\theoremstyle{plain}
\newtheorem{theorem}{\protect\theoremname}[section] 
\theoremstyle{definition}
\newtheorem{definition}[theorem]{\protect\definitionname}
\theoremstyle{plain}
\newtheorem{lemma}[theorem]{\protect\lemmaname}
\theoremstyle{remark}
\newtheorem{remark}[theorem]{\protect\remarkname}
\theoremstyle{plain}
\newtheorem{corollary}[theorem]{\protect\corollaryname}
\theoremstyle{plain}
\newtheorem{proposition}[theorem]{\protect\propositionname}
\theoremstyle{plain}
\newtheorem{example}[theorem]{\protect\examplename}
\theoremstyle{plain}
\providecommand{\definitionname}{Definition}
\providecommand{\lemmaname}{Lemma}
\providecommand{\theoremname}{Theorem}
\providecommand{\corollaryname}{Corollary}
\providecommand{\remarkname}{Remark}
\providecommand{\propositionname}{Proposition}
\providecommand{\examplename}{Example}
\providecommand{\problemname}{Problem}
\numberwithin{equation}{section}  
\def\R{{\mathbb R}}
\def\rn{{{\R}^n}}
\def\D{\mathcal{D}}
\def\d{\mathrm{d}}
\def\M{\mathcal{M}}
\def\Mit{\mathcal{M}^{\operatorname{it}}}
\def\H{\mathcal{H}}
\def\F{\mathcal{F}}
\def\sgn{\operatorname{sgn}}
\begin{document}
	\title
	{\bf\Large
		On the mixed Bourgain-Morrey spaces 
		\footnotetext{Jingshi Xu is supported by the National Natural Science Foundation of China (Grant No. 12161022) and the Science and Technology Project of Guangxi (Guike AD23023002).
		Pengfei Guo is supported by the National Natural Science Foundation of China (12561002).
	}
		}
	
	\date{}
	
	\author{Tengfei Bai\textsuperscript{a}, Pengfei Guo\textsuperscript{a},  Jingshi Xu\textsuperscript{b,c,d}\footnote{Corresponding author, E-mail: jingshixu@126.com}  \\
		{\scriptsize \textsuperscript{a} \scriptsize College of Mathematics and Statistics, Hainan Normal University, Haikou, Hainan 571158,
			China}\\
		{\scriptsize  \textsuperscript{b} School of Mathematics and Computing Science, Guilin University of Electronic Technology, Guilin 541004, China} \\
		{\scriptsize  \textsuperscript{c} Center for Applied Mathematics of Guangxi (GUET), Guilin 541004, China}\\
		{\scriptsize  \textsuperscript{d} Guangxi Colleges and Universities Key Laboratory of Data Analysis and Computation, Guilin 541004, China}
	}
	
	\pagestyle{myheadings}\markboth{\footnotesize\rm\sc Tengfei Bai, Pengfei Guo,  Jingshi Xu}
	{\footnotesize\rm\sc  }
	
\maketitle
\begin{abstract}
We introduce  the mixed Bourgain-Morrey spaces and 
obtain their preduals.
  The boundedness of Hardy-Littlewood maximal operator, iterated maximal operator, fractional integral operator, singular integral operator on these spaces is proved. The Littlewood-Paley theory for mixed Bourgain-Morrey spaces and their preduals are established. As  applications, we consider wavelet  characterizations for  mixed Bourgain-Morrey spaces and  a fractional chain rule in  mixed Bourgain-Morrey Triebel-Lizorkin spaces. In addition, we give a description of the dual of mixed Bourgain-Morrey spaces and  conclude the reflexivity of these spaces.
\end{abstract}
\textbf{Keywords} chain rule, fractional integral operator, Hardy-Littlewood maximal operator,  iterated maximal operator, Littlewood-Paley theory, mixed Bourgain-Morrey space, predual,   singular integral operator, wavelet

\noindent \textbf{Mathematics Subject Classification} 42B20, 42B25, 42B35,  46E30, 46A20
\tableofcontents

\section{Introduction}
In 1961, Benedek and Panzone \cite{BP61} introduced Lebesgue spaces with mixed norm. Bagby \cite{B75}
extended the Fefferman-Stein inequality to the mixed Lebesgue spaces.
In \cite{Ig86}, Igari researched the  interpolation theory for  linear operators in  mixed Lebesgue spaces and apply them to Fourier analysis.
Torres  and Ward considered Leibniz's rule, sampling and wavelets of mixed Lebesgue spaces  in \cite{TW15}.
In \cite{AIV19}, Antoni{\'c},  Ivec  and Vojnovi{\'c}  obtained a general framework for dealing with continuity of linear operators on mixed-norm Lebesgue spaces and showed the boundedness of a large class of pseudodifferential operators on these spaces.
The Hardy-Littlewood-Sobolev inequality on mixed-norm Lebesgue spaces was studied by Chen and Sun in \cite{CS22}.

Then  numerous other function spaces with mixed norms
 are developed. 
  Besov spaces with mixed norms  were  studied by in   \cite{BIN78, BIN79}. Triebel-Lizorkin spaces with  mixed norms were considered by  Besov et al. in \cite{BIN96}.  
In 1977,  the Lorentz spaces with mixed norms were first introduced by Fernandez  \cite{Fe77}.
An interpolation result on these spaces was got by Milman  \cite{Mi81}. 
 Banach function spaces with mixed norms were studied by  Blozinski   in \cite{Bl81}. 
Anisotropic mixed-norm {Hardy} spaces were  explored by Cleanthous et al. in  \cite{CGN17}.
Mixed-norm $\alpha$-modulation spaces were introduced by Cleanthous and Georgiadis in \cite{CG20}. 
The mixed Lebesgue spaces with variable exponents were studied by Ho in \cite{Ho18}. He obtained the boundedness of operators, such as the  Calder\'on-Zygmund operators on product domains, the Littlewood-Paley operators associated with family of disjoint rectangles,  the nontangential maximal function.  We refer the reader to the survey \cite{HY21} for the development of mixed norm spaces.

Recently,  
 there are new development of mixed norm spaces. For example, 
in \cite{WYY22}, Wu, Yang and Yuan studied interpolation in mixed-norm function spaces, including mixed-norm Lebesgue spaces, mixed-norm Lorentz spaces and mixed-norm Morrey spaces.
In \cite{Ho16},  Ho obtained the boundedness of the strong maximal operator on mixed-norm spaces. 
In \cite{ZX20}, Zhang and Xue introduced the  classes of multiple weights and of multiple fractional weights. They obtained the boundedness of multilinear strong maximal operators and  multilinear fractional strong maximal operators on weighted mixed norm spaces. 
In \cite{Ho21}, Ho established an extrapolation theory to mixed norm spaces.

In \cite{HLYY20}, Huang et al. introduced the anisotropic mixed-norm Hardy spaces  associated with  a general expansive matrix on  $\rn$ and established their radial or non-tangential maximal function characterizations. 
They obtained  characterizations of  anisotropic mixed-norm Hardy spaces by means of atoms, finite atoms, Lusin area functions, Littlewood-Paley  $g-$functions. The duality between anisotropic mixed-norm Hardy space and anisotropic mixed-norm Campanato space is also obtained. 
In \cite{HCY21}, Huang, Chang and Yang  researched the Fourier transform on anisotropic mixed-norm Hardy space. In \cite{HYY21}, Huang, Yang and Yuan  introduced  anisotropic mixed-norm Campanato-type space associated with a general expansive matrix on $\rn$. They proved that the Campanato-type space is the dual space of the anisotropic mixed-norm Hardy space.

More results can be founded in 
 \cite{BKPS06, Bl81, Fe77} for mixed Lorentz spaces, \cite{CGN19,GN16, JS08} for mixed Besov spaces and Triebel-Lizorkin spaces, \cite{DGZ24, DZ23, Hu23, Li22} for mixed Hardy spaces.
 
 Now we turn to the Morrey spaces.
In \cite{Mo38}, Morrey introduced the origin Morrey spaces  to research the  boundedness of the elliptic differential operators. We refer the reader to the  monographs \cite{SDH20, SDH202} for the theory of Morrey spaces. The mixed Morrey spaces were first introduced by  Nogayama in \cite{N19}.
He obtained the  boundedness of iterated
maximal operator,  fractional integral operator and singular integral operator on mixed Morrey spaces. 
 Nogayama \cite{N192} studied the predual spaces of mixed Morrey spaces and obtained the necessary and sufficient conditions for the boundedness of commutators of fractional integral operators on mixed Morrey spaces.  
 In \cite{IST15}, Izumi, Sawano and Tanaka researched the  Littlewood-Paley theory in Morrey spaces. Motivated by \cite{IST15}, 
  Nogayama \cite{N24} considered the Littlewood-Paley characterization  for mixed Morrey
spaces and their predual spaces. As an application, he showed the wavelet
characterization for mixed Morrey spaces.

Next we recall the Bourgain-Morrey spaces.
In 1991, Bourgain \cite{Bou91} introduced a special case of Bourgain-Morrey spaces to study the Stein-Tomas (Strichartz) estimate.
After then, many authors began to pay attention to the Bourgain-Morrey spaces. For example,
in \cite{M16}, Masaki showed the block spaces which are the  preduals of Bourgain-Morrey spaces. 
In \cite{HNSH23}, Hatano et al. researched the Bourgain-Morrey spaces from the viewpoints of harmonic analysis and functional analysis.    After then, some function spaces extending Bourgain-Morrey spaces were established.

In \cite{ZSTYY23}, Zhao et al.  introduced Besov-Bourgain-Morrey spaces which connect Bourgain-Morrey spaces with amalgam-type spaces.   They established an equivalent norm with an integral expression  and  showed the boundedness on these spaces of the Hardy-Littlewood maximal operator, the fractional integral, and the Calder\'on-Zygmund operator. The preduals, dual spaces and complex interpolations  of these spaces were also obtained.

Immediately after \cite{ZSTYY23},
Hu, et al.  introduced  Triebel-Lizorkin-Bourgain-Morrey spaces which connect Bourgain-Morrey spaces and global Morrey spaces in \cite{HLY23}. 
The embedding relations between Triebel-Lizorkin-Bourgain-Morrey spaces and Besov-Bourgain-Morrey spaces are proved.  
Various fundamental real-variable properties of these spaces are obtained. The sharp boundedness of  the Hardy-Littlewood maximal operator, the Calder\'on-Zygmund operator, and the fractional integral on these spaces was also proved.

Inspired by the generalized grand Morrey spaces and Besov-Bourgain-Morrey spaces, Zhang et al. introduced generalized grand Besov-Bourgain-Morrey spaces in \cite{ZYZ24}. The preduals  and the Gagliardo-Peetre  interpolation theorem,  extrapolation theorem were obtained.  The boundedness of Hardy-Littlewood maximal operator, the fractional integral and the Calder\'on-Zygmund operator on these spaces was also proved. 

The first author and the third author of this paper  introduced the weighted Bourgain-Morrey-Besov-Triebel-Lizorkin spaces associated with operators over a  space of homogeneous type in \cite{BX25} and  obtained two sufficient conditions for precompact sets in matrix weighted Bourgain-Morrey spaces in \cite{BX252}.

Motivated by the above literature, we will study the mixed Bourgain-Morrey spaces and their preduals from the viewpoints of harmonic analysis and functional analysis. The paper is organized as follows.
In Section \ref{preliminaries}, we give the definition of  mixed Bourgain-Morrey spaces and recall some lemmas about mixed Lebesgue spaces.
In Section \ref{sec property}, we research the properties of mixed Bourgain-Morrey spaces, such as embedding, dilation, translation, nontriviality, approximation, density.
In Section \ref{sec predual}, we introduce the mixed norm block spaces and show they are the preduals of mixed Bourgain-Morrey spaces.
In Section \ref{property block}, we consider the properties of the block spaces, such as completeness, density, Fatou property, lattice property. We also show the  duality of  mixed Bourgain-Morrey.
In Section \ref{sec operator}, we consider the boundedness of operators on (sequenced valued) mixed Bourgain-Morrey spaces and the (sequenced valued) block spaces. There operators include Hardy-Littlewood maximal operator, iterated maximal operator, fractional integral operator, singular integral operators.
In Section \ref{sec LP}, we study the Littlewood-Paley theory for mixed Bourgain-Morrey space and its predual. The characterizations by the heat semigroup and wavelets of mixed Bourgain-Morrey spaces are also obtained in Sections \ref{heat char} and \ref{wavelet char}.
In the last Section \ref{sec chain rules}, 
as an application, using the work of Douglas \cite{D25}, we prove a fractional chain rule in the mixed Bourgain-Morrey Triebel-Lizorkin spaces.

Throughout this paper,  let $c, C$ denote constants that are independent of the main parameters involved but whose value may differ from line to line.
For $A,B>0$, by $A\lesssim B$, we mean that $A\leq CB$ with some positive constant $C$ independent of appropriate quantities. By $ A \approx B$, we mean that $A\lesssim B$ and $B\lesssim A$.

\section{Preliminaries} \label{preliminaries}
First we recall some notations. Let $\mathbb N  =\{1,2,\ldots\} $ and let $\mathbb Z$ be all the integers. 
Let $\mathbb{N}_{0}:=\mathbb{N\cup}\{0\}$.
Let $\chi_{E}$ be the characteristic function of the set $E\subset\mathbb{R}^{n}$. 
Set $E^c : = \mathbb R^n \backslash E$ for a set $E \subset \rn$.
For $ 1 \le  p \le \infty$, let $p'$  be the conjugate exponent of $p$, that is $1/p +1/p' =1 $.  The function $\sgn $ is defined by 
\begin{equation*}
	\sgn (f) = \begin{cases}
		1, & \operatorname{if} f>0 \\
		0 & \operatorname{if} f = 0, \\
		-1, & \operatorname{if} f <0.
	\end{cases}
\end{equation*}
For $j\in\mathbb{Z}$, $m\in\mathbb{Z}^{n}$, let $Q_{j,m}:=\prod_{i=1}^{n}[2^{-j}m_{i},2^{-j}(m_{i}+1))$.
For a cube $Q$, $\ell(Q)$ stands for the length of cube $Q$. We
denote by $\mathcal{D} : = \{Q_{j,m},  j\in\mathbb{Z}$, $m\in\mathbb{Z}^{n} \}$ the family of all dyadic cubes in $\mathbb{R}^{n}$,
while $\mathcal{D}_{j}$ is the set of all dyadic cubes with $\ell(Q)=2^{-j},j\in\mathbb{Z}$. Let $a Q , a >0$ be the  cube  concentric with $Q$, having  the side length of $a \ell (Q)$.
For any $R>0$, Let $ B(x,R) := \{y\in \rn : |x-y| <R \} $ be an open ball in $\rn $. Let $B_R : = B (0,R)$ for $R>0$.

Let $\mathscr S := \mathscr{S}(\mathbb{R}^{n})$
denote the Schwartz space on $\rn$, and let $ \mathscr S ' :=  \mathscr{S}'(\mathbb{R}^{n})$
be its dual. Let $\mathcal{P} : = \mathcal{P}(\mathbb{R}^{n})$ be the class of the
polynomials on $\mathbb{R}^{n}$. Denote by $\mathscr{S}'/\mathcal{P} :=\mathscr{S}'/\mathcal{P}(\mathbb{R}^{n})$
the space of tempered distributions modulo polynomials. Let $\mathscr{S}_{0} : = \mathscr{S}_{0}(\mathbb{R}^{n}):=\{f\in\mathscr{S}:\partial^{\alpha}\mathcal{F}(g)(0)=0$
for all multi-indices $\alpha$\}. Recall that $\mathscr{S}'/\mathcal{P}$
is the dual of $\mathscr{S}_{0}$. Let $ L^0 := L^0 (\rn) $  be the set of  all measurable functions on $\rn$.
Let $\mathbb M ^+ : = \mathbb M ^+  (\rn)$ be the cone of all non-negative Lebesgue measurable functions.
We denote by $\operatorname{Sim} (\rn)$ the class of all simple functions 
\begin{equation*}
	f =\sum_{i=1}^N a_i  \chi_{E_i}, 
\end{equation*}
where $N \in \mathbb N$, $\{ a_i\} \subset \mathbb C$ and $\{ E_i \} \subset \rn  $ are pairwise disjoint.

The Fourier transform of $f$ is defined by $\F (f) (\xi) := \int_\rn f(x) e^{- 2\pi ix \cdot \xi} \d x$. Denote by $\F^{-1} (f) (x) := \int_\rn f(\xi) e^{ 2\pi i x \cdot \xi} \d \xi$ the inverse Fourier transform
of $f$ .

 The letters $\vec p  = (p_1, \ldots,  p_n)$ will denote $n$-tuples of the numbers in $[0,\infty]$ ($n\in \mathbb N$).
The notation $0< \vec p <\infty$ means that $0 < p_i < \infty$ for each $i \in \{1,\ldots,n\}$.	For $a\in \mathbb R$, let \begin{equation*}
	\frac{1}{\vec p }  = \left( \frac{1}{p_1}, \ldots , \frac{1}{p_n} \right),\quad  a \vec p = (ap_1, \ldots, a p_n), \quad  \vec{p}^{\,\prime} = ( p_1 ', \ldots, p_n ' ).
\end{equation*}

Then we  recall the  mixed Lebesgue spaces, which are  introduced by Benedek and Panzone in \cite{BP61}.
\begin{definition}
	Let $\vec p = (p_1, \ldots, p_n)  \in (0,\infty]^n$. Then the Mixed Lebesgue spaces $L^{\vec p} : = L^{\vec p} (\rn)$ is the set of all measurable functions $f :\rn \to \mathbb C$ such that
	\begin{equation*}
		\| f \|_{L^{\vec p} } : = \left(\int_\R \cdots  \left( \int_\R  |f(x_1, \ldots, x_n) |^{p_1 }\d x_1  \right)^{  \frac{p_2}{p_1} }   \cdots \d x_n  \right) ^{\frac{1 }{p_n}} <\infty .
	\end{equation*}
 We  make appropriate modifications when  $p_j =\infty$.
We also define $\| f\|_{L^{\vec p} (E) } := \| f \chi_E \|_{L^{\vec p}} $ for a set $E \subset \rn$.

\end{definition}

Note that the order in which the norms are taken is fundamental because in general (\cite[page 302]{BP61})
\begin{equation*}
	\left(\int_\R \left( \int_\R  |f(x_1, x_2) |^{p_1 }\d x_1  \right)^{  \frac{p_2}{p_1} }    \d x_2  \right) ^{\frac{1 }{p_2}} \neq \left(\int_\R \left( \int_\R  |f(x_1, x_2) |^{p_2 }\d x_1  \right)^{  \frac{p_1}{p_2} }    \d x_2  \right) ^{\frac{1 }{p_1}} .
\end{equation*}

In \cite{N19}, Nogayama introduced  Mixed Morrey spaces.

\begin{definition}
	Let $\vec p = (p_1, \ldots, p_n)  \in (0,\infty]^n$ and $t\ \in (0,\infty] $ satisfy 
	\begin{equation*}
		\sum_{i=1 } ^n \frac{1}{p_i}  \ge \frac{n}{t}.
	\end{equation*}
The mixed Morrey norm $\| \cdot \|_{ M_{\vec p} ^t   } $  is defined by 
\begin{equation*}
	\|f\|_{  M_{\vec p} ^t   } : = \sup_{\operatorname{cube} \; Q \subset \rn } |Q|^{ \frac{1}{t}  - \frac{1}{n} \sum_{i=1 } ^n \frac{1}{p_i}  }  \| f\chi_Q \|_{L^{\vec p}  } ,
\end{equation*}
where $f \in L^0 $. The mixed Morrey space $ M_{\vec p} ^t  $ is the set of all measurable functions $f$ with finite norm $ \|f\|_{  M_{\vec p} ^t  }$.
\end{definition}

Inspired by the Bourgain-Morrey space and the mixed Morrey space,
 we introduce the mixed Bourgain-Morrey space. 
\begin{definition}
	Let $\vec p = (p_1, \ldots, p_n)  \in (0,\infty]^n$ and $t\ \in (0,\infty] $ satisfy
	\begin{equation*}
		\sum_{i=1 } ^n \frac{1}{p_i}  \ge \frac{n}{t}.
	\end{equation*}
Let $ t \le r \le \infty$. Then the mixed Bourgain-Morrey space $ M_{\vec p} ^{t,r}:=  M_{\vec p} ^{t,r}  (\rn)  $ is the set of all measurable functions $f$ such that
\begin{equation*}
	\|f\|_{  M_{\vec p} ^{t,r}  } : = \left(\sum_{Q \in \D} |Q|^{ \frac{r}{t}  - \frac{r}{n} \sum_{i=1 } ^n \frac{1}{p_i}  }  \| f\chi_Q \|_{L^{\vec p}  } ^r  \right)^{1/r}<\infty.
\end{equation*}
\end{definition}

\begin{remark}
	(i) If $ \vec p = (p,p,\ldots, p) $, then $  M_{\vec p} ^{t,r}   $  is the Bourgain-Morrey spaces, which can be found in \cite{HNSH23, M16}.
	
	(ii) If $r =\infty$,  then $  M_{\vec p} ^{t,r}   $   becomes the mixed Morrey space.

\end{remark}

We list some properties for $L^{\vec p} $, which will be used frequently. 

The first property  is the Fatou's property for $L^{\vec p}$; for example, see \cite[Proposition 2.2]{N19}.
\begin{lemma} \label{lem fatou}
	Let $ 0 < \vec p \le \infty$. Let $\{ f_k\}_{k=1}^\infty$ be a 	sequence of non-negative measurable functions on $\rn$. Then 
	\begin{equation*}
		\left\|  \liminf _{k\to \infty} f_k \right\|_{L^{\vec p}}  \le \liminf _{k\to \infty}
		\left\|   f_k \right\|_{L^{\vec p}} .
	\end{equation*}
\end{lemma}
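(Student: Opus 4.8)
The statement to prove is Lemma~\ref{lem fatou}, the Fatou property for mixed Lebesgue spaces $L^{\vec p}$. The plan is to proceed by induction on the dimension $n$, peeling off one integration variable at a time, since the mixed norm is built by iterating one-dimensional $L^{p}$ norms. First I would treat the base case $n=1$: here $\|\cdot\|_{L^{\vec p}}$ is just an ordinary (possibly quasi-)norm $\|\cdot\|_{L^{p_1}}$, and the inequality $\|\liminf_k f_k\|_{L^{p_1}}\le\liminf_k\|f_k\|_{L^{p_1}}$ is the classical Fatou lemma for the Lebesgue integral when $p_1<\infty$ (apply the usual Fatou lemma to $|f_k|^{p_1}$ together with $\liminf_k|f_k|^{p_1}=(\liminf_k f_k)^{p_1}$ for nonnegative $f_k$, then take $p_1$-th roots), and an elementary direct argument when $p_1=\infty$ (for any set $E$ of positive measure, $\liminf_k f_k\le\liminf_k\|f_k\|_{L^\infty}$ a.e.).

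For the inductive step, write $x=(x',x_n)$ with $x'\in\R^{n-1}$, and set $\vec q=(p_1,\dots,p_{n-1})$ so that $\|f\|_{L^{\vec p}}=\big\|\,\|f(\cdot,x_n)\|_{L^{\vec q}(\R^{n-1})}\,\big\|_{L^{p_n}(\R,\d x_n)}$. Define $g_k(x_n):=\|f_k(\cdot,x_n)\|_{L^{\vec q}}$ and $g(x_n):=\|(\liminf_k f_k)(\cdot,x_n)\|_{L^{\vec q}}$. By the induction hypothesis applied in $\R^{n-1}$, for (a.e.) fixed $x_n$ we have $g(x_n)\le\liminf_k g_k(x_n)$. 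Then apply the one-dimensional Fatou property (the base case, in the variable $x_n$) to the nonnegative functions $g_k$: monotonicity of the $L^{p_n}$ norm in the pointwise order gives
\begin{equation*}
\|\liminf_k f_k\|_{L^{\vec p}}=\|g\|_{L^{p_n}}\le\big\|\liminf_k g_k\big\|_{L^{p_n}}\le\liminf_k\|g_k\|_{L^{p_n}}=\liminf_k\|f_k\|_{L^{\vec p}}.
\end{equation*}
This chains the two ingredients together and closes the induction.

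The only genuinely delicate point is bookkeeping with the ``almost everywhere'' qualifiers and with the endpoint exponents $p_i=\infty$: the pointwise inequality $g(x_n)\le\liminf_k g_k(x_n)$ from the induction hypothesis holds for a.e.\ $x_n$, which is harmless because the outer norm only sees a.e.\ behaviour; and one must check that monotonicity of $\|\cdot\|_{L^{p}}$ under the a.e.\ pointwise order is valid for $p=\infty$ as well (it is). I do not expect a substantive obstacle here — this is a standard iterated-norm argument — so the ``hard part'' is merely stating the reduction cleanly; alternatively, one can bypass the induction entirely by invoking that each iterated integral satisfies the classical Fatou lemma and peeling the norms from the inside out, which is the route I would actually write up if brevity is preferred.
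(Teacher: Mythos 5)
The paper does not actually prove this lemma; it simply cites it as Proposition~2.2 of Nogayama's mixed Morrey spaces paper \cite{N19}, so there is no internal proof to compare against. Your inductive peel-one-variable argument is the standard way to establish this Fatou property for mixed Lebesgue norms, and it is correct: the base case is the scalar Fatou lemma (with the pointwise inequality $\liminf_k f_k(x)\le\liminf_k\|f_k\|_{L^\infty}$ holding off a countable union of null sets for the $p_1=\infty$ endpoint), and the inductive step correctly chains the ($\mathbb{R}^{n-1}$)-inductive hypothesis applied slicewise with monotonicity and one-dimensional Fatou in the outermost variable. Two minor remarks: (i) the ``a.e.'' qualifier on the slicewise inequality $g(x_n)\le\liminf_k g_k(x_n)$ is actually unnecessary — it holds for every $x_n$, since $\{f_k(\cdot,x_n)\}_k$ is a sequence of non-negative measurable functions on $\mathbb{R}^{n-1}$ for each fixed $x_n$ and $(\liminf_k f_k)(\cdot,x_n)=\liminf_k f_k(\cdot,x_n)$ pointwise; (ii) you implicitly rely on measurability of $x_n\mapsto\|f(\cdot,x_n)\|_{L^{\vec q}}$, which is standard (and needed already to make sense of the definition of $\|\cdot\|_{L^{\vec p}}$, cf.\ Benedek--Panzone), but it is worth flagging since it is the one nontrivial structural input behind the harmless-looking iteration.
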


\begin{remark}
	Let $f,g \in L^0 $.
	It is easy to verify that the mixed Bourgain-Morrey space have the lattice property,  that is, $|g| \le |f| $ almost everywhere implies that $\| g\|_{ M_{\vec p} ^{t,r}  }  \le\| f\|_{ M_{\vec p} ^{t,r}   }  $. By the dominated converge theorem, $0 \le f_k \uparrow f$ almost everywhere as $k \to \infty$ implies that $ \|f_k\|_{  M_{\vec p} ^{t,r}  }  \uparrow \| f\|_{  M_{\vec p} ^{t,r}   }$ as $k \to \infty$ for $ n / ( \sum_{i=1}^n  1/p_{i})  \le  t \le r < \infty$.
	From \cite[Theorem 2]{CFMN21}, we obtain $M_{\vec p} ^{t,r}  $ is complete for $ n / ( \sum_{i=1}^n  1/p_{i})  \le  t \le r < \infty$.
	
	When $r=\infty$,  $  M_{\vec p} ^{t,\infty}   $  are the mixed Morrey spaces and  complete.
\end{remark}

The second property is the H\"older inequality, coming from,  for example, \cite[Section 2.1]{N19}.
\begin{lemma} \label{Holder mixed}
	Let $1 <\vec p ,\vec q <\infty$ and define $\vec r$ by $ 1/ \vec p + 1 / \vec q = 1/ \vec r$. If $f \in L^{\vec p} $  and  $g \in L^{\vec q} $, then $fg \in L^{\vec r} $  and
	\begin{equation*}
		\|fg\|_{ L^{\vec r} }  \le 	\|f\|_{ L^{\vec p}  }  	\|g\|_{ L^{\vec q}  } .
	\end{equation*}
\end{lemma}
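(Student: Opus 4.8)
The plan is to prove this by induction on the dimension $n$, peeling off one variable at a time and applying the classical one-dimensional H\"older inequality at each step. First I would record the relevant exponent arithmetic: writing $\vec p_2 := (p_2,\ldots,p_n)$, $\vec q_2 := (q_2,\ldots,q_n)$, $\vec r_2 := (r_2,\ldots,r_n)$, the hypothesis $1/\vec p + 1/\vec q = 1/\vec r$ restricts to $1/\vec p_2 + 1/\vec q_2 = 1/\vec r_2$, so the induction hypothesis applies to the lower-dimensional tuples. Moreover, since $1/r_i = 1/p_i + 1/q_i$ one has $r_i/p_i + r_i/q_i = 1$ with both summands positive, hence $p_i/r_i$ and $q_i/r_i$ are genuine conjugate exponents, both strictly larger than $1$, for every $i \in \{1,\ldots,n\}$; this is precisely what legitimizes the one-dimensional step.

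For the base case $n = 1$: given $f \in L^{p_1}(\R)$ and $g \in L^{q_1}(\R)$, applying the classical H\"older inequality with the conjugate pair $(p_1/r_1, q_1/r_1)$ to the functions $|f|^{r_1}$ and $|g|^{r_1}$ gives
\begin{equation*}
	\int_\R |f(x) g(x)|^{r_1}\,\d x \le \left(\int_\R |f(x)|^{p_1}\,\d x\right)^{r_1/p_1}\left(\int_\R |g(x)|^{q_1}\,\d x\right)^{r_1/q_1},
\end{equation*}
and raising to the power $1/r_1$ yields $\|fg\|_{L^{r_1}} \le \|f\|_{L^{p_1}}\|g\|_{L^{q_1}}$, in particular $fg \in L^{r_1}$.

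For the inductive step, fix $n \ge 2$, assume the assertion in dimension $n-1$, and for $y = (x_2,\ldots,x_n) \in \R^{n-1}$ set
\begin{equation*}
	F(y) := \left(\int_\R |f(x_1,y)|^{p_1}\,\d x_1\right)^{1/p_1}, \qquad G(y) := \left(\int_\R |g(x_1,y)|^{q_1}\,\d x_1\right)^{1/q_1},
\end{equation*}
and likewise $H(y) := \left(\int_\R |f(x_1,y)g(x_1,y)|^{r_1}\,\d x_1\right)^{1/r_1}$; all three are measurable on $\R^{n-1}$ by Tonelli's theorem. The one-dimensional H\"older inequality in the variable $x_1$, exactly as in the base case, gives $H \le F\,G$ pointwise on $\R^{n-1}$. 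By the very definition of the mixed norm one has $\|fg\|_{L^{\vec r}} = \|H\|_{L^{\vec r_2}(\R^{n-1})}$, $\|f\|_{L^{\vec p}} = \|F\|_{L^{\vec p_2}(\R^{n-1})}$ and $\|g\|_{L^{\vec q}} = \|G\|_{L^{\vec q_2}(\R^{n-1})}$, while the mixed norm is monotone under pointwise domination of non-negative functions (immediate from its iterated-integral form). Hence, invoking the induction hypothesis for $F \in L^{\vec p_2}$, $G \in L^{\vec q_2}$ together with $1/\vec p_2 + 1/\vec q_2 = 1/\vec r_2$,
\begin{equation*}
	\|fg\|_{L^{\vec r}} = \|H\|_{L^{\vec r_2}} \le \|FG\|_{L^{\vec r_2}} \le \|F\|_{L^{\vec p_2}}\,\|G\|_{L^{\vec q_2}} = \|f\|_{L^{\vec p}}\,\|g\|_{L^{\vec q}},
\end{equation*}
which closes the induction and simultaneously shows $fg \in L^{\vec r}$.

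I do not expect a genuine obstacle here: the only points requiring care are the measurability of the partial norms $F$, $G$, $H$ (handled by Tonelli) and the bookkeeping at the outset — checking coordinate by coordinate that $1/\vec p + 1/\vec q = 1/\vec r$ with $1 < \vec p, \vec q < \infty$ produces admissible conjugate pairs $(p_i/r_i, q_i/r_i)$, so that the classical H\"older inequality may be applied at every stage of the recursion. (Alternatively, one may simply cite \cite[Section 2.1]{N19}, but the induction above makes the argument self-contained.)
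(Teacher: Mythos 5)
Your proof is correct. The paper does not actually prove this lemma; it simply states it with the citation to Nogayama \cite[Section 2.1]{N19}, which in turn rests on the original Benedek--Panzone argument. Your induction-on-dimension proof is exactly that standard argument, carried out in full: the exponent bookkeeping (that $(p_i/r_i, q_i/r_i)$ are conjugate with both $>1$, even though $r_i$ itself may be $\le 1$), the fibrewise one-dimensional H\"older giving $H \le FG$, the identification of the $n$-variable mixed norm of $fg$, $f$, $g$ with the $(n-1)$-variable mixed norms of $H$, $F$, $G$, the monotonicity of the mixed norm under pointwise domination, and the measurability of the partial norms via Tonelli are all handled correctly and close the induction cleanly. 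Nothing is missing; your write-up is self-contained where the paper merely cites.
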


The third property is duality of $L^{\vec p}$.

\begin{lemma}[Theorem 1, \cite{BP61}] \label{dual mixed Lp}
	
	{\rm (i)}
	Let $1 \le \vec p < \infty$, $1/\vec p + 1 / \vec p \, ^\prime =1 $. $J (f)$ is a continuous linear functional on the normed space $L^{\vec p}$  if and only if it can be represented by 
	\begin{equation*}
		J (f) = \int_\rn h(x) f(x) \d x
	\end{equation*}
where $h$ is a uniquely determined function of $ L^{\vec p \, ^\prime}$, and $ \| J\| = \|h\|_{ L^{\vec p \, ^\prime}}$.

	{\rm (ii)} Let $1 \le \vec p < \infty$. The normed space $L^{\vec p}$ is a Banach space and every sequence converging
	in $L^{\vec p}$ contains a subsequence convergent almost everywhere to the limit 	function.
\end{lemma}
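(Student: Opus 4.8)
Since this is precisely the Benedek--Panzone duality theorem, my plan is to prove both parts by induction on the dimension $n$, with the base case $n=1$ being the classical Riesz representation theorem for ordinary $L^{p}$ spaces (which is exactly where the hypothesis $1\le p<\infty$ enters). At each step one views $L^{\vec p}(\rn)$ as an iterated space in the last variable and transfers the scalar result, peeling off one coordinate at a time.

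For part (i), I would argue by a Radon--Nikodym construction. Fix a bounded linear functional $J$ on $L^{\vec p}$. For each $N\in\mathbb N$ put $R_{N}:=[-N,N]^{n}$; since $\chi_{R_{N}}\in L^{\vec p}$ and $\chi_{E_{k}}\to 0$ in $L^{\vec p}$ whenever $E_{k}\downarrow\emptyset$ inside $R_{N}$ (dominated convergence in $L^{\vec p}$, with majorant $\chi_{R_{N}}$), the set function $E\mapsto J(\chi_{E})$ is a countably additive signed measure on $R_{N}$ absolutely continuous with respect to Lebesgue measure. Radon--Nikodym then gives $h_{N}\in L^{1}(R_{N})$ with $J(\chi_{E})=\int_{E}h_{N}$ for all measurable $E\subset R_{N}$; these are consistent on overlaps, so they patch to a single $h\in L^{1}_{\mathrm{loc}}(\rn)$, and by linearity $J(s)=\int_{\rn}h\,s$ for every simple function $s$ with bounded support.

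The heart of the matter is the norm identity $\|J\|=\|h\|_{L^{\vec p\,'}}$. Given $h\in L^{\vec p\,'}$, the inequality $\|J\|\le\|h\|_{L^{\vec p\,'}}$ is immediate from the mixed Hölder inequality (Lemma~\ref{Holder mixed}). For the reverse inequality (which also yields the membership $h\in L^{\vec p\,'}$ in the first place) I would establish the auxiliary representation
\[
\|g\|_{L^{\vec p\,'}}=\sup\Big\{\Big|\int_{\rn}g\,s\Big|:\ s\in\operatorname{Sim}(\rn),\ \operatorname{supp}s\ \text{bounded},\ \|s\|_{L^{\vec p}}\le 1\Big\},\qquad g\in L^{0},
\]
proved by induction on $n$: inside the innermost integral apply the scalar identity $\|\varphi\|_{p_{1}'}=\sup\{|\int\varphi\psi|:\|\psi\|_{p_{1}}\le1\}$ to a near-extremal simple $\psi$, then iterate through $p_{2},\dots,p_{n}$, with the routine modification when some $p_{i}=1$ so that $p_{i}'=\infty$. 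Applying this with $g=h$ and using $|\int h\,s|=|J(s)|\le\|J\|\,\|s\|_{L^{\vec p}}$ over all admissible $s$ gives $\|h\|_{L^{\vec p\,'}}\le\|J\|$, hence $h\in L^{\vec p\,'}$. Finally $J$ and $f\mapsto\int_{\rn}hf$ are two bounded functionals that agree on the dense subspace of $L^{\vec p}$ consisting of simple functions with bounded support (density uses $1\le\vec p<\infty$), so they coincide on $L^{\vec p}$; uniqueness of $h$ follows since $\int h\,s=0$ for all such $s$ forces $h=0$ a.e.

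For part (ii), completeness is the standard argument adapted to the mixed norm: from a Cauchy sequence pass to a subsequence $\{f_{k_{j}}\}$ with $\|f_{k_{j+1}}-f_{k_{j}}\|_{L^{\vec p}}\le 2^{-j}$; by the Fatou property (Lemma~\ref{lem fatou}) the function $\sum_{j}|f_{k_{j+1}}-f_{k_{j}}|$ lies in $L^{\vec p}$, hence is finite a.e., so $f_{k_{j}}$ converges a.e.\ and in $L^{\vec p}$ to some $f$, and then the whole Cauchy sequence converges to $f$. The same extraction applied to an arbitrary sequence converging to $f$ in $L^{\vec p}$ produces the asserted a.e.-convergent subsequence, whose a.e.\ limit necessarily equals $f$. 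The main obstacle in the whole proof is the norm identity in (i)—more precisely the inductive computation of $\|\cdot\|_{L^{\vec p\,'}}$ as a supremum of integral pairings against bounded simple functions—because this is exactly where the order-dependence of the iterated norms must be tracked; the remaining ingredients are faithful transcriptions of the one-variable theory.
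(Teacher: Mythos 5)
This lemma is imported verbatim from Benedek and Panzone~\cite{BP61} (their Theorem~1) and the paper offers no proof of its own, so there is no in-paper argument to compare against; what you have written is a faithful reconstruction of the classical Benedek--Panzone argument, and it is essentially correct.

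Two points are worth flagging. First, the crux you identify---the supremum formula
\[
\|g\|_{L^{\vec p\,'}} \;=\; \sup\Bigl\{\,\Bigl|\int_{\rn} g\,s\Bigr| : s\in\operatorname{Sim}(\rn),\ \operatorname{supp}s\ \text{bounded},\ \|s\|_{L^{\vec p}}\le 1\Bigr\}
\]
---is indeed where all the work sits, but the phrase ``take a near-extremal simple $\psi$ and iterate'' quietly sweeps under the rug that at each layer the extremizer for the innermost variable depends on the remaining outer variables, and one must check that this dependence is jointly measurable so that the tensor of slice-extremizers is an honest $L^{\vec p}$-function that can then be truncated and approximated by simple functions of bounded support. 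The cleanest way to settle this (and the way the paper itself implicitly handles it when it needs a norm-attainer in the proof of Theorem~\ref{predual mix BM}) is to write down the explicit layered extremizer
\[
s \;=\; \frac{(\overline{\sgn g})\,|g|^{\,p_1'-1}}{\|g\|_{(p_1')}^{\,p_1'-1}}\cdot\frac{\|g\|_{(p_1')}^{\,p_2'-1}}{\|g\|_{(p_1',p_2')}^{\,p_2'-1}}\cdots,
\]
which is manifestly measurable, has $\|s\|_{L^{\vec p}}=1$, and pairs with $g$ to give $\|g\|_{L^{\vec p\,'}}$; the $p_i=1$ layers (where $p_i'=\infty$) are handled by the usual $\varepsilon$-approximate extremizer, which is why the formula delivers only an approximate equality in general but suffices for the supremum. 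Second, the formula as you state it should be read with the convention that the supremum is $+\infty$ when $g\notin L^{\vec p\,'}$; in your application this is harmless because the Radon--Nikodym step already places $h$ in $L^1_{\operatorname{loc}}$, so all the pairings $\int h\,s$ are finite and the bound $|\int h\,s|\le\|J\|\,\|s\|_{L^{\vec p}}$ correctly forces $h\in L^{\vec p\,'}$. With those two clarifications, the proof is complete; part~(ii) is the standard Riesz--Fischer argument and poses no difficulty.
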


Finally, we recall the Muckenhoupt's class $A_p$ and it will be used in Section \ref{sec operator}.

Denote by $\M$  the Hardy-Littlewood maximal function of $f$:
\begin{equation*} 
	\M f (x) = \sup_{B \ni x } \frac{1}{ |B| } \int_B | f (y) | \d y
\end{equation*}
where the sup is taken over all balls $B$ containing $x \in\rn$. For $\eta \in (0,\infty)$, set $\M _\eta (f) = \M (|f|^\eta)^{1/\eta}$.

\begin{definition}
	Let $1<p<\infty $. We say that a weight $\omega$ belongs to class $A_p := A_p (\rn)$ if
	\begin{equation*}
		[\omega]_{A_p} := \sup_{B\;  \mathrm{ balls \; in  } \;\rn} \Big(   \frac{1}{|B|} \int_B \omega(x) \mathrm {d} x    \Big) ^{1/p} \Big(   \frac{1}{|B|} \int_B \omega(x) ^ { -1/(p-1) } \mathrm {d} x  \Big) ^{ (p-1 ) /p} <\infty.
	\end{equation*}
	We call a weight $\omega$   an $A_1$ weight if
	\begin{equation*}
		\mathcal M (w) (x) \le C w(x)
	\end{equation*}
	for almost all $x \in X$ and some $C>0$.
	Set $A_\infty = \cup_{p\ge 1} A_p$.
\end{definition}

%

\section{Properties of mixed Bourgain-Morrey spaces} \label{sec property}
In this section, we discuss some properties and examples of mixed Bourgain-Morrey spaces.

\subsection{Fundamental properties}
In what follows, the symbol $\hookrightarrow$ stands for continuous embedding. That is, for quasi-Banach spaces $X$ and $Y$, the notation $X\hookrightarrow Y$ means that $X \subset Y$ and that the canonical injection from $X$ to $Y$ is continuous.
\begin{lemma} \label{embed ell r}
		Let $ 0 < \vec p \le \infty $.  Let $ 0 < n / ( \sum_{i=1}^n  1/p_{i}) < t \le r_1 \le  r_2 \le  \infty $.   Then
	\begin{equation*}
		M_{\vec p} ^{t,r_1}    \hookrightarrow  M_{\vec p} ^{t,r_2}   .
	\end{equation*}
\end{lemma}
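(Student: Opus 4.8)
The plan is to reduce the inequality $\|f\|_{M_{\vec p}^{t,r_2}}\le \|f\|_{M_{\vec p}^{t,r_1}}$ to the elementary embedding $\ell^{r_1}\hookrightarrow\ell^{r_2}$ for $r_1\le r_2$. Set, for each dyadic cube $Q\in\D$,
\begin{equation*}
	a_Q := |Q|^{\frac{1}{t}-\frac{1}{n}\sum_{i=1}^n\frac{1}{p_i}}\,\|f\chi_Q\|_{L^{\vec p}},
\end{equation*}
so that $\|f\|_{M_{\vec p}^{t,r}} = \|(a_Q)_{Q\in\D}\|_{\ell^r(\D)}$ for every admissible $r$ (with the usual sup-interpretation when $r=\infty$). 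Since $\D$ is countable, the claim is exactly that $\|(a_Q)_Q\|_{\ell^{r_2}}\le \|(a_Q)_Q\|_{\ell^{r_1}}$ whenever $0<r_1\le r_2\le\infty$, which is the standard nesting of $\ell^r$ spaces.

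First I would recall the proof of that nesting for completeness: if $\|(a_Q)_Q\|_{\ell^{r_1}}=:A<\infty$ (otherwise there is nothing to prove), then $a_Q\le A$ for every $Q$, hence in the case $r_2<\infty$,
\begin{equation*}
	\sum_{Q\in\D} a_Q^{r_2} = \sum_{Q\in\D} a_Q^{r_1}\,a_Q^{r_2-r_1} \le A^{r_2-r_1}\sum_{Q\in\D} a_Q^{r_1} = A^{r_2-r_1}\cdot A^{r_1}=A^{r_2},
\end{equation*}
and taking $r_2$-th roots gives $\|(a_Q)_Q\|_{\ell^{r_2}}\le A$. The case $r_2=\infty$ is immediate since $\sup_Q a_Q\le A$. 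This shows $f\in M_{\vec p}^{t,r_1}$ implies $f\in M_{\vec p}^{t,r_2}$ with $\|f\|_{M_{\vec p}^{t,r_2}}\le\|f\|_{M_{\vec p}^{t,r_1}}$, so the inclusion holds and the canonical injection is continuous (indeed norm-nonincreasing); this is precisely the meaning of $M_{\vec p}^{t,r_1}\hookrightarrow M_{\vec p}^{t,r_2}$.

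There is essentially no real obstacle here: the statement is a soft consequence of the definition together with monotonicity of $\ell^r$-norms, and the hypothesis $0<n/(\sum_{i}1/p_i)<t$ is only needed to ensure the exponents defining the spaces are legitimate (it plays no active role in the estimate). One small point worth stating explicitly is that the exponent $\frac{1}{t}-\frac{1}{n}\sum_i\frac{1}{p_i}$ on $|Q|$ is the same for both $r_1$ and $r_2$, so the coefficients $a_Q$ genuinely do not depend on $r$; after that observation the argument is purely the $\ell^{r_1}\subset\ell^{r_2}$ fact, which I would either cite or include in the one-line form above.
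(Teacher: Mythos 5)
Your argument is correct and is exactly the paper's proof: the paper simply says the inclusion follows immediately from $\ell^{r_1}\hookrightarrow\ell^{r_2}$, which is the observation you make after introducing the coefficients $a_Q$. You have merely written out the standard $\ell^r$-nesting argument that the paper leaves implicit.
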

\begin{proof}
	The inclusion follows immediately from $ \ell^{r_1}  \hookrightarrow  \ell^{r_2}$.
\end{proof}

\begin{proposition} \label{pro embedding}
	Let $ 0 < \vec p  \le \vec s \le \infty $.  Let $ 0< t \le r \le  \infty $ such that $ n/t \le  \sum_{ i=1 }^n 1/ s_i $.   Then
	\begin{equation*}
		 M_{\vec s} ^{t,r}    \hookrightarrow  M_{\vec p} ^{t,r}  .
	\end{equation*}
\end{proposition}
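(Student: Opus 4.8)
The plan is to reduce the asserted embedding to a pointwise-in-$Q$ comparison of the weighted local norms, which in turn follows by iterating the elementary one-dimensional Hölder inequality on a bounded interval. The only facts I need are: (a) for $0 < a \le b \le \infty$ and a bounded interval $I \subset \R$ one has $\|g\|_{L^a(I)} \le |I|^{1/a - 1/b}\|g\|_{L^b(I)}$ for every measurable $g$ — this is Hölder's inequality applied to $|g|^a\cdot 1$ with exponent $b/a \ge 1$, and is trivial for $b=\infty$; and (b) the lattice property of $L^{\vec p}$ (recorded after Lemma \ref{lem fatou}), which legitimizes inserting (a) inside the iterated integrals.

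Fix a dyadic cube $Q = \prod_{i=1}^n I_i$, so that $|I_i| = \ell(Q)$ for each $i$ and $|Q| = \ell(Q)^n$. Since $\vec p \le \vec s$ we have $p_i \le s_i$ for all $i$, hence $1/p_i - 1/s_i \ge 0$. Applying (a) successively in the variables $x_1, x_2, \dots, x_n$ — first to the innermost integral with exponents $p_1 \le s_1$, then to the next layer with $p_2 \le s_2$, and so on, using (b) at each stage to pull the resulting pointwise domination through the remaining nested norm — I obtain
\[
\|f\chi_Q\|_{L^{\vec p}} \le \Big(\prod_{i=1}^n \ell(Q)^{1/p_i - 1/s_i}\Big)\|f\chi_Q\|_{L^{\vec s}} = |Q|^{\frac1n\sum_{i=1}^n(1/p_i - 1/s_i)}\|f\chi_Q\|_{L^{\vec s}}.
\]
(One may alternatively invoke Lemma \ref{Holder mixed} with the tuple $\vec q$ defined by $1/q_i = 1/p_i - 1/s_i$, noting $\|\chi_Q\|_{L^{\vec q}} = |Q|^{\frac1n\sum_i(1/p_i-1/s_i)}$, but the iterated argument is cleaner and needs no restriction on the ranges of the exponents.) Note also that $\sum_{i=1}^n 1/p_i \ge \sum_{i=1}^n 1/s_i \ge n/t$, so both $M_{\vec p}^{t,r}$ and $M_{\vec s}^{t,r}$ are well defined.

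To finish, multiply the last display by the scaling factor $|Q|^{\frac1t - \frac1n\sum_{i=1}^n 1/p_i}$; the powers of $|Q|$ combine to give
\[
|Q|^{\frac1t - \frac1n\sum_{i=1}^n 1/p_i}\|f\chi_Q\|_{L^{\vec p}} \le |Q|^{\frac1t - \frac1n\sum_{i=1}^n 1/s_i}\|f\chi_Q\|_{L^{\vec s}}
\]
for every $Q \in \D$. Raising to the $r$-th power and summing over $Q \in \D$ (or passing to the supremum when $r=\infty$, which is the mixed Morrey case) yields $\|f\|_{M_{\vec p}^{t,r}} \le \|f\|_{M_{\vec s}^{t,r}}$, so $M_{\vec s}^{t,r} \subset M_{\vec p}^{t,r}$ and the inclusion map has norm at most one, in particular it is continuous. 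There is no genuine obstacle in this argument; the only points demanding a little care are the bookkeeping of the exponents of $|Q|$ and the justification that the one-dimensional estimate may be applied layer by layer inside the mixed norm, which is exactly where the lattice property enters.
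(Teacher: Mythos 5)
Your proof is correct and follows essentially the same route as the paper: reduce to the cube-local estimate $\|f\chi_Q\|_{L^{\vec p}} \le |Q|^{\frac1n\sum_i(1/p_i-1/s_i)}\|f\chi_Q\|_{L^{\vec s}}$, then combine the powers of $|Q|$ and sum (or take the supremum) over dyadic cubes. The only difference is cosmetic: the paper cites this local estimate from \cite[Proposition 3.2]{N19}, while you rederive it from scratch by iterating the one-dimensional H\"older inequality through the nested norm, which makes the argument self-contained but does not change its structure.
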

\begin{proof}
	Then case $r =\infty$  was proved in  \cite[Proposition 3.2]{N19}. Now let $0<  t \le r <\infty$.
	It suffices to show 
	\begin{equation} \label{s p embed}
		\sum_{Q \in \D} |Q|^{ \frac{r}{t}  - \frac{r}{n} \sum_{i=1 } ^n \frac{1}{p_i}  }  \| f\chi_Q \|_{L^{\vec p}  } ^r   \le 	\sum_{Q \in \D} |Q|^{ \frac{r}{t}  - \frac{r}{n} \sum_{i=1 } ^n \frac{1}{s_i}   }  \| f\chi_Q \|_{L^{\vec s}  } ^r  .
	\end{equation}
From \cite[Proposition 3.2]{N19}, we have
\begin{equation*}
	\| f\chi_Q \|_{L^{\vec p}  }  \le \ell (Q) ^{ \left(\sum_{i=1}^n \frac{1}{ p_i}   \right)  -  \left(\sum_{i=1}^n \frac{1}{ s_i}   \right) } \| f\chi_Q \|_{L^{\vec s}  } .
\end{equation*}
Put this estimate into the left side of (\ref{s p embed}) and we obtain  the desired result (\ref{s p embed}).
\end{proof}

We also have the dilation property for $ M_{\vec p} ^{t,r}  $. 
\begin{lemma}
	Let $0 < \vec p \le \infty$. Let $0 < n / ( \sum_{i=1}^n  1/p_{i})    \le t < r \le \infty $. Then for $f \in M_{\vec p} ^{t,r}  $,  and $ a >0 $, 
	\begin{equation*}
		\| f(a \cdot )  \|_{ M_{\vec p} ^{t,r}   }  \approx a^{- n/t} \| f  \|_{ M_{\vec p} ^{t,r}   } .
	\end{equation*}
\end{lemma}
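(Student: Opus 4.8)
The plan is to compute the effect of the dilation directly on the defining sum, reducing everything to the known dilation behavior of the mixed Lebesgue norm on cubes. First I would recall that for $f_a := f(a\cdot)$ and any dyadic cube $Q \in \D$ we have $f_a \chi_Q = (f \chi_{aQ})(a \cdot)$, where $aQ$ denotes the cube $\{a x : x \in Q\}$; this $aQ$ is itself a cube of side length $a\ell(Q)$, though in general not dyadic. Using the one-variable scaling of mixed Lebesgue spaces, namely $\|g(a\cdot)\|_{L^{\vec p}} = a^{-\sum_{i=1}^n 1/p_i}\|g\|_{L^{\vec p}}$ (which follows by iterating the scalar change of variables in each coordinate), we get $\|f_a \chi_Q\|_{L^{\vec p}} = a^{-\sum_{i=1}^n 1/p_i}\|f\chi_{aQ}\|_{L^{\vec p}}$. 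Since $|Q| = a^{-n}|aQ|$, plugging these into the definition yields
\begin{equation*}
	\| f_a \|_{ M_{\vec p} ^{t,r} }^r = \sum_{Q \in \D} \bigl(a^{-n}|aQ|\bigr)^{ \frac{r}{t} - \frac{r}{n} \sum_{i=1}^n \frac{1}{p_i}} \, a^{-r\sum_{i=1}^n \frac{1}{p_i}} \, \|f\chi_{aQ}\|_{L^{\vec p}}^r = a^{-nr/t} \sum_{Q \in \D} |aQ|^{ \frac{r}{t} - \frac{r}{n}\sum_{i=1}^n \frac{1}{p_i}} \|f\chi_{aQ}\|_{L^{\vec p}}^r,
\end{equation*}
so that the exponent of $a$ in front is exactly $-nr/t$, matching the claimed $a^{-n/t}$ after taking $r$-th roots. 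The same computation with the supremum replacing the $\ell^r$ sum handles the case $r=\infty$.

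The remaining point — and the one genuine obstacle — is that the family $\{aQ : Q \in \D\}$ is not the dyadic family $\D$ itself, so the sum $\sum_{Q\in\D}|aQ|^{\theta}\|f\chi_{aQ}\|_{L^{\vec p}}^r$ is not literally $\|f\|_{M_{\vec p}^{t,r}}^r$; this is why the statement asserts $\approx$ rather than $=$. To control it I would compare $\{aQ\}$ with $\D$ by a standard covering argument: each cube $aQ$ with $\ell(aQ) = a 2^{-j}$ is contained in a bounded number (depending only on $n$) of dyadic cubes of comparable side length — concretely, pick the integer $k$ with $2^{-k}\le a2^{-j} < 2^{-k+1}$, and then $aQ$ meets at most $3^n$ dyadic cubes in $\D_k$ — and conversely every dyadic cube of scale $2^{-k}$ is covered by a bounded number of dilated cubes $aQ$, $Q\in\D_j$. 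Combining this with the lattice property (noted in the Remark after the definition) and the elementary inequality $\|f\chi_{E_1\cup E_2}\|_{L^{\vec p}}^r \lesssim_r \|f\chi_{E_1}\|_{L^{\vec p}}^r + \|f\chi_{E_2}\|_{L^{\vec p}}^r$ (valid with constant $1$ when $r\ge 1$ and with a $2$-power constant for $r<1$, in either case harmless for $\approx$), one gets two-sided bounds
\begin{equation*}
	\sum_{Q\in\D}|aQ|^{\theta}\|f\chi_{aQ}\|_{L^{\vec p}}^r \approx \sum_{Q\in\D}|Q|^{\theta}\|f\chi_{Q}\|_{L^{\vec p}}^r = \|f\|_{M_{\vec p}^{t,r}}^r,
\end{equation*}
with implied constants depending only on $n$ and $r$ (and not on $a$, since shifting $j\mapsto j+$const reindexes $\D$).

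Putting the two displays together gives $\|f(a\cdot)\|_{M_{\vec p}^{t,r}}^r \approx a^{-nr/t}\|f\|_{M_{\vec p}^{t,r}}^r$, i.e. the claim. I would organize the write-up as: (1) the scaling identity for $\|f_a\chi_Q\|_{L^{\vec p}}$ and the rewriting of the sum; (2) the geometric comparison lemma between $\{aQ\}_{Q\in\D}$ and $\D$; (3) assembling the estimate. The only slightly delicate bookkeeping is making sure the comparison constants in step (2) are independent of $a$, which is clear once one reduces, by the index shift, to the single regime $1 \le a < 2$; for general $a>0$ write $a = 2^{-N}a_0$ with $1\le a_0<2$ and absorb the $2^{-N}$ scaling into a relabeling of $\D$.
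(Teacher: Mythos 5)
Your argument is correct and is the standard one implicit in the paper's citations (the scalar Bourgain--Morrey case in HNSH23, Lemma 2.4, and the mixed Morrey $r=\infty$ case in N19, eq.\ (6)): rescale the mixed Lebesgue norm on dyadic cubes, note that $\{aQ\}_{Q\in\D}$ is a comparable but non-dyadic tiling, and close the gap by a bounded-overlap covering after the index-shift reduction to $1\le a<2$; the exponent computation $a^{-nr/t}$ is correct, and the uniformity in $a$ is handled properly. One small inversion in your bookkeeping: the subadditivity $(\alpha+\beta)^r\le\alpha^r+\beta^r$ that gives constant $1$ in $\|f\chi_{E_1\cup E_2}\|_{L^{\vec p}}^r\lesssim\sum_i\|f\chi_{E_i}\|_{L^{\vec p}}^r$ is the case $r<1$, not $r\ge1$ (for $r\ge1$ one picks up $2^{r-1}$, and for $\min\vec p<1$ a further $\vec p$-dependent constant enters via the quasi-triangle inequality), but as you note none of this affects the equivalence since every constant is independent of $a$.
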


\begin{proof}
	The proof is simple and we omit it here. We refer the reader to \cite[(6)]{N19} for the proof of $ M_{\vec p} ^{t,\infty}  $  and to \cite[Lemma 2.4]{HNSH23} for the proof of $ M_{ p} ^{t,r}  $.
\end{proof}

Next, consider the translation invariance of  $ M_{\vec p} ^{t,r}  $.
\begin{lemma}
	Let  $0< \vec p \le \infty  $. Let $0 < n / ( \sum_{i=1}^n  1/p_{i})    \le t < r \le \infty $. Then there exists $C_{n, \vec p, r} >0 $  such that for all $  y \in \rn $  and $f \in  M_{\vec p} ^{t,r}  $,  we have
	\begin{equation*}
		\| f( \cdot -y) \|_{ M_{\vec p} ^{t,r}   }  \le  C_{n, \vec p, r}   \| f \|_{ M_{\vec p} ^{t,r}   } .
	\end{equation*}
\end{lemma}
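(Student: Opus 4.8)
The plan is to reduce everything to a combinatorial statement about how a single dyadic cube $Q$ interacts with its translate $Q - y$, exactly as in the scalar Bourgain--Morrey case treated in \cite{HNSH23}. The key point is that translation does not preserve the dyadic grid, so we cannot simply reindex the sum; instead we must estimate $\|f(\cdot - y)\chi_Q\|_{L^{\vec p}} = \|f \chi_{Q - y}\|_{L^{\vec p}}$ (by the translation invariance of $L^{\vec p}$, which follows from its definition as an iterated Lebesgue norm) by covering the shifted cube $Q - y$ by a controlled number of dyadic cubes of the same generation. First I would fix $Q \in \mathcal D_j$, so $\ell(Q) = 2^{-j}$, and observe that $Q - y$ is contained in the union of at most $2^n$ dyadic cubes in $\mathcal D_j$; call this family $\mathcal N(Q)$. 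Then by the lattice property of $L^{\vec p}$ together with the triangle/quasi-triangle inequality (raising to a small power if $\min_i p_i < 1$ to linearize), one gets $\|f\chi_{Q-y}\|_{L^{\vec p}}^{\,\theta} \le \sum_{Q' \in \mathcal N(Q)} \|f \chi_{Q'}\|_{L^{\vec p}}^{\,\theta}$ with $\theta = \min(1, \min_i p_i, r)$, or more simply, since $\#\mathcal N(Q) \le 2^n$ is bounded, one has $\|f\chi_{Q-y}\|_{L^{\vec p}}^{r} \lesssim_{n,\vec p,r} \sum_{Q' \in \mathcal N(Q)} \|f\chi_{Q'}\|_{L^{\vec p}}^{r}$ with the implicit constant depending only on $n$, $\vec p$, $r$.

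Next I would raise to the $r$-th power in the definition of $\|f(\cdot-y)\|_{M_{\vec p}^{t,r}}$, insert the above cube-by-cube bound, and note that the exponent $|Q|^{\frac{r}{t} - \frac{r}{n}\sum_i 1/p_i}$ depends only on the generation $j$, hence is the same for $Q$ and for every $Q' \in \mathcal N(Q)$. This gives
\begin{equation*}
\|f(\cdot-y)\|_{M_{\vec p}^{t,r}}^r = \sum_{Q \in \mathcal D} |Q|^{\frac{r}{t} - \frac{r}{n}\sum_i 1/p_i} \|f\chi_{Q-y}\|_{L^{\vec p}}^r \lesssim \sum_{Q \in \mathcal D} \sum_{Q' \in \mathcal N(Q)} |Q'|^{\frac{r}{t} - \frac{r}{n}\sum_i 1/p_i} \|f\chi_{Q'}\|_{L^{\vec p}}^r .
\end{equation*}
Finally I would interchange the order of summation and use that each dyadic cube $Q'$ arises as a member of $\mathcal N(Q)$ for at most $2^n$ cubes $Q$ (the map $Q \mapsto \mathcal N(Q)$ has uniformly bounded fibers), so the double sum is bounded by $2^n \sum_{Q' \in \mathcal D} |Q'|^{\frac{r}{t} - \frac{r}{n}\sum_i 1/p_i}\|f\chi_{Q'}\|_{L^{\vec p}}^r = 2^n \|f\|_{M_{\vec p}^{t,r}}^r$. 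Taking $r$-th roots yields the claim with $C_{n,\vec p,r}$ absorbing all constants; the case $r = \infty$ is already covered by the mixed Morrey result and follows by the same covering argument with the sum replaced by a supremum.

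The main obstacle — really the only nontrivial point — is verifying the ``uniformly bounded fibers'' claim, i.e.\ that for a fixed translation $y$, each dyadic cube $Q'$ of generation $j$ can belong to $\mathcal N(Q)$ for only boundedly many $Q \in \mathcal D_j$. This is geometrically clear: if $Q' \cap (Q - y) \neq \emptyset$ then $Q$ lies within distance $\sqrt n\, \ell(Q) + |y|$ of $Q' + y$, but one must be slightly careful because for small generations ($2^{-j} \ll |y|$) the translate $Q-y$ is far from $Q$ and the relevant bound is that $Q - y$ still meets only $\le 2^n$ dyadic cubes of its own generation regardless of how large $|y|$ is — this is the familiar fact that an axis-parallel cube of side $\ell$ meets at most $2^n$ grid cubes of side $\ell$. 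Once this counting is pinned down, the rest is bookkeeping, and the constant is genuinely independent of $y$ (in fact $C_{n,\vec p,r} = 2^{n/r}$ works when $r < \infty$, up to the quasi-norm constant of $L^{\vec p}$).
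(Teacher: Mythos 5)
Your proof is correct, and it is exactly the argument the paper intends: the paper states only ``the proof is similar to [HNSH23, Lemma~2.5],'' and the scalar proof there proceeds by the same covering-and-recounting scheme you describe. All the ingredients you invoke are valid: translation invariance of $\|\cdot\|_{L^{\vec p}}$ and the identity $f(\cdot-y)\chi_Q = (f\chi_{Q-y})(\cdot-y)$, the fact that any axis-parallel cube of side $\ell$ meets at most $2^n$ dyadic cubes of side $\ell$ (regardless of how large $|y|$ is), and the symmetric counting (apply the same $2^n$ bound to $Q'+y$) that gives uniformly bounded fibers for $Q \mapsto \mathcal N(Q)$. Your remark about raising to a power $\theta = \min(1,\min_i p_i, r)$ to linearize handles the quasi-norm case $\min_i p_i < 1$ or $r<1$; if you write this up formally you should spell out that this uses disjointness of the $Q'$ together with the lattice property, rather than a bare quasi-triangle inequality, but that is a routine point and your ``up to the quasi-norm constant'' caveat correctly flags it.
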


\begin{proof}
	The proof is similar to \cite[Lemma 2.5]{HNSH23} and we omit it here.
\end{proof}

Using this property and the triangle inequality for $M_{\vec p} ^{t,r}  $, we obtain the following convolution inequality.
\begin{corollary} \label{convolution}
	Let $1 \le \vec p \le \infty $. Let $0 < n / ( \sum_{i=1}^n  1/p_{i})     \le t < r \le \infty $. Then  there exists $C_{n, \vec p, r} >0 $  such that
	\begin{equation*}
		\| g*f \|_{ M_{\vec p} ^{t,r}   }  \le C_{n, \vec p, r}  \|g\|_{L^1}	\| f \|_{ M_{\vec p} ^{t,r}   }  
	\end{equation*}
for all  $f \in M_{\vec p} ^{t,r}    $  and $g \in L^1$.
\end{corollary}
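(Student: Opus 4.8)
The plan is to reduce the convolution estimate to the translation invariance lemma stated just above, combined with the (quasi-)triangle inequality in $M_{\vec p}^{t,r}$ and an approximation of the integral $g*f$ by Riemann-type sums. First I would fix $f \in M_{\vec p}^{t,r}$ and $g \in L^1$, and assume $g \ge 0$ by replacing $g$ with $|g|$, which only increases the right-hand side; since $1 \le \vec p \le \infty$ and $t \le r$, the functional $\|\cdot\|_{M_{\vec p}^{t,r}}$ is a genuine norm (it is the $\ell^r$-combination of the norms $\|f\chi_Q\|_{L^{\vec p}}$ against fixed weights $|Q|^{r/t - (r/n)\sum 1/p_i}$), so the triangle inequality is available without a constant in the sum over $Q$. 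Writing
\begin{equation*}
	(g*f)(x) = \int_{\rn} g(y)\, f(x-y)\, \d y,
\end{equation*}
I would view this as a vector-valued (Bochner) integral of the map $y \mapsto g(y)\, f(\cdot - y)$ taking values in $M_{\vec p}^{t,r}$.

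The key steps, in order, are: (1) verify measurability and integrability of $y \mapsto g(y) f(\cdot-y)$ as an $M_{\vec p}^{t,r}$-valued function — here the translation lemma gives $\|g(y) f(\cdot-y)\|_{M_{\vec p}^{t,r}} = |g(y)|\,\|f(\cdot-y)\|_{M_{\vec p}^{t,r}} \le C_{n,\vec p,r}\,|g(y)|\,\|f\|_{M_{\vec p}^{t,r}}$, so the scalar integrand is dominated by $C_{n,\vec p,r}\|g\|_{L^1}\|f\|_{M_{\vec p}^{t,r}} < \infty$; (2) apply the abstract inequality $\|\int_{\rn} F(y)\,\d y\|_{M_{\vec p}^{t,r}} \le \int_{\rn} \|F(y)\|_{M_{\vec p}^{t,r}}\,\d y$ valid for Bochner integrals in a Banach space (or, to stay elementary, establish it by hand: approximate the integral by finite sums $\sum_k g(y_k)\,f(\cdot-y_k)\,|I_k|$ over a partition, apply the triangle inequality and translation invariance termwise, and pass to the limit using Lemma \ref{lem fatou} / the monotone convergence remark for $M_{\vec p}^{t,r}$ together with dominated convergence on the scalar side); (3) combine (1) and (2) to get
\begin{equation*}
	\|g*f\|_{M_{\vec p}^{t,r}} \le \int_{\rn} \|g(y) f(\cdot-y)\|_{M_{\vec p}^{t,r}}\,\d y \le C_{n,\vec p,r}\,\|g\|_{L^1}\,\|f\|_{M_{\vec p}^{t,r}}.
\end{equation*}

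The main obstacle I anticipate is the rigorous justification of step (2): passing the norm inside the integral requires either citing a Bochner-integral fact (which needs $M_{\vec p}^{t,r}$ to be Banach — guaranteed here by the completeness remark in the case $t \le r < \infty$, and directly for $r = \infty$ where it is the mixed Morrey space) or carefully controlling the convergence of the approximating sums in the $M_{\vec p}^{t,r}$-norm. The cleanest route is probably to first prove the inequality for $g$ a nonnegative simple function supported on a bounded set (where $g*f$ is literally a finite linear combination of translates of $f$, and the triangle inequality plus the translation lemma apply verbatim), then extend to general $g \in L^1$ by taking $0 \le g_k \uparrow |g|$ and invoking the monotone convergence property $\|g_k * f\|_{M_{\vec p}^{t,r}} \uparrow \|\,|g|*f\|_{M_{\vec p}^{t,r}}$ recorded in the remark after the definition, noting $|g_k*f| \uparrow |g|*f$ pointwise a.e.\ when $f \ge 0$ (and handling general $f$ by splitting into positive and negative, real and imaginary parts, which costs only a harmless constant). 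A brief remark that this is the standard argument — as in the reference \cite{HNSH23} for the unmixed case — would suffice in place of grinding through the details.
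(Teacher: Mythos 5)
Your proposal is correct and takes essentially the same route as the paper, which simply remarks that the corollary follows from the translation-invariance lemma together with the triangle inequality for $\|\cdot\|_{M_{\vec p}^{t,r}}$ (a Minkowski integral inequality argument, exactly as you spell out). Your expanded justification via Bochner integration or simple-function approximation is the standard way to make that one-line remark rigorous, and the hypotheses $1\le\vec p\le\infty$ and $t<r\le\infty$ with $t\ge n/(\sum 1/p_i)\ge 1$ do guarantee $r>1$, so $M_{\vec p}^{t,r}$ is a genuine Banach space as your argument requires.
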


Before proving Young's inequality for mixed Bourgain-Morrey spaces, we first recall Young's inequality for mixed Lebesgue spaces.

\begin{lemma} [p 319, Theorem 1, \cite{BP61}]  \label{young mixed Lp}
	Let $1 \le \vec p, \vec q, \vec s \le\infty $ satisfy the relation $1/ \vec p + 1/ \vec q = 1 + 1/\vec s $. If $f \in L^{\vec p}, g \in L^{\vec q}$, then  $ f* g \in L^{ \vec s} $  and 
	\begin{equation*}
		\| f*g \|_{ L^{ \vec s}  }  \le \| f \|_{ L^{\vec p} }   \|g\|_{ L^{\vec q} }.
	\end{equation*}
\end{lemma}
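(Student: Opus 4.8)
The plan is to prove the inequality by induction on the dimension $n$, peeling off the last (outermost) coordinate at each step and reducing everything to the classical one-dimensional Young inequality together with Minkowski's integral inequality for mixed-norm Lebesgue spaces. For $n=1$ there is nothing new: the hypothesis $1/p_1+1/q_1=1+1/s_1$ with $1\le p_1,q_1,s_1\le\infty$ is precisely the scalar Young condition, and the conclusion is the classical one-variable Young inequality on $\R$.

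For the inductive step, assume the statement in dimension $n-1$. Write a point of $\R^n$ as $x=(x'',x_n)$ with $x''\in\R^{n-1}$, and set $\vec a:=(p_1,\dots,p_{n-1})$, $\vec b:=(q_1,\dots,q_{n-1})$, $\vec c:=(s_1,\dots,s_{n-1})$, so that $1/\vec a+1/\vec b=1+1/\vec c$ in the first $n-1$ coordinates, while $1/p_n+1/q_n=1+1/s_n$. By Fubini's theorem,
$$
(f*g)(x'',x_n)=\int_{\R}\bigl(f(\cdot,y_n)*g(\cdot,x_n-y_n)\bigr)(x'')\,\d y_n,
$$
where the inner convolution is taken on $\R^{n-1}$. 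Since the relation forces $s_i\ge 1$ for every $i$, the space $L^{\vec c}(\R^{n-1})$ is a Banach function space and Minkowski's integral inequality holds there (it follows by iterating the one-dimensional Minkowski integral inequality coordinate by coordinate); hence, for a.e.\ $x_n$,
$$
\bigl\|(f*g)(\cdot,x_n)\bigr\|_{L^{\vec c}(\R^{n-1})}\le\int_{\R}\bigl\|f(\cdot,y_n)*g(\cdot,x_n-y_n)\bigr\|_{L^{\vec c}(\R^{n-1})}\,\d y_n.
$$
Applying the inductive hypothesis to the $(n-1)$-dimensional convolution with exponents $\vec a,\vec b,\vec c$ gives
$$
\bigl\|f(\cdot,y_n)*g(\cdot,x_n-y_n)\bigr\|_{L^{\vec c}(\R^{n-1})}\le\bigl\|f(\cdot,y_n)\bigr\|_{L^{\vec a}(\R^{n-1})}\,\bigl\|g(\cdot,x_n-y_n)\bigr\|_{L^{\vec b}(\R^{n-1})},
$$
and, writing $\Phi(y_n):=\|f(\cdot,y_n)\|_{L^{\vec a}(\R^{n-1})}$ and $\Psi(z):=\|g(\cdot,z)\|_{L^{\vec b}(\R^{n-1})}$, the two displays combine to $\|(f*g)(\cdot,x_n)\|_{L^{\vec c}(\R^{n-1})}\le(\Phi*\Psi)(x_n)$ for a.e.\ $x_n$.

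It then remains to take the $L^{s_n}$ norm in $x_n$ and invoke the scalar Young inequality in this single variable with $1/p_n+1/q_n=1+1/s_n$:
\begin{align*}
\|f*g\|_{L^{\vec s}(\R^n)}
&=\Bigl\|\,\bigl\|(f*g)(\cdot,x_n)\bigr\|_{L^{\vec c}(\R^{n-1})}\,\Bigr\|_{L^{s_n}(\R)}
\le\|\Phi*\Psi\|_{L^{s_n}(\R)}\\
&\le\|\Phi\|_{L^{p_n}(\R)}\,\|\Psi\|_{L^{q_n}(\R)}
=\|f\|_{L^{\vec p}(\R^n)}\,\|g\|_{L^{\vec q}(\R^n)},
\end{align*}
where the first and last equalities are just the definition of the mixed norm (the outermost integration being in $x_n$). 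This closes the induction. The only points that need care are the endpoint cases $p_i=\infty$, $q_i=\infty$ or $s_i=\infty$ (with the usual conventions for $L^\infty$ and for the Young relation); the measurability of $\Phi$ and $\Psi$, which follows from Fubini's theorem, and the fact that $f\in L^{\vec p}$ forces $\Phi\in L^{p_n}$ (so $\Phi<\infty$ a.e., and likewise $\Psi$), which is what makes the inductive hypothesis applicable slice by slice; and the verification of Minkowski's integral inequality for the mixed norm $L^{\vec c}$. I expect the endpoint book-keeping to be the only mildly delicate part, since no idea beyond the classical one-variable Young inequality is used; and this lemma is exactly the ingredient needed to pass to Young's inequality on the mixed Bourgain--Morrey spaces.
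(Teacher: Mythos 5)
The paper does not prove this lemma; it simply cites it from Benedek and Panzone \cite{BP61}, so there is no in-paper argument to compare against. Your proof is correct, and it is the standard one (and essentially the same as the Benedek--Panzone argument): induct on $n$ by peeling off the outermost variable, use Minkowski's integral inequality for the $L^{\vec c}(\R^{n-1})$-valued integral (valid since the constraint $1\le\vec s$ forces $\vec c\ge 1$ componentwise), invoke the inductive hypothesis fiberwise, and finish with the scalar Young inequality in the last variable. The two points you flag --- measurability of $\Phi,\Psi$ via Fubini, and $\Phi\in L^{p_n}$ (hence finite a.e.) exactly because $\|f\|_{L^{\vec p}(\R^n)}=\|\Phi\|_{L^{p_n}(\R)}$ --- are precisely the details needed to make the slice-by-slice application of the inductive hypothesis legitimate, and you have identified them correctly.
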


\begin{theorem}[Young's inequality]  
	Let parameters $ \vec p , \vec q, \vec s ,  t,t_0 ,t_1 , r , r_0 ,r_1 $ satisfy the following relations:
	
	{\rm (i)} $1 \le \vec p, \vec q, \vec s \le\infty $ and  $1 + 1/\vec p  =  1/ \vec s + 1/ \vec q  $;
	
	{\rm  (ii)}  $ n / ( \sum_{i=1}^n  1/p_{i})    \le t < r \le \infty $,    $ n / (\sum_{i=1}^n 1/  s_i    )  \le t_0 < r_0 < \infty $,    $n / (\sum_{i=1}^n 1/  q_i    )    \le t_1 < r_1 < \infty $;   
	
	{\rm  (iii)}  $1/t + 1 =1/t_0  +1/ t_1  $ and $1/r + 1 =1/ r_0 + 1/ r_1 $.
	
	Then for all $f \in  M_{\vec s} ^{t_0,r_0} $  and $g \in M_{\vec q} ^{t_1,r_1}   $,
	\begin{equation*}
		\| f*g\|_{  M_{\vec p} ^{t,r}  }  \lesssim \|f\|_{ M_{\vec s} ^{t_0,r_0}    } \|g\|_{ M_{\vec q} ^{t_1,r_1}    } .
	\end{equation*}
\end{theorem}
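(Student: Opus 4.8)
The plan is to reduce the estimate to the dyadic level and then apply the mixed-Lebesgue Young inequality (Lemma~\ref{young mixed Lp}) cube-by-cube, combining the resulting sums over dyadic cubes via the discrete Young inequality for $\ell^r$-spaces. First I would decompose $f = \sum_{P \in \D} f\chi_P$ and $g = \sum_{R \in \D} g\chi_R$ using the partition of $\rn$ into dyadic cubes of a fixed generation, say cubes of side length $1$; more precisely, for each $k\in\mathbb Z$ write $f = \sum_{P \in \D_k} f\chi_P$. For a fixed dyadic cube $Q \in \D_k$, the convolution $f*g$ restricted to $Q$ only ``sees'' translates of $f\chi_P$ and $g\chi_R$ with $P,R$ in a bounded number of neighboring cubes, so that $\|(f*g)\chi_Q\|_{L^{\vec p}} \lesssim \sum \|f\chi_{P}\|_{L^{\vec s}} \|g\chi_{R}\|_{L^{\vec q}}$ where the sum runs over $P$ near $Q$ and $R$ near $Q-P$; here I use Lemma~\ref{young mixed Lp} with the relation $1 + 1/\vec p = 1/\vec s + 1/\vec q$ from hypothesis~(i) together with the fact that $\operatorname{supp}(f\chi_P * g\chi_R) \subset P+R$.

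Next I would insert the scaling weights. Since all cubes in play have comparable side length $2^{-k}$, the volume factors $|Q|^{1/t - (1/n)\sum 1/p_i}$, $|P|^{1/t_0 - (1/n)\sum 1/s_i}$, $|R|^{1/t_1 - (1/n)\sum 1/q_i}$ are each powers of $2^{-k}$, and the arithmetic relations $1/t + 1 = 1/t_0 + 1/t_1$ and $1 + 1/\vec p = 1/\vec s + 1/\vec q$ are exactly what is needed to make the powers of $2^{-k}$ balance. Thus, summing over all generations $k$ and over cubes within each generation, the $r$-th power $\|f*g\|_{M_{\vec p}^{t,r}}^r = \sum_{Q\in\D} |Q|^{r/t - (r/n)\sum 1/p_i}\|(f*g)\chi_Q\|_{L^{\vec p}}^r$ is dominated, after applying the elementary inequality $(\sum a_j)^r \lesssim \sum a_j^r$ when $r\le 1$ or Minkowski/Hölder in $\ell^r$ when $r > 1$, by a convolution-type sum in the two families $\big\{|P|^{1/t_0 - (1/n)\sum 1/s_i}\|f\chi_P\|_{L^{\vec s}}\big\}_{P}$ and $\big\{|R|^{1/t_1 - (1/n)\sum 1/q_i}\|g\chi_R\|_{L^{\vec q}}\big\}_{R}$, indexed by dyadic cubes. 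Applying the discrete Young inequality $\|a * b\|_{\ell^r} \le \|a\|_{\ell^{r_0}}\|b\|_{\ell^{r_1}}$ (valid by hypothesis~(iii), $1/r + 1 = 1/r_0 + 1/r_1$) to these sequences then yields $\|f*g\|_{M_{\vec p}^{t,r}} \lesssim \|f\|_{M_{\vec s}^{t_0,r_0}}\|g\|_{M_{\vec q}^{t_1,r_1}}$, as desired.

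The main obstacle I expect is bookkeeping the ``geometry of convolution on dyadic cubes'': a convolution $f\chi_P * g\chi_R$ is supported on the algebraic sum $P+R$, which is generally not a single dyadic cube but is covered by a bounded number $C_n$ of dyadic cubes of the same generation. One must set up the index correspondence $Q \leftrightarrow (P, R)$ so that, for each $Q$, only $O(1)$ pairs $(P,R)$ (within a fixed generation) contribute, and so that after summing the contribution is genuinely a discrete convolution of the two weighted sequences rather than merely a product of sums. A careful way to handle this is to fix the generation $k$, identify $\D_k$ with $2^{-k}\mathbb Z^n$ via the ``lower-left corner'' map, and observe that $P+R$ for $P\cong 2^{-k}m$, $R\cong 2^{-k}\ell$ lies within $O(1)$ cubes near $2^{-k}(m+\ell)$; the discrete Young inequality on $\ell^r(\mathbb Z^n)$ then applies directly, and a final sum over $k$ (which again collapses correctly because of the exponent relations) completes the argument. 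A secondary technical point is the case distinction $r \le 1$ versus $r > 1$ (and similarly for $r_0, r_1$) when passing from $\|\cdot\|$ to $\|\cdot\|^r$ and back; this only affects the implicit constants.
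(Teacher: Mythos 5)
Your proposal follows essentially the same route as the paper: fix a generation $v$, write $Q_{v,m}$-by-$Q_{v,m}$ estimates via Minkowski and the mixed-Lebesgue Young inequality (Lemma~\ref{young mixed Lp}), note that the support issue $Q_{v,m}-Q_{v,\tilde m}\subset 3Q_{v,m-\tilde m}$ reduces the overlap to boundedly many cubes, and then apply discrete Young $\ell^{r_0}\ast\ell^{r_1}\to\ell^r$ over $m\in\mathbb Z^n$ within the fixed generation. All of this matches the paper's argument, and your identification of the $P+R$ geometry as the main bookkeeping obstacle is correct.

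Where the proposal is too thin is the final step: you say the sum over generations $k$ ``collapses correctly because of the exponent relations.'' That is not what is going on. The relations $1/t+1=1/t_0+1/t_1$ and $1/r+1=1/r_0+1/r_1$ are already fully consumed in the within-generation discrete Young inequality and the balancing of the $2^{-k}$ weights; they do not by themselves resolve the outer sum. After the per-generation step one has, writing
\begin{equation*}
A_v := \Big(\sum_{m}|Q_{v,m}|^{\frac{r_0}{t_0}-\frac{r_0}{n}\sum\frac{1}{s_i}}\|f\chi_{Q_{v,m}}\|_{L^{\vec s}}^{r_0}\Big)^{1/r_0},\quad
B_v := \Big(\sum_{m}|Q_{v,m}|^{\frac{r_1}{t_1}-\frac{r_1}{n}\sum\frac{1}{q_i}}\|g\chi_{Q_{v,m}}\|_{L^{\vec q}}^{r_1}\Big)^{1/r_1},
\end{equation*}
the bound $\|f\ast g\|_{M_{\vec p}^{t,r}}\lesssim\|A_vB_v\|_{\ell^r_v}$, and one cannot apply H\"older directly with exponents $(r_0,r_1)$ because $1/r_0+1/r_1 = 1+1/r\neq 1/r$. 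What is actually needed, and what the paper does, is H\"older in $v$ with the dual pair $(\ell^r,\ell^\infty)$ followed by the sequence-space embeddings $\ell^{r_0}(\mathbb Z)\hookrightarrow\ell^r(\mathbb Z)$ and $\ell^{r_1}(\mathbb Z)\hookrightarrow\ell^\infty(\mathbb Z)$, which are available precisely because $1/r_0+1/r_1>1$ forces $r_0<r$ and $r_1<r\le\infty$. This is a genuine (though repairable) gap: without naming this mechanism, the outer sum does not close, and the hypotheses $r_0,r_1<\infty$ in~(ii) would appear unused.
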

\begin{proof}
	We use the idea from \cite[Theorem 2.7]{HNSH23}. Since the spaces $ M_{\vec s} ^{t_0,r_0}  $ and  $ M_{\vec q} ^{t_1,r_1}   $ are closed when taking the absolute value, we assume $f$ and $g$ are non-negative. This justifies the definition of $f*g$. Fix $v\in \mathbb Z$ and $m \in \mathbb Z^n$.
	By Minkowski's inequality and Young's inequality for the $L^{ \vec p }$-norm (Lemma \ref{young mixed Lp}), we have
	\begin{align*}
		\| (f *g) \chi_{Q_{v , m} } \|_{L^{\vec p}}  & = \left\| \left( \sum_{\tilde m \in \mathbb Z^n} \int_{Q_{v , \tilde m} } f (y) g(\cdot -y) \d y  \right)  \chi_{Q_{v, m} }  \right\|_{L^{\vec p} }  \\
		& \le  \sum_{\tilde m \in \mathbb Z^n} \|  (f \chi_{ Q_{v, \tilde m} })  * (g \chi_{ Q_{v , m} - Q_{v , \tilde m}  })   \|_{ L^{\vec p} }  \\
		& \le  \sum_{\tilde m \in \mathbb Z^n} \|  f \chi_{ Q_{v ,  \tilde m} } \|_{L^{\vec s } } \| g \chi_{ Q_{v , m} - Q_{v  , \tilde m}  }  \|_{ L^{\vec q} }  ,
	\end{align*}
where $Q_{v, m} - Q_{v  , \tilde m}  := \{  x - \tilde x :x \in Q_{v , m} ,  \tilde x \in Q_{v,  \tilde m}  \} $. Since $ Q_{v, m} - Q_{v ,  \tilde m}  \subset 3 Q_{ v, m-\tilde m} $, by the definition of the parameters $ \vec p , \vec q, \vec s ,  t,t_0 ,t_1 , r , r_0 ,r_1 $, we  obtain
\begin{align*}
	& \left( \sum_{ m \in \mathbb Z^n}  | Q_{v, m}|^{ \frac{r}{t} - \frac{r}{n} \sum_{i=1}^n \frac{1}{p_i} } \| (f *g) \chi_{Q_{v, m} } \|_{L^{\vec p}} ^r	\right) ^{1/r}\\
	& \le \left( \sum_{ m \in \mathbb Z^n}  | Q_{v, m}|^{ \frac{r}{t} - \frac{r}{n} \sum_{i=1}^n \frac{1}{p_i} } \left( \sum_{\tilde m \in \mathbb Z^n} \|  f \chi_{ Q_{v,  \tilde m} } \|_{L^{\vec s } } \| g \chi_{  3 Q_{ v, m-\tilde m}  }  \|_{ L^{\vec q} }  \right) ^r	\right) ^{1/r}\\
	& \le \left( \sum_{ m \in \mathbb Z^n}  | Q_{v, m}|^{ \frac{r_0}{t_0} - \frac{r_0 }{n} \sum_{i=1}^n \frac{1}{s_i} }  \|  f \chi_{ Q_{v,  m} } \|_{L^{\vec s } } ^{r_0} \right) ^{1/r_0}  
	\left(  \sum_{ m \in \mathbb Z^n}  | Q_{v, m}|^{ \frac{r_1}{t_1} - \frac{r_1 }{n} \sum_{i=1}^n \frac{1}{q_i} }     \| g \chi_{3 Q_{v, m}  }  \|_{ L^{\vec q} }   ^{r_1}	\right) ^{1/r_1} \\
	& \lesssim  \left( \sum_{ m \in \mathbb Z^n}  | Q_{v, m}|^{ \frac{r_0}{t_0} - \frac{r_0 }{n} \sum_{i=1}^n \frac{1}{s_i} }  \|  f \chi_{ Q_{v , m} } \|_{L^{\vec s } } ^{r_0} \right) ^{1/r_0}  
	\left(  \sum_{ m \in \mathbb Z^n}  | Q_{v, m}|^{ \frac{r_1}{t_1} - \frac{r_1 }{n} \sum_{i=1}^n \frac{1}{q_i} }     \| g \chi_{ Q_{v, m}  }  \|_{ L^{\vec q} }   ^{r_1}	\right) ^{1/r_1} 
\end{align*}
where we used Young's inequality for the $\ell^r$-norm in the second inequality.  
Note that for each $j =1,2 $, $1 <r_j <r <\infty$ and hence $ \ell^{r_j} (\mathbb Z) \hookrightarrow \ell^{r} (\mathbb Z)  \hookrightarrow \ell^{\infty} (\mathbb Z)$. 
Consequently,
\begin{align*}
	\| f*g\|_{  M_{\vec p} ^{t,r}  } 
	& \lesssim \left\|\left( \sum_{ m \in \mathbb Z^n}  | Q_{v, m}|^{ \frac{r_0}{t_0} - \frac{r_0 }{n} \sum_{i=1}^n \frac{1}{s_i} }  \|  f \chi_{ Q_{v,  m} } \|_{L^{\vec s } } ^{r_0} \right) ^{1/r_0}  \right\|_{\ell_v^{r} }  \\
	& \quad \times
	 \left\|	\left(  \sum_{ m \in \mathbb Z^n}  | Q_{v, m}|^{ \frac{r_1}{t_1} - \frac{r_1 }{n} \sum_{i=1}^n \frac{1}{q_i} }     \| g \chi_{ Q_{v, m}  }  \|_{ L^{\vec q} }   ^{r_1}	\right) ^{1/r_1}  \right\|_{\ell_v^{\infty} } \\
	 & \le  \left\|\left( \sum_{ m \in \mathbb Z^n}  | Q_{v, m}|^{ \frac{r_0}{t_0} - \frac{r_0 }{n} \sum_{i=1}^n \frac{1}{s_i} }  \|  f \chi_{ Q_{v , m} } \|_{L^{\vec s } } ^{r_0} \right) ^{1/r_0}  \right\|_{\ell_v^{r_0 } } \\
	 & \quad \times
	 \left\|	\left(  \sum_{ m \in \mathbb Z^n}  | Q_{v, m}|^{ \frac{r_1}{t_1} - \frac{r_1 }{n} \sum_{i=1}^n \frac{1}{q_i} }     \| g \chi_{ Q_{v, m}  }  \|_{ L^{\vec q} }   ^{r_1}	\right) ^{1/r_1}  \right\|_{\ell_v^{r_1} } \\
	 & = \|f\|_{ M_{\vec s} ^{t_0,r_0}    } \|g\|_{ M_{\vec q} ^{t_1,r_1}   },
\end{align*}
as desired.
\end{proof}

\subsection{Nontriviality}
Next we consider the nontriviality of mixed Bourgain-Morrey spaces.
\begin{lemma}[(2),  \cite{N19}] \label{chi Q mixed Lp}
	Let $Q$  be a cube. Then for $0  < \vec p \le \infty$, 
	\begin{equation*}
		\| \chi_Q \|_{ L^ {\vec p} }  =|Q|^{\frac{1}{n}  \sum_{i=1}^n   \frac{1}{p_i}} .
	\end{equation*}
\end{lemma}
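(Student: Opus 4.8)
The plan is to reduce everything to the one-dimensional case by exploiting the product structure of a cube. Write $Q=\prod_{i=1}^{n}I_{i}$ where each $I_{i}\subset\R$ is an interval of length $\ell:=\ell(Q)$, so that the characteristic function factors as $\chi_{Q}(x_{1},\dots,x_{n})=\prod_{i=1}^{n}\chi_{I_{i}}(x_{i})$. The elementary observation that makes the argument go through is that $|\chi_{I_{i}}|^{p}=\chi_{I_{i}}$ for every $p\in(0,\infty)$; this is what allows the iterated integrals in the definition of $\|\cdot\|_{L^{\vec p}}$ to telescope without interference between the coordinates.

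First I would evaluate the innermost integral: $\int_{\R}|\chi_{Q}(x_{1},\dots,x_{n})|^{p_{1}}\,\d x_{1}=\ell\cdot\prod_{i=2}^{n}\chi_{I_{i}}(x_{i})$. Raising this to the power $p_{2}/p_{1}$ and again using that powers of characteristic functions are unchanged, one gets $\ell^{p_{2}/p_{1}}\prod_{i=2}^{n}\chi_{I_{i}}(x_{i})$; integrating in $x_{2}$ multiplies by another factor of $\ell$, and so on. Iterating through all $n$ coordinates and taking the outer root collapses the accumulated powers of $\ell$ to the single exponent $\sum_{i=1}^{n}1/p_{i}$, giving $\|\chi_{Q}\|_{L^{\vec p}}=\ell^{\sum_{i=1}^{n}1/p_{i}}$. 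Equivalently and more cleanly, one may invoke the multiplicativity of the mixed norm on product functions, $\|\,\prod_{i}f_{i}(x_{i})\,\|_{L^{\vec p}}=\prod_{i}\|f_{i}\|_{L^{p_{i}}}$, together with the one-dimensional fact $\|\chi_{I_{i}}\|_{L^{p_{i}}}=|I_{i}|^{1/p_{i}}=\ell^{1/p_{i}}$.

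Finally, since $|Q|=\ell^{n}$, i.e. $\ell=|Q|^{1/n}$, substitution yields $\|\chi_{Q}\|_{L^{\vec p}}=|Q|^{\frac{1}{n}\sum_{i=1}^{n}1/p_{i}}$, as claimed. The coordinates with $p_{j}=\infty$ are covered by the usual modification: $\|\chi_{I_{j}}\|_{L^{\infty}}=1=\ell^{0}=\ell^{1/p_{j}}$ under the convention $1/\infty=0$, so the formula holds verbatim. There is no genuine obstacle here; the only points deserving a word of care are the factorization of a cube into equal-length intervals and the bookkeeping of the exponents of $\ell$ (or, alternatively, a clean reference to the tensor-product identity for mixed Lebesgue norms).
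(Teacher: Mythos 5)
Your proof is correct, and since the paper simply cites this identity from \cite{N19} without reproducing an argument, there is nothing to compare against: your direct computation via the tensor-product structure $\chi_Q=\prod_i\chi_{I_i}$, the idempotence $|\chi_{I_i}|^p=\chi_{I_i}$, and the bookkeeping of powers of $\ell=|Q|^{1/n}$ (including the $p_j=\infty$ convention) is exactly the standard verification one would write out.
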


\begin{example} \label{exa Q0 iff}
	Let $ 0 < \vec p \le \infty $. Let $ 0 <t <\infty $ and $0 < r \le \infty$. Let $Q_0 = [0,1)^n$.
	$\chi_{Q_0 }  \in M_{\vec p} ^{t,r} $  if and only if  $   0 < n / ( \sum_{i=1}^n  1/p_{i})    < t <r <\infty $ or $ 0< n / ( \sum_{i=1}^n  1/p_{i})   \le t < r=\infty  $. 
\end{example}
\begin{proof}

	Case $r<\infty$. We first calculate the norm $\| \chi_{Q_0 } \|_{M_{\vec p} ^{t,r}  }$. By Lemma \ref{chi Q mixed Lp},	we obtain
	\begin{align*}
		& \sum_{v\ge 0} \sum_{ m \in \mathbb Z^n} \left(|Q_{v, m}|^{ 1/t- \frac{1}{n} \sum_{i=1}^n   \frac{1}{p_i}}   \| \chi_{Q_0 }  \chi_{Q_{v, m} }\|_{L^{\vec p}}    \right) ^r \\
		& = \sum_{v\ge 0} \sum_{ m \in \mathbb Z^n, Q_{v, m} \subset Q_0 } \left(|Q_{v, m}|^{ 1/t- \frac{1}{n} \sum_{i=1}^n \frac{1}{ p_i}  }   \|  \chi_{Q_{v, m} }\|_{L^{\vec p}}    \right) ^r \\
		& =  \sum_{v\ge 0} 2^{vn}  2^{-vn  r/t } ,
	\end{align*}
and
\begin{align*}
		& \sum_{v< 0} \sum_{ m \in \mathbb Z^n} \left(|Q_{v, m}|^{ 1/t- \frac{1}{n} \sum_{i=1}^n   \frac{1}{p_i}}   \| \chi_{Q_0 }  \chi_{Q_{v, m} }\|_{L^{\vec p}}    \right) ^r \\
	& = \sum_{v< 0} \left(|Q_{v, m}|^{ 1/t- \frac{1}{n} \sum_{i=1}^n \frac{1}{ p_i}  }   \|  \chi_{Q_{0} }\|_{L^{\vec p}}    \right) ^r \\
	& =  \sum_{v< 0}  \left(   2^{-vn (  1/t- \frac{1}{n} \sum_{i=1}^n \frac{1}{ p_i}   ) }    \right)^r .
\end{align*}
Hence 
\begin{equation*}
	\| \chi_{Q_0 } \|_{M_{\vec p} ^{t,r}  }  = \left( \sum_{v\ge 0} 2^{vn}  2^{-vn  (r/t) }  +   \sum_{v< 0}  \left(   2^{-vn (  1/t- \frac{1}{n} \sum_{i=1}^n \frac{1}{ p_i}   ) }    \right)^r   \right)^{1/r}.
\end{equation*}
The right-hand side is finite if and only if $   0 < n / ( \sum_{i=1}^n  1/p_{i})    < t <r <\infty $.

Case $r  = \infty$. If $v \ge 0$  and $Q_0 \cap Q_{v, m}  \neq \emptyset$, then $  Q_{v, m} \subset Q_0 $ and
\begin{align*}
	|Q_{v, m}|^{ 1/t- \frac{1}{n} \sum_{i=1}^n   \frac{1}{p_i}}   \| \chi_{Q_0 }  \chi_{Q_{v, m} }\|_{L^{\vec p}} 
= |Q_{v, m}|^{ 1/t} = 2^{ -vn /t}.
\end{align*}
If $v < 0$  and $Q_0 \cap Q_{v, m}  \neq \emptyset$, then $  Q_0  \subset Q_{v, m}$ and
\begin{align*}
	|Q_{v, m}|^{ 1/t- \frac{1}{n} \sum_{i=1}^n   \frac{1}{p_i}}   \| \chi_{Q_0 }  \chi_{Q_{v, m} }\|_{L^{\vec p}} 
 = 	|Q_{v, m}|^{ 1/t- \frac{1}{n} \sum_{i=1}^n   \frac{1}{p_i}}  = 2^{ -vn  ( 1/t- \frac{1}{n} \sum_{i=1}^n  \frac{1}{ p_i}  )}.
\end{align*}
Hence
\begin{equation*}
		\| \chi_{Q_0 } \|_{M_{\vec p} ^{t,\infty}  }  = \max \left\{  \sup_{v \ge 0, v \in \mathbb Z}  2^{ -vn /t},\sup_{v< 0, v\in \mathbb Z} 2^{ -vn  ( 1/t- \frac{1}{n} \sum_{i=1}^n  \frac{1}{ p_i}  )}  \right \}. 
\end{equation*}
The right-hand side is finite if and only if $ 0< n / ( \sum_{i=1}^n  1/p_{i})    \le t < r=\infty  $.
\end{proof}

Now we are ready to obtain the nontriviality of mixed Bourgain-Morrey spaces.

\begin{proposition}
	Let $ 0 < \vec p \le \infty $. Let $ 0 <t <\infty $.
	Then    $ M_{\vec p}^{t,r} \neq \{ 0\}$ if and only if $   0 < n / ( \sum_{i=1}^n  1/p_{i})    < t <r <\infty $ or $ 0< n / ( \sum_{i=1}^n  1/p_{i})    \le t < r=\infty  $.
\end{proposition}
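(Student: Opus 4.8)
The plan is to prove both implications from Example \ref{exa Q0 iff}, the lattice property of $M_{\vec p}^{t,r}$, and a Lebesgue‑density argument. Write $t_0 := n/\big(\sum_{i=1}^{n}1/p_i\big)$, so the hypotheses built into the definition of $M_{\vec p}^{t,r}$ read $0 < t_0 \le t \le r \le \infty$. The ``if'' direction is immediate: under either of the two listed sets of conditions, Example \ref{exa Q0 iff} gives $\chi_{Q_0}\in M_{\vec p}^{t,r}$, and $\chi_{Q_0}\ne 0$. For the converse I would take $f$ with $0 < \|f\|_{M_{\vec p}^{t,r}} < \infty$ and, after replacing $f$ by $|f|$ (same norm, by the lattice property), assume $f\ge 0$; if $r = \infty$ the second condition holds automatically, so I may assume $r < \infty$ and must rule out $t = t_0$ and $t = r$.

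The case $t = t_0$ is easy: then the exponent $\tfrac{r}{t}-\tfrac{r}{t_0}$ in the definition is $0$, so $\|f\|_{M_{\vec p}^{t,r}}^{r} = \sum_{Q\in\mathcal D}\|f\chi_Q\|_{L^{\vec p}}^{r}$. Since $\|f\|_{M_{\vec p}^{t,r}}>0$, $f$ is not a.e.\ zero, so there is a dyadic cube $Q_{*}\in\mathcal D_{v_{*}}$ with $a := \|f\chi_{Q_{*}}\|_{L^{\vec p}} > 0$, and for each integer $v\le v_{*}$ the cube $Q^{(v)}\in\mathcal D_{v}$ with $Q_{*}\subset Q^{(v)}$ satisfies $\|f\chi_{Q^{(v)}}\|_{L^{\vec p}}\ge\|f\chi_{Q_{*}}\|_{L^{\vec p}} = a$ by monotonicity of the mixed norm; hence $\|f\|_{M_{\vec p}^{t,r}}^{r}\ge\sum_{v\le v_{*}}a^{r} = \infty$, a contradiction. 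So $t_0 < t$.

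The case $t = r$ is the main point. Here $\|f\|_{M_{\vec p}^{t,t}}^{t} = \sum_{Q\in\mathcal D}|Q|^{1 - t/t_0}\|f\chi_Q\|_{L^{\vec p}}^{t}$, and the idea is that a nonzero $f$ behaves on many small dyadic cubes like a constant multiple of the characteristic function of the cube. I would fix $c > 0$ with $|\{f > c\}| > 0$ and, by the Lebesgue density theorem (plus routine bookkeeping to pass from a small cube to a comparable dyadic one), choose $Q_0\in\mathcal D_{v_0}$ with $|E|\ge\tfrac34|Q_0|$, where $E := \{f > c\}\cap Q_0$. For each $v\ge v_0$, among the $2^{(v-v_0)n}$ cubes $Q\in\mathcal D_v$ inside $Q_0$, those with $|E\cap Q| < \tfrac12|Q|$ each remove more than $\tfrac12|Q|$ from $|Q_0\setminus E|\le\tfrac14|Q_0|$, so there are fewer than $\tfrac12\,2^{(v-v_0)n}$ of them; for each of the remaining (``good'') cubes, $\|f\chi_Q\|_{L^{\vec p}}\ge c\,\|\chi_{E\cap Q}\|_{L^{\vec p}}\ge c\,c_{\vec p}\,|Q|^{1/t_0}$ with $c_{\vec p} > 0$ depending only on $n$ and $\vec p$ (see below), so $|Q|^{1 - t/t_0}\|f\chi_Q\|_{L^{\vec p}}^{t}\ge(c\,c_{\vec p})^{t}|Q|$. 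Summing over the good cubes at level $v$,
\[
\sum_{\substack{Q\in\mathcal D_v\\ Q\subset Q_0}}|Q|^{1 - t/t_0}\|f\chi_Q\|_{L^{\vec p}}^{t}\ \ge\ \tfrac12\,2^{(v-v_0)n}\,(c\,c_{\vec p})^{t}\,2^{-vn}\ =\ \tfrac12(c\,c_{\vec p})^{t}\,2^{-v_0 n},
\]
which is independent of $v$; summing over the infinitely many $v\ge v_0$ forces $\|f\|_{M_{\vec p}^{t,t}} = \infty$, a contradiction. Hence $t < r$, which completes the argument.

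The step I expect to require the most care is the inequality $\|\chi_{S}\|_{L^{\vec p}}\ge c_{\vec p}\,|Q|^{1/t_0}$ for a cube $Q$ and a measurable $S\subset Q$ with $|S|\ge\tfrac12|Q|$, because the mixed norm of a set reflects its shape, not merely its measure. For $1 < \vec p < \infty$ it is immediate from the mixed H\"older inequality (Lemma \ref{Holder mixed}) together with $\|\chi_Q\|_{L^{\vec{p}^{\,\prime}}} = |Q|^{1 - 1/t_0}$ (Lemma \ref{chi Q mixed Lp}): $|S| = \|\chi_S\cdot\chi_Q\|_{L^1}\le\|\chi_S\|_{L^{\vec p}}\|\chi_Q\|_{L^{\vec{p}^{\,\prime}}}$, so $c_{\vec p} = \tfrac12$ works. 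For the whole range $0 < \vec p\le\infty$ I would reduce by dilation to $Q = [0,1]^n$ and estimate the iterated integrals defining $\|\chi_S\|_{L^{\vec p}}$ from below one variable at a time, using Jensen's inequality where a consecutive exponent ratio is $\ge 1$ and the pointwise bound $u^{q}\ge u$ ($0\le u\le 1$, $0 < q\le 1$) where it is $\le 1$; this produces an admissible $c_{\vec p} > 0$, which I would record as a short lemma.
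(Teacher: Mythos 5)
Your proof is correct, and it takes a genuinely different route from the paper's. The paper's ``only if'' argument first raises $f$ to a power $u$ (via $\||f|^u\|_{M_{\vec p}^{t,r}} = \|f\|_{M_{u\vec p}^{ut,ur}}^u$) to reduce to the Banach case $\min\vec p > 1$, then convolves $f$ with $\chi_{[0,1]^n}$ using Corollary~\ref{convolution}, notes that the resulting nonnegative continuous function dominates $\epsilon\chi_{B(x_0,R)}$ for some ball, and then invokes translation and dilation invariance to conclude $\chi_{[0,1]^n}\in M_{\vec p}^{t,r}$, at which point Example~\ref{exa Q0 iff} finishes. You instead argue by contradiction directly at the two endpoints: when $t = n/(\sum_{i}1/p_i)$ the weight $|Q|^{r/t-(r/n)\sum_i 1/p_i}$ is trivial and a monotonicity-over-ancestors argument produces infinitely many terms bounded below by a fixed positive number; when $t=r$ you combine dyadic Lebesgue differentiation with a counting argument over the ``good'' dyadic sub-cubes and the lower bound $\|\chi_S\|_{L^{\vec p}}\ge c_{\vec p}\,|Q|^{(1/n)\sum_i 1/p_i}$ for $S\subset Q$ with $|S|\ge\tfrac12|Q|$. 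This sidesteps both the power-normalization and the convolution step, so it works for all $0<\vec p\le\infty$ without first passing to $\min\vec p>1$, at the price of the auxiliary lower bound on $\|\chi_S\|_{L^{\vec p}}$, which indeed needs the short inductive Jensen-or-pointwise argument you sketch (and that argument does extend to $p_i=\infty$, where the essential-supremum step behaves like the Jensen case). Both proofs are sound; the paper's is shorter because it leans on machinery already developed (the power trick, the convolution corollary, translation and dilation invariance), while yours is more self-contained and elementary and avoids the reduction to the Banach regime.
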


\begin{proof}
	We use the idea from \cite[Theorem 2.10]{HNSH23}.
From Example \ref{exa Q0 iff}, the ``if'' part is clear. Thus let us prove the ``only if'' part. Since 
\begin{equation*}
	\|  |f|^u \|_{   M_{\vec p}^{t,r}  }  =  \|  f \|_{   M_{ u \vec p}^{ ut,ur}  } ^u
\end{equation*}
for all $f \in L^0$, we can assume $\min \vec p >1$. In this case $ M_{\vec p}^{t,r}$ is a Banach space. Let $f \neq 0 \in  M_{\vec p}^{t,r} $. We may suppose $ f \ge 0 $ by replacing $f$ with $|f|$. By  replacing $f$ with $\min(1, |f|) $, we assume $f \in L^\infty$. Thanks to Corollary \ref{convolution}, $\chi_{[0,1]^n  } * f \in  M_{\vec p}^{t,r} \backslash \{0\}$. Since $\chi_{[0,1]^n  } * f$ is a non-negative non-zero continuous function, there exist $x_0 \in \rn, \epsilon >0 $  and $R>0$  such that $\chi_{[0,1]^n  } * f \ge \epsilon \chi_{B(x_0, R)} $. Thus $ \chi_{B(x_0, R) }  \in M_{\vec p}^{t,r} $. Since $ M_{\vec p}^{t,r}$  is invariant under translation and dilation, $ \chi_{[0,1]^n } \in M_{\vec p}^{t,r} $. Thus from Example \ref{exa Q0 iff}, we obtain $   0 < n / ( \sum_{i=1}^n  1/p_{i})     < t <r <\infty $ or $ 0< n / ( \sum_{i=1}^n  1/p_{i})     \le t < r=\infty  $.
\end{proof}

\subsection{Approximation, density and separability}
Here we investigate the approximation property of $ M_{\vec p}^{t,r} $ when $r<\infty$. 

We first recall the definition of filtrations, adaptedness and martingales; for example, see \cite[Definition 3.1.1]{HNVW16}.
\begin{definition}
	Let $(S, \mathcal A, \mu )$ be a measure space and $(I, \le) $ an ordered set.
	
	(i) A family of sub-$\sigma$-algebras $\mathscr F$ of $\mathcal  A$ is called a filtration in $(S, \mathcal A, \mu )$ 
	if $\mathscr F_m \subset \mathscr F_n$ whenever $m, n \in I$ and $m \le n$. The filtration is called $\sigma$-finite if $\mu$ is $\sigma$-finite on each $ \mathscr F_n$.
	
	(ii) A family of functions $(\mathscr F_n )_{n \in I} $ in $L^0(S)$ (the measurable functions on $S$) is adapted to the filtration $(\mathscr F_n )_{n \in I} $ if $f_n \in L^0(S)$ for all $n \in I$.

(iii) A family of functions $(\mathscr F_n )_{n \in I} $ in $L^0(S)$  is called a martingale with
respect to a $\sigma$-finite filtration $(\mathscr F_n )_{n \in I} $ if it is adapted to $(\mathscr F_n )_{n \in I} $, and
for all indices $m\le n$ the function $f_n$ is $\sigma$-integrable over $\mathscr F_m$ and satisfies
\begin{equation*}
	\mathbb E(f_n | \mathscr F_m) = f_m.
\end{equation*}
\end{definition}
The following example comes from \cite[Example 2.6.13, Example 3.1.3]{HNVW16}.
\begin{example}
	Let $(S, \mathcal A, \mu ) = (\rn, \mathcal B (\rn), \d x )$.
	Let $ \mathcal D_j := \{ 2^{-j} ( [0,1)^n +k ) : k\in \mathbb Z^n \}  , j\in \mathbb Z$ be the standard dyadic cubes of side-length $2^{-j} $ as in Section \ref{preliminaries}. Let $\mathscr F_j := \sigma (  \mathcal D_j  )$ its 	generated $\sigma$-algebra. 
 Then $( \mathscr F_j)_{j \in \mathbb Z}$ is a $\sigma$-finite filtration. Every function $f\in L^1_{\operatorname{loc}}$ is $\sigma$-integrable over every $\mathscr F_j$
 and therefore generates a martingale
$(\mathbb E (f| \mathscr F_j)) _{j\in \mathbb Z}$. In this case, a conditional expectation of $f$ with respect to $\mathscr F_j $
 exists and is given by
 \begin{equation*}
 	\sum_{Q \in \D_k}  \frac{1}{|Q|} \int_Q f (y) \d y   \chi_Q (x), \quad x \in \rn.
 \end{equation*}
\end{example}

For each $k \in \mathbb Z$, for a   measurable function $f$ and a cube $Q$, define the  operator $	\mathbb E_k  $  by

\begin{equation} \label{def E_k}
	\mathbb E_k   (f) (x) = \sum_{Q \in \D_k}  \frac{1}{|Q|} \int_Q f (y) \d y   \chi_Q (x), \quad x \in \rn.
\end{equation}

By virtue of the Lebesgue differentiation theorem, for almost every $x\in \rn $, 	$\mathbb E_k   (f)(x)$ converges to $f(x)$ as $k \to \infty $. 
We define 
\begin{equation*}
	\mathbb M f (x)= \sup _{ k \in\mathbb Z} \mathbb E_k   (f) (x) .
\end{equation*}
The following result is the Doob's inequality in mixed Lebesgue spaces.
\begin{lemma}
	[Theorem 2, \cite{SW21}]  \label{doob}
	If $1<\vec p <\infty$ or $\vec p = (\infty, \ldots, \infty ,p_{k+1}, \ldots, p_n)$ with $ 1<,p_{k+1}, \ldots, p_n <\infty $ for some $k \in \{1, \ldots, n \} $. Then the maximal operator $\mathbb M$ is bounded on $L^{\vec p}$, that is, for all $f\in L^{\vec p}$
	\begin{equation*}
	\| \mathbb M f \|_{ L^{\vec p}} \lesssim 	\| f \|_{ L^{\vec p}} .
	\end{equation*}
\end{lemma}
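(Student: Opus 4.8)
The plan is to dominate $\mathbb M$ pointwise by the Hardy--Littlewood maximal operator $\M$ and to invoke its boundedness on mixed Lebesgue spaces, after first stripping off any infinite exponents. Indeed, if $x\in Q$ and $Q$ is a dyadic cube of side length $\ell$, then $|x-y|<\sqrt n\,\ell$ for every $y\in Q$, so $Q\subset B(x,\sqrt n\,\ell)$, a ball of measure comparable to $\ell^{n}=|Q|$; hence $\mathbb M f(x)\le c_{n}\M f(x)$ pointwise. Consequently, for $1<\vec p<\infty$ the desired estimate follows at once from the boundedness of $\M$ on $L^{\vec p}$, which is a special case of Bagby's Fefferman--Stein inequality for mixed Lebesgue spaces \cite{B75}. (A more self-contained route is to induct on $n$: the case $n=1$ is the one-dimensional Doob/Hardy--Littlewood maximal inequality, and factoring a generation-$\nu$ dyadic cube as $P\times I$ with $P\in\mathcal D_{\nu}(\R^{n-1})$ and $I\in\mathcal D_{\nu}(\R)$ yields the pointwise bound $\mathbb M f\le\mathbb M_{n}\bigl(\mathbb M^{(n-1)}f\bigr)$, where $\mathbb M^{(n-1)}$ is the $(n-1)$-dimensional dyadic cube maximal operator and $\mathbb M_{n}$ is the one-dimensional dyadic maximal operator applied fibrewise in $x_{n}$; one then combines the induction hypothesis for $\mathbb M^{(n-1)}$ with a vector-valued maximal inequality for $\mathbb M_{n}$.)

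It remains to treat $\vec p=(\infty,\dots,\infty,p_{k+1},\dots,p_{n})$ with $1<p_{j}<\infty$ for $j>k$. Write $z'=(x_{1},\dots,x_{k})$ and $z''=(x_{k+1},\dots,x_{n})$; factoring a dyadic cube $Q_{\nu}(x)$ of generation $\nu$ as $Q_{\nu}'(z')\times Q_{\nu}''(z'')$ and using Fubini,
\[
\frac{1}{|Q_{\nu}(x)|}\int_{Q_{\nu}(x)}|f|\ =\ \frac{1}{|Q_{\nu}''(z'')|}\int_{Q_{\nu}''(z'')}\!\Bigl(\frac{1}{|Q_{\nu}'(z')|}\int_{Q_{\nu}'(z')}|f(y',y'')|\,\d y'\Bigr)\d y''\ \le\ \mathbb M^{(n-k)}g(z''),
\]
where $g(z''):=\operatorname{ess\,sup}_{z'}|f(z',z'')|$ and $\mathbb M^{(n-k)}$ is the dyadic cube maximal operator in the last $n-k$ variables. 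Taking the supremum over $\nu\in\mathbb Z$, the right-hand side is independent of $z'$, so $\operatorname{ess\,sup}_{z'}\mathbb M f(z',z'')\le\mathbb M^{(n-k)}g(z'')$; since the innermost $k$ layers of $\|\cdot\|_{L^{\vec p}}$ are essential suprema in $z'$, the lattice property gives $\|\mathbb M f\|_{L^{\vec p}}\le\|\mathbb M^{(n-k)}g\|_{L^{(p_{k+1},\dots,p_{n})}}$, and by the case already handled (in dimension $n-k$) together with $\|g\|_{L^{(p_{k+1},\dots,p_{n})}}=\|f\|_{L^{\vec p}}$ we conclude $\|\mathbb M f\|_{L^{\vec p}}\lesssim\|f\|_{L^{\vec p}}$.

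The step I expect to be the real content is the boundedness of the Hardy--Littlewood (equivalently, dyadic) maximal operator on the mixed space $L^{\vec p}$ with $1<\vec p<\infty$: this is a genuinely Fefferman--Stein-type, vector-valued phenomenon that cannot be obtained by treating the variables one at a time, since the one-dimensional maximal operator in a ``middle'' slot is controlled only through a vector-valued maximal inequality (the supremum over scales does not commute with the norms in the remaining variables). This is also why the hypothesis forces the infinite exponents to be the innermost ones: with an infinite exponent in a non-innermost slot the corresponding one-variable estimate already fails, as a moving-bump example shows, and the peeling argument above breaks down.
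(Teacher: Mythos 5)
The paper offers no proof of this lemma at all: it is imported verbatim as Theorem 2 of Szarvas--Weisz \cite{SW21}, where it is established by martingale methods. Your argument is therefore a genuinely different (and, within this paper's toolkit, quite natural) route, and as far as I can check it is correct. The pointwise domination $\mathbb M f\le c_n\,\M f$ is immediate from $Q\subset B(x,\sqrt n\,\ell(Q))$ with $|B(x,\sqrt n\,\ell(Q))|\approx|Q|$, so the case $1<\vec p<\infty$ reduces to the boundedness of the Hardy--Littlewood maximal operator on $L^{\vec p}$, a result the paper itself invokes elsewhere (Bagby \cite{B75}, \cite[Theorem 4.5]{N19}); you are right that this is the genuinely vector-valued, Fefferman--Stein-type ingredient, and your reduction is not circular since that result is proved independently of Doob's inequality. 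Your peeling argument for $\vec p=(\infty,\dots,\infty,p_{k+1},\dots,p_n)$ is also sound: the factorization $Q_{\nu}=Q'_{\nu}\times Q''_{\nu}$ of dyadic cubes gives a bound $\mathbb M f(z',z'')\le\mathbb M^{(n-k)}g(z'')$ that is uniform in $z'$, the identification $\|g\|_{L^{(p_{k+1},\dots,p_n)}}=\|f\|_{L^{\vec p}}$ uses the standard fact that the iterated essential suprema in the innermost $k$ variables coincide with the joint essential supremum, and the case already proved (in dimension $n-k$) finishes the estimate. What the two approaches buy: the citation to \cite{SW21} gives the statement in its natural martingale generality, while your proof keeps everything inside results the paper already relies on and makes the mechanism (domination by $\M$ plus stripping the innermost $L^\infty$ layers) transparent.

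Two small remarks. First, since the paper defines $\mathbb M f=\sup_k\mathbb E_k(f)$ without an absolute value, you should say explicitly that $|\mathbb M f|\le\sup_k\mathbb E_k(|f|)$, which is what your domination actually controls; this is a one-line observation. Second, your closing paragraph about necessity of placing the infinite exponents innermost (the ``moving-bump example'') is not substantiated and is not needed for the lemma, which only asserts sufficiency; I would either verify that example carefully or drop the claim, but its presence does not affect the validity of the proof.
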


For a real-valued measurable function $f$, we set $f_+ := \max (f,0)$ and $f_{-} := \max (-f, 0)$. Then $ \mathbb E_k f = \mathbb E_k f_+  - \mathbb E_k f_-$.

\begin{theorem} \label{E_k^q f to f}
	Let $1 < \vec p <\infty $. 
	Let $1 \le  n / ( \sum_{i=1}^n  1/p_{i})     < t <r <\infty $.
	  Then for a real-valued functions  $f \in M_{\vec p}^{t,r} $, the sequence $  \{\mathbb E_k (f_+) -  \mathbb E_k (f_-)  \} $ converges to $f$ in $ M_{\vec p}^{t,r} $.
\end{theorem}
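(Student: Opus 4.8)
Here is the plan. The idea is to reduce the statement to a dominated convergence argument inside $M_{\vec p}^{t,r}$. Write $\sigma:=\frac1n\sum_{i=1}^n\frac1{p_i}$ and $\langle g\rangle_{Q'}:=\frac1{|Q'|}\int_{Q'}g$, so that $\|h\|_{M_{\vec p}^{t,r}}=\big(\sum_{Q\in\D}|Q|^{r/t-r\sigma}\|h\chi_Q\|_{L^{\vec p}}^r\big)^{1/r}$. Decompose $f=f_+-f_-$; since $f_\pm\le|f|$ the lattice property gives $f_\pm\in M_{\vec p}^{t,r}$, and from (\ref{def E_k}) one has $0\le\mathbb E_k f_\pm\le\mathbb M f_\pm$ pointwise, where $\mathbb M g:=\sup_{k\in\mathbb Z}\mathbb E_k g$. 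Hence
\begin{equation*}
	h_k:=(\mathbb E_k f_+-\mathbb E_k f_-)-f=(\mathbb E_k f_+-f_+)-(\mathbb E_k f_--f_-)
\end{equation*}
satisfies $|h_k|\le G:=\mathbb M f_++\mathbb M f_-+|f|$ for every $k$, and $h_k\to0$ almost everywhere by the pointwise convergence $\mathbb E_k(\cdot)\to(\cdot)$ a.e.\ noted after (\ref{def E_k}) (applicable since $M_{\vec p}^{t,r}\subset L^{\vec p}_{\mathrm{loc}}\subset L^1_{\mathrm{loc}}$, as $1<\vec p<\infty$). So it remains to prove: (a) $\mathbb M$ is bounded on $M_{\vec p}^{t,r}$, whence $G\in M_{\vec p}^{t,r}$; and (b) a dominated convergence principle holds in $M_{\vec p}^{t,r}$.

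Step (b) is soft and is where $r<\infty$ enters. For each $Q\in\D$ we have $h_k\chi_Q\to0$ a.e.\ with $|h_k\chi_Q|\le G\chi_Q\in L^{\vec p}$, hence $\|h_k\chi_Q\|_{L^{\vec p}}\to0$ by the dominated convergence theorem in $L^{\vec p}$ (valid since $\vec p<\infty$; it follows by iterating the classical one). Moreover $|Q|^{r/t-r\sigma}\|h_k\chi_Q\|_{L^{\vec p}}^r\le|Q|^{r/t-r\sigma}\|G\chi_Q\|_{L^{\vec p}}^r$ and $\sum_{Q\in\D}|Q|^{r/t-r\sigma}\|G\chi_Q\|_{L^{\vec p}}^r=\|G\|_{M_{\vec p}^{t,r}}^r<\infty$, so the dominated convergence theorem applied to the series over the countable set $\D$ gives $\|h_k\|_{M_{\vec p}^{t,r}}^r\to0$, which is the assertion. (Equivalently, $M_{\vec p}^{t,r}$ has absolutely continuous norm when $r<\infty$ — the property that fails at the Morrey endpoint $r=\infty$.)

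Step (a) is the technical core. Fix $g\ge0$ in $M_{\vec p}^{t,r}$ and $Q\in\D_j$, and for $x\in Q$ split the supremum defining $\mathbb M g(x)$ into the ranges $k\ge j$ and $k<j$. When $k\ge j$, the level-$k$ dyadic cube through $x$ is contained in $Q$, so $\mathbb E_k g(x)=\mathbb E_k(g\chi_Q)(x)\le\mathbb M(g\chi_Q)(x)$, and Doob's inequality (Lemma \ref{doob}) gives $\|\chi_Q\sup_{k\ge j}\mathbb E_k g\|_{L^{\vec p}}\le\|\mathbb M(g\chi_Q)\|_{L^{\vec p}}\lesssim\|g\chi_Q\|_{L^{\vec p}}$. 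When $k<j$, $\mathbb E_k g$ is constant on $Q$, equal to the average $\langle g\rangle_P$ over the dyadic ancestor $P\in\D_k$ of $Q$; hence on $Q$ one has $\sup_{k<j}\mathbb E_k g\equiv c(Q):=\sup\{\langle g\rangle_{Q'}:Q'\in\D,\ Q'\supsetneq Q\}$, and Hölder's inequality (Lemma \ref{Holder mixed}) together with $\|\chi_{Q'}\|_{L^{\vec{p}\,'}}=|Q'|^{1-\sigma}$ (Lemma \ref{chi Q mixed Lp}) gives $\langle g\rangle_{Q'}\le\|g\chi_{Q'}\|_{L^{\vec p}}|Q'|^{-\sigma}=:a_{Q'}$, so $\|\chi_Q\sup_{k<j}\mathbb E_k g\|_{L^{\vec p}}=c(Q)|Q|^\sigma\le|Q|^\sigma\sup_{Q'\supsetneq Q}a_{Q'}$. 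Combining the two ranges, multiplying by $|Q|^{r/t-r\sigma}$ and summing over $Q\in\D$ yields
\begin{equation*}
	\|\mathbb M g\|_{M_{\vec p}^{t,r}}^r\ \lesssim\ \|g\|_{M_{\vec p}^{t,r}}^r+\sum_{Q\in\D}|Q|^{r/t}\Big(\sup_{Q'\supsetneq Q}a_{Q'}\Big)^r .
\end{equation*}
For the last sum, estimate $\big(\sup_{Q'\supsetneq Q}a_{Q'}\big)^r\le\sum_{i\ge1}a_{Q^{(i)}}^r$, where $Q^{(i)}$ denotes the $i$-th dyadic ancestor of $Q$, interchange the two sums, and use that for fixed $i$ every $P\in\D_l$ has $2^{in}$ descendants $Q\in\D_{l+i}$ while $|Q|^{r/t}=2^{-(l+i)nr/t}$; the sum then collapses to $\big(\sum_{i\ge1}2^{in(1-r/t)}\big)\|g\|_{M_{\vec p}^{t,r}}^r$, and the geometric series converges \emph{precisely because} $r>t$. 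This gives $\|\mathbb M g\|_{M_{\vec p}^{t,r}}\lesssim\|g\|_{M_{\vec p}^{t,r}}$, proving (a).

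I expect (a) — the boundedness of the dyadic maximal operator on $M_{\vec p}^{t,r}$ — to be the only genuine obstacle, and within it the contribution of dyadic cubes \emph{larger} than $Q$, which is exactly what forces the hypothesis $r>t$ and requires the Hölder-plus-ancestor-counting estimate. The remaining ingredients (the splitting $f=f_+-f_-$, the Lebesgue differentiation input, and the dominated-convergence bookkeeping in (b)) are routine. If preferred, (a) could instead be quoted from the maximal inequalities established later in the paper, but the argument above keeps the proof self-contained, using only Doob's inequality and Hölder's inequality.
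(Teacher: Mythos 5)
Your proof is correct, and it follows a genuinely different organization from the paper's. The paper argues directly: for $\epsilon>0$ it picks a finite index window $J$, then writes $\|f-\mathbb E_k f\|_{M_{\vec p}^{t,r}}\le I+II+III+IV$ according to whether the dyadic index $(j,m)$ lies in the window, outside it, or at levels finer than $k$, and estimates each piece separately (Doob for $III$, H\"older plus ancestor-counting with the geometric series $\sum_j 2^{(j-k)n(1-r/t)}$ for $IV$). You instead extract two reusable facts: (a) the martingale maximal operator $\mathbb M$ is bounded on $M_{\vec p}^{t,r}$, proved by splitting the supremum at the scale of the cube $Q$ (Doob for the fine scales, H\"older plus the same ancestor-counting for the coarse scales), and (b) $M_{\vec p}^{t,r}$ with $r<\infty$ satisfies a dominated convergence principle, which you deduce from DCT in $L^{\vec p}$ on each cube plus DCT for the counting measure on $\D$. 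The pointwise domination $|(\mathbb E_k f_+-\mathbb E_k f_-)-f|\le \mathbb M f_++\mathbb M f_-+|f|$ then finishes the proof. The technical kernel is the same in both proofs (Doob, H\"older with $\|\chi_Q\|_{L^{\vec p\,'}}=|Q|^{1-\sigma}$, and the counting of $2^{in}$ descendants forcing $r>t$), but your version is more modular: (a) is of independent interest (the paper only proves the Hardy--Littlewood maximal bound later, in Section~6, so your self-contained proof of (a) here avoids any forward reference), and (b) isolates exactly the role of the hypothesis $r<\infty$ — it is the absolutely-continuous-norm property, which the paper proves separately as Lemma~\ref{lem Absolutely continuous norm} but does not invoke here. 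Your argument also dispenses with the paper's $\epsilon$-bookkeeping and the explicit citation that $\mathbb E_k f\to f$ in $L^p_{\mathrm{loc}}$, replacing both with the single pointwise domination by $G\in M_{\vec p}^{t,r}$.
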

Note that our proof below shows $\mathbb E_k (f_+) - \mathbb E_k (f_-)  $ in $ M_{\vec p}^{t,r} $.
\begin{proof}
	We use the idea from \cite[Theorem 2.20]{HNSH23}.
	Let $f\in M_{\vec p}^{t,r} $ and $\epsilon>0 $ be fixed. 
	Since 
	\begin{equation*}
		\| f - ( \mathbb E_k  (f_+) -  \mathbb E_k (f_-) ) \|_{M_{\vec p}^{t,r}  }  \lesssim \| f_+ -  \mathbb E_k (f_+) \|_{M_{\vec p}^{t,r}  }  +\| f_{-} -   \mathbb E_k (f_-)  \|_{M_{\vec p}^{t,r}  } , 
	\end{equation*}
we need to show that $ \mathbb E_k (f_+)$ converges to $f_+$ and that $ \mathbb E_k (f_-)$ converges to $f_-$ in  $ M_{\vec p}^{t,r} $. Thus, we may assume that $f \in   M_{\vec p}^{t,r} $ is non-negative and we will prove that $ \{ \mathbb E_k (f_+) \}_{k =1}^\infty $  converges back to $f$ in $M_{\vec p}^{t,r} $.
	
	Since $f \in M_{\vec p}^{t,r}$, there exists $J \in \mathbb N$ large enough such that
	\begin{align} \label{J epsilon}
		\nonumber
		&	\left(\sum_{j\in \mathbb Z}  \sum_{m \in \mathbb Z^n}  ( \chi_{\mathbb Z \backslash [-J,J]} (j)  + \chi_{\mathbb Z^n \backslash B_J  }  (m) )  |Q_{j,m}|^{r/t-\frac{r}{n} \sum_{i=1}^n \frac{1}{ p_i}  } 
		\| \chi_{Q_{j,m}  } f \|_{ L^{\vec p}} ^ r
		 \right)^{1/r} \\
		& <\epsilon.
	\end{align}
	Fix $k \in \mathbb N \cap (J,\infty)$. Set
	\begin{align*}
		I & := \left\| \left\{       |Q_{j,m}|^{1/t-\frac{1}{n} \sum_{i=1}^n \frac{1}{ p_i} } 
		\| ( f -\mathbb E_k (f) ) \chi_{Q_{j,m} } \|_{L^{\vec p}  }
	  \right\}_{j \in [-J,J], m\in \mathbb Z^n}  \right\|_{\ell^r} , \\
		II & := \left\| \left\{       |Q_{j,m}|^{1/t-\frac{1}{n} \sum_{i=1}^n \frac{1}{ p_i} } 	\| f \chi_{Q_{j,m} } \|_{L^{\vec p}  } \right\}_{j \in \mathbb Z \backslash [-J,J], m\in \mathbb Z^n}  \right\|_{\ell^r} ,\\
		III & := \left\| \left\{       |Q_{j,m}|^{1/t-\frac{1}{n} \sum_{i=1}^n \frac{1}{ p_i} } \| \mathbb E_k (f)  \chi_{Q_{j,m} } \|_{L^{\vec p}  } \right\}_{j \in ( (-\infty, -J) \cup (J,k]  )\cap \mathbb Z, m\in \mathbb Z^n}  \right\|_{\ell^r}, \\
		IV & := \left\| \left\{       |Q_{j,m}|^{1/t-\frac{1}{n} \sum_{i=1}^n \frac{1}{ p_i} } \| \mathbb E_k (f)  \chi_{Q_{j,m} } \|_{L^{\vec p}  }   \right\}_{j \in  \mathbb Z \backslash (-\infty,k], m\in \mathbb Z^n}  \right\|_{\ell^r}.
	\end{align*}
	Then we decompose
	\begin{equation*}
		\| f-   \mathbb E_k (f) \|_{M_{\vec p}^{t,r}  }  \le I +II+ III+ IV.
	\end{equation*}
	For $I$, using the  Minkowski inequality and (\ref{J epsilon}), we see
	\begin{align*}
		I  \le \left\| \left\{       |Q_{j,m}|^{1/t-\frac{1}{n} \sum_{i=1}^n \frac{1}{ p_i}  } 	\| ( f -\mathbb E_k (f) ) \chi_{Q_{j,m} } \|_{L^{\vec p}  }   \right\}_{j \in [-J,J], m\in \mathbb Z^n \cap B_J}  \right\|_{\ell^r} +\epsilon.
	\end{align*}
Since $	\mathbb E_k   (f) $ converges back to $f$ in $L^{p}_{\operatorname{loc}}$ for $ p \in [1,\infty)$ (for example, sees \cite[Theorem 3.3.2]{HNVW16}) and $ L_{\operatorname{loc}} ^{\max \vec p}  \subset  L_{\operatorname{loc}} ^{\vec p}  \subset L_{\operatorname{loc}} ^{1} $, we have  $	\mathbb E_k   (f) $ converges back to $f$ in $L_{\operatorname{loc}} ^{\vec p}$.
Hence

	\begin{equation*}
		I  < 2\epsilon
	\end{equation*} 
	as long as $k$ large enough.
	
	We move on $II$.  Simply use (\ref{J epsilon}) to conclude $II< \epsilon$.
	
	Next, we estimate $III$. By Lemma \ref{doob},	
	\begin{align*}
		& \| \mathbb E_k (f)  \chi_{Q_{j,m} } \|_{L^{\vec p}  } = \| \mathbb E_k (f \chi_{Q_{j,m} } )  \chi_{Q_{j,m} } \|_{L^{\vec p}  }  \le  \| \mathbb M (f \chi_{Q_{j,m} } )  \chi_{Q_{j,m} } \|_{L^{\vec p}  } \lesssim  \| f \chi_{Q_{j,m} }  \|_{L^{\vec p}  } .
	\end{align*}
	Taking the $\ell^r$-norm over $j \in ( (-\infty, -J) \cup (J,k]  )\cap \mathbb Z, m\in \mathbb Z^n $, we obtain
	\begin{align*}
		III \le \left(   \sum_{ j \in ( (-\infty, -J) \cup (J,k]  )\cap \mathbb Z}   \sum_{ m\in \mathbb Z^n }  |Q_{j,m}|^{r/t-\frac{r}{n} \sum_{i=1}^n \frac{1}{ p_i} } \| f  \chi_{Q_{j,m} } \|_{L^{\vec p}  } ^r \right)^{1/r} <\epsilon.
	\end{align*}
	Finally, we estimate $IV$. Let $j \in \mathbb Z \backslash (-\infty,k]$. Since $Q_{j,m} \subset Q$ for $Q\in \D_k$  as long as $Q_{j,m} \cap Q \neq\emptyset$, using H\"older's inequality with $\vec p / q >  1$, we have
	\begin{align*}
		| IV |^r & \le  \sum_{j=k+1}^\infty \sum_{m \in \mathbb Z^n}  |Q_{j,m}|^{r/t-\frac{r}{n} \sum_{i=1}^n \frac{1}{ p_i} }  \| \mathbb E_k (f)  \chi_{Q_{j,m} } \|_{L^{\vec p}  } ^r  \\
		& = \sum_{j=k+1}^\infty  \sum_{ Q \in \D _k} \sum_{m \in \mathbb Z^n,   Q_{j,m} \subset Q}  |Q_{j,m}|^{r/t} \left( \frac{1}{|Q|} \int_Q f  (z)  \d z \right)  ^{r}  \\
		& \le \sum_{j=k+1}^\infty   \sum_{ Q \in \D _k} \sum_{m \in \mathbb Z^n,   Q_{j,m} \subset Q}  |Q_{j,m}|^{r/t   } |Q|^{ - \frac{r}{n} \sum_{i=1}^n \frac{1}{ p_i}   } \| f\chi_Q \|_{ L^{\vec p} } ^r .
	\end{align*}

	Note that $   |Q_{j,m}| = 2^{ -(j-k)n } |Q|$. Since $t<r<\infty$, 
	we obtain
	\begin{align*}
		| IV |^r & \le   \sum_{j=k+1}^\infty \sum_{ Q \in \D _k}  \sum_{m \in \mathbb Z^n,   Q_{j,m} \subset Q}  2^{ -(j-k)n r/t }  |Q|^{r/t - \frac{r}{n} \sum_{i=1}^n \frac{1}{ p_i}  } \| f\chi_Q \|_{ L^{\vec p} } ^r  \\
			& \le \sum_{j=k+1}^\infty  2^{ -(j-k)n (1- r/t) }\sum_{  Q\in \D_k}|Q|^{r/t - \frac{r}{n} \sum_{i=1}^n \frac{1}{ p_i}  } \| f\chi_Q \|_{ L^{\vec p} } ^r \\
		& \lesssim \sum_{  Q\in \D_k}|Q|^{r/t - \frac{r}{n} \sum_{i=1}^n \frac{1}{ p_i}  } \| f\chi_Q \|_{ L^{\vec p} } ^r.
	\end{align*}
	Since $J \le k$, from (\ref{J epsilon}), we conclude $IV \lesssim \epsilon$.
	
	Combining the estimates $I -IV$, we get 
	\begin{equation*}
		\limsup_{k\to \infty} \| f- \mathbb E_k (f) \|_{M_{\vec p}^{t,r}}   \lesssim \epsilon.
	\end{equation*} 
	Since $\epsilon>0$ is arbitrary, we  obtain the desired result.
\end{proof}

As a further corollary, the set $L_c^\infty$ of all compactly supported  bounded functions  is dense in $M_{\vec p}^{t,r} $.
\begin{corollary} \label{dense L_c mixed BM}
	Let $1< \vec p <\infty $. 
	Let $ 1 <   n / ( \sum_{i=1}^n  1/p_{i})      < t <r <\infty $. Then $L_c^\infty $  is dense in $M_{\vec p}^{t,r} $.
\end{corollary}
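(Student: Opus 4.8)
The plan is to deduce the corollary from Theorem \ref{E_k^q f to f}: that result makes the martingale approximants $\mathbb E_k(f_+)-\mathbb E_k(f_-)$ dense (for real-valued $f$), and each such approximant is constant on the dyadic cubes of $\D_k$, so the only remaining task is to truncate its support.

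First I would reduce to the situation handled by Theorem \ref{E_k^q f to f}. Fix $f\in M_{\vec p}^{t,r}$ and $\epsilon>0$. Write $f=(\operatorname{Re}f)_+-(\operatorname{Re}f)_-+i\big[(\operatorname{Im}f)_+-(\operatorname{Im}f)_-\big]$; by the lattice property each of the four non-negative parts lies in $M_{\vec p}^{t,r}$, and under the standing hypotheses $1<\vec p<\infty$ and $1<n/(\sum_{i=1}^n 1/p_i)<t<r<\infty$ the hypotheses of Theorem \ref{E_k^q f to f} hold. Applying that theorem to each part and choosing $k\in\mathbb N$ large enough, I obtain a function $g$ (the corresponding complex combination of $\mathbb E_k((\operatorname{Re}f)_{\pm})$ and $\mathbb E_k((\operatorname{Im}f)_{\pm})$) with $g\in M_{\vec p}^{t,r}$ and $\|f-g\|_{M_{\vec p}^{t,r}}<\epsilon/2$. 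The crucial feature is that $g$ is constant on each $Q\in\D_k$.

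Next I would truncate in space. For $R>0$ put $g_R:=g\chi_{B_R}$. Since $B_R$ meets only finitely many cubes of $\D_k$ and $g$ is constant on each of them, $g_R$ is bounded and compactly supported, so $g_R\in L_c^\infty$. It therefore suffices to show $\|g-g_R\|_{M_{\vec p}^{t,r}}=\|g\chi_{B_R^c}\|_{M_{\vec p}^{t,r}}\to 0$ as $R\to\infty$. We have $0\le|g\chi_{B_R^c}|\le|g|\in M_{\vec p}^{t,r}$ and $g\chi_{B_R^c}\to 0$ pointwise, so this is a dominated-convergence statement for the norm $\|\cdot\|_{M_{\vec p}^{t,r}}$. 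Writing the norm as the $\ell^r(\D)$-sum $\sum_{Q\in\D}|Q|^{r/t-(r/n)\sum_{i=1}^n 1/p_i}\,\|g\chi_{B_R^c}\chi_Q\|_{L^{\vec p}}^r$, each summand tends to $0$ by dominated convergence in $L^{\vec p}$ (legitimate because $1<\vec p<\infty$ and $\|g\chi_Q\|_{L^{\vec p}}<\infty$ for each fixed $Q$, being a single term of a convergent series), while the summands are bounded by the $\ell^1(\D)$-summable sequence $\big(|Q|^{r/t-(r/n)\sum_{i=1}^n 1/p_i}\,\|g\chi_Q\|_{L^{\vec p}}^r\big)_{Q\in\D}$; dominated convergence with respect to the counting measure on $\D$ then yields $\|g\chi_{B_R^c}\|_{M_{\vec p}^{t,r}}\to 0$. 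Picking $R$ with $\|g-g_R\|_{M_{\vec p}^{t,r}}<\epsilon/2$ and combining with the first step gives $\|f-g_R\|_{M_{\vec p}^{t,r}}<\epsilon$.

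I expect the one genuinely non-routine point to be this dominated convergence for the $M_{\vec p}^{t,r}$-norm, i.e.\ the absolute continuity of the norm, for which the finiteness $r<\infty$ is essential; the rest is bookkeeping. (Alternatively, one can bypass Theorem \ref{E_k^q f to f} entirely and apply the very same dominated-convergence argument directly to the truncations $f_N:=f\,\chi_{B_N}\,\chi_{\{|f|\le N\}}\in L_c^\infty$, which converge to $f$ in $M_{\vec p}^{t,r}$ since $|f-f_N|\le|f|$ and $f_N\to f$ pointwise; I would record this as a remark.)
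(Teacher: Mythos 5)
Your main proof follows the paper's own route: apply Theorem \ref{E_k^q f to f} to produce a martingale approximant $g$, truncate it to a ball, and use a two-stage dominated-convergence argument (first term by term in $L^{\vec p}$, then over the counting measure on $\D$ using the $\ell^1$-summable majorant coming from $\|g\|_{M_{\vec p}^{t,r}}^r<\infty$) to control the tail. The only differences from the paper are cosmetic: you decompose a complex-valued $f$ into four non-negative parts, and you spell out the dominated-convergence step that the paper compresses into the phrase ``Lebesgue convergence theorem.'' Your closing remark is the more interesting observation: the detour through Theorem \ref{E_k^q f to f} is in fact unnecessary for this corollary, since the identical two-stage dominated-convergence argument applied directly to the truncations $f_N=f\,\chi_{B_N}\,\chi_{\{|f|\le N\}}\in L_c^\infty$ gives $\|f-f_N\|_{M_{\vec p}^{t,r}}\to 0$ immediately (here $|f-f_N|\le|f|$, $f_N\to f$ a.e., $1<\vec p<\infty$ gives dominated convergence in each $L^{\vec p}$-factor, and $r<\infty$ is what makes the $\ell^r(\D)$-tail vanish). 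That is a genuinely shorter and more elementary proof of the stated corollary; what the detour through Theorem \ref{E_k^q f to f} buys the paper is approximation by dyadic step functions, which it exploits later (e.g.\ in Lemma \ref{dense simple}).
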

\begin{proof}

	We need to approximate $f \in M_{\vec p}^{t,r}$ with functions in $L_c^\infty$.
	We may assume that $f$ is non-negative from the decomposition $f =  f_ + - f_- $. Fix $\epsilon>0$. 
	 By Theorem \ref{E_k^q f to f}, there exists $k$ large enough such that 
	\begin{equation} \label{f -Ek f < eps}
		\| f - \mathbb E_k (f) \|_{M_{\vec p}^{t,r} } < \epsilon.
	\end{equation}
	Note that $\mathbb E_k (f)\chi_{B_R}  \in L_c^\infty $  for all $R>0$.
	Fix a large number $k\in \mathbb N$ such that (\ref{f -Ek f < eps}) holds. By the disjointness of the family $\{ Q_{k,m} \}_{m\in \mathbb Z^n}$, 
	\begin{align*}
		&	\| \mathbb E_k (f) - \mathbb E_k  (f)\chi_{B_R} \|_{ M_{\vec p}^{t,r}  }  \\
		& = \left( \sum_{j\in \mathbb Z, m\in \mathbb Z^n} |Q_{j,m}|^{r/t-\frac{r}{n} \sum_{i=1}^n \frac{1}{ p_i}  }   \|\mathbb E_k (f) - \mathbb E_k  (f)\chi_{B_R} \|_{L^{\vec p} } ^r   \right)^{1/r}  \\
		& \le  \left( \sum_{j\in \mathbb Z}  \sum_{ m\in \mathbb Z^n, Q_{j,m} \cap B_R^c \neq \emptyset} |Q_{j,m}|^{r/t-\frac{r}{n} \sum_{i=1}^n \frac{1}{ p_i}  }  \|\mathbb E_k (f) - \mathbb E_k  (f)\chi_{B_R} \|_{L^{\vec p} } ^r    \right)^{1/r} .
	\end{align*}
	Note that $\mathbb E_k (f) \in M_{\vec p}^{t,r}$ according to Theorem \ref{E_k^q f to f}. By the Lebesgue convergence theorem, we obtain
	\begin{equation*}
		\| \mathbb E_k (f) - \mathbb E_k  (f)\chi_{B_R} \|_{ M_{\vec p}^{t,r}  }    <\epsilon
	\end{equation*}
	as long as $R$ large enough. 
	Then by the Minkowski inequality, we have
	\begin{equation*}
		\| f - E_k (f)\chi_{B_R} \|_{ M_{\vec p}^{t,r} } \lesssim \epsilon.
	\end{equation*}
	Since $\epsilon>0$ is arbitrary, the proof is complete.
\end{proof}
\begin{remark}
	A consequence of Corollary \ref{dense L_c mixed BM} and Lemma \ref{embed ell r} is that $ M_{\vec p}^{t,r} \subset  \widetilde{ M_{\vec p}^{t,\infty} }  $, where $\widetilde{ M_{\vec p}^{t,\infty} }$ stands for the closure of $L_c^\infty $ in $M_{\vec p}^{t,\infty} $.
\end{remark}

\begin{lemma}[Lusin's property, Theorem 1.14-4 (c), \cite{C13}]
	Let $A$ be a subset of $\rn$. Let $f:\rn \to \mathbb R$ be a measurable function with $|A| <\infty$, where $A = \{ x \in \rn: f(x) \neq 0\}$. Then given any $\epsilon >0$, there exists a function $f_\epsilon \in C(\rn)$, the continuous functions on $\rn$, whose support is a compact subset of $A$ and such that 
	$
		\sup_{x\in \rn } |f_\epsilon (x) | \le \sup_{x\in \rn } |f (x) |
	$
and $ | \{ x\in \rn :  f(x) \neq f _\epsilon (x) \}| \le \epsilon$.
\end{lemma}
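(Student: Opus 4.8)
The plan is to carry out the classical proof of Lusin's theorem while keeping track of the three extra requirements: the bound $\sup|f_\epsilon|\le\sup|f|$, compactness of the support, and the measure estimate on $\{f\ne f_\epsilon\}$. First I would make two reductions at once. Since $f$ is finite-valued and $|A|<\infty$, the set $\{|f|>N\}$ has measure $<\epsilon/4$ for $N$ large, and $|A\setminus B_R|<\epsilon/4$ for $R$ large; so I put $g:=f\,\chi_{\{|f|\le N\}}\,\chi_{B_R}$, which is bounded by $M:=\min(N,\sup|f|)\le\sup|f|$, is supported in the bounded set $A\cap B_R$ (which I rename $A$), and satisfies $|\{f\ne g\}|<\epsilon/2$. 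It then suffices to produce $f_\epsilon\in C(\rn)$ with compact support contained in $A$, with $\sup|f_\epsilon|\le M$, and with $|\{g\ne f_\epsilon\}|<\epsilon/2$, since $\{f\ne f_\epsilon\}\subset\{f\ne g\}\cup\{g\ne f_\epsilon\}$.

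Next I would produce a compact set $F\subset A$ with $|A\setminus F|$ as small as I like on which $g$ is continuous. Choose simple functions $s_k$ with $|s_k|\le M$, each supported in the (now bounded) set $A$, and $s_k\to g$ almost everywhere. For each $k$, inner regularity of Lebesgue measure produces a compact set $\Psi_k\subset A$ with $|A\setminus\Psi_k|<\epsilon\,2^{-k-4}$ such that $s_k$ is locally constant, hence continuous, on $\Psi_k$ (take compact subsets of the finitely many, pairwise disjoint level sets of $s_k$ in $A$; disjoint compact sets are mutually separated). Egorov's theorem gives a set $E\subset A$ with $|A\setminus E|$ small on which $s_k\to g$ uniformly. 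Then $F:=\overline{E}\cap\bigcap_k\Psi_k$ is compact and contained in $A$, has $|A\setminus F|$ small, and on it $g$ is the uniform limit of the continuous maps $s_k|_F$, hence continuous. Now the Tietze--Urysohn extension theorem extends $g|_F$ to some $h\in C(\rn)$; post-composing $h$ with the $1$-Lipschitz truncation $t\mapsto\max(-M,\min(M,t))$, which fixes $g|_F$ because $|g|\le M$, arranges $\sup|h|\le M$. Choosing a bounded open $V$ with $F\subset V$ and $|V\setminus F|$ small (so $|V\setminus A|$ small, as $F\subset A$) and a Urysohn cutoff $\varphi\in C(\rn)$ with $\chi_F\le\varphi\le\chi_V$ and $\operatorname{supp}\varphi\subset\overline{V}$, the function $f_\epsilon:=\varphi h$ is continuous, has compact support, satisfies $\sup|f_\epsilon|\le M\le\sup|f|$, and equals $g$ on $F$; since $\{g\ne f_\epsilon\}\subset(A\setminus F)\cup(V\setminus A)$, collecting all the exceptional sets gives $|\{f\ne f_\epsilon\}|<\epsilon$.

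The delicate point is the middle step: one must force the simple approximants to be \emph{genuinely} continuous on a large \emph{closed} set (inner regularity together with separation of disjoint compacta) and, simultaneously, make $s_k\to g$ \emph{uniformly} there (Egorov), all while keeping that closed set inside $A$ so the support requirement can be met; the sup-norm control is then merely the Lipschitz-truncation trick applied after Tietze. I should flag that insisting on $\operatorname{supp}f_\epsilon$ literally inside $A$ forces the support, in the construction above, to lie in an open $V$ with $|V\setminus A|$ small rather than in $A$ itself when $A$ has empty interior; for the uses in later sections it is enough that $\operatorname{supp}f_\epsilon$ be compact and contained in an arbitrarily small neighbourhood of $A$, and with that mild relaxation the argument goes through as stated.
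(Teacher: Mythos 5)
The paper does not prove this lemma; it is imported verbatim as a citation to Ciarlet (Theorem~1.14-4\,(c)), so there is no in-paper proof to compare against. Your argument is the standard proof of Lusin's theorem (reduction to a bounded compactly supported $g$, inner regularity on the level sets of simple approximants to get continuity on a large compact set, Egorov for uniform convergence, Tietze extension plus a Lipschitz truncation for the sup-bound, and a Urysohn cutoff for compact support), and it is essentially correct. One small technical slip: after Egorov gives a measurable $E$ with uniform convergence of $s_k$, taking $\overline{E}$ does not automatically preserve uniform convergence, since you only control $s_k\to g$ on $E$, not on $\overline{E}\setminus E$. The clean fix is to apply inner regularity once more to shrink the Egorov set to a compact $K\subset E$ with $|E\setminus K|$ small and set $F:=K\cap\bigcap_k\Psi_k$; then the rest of your argument goes through unchanged.

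Your final remark identifies a genuine defect in the lemma \emph{as quoted in the paper}, and you should not treat it as a loose end of your own proof. If $A=\{f\neq 0\}$ has empty interior (e.g.\ a fat Cantor set), then any $f_\epsilon\in C(\rn)$ with $\operatorname{supp} f_\epsilon\subset A$ must vanish identically, because the support of a continuous function is the closure of an open set; hence $|\{f\neq f_\epsilon\}|=|A|$ cannot be made small, and the statement as written is false. In Ciarlet's book the theorem is stated for a measurable $f$ on an \emph{open} set $\Omega$, with the conclusion $\operatorname{supp} f_\epsilon$ a compact subset of $\Omega$ (not of the level set $A$); the paper's restatement has conflated $\Omega$ with $A$. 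Your proof, with the support placed in a small open neighbourhood $V\supset F$, matches the correct version of the theorem, and this is exactly what is used downstream in the paper (in the density argument the support need only be compact, not literally inside $A$). So your flag is the right reading: the proof is fine, and it is the paper's transcription of the hypothesis/conclusion that should be corrected.
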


\begin{corollary} \label{compact continuous dense mixed BM}
	Let $1< \vec p <\infty $. 
	Let $ 1 <   n / ( \sum_{i=1}^n  1/p_{i})     < t <r <\infty $. Then $C_c := C_c (\rn) $, the  set of all compactly supported  continuous functions,  is dense in $M_{\vec p}^{t,r} $.
\end{corollary}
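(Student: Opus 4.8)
The plan is to combine the density of $L_c^\infty$ with Lusin's property. Since $C_c\subset L_c^\infty$, Corollary~\ref{dense L_c mixed BM} reduces the claim to showing that an arbitrary $f\in L_c^\infty$ can be approximated in $M_{\vec p}^{t,r}$ by functions in $C_c$. So I would fix such an $f$ with $M:=\|f\|_{L^\infty}$ and $\operatorname{supp}f\subset B_R$, fix $\epsilon>0$, and produce $g\in C_c$ with $\|f-g\|_{M_{\vec p}^{t,r}}\lesssim\epsilon$.

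Two preliminary facts are needed. First, $\chi_{B_R}\in M_{\vec p}^{t,r}$: the cube $[-R,R]^n$ contains $B_R$, and by Example~\ref{exa Q0 iff} together with the dilation and translation invariance of $M_{\vec p}^{t,r}$ established above, the indicator of any cube lies in $M_{\vec p}^{t,r}$ under the present hypotheses, so the lattice property gives $\chi_{B_R}\in M_{\vec p}^{t,r}$. Consequently the series $\sum_{Q\in\D}a_Q$ with $a_Q:=|Q|^{\frac rt-\frac rn\sum_{i=1}^n\frac1{p_i}}\,\|\chi_{B_R}\chi_Q\|_{L^{\vec p}}^r$ converges, so I may fix a finite $\mathcal F\subset\D$ with $\sum_{Q\in\D\setminus\mathcal F}a_Q<(\epsilon/(4M))^r$. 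Second, for every cube $Q$ and every measurable $F\subset Q$,
\[
\|\chi_F\|_{L^{\vec p}}\le C_{n,\vec p}\,\ell(Q)^{\sum_{i=1}^n\frac1{p_i}-\frac n{\max_i p_i}}\,|F|^{1/\max_i p_i},
\]
which one proves by induction on $n$, bounding in each one-dimensional integration either the length of the corresponding slice by $\ell(Q)$ or, via Jensen's inequality on an interval of length $\ell(Q)$, the integral of a concave power; since $1<\vec p<\infty$, the exponent $1/\max_i p_i$ is positive. Hence, for each fixed $Q$, $\|\chi_F\chi_Q\|_{L^{\vec p}}\to 0$ as $|F|\to0$, and because $\mathcal F$ is finite there is $\delta_0>0$ with $|Q|^{\frac rt-\frac rn\sum_i\frac1{p_i}}\,\|\chi_F\chi_Q\|_{L^{\vec p}}^r<(\epsilon/(4M))^r/\#\mathcal F$ for all $Q\in\mathcal F$ and all measurable $F$ with $|F|\le\delta_0$.

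Then I would apply Lusin's property to $f$ with the parameter $\delta_0$: there is $g\in C(\rn)$ whose support is a compact subset of $\{f\ne0\}\subset B_R$, hence $g\in C_c$, with $\|g\|_{L^\infty}\le M$ and $|E|\le\delta_0$, where $E:=\{x:f(x)\ne g(x)\}$. Since both $f$ and $g$ vanish outside $B_R$, we have $E\subset B_R$, so $|f-g|\le 2M\chi_E\le 2M\chi_{B_R}$ almost everywhere. Using the lattice property of $L^{\vec p}$ and splitting the defining sum of $\|f-g\|_{M_{\vec p}^{t,r}}^r$ over $\mathcal F$ and its complement,
\[
\|f-g\|_{M_{\vec p}^{t,r}}^r\le (2M)^r\sum_{Q\in\D}|Q|^{\frac rt-\frac rn\sum_i\frac1{p_i}}\|\chi_E\chi_Q\|_{L^{\vec p}}^r\le (2M)^r\Big(\#\mathcal F\cdot\tfrac{(\epsilon/(4M))^r}{\#\mathcal F}+\sum_{Q\notin\mathcal F}a_Q\Big)<2(\epsilon/2)^r,
\]
where for $Q\notin\mathcal F$ we used $\|\chi_E\chi_Q\|_{L^{\vec p}}\le\|\chi_{B_R}\chi_Q\|_{L^{\vec p}}$. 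Thus $\|f-g\|_{M_{\vec p}^{t,r}}\lesssim\epsilon$, and since $\epsilon>0$ is arbitrary, $C_c$ is dense in $M_{\vec p}^{t,r}$.

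The main obstacle is the second preliminary fact, i.e. the absolute continuity of the $L^{\vec p}$-norm on subsets of a fixed cube: in contrast with the unmixed case, $\|\chi_F\|_{L^{\vec p}}$ is not a single power of $|F|$, so one must push the Jensen/crude-length dichotomy through the iterated one-dimensional integrals. Once this is in hand, the remainder is just an organisation of Corollary~\ref{dense L_c mixed BM}, Lusin's property, and the convergence of the series defining $\|\chi_{B_R}\|_{M_{\vec p}^{t,r}}$.
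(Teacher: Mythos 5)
Your proof is correct, and it takes a genuinely different route from the paper, even though both start from Corollary~\ref{dense L_c mixed BM} and Lusin's property. The paper runs Lusin with the sequence of parameters $1/m$, notes convergence in measure, extracts an a.e.\ convergent subsequence by the Riesz theorem, and then appeals (rather tersely) to a dominated convergence theorem for $M_{\vec p}^{t,r}$ using $2M\chi_{B_R}$ as a majorant. You instead make the argument quantitative: you fix a single Lusin parameter $\delta_0$ up front, chosen so that the finitely many ``large'' terms in the defining sum are each small by an absolute-continuity bound for $\|\chi_F\|_{L^{\vec p}}$ on a fixed cube, while the infinite tail is small because $\chi_{B_R}\in M_{\vec p}^{t,r}$ makes the series summable. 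This avoids invoking dominated convergence in $M_{\vec p}^{t,r}$ and any subsequence extraction, at the price of having to establish your second preliminary fact. That fact is correct and, incidentally, follows in one line from the paper's H\"older inequality (Lemma~\ref{Holder mixed}): taking $q=\max_i p_i$ and $1/w_i=1/p_i-1/q\ge0$ gives
\begin{equation*}
\|\chi_F\|_{L^{\vec p}}\le \|\chi_F\|_{L^{(q,\dots,q)}}\,\|\chi_Q\|_{L^{\vec w}}=|F|^{1/q}\,\ell(Q)^{\sum_i 1/p_i-n/q},
\end{equation*}
which is cleaner than the iterated Jensen argument and needs no induction. In short, both proofs work; the paper's is shorter but leans on a dominated-convergence step it does not elaborate, whereas yours is longer but explicitly tracks where the smallness comes from, which is arguably more transparent in the mixed-norm setting.
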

\begin{proof}
	Fix $\epsilon>0$ and choose $m_0 \in \mathbb N$ such that $ 1/m_0 \le \epsilon $.
	By Corollary \ref{dense L_c mixed BM}, for $f \in M_{\vec p}^{t,r}$, there exists $g \in L_c^\infty$ such that 
	\begin{equation*}
		\| f-g \|_{M_{\vec p}^{t,r} } \le 1/m_0 \le \epsilon.
	\end{equation*}
Let $A = \operatorname{supp} (g)$. By Lusin's property, there exists continuous function $g_m $ such that supp$(g_m) \subset A$ and $ | \{ x\in \rn :  g(x) \neq g _m (x) \}| \le1/m$. Hence the sequence $\{ g_m\}_{m \in \mathbb N}$ converges to $g$ in measure. By the Resiz theorem, there exists a subsequence $\{ g_m\}_{m \in \mathbb N}$ (we still use the notation $\{ g_m\}_{m \in \mathbb N}$) converges to $g $ a.e. Then by the dominated theorem,
 \begin{equation*}
\lim_{ m\to \infty }	\| g -g_m \|_{M_{\vec p}^{t,r} } =0.
\end{equation*}
For the above $\epsilon$, choose $m' $ large enough such that $ 	\| g -g_{m'} \|_{M_{\vec p}^{t,r} } \le \epsilon $.
By the Minkowski inequality, we obtain
\begin{equation*}
	\| f-g_{m'}  \|_{M_{\vec p}^{t,r} } \le 	\| f-g \|_{M_{\vec p}^{t,r} } +	\| g-g_{m'}  \|_{M_{\vec p}^{t,r} } \le 2\epsilon.
\end{equation*}
Thus we finish the proof.
\end{proof}

We say that a metric space $E$ is separable if there exists a subset $ D \subset E$
that is countable and dense.

\begin{theorem} \label{separable mixed BM}
	Let $1< \vec p <\infty $. 
	Let $ 1 <   n / ( \sum_{i=1}^n  1/p_{i})     < t <r <\infty $. Then  $M_{\vec p}^{t,r} $  is separable.
\end{theorem}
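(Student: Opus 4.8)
The plan is to exhibit an explicit countable dense subset of $M_{\vec p}^{t,r}$, namely
\begin{equation*}
	\mathcal D_0 := \left\{ \sum_{j=1}^N a_j\chi_{Q_j} : N\in\mathbb N,\ a_j\in\mathbb Q+i\mathbb Q,\ Q_j\in\mathcal D \right\}.
\end{equation*}
Since $\mathcal D$ is countable and, for each fixed $N$, the coefficient vectors range over a countable set, $\mathcal D_0$ is countable. Each element of $\mathcal D_0$ does belong to $M_{\vec p}^{t,r}$: by Example \ref{exa Q0 iff} together with the translation and dilation invariance of $M_{\vec p}^{t,r}$ (recall $n/(\sum_{i=1}^n 1/p_i)<t<r<\infty$), one has $\|\chi_Q\|_{M_{\vec p}^{t,r}}<\infty$ for every $Q\in\mathcal D$, and the triangle inequality for $M_{\vec p}^{t,r}$ (available here since $1\le\vec p<\infty$ and $1<t<r<\infty$) gives $\|\sum_j a_j\chi_{Q_j}\|_{M_{\vec p}^{t,r}}\le\sum_j|a_j|\,\|\chi_{Q_j}\|_{M_{\vec p}^{t,r}}<\infty$. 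It then remains to prove that $\mathcal D_0$ is dense.

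Let $f\in M_{\vec p}^{t,r}$ and $\epsilon>0$. By Corollary \ref{compact continuous dense mixed BM} we may pick $g\in C_c(\rn)$ with $\|f-g\|_{M_{\vec p}^{t,r}}<\epsilon/3$; say $\operatorname{supp} g\subset B_R$. Next I approximate $g$ by a dyadic step function with finitely many pieces, all confined to one fixed region. Choose a finite union $V$ of dyadic cubes with $\overline{B_{2R}}\subset V$; by the lattice property and the triangle inequality, $\|\chi_V\|_{M_{\vec p}^{t,r}}<\infty$. For $k\in\mathbb N$ set
\begin{equation*}
	g_k := \sum_{Q\in\mathcal D_k,\ Q\subset B_{2R}} g(x_Q)\,\chi_Q,
\end{equation*}
where $x_Q$ is the centre of $Q$. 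Since $g$ is uniformly continuous, $\|g-g_k\|_{L^\infty}\to 0$ as $k\to\infty$, and because $\operatorname{supp}(g-g_k)\subset V$ the lattice property yields $\|g-g_k\|_{M_{\vec p}^{t,r}}\le\|g-g_k\|_{L^\infty}\|\chi_V\|_{M_{\vec p}^{t,r}}\to 0$. Fix $k$ with $\|g-g_k\|_{M_{\vec p}^{t,r}}<\epsilon/3$ and write $g_k=\sum_{i=1}^M c_i\chi_{Q_i}$ with $Q_i\in\mathcal D$.

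Finally, for Gaussian rationals $a_i$ the triangle inequality gives
\begin{equation*}
	\Big\| g_k-\sum_{i=1}^M a_i\chi_{Q_i}\Big\|_{M_{\vec p}^{t,r}} \le \Big(\max_{1\le i\le M}|c_i-a_i|\Big)\sum_{i=1}^M\|\chi_{Q_i}\|_{M_{\vec p}^{t,r}},
\end{equation*}
and since the sum on the right is finite we may choose the $a_i$ so close to the $c_i$ that the left side is $<\epsilon/3$. Then $\sum_i a_i\chi_{Q_i}\in\mathcal D_0$ and $\|f-\sum_i a_i\chi_{Q_i}\|_{M_{\vec p}^{t,r}}<\epsilon$, proving density of $\mathcal D_0$ and hence separability of $M_{\vec p}^{t,r}$. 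The only mildly delicate point is the passage from $g\in C_c$ to a finite dyadic step function that is small in the $M_{\vec p}^{t,r}$-norm: a uniform estimate alone is not enough, so one must keep all the approximants inside a single region $V$ built from dyadic cubes, where $\chi_V\in M_{\vec p}^{t,r}$, to convert uniform smallness into smallness in $M_{\vec p}^{t,r}$. As an alternative route one may skip $C_c$ altogether and obtain the finite dyadic step functions directly from the conditional-expectation approximation of Theorem \ref{E_k^q f to f} and Corollary \ref{dense L_c mixed BM}, replacing the ball $B_R$ used there by the union of the level-$k$ dyadic cubes that intersect it.
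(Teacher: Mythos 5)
Your proof is correct and follows essentially the same route as the paper: approximate $f$ by a compactly supported continuous function via Corollary \ref{compact continuous dense mixed BM}, use uniform continuity to pass to a finitely supported step function while controlling the $M_{\vec p}^{t,r}$-norm by confining the error to a fixed region whose indicator has finite norm, and finally rationalize the coefficients. The only cosmetic difference is that the paper builds its countable dense set from rectangles with rational endpoints while you use dyadic cubes; both work equally well here.
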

\begin{proof}
	Let $A := \{ I = \prod_{i=1}^n (a_i, b_i): a_i ,b_i \in \mathbb Q \}$. Then $A$ is countable. Let 
$\mathcal A  $ be the space spanned by functions $\{  \chi_{Q} : Q \in A  \}$ on $\mathbb Q$. That is, $f\in \mathcal A $ has the form $f = \sum_{j \in J} \alpha_j \chi_{Q_j} $ where the set $J$ of indices is finite, and $\alpha _j \in \mathbb Q$ and $Q_j \in A$ for all $j \in J$.
	
	For any $f \in M_{\vec p}^{t,r} $, by Corollary \ref{compact continuous dense mixed BM}, there exists $g\in C_c$ such that $\| f-g\|_{M_{\vec p}^{t,r} } \le \epsilon$.   Choose $I \in A$ such that supp $g \subset I $. Since $g \in C_c$, for any $\epsilon >0$, choose $h \in \mathcal A$  such that $\| h-g\|_{L^\infty} <\epsilon  \| \chi_{I} \|_{M_{\vec p}^{t,r}  } ^{-1}$ and supp $h \subset I $. Then
	\begin{equation*}
		\| g -h \|_{M_{\vec p}^{t,r} }  \le \epsilon.
	\end{equation*}
By the Minkowski inequality, we obtain $ \| f -h\|_{M_{\vec p}^{t,r} }  \le 2\epsilon$.
\end{proof}

To  show the duality of mixed Bourgain-Morrey spaces, we need a smaller dense space $\operatorname{Sim} (\rn) $.
The following result will be used in Section \ref{sec associate and dual}.

\begin{lemma} \label{dense simple}
	Let $1< \vec p <\infty $. 
	Let $ 1 <   n / ( \sum_{i=1}^n  1/p_{i})     < t <r <\infty $. Then  $\operatorname{Sim} (\rn) $ is dense in $M_{\vec p}^{t,r}$.
\end{lemma}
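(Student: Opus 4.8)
The plan is to reduce the density of $\operatorname{Sim}(\rn)$ to the density already established for $L_c^\infty$. By Corollary \ref{dense L_c mixed BM}, it suffices to approximate an arbitrary $g \in L_c^\infty$ by simple functions in the $M_{\vec p}^{t,r}$-norm, and since $g = g_+ - g_-$ with both parts again in $L_c^\infty$, we may assume $g \ge 0$. So fix $g \in L_c^\infty$ with $g \ge 0$, $\operatorname{supp} g \subset B_R$ for some $R > 0$, and $0 \le g \le M$ for some $M > 0$.

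First I would build the standard increasing approximation: for each $j \in \mathbb N$ let $s_j := \sum_{\ell=0}^{j 2^j - 1} \ell 2^{-j} \chi_{E_{j,\ell}}$ where $E_{j,\ell} := \{ x \in B_R : \ell 2^{-j} \le g(x) < (\ell+1) 2^{-j} \}$, truncated at height $j$. Then $s_j \in \operatorname{Sim}(\rn)$ (each $s_j$ is a finite combination of characteristic functions of the pairwise disjoint, bounded, measurable sets $E_{j,\ell}$, all contained in $B_R$), we have $0 \le s_j \uparrow g$ pointwise, and $|g - s_j| \le M \chi_{B_R}$ for all $j$. The function $M\chi_{B_R}$ lies in $M_{\vec p}^{t,r}$: indeed $\chi_{B_R}$ is dominated by $\chi_Q$ for a suitable cube $Q \supset B_R$, and $\chi_Q \in M_{\vec p}^{t,r}$ by Example \ref{exa Q0 iff} together with the dilation and translation invariance of $M_{\vec p}^{t,r}$ (recall the hypotheses give $1 < n/(\sum 1/p_i) < t < r < \infty$, which is exactly the regime where $\chi_{Q_0} \in M_{\vec p}^{t,r}$). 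Hence the dominating function has finite $M_{\vec p}^{t,r}$-norm.

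Next I would invoke the dominated convergence property recorded in the Remark following the definition of $M_{\vec p}^{t,r}$ — more precisely, since $|g - s_j| \to 0$ pointwise and $|g - s_j| \le M\chi_{B_R} \in M_{\vec p}^{t,r}$, the monotone/dominated convergence statement for these spaces (valid for $n/(\sum 1/p_i) \le t \le r < \infty$) yields $\| g - s_j \|_{M_{\vec p}^{t,r}} \to 0$ as $j \to \infty$. Concretely: $0 \le |g - s_j| \downarrow 0$ pointwise, so writing $h_j := M\chi_{B_R} - |g - s_j|$ we have $0 \le h_j \uparrow M\chi_{B_R}$, whence $\|h_j\|_{M_{\vec p}^{t,r}} \uparrow \|M\chi_{B_R}\|_{M_{\vec p}^{t,r}} < \infty$; but this does not immediately control $\|g - s_j\|$ unless we argue directly. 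So instead I would apply the dominated-convergence form directly to $|g-s_j|^{\min\vec p}$ or, cleanly, note that $\{ |g - s_j| \}_j$ is a decreasing sequence of non-negative functions with pointwise limit $0$ dominated by an element of $M_{\vec p}^{t,r}$, and use the lattice property plus the dominated convergence theorem inside each $L^{\vec p}$-quasinorm $\| (g-s_j)\chi_Q \|_{L^{\vec p}}$ (Fatou, Lemma \ref{lem fatou}, controls the limit; domination controls the sum over $Q \in \D$ via dominated convergence for series). Finally, given $\epsilon > 0$, choose $g \in L_c^\infty$ with $\|f - g\|_{M_{\vec p}^{t,r}} < \epsilon$ and then $j$ with $\|g - s_j\|_{M_{\vec p}^{t,r}} < \epsilon$; the triangle inequality (valid since $\min\vec p > 1$ makes $M_{\vec p}^{t,r}$ a Banach space) gives $\|f - s_j\|_{M_{\vec p}^{t,r}} < 2\epsilon$, completing the proof.

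The main obstacle is the convergence $\|g - s_j\|_{M_{\vec p}^{t,r}} \to 0$: one must be careful that the $\ell^r$-sum over all dyadic cubes $Q \in \D$ genuinely passes to the limit. This is handled by the domination $|g - s_j| \le M\chi_{B_R}$ — the summand for each $Q$ converges to $0$ by dominated convergence in $L^{\vec p}$, and the whole series is dominated by the convergent series defining $\|M\chi_{B_R}\|_{M_{\vec p}^{t,r}}^r$, so the dominated convergence theorem for series applies. Everything else is routine bookkeeping with the triangle inequality and the density results already proved.
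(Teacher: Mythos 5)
Your proof is correct, but it takes a genuinely different route from the paper's after the common first step. Both you and the authors reduce to approximating a compactly supported bounded function $g$, via Corollary \ref{dense L_c mixed BM}. The paper then immediately takes $\mathbb{E}_K(g)$ — the dyadic conditional expectation from (\ref{def E_k}) — as the simple function: since $g$ is bounded with compact support, $\mathbb{E}_K(g)$ is a finite linear combination of characteristic functions of disjoint dyadic cubes, hence lies in $\operatorname{Sim}(\rn)$, and the convergence $\|g-\mathbb{E}_K(g)\|_{M_{\vec p}^{t,r}}\to 0$ is exactly Theorem \ref{E_k^q f to f}, so the whole second step is one citation. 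You instead build the classical staircase approximants $s_j$ and prove the convergence $\|g-s_j\|_{M_{\vec p}^{t,r}}\to 0$ by hand, with a two-level dominated convergence argument: termwise in each $\|(g-s_j)\chi_Q\|_{L^{\vec p}}$ (dominated by $M\chi_{B_R}\chi_Q$), then over the $\ell^r$ sum in $Q\in\mathcal{D}$ (dominated by the convergent series for $\|M\chi_{B_R}\|_{M_{\vec p}^{t,r}}^r$). This is sound — you are right to note that the monotone-norm Remark alone does not give norm convergence of the error, and your detour through domination fixes that — but it reproves, in the special case of a bounded compactly supported target, what Theorem \ref{E_k^q f to f} already delivers for all of $M_{\vec p}^{t,r}$ via Doob's inequality. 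What your route buys is self-containedness (pure measure theory after Corollary \ref{dense L_c mixed BM}, no martingale machinery); what the paper's route buys is brevity and consistency with the approximation theory it has already set up for later use.
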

\begin{proof}
	Without	loss of generality, we may assume that $f \in M_{\vec p}^{t,r}$ is nonnegative. For any $f \in M_{\vec p}^{t,r}$, there exists $g \in L_c^\infty$  such that \begin{equation*}
		\| f-g\|_{ \in M_{\vec p}^{t,r} } \le \epsilon .
	\end{equation*} 
Using Theorem \ref{E_k^q f to f}, there exists $K \in \mathbb N$ such that 
\begin{equation*}
	\| g - \mathbb E_K  (g) \|_{M_{\vec p}^{t,r} } \le \epsilon .
\end{equation*}
Since $g \in L_c^\infty$, we deduce $\mathbb E_K  (g)  \in \operatorname{Sim} (\rn)$. Then 
\begin{equation*}
	\| f -   \mathbb E_K  (g) \|_{ M_{\vec p}^{t,r}}  \le 2 \epsilon.
\end{equation*}
Thus $\operatorname{Sim} (\rn) $ is dense in $M_{\vec p}^{t,r}$.
\end{proof}

\section{Preduals of mixed Bourgain-Morrey spaces} \label{sec predual}
In this section, we mainly prove  preduals of of mixed Bourgain-Morrey spaces. 
\begin{definition}
	Let $ 1 \le \vec p <\infty$. Let   $n / ( \sum_{i=1}^n  1/p_{i})  \le t \le \infty $. A measurable function $b$ is said to be a
	$(\vec{p}^{\,\prime}, t')$-block if there exists a cube $Q$ that supports $b$ such that 
	\begin{equation*}
		\| b \|_{L^{\vec{p}^{\,\prime} } }  \le |Q| ^{  1/t - \frac{1}{n}  ( \sum_{i=1}^n  1/p_{i})   }  .
	\end{equation*}
	If we need to indicate $Q$, we  say that $b$ is a $(\vec{p}^{\,\prime},t')$-block supported on $Q$.
\end{definition}

\begin{definition}\label{def mix block space}
	Let $ 1 \le \vec p <\infty$. Let   $n / ( \sum_{i=1}^n  1/p_{i})  \le t \le r \le  \infty $.
	The function space $\mathcal{H}_{\vec p \, ^\prime}^{t',r'} $
	is the set of  all measurable functions $f$ such that $f$ is realized
	as the sum
	\begin{equation}\label{eq: mix block f}
		f = \sum_{(j,k)\in\mathbb{Z}^{n+1}}\lambda_{j,k}b_{j,k}
	\end{equation}
	with some $\lambda=\{\lambda_{j,k}\}_{(j,k)\in\mathbb{Z}^{n+1}}\in\ell^{r'}(\mathbb{Z}^{n+1})$
	and $b_{j,k}$ is a $(\vec{p}^{\,\prime},t')$-block supported on  $Q_{j,k}$ where (\ref{eq: mix block f}) converges almost everywhere on $\rn$. The norm of $\mathcal{H}_{\vec p \, ^\prime}^{t',r'} $
	is defined by
	\[
	\|f\|_{\mathcal{H}_{\vec p \, ^\prime}^{t',r'} } :=\inf_{\lambda}\|\lambda\|_{\ell^{r'}},
	\]
	where the infimum is taken over all admissible sequence $\lambda$
	such that (\ref{eq: mix block f}) holds.
\end{definition}

\begin{remark} \label{block norm 1}
	Let  $g_{j_0, k_0}$ be a  $(\vec{p}^{\,\prime},t')$-block supported on some $Q_{j_0, k_0}$. 
	Then $ \|g_{j_0, k_0} \|_{\mathcal{H}_{\vec p \, ^\prime}^{t',r'} }  \le 1$. Indeed, let 
	\begin{equation*}
		\lambda_{j,k} = \begin{cases}
			1, &  \operatorname{if} j = j_0, k = k_0; \\
			0 , & \operatorname{else}.
		\end{cases}
	\end{equation*} 
	Hence  $ \|g_{j_0, k_0} \|_{\mathcal{H}_{\vec p \, ^\prime}^{t',r'} }  \le  \left( \sum_{(j,k)\in   \mathbb Z ^{n+1}  } |	\lambda_{j,k} |^{r'}  \right)^{1/r'} =1.  $
\end{remark}

\begin{proposition}\label{mix block sum converge}
	Let $ 1 \le \vec p <\infty$. Let   $n / ( \sum_{i=1}^n  1/p_{i})  < t < r \le  \infty $.
	Assume that $ \{\lambda_{j,k}\}_{(j,k)\in\mathbb{Z}^{n+1} }  \in \ell^{r'}$ and for each $(j,k)\in\mathbb{Z}^{n+1}$, $b_{j,k}$  is a  $(\vec p \, ^\prime,t')$-block supported on $Q_{j,k}$. Then
	
	{\rm (i)} the summation (\ref{eq: mix block f}) converges  almost everywhere   on $\rn$;
	
	{\rm (ii)} the summation (\ref{eq: mix block f}) converges   in $L^1_{\operatorname{loc}  }$. 
\end{proposition}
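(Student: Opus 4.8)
The plan is to reduce everything to the single numerical estimate
\[
\Sigma(Q^\ast):=\sum_{(j,k)\in\mathbb Z^{n+1}}|\lambda_{j,k}|\,\|b_{j,k}\|_{L^1(Q^\ast)}<\infty\qquad\text{for every cube }Q^\ast\subset\rn,
\]
after which both (i) and (ii) follow by standard measure theory. Since $b_{j,k}$ is supported on $Q_{j,k}$, only indices with $Q_{j,k}\cap Q^\ast\neq\emptyset$ contribute, and for those Hölder's inequality for mixed Lebesgue spaces (cf.\ Lemma~\ref{Holder mixed}, with $1/\vec p+1/\vec{p}^{\,\prime}=(1,\dots,1)$), the monotonicity of the $L^{\vec p}$-norm, and Lemma~\ref{chi Q mixed Lp} give
\[
\|b_{j,k}\|_{L^1(Q^\ast)}=\|b_{j,k}\chi_{Q_{j,k}\cap Q^\ast}\|_{L^1}\le\|b_{j,k}\|_{L^{\vec{p}^{\,\prime}}}\,\|\chi_{Q_{j,k}\cap Q^\ast}\|_{L^{\vec p}}\le|Q_{j,k}|^{\frac1t-\frac1n\sum_{i=1}^n\frac1{p_i}}\min\{|Q_{j,k}|,|Q^\ast|\}^{\frac1n\sum_{i=1}^n\frac1{p_i}}.
\]
Setting $\sigma:=\frac1n\sum_{i=1}^n\frac1{p_i}-\frac1t$, which is strictly positive by the standing hypothesis $n/(\sum_i 1/p_i)<t$, this yields the two bounds $\|b_{j,k}\|_{L^1(Q^\ast)}\le|Q_{j,k}|^{1/t}$ (efficient for small cubes) and $\|b_{j,k}\|_{L^1(Q^\ast)}\le|Q_{j,k}|^{-\sigma}|Q^\ast|^{\frac1n\sum_i 1/p_i}$ (efficient for large cubes).

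Next I would split the index set according to the size of $Q_{j,k}$. Write $L:=\ell(Q^\ast)$ and let $j_\ast\in\mathbb Z$ be the integer with $2^{j_\ast-1}L<1\le 2^{j_\ast}L$; for each fixed $j$ the number of $k$ with $Q_{j,k}\cap Q^\ast\neq\emptyset$ is at most $(2^jL+2)^n$, hence $\le 3^n(2^jL)^n$ when $j\ge j_\ast$ and $\le 3^n$ when $j<j_\ast$. Using $|Q_{j,k}|=2^{-jn}$, the first bound on $\|b_{j,k}\|_{L^1(Q^\ast)}$ for $j\ge j_\ast$, and the second for $j<j_\ast$, the $\ell^r$-norm of $\{\|b_{j,k}\|_{L^1(Q^\ast)}\}_{Q_{j,k}\cap Q^\ast\neq\emptyset}$ is (for $r<\infty$) at most
\[
\Big(3^nL^n\sum_{j\ge j_\ast}2^{jn(1-r/t)}\Big)^{1/r}+\Big(3^nL^{r\sum_i 1/p_i}\sum_{j<j_\ast}2^{jn\sigma r}\Big)^{1/r}=:C(Q^\ast)<\infty,
\]
where the first geometric series converges because $1-r/t<0$ (this is exactly where $t<r$ enters) and the second because $n\sigma r>0$; for $r=\infty$ one replaces the $\ell^r$-sums by suprema and argues identically. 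Hölder's inequality for sequences (exponents $r'$ and $r$) then gives $\Sigma(Q^\ast)\le\|\lambda\|_{\ell^{r'}}\,C(Q^\ast)<\infty$.

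To conclude, fix an enumeration of $\mathbb Z^{n+1}$ and apply the above with $Q^\ast$ a cube containing $B_N$. The partial sums of $\sum_{(j,k)}\lambda_{j,k}b_{j,k}$ are Cauchy in $L^1(B_N)$ because their increments are bounded by tails of the convergent series $\Sigma(Q^\ast)$, so the series converges in $L^1(B_N)$; letting $N\to\infty$ gives convergence in $L^1_{\operatorname{loc}}$, which is (ii). Moreover the monotone convergence theorem applied to the partial sums of $\sum_{(j,k)}|\lambda_{j,k}|\,|b_{j,k}|$ shows this function lies in $L^1(B_N)$, hence is finite a.e.\ on $B_N$; therefore $\sum_{(j,k)}\lambda_{j,k}b_{j,k}(x)$ converges absolutely for a.e.\ $x\in B_N$, and, $N$ being arbitrary, for a.e.\ $x\in\rn$, which is (i). The only subtle point is the two-regime splitting: one must pair the small-cube estimate with the summability furnished by $t<r$ and the large-cube estimate with the summability furnished by $n/(\sum_i1/p_i)<t$, and check that the number of dyadic cubes meeting $Q^\ast$ grows like $(2^j\ell(Q^\ast))^n$ for $j\ge j_\ast$ and stays bounded for $j<j_\ast$.
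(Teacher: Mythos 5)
Your proof is correct and takes essentially the same route as the paper's: bound $\int_{Q^\ast}|b_{j,k}|$ by mixed-norm H\"older, split the sum over scales $j$ at the side length of the test cube, count the dyadic cubes of each scale meeting $Q^\ast$, and finish with the sequence H\"older inequality, where the small-cube geometric series converges because $t<r$ and the large-cube one because $n/(\sum_i 1/p_i)<t$. The only cosmetic difference is that the paper normalizes to $Q^\ast=[0,1)^n$ (so $j_\ast=0$) and does the $k$-sum H\"older inside the $j$-sum, whereas you keep a general cube and apply H\"older once over all $(j,k)$.
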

\begin{proof}
	We use the idea from \cite[Propositon 3.4]{ZSTYY23}. 
	
	(i) We first claim that, for any given dyadic cube $Q\in \D$,
	\begin{equation} \label{eq almost every}
		\int_Q \sum_{(j,k)\in\mathbb{Z}^{n+1} } |\lambda_{j,k} |  |b_{j,k} (x)| \d x <\infty .
	\end{equation}
	Without loss of generality, we may assume that $Q := [0,1)^n $. Applying the Tonelli theorem, we obtain
	\begin{equation*}
		\int_Q \sum_{(j,k)\in\mathbb{Z}^{n+1} } |\lambda_{j,k} |  |b_{j,k} (x)|\d x = \sum_{(j,k)\in\mathbb{Z}^{n+1}, Q_{j,k} \cap Q \neq \emptyset }	 |\lambda_{j,k} | \int_Q   |b_{j,k} (x)| \d x .
	\end{equation*}
	By H\"older's inequality and support of $b_{j,k}$, we have
	\begin{align*}
		\int_Q    |b_{j,k} (x)| \d x  & \le |Q \cap Q_{j,k}|^{ \frac{1}{n}  \sum_{i=1}^n   \frac{1}{p_i} }  \left\|  b_{j,k}  \right\|_{L^{\vec p \, ^\prime}} \\
		& \le \min\{ 1, 2^{ -j \sum_{i=1}^n   \frac{1}{p_i}  } \} 2^{-jn   ( 1/t - \frac{1}{n}  ( \sum_{i=1}^n  1/p_{i}) ) } .
	\end{align*}
	Thus by $r >t $, we have
	\begin{align*}
			\sum_{j =0}^\infty \sum_{k \in \mathbb Z^n, Q_{j,k} \subset Q }	 |\lambda_{j,k} | \int_Q   |b_{j,k} (x)| \d x & \le \sum_{j =0}^\infty   2^{-jn /t } \sum_{k \in \mathbb Z^n, Q_{j,k} \subset Q }	 |\lambda_{j,k} | \\
		& \le \sum_{j =0}^\infty    2^{-jn /t }  2^{jn/r} \left( \sum_{k \in \mathbb Z^n, Q_{j,k} \subset Q }	 |\lambda_{j,k} |^{r'} \right)^{1/r'} \\
		& \lesssim \| \{\lambda_{j,k} \}_{ (j,k)\in\mathbb{Z}^{n+1} } \|_{\ell^{r'}} < \infty .
	\end{align*}
	By $n / ( \sum_{i=1}^n  1/p_{i})  < t$, we have
	\begin{align*}
		& \sum_{j =-\infty}^{-1} \sum_{k \in \mathbb Z^n, Q \subset Q_{j,k} }	 |\lambda_{j,k} | \int_Q   |b_{j,k} (x)| \d x \\
		& \le \sum_{j =-\infty}^{-1} 2^{-jn   ( 1/t - \frac{1}{n}  ( \sum_{i=1}^n  1/p_{i}) ) }  \sum_{k \in \mathbb Z^n, Q \subset Q_{j,k} }	 |\lambda_{j,k} |  \\
		& = \sum_{j =-\infty}^{-1} 2^{-jn   ( 1/t - \frac{1}{n}  ( \sum_{i=1}^n  1/p_{i}) ) }   \left( \sum_{k \in \mathbb Z^n, Q \subset Q_{j,k} }	 |\lambda_{j,k} |^{r'} \right)^{1/r'} \\
		& \lesssim \| \{\lambda_{j,k} \}_{ (j,k)\in\mathbb{Z}^{n+1} } \|_{\ell^{r'}} < \infty .
	\end{align*}
	Hence, from the above two estimates, we prove (\ref{eq almost every})  and the claim holds true.
	Using this claim, we find that, for any $Q\in \D$,
	\begin{equation*}
		\left|  \int_Q \sum_{(j,k)\in\mathbb{Z}^{n+1} } \lambda_{j,k}   b_{j,k} (x)\d x \right|  \le \int_Q \sum_{(j,k)\in\mathbb{Z}^{n+1} } |\lambda_{j,k} |  |b_{j,k} (x)| \d x <\infty.
	\end{equation*} 
	Thus, $ \sum_{(j,k)\in\mathbb{Z}^{n+1} } \lambda_{j,k}   b_{j,k} $ converges  almost everywhere  on any $Q\in \D$. From the arbitrariness of $Q \in \D$, we infer that  $ \sum_{(j,k)\in\mathbb{Z}^{n+1} } \lambda_{j,k}   b_{j,k} $
	converges  almost everywhere on $\rn$. Thus we prove (i).
	
	(ii) Applying (\ref{eq almost every}) and the Lebesgue dominated convergence theorem, we have that for any $Q\in \D$,
	\begin{equation*}
		\lim_{N \to \infty} \int_Q \Big|  \sum_{ |j| < N , |k|_\infty < N}  \lambda_{j,k}   b_{j,k} (x) \Big|\d x = \int_Q \Big|  \sum_{ (j,k)\in\mathbb{Z}^{n+1} }  \lambda_{j,k}   b_{j,k} (x) \Big| \d x,
	\end{equation*}
	which, together with (\ref{eq almost every}) again, further implies that the summation (\ref{eq: mix block f}) converges in $L^1_{\operatorname{loc}  }$.
	Here $|k|_\infty = \max \{ k_1, \ldots, k_n\}$.
	Thus we prove (ii).
\end{proof}

\begin{remark}
	Based on Proposition \ref{mix block sum converge}, we find that, in Definition \ref{def mix block space}, instead of the requirement that   the summation (\ref{eq: mix block f}) converges  almost everywhere on $\rn$, if we require that (\ref{eq: mix block f}) converges in $L^1_{\operatorname{loc}  }$, we then obtain the same block space.
\end{remark}

\begin{lemma} \label{block = Lp mixed}
	Let $ 1 \le \vec p <\infty$. Let   $n / ( \sum_{i=1}^n  1/p_{i})  = t < r =  \infty $.
	Then  $\mathcal{H}_{\vec p \, ^\prime}^{t',1}  = L^{\vec p \, ^\prime} $ with coincidence
	of norms.
\end{lemma}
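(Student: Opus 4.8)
The plan is to prove the two inclusions $\mathcal{H}_{\vec p \, ^\prime}^{t',1}\subseteq L^{\vec p \, ^\prime}$ and $L^{\vec p \, ^\prime}\subseteq\mathcal{H}_{\vec p \, ^\prime}^{t',1}$ together with the matching norm identities. Two observations make the bookkeeping transparent. Since $n/(\sum_{i=1}^{n}1/p_{i})=t$, the exponent $1/t-\frac1n\sum_{i=1}^{n}1/p_{i}$ vanishes, so a $(\vec p \, ^\prime,t')$-block supported on a cube $Q$ is exactly a function supported on $Q$ with $\|b\|_{L^{\vec p \, ^\prime}}\le1$; and since $r=\infty$ we have $r'=1$, so a member of $\mathcal{H}_{\vec p \, ^\prime}^{t',1}$ is a sum $f=\sum_{(j,k)}\lambda_{j,k}b_{j,k}$ with $\{\lambda_{j,k}\}\in\ell^{1}$.

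For the inclusion $\mathcal{H}_{\vec p \, ^\prime}^{t',1}\hookrightarrow L^{\vec p \, ^\prime}$, let $f=\sum_{(j,k)}\lambda_{j,k}b_{j,k}$ be an admissible representation. Since $\vec p \, ^\prime\ge1$, the functional $\|\cdot\|_{L^{\vec p \, ^\prime}}$ satisfies the triangle inequality, and combined with the Fatou property (Lemma \ref{lem fatou}) it satisfies the countable triangle inequality; hence
\[
\Big\|\sum_{(j,k)}|\lambda_{j,k}|\,|b_{j,k}|\Big\|_{L^{\vec p \, ^\prime}}\le\sum_{(j,k)}|\lambda_{j,k}|\,\|b_{j,k}\|_{L^{\vec p \, ^\prime}}\le\sum_{(j,k)}|\lambda_{j,k}|=\|\lambda\|_{\ell^{1}} .
\]
As the defining sum converges a.e., $|f|\le\sum_{(j,k)}|\lambda_{j,k}|\,|b_{j,k}|$ a.e., so the lattice property of $L^{\vec p \, ^\prime}$ gives $f\in L^{\vec p \, ^\prime}$ with $\|f\|_{L^{\vec p \, ^\prime}}\le\|\lambda\|_{\ell^{1}}$; taking the infimum over admissible $\lambda$ yields $\|f\|_{L^{\vec p \, ^\prime}}\le\|f\|_{\mathcal{H}_{\vec p \, ^\prime}^{t',1}}$.

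For the reverse inclusion, I would first handle $f\in L^{\vec p \, ^\prime}$ with compact support: then $\operatorname{supp}f$ lies in a cube $Q$, and writing $f=\|f\|_{L^{\vec p \, ^\prime}}\,b$ with $b:=f/\|f\|_{L^{\vec p \, ^\prime}}$ exhibits $f$ as a single-term block sum supported on $Q$ (note $\|b\|_{L^{\vec p \, ^\prime}}=1$), so $\|f\|_{\mathcal{H}_{\vec p \, ^\prime}^{t',1}}\le\|f\|_{L^{\vec p \, ^\prime}}$. For general $f\in L^{\vec p \, ^\prime}$, fix $\epsilon>0$ and choose radii $R_{1}<R_{2}<\cdots\uparrow\infty$ with $\|f\chi_{B_{R_{j}}^{c}}\|_{L^{\vec p \, ^\prime}}\le\epsilon2^{-j}$, which is possible by the absolute continuity of $\|\cdot\|_{L^{\vec p \, ^\prime}}$ (dominated convergence). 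Put $g_{1}:=f\chi_{B_{R_{1}}}$ and $g_{j}:=f\chi_{B_{R_{j}}\setminus B_{R_{j-1}}}$ for $j\ge2$, so that $\sum_{j}g_{j}=f$ pointwise and in $L^{\vec p \, ^\prime}$, each $g_{j}$ being compactly supported. By the first case, each $g_{j}$ equals $\|g_{j}\|_{L^{\vec p \, ^\prime}}$ times a block on a cube; concatenating these expansions (relabeling the cubes by $\mathbb{Z}^{n+1}$) yields an admissible representation of $f$ with
\[
\|\lambda\|_{\ell^{1}}=\sum_{j}\|g_{j}\|_{L^{\vec p \, ^\prime}}\le\|f\|_{L^{\vec p \, ^\prime}}+\sum_{j\ge2}\|f\chi_{B_{R_{j-1}}^{c}}\|_{L^{\vec p \, ^\prime}}\le\|f\|_{L^{\vec p \, ^\prime}}+\epsilon .
\]
Letting $\epsilon\downarrow0$ gives $\|f\|_{\mathcal{H}_{\vec p \, ^\prime}^{t',1}}\le\|f\|_{L^{\vec p \, ^\prime}}$, and with the first inclusion this proves $\mathcal{H}_{\vec p \, ^\prime}^{t',1}=L^{\vec p \, ^\prime}$ with coincidence of norms.

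The main obstacle is the reverse inclusion — specifically, the tail estimate $\|f\chi_{B_{R}^{c}}\|_{L^{\vec p \, ^\prime}}\to0$ as $R\to\infty$ (absolute continuity of the mixed norm, which is where the finiteness of $\vec p \, ^\prime$ enters), together with the combinatorial bookkeeping of packing the block expansions of the pieces $g_{j}$ into the single index set $\{Q_{j,k}\}_{(j,k)\in\mathbb{Z}^{n+1}}$ of Definition \ref{def mix block space} without inflating the $\ell^{1}$ count. The first inclusion is routine.
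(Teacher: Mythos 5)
Your argument follows the same two-step plan as the paper's own proof: for the embedding $\mathcal{H}_{\vec p \, ^\prime}^{t',1}\hookrightarrow L^{\vec p \, ^\prime}$ you apply the countable triangle inequality of $\|\cdot\|_{L^{\vec p \, ^\prime}}$ to an admissible block representation, and for the converse you exhaust $\rn$ by an increasing sequence (balls $B_{R_j}$ for you, cubes $Q_j$ centered at the origin in the paper) chosen via absolute continuity of the mixed norm so that the tail pieces have $L^{\vec p \, ^\prime}$-norms summing to $O(\epsilon)$, then treat each annular piece as a single rescaled block using the observation that $1/t-\tfrac1n\sum_{i=1}^n 1/p_i=0$. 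Your write-up is in fact somewhat more careful, making this normalization explicit and spelling out the role of Lemma~\ref{lem fatou}.

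There is, however, one shared subtlety that you flag but do not resolve, and which the paper silently inherits as well. In Definition~\ref{def mix block space} each $b_{j,k}$ must be supported on a \emph{dyadic} cube $Q_{j,k}$, whereas your pieces $g_j$ (and the paper's $\chi_{Q_j\setminus Q_{j-1}}f$) are supported on balls, respectively cubes centered at the origin, none of which is contained in a single dyadic cube. After splitting $g_1$ across the $2^n$ quadrants to obtain dyadic supports, the $\ell^1$ count is generically inflated, since for disjoint sets $\{R_i\}$ one only has $\sum_i\|g\chi_{R_i}\|_{L^{\vec p \, ^\prime}}\ge\|g\|_{L^{\vec p \, ^\prime}}$, not $\le$. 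Concretely, for $n=1$, $\vec p=(2)$ (hence $t=t'=2$) and $f=\chi_{[-1,1)}$ one has $\|f\|_{L^{2}}=\sqrt2$, while $\|f\|_{\mathcal{H}_{2}^{2,1}}=2$: every dyadic interval lies entirely in $[0,\infty)$ or entirely in $(-\infty,0)$, so your forward inequality applied to $f\chi_{[0,\infty)}$ and $f\chi_{(-\infty,0)}$ forces the partial $\ell^1$-sums over each half-line to be at least $1$. Thus what the exhaustion argument actually yields is $\|f\|_{\mathcal{H}_{\vec p \, ^\prime}^{t',1}}\lesssim\|f\|_{L^{\vec p \, ^\prime}}$ (with a dimensional constant from the quadrant splitting) rather than the exact coincidence of norms stated in the lemma; this is an imprecision in the lemma and the paper's proof, which your argument faithfully reproduces rather than introduces.
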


\begin{proof}
	We use the idea from	\cite[Proposition 339]{SDH20}.
	If $f \in \mathcal{H}_{\vec p \, ^\prime}^{t',1} $. Then there exist a sequence $ \{ \lambda_{j,k}\} _{  (j,k)  \in \mathbb Z^{1+n} } \in  \ell^1  $ and a sequence $\{ b_{j,k} \}_{(j,k)  \in \mathbb Z^{1+n}  }$ of  $(\vec p \, ^\prime,t') $-blocks such that  $f = \sum_{ (j,k) \in \mathbb Z^{1+n}  }   \lambda_{j,k}  b_{j,k} $  and
	\begin{equation*}
		\sum_{  (j,k) \in \mathbb Z^{1+n} }    | \lambda_{j,k}|    \le  (1+\epsilon)	\| f\|_{\mathcal{H}_{\vec p \, ^\prime}^{t',1}  }.
	\end{equation*}
	Then by Minkowski's inequality, we have
	\begin{align*}
		\left\| f  \right\|_{ L^{\vec p \, ^\prime}  }  
		& \le \sum_{ (j,k) \in \mathbb Z^{1+n}  }   | \lambda_{j,k}  | \|  b_{j,k} \|_{ L^{\vec p \, ^\prime}  } 
		\le 	\sum_{  (j,k) \in \mathbb Z^{1+n} }    | \lambda_{j,k}|    \le  (1+\epsilon)	\| f\|_{\mathcal{H}_{\vec p \, ^\prime}^{t',1}  } .
	\end{align*}	
	For the another direction, let $f \in  L^{\vec p \, ^\prime} $. Fix $\epsilon>0$. Then there is a decomposition 
	\begin{equation*}
		f = \chi_{Q_1} f + \sum_{j=2}^\infty  \chi_{Q_j \backslash Q_{j-1}} f, 
	\end{equation*}
	where $ \{ Q_j\}_{j=1}^\infty  $ is an increasing sequence of cubes centered at the origin satisfying 
	\begin{equation*}
		\| \chi_{\rn \backslash  Q_k } f \|_{  L^{\vec p \, ^\prime}  } \le \epsilon 2^{-k}
	\end{equation*}
	for all $k \in \mathbb N$. Then using this decomposition, we have
	\begin{equation*}
		\| f\|_{\mathcal{H}_{\vec p \, ^\prime}^{t',1}  }  \le \| f\|_{ L^{p'}  }  + \sum_{k=1}^\infty \epsilon 2^{-k}.
	\end{equation*}
	Thus $f \in \mathcal{H}_{\vec p \, ^\prime}^{t',1} $.  Thus the proof is complete.
\end{proof}

The following lemma indicates how to generate blocks.
\begin{lemma} \label{generate mix block}
	Let $ 1 < \vec p <\infty$. Let   $1 < n / ( \sum_{i=1}^n  1/p_{i})   < t <r<\infty $ or $ 1< n / ( \sum_{i=1}^n  1/p_{i})  \le t< r =\infty  $.
	Let $f$ be an $L^{\vec  p \; ^\prime } $ function supported	on a cube $Q \in \D$. Then $\| f\|_ { \H_{\vec p \, ^\prime}^{t',r'} }  \le \| f\| _{ L^{\vec p \, ^\prime} }   |Q| ^{ \frac{1}{n}  ( \sum_{i=1}^n  1/p_{i})  -  1/t  }   $.
\end{lemma}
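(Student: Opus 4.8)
The plan is to exhibit $f$ as a single renormalised block, so that its block-space norm is controlled immediately. First I would dispose of the trivial case: if $\|f\|_{L^{\vec p \, ^\prime}}=0$, then $f=0$ almost everywhere and both sides of the asserted inequality vanish, so nothing is to be proved. Assume henceforth $\|f\|_{L^{\vec p \, ^\prime}}\in(0,\infty)$, finiteness being part of the hypothesis $f\in L^{\vec p \, ^\prime}$.

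Next I would set $\Lambda:=\|f\|_{L^{\vec p \, ^\prime}}\,|Q|^{\frac1n(\sum_{i=1}^n 1/p_i)-1/t}$ and $b:=\Lambda^{-1}f$. The one computation needed is that $b$ is a $(\vec p \, ^\prime,t')$-block supported on $Q$: indeed $b$ is supported on $Q$ because $f$ is, and
$\|b\|_{L^{\vec p \, ^\prime}}=\Lambda^{-1}\|f\|_{L^{\vec p \, ^\prime}}=|Q|^{1/t-\frac1n(\sum_{i=1}^n 1/p_i)}$,
which is exactly the normalisation demanded in the definition of a $(\vec p \, ^\prime,t')$-block (here I use $n/(\sum_i 1/p_i)<t$, resp. $\le t$, only to know the exponent is meaningful). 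Since $Q\in\D$, we may write $Q=Q_{j_0,k_0}$ for a unique index $(j_0,k_0)\in\mathbb Z^{n+1}$, so $b$ is a $(\vec p \, ^\prime,t')$-block supported on $Q_{j_0,k_0}$.

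Finally I would choose the admissible coefficient sequence $\lambda=\{\lambda_{j,k}\}_{(j,k)\in\mathbb Z^{n+1}}$ by $\lambda_{j_0,k_0}:=\Lambda$ and $\lambda_{j,k}:=0$ otherwise, with $b_{j_0,k_0}:=b$ and $b_{j,k}$ any fixed $(\vec p \, ^\prime,t')$-block supported on $Q_{j,k}$ for the remaining indices (whose coefficients vanish, so the choice is irrelevant). Then $\sum_{(j,k)\in\mathbb Z^{n+1}}\lambda_{j,k}b_{j,k}=\Lambda b=f$ is a one-term sum, hence converges everywhere, and $\lambda\in\ell^{r'}(\mathbb Z^{n+1})$ with $\|\lambda\|_{\ell^{r'}}=\Lambda$ (whether $r'=1$, coming from $r=\infty$, or $r'\in(1,\infty)$). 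By the definition of $\|\cdot\|_{\mathcal{H}_{\vec p \, ^\prime}^{t',r'}}$ as the infimum over all such decompositions — equivalently, by Remark \ref{block norm 1} together with the homogeneity of that norm — we obtain $\|f\|_{\mathcal{H}_{\vec p \, ^\prime}^{t',r'}}\le\|\lambda\|_{\ell^{r'}}=\Lambda=\|f\|_{L^{\vec p \, ^\prime}}\,|Q|^{\frac1n(\sum_{i=1}^n 1/p_i)-1/t}$, which is the claim. I do not expect any genuine obstacle: the hypotheses $1<\vec p<\infty$ and the constraints on $t,r$ serve only to guarantee that $\vec p \, ^\prime$ and the space $\mathcal{H}_{\vec p \, ^\prime}^{t',r'}$ are well-defined in the sense of Definition \ref{def mix block space} and Proposition \ref{mix block sum converge}, and the single-block decomposition above plainly respects them.
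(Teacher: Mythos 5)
Your proposal is correct and is essentially the paper's own argument: define $b$ as the renormalisation $|Q|^{1/t-\frac1n\sum_i 1/p_i}f/\|f\|_{L^{\vec p\,^\prime}}$, observe it is a $(\vec p\,^\prime,t')$-block supported on $Q$, and conclude from the one-term decomposition and the infimum definition of $\|\cdot\|_{\mathcal H_{\vec p\,^\prime}^{t',r'}}$. You merely spell out the choice of coefficient sequence and the degenerate case $f=0$ slightly more explicitly than the paper does.
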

\begin{proof}
	Assume that $f$ is not zero almost everywhere, let 
	\begin{equation*}
		b := \frac{   |Q| ^{  1/t - \frac{1}{n}  ( \sum_{i=1}^n  1/p_{i})   }    }{ \|f\|_{L^{\vec p \, ^\prime}    }  } f.
	\end{equation*}
	Then $b$ is supported on $Q \in \D$, and 
	\begin{equation*}
		\| b \|_{L^{\vec  p'}  }  =     |Q| ^{  1/t - \frac{1}{n}  ( \sum_{i=1}^n  1/p_{i})   }     .  
	\end{equation*}
	Hence $b$ is a $ (\vec p \, ^\prime,t')$-block and $\|b\|_{  \H_{\vec p \, ^\prime}^{t',r'} }  \le  1$. 
	Equating this inequality, we obtain the desired result.
\end{proof}

\begin{theorem} \label{dense block}
	Let $ 1 < \vec p <\infty$. Let   $1 < n / ( \sum_{i=1}^n  1/p_{i})   < t <r<\infty $ or $ 1< n / ( \sum_{i=1}^n  1/p_{i})  \le t< r =\infty  $. Then $L^{\vec p \, ^\prime}  _c $ is dense in $\H_{\vec p \, ^\prime}^{t',r'}  $.
\end{theorem}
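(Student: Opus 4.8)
The plan is to show that $L^{\vec p\,^\prime}_c$ is dense in $\H_{\vec p\,^\prime}^{t',r'}$ by a two-stage argument: first approximate a general element of $\H_{\vec p\,^\prime}^{t',r'}$ by a finite block sum, then approximate that finite block sum by compactly supported $L^{\vec p\,^\prime}$ functions. The second stage is essentially immediate, since every block $b_{j,k}$ is by definition an $L^{\vec p\,^\prime}$ function with compact support (supported on $Q_{j,k}$), so any finite linear combination $\sum_{(j,k)\in F}\lambda_{j,k}b_{j,k}$ with $F\subset\mathbb Z^{n+1}$ finite already lies in $L^{\vec p\,^\prime}_c$. Thus the crux is the first stage: given $f=\sum_{(j,k)\in\mathbb Z^{n+1}}\lambda_{j,k}b_{j,k}$ with $\|\lambda\|_{\ell^{r'}}\le(1+\epsilon)\|f\|_{\H_{\vec p\,^\prime}^{t',r'}}$ (using the definition of the infimum norm), I would set $f_N:=\sum_{(j,k)\in F_N}\lambda_{j,k}b_{j,k}$ where $F_N$ is an increasing exhaustion of $\mathbb Z^{n+1}$ by finite sets (say $|j|\le N$, $|k|_\infty\le N$), and estimate $\|f-f_N\|_{\H_{\vec p\,^\prime}^{t',r'}}$.

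For the key estimate, note that $f-f_N=\sum_{(j,k)\notin F_N}\lambda_{j,k}b_{j,k}$ is itself a block decomposition with coefficient sequence $\lambda$ restricted to the complement of $F_N$, which is admissible by Proposition \ref{mix block sum converge} (the tail still lies in $\ell^{r'}$ and the sum converges a.e.\ and in $L^1_{\operatorname{loc}}$). Hence by the definition of the $\H_{\vec p\,^\prime}^{t',r'}$ norm,
\begin{equation*}
	\|f-f_N\|_{\H_{\vec p\,^\prime}^{t',r'}}\le\Big(\sum_{(j,k)\notin F_N}|\lambda_{j,k}|^{r'}\Big)^{1/r'},
\end{equation*}
and the right-hand side tends to $0$ as $N\to\infty$ because $\lambda\in\ell^{r'}$ with $r'<\infty$ (here we use $r>1$, which holds since $r>t>1$; note the endpoint case $r=\infty$, i.e.\ $r'=1$, is still fine as $\ell^1$ tails vanish). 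Combining, for $N$ large we get $\|f-f_N\|_{\H_{\vec p\,^\prime}^{t',r'}}\le\epsilon$, and since $f_N\in L^{\vec p\,^\prime}_c$ and $\epsilon>0$ is arbitrary, density follows.

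The main obstacle, such as it is, is verifying that the tail $\sum_{(j,k)\notin F_N}\lambda_{j,k}b_{j,k}$ is a legitimate element of $\H_{\vec p\,^\prime}^{t',r'}$ with the claimed norm bound — that is, that truncating a block decomposition yields a block decomposition of the tail with the obvious coefficient sequence. This is where Proposition \ref{mix block sum converge} is needed to guarantee convergence of the truncated sum, and one should also check that $f-f_N$ genuinely equals this tail sum a.e.\ (which again follows from the a.e.\ convergence in Proposition \ref{mix block sum converge}(i), since partial sums over $F_N$ converge a.e.\ to $f$). Everything else is bookkeeping with the definition of the infimum norm and the monotone convergence of $\ell^{r'}$ tails. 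An alternative, slightly cleaner route would be to first establish $L^{\vec p\,^\prime}_c\subset\H_{\vec p\,^\prime}^{t',r'}$ via Lemma \ref{generate mix block}, then observe that finite block sums are dense by the tail argument above and that finite block sums lie in $L^{\vec p\,^\prime}_c$; but the two formulations are equivalent.
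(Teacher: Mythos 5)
Your proposal is correct and follows essentially the same route as the paper's proof: write $f$ as an admissible block sum with coefficients $\lambda\in\ell^{r'}$ close to optimal, truncate to the finite sum $f_N$ over indices $|(j,k)|_\infty\le N$, note $f_N\in L^{\vec p\,^\prime}_c$ since it is a finite sum of compactly supported $L^{\vec p\,^\prime}$ functions, and bound $\|f-f_N\|_{\H_{\vec p\,^\prime}^{t',r'}}$ by the $\ell^{r'}$-norm of the tail of $\lambda$, which vanishes as $N\to\infty$ since $1\le r'<\infty$. The extra care you take in verifying that the tail sum is itself an admissible block decomposition (via Proposition \ref{mix block sum converge}) makes the argument slightly more explicit than the paper's, but the mechanism is identical.
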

\begin{proof}
	We use the idea from \cite[Theorem 345]{SDH20}.
	Since $ f \in \H_{\vec p \, ^\prime}^{t',r'} $, there exist a sequence $\{ \lambda_{j,k} \}_{ (j,k) \in \mathbb Z ^{1+n} }  \in \ell ^{r'} $  and a sequence  $(\vec p \, ^\prime,t')$-block $\{  b_{j,k} \}  _{ (j,k) \in \mathbb Z ^{1+n} } $ 
	such that $f 	=\sum_{(j,k)\in\mathbb{Z}^{n+1}}\lambda_{j,k}b_{j,k} $  and $\| \{ \lambda_{j,k} \} \|_{\ell ^{r'}}  \le \| f\|_{\H_{\vec p \, ^\prime}^{t',r'} }    +\epsilon$.	
	Define 
	\begin{equation} \label{f_N dense}
		f_N = \sum_{|(j,k)|_\infty  \le N}\lambda_{j,k}b_{j,k},
	\end{equation}
	where $|(j,k)|_\infty  = \max  \{ |j|, |k_1|, \ldots, |k_n| \}  $.
	Since $r' \in [1,\infty)$, we get
	\begin{equation*}
		\| f -f_N \|_{ \H_{\vec p \, ^\prime}^{t',r'} }  \le  \left( \sum_{|j| \le N ,|k|_\infty  > N} | \lambda_{j,k} |^{r'}  + \sum_{ |j| > N ,|k|_\infty  \le  N}| \lambda_{j,k} |^{r'}  \right)^{1/r'}   \to 0 
	\end{equation*}
	as $N\to \infty.$ It is not hard to show $f_N \in L^{\vec p \, ^\prime } $,  (for example, see \cite[Theorem 4.3]{BGX25}).	 
	Hence
	$L^{\vec p \, ^\prime}  _c $ is dense in $\H_{\vec p \, ^\prime}^{t',r'} $. 
\end{proof}

From the above theorem, when we investigate $  \H_{\vec p \, ^\prime}^{t',r'}$, the space $L^{\vec p \, ^\prime}  _c $ is a useful space. When considering the action of  linear operators defined  and continuous on    $ \H_{\vec p \, ^\prime}^{t',r'}$ and $L^{\vec p \, ^\prime}  _c $, it will be helpful to have a finite decomposition in $L^{\vec p \, ^\prime}  _c  $.
The following result says that each function $f \in L^{\vec p \, ^\prime}  _c $ has a finite admissible expression. That is, the sum is finite.
\begin{theorem}\label{finite decom}
	Let $ 1 < \vec p <\infty$. Let   $1 < n / ( \sum_{i=1}^n  1/p_{i})   < t <r<\infty $ or $ 1< n / ( \sum_{i=1}^n  1/p_{i})  \le t< r =\infty  $. Then each $ f \in L^{\vec p \, ^\prime}  _c $ admits the finite decomposition $ f  = \sum_{v=1}^M \lambda_v b_v $ where $ \lambda_1, \lambda_2, \ldots, \lambda_M \ge 0 $ and each $ b_v $ is a $(\vec p \, ^\prime,t')$-block. Furthermore, 
	\begin{equation*}
		\| f\|_{\H_{\vec p \, ^\prime}^{t',r'}} \approx \inf_\lambda \left( \sum_{v=1}^M \lambda_v ^{r'} \right)^{1/r'},
	\end{equation*}
	where $ \lambda = \{ \lambda_v\}_{v=1}^M $ runs over all finite admissible expressions:
	\begin{equation*}
		f= \sum_{v=1}^M \lambda_v b_v,
	\end{equation*}
	$ \lambda_1, \lambda_2, \ldots, \lambda_M \ge 0 $ and each $ b_v $ is a $(
	\vec p \, ^\prime,t')$-block for each $v = 1,2,\ldots ,M$.
\end{theorem}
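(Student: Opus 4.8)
The plan is as follows. One inequality is free: appending zero terms turns any finite admissible expression into an admissible expression in the sense of Definition~\ref{def mix block space} (a finite admissible expression being, as there, of the form $f=\sum_{(j,k)\in F}\lambda_{j,k}b_{j,k}$ with $F\subset\mathbb{Z}^{n+1}$ finite and $b_{j,k}$ a block on $Q_{j,k}$), so $\|f\|_{\H_{\vec{p}^{\,\prime}}^{t',r'}}\le\inf_\lambda\bigl(\sum_{v=1}^M\lambda_v^{r'}\bigr)^{1/r'}$. A finite admissible expression always exists: if $f\in L^{\vec{p}^{\,\prime}}_c$ is supported in a dyadic cube $R_0=Q_{j_0,k_0}$, then by the proof of Lemma~\ref{generate mix block} the function $f=f\chi_{R_0}$ is a single $(\vec{p}^{\,\prime},t')$-block on $R_0$ times the constant $\|f\|_{L^{\vec{p}^{\,\prime}}}|R_0|^{\frac1n\sum_{i=1}^n1/p_i-1/t}$. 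Thus the whole content of the theorem is the reverse estimate $\inf_\lambda\bigl(\sum_{v=1}^M\lambda_v^{r'}\bigr)^{1/r'}\lesssim\|f\|_{\H_{\vec{p}^{\,\prime}}^{t',r'}}$. In the endpoint case $t=n/(\sum_{i=1}^n1/p_i)$, $r=\infty$ this is trivial, since then $\H_{\vec{p}^{\,\prime}}^{t',1}=L^{\vec{p}^{\,\prime}}$ by Lemma~\ref{block = Lp mixed} and, every block having $L^{\vec{p}^{\,\prime}}$-norm at most one, the single-block expression already realises the norm; from now on I assume $1<n/(\sum_{i=1}^n1/p_i)<t\le r$.

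The first step is to reduce to dyadic step functions. Fix $f\in L^{\vec{p}^{\,\prime}}_c$ supported in a dyadic cube $R_0$ of generation $m_0$, and $\epsilon>0$. Since $1<\vec p<\infty$, the operator $\M$ is bounded on $L^{\vec{p}^{\,\prime}}$ (Lemma~\ref{doob}), so $|\mathbb{E}_m f|\le\M f\in L^{\vec{p}^{\,\prime}}$ and dominated convergence in $L^{\vec{p}^{\,\prime}}$ gives $\mathbb{E}_m f\to f$ in $L^{\vec{p}^{\,\prime}}$ as $m\to\infty$. For $m\ge m_0$ the function $\mathbb{E}_m f$ is a dyadic step function of scale $2^{-m}$ supported in $R_0$, while $f-\mathbb{E}_m f$ is an $L^{\vec{p}^{\,\prime}}$-function supported in $R_0\in\D$; Lemma~\ref{generate mix block} writes $f-\mathbb{E}_m f=\nu_m c_m$ with $c_m$ a $(\vec{p}^{\,\prime},t')$-block on $R_0$, $\nu_m=\|f-\mathbb{E}_m f\|_{L^{\vec{p}^{\,\prime}}}|R_0|^{\frac1n\sum_{i=1}^n1/p_i-1/t}\to0$, and moreover $\|\mathbb{E}_m f\|_{\H_{\vec{p}^{\,\prime}}^{t',r'}}\le\|f\|_{\H_{\vec{p}^{\,\prime}}^{t',r'}}+\nu_m$. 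Hence it suffices to prove $\inf_\lambda\bigl(\sum_v\lambda_v^{r'}\bigr)^{1/r'}\le C\|g\|_{\H_{\vec{p}^{\,\prime}}^{t',r'}}$ for \emph{dyadic step functions} $g$ supported in a dyadic cube, with $C$ independent of $g$: writing $f=\mathbb{E}_m f+\nu_m c_m$ and letting first $m\to\infty$ and then $\epsilon\to0$ then yields the assertion for $f$.

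For such a $g$, of scale $2^{-m}$ and supported in $R_0$, pick an admissible decomposition $g=\sum_{(j,k)\in\mathbb{Z}^{n+1}}\lambda_{j,k}b_{j,k}$ with $\|\{\lambda_{j,k}\}\|_{\ell^{r'}}\le\|g\|_{\H_{\vec{p}^{\,\prime}}^{t',r'}}+\epsilon$; by Proposition~\ref{mix block sum converge} it converges in $L^1_{\operatorname{loc}}$, so applying the ($L^1_{\operatorname{loc}}$-continuous) operator $h\mapsto\mathbb{E}_m(h)\chi_{R_0}$ gives $g=\sum_{(j,k)}\lambda_{j,k}\,\mathbb{E}_m(b_{j,k})\chi_{R_0}$. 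The structural fact driving everything is that $\mathbb{E}_m$ sends blocks to blocks: when $Q_{j,k}$ has side $\ge2^{-m}$, $\mathbb{E}_m(b_{j,k})$ is supported in $Q_{j,k}$ with $\|\mathbb{E}_m(b_{j,k})\|_{L^{\vec{p}^{\,\prime}}}\le\|\M b_{j,k}\|_{L^{\vec{p}^{\,\prime}}}\lesssim|Q_{j,k}|^{1/t-\frac1n\sum_{i=1}^n1/p_i}$, a bounded multiple of a block on $Q_{j,k}$; when $Q_{j,k}$ has side $<2^{-m}$, $\mathbb{E}_m(b_{j,k})=\bigl(\tfrac1{|Q^{(m)}|}\int b_{j,k}\bigr)\chi_{Q^{(m)}}$ for the generation-$m$ cube $Q^{(m)}\supseteq Q_{j,k}$, and the estimate $\bigl|\int b_{j,k}\bigr|\le\|b_{j,k}\|_{L^{\vec{p}^{\,\prime}}}|Q_{j,k}|^{\frac1n\sum_{i=1}^n1/p_i}\le|Q_{j,k}|^{1/t}$ shows $\|\mathbb{E}_m(b_{j,k})\|_{L^{\vec{p}^{\,\prime}}}\le|Q_{j,k}|^{1/t}|Q^{(m)}|^{-\frac1n\sum_{i=1}^n1/p_i}\le|Q^{(m)}|^{1/t-\frac1n\sum_{i=1}^n1/p_i}$, i.e.\ a block on $Q^{(m)}$. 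After restriction to $R_0$, every summand is therefore supported in one of the \emph{finitely many} dyadic cubes $Q\subseteq R_0$ of side in $[2^{-m},2^{-m_0}]$, or in $R_0$ itself (the contributions of the infinitely many dyadic cubes strictly containing $R_0$ being collapsed into one function on $R_0$). Grouping the summands according to this cube produces a \emph{finite} admissible expression $g=\sum_v\mu_v c_v$ in which each $\mu_v$ equals the $L^{\vec{p}^{\,\prime}}$-norm of the corresponding (sub)series times the relevant power of $|Q|$. Estimating each such series by the triangle inequality in $L^{\vec{p}^{\,\prime}}$ and then applying H\"older's inequality in $\ell^{r'}$ over the scales involved, one arrives at $\bigl(\sum_v\mu_v^{r'}\bigr)^{1/r'}\le C(n,\vec p,t,r)\,\|\{\lambda_{j,k}\}\|_{\ell^{r'}}$; passing to the infimum over $\lambda$ and letting $\epsilon\to0$ completes the argument.

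The technical heart — and the place where hidden subtleties are most likely — is this last $\ell^{r'}$-estimate on the grouped coefficients, which hinges on two geometric series. The first comes from the dyadic cubes $Q\supseteq R_0$: after restriction to $R_0$ their blocks have $L^{\vec{p}^{\,\prime}}$-norm at most $|Q|^{1/t-\frac1n\sum_{i=1}^n1/p_i}=|R_0|^{1/t-\frac1n\sum_{i=1}^n1/p_i}(|R_0|/|Q|)^{\frac1n\sum_{i=1}^n1/p_i-1/t}$, which decays geometrically in $\log(|Q|/|R_0|)$ precisely because $\frac1n\sum_{i=1}^n1/p_i>1/t$, so H\"older turns the coarse part into one block on $R_0$ with coefficient $\lesssim\|\{\lambda_{j,k}\}\|_{\ell^{r'}}$. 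The second comes from the scales finer than $2^{-m}$ inside a fixed generation-$m$ cube: the sharp bound $\|\mathbb{E}_m(b_{j,k})\|_{L^{\vec{p}^{\,\prime}}}\le|Q_{j,k}|^{1/t}|Q^{(m)}|^{-\frac1n\sum_{i=1}^n1/p_i}$ contributes a factor $2^{-jn/t}$ at scale $2^{-j}$, which against the counting factor $2^{(j-m)n/r}$ produced by H\"older at that scale yields a convergent series exactly when $t<r$. Thus both parameter restrictions $n/(\sum_{i=1}^n1/p_i)<t$ and $t<r$ enter decisively, in the same way as in the scalar arguments of \cite{HNSH23} and \cite[Theorem~345]{SDH20}.
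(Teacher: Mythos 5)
The paper does not actually give its own proof of Theorem~\ref{finite decom}: it simply refers to \cite[Theorem~4.4]{BGX25}, so there is no in-paper argument to compare against. On its own merits your proposal is correct in outline and the strategy is sound: reduce via $\mathbb{E}_m$ to dyadic step functions, push $\mathbb{E}_m(\cdot)\chi_{R_0}$ through an infinite admissible decomposition (this is legitimate because Proposition~\ref{mix block sum converge} gives $L^1_{\operatorname{loc}}$-convergence and $\mathbb{E}_m$ is $L^1_{\operatorname{loc}}$-continuous), regroup the blocks according to the dyadic subcubes of $R_0$ of generation between $m_0$ and $m$ plus $R_0$ itself, and then control the grouped coefficients by two geometric series. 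I checked both series: the coarse one ($j<m_0$) converges because $1/t-\tfrac1n\sum1/p_i<0$, and the fine one ($j>m$) converges because $t<r$, with constants uniform in $m$ and $m_0$. The endpoint $t=n/(\sum1/p_i)$, $r=\infty$ is correctly dispatched via Lemma~\ref{block = Lp mixed}, and the passage $N(f)\le N(\mathbb{E}_m f)+\nu_m\le C\|\mathbb{E}_m f\|_{\H}+\nu_m\le C\|f\|_{\H}+(C+1)\nu_m$ followed by $m\to\infty$ closes the reduction.

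There is one genuine gap you should fix. You assume throughout that $f\in L^{\vec p\,'}_c$ is supported in a single dyadic cube $R_0$. This can fail: a compactly supported function straddling a coordinate hyperplane (e.g.\ supported on $[-1,1]^n$) is not contained in any $Q\in\D$, since the dyadic grid $\D$ at every scale places $0$ at a corner. Both of your opening claims — that a finite admissible expression exists at all, and that one may reduce to the single-cube case — therefore need a preliminary step: split $f=\sum_{v=1}^{2^n}f\chi_{O_v}$, where $O_1,\dots,O_{2^n}$ are the open coordinate orthants (so each $f\chi_{O_v}$ lies in a dyadic cube of large enough side), note $\|f\chi_{O_v}\|_{\H_{\vec p\,'}^{t',r'}}\le\|f\|_{\H_{\vec p\,'}^{t',r'}}$ by the lattice property (Lemma~\ref{lem lattice block}), run your argument on each piece, and concatenate the finite decompositions. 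The resulting constant picks up a harmless factor $2^{n/r'}$. A second, purely cosmetic point: where you invoke ``$\M$'' for the maximal operator dominating $\mathbb{E}_m$ and bounded on $L^{\vec p\,'}$, the relevant operator is the martingale maximal $\mathbb{M}$ of Lemma~\ref{doob}, not the Hardy--Littlewood $\M$; either works, but the citation should match the symbol.
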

\begin{proof}
	The proof is similar to \cite[Theorem 4.4]{BGX25} and we omit it here.	
\end{proof}

\begin{remark}
		Let $ 1 < \vec p <\infty$. Let   $1 < n / ( \sum_{i=1}^n  1/p_{i})   < t <r<\infty $ or $ 1< n / ( \sum_{i=1}^n  1/p_{i})  \le t< r =\infty  $. Then  $\H_{\vec p \, ^\prime}^{t',r'}  $ is separable. The proof is similar with Theorem \ref{separable mixed BM}.
\end{remark}

In \cite[Theorem 2.7]{N192}, Nogayama showed that the predual space of  $M_{\vec p}^{t,\infty} $ is $\mathcal{H}_{\vec p \, ^\prime}^{t',1} $.
Before proving the following result,   we give notation for the mixed Lebesgue norm $\| \cdot \|_{L^{\vec p} }$. The mapping
\begin{equation*}
	(x_2, \ldots , x_n) \in \mathbb R^{n-1} \mapsto \| f\|_{ (p_1) }  (x_2, \ldots , x_n) := \left(\int_{\mathbb R}  |f (x_1, x_2, \ldots, x_n) |^{p_1} \d x_1 \right) ^{1/p_1}
\end{equation*}
is a measurable function  on  $\mathbb R^{n-1}$. Moreover, let
\begin{equation*}
	\| f \|_{ \vec q } =\| f \|_{ (p_1, \ldots, p_j) } := \left\|  [ \| f\|_{ (p_1, \ldots, p_{j-1})  } ]  \right\|_{  (  p_j ) },
\end{equation*}
where $\| f \|_{ (p_1, \ldots, p_{j-1}) }$ denotes $|f|$ if $j=1$  and $\vec q = (p_1, \ldots, p_j), j\le n$.
Then $ \| f \|_{ \vec q  }$  is a measurable function of $ (x_{j+1}, \ldots, x_n )$ for $j <n$.

\begin{theorem} \label{predual mix BM}
	Let $ 1 < \vec p <\infty$. Let   $1 < n / ( \sum_{i=1}^n  1/p_{i})   < t <r<\infty $ or $ 1< n / ( \sum_{i=1}^n  1/p_{i})  \le t< r =\infty  $.
	Then the dual  space of
	$\mathcal{H}_{\vec p \, ^\prime}^{t',r'} $, denoted by $ \Big( \mathcal{H}_{\vec p \, ^\prime}^{t',r'} \Big)^* $, is
	$M_{\vec p}^{t,r} $, that is,  
	\begin{equation*}
		\Big(  \mathcal{H}_{\vec p \, ^\prime}^{t',r'} \Big)^*   = M_{\vec p}^{t,r} 
	\end{equation*}
	in the following sense:
	
	{\rm (i)} if $f \in M_{\vec p}^{t,r} $, then the linear functional 
	\begin{equation} \label{Jf}
		L_f : f \to  L_f (g) :=   \int_\rn   g (x) f (x)  \d x
	\end{equation}
	is bounded on $ \mathcal{H}_{\vec p \, ^\prime}^{t',r'} $.
	
	{\rm (ii)} conversely, any continuous linear functional on $\mathcal{H}_{\vec p \, ^\prime}^{t',r'} $ arises as in  (\ref{Jf}) with a unique $f \in  M_{\vec p}^{t,r}  $.
	
	Moreover, $\|f\|_{   M_{\vec p}^{t,r}  }  = \| L_f \|_{ ( \mathcal{H}_{\vec p \, ^\prime}^{t',r'}  )^*}$ and 
	\begin{equation} \label{H = max M le 1}
		\|g\|_{\mathcal{H}_{\vec p \, ^\prime}^{t',r'}   } =\max \left\{\left|\int_\rn  g (x) f (x)  \d x    \right|: f\in   M_{\vec p}^{t,r} , \|f\|_{   M_{\vec p}^{t,r} } \le 1 \right\}.
	\end{equation}
\end{theorem}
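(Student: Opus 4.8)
The plan is to split according to whether $r$ is finite. When $r=\infty$ one has $r'=1$ and $M_{\vec p}^{t,\infty}$ is the mixed Morrey space, so the identity $(\mathcal{H}_{\vec p \, ^\prime}^{t',1})^*=M_{\vec p}^{t,\infty}$ is precisely \cite[Theorem 2.7]{N192}; from now on I assume $1<n/(\sum_{i=1}^n 1/p_i)<t<r<\infty$. The tools needed are the duality $(L^{\vec p})^*=L^{\vec p \, ^\prime}$ with equality of norms (Lemma \ref{dual mixed Lp}), the H\"older inequality on mixed Lebesgue spaces (Lemma \ref{Holder mixed}), the block generation Lemma \ref{generate mix block}, the density of $L^{\vec p \, ^\prime}_c$ in $\mathcal{H}_{\vec p \, ^\prime}^{t',r'}$ (Theorem \ref{dense block}), and the elementary duality $(\ell^{r'})^*=\ell^r$. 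To prove (i), I would write $g=\sum_{(j,k)\in\mathbb Z^{n+1}}\lambda_{j,k}b_{j,k}\in\mathcal{H}_{\vec p \, ^\prime}^{t',r'}$ with each $b_{j,k}$ a $(\vec p \, ^\prime,t')$-block on $Q_{j,k}$; H\"older's inequality gives $\int_\rn|b_{j,k}||f|\le|Q_{j,k}|^{\frac{1}{t}-\frac{1}{n}\sum_{i=1}^{n}\frac{1}{p_i}}\|f\chi_{Q_{j,k}}\|_{L^{\vec p}}$, and the $\ell^{r'}$-$\ell^{r}$ H\"older inequality then gives $\sum_{(j,k)}|\lambda_{j,k}|\int_\rn|b_{j,k}||f|\le\|\lambda\|_{\ell^{r'}}\|f\|_{M_{\vec p}^{t,r}}$. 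Hence $\int_\rn gf$ converges absolutely, the term-by-term integration is legitimate, and passing to the infimum over admissible $\lambda$ yields $|L_f(g)|\le\|g\|_{\mathcal{H}_{\vec p \, ^\prime}^{t',r'}}\|f\|_{M_{\vec p}^{t,r}}$, i.e.\ $L_f$ is bounded with $\|L_f\|\le\|f\|_{M_{\vec p}^{t,r}}$.

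Next I would prove the reverse inequality $\|f\|_{M_{\vec p}^{t,r}}\le\|L_f\|$ by a testing argument. Fix a finite $\mathcal F\subset\mathcal D$ and nonnegative numbers $\{c_Q\}_{Q\in\mathcal F}$. For each $Q\in\mathcal F$, the duality of $L^{\vec p}$ supplies some $h_Q\in L^{\vec p \, ^\prime}$ with $\operatorname{supp}h_Q\subset Q$, $\|h_Q\|_{L^{\vec p \, ^\prime}}\le1$ and $\int_Q h_Qf\ge(1-\delta)\|f\chi_Q\|_{L^{\vec p}}$, so that $|Q|^{\frac{1}{t}-\frac{1}{n}\sum_{i=1}^{n}\frac{1}{p_i}}h_Q$ is a $(\vec p \, ^\prime,t')$-block on $Q$, whence $g:=\sum_{Q\in\mathcal F}c_Q|Q|^{\frac{1}{t}-\frac{1}{n}\sum_{i=1}^{n}\frac{1}{p_i}}h_Q\in\mathcal{H}_{\vec p \, ^\prime}^{t',r'}$ with $\|g\|_{\mathcal{H}_{\vec p \, ^\prime}^{t',r'}}\le\|\{c_Q\}\|_{\ell^{r'}}$ (distinct dyadic cubes carry distinct block indices). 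Then $L_f(g)\ge(1-\delta)\sum_{Q\in\mathcal F}c_Q|Q|^{\frac{1}{t}-\frac{1}{n}\sum_{i=1}^{n}\frac{1}{p_i}}\|f\chi_Q\|_{L^{\vec p}}$ while $|L_f(g)|\le\|L_f\|\,\|\{c_Q\}\|_{\ell^{r'}}$; taking the supremum over $\|\{c_Q\}\|_{\ell^{r'}}\le1$, then letting $\delta\downarrow0$ and $\mathcal F\uparrow\mathcal D$, gives $\|f\|_{M_{\vec p}^{t,r}}\le\|L_f\|$. Together with the first paragraph this proves $\|L_f\|=\|f\|_{M_{\vec p}^{t,r}}$ and, in particular, that $f\mapsto L_f$ is injective.

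For (ii), let $L\in(\mathcal{H}_{\vec p \, ^\prime}^{t',r'})^*$. For a fixed $Q\in\mathcal D$, Lemma \ref{generate mix block} shows $h\mapsto L(h\chi_Q)$ is a bounded functional on $L^{\vec p \, ^\prime}$ of norm $\le\|L\|\,|Q|^{\frac{1}{n}\sum_{i=1}^{n}\frac{1}{p_i}-\frac{1}{t}}$, so Lemma \ref{dual mixed Lp} produces a unique $f_Q\in L^{\vec p}$, which may be taken supported on $Q$, with $L(h)=\int_Q hf_Q$ for all $h\in L^{\vec p \, ^\prime}$ supported on $Q$ and $\|f_Q\|_{L^{\vec p}}\le\|L\|\,|Q|^{\frac{1}{n}\sum_{i=1}^{n}\frac{1}{p_i}-\frac{1}{t}}$. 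Uniqueness in Lemma \ref{dual mixed Lp} forces $f_Q=f_{Q'}$ a.e.\ on $Q$ whenever $Q\subset Q'$, so the family $\{f_Q\}$ glues to a single $f\in L^{\vec p}_{\mathrm{loc}}$ with $f\chi_Q=f_Q$ for every $Q\in\mathcal D$; writing a compactly supported $h\in L^{\vec p \, ^\prime}$ as a finite sum of pieces supported on dyadic cubes of side length one, one gets $L(h)=\int_\rn hf$ for all $h\in L^{\vec p \, ^\prime}_c$. Re-running the testing argument of the previous paragraph with $L$ in place of $L_f$ (legitimate since each block $|Q|^{\frac{1}{t}-\frac{1}{n}\sum_{i=1}^{n}\frac{1}{p_i}}h_Q$ is supported on a single dyadic cube) gives $\|f\|_{M_{\vec p}^{t,r}}\le\|L\|$, so $f\in M_{\vec p}^{t,r}$. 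By (i), $L_f$ is bounded on $\mathcal{H}_{\vec p \, ^\prime}^{t',r'}$ and agrees with $L$ on the dense subspace $L^{\vec p \, ^\prime}_c$ (Theorem \ref{dense block}), hence $L=L_f$; uniqueness of $f$ is the injectivity above. Finally, having identified $(\mathcal{H}_{\vec p \, ^\prime}^{t',r'})^*$ with $M_{\vec p}^{t,r}$ isometrically, (\ref{H = max M le 1}) is the general normed-space fact that $\|g\|_{\mathcal{H}_{\vec p \, ^\prime}^{t',r'}}=\max\{|\Lambda(g)|:\Lambda\in(\mathcal{H}_{\vec p \, ^\prime}^{t',r'})^*,\ \|\Lambda\|\le1\}$, the maximum being attained by a Hahn-Banach extension of $\lambda g\mapsto\lambda\|g\|_{\mathcal{H}_{\vec p \, ^\prime}^{t',r'}}$, which corresponds under the identification to an $L_f$ with $\|f\|_{M_{\vec p}^{t,r}}=1$.

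The step I expect to cost the most effort is verifying in (ii) that the glued $f$ genuinely lies in $M_{\vec p}^{t,r}$ with norm $\le\|L\|$: the cube-by-cube bound $|Q|^{\frac{1}{t}-\frac{1}{n}\sum_{i=1}^{n}\frac{1}{p_i}}\|f\chi_Q\|_{L^{\vec p}}\le\|L\|$ is immediate, but promoting it to the $\ell^r$-summable norm requires coupling $L^{\vec p}$-duality on each cube with $\ell^{r'}$-$\ell^r$ duality over $\mathcal D$ while only near-optimal test functions and test sequences are available, and while keeping the block-indexing bookkeeping exact so that $\|\sum_Q c_Q b_Q\|_{\mathcal{H}_{\vec p \, ^\prime}^{t',r'}}\le\|\{c_Q\}\|_{\ell^{r'}}$ remains valid. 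The rest reduces to routine use of H\"older's inequality and the density statement of Theorem \ref{dense block}.
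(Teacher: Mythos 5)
Your proposal is correct and follows essentially the same route as the paper: H\"older on mixed $L^{\vec p}$-$L^{\vec p\,'}$ plus $\ell^{r'}$-$\ell^r$ duality for the boundedness of $L_f$, local $L^{\vec p}$-duality on each dyadic cube followed by gluing to build the representing function, a testing argument with sums of blocks weighted by an $\ell^{r'}$-sequence to obtain $\|f\|_{M_{\vec p}^{t,r}}\le\|L\|$, and density of $L^{\vec p\,'}_c$ to conclude. The only cosmetic difference is that you test with near-extremizing $h_Q$ (up to a factor $1-\delta$) while the paper writes down an explicit extremal $g_{j,k}$ realizing the $L^{\vec p}$-norm exactly, and you cite Hahn--Banach for the norm-attainment in \eqref{H = max M le 1} where the paper cites a reference; both are equally valid.
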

\begin{proof}
	(i) Let $g \in \mathcal{H}_{\vec p \, ^\prime}^{t',r'} $ and $\epsilon>0$. Then $g = \sum_{(j,k)\in\mathbb{Z}^{n+1}}\lambda_{j,k}b_{j,k}$ with $\lambda=\{\lambda_{j,k}\}_{(j,k)\in\mathbb{Z}^{n+1}}\in\ell^{r'}(\mathbb{Z}^{n+1})$
	and $b_{j,k}$ is a  $(\vec p \, ^\prime,t')$-block supported on $Q_{j,k}$ such that 
	\begin{equation*}
		\left( \sum_{(j,k)\in\mathbb{Z}^{n+1}} |\lambda_{j,k}|^{r'}\right)^{1/r'}  \le (1+\epsilon) \| g\|_{ \mathcal{H}_{\vec p \, ^\prime}^{t',r'} }.
	\end{equation*}
	By  H\"older's inequality,
	\begin{align} \label{f g le M H}
		\nonumber
		\left| \int_\rn g (x) f (x)  \d x\right| &  = 	\left|	\int_\rn   \sum_{(j,k)\in\mathbb{Z}^{n+1}}\lambda_{j,k}b_{j,k} (x)   f (x)  \d x\right| \\
		\nonumber
		& \le \sum_{(j,k)\in\mathbb{Z}^{n+1}} |\lambda_{j,k}| \int_{Q _{j,k} }   | b_{j,k} (x)| | f (x)| \d x \\
		\nonumber
		& \le \sum_{(j,k)\in\mathbb{Z}^{n+1}} |\lambda_{j,k}| |Q| ^{  1/t - \frac{1}{n}  ( \sum_{i=1}^n  1/p_{i})   }  \| f \|_{L^{\vec p } }    \\
		\nonumber
		& \le \left( \sum_{(j,k)\in\mathbb{Z}^{n+1}} |\lambda_{j,k}|^{r'}\right)^{1/r'} \| f\|_{M_{\vec p}^{t,r}  } \\
		& \le (1+\epsilon) \| g\|_{  \mathcal{H}_{\vec p \, ^\prime}^{t',r'}  }  \| f\|_{M_{\vec p}^{t,r}   }.
	\end{align}
	Letting $\epsilon\to 0^+$,  we prove (i).
	
	(ii) Let $L$ be a continuous linear functional on $\mathcal{H}_{\vec p \, ^\prime}^{t',r'}$.
	
	By Lemma \ref{generate mix block}, since we can regard the element of $ L^{\vec p \, ^\prime} (Q_{j,k})$ as a $ (\vec p \, ^\prime,t')$-block modulo multiplicative constant, the functional $g \mapsto L(g) $  is well defined and bounded on $ L^{\vec p \, ^\prime} (Q_{j,k})$. Thus, by the $L^{\vec p} -L^{\vec{p}^{\,\prime}} $  duality (Lemma \ref{dual mixed Lp}), there exists $f_{j,k} \in  L^{\vec p} (Q_{j,k})$ such that
	\begin{equation} \label{L g jk}
		L(g) = \int_{Q_{j,k}} f_{j,k} (x) g(x) \d x
	\end{equation}
	for all $g \in L^{\vec p \, ^\prime} (Q_{j,k})$. By the uniqueness of this theorem, we can find $  L^{\vec p}_{\operatorname{loc}} $-function $f$ such that 
	\begin{equation*}
		f \chi_{Q_{j,k}}  = f_{j,k} \quad \operatorname{a.e.}
	\end{equation*}
	for any $Q_{j,k} \in \D $. We shall prove $f \in M_{\vec p}^{t,r}$. 
	
	Fix  a finite set $K \subset \mathbb Z^{n+1}$. For each $(j,k) \in K$, 
	let 
	\begin{equation*}
		g_{j,k} := 
		\frac{|Q_{j,k} |^{  1/t - \frac{1}{n}  ( \sum_{i=1}^n  1/p_{i})  } }{ \| f\chi_{ Q_{j,k} } \|_{L^{\vec p}  } ^{q_n -1} } (\overline{ \sgn f}) |f|^{p_1 -1} \chi_{ Q_{j,k} }  \| f \chi_{ Q_{j,k} } \|_{(q_1) }^{q_2 -q_1} \cdots \| f \chi_{ Q_{j,k} } \|_{(q_1, \ldots , q_{n-1} ) }^{q_n -q_{n-1} }   
	\end{equation*}
	if $ \| f\chi_{ Q_{j,k} } \|_{L^{\vec p}  } >0 $  and let $g_{j,k} = 0$ if   $ \| f\chi_{ Q_{j,k} } \|_{L^{\vec p}  } =0 $.
	Then if $ \| f\chi_{ Q_{j,k} } \|_{L^{\vec p}  } >0 $,
	\begin{equation} \label{int f g jk}
		\int_{Q_{j,k}} f (x) g_{j,k}(x) \d x = |Q_{j,k} |^{  1/t - \frac{1}{n}  ( \sum_{i=1}^n  1/p_{i})  } 	\| f\chi_{ Q_{j,k} }\|_{L^{\vec p}  } ,
	\end{equation}
	and 
	\begin{equation*}
		\| g_{j,k} \|_{L^{\vec{p}^{\,\prime} }  }  =  |Q_{j,k} |^{  1/t - \frac{1}{n}  ( \sum_{i=1}^n  1/p_{i})  } .
	\end{equation*}
	Note that $   g_{j,k} $  is a $(\vec p \, ^\prime , t') $-block supported on $Q_{j,k}$.
	Take an arbitrary nonnegative
	sequence $\{  \lambda_{j,k}\} \in \ell^{r'} (\mathbb Z^{1+n})$
	supported on $K$ and set
	\begin{equation}\label{g K decom}
		g_{K}  = \sum_{(j,k) \in K}  \lambda_{j,k}   g_{j,k}  \in \mathcal{H}_{\vec p \, ^\prime}^{t',r'}  .
	\end{equation}
	Then  from  (\ref{L g jk}), (\ref{int f g jk}), the fact that $K$ is a finite set, and the linearity of $L$, we get 
	\begin{align*}
		&	\sum_{(j,k) \in K}  \lambda_{j,k}  |Q_{j,k}| ^{1/t - \frac{1}{n}  ( \sum_{i=1}^n  1/p_{i}) } \left\|   f \chi_{Q_{j,k}}    \right\|_{L^{\vec p} } \\
		& = \sum_{(j,k) \in K}  \lambda_{j,k}  \int_{Q_{j,k}} f (x) g_{j,k}(x) \d x  \\
		& =   \int_\rn   g_{K} (x) f(x ) \d x \\
		& = L (g_{K}  ).
	\end{align*}
	By the decomposition (\ref{g K decom}) and $L$ is a continuous linear functional on $ \mathcal{H}_{ \vec p \, ^\prime}^{t',r'} $, we obtain
	\begin{equation*}
		L (g_{K}  )  \le  \| L \|_{ (\mathcal{H}_{\vec p \, ^\prime}^{t',r'})^*  }  \| g_{K} \|_{  \mathcal{H}_{\vec p \, ^\prime}^{t',r'}  } \le \| L \|_{ (\mathcal{H}_{\vec p \, ^\prime}^{t',r'})^*  } \left( \sum_{(j,k) \in K}   \lambda_{j,k} ^ {r'} \right)^{1/r'}.
	\end{equation*}
	Since $r >1$ and $K  \subset \mathbb Z^{1+n}$ and $ \{\lambda_{j,k} \}_{(j,k) \in K } $ are arbitrary, we conclude 
	\begin{align*}
		& \| f \|_{M_{\vec p}^{t,r}  }  \\
		& = \sup \left\{  \sum_{ (j,k) \in \mathbb Z^{1+n}  }  \lambda_{j,k} |Q_{j,k}| ^{1/t - \frac{1}{n}  ( \sum_{i=1}^n  1/p_{i}) } \left\|   f \chi_{Q_{j,k}}    \right\|_{L^{\vec p} } :  \| \{\lambda_{j,k} \} \|_{\ell^{r' } (\mathbb Z^{1+n})}  = 1     \right\} \\
		&  \le   \| L \|_{ (\mathcal{H}_{\vec p \, ^\prime}^{t',r'})^*  }  < \infty .
	\end{align*}
	Thus $f \in M_{\vec p}^{t,r}$ and $\| f \|_{M_{\vec p}^{t,r}  } \le  \| L \|_{ (\mathcal{H}_{\vec p \, ^\prime}^{t',r'})^*  } $. Together (i), we obtain $\| f \|_{M_{\vec p}^{t,r}  } =  \| L \|_{ (\mathcal{H}_{\vec p \, ^\prime}^{t',r'})^*  } $ 
	
	Hence we conclude that $L$  is realized as $L = L_f$ for $f \in M_{\vec p}^{t,r}$ at least on $g \in L_c^{\vec p} $. Since $ L_c^{\vec p}$  is dense in $\mathcal{H}_{\vec p \, ^\prime}^{t',r'}$ by Theorem \ref{dense block}, we can obtain the desired result.
	
	Next we show that $f$ is unique. Suppose that there exists another $\tilde f \in M_{\vec p}^{t,r}$ such that $L$ arises as in (\ref{Jf}) with $f $ replaced by $\tilde f$.
	Then for any $ Q \in \D$, since $\chi_Q \overline{\sgn ( f(x) - \tilde f (x) ) }  \in  \mathcal{H}_{\vec p \, ^\prime}^{t',r'} $ , we have
	\begin{equation*}
		\int_Q	 \overline{\sgn ( f(x) - \tilde f (x) ) } ( f(x) - \tilde f (x) )  \d x = 	\int_Q	 | f(x) - \tilde f (x) |  \d x =  0.
	\end{equation*}
	This, combined with the arbitrariness of $ Q \in \D$ further implies that $f=  \tilde f $ almost everywhere.
	
	Using  \cite[Theorem 87, Existence of the norm attainer]{SDH20}, (\ref{H = max M le 1}) is included in what we have proven.
	Thus we complete the proof.
\end{proof}

\section{Properties of  block spaces}\label{property block}

In this section, we discuss the completeness,  Fatou property, lattice and associate space of 
the function space $\mathcal{H}_{\vec p \, ^\prime}^{t',r'} $.

\subsection{The completeness} \label{banach space}

We first show the completeness.

\begin{theorem} \label{Banach}
	Let $ 1 < \vec p <\infty$. Let   $1 < n / ( \sum_{i=1}^n  1/p_{i})   < t <r<\infty $ or $ 1< n / ( \sum_{i=1}^n  1/p_{i})  \le t< r =\infty  $. Then $\mathcal{H}_{\vec p \, ^\prime}^{t',r'} $ is a Banach space.
\end{theorem}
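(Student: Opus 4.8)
The plan is to handle the degenerate endpoint separately and then prove completeness by the standard criterion that a normed space is complete precisely when every absolutely summable series is summable, exploiting the block structure. If $n/(\sum_{i=1}^n 1/p_i)=t$, then under our hypotheses $r=\infty$ and $r'=1$, and Lemma \ref{block = Lp mixed} identifies $\mathcal{H}_{\vec p \, ^\prime}^{t',1}$ isometrically with $L^{\vec p \, ^\prime}$, which is a Banach space by Lemma \ref{dual mixed Lp}(ii); so I may assume from now on that $n/(\sum_{i=1}^n 1/p_i)<t<r\le\infty$, a range in which Proposition \ref{mix block sum converge} is available. First I would record that $\|\cdot\|_{\mathcal{H}_{\vec p \, ^\prime}^{t',r'}}$ is genuinely a norm: absolute homogeneity is immediate from the definition; the triangle inequality is the dual formula \eqref{H = max M le 1} already proved in Theorem \ref{predual mix BM} (or, self-contained, it follows from merging two near-optimal block decompositions exactly as below together with Minkowski's inequality in $\ell^{r'}$); and if $\|g\|_{\mathcal{H}_{\vec p \, ^\prime}^{t',r'}}=0$ then, since $g\in L^1_{\operatorname{loc}}$ by Proposition \ref{mix block sum converge}(ii), pairing $g$ against $\|\chi_Q\|_{M_{\vec p}^{t,r}}^{-1}\chi_Q\overline{\sgn g}$ — which lies in $M_{\vec p}^{t,r}$ by Example \ref{exa Q0 iff} and has norm at most $1$ — forces $\int_Q|g|=0$ for every $Q\in\D$, hence $g=0$ almost everywhere.

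For completeness I would take $\{g_m\}_{m\ge1}\subset\mathcal{H}_{\vec p \, ^\prime}^{t',r'}$ with $\sum_{m\ge1}\|g_m\|_{\mathcal{H}_{\vec p \, ^\prime}^{t',r'}}<\infty$ and, for each $m$, fix a decomposition $g_m=\sum_{(j,k)\in\mathbb Z^{n+1}}\lambda^{(m)}_{j,k}b^{(m)}_{j,k}$ into $(\vec p \, ^\prime,t')$-blocks $b^{(m)}_{j,k}$ supported on $Q_{j,k}$ with $\|\{\lambda^{(m)}_{j,k}\}\|_{\ell^{r'}}\le\|g_m\|_{\mathcal{H}_{\vec p \, ^\prime}^{t',r'}}+2^{-m}$. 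The device is to merge these decompositions cube by cube: set $\Lambda_{j,k}:=\sum_{m\ge1}|\lambda^{(m)}_{j,k}|$ and, when $\Lambda_{j,k}>0$, $B_{j,k}:=\Lambda_{j,k}^{-1}\sum_{m\ge1}\lambda^{(m)}_{j,k}b^{(m)}_{j,k}$ (convergent in the Banach space $L^{\vec p \, ^\prime}$), and $B_{j,k}:=0$ otherwise. Each $B_{j,k}$ is supported on $Q_{j,k}$, and the block bound for the $b^{(m)}_{j,k}$ together with the defining normalization give $\|B_{j,k}\|_{L^{\vec p \, ^\prime}}\le|Q_{j,k}|^{1/t-\frac1n\sum_{i=1}^n 1/p_i}$, so every $B_{j,k}$ is again a $(\vec p \, ^\prime,t')$-block; meanwhile Minkowski's inequality in $\ell^{r'}$ gives $\|\{\Lambda_{j,k}\}\|_{\ell^{r'}}\le\sum_{m\ge1}\|\{\lambda^{(m)}_{j,k}\}\|_{\ell^{r'}}<\infty$. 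Hence, by Proposition \ref{mix block sum converge}, $f:=\sum_{(j,k)\in\mathbb Z^{n+1}}\Lambda_{j,k}B_{j,k}$ converges a.e. and in $L^1_{\operatorname{loc}}$, belongs to $\mathcal{H}_{\vec p \, ^\prime}^{t',r'}$, and satisfies $\|f\|_{\mathcal{H}_{\vec p \, ^\prime}^{t',r'}}\le\|\{\Lambda_{j,k}\}\|_{\ell^{r'}}$.

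It remains to identify $f$ with $\sum_{m\ge1}g_m$ and show the partial sums converge to it in norm. For a fixed $Q\in\D$, the estimate in the proof of Proposition \ref{mix block sum converge} yields $\int_Q\sum_{(j,k)}|\lambda^{(m)}_{j,k}|\,|b^{(m)}_{j,k}|\lesssim_Q\|\{\lambda^{(m)}_{j,k}\}\|_{\ell^{r'}}$; summing over $m$ and invoking Tonelli's theorem, the double series $\sum_{m,(j,k)}\lambda^{(m)}_{j,k}b^{(m)}_{j,k}$ converges absolutely a.e. on $Q$, hence on all of $\rn$, and may be summed in any order, so $f=\sum_{m\ge1}g_m$ a.e. Running the same merging construction on the tails $\{g_m\}_{m>N}$ exhibits $f-\sum_{m=1}^{N}g_m=\sum_{m>N}g_m$ as an element of $\mathcal{H}_{\vec p \, ^\prime}^{t',r'}$ with norm at most $\sum_{m>N}(\|g_m\|_{\mathcal{H}_{\vec p \, ^\prime}^{t',r'}}+2^{-m})\to0$ as $N\to\infty$; thus $\sum_m g_m$ converges to $f$ in $\mathcal{H}_{\vec p \, ^\prime}^{t',r'}$ and the space is complete (the case $r=\infty$ is identical, with $\ell^{r'}=\ell^1$). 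The one genuinely delicate point, and the main obstacle, is this rearrangement of the merged double series: one must check that combining infinitely many block decompositions neither destroys the a.e./$L^1_{\operatorname{loc}}$ convergence demanded in Definition \ref{def mix block space} nor alters the sum, and it is precisely Proposition \ref{mix block sum converge} together with Tonelli's theorem that makes this rigorous, while Minkowski's inequality in $\ell^{r'}$ keeps the norm under control.
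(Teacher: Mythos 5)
Your proof is correct, and it takes the same broad route as the paper (the Riesz--Fischer criterion that a normed space is complete precisely when every absolutely summable series is summable, applied to near-optimal block decompositions of the terms), but it differs in one genuinely useful way: you make the cube-by-cube merging device explicit. Concretely, you define $\Lambda_{j,k}:=\sum_m|\lambda^{(m)}_{j,k}|$, form the single block $B_{j,k}:=\Lambda_{j,k}^{-1}\sum_m\lambda^{(m)}_{j,k}b^{(m)}_{j,k}$ (the series converging absolutely in the Banach space $L^{\vec p\,^\prime}$), verify $B_{j,k}$ is again a $(\vec p\,^\prime,t')$-block, and control $\|\{\Lambda_{j,k}\}\|_{\ell^{r'}}$ by Minkowski's inequality. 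With that single admissible decomposition in hand, Proposition \ref{mix block sum converge} immediately furnishes the a.e.\ and $L^1_{\operatorname{loc}}$ convergence required by Definition \ref{def mix block space}, and the Tonelli rearrangement identifies the resulting function with $\sum_m g_m$. The paper instead writes the candidate sum as a ``double'' series with possibly infinitely many blocks per cube, truncates to finite index sets $M_i$, shows convergence in $L^1_{\operatorname{loc}}$ by a pairing estimate, and extracts an a.e.\ convergent subsequence via the Riesz theorem before reading off the norm bound $\sum_i\bigl(\sum_{(j,k)}|\lambda_{i,j,k}|^{r'}\bigr)^{1/r'}$; the passage from that double family of blocks to a genuine admissible decomposition (one block per cube) is what you have spelled out and the paper leaves tacit. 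Your version also integrates the tail estimate cleanly: rerunning the merge on $\{g_m\}_{m>N}$ gives the norm convergence of the partial sums directly. Two small remarks: (i) to see $\chi_Q\overline{\sgn g}\in M_{\vec p}^{t,r}$ for a general $Q\in\D$ you should invoke the lattice property of $M_{\vec p}^{t,r}$ together with its translation and dilation invariance rather than Example \ref{exa Q0 iff} alone (which treats only $Q_0=[0,1)^n$); (ii) the nondegeneracy argument implicitly uses the pairing bound \eqref{f g le M H} from Theorem \ref{predual mix BM}, which is proved earlier, so there is no circularity, but it is worth noting this dependence explicitly.
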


\begin{proof}
	We only  prove the completeness  since others are easy. 
	
	Let $ \{ f_i\}_{i\in \mathbb N}$  be a sequence such that for each $i \in \mathbb N$, $f_i \in \mathcal{H}_{\vec p \, ^\prime}^{t',r'}  $, $ \sum_{i\ge 1} \| f_i \|_{ \mathcal{H}_{\vec p \, ^\prime}^{t',r'}   }  <\infty  $.
	From  Theorem \ref{predual mix BM},
	for any ball $B$, we have
	\begin{equation*}
		\int_{B} \sum_{i \ge 1} |f_i (y) | \d y =\sum_{i \ge 1} \int_{B}  | f_i (y) | \d y \le \| \chi_B \|_{  M _{\vec p}^{t,r}   }  \sum_{i \ge 1} \|    f_i \|_{ \mathcal{H}_{\vec p \, ^\prime}^{t',r'}  }.
	\end{equation*}
	From $|  \sum_{i \ge 1}  f_i |  \le \sum_{i \ge 1} | f_i  |$,
	we obtain  $ f = \sum_{i \ge 1}  f_i  $ is a well defined,  measurable function and  $f\in  L^1_{\operatorname{loc}} $. So we only need to show that  $ f = \sum_{i \ge 1}  f_i  $ belongs to $\mathcal{H}_{\vec p \, ^\prime}^{t',r'} $.
	Fix $\epsilon > 0$. There exists a positive integer $N_0$ such that for all $N \ge N_0$, there holds
	\begin{equation*}
		\sum_{i \ge N } \|    f_i \|_{ 	\mathcal{H}_{\vec p \, ^\prime}^{t',r'}  }  <\epsilon.
	\end{equation*}
	For this $\epsilon > 0$, there exists a decomposition 
	\begin{equation*}
		f _i=  \sum_{(j,k)\in \mathbb Z ^{n+1}} \lambda_{i,j,k} b_{i,j,k},
	\end{equation*}
	where  $b_{i,j,k}$ is a  $(\vec p \, ^\prime,t')$-block supported on $Q_{j,k}$ and 
	\begin{equation*}
		\left(    \sum_{ (j,k)\in \mathbb Z ^{n+1}  }   | \lambda_{i,j,k}| ^{r'} \right)^{1/r'}  \le (1+\epsilon)  \|    f_i \|_{ \mathcal{H}_{\vec p \, ^\prime}^{t',r'}  }.
	\end{equation*}
	Furthermore, for any $ 1\le i \le N_0 $, there exists a index set $ M _i  \subset  \mathbb Z ^{n+1} $  such that 
	\begin{equation*}
		\bigg\|  f_i  - \sum_{(j,k)\in M_i} \lambda_{i,j,k} b_{i,j,k}  \bigg\|_{ \mathcal{H}_{\vec p \, ^\prime}^{t',r'}  } \le \bigg(  \sum_{(j,k)\in   \mathbb Z ^{n+1}  \backslash  M_i} |\lambda_{i,j,k}|^{r'} \bigg) ^{1/r'} < 2^{-i} \epsilon.
	\end{equation*}
	Therefore, for all ball $B$, 
	\begin{align*}
		&	\int_B \bigg| f(y)  -  \sum_{i=1}^{N_0}  \sum_{(j,k)\in M_i} \lambda_{i,j,k} b_{i,j,k} (y) \bigg| \d y \\
		& \le \int_B  \bigg| f(y) -\sum_{i=1}^{N_0} f_i (y)  \bigg| \d y + \int_B  \bigg| \sum_{i=1}^{N_0} f_i (y) - \sum_{i=1}^{N_0}  \sum_{(j,k)\in M_i} \lambda_{i,j,k} b_{i,j,k} (y)  \bigg| \d y  \\
		& \le  \int_B  \bigg| \sum_{i=N_0+1}^{\infty } f_i (y)  \bigg|\d y +  \sum_{i=1}^{N_0} \int_B  \bigg|  f_i (y) -   \sum_{(j,k)\in M_i} \lambda_{i,j,k} b_{i,j,k} (y)  \bigg| \d y  \\
		& \le \| \chi_B \|_{M_{\vec p}^{t,r} } \bigg( \sum_{i=N_0+1}^{\infty }  \|f_i\|_{ \mathcal{H}_{\vec p \, ^\prime}^{t',r'}   }  +   \sum_{i=1}^{N_0}  \bigg\|  f_i  -   \sum_{(j,k)\in M_i} \lambda_{i,j,k} b_{i,j,k}  \bigg\|_{\mathcal{H}_{\vec p \, ^\prime}^{t',r'}   }  \bigg) \\
		& \le \| \chi_B \|_{M_{\vec p}^{t,r} } ( \epsilon +\sum_{i=1}^{N_0}  2^{-i} \epsilon  ) \\
		& \lesssim  \| \chi_B \|_{M_{\vec p}^{t,r} }  \epsilon.
	\end{align*}
	As a consequence,  $\sum_{i\ge 1} \sum_{(j,k)\in \mathbb Z ^{n+1}} \lambda_{i,j,k} b_{i,j,k}$ converges to $f$ in  $L^1_{\operatorname{loc}}  $.  	
		
	Hence  $\sum_{i\ge 1} \sum_{(j,k)\in \mathbb Z ^{n+1}} \lambda_{i,j,k} b_{i,j,k}$ converges to $f$ locally in measure. Therefore, there exists a subsequence of  
	\begin{equation*}
		\left\{\sum_{i= 1}^{N} \sum_{(j,k)\in K \subset \mathbb Z ^{n+1} , \sharp K = M} \lambda_{i,j,k} b_{i,j,k}  \right\}_{ N \in \mathbb N ,M \in \mathbb N}
	\end{equation*}
	converges to $f$ a.e. 
	Recall that $\sharp K$ is the cardinal number of the set $K$.
	Furthermore, $\lambda_{i,j,k}$, $i\in \mathbb N, (j,k) \in \mathbb Z ^{n+1}$  satisfies
	\begin{equation*}
		\sum_{i\ge 1}	\bigg(    \sum_{ (j,k)\in \mathbb Z ^{n+1}  }   | \lambda_{i,j,k}| ^{r'} \bigg)^{1/r'}  \le \sum_{i\ge 1} (1+\epsilon)  \|    f_i \|_{ \mathcal{H}_{\vec p \, ^\prime}^{t',r'}  }.
	\end{equation*}
	That is, $ \sum_{i\ge  1}  f_i$ converges to $f$ in  $\mathcal{H}_{\vec p \, ^\prime}^{t',r'} $. Since $\epsilon$ is arbitrary, we obtain
	\begin{equation*}
		\| f\|_{ \mathcal{H}_{\vec p \, ^\prime}^{t',r'}   }  =  	\bigg\| \sum_{i\ge  1}  f_i \bigg\|_{ \mathcal{H}_{\vec p \, ^\prime}^{t',r'}   }  \le \sum_{i\ge  1} \|    f_i \|_{ \mathcal{H}_{\vec p \, ^\prime}^{t',r'}  }.
	\end{equation*}
	Then by the fact that a normed linear space is compete if and only if every absolutely summable sequence is summable (for example, see \cite[Theorem III.3]{RS72}), 	$\mathcal{H}_{\vec p \, ^\prime}^{t',r'}  $ is complete.
\end{proof}

\subsection{Fatou property}
In this subsection, the main result is the following theorem, which is called the Fatou property of $\H_{\vec p \, ^\prime}^{t',r'}$. Before proving it, we first recall the following lemma.

Using Banach and Alaoglu Theorem (see, for example \cite[Theorem 89]{SDH20}) and duality of mixed $L^{\vec p}$ (Lemma \ref{dual mixed Lp}), we obtain the following result.
\begin{lemma}\label{weak* compct}
 Let  $1<\vec  p <\infty $ and $\{ f_j \}_{j=1}^\infty$  be a bounded sequence in $L^{\vec p} $. Namely, there exists $M>0$ such that $ \|f_j\|_{L^{\vec p}} \le M$. Then we can find $f \in L^{\vec p}$  and a subsequence $\{ f_{j_k} \}_{k=1}^\infty$  such that 
	\begin{equation*}
		\lim_{k\to \infty} \int_\rn  f_{j_k}(x)  g (x)  \d x = \int_\rn  f(x)  g (x) \d  x
	\end{equation*}
	for all $g \in L^{\vec p \, ^\prime}  $.
\end{lemma}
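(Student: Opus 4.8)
The plan is to realize $L^{\vec p}$ as the dual of $L^{\vec p\,'}$ and then invoke the sequential Banach--Alaoglu theorem. First I would note that since $1<\vec p<\infty$ we also have $1<\vec p\,'<\infty$, so Lemma \ref{dual mixed Lp}{\rm (i)} applies with the roles of $\vec p$ and $\vec p\,'$ interchanged: every continuous linear functional on $L^{\vec p\,'}$ has the form $g\mapsto\int_\rn h(x)\,g(x)\,\d x$ for a uniquely determined $h\in L^{\vec p}$, and the map $h\mapsto L_h$ is an isometric isomorphism of $(L^{\vec p\,'})^{*}$ onto $L^{\vec p}$. In particular each $f_j$ determines a functional $L_{f_j}\in(L^{\vec p\,'})^{*}$ with $\|L_{f_j}\|_{(L^{\vec p\,'})^{*}}=\|f_j\|_{L^{\vec p}}\le M$, so $\{L_{f_j}\}_{j=1}^\infty$ lies in the closed ball of radius $M$ of $(L^{\vec p\,'})^{*}$.

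Next I would use that $L^{\vec p\,'}$ is a separable Banach space: since $1\le\vec p\,'<\infty$, the simple functions supported on rational rectangles with rational--complex coefficients are dense in $L^{\vec p\,'}$, exactly as in the density argument underlying Theorem \ref{separable mixed BM}; hence $L^{\vec p\,'}$ admits a countable dense subset. For the dual of a separable Banach space the closed ball of radius $M$ is metrizable and compact in the weak-$*$ topology, so by the Banach--Alaoglu theorem (\cite[Theorem 89]{SDH20}) the bounded sequence $\{L_{f_j}\}$ has a weak-$*$ convergent subsequence $\{L_{f_{j_k}}\}_{k=1}^\infty$; denote its weak-$*$ limit by $L\in(L^{\vec p\,'})^{*}$.

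Finally I would translate back through the duality. By the isometric identification from the first step there is a unique $f\in L^{\vec p}$ with $L=L_f$, and the weak-$*$ convergence $L_{f_{j_k}}\to L_f$ is precisely the assertion
\[
\lim_{k\to\infty}\int_\rn f_{j_k}(x)\,g(x)\,\d x=\int_\rn f(x)\,g(x)\,\d x\qquad\text{for every }g\in L^{\vec p\,'}.
\]
(One also obtains $\|f\|_{L^{\vec p}}\le\liminf_{k\to\infty}\|f_{j_k}\|_{L^{\vec p}}\le M$ from the weak-$*$ lower semicontinuity of the norm, though this is not needed for the statement.)

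The only point that is not completely routine is the step from the topological weak-$*$ compactness that Banach--Alaoglu literally provides to the \emph{sequential} extraction of a subsequence; this is what forces us to record separability of the predual $L^{\vec p\,'}$, and the hypothesis $1<\vec p<\infty$ is exactly what guarantees $1\le\vec p\,'<\infty$ and hence that separability. Everything else is a direct application of Lemma \ref{dual mixed Lp}.
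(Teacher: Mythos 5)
Your argument is correct and matches the paper's intended route: the paper simply cites Banach--Alaoglu together with Lemma \ref{dual mixed Lp} (the $L^{\vec p}$--$L^{\vec p\,'}$ duality), which is exactly what you do. You have in fact been more careful than the paper in flagging that the sequential form of Banach--Alaoglu needs separability of $L^{\vec p\,'}$ (guaranteed by $1<\vec p<\infty$, hence $1<\vec p\,'<\infty$), a point the paper leaves implicit.
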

\begin{theorem} \label{Fatou general}
		Let $ 1 < \vec p <\infty$. Let   $1 < n / ( \sum_{i=1}^n  1/p_{i})   < t <r<\infty $ or $ 1< n / ( \sum_{i=1}^n  1/p_{i})  \le t< r =\infty  $.
	If a bounded sequence $\{f_\ell \}_{\ell\in \mathbb N} \subset \H_{\vec p \, ^\prime}^{t',r'} \cap L_{\operatorname{loc}} ^{\vec p \, ^\prime}$ converges locally to $f$  in the weak topology of $  L^{\vec p \, ^\prime} $, then $f \in \H_{\vec p \, ^\prime}^{t',r'}$ and 
	\begin{equation*}
		\|f\|_{ \H_{\vec p \, ^\prime}^{t',r'} }  \le \liminf _{\ell \to \infty} \|f_\ell\|_{ \H_{\vec p \, ^\prime}^{t',r'} } .
	\end{equation*}
\end{theorem}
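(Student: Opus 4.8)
The plan is to exploit the duality $\big(\H_{\vec p\,^\prime}^{t',r'}\big)^* = M_{\vec p}^{t,r}$ established in Theorem \ref{predual mix BM}, together with the finite decomposition result Theorem \ref{finite decom}, to compute the $\H_{\vec p\,^\prime}^{t',r'}$-norm of $f$ by testing against functions in $M_{\vec p}^{t,r}$. Concretely, I would first note that the $L_{\operatorname{loc}}^{\vec p\,^\prime}$ weak convergence $f_\ell \to f$ guarantees that $f \in L_{\operatorname{loc}}^{\vec p\,^\prime}$ (by Lemma \ref{weak* compct}, the local weak limit lives in $L^{\vec p\,^\prime}$ on each cube), so $\int_{Q_{j,k}} b(x) f(x)\,\d x$ makes sense for every $(\vec p\,^\prime,t')$-block $b$. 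The key identity to reach is
\begin{equation*}
\|f\|_{\H_{\vec p\,^\prime}^{t',r'}} = \sup\left\{ \left| \int_\rn f(x) g(x)\,\d x \right| : g \in \operatorname{Sim}(\rn)\ \text{or}\ g\in L_c^\infty,\ \|g\|_{M_{\vec p}^{t,r}} \le 1 \right\},
\end{equation*}
which follows from \eqref{H = max M le 1} in Theorem \ref{predual mix BM} combined with the density of $\operatorname{Sim}(\rn)$ (or $L_c^\infty$, $C_c$) in $M_{\vec p}^{t,r}$ from Lemma \ref{dense simple} and Corollaries \ref{dense L_c mixed BM}--\ref{compact continuous dense mixed BM}. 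Restricting the test functions $g$ to be compactly supported and bounded is exactly what lets the local weak convergence do its job.

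The main argument then runs as follows. Fix $g \in L_c^\infty$ with $\|g\|_{M_{\vec p}^{t,r}} \le 1$; its support lies in some cube $Q$. Since $f_\ell \to f$ locally in the weak topology of $L^{\vec p\,^\prime}$ and $g \chi_Q \in L^{\vec p}$ (as $g$ is bounded with compact support), we have $\int_\rn f_\ell g\,\d x \to \int_\rn f g\,\d x$. On the other hand, for each $\ell$, part (i) of Theorem \ref{predual mix BM} gives $\big|\int_\rn f_\ell g\,\d x\big| \le \|f_\ell\|_{\H_{\vec p\,^\prime}^{t',r'}} \|g\|_{M_{\vec p}^{t,r}} \le \|f_\ell\|_{\H_{\vec p\,^\prime}^{t',r'}}$. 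Passing to the limit,
\begin{equation*}
\left| \int_\rn f(x) g(x)\,\d x \right| = \lim_{\ell\to\infty} \left| \int_\rn f_\ell(x) g(x)\,\d x \right| \le \liminf_{\ell\to\infty} \|f_\ell\|_{\H_{\vec p\,^\prime}^{t',r'}}.
\end{equation*}
Taking the supremum over all such $g$ and invoking the sup-characterization of the norm yields $\|f\|_{\H_{\vec p\,^\prime}^{t',r'}} \le \liminf_{\ell\to\infty}\|f_\ell\|_{\H_{\vec p\,^\prime}^{t',r'}}$; in particular $f \in \H_{\vec p\,^\prime}^{t',r'}$ since the right-hand side is finite by the boundedness hypothesis on the sequence.

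There is one delicate point I would handle carefully: the sup-formula \eqref{H = max M le 1} is stated a priori for $g \in \H_{\vec p\,^\prime}^{t',r'}$, whereas here I want to apply it to $f$ which is not yet known to be in that space. The clean way around this is to work instead with the finite decomposition of Theorem \ref{finite decom}: for $g = \sum_{v=1}^M \lambda_v b_v \in L_c^{\vec p\,^\prime}$ one has the pairing estimate, and then use that $L_c^{\vec p\,^\prime}$ is dense in $\H_{\vec p\,^\prime}^{t',r'}$ (Theorem \ref{dense block}) to reduce to test functions $g$ that are themselves finite block sums — for those, the chain of inequalities above is unconditional and does not presuppose $f \in \H_{\vec p\,^\prime}^{t',r'}$. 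Equivalently, one shows directly that the linear functional $g \mapsto \int_\rn f g\,\d x$ on $L_c^{\vec p\,^\prime}$ is bounded with norm at most $\liminf_\ell \|f_\ell\|_{\H}$, and then identifies this functional, via Theorem \ref{predual mix BM}(ii) applied in reverse (i.e. the fact that $(\H_{\vec p\,^\prime}^{t',r'})^{**} \supset \H_{\vec p\,^\prime}^{t',r'}$ and a bounded functional on the dense subspace extends), with an element of $\H_{\vec p\,^\prime}^{t',r'}$ of the same norm bound. This bookkeeping — making sure the duality is applied only where it is licensed — is the one real obstacle; the analytic content is just "weak convergence preserves bilinear pairings against fixed test functions" plus the norm duality.
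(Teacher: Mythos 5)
You have correctly spotted the delicate point, but the workaround you propose does not close the gap; it is in fact circular. Your argument reduces to the statement: if $\sup\{\,|\int_\rn f g\,\d x| : g\in M_{\vec p}^{t,r},\ \|g\|_{M_{\vec p}^{t,r}}\le 1\,\}<\infty$, then $f\in\H_{\vec p\,^\prime}^{t',r'}$ with that same bound on its norm. This is precisely the assertion that the K\"othe dual $(M_{\vec p}^{t,r})'$ is contained in $\H_{\vec p\,^\prime}^{t',r'}$, which in the paper is Theorem \ref{associate space} (and the dual-space identification is Theorem \ref{dual mixed BM}). Both of those results are proved \emph{using} Theorem \ref{Fatou general}: the proof of Theorem \ref{associate space} truncates $f$ to $f_k=\min(f,k)\chi_{B_k}$, checks $\|f_k\|_{\H}\le 1$ via Theorem \ref{predual mix BM}, and then invokes Theorem \ref{Fatou general} to pass to the limit $f_k\uparrow f$. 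So the ``identification'' step you are relying on is exactly what is not yet available. Note also that Theorem \ref{predual mix BM} gives $(\H_{\vec p\,^\prime}^{t',r'})^*=M_{\vec p}^{t,r}$, i.e., it characterizes functionals \emph{on} $\H$, not functionals on $M$; ``applying (ii) in reverse'' to a bounded functional on a dense subspace of $M$ would require $(M_{\vec p}^{t,r})^*=\H_{\vec p\,^\prime}^{t',r'}$, which is again downstream of Fatou. The sup-formula \eqref{H = max M le 1} is a consequence of duality for $g$ already known to lie in $\H$; it is not an a priori characterization of membership.

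The paper's proof avoids this by working with the block structure directly: it writes each $f_\ell=\sum_{Q\in\mathcal D}\lambda_{\ell,Q}b_{\ell,Q}$ with $\|\{\lambda_{\ell,Q}\}\|_{\ell^{r'}}\le 1+\epsilon$, uses Lemma \ref{weak* compct} (Banach--Alaoglu in $L^{\vec p\,^\prime}$) to pass to a subsequence along which $\lambda_{\ell,Q}\to\lambda_Q$ and $b_{\ell,Q}\to b_Q$ weakly, verifies that each $b_Q$ is again a $(\vec p\,^\prime,t')$-block via Lemma \ref{lem fatou}, shows by testing against indicators of dyadic cubes that $f=\sum_Q\lambda_Q b_Q$, and then applies Fatou's lemma for the $\ell^{r'}$-sum of the coefficients. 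This is the genuinely different and genuinely hard content: one must \emph{construct} a block decomposition of $f$, not merely estimate a pairing. Your proposal captures the easy half (the pairing estimate against fixed test functions does pass to the limit under local weak convergence), but the step that turns a norm bound into an explicit block decomposition is the whole point, and no amount of density bookkeeping in $L_c^{\vec p\,^\prime}$ or appeals to $(\H)^{**}$ supplies it without first establishing the associate-space identity independently.
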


\begin{proof}
	We use the idea from \cite[Lemma 5.4]{HNSH23}. 
	By Lemma \ref{block = Lp mixed}, $\H_{\vec p \, ^\prime}^{t',r'} = L^{\vec p \, ^\prime} $ when $ 1 < n / ( \sum_{i=1}^n  1/p_{i}) = t <r =\infty$.
 We only prove the case  $1<  n / ( \sum_{i=1}^n  1/p_{i})<t<r<\infty$ or $1<  n / ( \sum_{i=1}^n  1/p_{i}) < t<r=\infty$.
	
	By normalization, we assume $\|f_\ell\|_{ \H_{\vec p \, ^\prime}^{t',r'} } \le 1$ for all $\ell \in \mathbb N$. It suffices to show  $f \in \H_{\vec p \, ^\prime}^{t',r'}$ and   $\|f\|_{ \H_{\vec p \, ^\prime}^{t',r'} } \le 1$.
	We write 
	\begin{equation*}
		f_\ell = \sum_{Q \in \D} \lambda_{\ell,Q} b_{\ell,Q}
	\end{equation*}
	where $b_{\ell,Q} $ is a $ (\vec p \, ^\prime,t' ) $-block supported on $ Q $ and  $ \left( \sum_{Q\in \D} | \lambda_{\ell,Q} |^{r'} \right)^{1/r'} \le  \| f_\ell \|_{  \H_{\vec p \, ^\prime}^{t',r'} }  +\epsilon \le 1+\epsilon $.
	We may suppose that $  \lambda_{\ell,Q} \ge 0$ since $ - b_{\ell,Q} $ is also a $ (\vec p \, ^\prime,t' ) $-block supported on $ Q $.	
	Using Lemma \ref{weak* compct}, by passing to a subsequence, we assume that 
	\begin{equation*}
		\lambda_Q := \lim_{\ell \to \infty} \lambda_{\ell,Q}
	\end{equation*}
	exists in $[0,\infty)$ and that
	\begin{equation*}
		b_Q := \lim_{\ell \to \infty} b_{\ell,Q}
	\end{equation*}
	exists in the weak topology of $ L^{\vec p \, ^\prime} $.
	It is easy to see that supp $b_Q \subset Q$. By the Lemma \ref{lem fatou}, 
	\begin{align*}
		\| b_Q \|_{L^{\vec p \, ^\prime}} & =\sup_{ \|h\|_{L^{\vec p} } \le 1 } \left| \int_\rn  b_Q (x) h(x)  \d x \right|  \\
		& \le \liminf_{ \ell \to \infty } \sup_{ \|h\|_{L^{\vec p} } \le 1 } \left| \int_\rn  b_{\ell,Q} (x) h(x)  \d x \right|  \\
		& \le |Q|^{1/t-\frac{1}{n}  ( \sum_{i=1}^n  1/p_{i})} .
	\end{align*}
	Hence $b_Q$ is a $ (\vec p \, ^\prime,t' ) $-block supported on $ Q $. 
	Let 
	\begin{equation*}
		g : = \sum_{Q\in \D} \lambda_Q b_Q.
	\end{equation*}
	We claim that $f=g$. Once this is achieved, we obtain $ f \in  \H_{\vec p \, ^\prime}^{t',r'}$ and
	\begin{equation} \label{f liminf 1+ eps}
		\|f\|_{ \H_{\vec p \, ^\prime}^{t',r'} } \le \left( \sum_{Q\in \D} |\lambda_Q|^{r'} \right)^{1/r'} \le \liminf_{\ell \to \infty}   \left( \sum_{Q\in \D} | \lambda_{\ell,Q} |^{r'} \right)^{1/r'} \le 1+\epsilon.
	\end{equation}
	By the Lebesgue differentiation theorem, this amounts to the proof of the equality:
	\begin{equation*}
		\int_Q f (x)\d x = \int_Q g (x)\d x
	\end{equation*}
	for all $Q \in \D$. Now fix $Q_0 \in \D$. Let $ j_{Q_0} := -\log_2 ( \ell (Q_0)) $. Then $ Q_0 \in \D _{ j_{Q_0}}$.
	By the definition of $ f_{\ell} $ and the fact that the sum defining $ f_{\ell} $ converges almost everywhere on $\rn$, we have
	\begin{align*}
		\int_{Q_0} f_{\ell} (x) \d x = \sum_{Q \in \D} \lambda_{\ell,Q} \int_{Q_0}   b_{\ell,Q} (x) \d x .
	\end{align*}
	By H\"older's inequality, we have
	\begin{align*}
		\left|  \int_{Q_0}  b_{\ell,Q} (x) \d x \right| &  \le \int_{Q_0}  | b_{\ell,Q} (x)| \d x  \\
		& \le  \|b_{\ell,Q} \|_{L^{ \vec{p}^{\,\prime}} }  \| \chi_{Q \cap Q_0 } \|_{L^{\vec p} }  \\
		& \le |Q|^{1/t-\frac{1}{n}  ( \sum_{i=1}^n  1/p_{i}) }  |Q \cap Q_{0} |^{\frac{1}{n}  ( \sum_{i=1}^n  1/p_{i})} .
	\end{align*}
	Since  $1/t-\frac{1}{n}  ( \sum_{i=1}^n  1/p_{i}) <0$  and $ 1- r/t < 0$,
	\begin{align*}
		& \left(  \sum_{m = -\infty} ^\infty \sum_{Q \in \D_m, Q \cap  Q_{0} \neq \emptyset }|Q|^{r/t-\frac{r}{n}  ( \sum_{i=1}^n  1/p_{i}) }  |Q \cap Q_{0} |^{\frac{r}{n}  ( \sum_{i=1}^n  1/p_{i})}\right)^{1/r} \\
		&= \left(\sum_{m= -\infty} ^{ j_{Q_0} } 2^{ -m r n (1/t-\frac{1}{n}  ( \sum_{i=1}^n  1/p_{i})  )} 2^{ - r j_{Q_0}   ( \sum_{i=1}^n  1/p_{i}) }  +\sum_{ m =j_{Q_0}  +1 } ^\infty   2^{ - j_{Q_0} n} 2^{m n  (1 -r/t) } \right)^{1/r} \\
		& \le  C <\infty.
	\end{align*}
	Thus \begin{align*}
		\left|  \int_{Q_0} f_{\ell } (x) \d x \right| \le C \left( \sum_{m = -\infty} ^\infty \sum_{Q \in \D _m , Q \cap Q_0  \neq \emptyset }  ( \lambda_{\ell,Q}  )^{r'} \right)^{1/r'} \le  C (1+\epsilon) <\infty.
	\end{align*}
	Now we can  use  the Lebesgue convergence theorem to  obtain
	\begin{align*}
		\lim_{\ell \to \infty} \int_{Q_0}  f_{\ell} (x) \d x  
		= \sum_{m= -\infty} ^\infty \left( \lim_{\ell \to \infty} \sum_{Q \in \D _m , Q \cap Q_0  \neq \emptyset } \lambda_{\ell,Q}  \int_{Q_0} b_{\ell,Q} (x) \d x    \right).
	\end{align*}
	Since 
	$
	\sum_{Q \in \D _m , Q \cap Q_0  \neq \emptyset }
	$
	is the symbol of summation over a finite set for each $m$, we have
	\begin{equation*}
		\lim_{\ell \to \infty} \sum_{Q \in \D} \lambda_{\ell,Q}  \int_{Q_0}  b_{\ell,Q} (x) \d x =  \sum_{Q \in \D} \lambda_{Q}   \int_{Q_0}  b_{Q}  (x) \d x = \int_{Q_0} g (x)\d x.
	\end{equation*}
	By  $\{f_\ell \}_{\ell\in \mathbb N} \subset \H_{\vec p \, ^\prime}^{t',r'} \cap L_{\operatorname{loc}} ^{\vec p \, ^\prime} $ converges locally to $f$  in the weak topology of $  L^{\vec p \, ^\prime}$,
	we obtain
	\begin{equation*}
		\int_{Q_0} f (x)\d x = \lim_{\ell\to \infty } \int_{Q_0} f_\ell (x)\d x = \lim_{\ell \to \infty} \sum_{Q \in \D} \lambda_{\ell,Q}  \int_{Q_0}  b_{\ell,Q} (x) \d x  = \int_{Q_0} g (x)\d x.
	\end{equation*}
	
	Finally, letting $\epsilon \to 0^+$ in  (\ref{f liminf 1+ eps}), the proof is complete.
\end{proof}


Now we use the Fatou property of $\H_{\vec p \, ^\prime}^{t',r'} $ to show the following result.
\begin{corollary}
		Let $ 1 < \vec p <\infty$. Let   $1 < n / ( \sum_{i=1}^n  1/p_{i})   < t <r<\infty $ or $ 1< n / ( \sum_{i=1}^n  1/p_{i})  \le t< r =\infty  $.  Suppose that  $ fg \in L^1 $ for all $f  \in  M_{\vec p}^{t,r} $. Then $g \in \H_{\vec p \, ^\prime}^{t',r'}  $.
\end{corollary}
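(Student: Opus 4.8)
The plan is to realize $g$ as an $L^1_{\operatorname{loc}}$-limit of a bounded sequence in $\H_{\vec p \, ^\prime}^{t',r'}$ built from truncations of $g$, and then to invoke the Fatou property of $\H_{\vec p \, ^\prime}^{t',r'}$. First, since $\chi_{Q_0}\in M_{\vec p}^{t,r}$ by Example \ref{exa Q0 iff} and $M_{\vec p}^{t,r}$ is invariant under translation and dilation, $\chi_Q\in M_{\vec p}^{t,r}$ for every cube $Q$; the hypothesis then forces $\chi_Q g\in L^1$ for all $Q$, so $g\in L^1_{\operatorname{loc}}$ and $|g|<\infty$ almost everywhere. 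For $N\in\mathbb N$ put $g_N:=g\,\chi_{\{|g|\le N\}\cap B_N}$, so that $g_N\in L^\infty_c$, $|g_N|\le|g|$ almost everywhere, and $g_N\to g$ almost everywhere as $N\to\infty$. Since $g_N$ is bounded with compact support, Lemma \ref{generate mix block} gives $g_N\in\H_{\vec p \, ^\prime}^{t',r'}$.

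The decisive step is to show that $L_g\colon f\mapsto\int_\rn f(x)g(x)\,\d x$ is bounded on $M_{\vec p}^{t,r}$. For fixed $N$ choose a dyadic cube $\widetilde Q_N$ with $B_N\subset\widetilde Q_N$; combining H\"older's inequality (Lemma \ref{Holder mixed}) with the elementary bound $\|f\chi_{\widetilde Q_N}\|_{L^{\vec p}}\le|\widetilde Q_N|^{\frac{1}{n}\sum_{i=1}^n\frac{1}{p_i}-\frac{1}{t}}\|f\|_{M_{\vec p}^{t,r}}$ (immediate from the definition of $\|\cdot\|_{M_{\vec p}^{t,r}}$) shows that $L_{g_N}$ is a bounded linear functional on $M_{\vec p}^{t,r}$. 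Moreover, for each $f\in M_{\vec p}^{t,r}$ one has $|fg_N|\le|fg|\in L^1$, using that $|f|\in M_{\vec p}^{t,r}$ by the lattice property together with the hypothesis, so dominated convergence gives $L_{g_N}(f)\to L_g(f)$ and in particular $\sup_N|L_{g_N}(f)|<\infty$. Since $M_{\vec p}^{t,r}$ is a Banach space, the Banach--Steinhaus theorem yields $C:=\sup_N\|L_{g_N}\|_{(M_{\vec p}^{t,r})^*}<\infty$, whence $|L_g(f)|=\lim_N|L_{g_N}(f)|\le C\|f\|_{M_{\vec p}^{t,r}}$. Applying identity (\ref{H = max M le 1}) of Theorem \ref{predual mix BM} to $g_N\in\H_{\vec p \, ^\prime}^{t',r'}$ then gives $\|g_N\|_{\H_{\vec p \, ^\prime}^{t',r'}}=\|L_{g_N}\|_{(M_{\vec p}^{t,r})^*}\le C$, so $\{g_N\}_N$ is bounded in $\H_{\vec p \, ^\prime}^{t',r'}$.

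Finally, $|g_N|\le|g|\in L^1_{\operatorname{loc}}$ and $g_N\to g$ almost everywhere give $g_N\to g$ in $L^1_{\operatorname{loc}}$, hence $\int_{Q_0}g_N\,\d x\to\int_{Q_0}g\,\d x$ for every $Q_0\in\D$. Feeding the bounded sequence $\{g_N\}\subset\H_{\vec p \, ^\prime}^{t',r'}\cap L^\infty_c$ into the argument of the Fatou property (Theorem \ref{Fatou general}) --- in which the sole role of the local convergence hypothesis is precisely to furnish $\int_{Q_0}g_N\,\d x\to\int_{Q_0}g\,\d x$ --- produces $g\in\H_{\vec p \, ^\prime}^{t',r'}$ with $\|g\|_{\H_{\vec p \, ^\prime}^{t',r'}}\le\liminf_N\|g_N\|_{\H_{\vec p \, ^\prime}^{t',r'}}\le C$. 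The genuinely delicate point is the middle step: passing from ``$fg\in L^1$ for every $f$'' to boundedness of $L_g$ cannot be carried out by a naive closed-graph argument, since convergence in $M_{\vec p}^{t,r}$ only yields almost-everywhere convergence along a subsequence and provides no integrable majorant; it is the completeness of $M_{\vec p}^{t,r}$ together with the uniform boundedness principle, applied to the honestly bounded functionals $L_{g_N}$, that makes the passage work.
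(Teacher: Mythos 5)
Your proof is correct and reaches the conclusion by essentially the same truncation-plus-Fatou scheme as the paper: you define the same $g_N$, verify each $g_N$ lies in $\H_{\vec p \, ^\prime}^{t',r'}$ via Lemma \ref{generate mix block}, establish a uniform bound $\|g_N\|_{\H_{\vec p \, ^\prime}^{t',r'}}\le C$ using Theorem \ref{predual mix BM}, and close with Theorem \ref{Fatou general}. The only genuine difference is how the uniform bound is obtained: you use Banach--Steinhaus applied to the visibly bounded functionals $L_{g_N}$, whereas the paper invokes the closed graph theorem directly for the map $T\colon f\mapsto fg$ from $M_{\vec p}^{t,r}$ into $L^1$. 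Both routes are fine, and your observation that the sole role of the local weak-convergence hypothesis in the Fatou-property proof is to supply $\int_{Q_0}g_N\to\int_{Q_0}g$ over dyadic cubes is exactly right (and in fact cleans up a small imprecision in the paper's own statement that $g_N\to g$ ``locally in the weak topology of $L^{\vec p\,^\prime}$'', since a priori $g$ need only be locally integrable).

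Your final paragraph, however, overstates the case against the closed graph argument. The paper applies the closed graph theorem to the operator $T\colon M_{\vec p}^{t,r}\to L^1$, $Tf=fg$, with $L^1$ as the target --- not to the scalar functional $L_g$. In that setting no integrable majorant is needed: suppose $f_n\to f$ in $M_{\vec p}^{t,r}$ and $f_ng\to h$ in $L^1$. The first convergence implies $f_n\chi_B\to f\chi_B$ in $L^{\vec p}$ for each ball $B$ (by the pointwise control $\|f\chi_B\|_{L^{\vec p}}\lesssim_B\|f\|_{M_{\vec p}^{t,r}}$ you yourself record), so by Lemma \ref{dual mixed Lp}(ii) and a diagonal argument a subsequence of $(f_n)$ converges to $f$ a.e.\ on $\rn$; the second convergence forces a further subsequence $f_{n_k}g\to h$ a.e. Combining, $f_{n_k}g\to fg$ a.e.\ while also $f_{n_k}g\to h$ a.e., so $h=fg$. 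The graph is thus closed, completeness of $M_{\vec p}^{t,r}$ (established earlier in the paper) applies, and the closed graph theorem delivers $\|fg\|_{L^1}\le L\|f\|_{M_{\vec p}^{t,r}}$ exactly as the paper claims. Your objection would be pertinent if one tried to apply the closed graph theorem to the scalar-valued $L_g$, where $L_g(f_n)\to c$ carries no pointwise information --- but that is not the map the paper uses. So the correct framing is not that the closed graph approach fails, but that you have given a workable alternative via Banach--Steinhaus.
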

\begin{proof}
	By the closed graph theorem, $ \|  fg \|_{L^1}   \le L \|f\|_{M_{\vec p}^{t,r}}  $  for some $L$ independent of $f  \in M_{\vec p}^{t,r}$. Set   $g_N := \chi_{B(0,N)}  \chi_{ [0,N] } (|g|) g $ for each $N \in \mathbb N$. Then using Lemma \ref{generate mix block} and the fact that $B(0,N)$ can be covered by at most $2^n$ dyadic cubes, we have $g_N \in \H_{\vec p \, ^\prime}^{t',r'} $. Using Theorem \ref{predual mix BM}, we obtain $ \| g_N \| _{ \H_{\vec p \, ^\prime}^{t',r'}  }  \le L. $
	Since $g_N$  converges locally to $g$  in the weak topology of $  L^{\vec p \, ^\prime} $,
	using Theorem \ref{Fatou general}, we see that $g \in  \H_{\vec p \, ^\prime}^{t',r'}  $.
\end{proof}

\subsection{Lattice property} \label{lattice space}
The following result is the  lattice property of block spaces $\H_{\vec p \, ^\prime}^{t',r'} $.
\begin{lemma} \label{lem lattice block}
	Let $ 1 < \vec p <\infty$. Let   $1 < n / ( \sum_{i=1}^n  1/p_{i})   < t <r<\infty $ or $ 1< n / ( \sum_{i=1}^n  1/p_{i})  \le t< r =\infty  $.
	Then a measurable function $ f$  belongs to $\H_{\vec p \, ^\prime}^{t',r'}  $ if and only if there exists a measurable non-negative function  $ g \in \H_{\vec p \, ^\prime}^{t',r'} $ such that  $|f(x)| \le g (x)$ a.e.
\end{lemma}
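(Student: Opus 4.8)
The plan is to derive both implications from facts already established, so that no substantial new argument is needed. First we dispose of the boundary case $n/(\sum_{i=1}^n 1/p_i)=t$, which forces $r=\infty$: here Lemma \ref{block = Lp mixed} gives $\H_{\vec p \, ^\prime}^{t',r'}=L^{\vec p \, ^\prime}$ with coincidence of norms, and the lattice property of the mixed Lebesgue space $L^{\vec p \, ^\prime}$ is immediate, since $\|h\|_{L^{\vec p \, ^\prime}}$ depends only on $|h|$ and $|f|\le g$ a.e.\ gives $\|f\|_{L^{\vec p \, ^\prime}}\le\|g\|_{L^{\vec p \, ^\prime}}$. From now on we assume $n/(\sum_{i=1}^n 1/p_i)<t$, so that Proposition \ref{mix block sum converge} is available.

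For the ``only if'' direction, suppose $f\in\H_{\vec p \, ^\prime}^{t',r'}$ and fix an admissible decomposition $f=\sum_{(j,k)\in\mathbb Z^{n+1}}\lambda_{j,k}b_{j,k}$ with $\{\lambda_{j,k}\}\in\ell^{r'}$, each $b_{j,k}$ a $(\vec p \, ^\prime,t')$-block supported on $Q_{j,k}$, the series converging a.e.\ to $f$. Since the mixed Lebesgue norm is unchanged under taking absolute values, each $|b_{j,k}|$ is again a $(\vec p \, ^\prime,t')$-block supported on $Q_{j,k}$, and $\{|\lambda_{j,k}|\}\in\ell^{r'}$ has the same $\ell^{r'}$-norm. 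By Proposition \ref{mix block sum converge}(i) the series $g:=\sum_{(j,k)\in\mathbb Z^{n+1}}|\lambda_{j,k}|\,|b_{j,k}|$ converges a.e., hence $g$ is a non-negative measurable function belonging to $\H_{\vec p \, ^\prime}^{t',r'}$; and on the common full-measure set where both series converge, $|f(x)|=\bigl|\sum_{(j,k)}\lambda_{j,k}b_{j,k}(x)\bigr|\le g(x)$, as required.

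For the ``if'' direction, let $g\in\H_{\vec p \, ^\prime}^{t',r'}$ be non-negative with $|f|\le g$ a.e. I claim that $fh\in L^1$ for every $h\in M_{\vec p}^{t,r}$. Indeed, $|h|\in M_{\vec p}^{t,r}$ with $\bigl\||h|\bigr\|_{M_{\vec p}^{t,r}}=\|h\|_{M_{\vec p}^{t,r}}$ by the lattice property of $M_{\vec p}^{t,r}$, and the computation \eqref{f g le M H} in the proof of Theorem \ref{predual mix BM} (with the function ``$f$'' there taken to be $|h|$) gives $\int_\rn g(x)|h(x)|\,\d x\le\|g\|_{\H_{\vec p \, ^\prime}^{t',r'}}\,\|h\|_{M_{\vec p}^{t,r}}<\infty$; thus $g|h|\in L^1$, and since $|fh|\le g|h|$ pointwise we get $fh\in L^1$. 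Because $h\in M_{\vec p}^{t,r}$ was arbitrary, the Corollary stated immediately before the present lemma (applied with its two functions interchanged, i.e.\ with its ``$g$'' being our $f$) yields $f\in\H_{\vec p \, ^\prime}^{t',r'}$, which finishes the proof. The only point requiring a little attention is that last pairing bound $\int_\rn g|h|<\infty$; it is literally the estimate \eqref{f g le M H}, which rests only on H\"older's inequality for the $L^{\vec p \, ^\prime}$--$L^{\vec p}$ pair together with $\ell^{r'}$--$\ell^r$ duality, so the argument stays entirely within Sections \ref{sec predual}--\ref{property block}.
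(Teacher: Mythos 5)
Your proof is correct; since the paper omits its own proof and only cites \cite[Lemma 4.10]{BGX25}, there is no text here to compare against line by line, so I will comment on the structure. The ``only if'' direction --- replace each $\lambda_{j,k}$ and $b_{j,k}$ by its absolute value, invoke Proposition \ref{mix block sum converge}(i) for a.e.\ convergence of the majorant series, and take $g$ to be its sum --- is the natural argument and is almost certainly what the omitted proof does. Your handling of the endpoint $n/(\sum_i 1/p_i)=t$, $r=\infty$ via Lemma \ref{block = Lp mixed} is exactly right and is needed precisely because Proposition \ref{mix block sum converge} requires the strict inequality.

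Your ``if'' direction takes a genuinely different route from the most direct one. The direct route is constructive: pass to a decomposition $g=\sum\lambda_{j,k}b_{j,k}$ with nonnegative data (replacing $\lambda_{j,k},b_{j,k}$ by $|\lambda_{j,k}|,|b_{j,k}|$ only enlarges $g$ while preserving the $\ell^{r'}$-norm), observe that $|f|/g\le 1$ where $g>0$ and $f=0$ where $g=0$, and set $c_{j,k}:=(f/g)b_{j,k}$, extended by $0$ where $g=0$; each $c_{j,k}$ is again a $(\vec p\,^\prime,t')$-block on $Q_{j,k}$ because $|c_{j,k}|\le b_{j,k}$, the coefficients are unchanged, and $\sum\lambda_{j,k}c_{j,k}=f$ a.e.\ by absolute convergence. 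Your route instead bootstraps through the unnamed Corollary preceding the lemma, which in turn rests on the closed graph theorem, Lemma \ref{generate mix block}, and the Fatou property (Theorem \ref{Fatou general}, hence Banach--Alaoglu). Both approaches are valid and of similar length; the direct construction is more elementary and stays entirely inside the definition of the block space, while your argument makes visible that the lattice property is a formal consequence of the duality estimate \eqref{f g le M H} together with the Fatou property, which is a worthwhile observation but invokes heavier machinery than the statement really requires. One word of caution for a write-up: you should make sure the citation chain has no circularity, i.e.\ that Theorem \ref{Fatou general} and the Corollary do not secretly use the lattice property; a check confirms they do not, so the argument stands.
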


\begin{proof}
	The proof is similar to \cite[Lemma 4.10]{BGX25} and we omit it here.
\end{proof}

\subsection{Associate spaces and dual spaces} \label{sec associate and dual}
We first recall ball Banach function norm and ball Banach function space.
\begin{definition} \label{ball Bf norm}
	A mapping $\rho: \mathbb M ^+ \to [0,\infty]$  is called a ball Banach  function norm (over $\rn$) if, for all $f, g, f_k (k \in \mathbb N)$  in $\mathbb M ^+ $, for all constants $a \ge 0$ and for all balls $B$ in $\rn$, the	following properties hold:
	
	{\rm (P1)} $\rho (f) = 0 $ if and only if $f =0$ a.e., $\rho (a f) = a \rho (f)$, $\rho(f+g) \le \rho (f) +\rho (g)$,
	
		{\rm (P2)} $\rho (g) \le \rho (f)$  if $ g \le f $ a.e.,
		
		{\rm (P3)} the Fatou property; $\rho (f_j) \uparrow \rho (f)$ holds whenever $0 \le f_j \uparrow f$ a.e.,
		
		{\rm (P4)} For all balls $B$, $\rho (\chi_B) <\infty$,
		
			{\rm (P5)} $\| \chi_B f \|_{L^1} \lesssim_B \rho (f) $ with the implicit constant depending on $B$ and $\rho$ but independent of $f.$
			
		The space generated by such $\rho$ is called a ball Banach function
			space.
\end{definition}

\begin{definition}
	If $\rho$  is a a ball Banach function norm, its associate norm $\rho '$ is defined in  $ \mathbb M ^+$  by
	\begin{equation*}
		\rho ' (g) := \sup\left\{ \| fg\|_{L^1} :f \in \mathbb M ^+, \rho (f) \le 1 \right\}, g \in  \mathbb M ^+ .
	\end{equation*}
\end{definition}

\begin{definition}
	Let $\rho$ be a ball Banach function norm, and let $ \mathcal X = \mathcal X (\rho) $ be the ball Banach function space determined by $\rho$ as in Definition \ref{ball Bf norm}. Let $\rho '$ be the associate norm of $\rho$. The Banach function space $\mathcal X(\rho ') = \mathcal X ' (\rho)$ determined by $\rho '$ is called the associate space or the K\"othe dual of $\mathcal X$.
\end{definition}

\begin{lemma}[Theorem 352, \cite{SDH20}] \label{E= E''}
	Every ball Banach function space $E(\rn)$ coincides with its
	second associate space $E'' (\rn)$. In other words, a function $f$ belongs to $E(\rn)$
	if and only if it belongs to $E ''(\rn)$, and in that case $\|f\|_{E} = \|f\|_{E''}$ or all
	$f \in E (\rn) = E'' (\rn)$.
\end{lemma}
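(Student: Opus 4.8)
Since this is precisely \cite[Theorem 352]{SDH20} (the ball-space analogue of the classical Lorentz--Luxemburg theorem for Banach function spaces), one could simply invoke that reference; for completeness I sketch the argument. Write $\rho$ for the ball Banach function norm generating $E(\rn)$ and $\rho'$, $\rho''$ for its first and second associate norms. The plan is to prove the two inequalities $\rho''(f)\le\rho(f)$ and $\rho(f)\le\rho''(f)$ for every $f\in\mathbb M^+$, which together give $E(\rn)=E''(\rn)$ with equal norms.

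The first inequality is the easy one. I would first record the Hölder-type bound $\|fg\|_{L^1}\le\rho(f)\,\rho'(g)$ for all $f,g\in\mathbb M^+$, which is immediate from the definition of $\rho'$ and the homogeneity in (P1); taking the supremum over all $g\in\mathbb M^+$ with $\rho'(g)\le1$ yields $\rho''(f)\le\rho(f)$, i.e. $E(\rn)\hookrightarrow E''(\rn)$ with $\|f\|_{E''}\le\|f\|_{E}$. At this stage one also checks that $\rho'$ is itself a ball Banach function norm: (P1), (P2) and the Fatou property (P3) for $\rho'$ follow by exchanging suprema and using monotone convergence inside the supremum, while (P4) for $\rho'$ uses (P5) for $\rho$ and (P5) for $\rho'$ uses (P4) for $\rho$. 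In particular $E'(\rn)$ and $E''(\rn)$ are again ball Banach function spaces, so the statement is meaningful.

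The content is the reverse inequality $\rho(f)\le\rho''(f)$. First I would reduce to the case that $f$ is bounded and supported in a ball: for general $f\in\mathbb M^+$ put $f_k:=\min(f,k)\chi_{B_k}$, so $0\le f_k\uparrow f$ a.e.; by (P3) $\rho(f)=\lim_k\rho(f_k)$, and by monotonicity of the supremum defining $\rho''$ one has $\rho''(f_k)\le\rho''(f)$, so it suffices to handle each $f_k$. Fix such an $f$, supported in a ball $B$; then $\rho(f)<\infty$ by (P4). Suppose, for contradiction, that $\rho''(f)<\rho(f)$, pick $\lambda$ with $\rho''(f)<\lambda<\rho(f)$ and replace $f$ by $f/\lambda$, so that $\rho(f)>1$ while $\rho''(f)<1$. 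Consider $K:=\{h\in L^1(B):\rho(|h|)\le1\}$; it is convex, and it is closed in $L^1(B)$ because the Fatou property of $\rho$ (extract an a.e.-convergent subsequence and apply (P3) to the increasing functions $\inf_{n\ge m}|h_n|$) passes $\rho(|h_n|)\le1$ to the $L^1$-limit. Since $f\in L^1(B)$ but $f\notin K$, the Hahn--Banach separation theorem provides $g\in L^\infty(B)$ and $\alpha\ge0$ with $\int_B gh\le\alpha<\int_B gf$ for every $h\in K$; because $f\ge0$ and $K$ is stable under $h\mapsto|h|\,\sgn g$, one may replace $g$ by $|g|$ and assume $g\ge0$. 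Then $\int_B gh\le\alpha$ for all $h\in\mathbb M^+$ with $\rho(h)\le1$, hence $\rho'(g)\le\alpha$, while $\int_B gf>\alpha$. Since $0\in K$ forces $\int_B gf>\alpha\ge0$, we cannot have $\alpha=0$ (otherwise $\rho'(g)=0$, so $g=0$ a.e. by (P1) for $\rho'$, contradicting $\int_B gf>0$); thus $\alpha>0$, and $g/\alpha$ satisfies $\rho'(g/\alpha)\le1$ with $\int_B f\,(g/\alpha)>1$, whence $\rho''(f)\ge\int_B f\,(g/\alpha)>1$ — contradicting $\rho''(f)<1$. Therefore $\rho(f)\le\rho''(f)$ for all $f\in\mathbb M^+$, and combining with the first part finishes the proof.

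The main obstacle is the reverse inequality, and within it two delicate points: verifying that $K$ is closed in $L^1(B)$ (this is exactly where the Fatou property (P3) is indispensable), and normalising the separating functional — that is, ruling out $\alpha=0$ — so that it actually yields a legitimate competitor in the supremum defining $\rho''$. The reductions to bounded, compactly supported $f$ via (P3) and the auxiliary integrability coming from (P4)--(P5) are routine but must be in place before the separation argument can be run.
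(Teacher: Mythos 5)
The paper does not prove this statement at all; it is imported verbatim as \cite[Theorem 352]{SDH20}, so there is no in-paper argument to compare against. Your sketch is a correct adaptation of the classical Lorentz--Luxemburg proof to the ball Banach setting: the Hölder bound $\|fg\|_{L^1}\le\rho(f)\rho'(g)$ gives $\rho''\le\rho$, the truncation $f_k=\min(f,k)\chi_{B_k}$ together with (P3) reduces the reverse inequality to bounded, ball-supported $f$, and the Hahn--Banach separation in $L^1(B)$ (with $K=\{h:\rho(|h|)\le1\}$ closed by the Fatou-subsequence argument) supplies the competitor $g/\alpha$ in the supremum defining $\rho''$. The only minor imprecision is the remark that $\rho(f)<\infty$ ``by (P4)'': for $f\le k\chi_B$ one also needs (P1) and (P2) to get $\rho(f)\le k\rho(\chi_B)<\infty$. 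Everything else — the nonnegativity of $\alpha$ from $0\in K$, the passage from $g$ to $|g|$ via stability of $K$ under $h\mapsto|h|\sgn g$, and ruling out $\alpha=0$ via (P1) for $\rho'$ — is in order.
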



\begin{theorem} \label{associate space}
		Let $ 1 < \vec p <\infty$. Let   $1 < n / ( \sum_{i=1}^n  1/p_{i})   < t <r<\infty $ or $ 1< n / ( \sum_{i=1}^n  1/p_{i})  \le t< r =\infty  $.
	Then the associate space $( M_{\vec p}^{t,r} )  ' $ as a ball Banach function space coincides with the block space $\mathcal{H}_{\vec p \, ^\prime}^{t',r'}  $.
\end{theorem}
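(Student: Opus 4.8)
The plan is to assemble the statement from the duality result of Theorem \ref{predual mix BM} and the Fatou property of the block space, with essentially no new computation. First I would verify that $M_{\vec p}^{t,r}$ is a ball Banach function space in the sense of Definition \ref{ball Bf norm}: (P1), including the triangle inequality, holds because $1<\vec p<\infty$ and $r\ge 1$ in both parameter ranges; (P2) and (P3) are the lattice and Fatou properties recorded in the remark following the definition of $M_{\vec p}^{t,r}$; (P4) follows by covering a ball $B$ by finitely many dyadic cubes and invoking Lemma \ref{chi Q mixed Lp}; and (P5) follows from H\"older's inequality for $L^{\vec p}$ together with the elementary single-cube bound $\|f\chi_Q\|_{L^{\vec p}}\le |Q|^{\frac1n\sum_{i=1}^n 1/p_i-1/t}\|f\|_{M_{\vec p}^{t,r}}$, which is immediate from the definition of $\|\cdot\|_{M_{\vec p}^{t,r}}$. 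Hence the associate norm $\rho'$ of $\rho:=\|\cdot\|_{M_{\vec p}^{t,r}}$ is well defined and $(M_{\vec p}^{t,r})'$ is meaningful.

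Next I would establish the two inclusions with matching norms. For $\mathcal{H}_{\vec p \, ^\prime}^{t',r'}\subseteq (M_{\vec p}^{t,r})'$ it suffices to note that the H\"older-type chain \eqref{f g le M H} from the proof of Theorem \ref{predual mix BM} gives, for $g\in\mathcal{H}_{\vec p \, ^\prime}^{t',r'}$ and any $f\in M_{\vec p}^{t,r}$,
\[
\|fg\|_{L^1}=\int_\rn|f(x)|\,|g(x)|\,\d x\le \|g\|_{\mathcal{H}_{\vec p \, ^\prime}^{t',r'}}\,\||f|\|_{M_{\vec p}^{t,r}}=\|g\|_{\mathcal{H}_{\vec p \, ^\prime}^{t',r'}}\,\|f\|_{M_{\vec p}^{t,r}},
\]
where the last equality uses the lattice property of $M_{\vec p}^{t,r}$; taking the supremum over $\rho(f)\le1$ yields $\rho'(g)\le\|g\|_{\mathcal{H}_{\vec p \, ^\prime}^{t',r'}}$. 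Conversely, if $\rho'(g)<\infty$, then applying the definition of $\rho'$ to $|f|$ shows $|f||g|\in L^1$, hence $fg\in L^1$, for every $f\in M_{\vec p}^{t,r}$, so the corollary following Theorem \ref{Fatou general} (which asserts exactly that $fg\in L^1$ for all $f\in M_{\vec p}^{t,r}$ forces $g\in\mathcal{H}_{\vec p \, ^\prime}^{t',r'}$) gives $g\in\mathcal{H}_{\vec p \, ^\prime}^{t',r'}$. For the reverse norm bound I would use the norm attainer in \eqref{H = max M le 1}: there is $f\in M_{\vec p}^{t,r}$ with $\|f\|_{M_{\vec p}^{t,r}}\le1$ such that $\|g\|_{\mathcal{H}_{\vec p \, ^\prime}^{t',r'}}=\left|\int_\rn g f\,\d x\right|\le\||g||f|\|_{L^1}\le\rho'(g)\,\rho(|f|)\le\rho'(g)$, which combined with the previous display yields $\|g\|_{(M_{\vec p}^{t,r})'}=\|g\|_{\mathcal{H}_{\vec p \, ^\prime}^{t',r'}}$, so the two spaces coincide with equal norms.

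I expect the main obstacle to be bookkeeping rather than a genuine difficulty: one must check carefully that the block space is itself a ball Banach function space so that the K\"othe-duality language is legitimate on both sides, and one must treat the borderline case $n/(\sum_{i=1}^n 1/p_i)=t<r=\infty$ separately, where $\mathcal{H}_{\vec p \, ^\prime}^{t',1}=L^{\vec p \, ^\prime}$ by Lemma \ref{block = Lp mixed} and the claim degenerates to the classical $L^{\vec p}$--$L^{\vec p \, ^\prime}$ duality of Lemma \ref{dual mixed Lp}. As an alternative to the final norm step, once $(\mathcal{H}_{\vec p \, ^\prime}^{t',r'})'=M_{\vec p}^{t,r}$ is known as associate spaces, one may instead invoke Lemma \ref{E= E''} to conclude $(M_{\vec p}^{t,r})'=(\mathcal{H}_{\vec p \, ^\prime}^{t',r'})''=\mathcal{H}_{\vec p \, ^\prime}^{t',r'}$, though this merely repackages the same duality.
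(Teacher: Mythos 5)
Your proof is correct and uses the same ingredients as the paper's: Theorem \ref{predual mix BM} for the inclusion $\mathcal{H}_{\vec p \, ^\prime}^{t',r'}\subseteq(M_{\vec p}^{t,r})'$, and a Fatou/truncation argument for the converse. The only organizational difference is that the paper runs the truncation $f_k:=\min(f,k)\chi_{B_k}$ plus Theorem \ref{Fatou general} directly inside the proof to get membership and the norm bound in one step, whereas you delegate membership to the corollary following Theorem \ref{Fatou general} (which internally does the same truncation) and then recover the sharp norm separately from \eqref{H = max M le 1}; both routes are valid and equivalent. One small imprecision: the equality $\||f|\|_{M_{\vec p}^{t,r}}=\|f\|_{M_{\vec p}^{t,r}}$ is simply because the $M_{\vec p}^{t,r}$-norm is defined through $|f|$, not a consequence of the lattice property (which concerns comparison of two different functions).
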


\begin{proof}
	We use the idea from \cite[Theorem 355]{SDH20}. 
	As we have seen in Theorem \ref{predual mix BM}, $\mathcal{H}_{\vec p \, ^\prime}^{t',r'} \subset  ( M_{\vec p}^{t,r} )  '  $. Hence, we will verify the converse. Suppose that $f\in L^0$ satisfies 
	\begin{equation} \label{fg le 1}
		\sup \left\{ \| fg\|_{L^1} : \|g\|_{ M_{\vec p}^{t,r} } \le  1 \right\}  \le 1.
	\end{equation}
Then we first see that $|f(x)|<\infty$, a.e. $x\in \rn$. Splitting $f$ into its real
and imaginary parts and each of these into its positive and negative parts,
we may assume without loss of generality that $f \in \mathbb  M^+.$ For $k \in \mathbb N$, set $f_k := \min(f,k) \chi_{B_k }$. By Theorem \ref{predual mix BM}, Lemma \ref{lem lattice block} and (\ref{fg le 1}), we have $f_k \in \mathcal{H}_{\vec p \, ^\prime}^{t',r'}$ and $ \| f_k \|_{\mathcal{H}_{\vec p \, ^\prime}^{t',r'}} \le 1$. Since $f_k \uparrow f$ a.e., by Theorem \ref{Fatou general}, we obtain $f \in \mathcal{H}_{\vec p \, ^\prime}^{t',r'}$ and $ \| f\|_{\mathcal{H}_{\vec p \, ^\prime}^{t',r'}} \le 1$. 

By Theorem \ref{Fatou general} and Lemma \ref{E= E''}, we have $ \mathcal{H}_{\vec p \, ^\prime}^{t',r'} = (\mathcal{H}_{\vec p \, ^\prime}^{t',r'})''$. This proves the theorem.
\end{proof}

Theorem \ref{associate space} and Lemma \ref{E= E''} yield $( \mathcal{H}_{\vec p \, ^\prime}^{t',r'} ) ' = ( M_{\vec p}^{t,r} )  '' = M_{\vec p}^{t,r}  $.

\begin{definition}[Absolutely continuous norm] 
	Let $ 0 < \vec p \le \infty $. Let $ 0 <t <\infty $.
	Then    $ M_{\vec p}^{t,r} \neq \{ 0\}$ if and only if $   0 < n / ( \sum_{i=1}^n  1/p_{i})    < t <r <\infty $ or $ 0< n / ( \sum_{i=1}^n  1/p_{i})    \le t < r=\infty  $.
	A function $f \in M_{\vec p}^{t,r} $ has absolutely continuous norm in $ M_{\vec p}^{t,r}$ if  $\| f\chi_{E_k}\| _{M_{\vec p}^{t,r}} \to 0  $ 
	for every sequence $\{ E_k \}_{k=1}^\infty $ satisfying $E_k \to \emptyset $ a.e., in the sense that
	$\lim_{k \to \infty} \chi_{E_k} =0$ for almost all $x\in \rn$.
\end{definition}

\begin{lemma} \label{lem Absolutely continuous norm}
		Let $1< \vec p < \infty $ and 
	$ 1 <  n / ( \sum_{i=1}^n  1/p_{i})    < t <r <\infty $.  For $f \in  M_{\vec p}^{t,r}$, $f$ has the
	absolutely continuous norm.
\end{lemma}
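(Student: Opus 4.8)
\textbf{Proof proposal for Lemma \ref{lem Absolutely continuous norm}.}

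The plan is to argue directly from the series defining the $M_{\vec p}^{t,r}$-norm, applying the dominated convergence theorem twice: once inside each mixed-Lebesgue factor $L^{\vec p}$ and once for the sum over the dyadic cubes. Fix $f\in M_{\vec p}^{t,r}$ and a sequence $\{E_k\}_{k=1}^\infty$ of measurable sets with $\chi_{E_k}\to 0$ a.e.\ on $\rn$. For $Q\in\D$ write $w_Q:=|Q|^{r/t-\frac{r}{n}\sum_{i=1}^n 1/p_i}$, so that
\[
\|f\chi_{E_k}\|_{M_{\vec p}^{t,r}}^r=\sum_{Q\in\D}w_Q\,\|f\chi_{E_k}\chi_Q\|_{L^{\vec p}}^r ,\qquad \sum_{Q\in\D}w_Q\,\|f\chi_Q\|_{L^{\vec p}}^r=\|f\|_{M_{\vec p}^{t,r}}^r<\infty .
\]

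First I would fix a dyadic cube $Q$. Since $w_Q>0$ and the second sum above is finite, $\|f\chi_Q\|_{L^{\vec p}}<\infty$, so $|f|\chi_Q\in L^{\vec p}$ and $f\chi_Q$ is finite a.e.; consequently $f\chi_{E_k}\chi_Q\to 0$ a.e.\ and $|f\chi_{E_k}\chi_Q|\le|f|\chi_Q\in L^{\vec p}$. Because $1<\vec p<\infty$ --- in particular no component of $\vec p$ equals $\infty$ --- the dominated convergence theorem is valid in $L^{\vec p}$; I would justify this by iterating the classical scalar dominated convergence theorem over the coordinates $x_1,\dots,x_n$ in the nested form $\|\cdot\|_{(p_1,\dots,p_j)}$ recalled before Theorem \ref{predual mix BM}, or simply invoke that $L^{\vec p}$ with $\vec p<\infty$ has absolutely continuous norm. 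Hence $\|f\chi_{E_k}\chi_Q\|_{L^{\vec p}}\to 0$, and therefore $w_Q\,\|f\chi_{E_k}\chi_Q\|_{L^{\vec p}}^r\to 0$ as $k\to\infty$, for every fixed $Q\in\D$.

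Next I would sum over $\D$. The monotonicity (lattice property) of the $L^{\vec p}$-norm gives $w_Q\,\|f\chi_{E_k}\chi_Q\|_{L^{\vec p}}^r\le w_Q\,\|f\chi_Q\|_{L^{\vec p}}^r$ for every $k$ and every $Q$, and this majorant is summable over the countable index set $\D$, with sum $\|f\|_{M_{\vec p}^{t,r}}^r$. Applying the dominated convergence theorem with respect to the counting measure on $\D$, together with the termwise convergence established above, yields $\|f\chi_{E_k}\|_{M_{\vec p}^{t,r}}^r=\sum_{Q\in\D}w_Q\,\|f\chi_{E_k}\chi_Q\|_{L^{\vec p}}^r\to 0$ as $k\to\infty$, i.e.\ $f$ has absolutely continuous norm.

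The only non-formal ingredient is the dominated convergence theorem in $L^{\vec p}$ used above, and I expect that (together with keeping track that the dominating numbers are exactly the $Q$-summands of $\|f\|_{M_{\vec p}^{t,r}}^r$) to be the main point; the rest is bookkeeping. Both hypotheses $r<\infty$ and $\vec p<\infty$ are essential: if $r=\infty$ the series becomes a supremum over $\D$ and the argument collapses, while an exponent $\infty$ in $\vec p$ would destroy the inner convergence. An alternative is to approximate $f$ by $g\in L_c^\infty$ (Corollary \ref{dense L_c mixed BM}), reduce via the triangle inequality and the lattice property to $\|g\chi_{E_k}\|_{M_{\vec p}^{t,r}}\le\|g\|_{L^\infty}\,\|\chi_{E_k\cap B_R}\|_{M_{\vec p}^{t,r}}$ with $\operatorname{supp}g\subset B_R$, and then apply the same argument to the dominated sequence $\chi_{E_k\cap B_R}\le\chi_{B_R}\in M_{\vec p}^{t,r}$ (membership by Example \ref{exa Q0 iff} and the translation/dilation invariance); but the direct argument above is shorter.
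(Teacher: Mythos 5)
Your direct argument is correct, and it differs from the paper's. The paper proves absolute continuity by approximation: it invokes Lemma \ref{dense simple} to choose simple functions $f_k \to f$ in $M_{\vec p}^{t,r}$, writes $\|f\chi_{F_j}\|_{M_{\vec p}^{t,r}} \le \|f_k-f\|_{M_{\vec p}^{t,r}} + \|f_k\chi_{F_j}\|_{M_{\vec p}^{t,r}}$, and lets $j\to\infty$ then $k\to\infty$. That route is shorter on the page, but it silently assumes that $\|f_k\chi_{F_j}\|_{M_{\vec p}^{t,r}}\to 0$ for each fixed simple $f_k$ --- which is exactly the statement of the lemma for simple functions, and still requires an argument of the type you give (dominating $\chi_{F_j\cap\operatorname{supp}f_k}$ by $\chi_{\operatorname{supp}f_k}\in M_{\vec p}^{t,r}$ and passing to the limit). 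Your two-level dominated convergence proof, using the DCT in each $L^{\vec p}$ factor (valid since all $p_i<\infty$, by iterating the scalar DCT through the nested norms) and then the DCT for the series over $\D$ (valid since $r<\infty$ and $\sum_Q w_Q\|f\chi_Q\|^r_{L^{\vec p}}<\infty$ is a summable majorant), is more elementary and self-contained: it avoids the reliance on Lemma \ref{dense simple}, which in the paper is itself a consequence of the nontrivial Theorem \ref{E_k^q f to f} and Corollary \ref{dense L_c mixed BM}. You also correctly identified where each of the two finiteness hypotheses is used, and your sketched alternative proof is essentially the paper's argument.

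One small point worth spelling out if this were to be written up: when applying the scalar DCT iteratively inside $L^{\vec p}$, one must note that after each inner integration the resulting function of the remaining variables is still dominated a.e.\ by the corresponding partial norm of $|f\chi_Q|$, which is integrable to the next power because $\|f\chi_Q\|_{L^{\vec p}}<\infty$. This is routine but is the kind of detail referees ask for.
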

\begin{proof}
	Let $f \in M_{\vec p}^{t,r}$. By Lemma \ref{dense simple}, we can find a sequence of simple functions $\{f_k\}_{k\in \mathbb N} \subset \operatorname{Sim} (\rn)$ such that 
	\begin{equation*}
		\lim_{k\to \infty} \| f- f_k\|_{M_{\vec p}^{t,r}}  =0.
	\end{equation*}
Let $\{F_k \}_{k\in \mathbb N}$ be an arbitrary sequence for which $F_k \to \emptyset$ a.e. Then
\begin{equation*}
	\|f \chi_{F_j} \|_{ M_{\vec p}^{t,r}} \le \| f_k -f \|_{M_{\vec p}^{t,r}}  + \| f_k  \chi_{F_j} \|_{ M_{\vec p}^{t,r}} .
\end{equation*}
Letting $j \to \infty$, we obtain
\begin{equation*}
	\limsup_{j\to \infty} 	\|f \chi_{F_j} \|_{ M_{\vec p}^{t,r}} \le \| f_k -f \|_{M_{\vec p}^{t,r}} .
\end{equation*}
Since $k$ is arbitrary, we have $	\lim_{j\to \infty} 	\|f \chi_{F_j} \|_{ M_{\vec p}^{t,r}} =0$.
\end{proof}

\begin{theorem} \label{dual mixed BM}
		Let $1< \vec p < \infty $ and 
	$ 1 <  n / ( \sum_{i=1}^n  1/p_{i})    < t <r <\infty $.  
	Then the dual of $  M_{\vec p}^{t,r} $ is $ \H_{\vec p \, ^\prime}^{t',r'}  $ in the following sense:
	
	If $g \in \H_{\vec p \, ^\prime}^{t',r'} $, then $L_g : f \mapsto \int_\rn f(x) g(x) \d x $ is an element of $ (  M_{\vec p}^{t,r} ) ^\ast  $. Moreover, for any $L \in  (  M_{\vec p}^{t,r} ) ^\ast$, there exists $g \in\H_{\vec p \, ^\prime}^{t',r'}$ such that $L = L_g$.
\end{theorem}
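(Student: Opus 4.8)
The plan is to identify $\bigl(M_{\vec p}^{t,r}\bigr)^{\ast}$ with the associate space $\bigl(M_{\vec p}^{t,r}\bigr)'$, which by Theorem \ref{associate space} equals $\H_{\vec p \, ^\prime}^{t',r'}$; conceptually this is the classical duality theorem for (ball) Banach function spaces possessing an absolutely continuous norm (see, e.g., \cite{SDH20}), and Lemma \ref{lem Absolutely continuous norm} supplies precisely that property when $r<\infty$. The easy half is immediate: if $g\in\H_{\vec p \, ^\prime}^{t',r'}=\bigl(M_{\vec p}^{t,r}\bigr)'$, then the generalized H\"older inequality for a ball Banach function space and its associate gives
\[
|L_g(f)|=\left|\int_\rn f(x)g(x)\,\d x\right|\le\|f\|_{M_{\vec p}^{t,r}}\,\|g\|_{\H_{\vec p \, ^\prime}^{t',r'}},
\]
so $L_g\in\bigl(M_{\vec p}^{t,r}\bigr)^{\ast}$ with $\|L_g\|\le\|g\|_{\H_{\vec p \, ^\prime}^{t',r'}}$.

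For the converse, fix $L\in\bigl(M_{\vec p}^{t,r}\bigr)^{\ast}$ and first construct the representing function. For each dyadic cube $Q\in\D$ we have $\chi_Q\in M_{\vec p}^{t,r}$ by Example \ref{exa Q0 iff} and the translation/dilation invariance of $M_{\vec p}^{t,r}$, so the set function $E\mapsto\nu_Q(E):=L(\chi_E)$ is well defined on measurable $E\subset Q$ and finitely additive. It is in fact countably additive: if $E_j\subset Q$ with $E_j\downarrow\emptyset$, then Lemma \ref{lem Absolutely continuous norm} applied to $\chi_Q\in M_{\vec p}^{t,r}$ yields $\|\chi_{E_j}\|_{M_{\vec p}^{t,r}}=\|\chi_Q\chi_{E_j}\|_{M_{\vec p}^{t,r}}\to0$, hence $\nu_Q(E_j)\to0$. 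Since $|E|=0$ forces $\nu_Q(E)=0$, the Radon--Nikodym theorem provides $g_Q\in L^1(Q)$ with $L(\chi_E)=\int_E g_Q$ for all $E\subset Q$; by uniqueness these patch to a function $g\in L^1_{\operatorname{loc}}$ with $g\chi_Q=g_Q$ a.e. for every $Q\in\D$. By linearity $L(h)=\int_\rn hg$ for every simple $h$ with bounded support, and then, approximating an arbitrary $h\in L_c^\infty$ in the $L^\infty$-norm by such simple functions (using that $\|\cdot\|_{M_{\vec p}^{t,r}}$ is dominated by a multiple of $\|\cdot\|_{L^\infty}$ on functions supported in a fixed ball, together with $g\in L^1_{\operatorname{loc}}$), we obtain $L(h)=\int_\rn h(x)g(x)\,\d x$ for all $h\in L_c^\infty$.

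It remains to show $g\in\bigl(M_{\vec p}^{t,r}\bigr)'=\H_{\vec p \, ^\prime}^{t',r'}$, i.e. $\sup\{\|fg\|_{L^1}:\|f\|_{M_{\vec p}^{t,r}}\le1\}<\infty$. By the lattice property and the Fatou property of $M_{\vec p}^{t,r}$ together with monotone convergence, this supremum equals $\sup\{\int_\rn s|g|:s\ \text{nonnegative simple with bounded support},\ \|s\|_{M_{\vec p}^{t,r}}\le1\}$. For such an $s$, the function $s\,\overline{\sgn(g)}$ belongs to $L_c^\infty$ and satisfies $|s\,\overline{\sgn(g)}|\le s$, so the displayed identity on $L_c^\infty$ and the lattice property give
\[
\int_\rn s|g|=\int_\rn s\,\overline{\sgn(g)}\,g=L\bigl(s\,\overline{\sgn(g)}\bigr)\le\|L\|\,\|s\|_{M_{\vec p}^{t,r}}\le\|L\|.
\]
Taking the supremum yields $\|g\|_{\H_{\vec p \, ^\prime}^{t',r'}}=\|g\|_{(M_{\vec p}^{t,r})'}\le\|L\|<\infty$, so $g\in\H_{\vec p \, ^\prime}^{t',r'}$. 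Finally, $L$ and $L_g$ are continuous functionals on $M_{\vec p}^{t,r}$ agreeing on $L_c^\infty$, which is dense in $M_{\vec p}^{t,r}$ by Corollary \ref{dense L_c mixed BM} (or Lemma \ref{dense simple}); hence $L=L_g$, and together with the easy half this also gives $\|L\|=\|g\|_{\H_{\vec p \, ^\prime}^{t',r'}}$.

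The main obstacle is the step producing the representing function $g$. Unlike the situation in Theorem \ref{predual mix BM}, there is no continuous embedding $L^{\vec p}(Q)\hookrightarrow M_{\vec p}^{t,r}$ --- a compactly supported $L^{\vec p}$ function need not lie in $M_{\vec p}^{t,r}$ --- so one cannot simply apply $L^{\vec p}$--$L^{\vec p \, ^\prime}$ duality on cubes as in Lemma \ref{dual mixed Lp}; instead $g$ must be built from the set function $L(\chi_E)$ via Radon--Nikodym, and the countable additivity needed there is exactly where the absolute continuity of the norm (Lemma \ref{lem Absolutely continuous norm}), hence the hypothesis $r<\infty$, is indispensable. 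Everything else is a routine assembly of the associate-space identification (Theorem \ref{associate space}), the Fatou and lattice properties, and the density of $L_c^\infty$.
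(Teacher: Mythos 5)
Your proof is correct and follows essentially the same route as the paper's: identify $(M_{\vec p}^{t,r})^\ast$ with the associate space $\H_{\vec p\,^\prime}^{t',r'}$ via Theorem \ref{associate space}, build the representing function by applying Radon--Nikodym to the set function $E\mapsto L(\chi_E)$ on a covering of $\rn$ (with countable additivity supplied by absolute continuity of the norm, i.e.\ $r<\infty$), verify membership in the associate space by approximating with simple functions, and finish by density. Your closing remark about why the $L^{\vec p}$--$L^{\vec p\,^\prime}$ local duality trick from Theorem \ref{predual mix BM} is unavailable here (no embedding $L^{\vec p}_c\hookrightarrow M_{\vec p}^{t,r}$, since $1/t - \frac{1}{n}\sum 1/p_i < 0$ forces small cubes to carry divergent weight) accurately pinpoints the reason the Radon--Nikodym construction is the right tool, and matches the paper's choice.
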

\begin{proof}
	We use the idea from \cite[Theorem 356]{SDH20}.
	The first assertion is clear, since
 $ M_{\vec p}^{t,r}$ is the dual of  $ \H_{\vec p \, ^\prime}^{t',r'}  $ as in Theorem \ref{predual mix BM}. Hence, we will prove that $  (  M_{\vec p}^{t,r} ) ^\ast  \subset  \H_{\vec p \, ^\prime}^{t',r'}$. Thanks to Theorem \ref{associate space}, we need only to show $  (  M_{\vec p}^{t,r} ) ^\ast  \subset   ( M_{\vec p}^{t,r} )  '$. Let $L \in (  M_{\vec p}^{t,r} ) ^\ast$. We will exhibit a function $g  \in ( M_{\vec p}^{t,r} )  '$  such that 
 \begin{equation} \label{L f = int fg}
 	L (f) = \int_\rn f (x) g(x) \d x, \; f \in  M_{\vec p}^{t,r} .
 \end{equation}

For each $i \in  \mathbb Z^n$, let $\mathcal A_i$ denote the Lebesgue measurable subsets of $Q_{0,i}$ and define a set-function $\lambda_i$ on $\mathcal A_i$  by  $\lambda_i (A) := L (\chi_A)$, $A \in \mathcal A_i$.
Notice that $\lambda_i (A)$ is well defined for all $A \in \mathcal A_i$ because $\chi_A  \in L_c^\infty \subset M_{\vec p}^{t,r}$.

We claim that $\lambda_i$ is countably additive on $\mathcal A_i$ for any $i \in  \mathbb Z^n$. Indeed,
let $\{  A_k \}_{k \in \mathbb N}$ be a sequence of disjoint sets from $\mathcal A_i$, and let
\begin{equation*}
	B_\ell = \cup_{k=1}^\ell A_k, \; A := \cup_{\ell \in \mathbb N} B_\ell  = \cup_{k \in \mathbb N} A_k.
\end{equation*}
It follows from the Lebesgue dominated convergence theorem ($r<\infty$) that
\begin{equation*}
\lim_{\ell \to \infty}	\| \chi_A - \chi_{B_\ell} \|_{M_{\vec p}^{t,r} } = 0.
\end{equation*}
The continuity and linearity of $L$ give
\begin{equation*}
	\lambda_i (A) = L(\chi_A)  = \lim_{\ell \to \infty}  L(B_\ell) =\lim_{\ell \to \infty}  \sum_{k=1}^\ell   L(A_k) = \sum_{k=1}^\infty \lambda_i (A_k),
\end{equation*}
which establishes the claim. 

Since $| \lambda_i (A)|  \le \|L\|_{ (M_{\vec p}^{t,r})^\ast} \|\chi_A\|_{M_{\vec p}^{t,r} }$
for all $A \in \mathcal A_i$ and $ \lambda_i (A) = 0$ for
all $A \in \mathcal A_i$ such that $|A|=0$, by the Radon-Nikodym theorem (see \cite[Theorem 15.41]{Ko08}, for example), there uniquely exists $g\in L^0(Q_{0,i})$ such that
\begin{equation*}
	L(\chi_A)  = \lambda_i (A) =\int_\rn \chi_A (x) g_i (x) \d x, \; A \subset \mathcal A _i.
\end{equation*}
Since the sets $ \{Q_{0,i} \}_{i\in\mathbb Z^n} $ are disjoint, we may define a function  $g$ on $\rn $  by setting $g:= g_i $ on $Q_{0,i}$. Clearly, 
\begin{equation} \label{L chi E}
	L (\chi_E) = \int_\rn \chi_E (x) g(x) \d x
\end{equation}
for all characteristic functions of sets of finite measure $E$. We will show that $g$ belongs to $( M_{\vec p}^{t,r})  '$. Choose and fix $f$ in $ M_{\vec p}^{t,r} $. 
Let 
\begin{equation*}
	f_\ell := \sum_{k=1}^{4^\ell} 2^{-\ell} k \chi_{F_{k,\ell}}
\end{equation*}
for $\ell \in \mathbb N$, where $F_{k,\ell} :=\{ x\in B_{2^\ell} :  2^{-\ell} k \le |f(x)| \le 2^{-\ell} (k+1) \} $. We may suppose that $g$ is real-valued; otherwise split $g$ into its real
and imaginary parts. Then  $f_\ell \sgn (g)$ becomes a finite linear combination of characteristic functions of sets of finite measure. Hence, we may apply (\ref{L chi E}) and use the linearity of $L$ to obtain
\begin{equation*}
	\int_\rn f_\ell (x) | g(x) |\d x = L (f_\ell \sgn (g) )  \le \| L \|_{ ( M_{\vec p}^{t,r} )^\ast }  \| f_\ell \|_{  M_{\vec p}^{t,r}} .
\end{equation*}
Letting $\ell \to \infty$, we obtain 
\begin{equation*}
	\| f g\|_{L^1} \le \| L \|_{ ( M_{\vec p}^{t,r} )^\ast }  \| f \|_{  M_{\vec p}^{t,r}} 
\end{equation*}
from the monotone
convergence theorem and the Fatou property of mixed Bourgain-Morrey norm. This means $g \in ( M_{\vec p}^{t,r} ) '$.

For a simple function $f$, write
\begin{equation*}
	L(f) =\int_\rn f(x) g(x) \d x,
\end{equation*}
and observe the continuity of both sides on $M_{\vec p}^{t,r} $. By Lemma \ref{lem Absolutely continuous norm}, we obtain (\ref{L f = int fg}) holds.
This completes the proof of the theorem.
\end{proof} 

\begin{corollary}\label{reflexive BMX}
	Let $1< \vec p < \infty $ and 
	$ 1 <  n / ( \sum_{i=1}^n  1/p_{i})     < t <r <\infty $.     Then  $   M_{\vec p}^{t,r} $ is reflexive.
\end{corollary}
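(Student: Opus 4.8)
The plan is to deduce reflexivity of $M_{\vec p}^{t,r}$ directly from the duality results already established. By Theorem \ref{predual mix BM}, we have $\big(\H_{\vec p \, ^\prime}^{t',r'}\big)^* = M_{\vec p}^{t,r}$, and by Theorem \ref{dual mixed BM}, we have $\big(M_{\vec p}^{t,r}\big)^* = \H_{\vec p \, ^\prime}^{t',r'}$. Combining these, the bidual of $M_{\vec p}^{t,r}$ satisfies $\big(M_{\vec p}^{t,r}\big)^{**} = \big(\H_{\vec p \, ^\prime}^{t',r'}\big)^* = M_{\vec p}^{t,r}$ isometrically. What remains is to verify that this identification is implemented by the canonical embedding $J \colon M_{\vec p}^{t,r} \to \big(M_{\vec p}^{t,r}\big)^{**}$ given by $J(f)(L) = L(f)$, rather than by some other accidental isometric isomorphism.

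First I would unwind the pairings carefully. Given $f \in M_{\vec p}^{t,r}$, it induces the functional $L_f \in \big(\H_{\vec p \, ^\prime}^{t',r'}\big)^*$ via $L_f(g) = \int_\rn g(x) f(x)\,\d x$ (Theorem \ref{predual mix BM}(i)), and Theorem \ref{predual mix BM} tells us every element of $\big(\H_{\vec p \, ^\prime}^{t',r'}\big)^*$ is of this form with $\|L_f\|=\|f\|_{M_{\vec p}^{t,r}}$. Dually, given $L \in \big(M_{\vec p}^{t,r}\big)^*$, Theorem \ref{dual mixed BM} produces a unique $g_L \in \H_{\vec p \, ^\prime}^{t',r'}$ with $L(f) = \int_\rn f(x) g_L(x)\,\d x$ for all $f \in M_{\vec p}^{t,r}$. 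Now take any $\Phi \in \big(M_{\vec p}^{t,r}\big)^{**}$. Composing with the isometric isomorphism $L \mapsto g_L$ from $\big(M_{\vec p}^{t,r}\big)^*$ onto $\H_{\vec p \, ^\prime}^{t',r'}$, the functional $\Phi$ corresponds to an element of $\big(\H_{\vec p \, ^\prime}^{t',r'}\big)^*$, which by Theorem \ref{predual mix BM} equals $L_f$ for a unique $f \in M_{\vec p}^{t,r}$; that is, $\Phi(L) = L_f(g_L) = \int_\rn g_L(x) f(x)\,\d x = L(f) = J(f)(L)$ for every $L \in \big(M_{\vec p}^{t,r}\big)^*$. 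Hence $\Phi = J(f)$, so $J$ is surjective, and since $\|J(f)\| = \|f\|_{M_{\vec p}^{t,r}}$ it is an isometry; therefore $M_{\vec p}^{t,r}$ is reflexive.

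The one point requiring genuine care — and the main obstacle — is the bookkeeping showing that the two duality theorems are ``compatible'' in the sense that the natural map $\big(M_{\vec p}^{t,r}\big)^*\to\H_{\vec p \, ^\prime}^{t',r'}$, $L\mapsto g_L$, is precisely the inverse (up to the canonical identification) of the map $\H_{\vec p \, ^\prime}^{t',r'}\to\big(M_{\vec p}^{t,r}\big)^*$, $g\mapsto L_g$, which it is because both are given by the same integral pairing $\langle f,g\rangle=\int_\rn fg$. Once one observes that every functional in sight is an integral against the opposite-space element, the equality $\Phi(L)=L(f)$ falls out. One should also record that both $M_{\vec p}^{t,r}$ and $\H_{\vec p \, ^\prime}^{t',r'}$ are Banach spaces in the stated parameter range (Theorem \ref{Banach} and the completeness remark following the definition of $M_{\vec p}^{t,r}$, recalling that here $1<n/(\sum_{i=1}^n 1/p_i)<t<r<\infty$ guarantees we are in the Banach setting), so that reflexivity is even meaningful. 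No new estimates are needed; the proof is a short diagram chase through Theorems \ref{predual mix BM} and \ref{dual mixed BM}.
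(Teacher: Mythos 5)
Your proof is correct and is essentially the argument the paper leaves implicit (the corollary is stated without proof immediately after Theorem \ref{dual mixed BM}). You correctly identify the only genuine subtlety: having $\big(\H_{\vec p\,^\prime}^{t',r'}\big)^* = M_{\vec p}^{t,r}$ and $\big(M_{\vec p}^{t,r}\big)^* = \H_{\vec p\,^\prime}^{t',r'}$ is not by itself enough (cf.\ the James space, which is isometric to its bidual but not reflexive); one must check that the induced isomorphism $M_{\vec p}^{t,r}\to\big(M_{\vec p}^{t,r}\big)^{**}$ is the canonical embedding $J$, and this is exactly what your diagram chase verifies by noting that both dualities in Theorems \ref{predual mix BM} and \ref{dual mixed BM} are implemented by the same integral pairing $\langle f,g\rangle = \int_\rn fg$. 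One small remark on precision: Theorem \ref{dual mixed BM} as stated guarantees $L\mapsto g_L$ is a linear bijection but does not explicitly assert the isometry you invoke; that isometry does hold (it follows from the inequality $\|L_g\|\le\|g\|_{\H_{\vec p\,^\prime}^{t',r'}}$ from (\ref{f g le M H}) combined with $\|g_L\|_{\H_{\vec p\,^\prime}^{t',r'}} = \|g_L\|_{(M_{\vec p}^{t,r})'}\le\|L\|$ via Theorem \ref{associate space}), but for reflexivity alone surjectivity of $J$ suffices, and for surjectivity you only need $L\mapsto g_L$ to be a continuous bijection (then use the open mapping theorem), so the argument goes through either way.
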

\begin{remark}
	Let $\vec p = p$, then Theorem \ref{dual mixed BM} becomes \cite[Theorem 5.5]{HNSH23} and Corollary \ref{reflexive BMX}  becomes \cite[Remark 5.7]{HNSH23}.
\end{remark}

\section{Operators} \label{sec operator}
In this section, we discuss some operators on mixed Bourgain-Morrey spaces and their preduals.
\subsection{The Hardy-Littlewood maximal operator on mixed Bourgain-Morrey spaces} 
In this subsection, we consider the boundedness property of the Hardy-Littlewood maximal operator and iterated maximal operator on mixed Bourgain-Morrey spaces.

For the boundedness of Hardy-Littlewood maximal operator,
we define a equivalent norm of $M_{\vec p}^{t,r}  $ below. To do so, we use a dyadic grid $\mathcal D_{k,\vec a}$, $k \in \mathbb Z$, $\vec a \in \{ 0,1,2\}^n$. More precisely, let
\begin{equation*}
	\mathcal D_{k,\vec a}^0  :=  \{ 2^{-k} [ m +a/3, m+a/3 +1) : m\in \mathbb Z   \}
\end{equation*}
for $k \in \mathbb Z$  and $a = 0,1,2$. Consider
\begin{equation*}
	\mathcal D_{k,\vec a} := \{ Q_1 \times Q_2 \times \cdots \times Q_n :Q_j \in	\mathcal D_{k,\vec a_j}^0 ,  j =1,2,\ldots , n \}
\end{equation*}
for $k \in \mathbb Z$  and  $\vec a = (a_1, a_2, \ldots, a_n) \in  \{ 0,1,2\}^n$. Hereafter,  a  dyadic grid is the family $\mathcal D _{\vec a} := \cup_{k\in \mathbb Z} \mathcal D_{k,\vec a} $ for $\vec a \in \{0,1,2\}^n$.

The following result is an important property of the dyadic grids to prove the equivalent norm of the mixed Bourgain-Morrey space.
\begin{lemma}[Lemma 2.8, \cite{HNSH23}] \label{cube be covered}
	For any cube $Q$ there exists $R\in \bigcup_{\vec a \in \{0,1,2\}^n}  \mathcal D _{\vec a}$ such that $Q \subset R$  and $|R| \le 6^n |Q|$.
\end{lemma}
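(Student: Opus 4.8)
The plan is to reduce the $n$-dimensional statement to a one-dimensional fact about the three shifted dyadic systems $\mathcal D_{k,0}^0$, $\mathcal D_{k,1}^0$, $\mathcal D_{k,2}^0$ at a common scale, and then obtain $R$ as a Cartesian product. Write $\ell := \ell(Q)$, so that $|Q| = \ell^n$, and fix once and for all an index $k \in \mathbb Z$ with
\begin{equation*}
	3\ell \le 2^{-k} \le 6\ell,
\end{equation*}
equivalently $2^{-k}/6 \le \ell \le 2^{-k}/3$; such a $k$ exists because the closed interval $[3\ell,6\ell]$ has endpoint ratio $2$ and therefore contains a power of $2$. The same $k$ will be used for every coordinate, which is legitimate precisely because $Q$ is a cube.

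The heart of the argument is the following one-dimensional covering claim, applied coordinatewise. Let $[x_i, x_i+\ell)$ be the $i$-th edge of $Q$. We seek $a_i \in \{0,1,2\}$ and $m_i \in \mathbb Z$ with $[\,2^{-k}(m_i+a_i/3),\, 2^{-k}(m_i+a_i/3+1)\,) \supseteq [x_i, x_i+\ell)$. Unwinding the two endpoint inequalities, this is equivalent to producing a point $p = 2^{-k}(m+a/3)$, with $m\in\mathbb Z$ and $a\in\{0,1,2\}$, lying in the closed window $J_i := [\,x_i+\ell-2^{-k},\, x_i\,]$. Now the set of admissible values of $p$ is exactly $2^{-k}\cdot\tfrac13\mathbb Z$, since every integer $j$ can be written uniquely as $j = 3m+a$ with $a\in\{0,1,2\}$; this set is $2^{-k}/3$-separated. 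On the other hand, $J_i$ has length $2^{-k}-\ell \ge 2^{-k} - 2^{-k}/3 = \tfrac23\,2^{-k} > 2^{-k}/3$, so $J_i$ must contain at least one point of that lattice. This yields the desired interval $Q_i \in \mathcal D_{k,a_i}^0$ with $Q_i \supseteq [x_i, x_i+\ell)$ and $\ell(Q_i) = 2^{-k} \le 6\ell$.

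To finish, set $\vec a := (a_1,\dots,a_n) \in \{0,1,2\}^n$ and $R := Q_1 \times \cdots \times Q_n$. By construction $R \in \mathcal D_{k,\vec a} \subset \mathcal D_{\vec a} \subset \bigcup_{\vec a \in \{0,1,2\}^n}\mathcal D_{\vec a}$, we have $Q \subseteq R$, and
\begin{equation*}
	|R| = \bigl(2^{-k}\bigr)^n \le (6\ell)^n = 6^n\,|Q|,
\end{equation*}
which is the assertion. The whole computation is routine; the only point that requires genuine care — the ``main obstacle,'' such as it is — is the step above: one must verify that the left endpoints of the three shifted grids at scale $2^{-k}$ together form a set with gaps exactly $2^{-k}/3$, so that a window of length strictly larger than $2^{-k}/3$ is guaranteed to capture one of them. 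Everything else is bookkeeping, and the choice $3\ell \le 2^{-k} \le 6\ell$ is exactly what makes both the ``contains'' condition ($\ell \le 2^{-k}/3$) and the volume bound ($2^{-k} \le 6\ell$) hold simultaneously.
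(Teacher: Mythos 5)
Your proof is correct, and it is precisely the standard one-third-trick argument for this lemma (the paper simply cites \cite{HNSH23} rather than reproving it). The two essential observations — that the left endpoints of the three shifted families at a fixed scale $2^{-k}$ form the lattice $\tfrac{2^{-k}}{3}\mathbb Z$, and that choosing $k$ with $3\ell\le 2^{-k}\le 6\ell$ makes the admissible window in each coordinate longer than $\tfrac{2^{-k}}{3}$ while keeping $(2^{-k})^n\le 6^n\ell^n$ — are exactly the content of the cited proof.
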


Fix the dyadic grid $  \mathcal D _{\vec a}$ where $ \vec a \in \{0,1,2\}^n  $ and  define
\begin{equation*}
	\| f \|_{  M_{\vec p}^{t,r} (\mathcal D _{\vec a}) } : = \left\|\left\{|Q|^{1/t-1/p}  \| f \chi_Q \|_{L^{\vec p }  }  \right\}_{Q \in \mathcal D _{\vec a} }    \right\|_{\ell^r}
\end{equation*}
for all $f \in L^0 $.

Using Lemma \ref{cube be covered}, we obtain the following result. Since its proof is similar to  \cite[ (2.1)]{HNSH23},  we omit it here.
\begin{lemma} \label{D equivalence}
	Let $ 0 < \vec p \le \infty $. Let    $0 < n / ( \sum_{i=1}^n  1/p_i )   < t <r <\infty $ or $ 0< n / ( \sum_{i=1}^n  1/p_i )  \le t < r=\infty  $.
	Then $ \|\cdot \|_{M_{\vec p}^{t,r}   }$  and $ \|\cdot \|_{M_{\vec p}^{t,r} (\mathcal D _{\vec a})  } $ are equivalent for any dyadic grid $\mathcal D _{\vec a} $ where $ \vec a \in \{0,1,2\}^n  $. That is, for each $f \in L^0 $, $ \|f\|_{M_{\vec p}^{t,r}   }   \approx   \|f\|_{M_{\vec p}^{t,r} (\mathcal D _{\vec a})  }  $.
\end{lemma}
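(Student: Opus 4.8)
The plan is to prove the two-sided bound $\|f\|_{M_{\vec p}^{t,r}(\mathcal D_{\vec a})}\approx\|f\|_{M_{\vec p}^{t,r}}$ by comparing the grids $\mathcal D_{\vec a}$ and $\mathcal D=\mathcal D_{\vec 0}$ scale by scale. Put $\theta:=1/t-\frac1n\sum_{i=1}^n 1/p_i$, so that for any $\vec b\in\{0,1,2\}^n$ one has $\|f\|_{M_{\vec p}^{t,r}(\mathcal D_{\vec b})}^r=\sum_{k\in\mathbb Z}\sum_{Q\in\mathcal D_{k,\vec b}}(|Q|^\theta\|f\chi_Q\|_{L^{\vec p}})^r$, with the obvious supremum when $r=\infty$. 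It then suffices to establish $\|f\|_{M_{\vec p}^{t,r}(\mathcal D_{\vec a})}\lesssim\|f\|_{M_{\vec p}^{t,r}(\mathcal D_{\vec b})}$ for arbitrary fixed $\vec a,\vec b$; applying this with the pair $(\vec a,\vec 0)$ and then with $(\vec 0,\vec a)$ yields the asserted equivalence, since $\mathcal D=\mathcal D_{\vec 0}$.

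First we fix $k\in\mathbb Z$ and $Q\in\mathcal D_{k,\vec a}$. Since $\mathcal D_{k,\vec b}$ is a partition of $\rn$ into axis-parallel cubes of the same side length $2^{-k}$ as $Q$, the cube $Q$ meets at most $2^n$ members of $\mathcal D_{k,\vec b}$ and lies in their union; we call this family $\mathcal N(Q)$. Using the lattice property of $L^{\vec p}$, the (quasi-)triangle inequality for $\|\cdot\|_{L^{\vec p}}$ with a constant $C_{\vec p}\ge1$ (equal to $1$ when $\min\vec p\ge1$), and $|Q|=|R|$ for $R\in\mathcal N(Q)$, we obtain
\begin{equation*}
	|Q|^\theta\|f\chi_Q\|_{L^{\vec p}}\le C_{n,\vec p}\sum_{R\in\mathcal N(Q)}|R|^\theta\|f\chi_R\|_{L^{\vec p}}.
\end{equation*}
Raising this to the power $r$, invoking $(\sum_{i=1}^{N}a_i)^r\le\max\{1,N^{r-1}\}\sum_{i=1}^N a_i^r$ with $N\le2^n$ (for $r<\infty$; the case $r=\infty$ is immediate), summing over $Q\in\mathcal D_{k,\vec a}$, and interchanging the order of summation while using that each $R\in\mathcal D_{k,\vec b}$ lies in $\mathcal N(Q)$ for at most $2^n$ cubes $Q\in\mathcal D_{k,\vec a}$, we arrive at
\begin{equation*}
	\sum_{Q\in\mathcal D_{k,\vec a}}\big(|Q|^\theta\|f\chi_Q\|_{L^{\vec p}}\big)^r\lesssim\sum_{R\in\mathcal D_{k,\vec b}}\big(|R|^\theta\|f\chi_R\|_{L^{\vec p}}\big)^r,
\end{equation*}
with an implicit constant depending only on $n$, $\vec p$, $r$. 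Summing over $k\in\mathbb Z$ finishes the argument. Equivalently, one may route the proof through Lemma \ref{cube be covered}: every cube is contained in a cube of comparable volume belonging to one of the finitely many grids $\mathcal D_{\vec c}$, $\vec c\in\{0,1,2\}^n$, and these grids are then absorbed by the same bounded-overlap counting.

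I do not expect a genuine obstacle here; the proof is essentially bookkeeping. The two points that need attention are the quasi-norm regime $\min\vec p<1$, where the triangle-inequality constant $C_{\vec p}$ exceeds $1$, and the small-exponent regime $r<1$ (which does occur, since only $0<n/(\sum_i 1/p_i)<t<r$ is assumed), where $\max\{1,N^{r-1}\}=1$ and one uses $(\sum_i a_i)^r\le\sum_i a_i^r$ directly. In every case the extra factors are dimensional or exponent-dependent constants, which is precisely what the equivalence $\|f\|_{M_{\vec p}^{t,r}}\approx\|f\|_{M_{\vec p}^{t,r}(\mathcal D_{\vec a})}$ permits.
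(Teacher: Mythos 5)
Your argument is correct and is, as far as one can tell from the paper's cryptic pointer to \cite[(2.1)]{HNSH23}, essentially the intended one: the key geometric fact is that a cube of $\mathcal D_{k,\vec a}$ meets at most $2^n$ cubes of $\mathcal D_{k,\vec b}$ of the \emph{same} generation $k$ (since the two grids at scale $k$ differ by a shift of $a_i/3\cdot 2^{-k}$ and $b_i/3\cdot 2^{-k}$ in each coordinate), and the bounded-overlap count is symmetric, so after interchanging the sums one absorbs a factor $\le 2^n$ (or $\max\{1,(2^n)^{r-1}\}\cdot 2^n$ for $r<\infty$) into the implied constant, uniformly in $k$. Your handling of the two delicate regimes is also right: for $\min\vec p<1$ the quasi-triangle constant of $L^{\vec p}$ for a fixed number $\le 2^n$ of disjointly supported summands is a finite constant depending only on $n$ and $\vec p$, and for $r<1$ one simply uses $(\sum a_i)^r\le\sum a_i^r$. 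The only small caveat worth flagging is that the side remark routing the proof through Lemma~\ref{cube be covered} does not, on its own, give a comparison of $\mathcal D_{\vec a}$ with a \emph{fixed} grid $\mathcal D_{\vec b}$ (the covering cube $R$ produced by that lemma may land in any of the $3^n$ grids), so you would still need precisely the same-scale bounded-overlap argument you give in the main part; the direct scale-by-scale comparison is the cleaner route and should stand on its own.
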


Denote by $\mathcal M_{\operatorname{dyadic}}$ the dyadic maximal operator generated by the dyadic cubes  in $\mathcal D$. We do not have a pointwise estimate to control $\mathcal M$ in terms of $\mathcal M_{\operatorname{dyadic}}$. 
The maximal operator generated by a family $\mathcal D _{\vec a}$ is defined by 
\begin{equation*}
	\mathcal M_{\mathcal D _{\vec a}} f (x) = \sup_{Q \in \mathcal D _{\vec a}} \frac{\chi_Q (x)}{|Q|} \int_Q |f(y) | \d y
\end{equation*}
for $f\in L^0 $ and $ \vec a \in \{0,1,2\}^n  $.
As in \cite{LN19},
\begin{equation} \label{M le Mdya}
	\mathcal M f (x) \lesssim \sum_{ \vec a \in \{0,1,2\}^n } \M_{\mathcal D _{\vec a}} f (x).
\end{equation}

\begin{theorem} \label{HL M}
	Let $ 1< \vec p < \infty $. Let    $1 < n / ( \sum_{i=1}^n  1/p_i )   < t <r <\infty $ or $ 1< n / ( \sum_{i=1}^n  1/p_i )  \le t < r=\infty  $.
 Then $\mathcal M$  is bounded on  $M_{\vec p}^{t,r} $.
\end{theorem}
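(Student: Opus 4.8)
The plan is to follow the standard route for Morrey-type spaces: fix a dyadic cube, split $f$ into a piece supported near that cube and a piece supported far from it, and treat the two pieces by $L^{\vec p}$-boundedness of $\mathcal M$ and by a H\"older-plus-telescoping estimate respectively. The case $r=\infty$ is the mixed Morrey space $M_{\vec p}^{t,\infty}$, for which the statement is due to Nogayama \cite{N19}, so I only treat $t<r<\infty$. By the one-third-trick bound (\ref{M le Mdya}) together with the norm equivalence $\|\cdot\|_{M_{\vec p}^{t,r}}\approx\|\cdot\|_{M_{\vec p}^{t,r}(\mathcal D_{\vec a})}$ of Lemma \ref{D equivalence}, it would be enough to bound each dyadic-type maximal operator; equivalently, and more directly, I will estimate $\sum_{Q_0\in\D}|Q_0|^{r/t-\frac rn\sum_{i=1}^n1/p_i}\|(\mathcal M f)\chi_{Q_0}\|_{L^{\vec p}}^r$ over the standard dyadic grid $\D$. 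Fix $Q_0\in\D$ and write $f=f_1+f_2$ with $f_1:=f\chi_{3Q_0}$ and $f_2:=f\chi_{(3Q_0)^c}$, so that $\mathcal M f\le\mathcal M f_1+\mathcal M f_2$ pointwise.

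For the local piece $f_1$, since $1<\vec p<\infty$ the Hardy--Littlewood maximal operator is bounded on $L^{\vec p}$ (this is classical, and also follows from Doob's inequality, Lemma \ref{doob}, via (\ref{M le Mdya})), so $\|(\mathcal M f_1)\chi_{Q_0}\|_{L^{\vec p}}\le\|\mathcal M f_1\|_{L^{\vec p}}\lesssim\|f\chi_{3Q_0}\|_{L^{\vec p}}$. Because $3Q_0$ is the union of the $3^n$ dyadic cubes of side $\ell(Q_0)$ adjacent to $Q_0$, each of which is counted at most $3^n$ times as $Q_0$ varies, raising to the $r$-th power and summing against the weights $|Q_0|^{r/t-\frac rn\sum_{i=1}^n1/p_i}$ produces a constant multiple of $\|f\|_{M_{\vec p}^{t,r}}^r$.

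For the far piece $f_2$, the geometric point is that for $x\in Q_0$ every ball contributing to $\mathcal M f_2(x)$ must have radius $\gtrsim\ell(Q_0)$, whence $\mathcal M f_2(x)\lesssim\sup_{k\ge0}|2^kQ_0|^{-1}\int_{2^kQ_0}|f|$, a quantity independent of $x\in Q_0$. Raising to the $r$-th power turns the supremum into a sum, and H\"older's inequality (Lemma \ref{Holder mixed}) with $\|\chi_Q\|_{L^{\vec p}}=|Q|^{\frac1n\sum_{i=1}^n1/p_i}$ (Lemma \ref{chi Q mixed Lp}) gives $|2^kQ_0|^{-1}\int_{2^kQ_0}|f|\le|2^kQ_0|^{-\frac1n\sum_{i=1}^n1/p_i}\|f\chi_{2^kQ_0}\|_{L^{\vec p}}$. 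Substituting $|2^kQ_0|=2^{kn}|Q_0|$ and collecting the powers of $2$ (using here the standing relation $\sum_{i=1}^n1/p_i\ge n/t$, which makes the $|Q_0|$-exponent turn the right way), the general term of the sum for $f_2$ becomes $2^{-krn/t}|2^kQ_0|^{r/t-\frac rn\sum_{i=1}^n1/p_i}\|f\chi_{2^kQ_0}\|_{L^{\vec p}}^r$. Covering each dilate $2^kQ_0$ by at most $2^n$ dyadic cubes of the same side length, and noting that a fixed dyadic cube of side $2^k\ell(Q_0)$ arises in this way from $\lesssim2^{kn}$ cubes $Q_0$, the inner $Q_0$-sum is dominated by $2^{kn}\|f\|_{M_{\vec p}^{t,r}}^r$, and one is left with $\big(\sum_{k\ge0}2^{kn(1-r/t)}\big)\|f\|_{M_{\vec p}^{t,r}}^r$; this series is geometric with ratio $2^{n(1-r/t)}<1$ precisely because $t<r$, so it converges, and adding the local estimate finishes the proof. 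The one delicate point, and essentially the only place where real work is needed, is exactly this far-piece bookkeeping: converting the supremum over the non-dyadic dilates $2^kQ_0$ into an honest $\ell^r$-sum over genuine dyadic cubes while keeping the overlap multiplicities under control, and checking that once all the powers of $2$ are collected the surviving $k$-series converges exactly under the hypothesis $t<r$ (the roles of $1<\vec p<\infty$ and of $n/(\sum_{i=1}^n1/p_i)<t$ entering, respectively, through the $L^{\vec p}$-bound for $\mathcal M$ and H\"older, and through the comparison of $|Q_0|$ with $|2^kQ_0|$).
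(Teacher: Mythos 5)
Your proof is correct, and it takes a route that is related to but genuinely distinct from the paper's. The paper first passes through the one-third trick and Lemma~\ref{D equivalence} so as to replace $\mathcal M$ by the dyadic maximal operator $\mathcal M_{\mathcal D_{\vec a}}$, and then, for $Q\in\mathcal D_{\vec a}$, splits $f=f\chi_Q+f\chi_{Q^c}$; the far piece is bounded by the chain of dyadic \emph{ancestors} $Q_k$ of $Q$, which are again honest dyadic cubes, so the counting step (each $R\in\mathcal D_{\vec a}$ is the $k$-th parent of exactly $2^{kn}$ cubes) drops out immediately. You instead keep the Hardy--Littlewood operator itself and the standard grid $\D$, split $f=f\chi_{3Q_0}+f\chi_{(3Q_0)^c}$ (the extra ring is the price for not having dyadicized the maximal operator: it forces balls reaching $\operatorname{supp}f_2$ from $Q_0$ to have radius $\gtrsim\ell(Q_0)$), and control the far piece by the concentric dilates $2^kQ_0$. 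Since those dilates are not dyadic, you need the additional step of covering each $2^kQ_0$ by $\lesssim 2^n$ dyadic cubes of the same side length and then counting, for each resulting dyadic $R$, the $\lesssim 2^{kn}$ cubes $Q_0$ that can produce it; you carry that out correctly (and likewise the $3^n$-fold decomposition of $3Q_0$ for the local piece). Both arguments ultimately hinge on exactly the same two ingredients: the $L^{\vec p}$-boundedness of $\mathcal M$ from \cite{N19} for the local piece, and the convergence of the geometric series $\sum_{k\ge0}2^{kn(1-r/t)}$ under $t<r$ after absorbing the $\approx 2^{kn}$-fold overlap. Your version avoids Lemma~\ref{D equivalence} and the shifted grids entirely, at the modest cost of the extra covering-and-overlap bookkeeping that the paper's choice of dyadic ancestors makes automatic.
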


\begin{proof}
	We only need to prove the case $r<\infty$, since $r =\infty$ is in \cite[Theorem 4.5]{N19}. Due to (\ref{M le Mdya}) and Lemma \ref{D equivalence}, it sufficient to show the boundedness for the dyadic maximal operator $ 	\mathcal M_{\mathcal D _{\vec a}}$  instead of the Hardy-Littlewood maximal function $\mathcal M$.	
	Let $f \in M_{\vec p}^{t,r}  $, $Q \in \mathcal D _{\vec a}$, $f_1 = f \chi_Q$ and $f_2 = f \chi_{\rn \backslash Q}$. By \cite[Theorem 4.5]{N19}, we have
	\begin{equation} \label{M f_1}
		\| 	\mathcal M_{\mathcal D _{\vec a}} f_1 \|_{ L^{\vec p} (	Q)} \lesssim 	\| f_1 \|_{ L^{\vec p} } = \| f \|_{ L^{\vec p} (Q) }.
	\end{equation}
	For $k \in \mathbb N$, let $Q_k$  be the $k^{\operatorname{th}}$ dyadic parent of $Q$, which is the dyadic cube in $\mathcal D _{\vec a}$ satisfying $Q \subset Q_k$ and $\ell (Q_k) = 2^k \ell (Q)$. 
	Hence, for $x \in Q$, applying the H\"older inequality (Lemma \ref{Holder mixed}), we obtain
	\begin{align*}
		\mathcal M_{\mathcal D _{\vec a}} f_2 (x) & \le \sum_{k=1} \frac{1}{|Q_k|} \int_{Q_k} |f(y) | \d y \\
		& \le \sum_{k=1} |Q_k|^{ - \frac{1}{n}  \sum_{i=1}^n   \frac{1}{p_i} } \| f \|_{ L^{\vec p} (Q_k)}  .
	\end{align*}
	Thus,
	\begin{align*}
		\|	\mathcal M_{\mathcal D _{\vec a}} f_2 \|_{ L^{\vec p} (Q) } \le |Q|^{ \frac{1}{n}  \sum_{i=1}^n   \frac{1}{p_i}} \sum_{k=1} |Q_k|^{ - \frac{1}{n}  \sum_{i=1}^n   \frac{1}{p_i} } \| f \|_{ L^{\vec p} (Q_k)} .
	\end{align*}
	Let $R \in \mathcal D _{\vec a}$ and $k\in \mathbb N$. A geometric observation shows there are $2^{kn}$ dyadic cubes $Q$ such that $Q_k = R$. Namely,
	\begin{equation} \label{geo 2kn}
		\sum_{  Q  \in \mathcal D _{\vec a}, Q_k = R}  |Q_k|^{r/t -\frac{r}{n}  \sum_{i=1}^n   \frac{1}{p_i} } \|f\|_{  L^{\vec p} (Q_k)} ^r  = 2^{kn} |R|^{r/t -\frac{r}{n}  \sum_{i=1}^n   \frac{1}{p_i}} \|f\|_{  L^{\vec p} (R)} ^r .
	\end{equation}
	Adding (\ref{geo 2kn}) over  $R \in \mathcal D _{\vec a}$ gives
	\begin{equation*}
		\sum_{Q \in \mathcal D _{\vec a}} |Q_k| ^{r/t -\frac{r}{n}  \sum_{i=1}^n   \frac{1}{p_i} } \|f\|_{  L^{\vec p } (Q_k)} ^r  =2^{kn} \sum_{R \in \mathcal D _{\vec a} } |R|^{r/t-\frac{r}{n}  \sum_{i=1}^n   \frac{1}{p_i} } \|f\|_{  L^{\vec p } (R)} ^r  = 2^{kn}  \|f\|_{M_{\vec p}^{t,r}  } ^r .
	\end{equation*}
	Then 
	\begin{align*}
		& \left(\sum_{Q\in \mathcal D _{\vec a}}  |Q|^{r/t-\frac{r}{n}  \sum_{i=1}^n   \frac{1}{p_i} } \|	\mathcal M_{\mathcal D _{\vec a}} f_2 \|_{ L^{\vec p} (Q) } ^r  \right)^{1/r} \\
		& \le \left(\sum_{Q\in \mathcal D _{\vec a}}  |Q|^{r/t} \left(  \sum_{k=1} |Q_k|^{ - \frac{1}{n}  \sum_{i=1}^n   \frac{1}{p_i} } \| f \|_{ L^{\vec p} (Q_k)} \right)^r  \right)^{1/r} \\
		& \le \sum_{k=1}^\infty 2^{-kn/t}  \left(\sum_{Q\in \mathcal D _{\vec a}}  |Q_k|^{r/t -   \frac{1}{n}  \sum_{i=1}^n   \frac{1}{p_i}}   \| f \|_{ L^{\vec p} (Q_k)}^r  \right)^{1/r} \\
		& \le \sum_{k=1}^\infty 2^{-kn/t} 2^{kn/r}  \left(\sum_{Q_k\in \mathcal D _{\vec a}}  |Q_k|^{r/t  - \frac{1}{n}  \sum_{i=1}^n   \frac{1}{p_i} }   \| f \|_{ L^{\vec p} (Q_k)}^r  \right)^{1/r} \\
		& \lesssim \| f\|_{M_{\vec p}^{t,r} (\mathcal D _{\vec a})  } .
	\end{align*}
	Together this with (\ref{M f_1}), we prove 
	\begin{equation*}
		\| \mathcal M_{\mathcal D _{\vec a}} f\|_{M_{\vec p}^{t,r} (\mathcal D _{\vec a})   } 	\lesssim \| f\|_{M_{\vec p}^{t,r} (\mathcal D _{\vec a}) } 
	\end{equation*}
	as desired.
\end{proof}

We  define the maximal operator$ \M_{(k)}$ for  $x_k$  as follows:
\begin{equation*}
	\M_{(k)} f (x) := \sup_{x_k \in I}  \int _I  | f(x_1, \ldots, y_k ,\ldots, x_n) | \d y_k,
\end{equation*}
where interval $I$ ranges over all intervals containing $x_k$. Furthermore, for all measurable functions
$f$, define the iterated maximal operator  $\M^{(\operatorname{it})}$ by
\begin{equation*}
	\M^{(\operatorname{it})} f(x) := \M_{(n)} \cdots \M_{(1)} (|f|) (x) .
\end{equation*}

\begin{proposition} \label{mixed BM equ norm weight}
		Let $ 1< \vec p < \infty $. Let    $ 1 < n / ( \sum_{i=1}^n  1/p_i )   < t <r <\infty $ or $ 1 < n / ( \sum_{i=1}^n  1/p_i )  \le t < r=\infty  $. Let $\eta \in (0,1)  $  such that
		\begin{equation*}
				0<   \left(\frac{1}{n} \sum_{i=1 } ^n \frac{1}{p_i} -\frac{1}{t}  +  \frac{1}{r} \right)  <\eta <1 .
		\end{equation*}
		Then for $f\in L^0(\rn)$, we have
		\begin{equation*}
			\|f\|_{M_{\vec p}^{t,r}  }  \approx  \left(\sum_{Q \in \D} |Q|^{ \frac{r}{t}  - \frac{r}{n} \sum_{i=1 } ^n \frac{1}{p_i}  }  \| f ( \M^{\operatorname{it} } \chi_Q) ^\eta \|_{L^{\vec p}  } ^r  \right)^{1/r} .
		\end{equation*}
\end{proposition}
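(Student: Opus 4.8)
The estimate $\|f\|_{M_{\vec p}^{t,r}}\lesssim(\text{right-hand side})$ is immediate. Writing $Q=I_1\times\cdots\times I_n$, the tensor structure of $\M^{\operatorname{it}}$ gives $\M^{\operatorname{it}}\chi_Q(x)=\prod_{k=1}^{n}m_k(x_k)$, where $m_k$ is the one-dimensional Hardy--Littlewood maximal function of $\chi_{I_k}$; since $m_k\equiv 1$ on $I_k$ we get $(\M^{\operatorname{it}}\chi_Q)^{\eta}\ge\chi_Q$, hence $\|f(\M^{\operatorname{it}}\chi_Q)^{\eta}\|_{L^{\vec p}}\ge\|f\chi_Q\|_{L^{\vec p}}$ for every $Q$, and summing the $r$-th powers over $Q\in\mathcal D$ gives the claim.

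For the converse, set $a:=\tfrac1t-\tfrac1n\sum_{i}\tfrac1{p_i}\ (\le 0)$ and $A_Q:=|Q|^{a}\|f(\M^{\operatorname{it}}\chi_Q)^{\eta}\|_{L^{\vec p}}$; we must show $\sum_{Q\in\mathcal D}A_Q^{r}\lesssim\|f\|_{M_{\vec p}^{t,r}}^{r}$, and we may take $f\ge 0$. The plan is to exploit simultaneously the tensor structure of $\M^{\operatorname{it}}\chi_Q$ and the iterated structure of $\|\cdot\|_{L^{\vec p}}$. From the elementary one-dimensional bound $m_k^{\eta}\lesssim\sum_{\mu\ge0}2^{-\eta\mu}\chi_{2^{\mu}I_k}$, where $2^{\mu}I_k$ denotes the $\mu$-fold dyadic dilate of $I_k$ (a union of at most two dyadic intervals of length $2^{\mu}\ell(Q)$), multiplying over $k$ yields $(\M^{\operatorname{it}}\chi_Q)^{\eta}\lesssim\sum_{\vec\mu\in\mathbb N_0^{\,n}}2^{-\eta|\vec\mu|}\chi_{R_Q^{\vec\mu}}$ with $|\vec\mu|:=\mu_1+\cdots+\mu_n$ and $R_Q^{\vec\mu}:=2^{\mu_1}I_1\times\cdots\times2^{\mu_n}I_n$ a dyadic rectangle of side $\approx2^{\mu_k}\ell(Q)$ in the $k$-th direction; consequently, by the triangle inequality in $L^{\vec p}$,
\[
\|f(\M^{\operatorname{it}}\chi_Q)^{\eta}\|_{L^{\vec p}}\lesssim\sum_{\vec\mu\in\mathbb N_0^{\,n}}2^{-\eta|\vec\mu|}\,\|f\chi_{R_Q^{\vec\mu}}\|_{L^{\vec p}}.
\]

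Raising this to the power $r$, summing over $Q\in\mathcal D$ and using Minkowski's inequality in $\ell^{r}$ (recall $r>1$), it suffices to show, for each fixed $\vec\mu$,
\[
\Big(\sum_{Q\in\mathcal D}\big(|Q|^{a}\|f\chi_{R_Q^{\vec\mu}}\|_{L^{\vec p}}\big)^{r}\Big)^{1/r}\lesssim C(\vec\mu)\,\|f\|_{M_{\vec p}^{t,r}},\qquad\text{with}\qquad\sum_{\vec\mu}2^{-\eta|\vec\mu|}C(\vec\mu)<\infty .
\]
To estimate the left-hand side one again works variable by variable: writing out the iterated norm defining $\|f\chi_{R_Q^{\vec\mu}}\|_{L^{\vec p}}$, splitting each $2^{\mu_k}I_k$ into its $2^{\mu_k}$ dyadic subintervals of length $\ell(Q)$, and using $\ell^{p_k}$-summability and the additivity of the one-dimensional integrals over disjoint intervals (as in the proof of Theorem \ref{HL M}), one controls $\|f\chi_{R_Q^{\vec\mu}}\|_{L^{\vec p}}$ by a nested $\ell^{p_1}(\ell^{p_2}(\cdots))$-sum of the quantities $\|f\chi_q\|_{L^{\vec p}}$ over the dyadic cubes $q\subseteq R_Q^{\vec\mu}$ of the same scale as $Q$; summing the $r$-th powers over $Q$ and counting, in each coordinate separately, the multiplicity with which a fixed dyadic cube occurs (controlled by the one-dimensional volume ratio $2^{\mu_k}$), one arrives at $C(\vec\mu)^{r}=2^{\sum_k\mu_k\gamma_k}$ with exponents $\gamma_k$ depending only on $p_k,t,r$, so that $\sum_{\vec\mu}2^{-\eta|\vec\mu|}C(\vec\mu)=\prod_{k}\big(\sum_{\mu_k\ge0}2^{\mu_k(\gamma_k/r-\eta)}\big)$, which converges exactly when $\eta>\tfrac1n\sum_i\tfrac1{p_i}-\tfrac1t+\tfrac1r$.

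The delicate point, and the main obstacle, is the bookkeeping in this last step: the nested sums and the Minkowski/Hölder inequalities must be organized so that each coordinate $k$ contributes its own geometric factor $2^{\mu_k(\gamma_k/r-\eta)}$ and the contributions decouple into a product. If instead one bounds $R_Q^{\vec\mu}$ crudely by the circumscribed dyadic cube of side $2^{\max_k\mu_k}\ell(Q)$, one only obtains the (too strong) restriction $\eta>\sum_i\tfrac1{p_i}-\tfrac nt+\tfrac nr$; the passage to the stated range of $\eta$ genuinely requires that $\M^{\operatorname{it}}$ and $\|\cdot\|_{L^{\vec p}}$ both act one variable at a time. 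For $r=\infty$ the statement is the mixed-Morrey characterization and is contained in \cite{N19}.
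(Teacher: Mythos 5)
Your easy direction and the dyadic decomposition of $(\M^{\operatorname{it}}\chi_Q)^{\eta}$ coincide with the paper's opening moves. Your observation that bounding $R_Q^{\vec\mu}=2^{\mu_1}I_1\times\cdots\times 2^{\mu_n}I_n$ by the circumscribed cube of side $2^{\max_k\mu_k}\ell(Q)$ only yields $\eta>\sum_i 1/p_i-n/t+n/r$ is correct, and this is in fact exactly what the paper does: it replaces each rectangle by $2^{\ell}Q$ with $\ell=\max_j\ell_j$, reducing to the convergence of $\sum_{\vec{\ell}\in\mathbb N^n}2^{\ell n a'-|\vec{\ell}|\eta}$ where $a':=\tfrac1n\sum_i 1/p_i-\tfrac1t+\tfrac1r>0$. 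The remark after the proof claims this converges for $\eta>a'$, but it checks only the single tuple $\ell_1=2$, $\ell_2=\cdots=\ell_n=1$; for $\ell_1=L$, $\ell_2=\cdots=\ell_n=1$ one has $|\vec{\ell}|=L+n-1$, so the summand is $\approx 2^{L(na'-\eta)}$ and convergence actually forces $\eta>na'$, the threshold you found. So you have, without seeing it, located a genuine gap in the paper's own argument.

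That said, your hard direction is not established either. The assertion that coordinate-wise counting gives $C(\vec\mu)^{r}=2^{\sum_k\mu_k\gamma_k}$ with $\max_k\gamma_k/r=a'$ is a plan, not a proof, and you flag it yourself as ``the main obstacle.'' The natural count --- split the bounding dyadic rectangle into $\prod_k 2^{\mu_k}$ subcubes, pass from $\ell^{\vec p}$ to $\ell^{r}$ by H\"older picking up $\prod_k 2^{\mu_k\max(0,1/p_k-1/r)}$, then count the multiplicity $2^{|\vec\mu|}$ --- yields $\gamma_k=\max(1,r/p_k)$, hence the requirement $\eta>\max(1/r,1/\min_k p_k)$, which $\eta>a'$ does not imply. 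In fact the stated $\eta$-range looks too wide: for $f=\chi_{[0,1)^n}$ one computes $\|\chi_{[0,1)^n}(\M^{\operatorname{it}}\chi_Q)^{\eta}\|_{L^{\vec p}}\approx 2^{-v\sum_j\min(1/p_j,\eta)}$ when $Q\in\D_v$ lies inside the unit cube, so the right-hand side is finite only if $\sum_j\min(1/p_j,\eta)>n/r-n/t+\sum_j 1/p_j$, and this fails for $n=2$, $(p_1,p_2)=(2,10)$, $t=4$, $r=5$, $\eta=0.3$, although $a'=0.25<\eta$. The proposition apparently needs the extra hypothesis $\eta<1/\max\vec p$ (which the paper does impose in the theorem that uses it, and which turns the above condition into $n\eta>n/r-n/t+\sum_j 1/p_j$, i.e.\ $\eta>a'$); but even with it, the coordinate-wise count above still overshoots, so the hard direction remains open in your write-up.
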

\begin{proof}
	We use the idea from \cite[Proposition 4.10]{N19}.
	We only need to show the case $r<\infty$  since  case $r=\infty$ is similar. 
	From $ \chi_Q \le  ( \M^{\operatorname{it} } \chi_Q) ^\eta$, we obtain
	\begin{equation*}
		\|f\|_{M_{\vec p}^{t,r}  }  \le \left(\sum_{Q \in \D} |Q|^{ \frac{r}{t}  - \frac{r}{n} \sum_{i=1 } ^n \frac{1}{p_i}  }  \| f ( \M^{\operatorname{it} } \chi_Q) ^\eta \|_{L^{\vec p}  } ^r  \right)^{1/r} .
	\end{equation*}
For the opposite inequality, fix a cube $Q = I_1 \times \cdots \times I_n$. Given  $(\ell_1, \ldots, \ell_n) \in \mathbb N^n$, we write $\ell = \max (\ell_1, \ldots, \ell_n)$. Then we get
\begin{align*}
 &	|Q|^{ \frac{1}{t}  - \frac{1}{n} \sum_{i=1 } ^n \frac{1}{p_i}  }  \| f ( \M^{\operatorname{it} } \chi_Q) ^\eta \|_{L^{\vec p}  } \\
	& \lesssim |Q|^{ \frac{1}{t}  - \frac{1}{n} \sum_{i=1 } ^n \frac{1}{p_i}  }  \left\| f  \prod_{j=1}^n  \left( \frac{ \ell (I_j)}{\ell(I_j) + |  \cdot_j - c(I_j) | |}\right) ^\eta  \right\|_{L^{\vec p}  } \\
		& \lesssim |Q|^{ \frac{1}{t}  - \frac{1}{n} \sum_{i=1 } ^n \frac{1}{p_i}  } 
		\sum_{\ell_1, \ldots, \ell_n =1}^\infty  \frac{1}{2^{ (\ell_1 + \ldots, \ell_n )\eta  } }
		 \left\| f  \chi_{ 2^{\ell_1} I_1 \times \cdots \times  2^{\ell_n} I_n }    \right\|_{L^{\vec p}  } \\
		 	& \lesssim |Q|^{ \frac{1}{t}  - \frac{1}{n} \sum_{i=1 } ^n \frac{1}{p_i}  } 
		 \sum_{\ell_1, \ldots, \ell_n =1}^\infty  \frac{1}{2^{ (\ell_1 + \ldots, \ell_n )\eta  } }
		 \left\| f  \chi_{ 2^{\ell} Q }    \right\|_{L^{\vec p}  } \\
		 & \lesssim 
		 \sum_{\ell_1, \ldots, \ell_n =1}^\infty  \frac{  2^{\ell n (\frac{1}{n} \sum_{i=1 } ^n \frac{1}{p_i} -\frac{1}{t}  ) }   }{2^{ (\ell_1 + \ldots, \ell_n )\eta  } } 
		  |2^\ell Q|^{ \frac{1}{t}  - \frac{1}{n} \sum_{i=1 } ^n \frac{1}{p_i}  } 
		 \left\| f  \chi_{ 2^{\ell} Q }    \right\|_{L^{\vec p}  } ,
\end{align*}
where $c(I_j)$ denote the center of $I_j$.
Adding over all cubes $Q\in \D$, we obtain
\begin{align*}
	& \left(\sum_{Q \in \D} |Q|^{ \frac{r}{t}  - \frac{r}{n} \sum_{i=1 } ^n \frac{1}{p_i}  }  \| f ( \M^{\operatorname{it} } \chi_Q) ^\eta \|_{L^{\vec p}  } ^r  \right)^{1/r}  \\
	& \lesssim \sum_{\ell_1, \ldots, \ell_n =1}^\infty  \frac{  2^{\ell n (\frac{1}{n} \sum_{i=1 } ^n \frac{1}{p_i} -\frac{1}{t}  ) }   }{2^{ (\ell_1 + \ldots, \ell_n )\eta  } }  \left(\sum_{Q \in \D}  |2^\ell Q|^{ \frac{r}{t}  - \frac{r}{n} \sum_{i=1 } ^n \frac{1}{p_i}  } 
	\left\| f  \chi_{ 2^{\ell} Q }    \right\|_{L^{\vec p}  } ^r    \right)^{1/r}
	\\
	& \lesssim \sum_{\ell_1, \ldots, \ell_n =1}^\infty  \frac{  2^{\ell n (\frac{1}{n} \sum_{i=1 } ^n \frac{1}{p_i} -\frac{1}{t}  ) }   }{2^{ (\ell_1 + \ldots, \ell_n )\eta  } }  2^{\ell n/r}  \| f\|_{ M_{\vec p}^{t,r} } \\
	& \lesssim  \| f\|_{ M_{\vec p}^{t,r} } .
\end{align*}
Next we show that $\eta $ can be chosen.
Since  $ 0 <1/r  <  ( \sum_{i=1 } ^n \frac{1}{p_i}  )/n <1 $, we get 
\begin{equation*}
	0< 1/r <  \left (\frac{1}{n} \sum_{i=1 } ^n \frac{1}{p_i} -\frac{1}{t}  +  \frac{1}{r} \right )  < 1- 1/t+1/r <1.
\end{equation*}
Thus the proof is finished.
\end{proof}
\begin{remark}
	We give some explanations about the sum $\sum_{\ell_1, \ldots, \ell_n =1}^\infty  \frac{  2^{\ell n a }   }{2^{ (\ell_1 + \ldots, \ell_n )b  } } $.
	If $ \ell_1 =2 , \ell_2 =\cdots = \ell_n =1  $, then $\ell = 2$ and 
	\begin{equation*}
		\frac{  2^{\ell n a }   }{2^{ (\ell_1 + \ldots, \ell_n )b  } } = 	\frac{  2^{\ell n a }   }{2^{ (n \ell + 1-n)b  } } \le 2^{nb} 	\frac{  2^{\ell n a }   }{2^{ n \ell b  } } .
	\end{equation*}
Hence if $ b-a >0 $,
\begin{equation*}
	\sum_{\ell_1, \ldots, \ell_n =1}^\infty  \frac{  2^{\ell n a }   }{2^{ (\ell_1 + \ldots, \ell_n )b  } }  <\infty.
\end{equation*}
\end{remark}

\begin{lemma}[Proposition 4.9, \cite{N19}] \label{Mit mixed weight}
	Let $1<\vec p <\infty$. Let $f \in L^0$ and $ \omega_k \in A_{p_k} (\mathbb R) $  for $k =1 ,\ldots ,n.$ Then
	\begin{equation*}
		\left\| \M^{\operatorname{it}} f \cdot \bigotimes_{k=1}^n \omega_k ^{1/p_k} \right\|_{L^{\vec p}} \lesssim \left\| f \cdot \bigotimes_{k=1}^n \omega_k ^{1/p_k} \right\|_{L^{\vec p}} .
	\end{equation*}
\end{lemma}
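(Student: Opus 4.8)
The plan is to reinterpret the left-hand side as the norm of $\M^{\operatorname{it}}f$ in a weighted mixed Lebesgue space and then to run an induction on the dimension $n$. First I would observe that, since the weight $\bigotimes_{k=1}^{n}\omega_{k}^{1/p_{k}}$ is a tensor product, one has the identity $\big\|h\cdot\bigotimes_{k=1}^{n}\omega_{k}^{1/p_{k}}\big\|_{L^{\vec p}}=\|h\|_{L^{\vec p}(\omega_{1}\otimes\cdots\otimes\omega_{n})}$, where the right-hand side is the mixed norm computed against the product measure $\omega_{1}(x_{1})\,\d x_{1}\otimes\cdots\otimes\omega_{n}(x_{n})\,\d x_{n}$: raising the integrand to the power $p_{1}$, integrating in $x_{1}$, raising to the power $p_{2}/p_{1}$, integrating in $x_{2}$, and so on, the powers of each $\omega_{k}$ telescope to give exactly $\omega_{k}(x_{k})$. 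Hence it suffices to prove
\[
\|\M^{\operatorname{it}}f\|_{L^{\vec p}(\omega_{1}\otimes\cdots\otimes\omega_{n})}\lesssim\|f\|_{L^{\vec p}(\omega_{1}\otimes\cdots\otimes\omega_{n})}.
\]

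For $n=1$ this is precisely Muckenhoupt's theorem for the one-dimensional Hardy--Littlewood maximal operator: $\omega_{1}\in A_{p_{1}}(\R)$ and $1<p_{1}<\infty$ give $\|\M_{(1)}g\|_{L^{p_{1}}(\R,\omega_{1})}\lesssim\|g\|_{L^{p_{1}}(\R,\omega_{1})}$. For the inductive step I would write $\M^{\operatorname{it}}f=\M_{(n)}(F)$ with $F:=\M_{(n-1)}\cdots\M_{(1)}(|f|)$, and let $\star:=L^{(p_{1},\dots,p_{n-1})}(\omega_{1}\otimes\cdots\otimes\omega_{n-1})$ be the $(n-1)$-variable weighted mixed Lebesgue space over $\R^{n-1}_{(x_{1},\dots,x_{n-1})}$, so that $L^{\vec p}(\omega_{1}\otimes\cdots\otimes\omega_{n})=L^{p_{n}}(\R,\omega_{n};\star)$. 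Since for each fixed $x_{n}$ one has $F(\cdot,x_{n})=\M_{(n-1)}\cdots\M_{(1)}\big(|f|(\cdot,x_{n})\big)$, the induction hypothesis applied slicewise in $x_{n}$ yields $\|F(\cdot,x_{n})\|_{\star}\lesssim\||f|(\cdot,x_{n})\|_{\star}$, and therefore $\|F\|_{L^{p_{n}}(\R,\omega_{n};\star)}\lesssim\|f\|_{L^{p_{n}}(\R,\omega_{n};\star)}$. Thus it only remains to pass from $F$ to $\M_{(n)}(F)$, i.e. to establish the vector-valued weighted maximal estimate
\[
\big\|\,x_{n}\mapsto\|\M_{(n)}(F)(\cdot,x_{n})\|_{\star}\,\big\|_{L^{p_{n}}(\R,\omega_{n})}\lesssim\big\|\,x_{n}\mapsto\|F(\cdot,x_{n})\|_{\star}\,\big\|_{L^{p_{n}}(\R,\omega_{n})},
\]
after which the three displayed inequalities chain together and close the induction.

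The displayed vector-valued estimate is the crux of the argument and the step I expect to be the main obstacle: one cannot simply interchange $\M_{(n)}$ with the $\star$-norm, because $\|\sup_{I}(\cdot)\|_{\star}\le\sup_{I}\|(\cdot)\|_{\star}$ fails for a lattice norm $\star$, so there is no pointwise domination $\|\M_{(n)}(F)(\cdot,x_{n})\|_{\star}\le\M_{(n)}\big(\|F(\cdot,\cdot)\|_{\star}\big)(x_{n})$, and a genuine Fefferman--Stein type inequality is unavoidable. The way I would obtain it is to invoke the boundedness of the one-dimensional maximal operator $\M_{(n)}$ on $L^{p_{n}}(\R,\omega_{n};\star)$, which holds because $\omega_{n}\in A_{p_{n}}(\R)$ and $\star$, being a mixed Lebesgue space with $1<(p_{1},\dots,p_{n-1})<\infty$, is a UMD Banach function lattice; equivalently, one may quote the weighted Fefferman--Stein vector-valued maximal inequality (for $A_{p}$ weights) for $\star$-valued functions, which is standard. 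Feeding this tool into the induction produces the desired estimate $\|\M^{\operatorname{it}}f\cdot\bigotimes_{k}\omega_{k}^{1/p_{k}}\|_{L^{\vec p}}\lesssim\|f\cdot\bigotimes_{k}\omega_{k}^{1/p_{k}}\|_{L^{\vec p}}$.
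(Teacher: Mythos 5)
The paper does not prove this lemma itself: it is quoted verbatim as a known result from Nogayama \cite[Proposition 4.9]{N19}, so there is no internal argument in this paper to compare against. Your proof is a correct self-contained argument. The telescoping identity turning $\|h\cdot\bigotimes_k\omega_k^{1/p_k}\|_{L^{\vec p}}$ into the mixed norm against the product measure $\omega_1\,\d x_1\otimes\cdots\otimes\omega_n\,\d x_n$ is verified easily by unwinding the iterated integral, and the slicewise induction correctly reduces the problem to the boundedness of the one-variable lattice maximal operator on the Bochner space $L^{p_n}(\R,\omega_n;\star)$. You are also right that this last step is the real content and that no pointwise domination of the form $\|\M_{(n)}F(\cdot,x_n)\|_\star\le\M_{(n)}(\|F\|_\star)(x_n)$ is available, since Minkowski's inequality gives only the control of the $\star$-norm of a single average, not of the lattice supremum of averages.

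The tool you invoke to close the gap is essentially the theorem of Garc\'ia-Cuerva--Mac\'ias--Torrea (Israel J. Math. 1993), which asserts that if $X$ is a Banach function lattice with the Hardy--Littlewood property (in particular, if $X$ is UMD, by Rubio de Francia's theorem) and $w\in A_p$, then the lattice maximal operator is bounded on $L^p_w(X)$. Since $\star=L^{(p_1,\dots,p_{n-1})}(\omega_1\otimes\cdots\otimes\omega_{n-1})$ with $1<p_i<\infty$ is a UMD Banach function lattice over a $\sigma$-finite measure space, this applies and your induction closes. One small terminological remark: what you are using is the weighted lattice (or ``Bochner-valued'') maximal inequality for UMD function lattices; the name ``Fefferman--Stein vector-valued maximal inequality'' is usually reserved for the $\ell^q$-valued case (Andersen--John in the weighted setting), which is a special instance. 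This does not affect the correctness of your argument, and whether Nogayama's original proof proceeds via the same UMD route or via a more hands-on extrapolation/iteration of the $\ell^q$-valued inequality, your reduction of the lemma to a standard Bochner-space maximal theorem is the natural and correct strategy.
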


\begin{theorem}
		Let $ 1< \vec p < \infty $. Let    $ 1 < n / ( \sum_{i=1}^n  1/p_i )   < t <r <\infty $ or $ 1 < n / ( \sum_{i=1}^n  1/p_i )  \le t < r=\infty  $. 
		Furthermore, let 		
		\begin{equation*}
		\left(\frac{1}{n} \sum_{i=1 } ^n \frac{1}{p_i} -\frac{1}{t}  +  \frac{1}{r} \right) <  \frac{1}{\max \vec p}.
		\end{equation*}
		 Set $\eta \in (0,1)  $  such that
	\begin{equation*}
		0<  \left (\frac{1}{n} \sum_{i=1 } ^n \frac{1}{p_i} -\frac{1}{t}  +  \frac{1}{r} \right)  <\eta  < \frac{1}{\max \vec p}<1 .
	\end{equation*}
Then 
\begin{equation*}
		\|  \Mit f \|_{ M_{\vec p}^{t,r}   } \lesssim \| f\|_{ M_{\vec p}^{t,r} } .
\end{equation*}
\end{theorem}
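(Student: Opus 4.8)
The plan is to deduce the boundedness of $\Mit$ from the weighted description of the norm of $M_{\vec p}^{t,r}$ obtained in Proposition \ref{mixed BM equ norm weight}, combined with the weighted inequality for $\Mit$ in Lemma \ref{Mit mixed weight}. As usual it suffices to treat the case $r<\infty$; the case $r=\infty$, where $M_{\vec p}^{t,\infty}$ is the mixed Morrey space, is the corresponding result of Nogayama \cite{N19}. Fix $\eta\in(0,1)$ as in the statement, so that
\begin{equation*}
0<\Big(\frac1n\sum_{i=1}^n\frac1{p_i}-\frac1t+\frac1r\Big)<\eta<\frac1{\max\vec p}\le\frac1{p_k}\qquad(k=1,\dots,n).
\end{equation*}
The left inequality makes $\eta$ admissible in Proposition \ref{mixed BM equ norm weight}, while the right one gives $0<\eta p_k<1$ for every $k$.

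The key observation is the product structure of $\M^{\operatorname{it}}\chi_Q$. Let $Q=I_1\times\cdots\times I_n\in\D$. Since $\chi_Q(x)=\prod_{k=1}^n\chi_{I_k}(x_k)$ and each $\M_{(k)}$ acts only in the variable $x_k$, iterating the definition of $\Mit$ yields
\begin{equation*}
\M^{\operatorname{it}}\chi_Q(x)=\prod_{k=1}^nM\chi_{I_k}(x_k),
\end{equation*}
where $M$ denotes the one-dimensional Hardy--Littlewood maximal operator. Set $\omega_k:=(M\chi_{I_k})^{\eta p_k}$ on $\mathbb R$. By the Coifman--Rochberg theorem, for every interval $I$ and every exponent $\delta\in[0,1)$ one has $(M\chi_I)^{\delta}\in A_1(\mathbb R)$ with $A_1$-constant bounded by a quantity depending only on $\delta$; in particular $\omega_k\in A_1(\mathbb R)\subset A_{p_k}(\mathbb R)$ with $A_{p_k}$-constant uniform in $k$ and in $Q$. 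Moreover $\omega_k^{1/p_k}=(M\chi_{I_k})^{\eta}$, so that
\begin{equation*}
\bigotimes_{k=1}^n\omega_k^{1/p_k}=\bigotimes_{k=1}^n(M\chi_{I_k})^{\eta}=(\M^{\operatorname{it}}\chi_Q)^{\eta}.
\end{equation*}

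Since the implicit constant in Lemma \ref{Mit mixed weight} depends on the weights only through their $A_{p_k}$-constants, applying that lemma with the $\omega_k$ above gives, uniformly in $Q\in\D$,
\begin{equation*}
\big\|\Mit f\,(\M^{\operatorname{it}}\chi_Q)^{\eta}\big\|_{L^{\vec p}}=\Big\|\Mit f\cdot\bigotimes_{k=1}^n\omega_k^{1/p_k}\Big\|_{L^{\vec p}}\lesssim\Big\|f\cdot\bigotimes_{k=1}^n\omega_k^{1/p_k}\Big\|_{L^{\vec p}}=\big\|f\,(\M^{\operatorname{it}}\chi_Q)^{\eta}\big\|_{L^{\vec p}}.
\end{equation*}
Raising to the power $r$, multiplying by $|Q|^{\frac rt-\frac rn\sum_{i=1}^n 1/p_i}$, summing over $Q\in\D$, and invoking Proposition \ref{mixed BM equ norm weight} once for $\Mit f$ and once for $f$ then yields
\begin{equation*}
\|\Mit f\|_{M_{\vec p}^{t,r}}\approx\Big(\sum_{Q\in\D}|Q|^{\frac rt-\frac rn\sum_{i=1}^n\frac1{p_i}}\big\|\Mit f\,(\M^{\operatorname{it}}\chi_Q)^{\eta}\big\|_{L^{\vec p}}^r\Big)^{1/r}\lesssim\Big(\sum_{Q\in\D}|Q|^{\frac rt-\frac rn\sum_{i=1}^n\frac1{p_i}}\big\|f\,(\M^{\operatorname{it}}\chi_Q)^{\eta}\big\|_{L^{\vec p}}^r\Big)^{1/r}\approx\|f\|_{M_{\vec p}^{t,r}},
\end{equation*}
which is the asserted estimate.

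The only substantial inputs are the tensor-product identity $\bigotimes_k\omega_k^{1/p_k}=(\M^{\operatorname{it}}\chi_Q)^{\eta}$ and the uniform Coifman--Rochberg bound ensuring $\omega_k\in A_{p_k}(\mathbb R)$ with constant independent of the interval $I_k$; the remainder is formal manipulation of the equivalent norm plus a single application of Lemma \ref{Mit mixed weight}. The point that requires care — and the reason the extra hypothesis $\big(\frac1n\sum_{i=1}^n 1/p_i-\frac1t+\frac1r\big)<1/\max\vec p$ is imposed — is precisely that one must be able to choose $\eta$ with $\eta p_k<1$ simultaneously for all $k$ while still keeping $\eta$ above $\frac1n\sum_i 1/p_i-\frac1t+\frac1r$, so that every $\omega_k$ genuinely lies in $A_{p_k}(\mathbb R)$ with a uniform constant; otherwise the summation over $Q\in\D$ could not be controlled.
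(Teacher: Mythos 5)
Your proof is correct and follows essentially the same route as the paper: it combines the equivalent weighted norm of Proposition \ref{mixed BM equ norm weight}, the tensor-product identity $(\Mit\chi_Q)^\eta=\bigotimes_k(\M_{(k)}\chi_{I_k})^\eta$, the Coifman--Rochberg bound putting $(\M_{(k)}\chi_{I_k})^{\eta p_k}$ in $A_1(\mathbb R)\subset A_{p_k}(\mathbb R)$ with constant uniform in $Q$, and Lemma \ref{Mit mixed weight}. The only (welcome) difference is that you make the uniformity in $Q$ of the $A_{p_k}$-constants explicit, which the paper leaves implicit.
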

\begin{proof}
		Let $ 1< \vec p <\infty $. Let    $ 1 < n / ( \sum_{i=1}^n  1/p_i )   < t <r <\infty $ or $ 1 < n / ( \sum_{i=1}^n  1/p_i )  \le t < r=\infty  $. Let $\eta \in (0,1)  $  such that
	\begin{equation*}
		0<   \left(\frac{1}{n} \sum_{i=1 } ^n \frac{1}{p_i} -\frac{1}{t}  +  \frac{1}{r} \right )  <\eta <1 .
	\end{equation*}
Let $Q = I_1 \times \cdots \times I_n$. Then
\begin{equation*}
	( \Mit \chi_Q) ^{\eta }  = \left(  \bigotimes_{i=1}^n \M _{(i)} \chi_{I_i} \right)^{\eta }  =  \bigotimes_{i=1}^n ( \M _{(i)} \chi_{I_i} )^{\eta } .
\end{equation*}
From \cite[Theorem 288]{SDH20}, $ ( \M _{(i)} \chi_{I_i} )^{\eta  p_i} $  belongs to $A_1(\mathbb R)$ if  $\eta > 0$ such that
\begin{equation*}
   0 < \eta  p_i < 1 .
\end{equation*}
Hence $ ( \M _{(i)} \chi_{I_i} )^{\eta  p_i} \in A_1(\mathbb R) \subset A_{p_i}(\mathbb R) $ for all $p_i$. By Lemma \ref{Mit mixed weight}, we obtain
\begin{align*}
	\| \Mit f ( \Mit \chi_Q) ^{\eta } \|_{L^{ \vec p } } & =  \left\| ( \Mit f ) \bigotimes_{i=1}^n ( \M _{(i)} \chi_{I_i} )^{\eta }    \right\|_{ L^{ \vec p } } \\
	& \lesssim  \left\|  f \bigotimes_{i=1}^n ( \M _{(i)} \chi_{I_i} )^{\eta }    \right\|_{ L^{ \vec p } } \\
	& = \left\|  f   ( \Mit  \chi_Q )^{\eta }    \right\|_{ L^{ \vec p } } .
\end{align*}
Then by Proposition \ref{mixed BM equ norm weight},
we have
\begin{align*}
	\|  \Mit f \|_{ M_{\vec p}^{t,r}   } & \approx \left(\sum_{Q \in \D} |Q|^{ \frac{r}{t}  - \frac{r}{n} \sum_{i=1 } ^n \frac{1}{p_i}  }  \| \Mit f ( \M^{\operatorname{it} } \chi_Q) ^\eta \|_{L^{\vec p}  } ^r  \right)^{1/r}  \\
	& \lesssim \left(\sum_{Q \in \D} |Q|^{ \frac{r}{t}  - \frac{r}{n} \sum_{i=1 } ^n \frac{1}{p_i}  }  \left\|  f   ( \Mit  \chi_Q )^{\eta }    \right\|_{ L^{ \vec p } }  ^r  \right)^{1/r}  \\
	& \approx \| f\|_{ M_{\vec p}^{t,r}   } .
\end{align*}
We finish the proof.
\end{proof}

The vector-valued case is important because it is useful in establishing the theory of related function spaces. 
\begin{definition}
		Let $ 0 < \vec p \le \infty $. Let $   0 < n / ( \sum_{i=1}^n  1/p_{i})     < t <r <\infty $ or $ 0< n / ( \sum_{i=1}^n  1/p_{i})    \le t < r=\infty  $. Let $ 0< u \le \infty$.
	The mixed vector-valued  Bourgain-Morrey norm $\| \cdot \|_{ M_{\vec p} ^{t,r}  (\ell^u) } $  is defined by 
	\begin{align*}
		\| \{f_k \}_{k\in \mathbb N} \|_{  M_{\vec p} ^{t,r}  (\ell^u) } 
		& :=  \left(\sum_{Q \in \D} |Q|^{ \frac{r}{t}  - \frac{r}{n} \sum_{i=1 } ^n \frac{1}{p_i}  }  \left\| \left( \sum_{k=1}^\infty  |f_k|^u \right)^{1/u}  \chi_Q \right\|_{L^{\vec p}  } ^r  \right)^{1/r} ,
	\end{align*}
	where the sequence $\{f_k \}_{k=1}^\infty \subset L^0 $. The mixed vector-valued   Bourgain-Morrey space $  M_{\vec p} ^{t,r}  (\ell^u)  $ is the set of all measurable sequences  $\{f_k \}_{k\in \mathbb N}$ with finite norm $ \|\{f_k \}_{k\in \mathbb N}\|_{  M_{\vec p} ^{t,r}  (\ell^u)  }$.
	
\end{definition}

\begin{theorem} \label{HL seq mix BM}
	Let $ 1< \vec p \le \infty $ and $1<u \le \infty $. Let    $1 < n / ( \sum_{i=1}^n  1/p_i )   < t <r <\infty $ or $ 1< n / ( \sum_{i=1}^n  1/p_i )  \le t < r=\infty  $.
	Then for all $\{f_k \}_{k\in \mathbb N} \in M_{\vec p} ^{t,r}  (\ell^u)  $, we have
	\begin{equation*}
\|\{\M f_k \}_{k\in \mathbb N}\|_{  M_{\vec p} ^{t,r}  (\ell^u) }  \lesssim 	\|\{f_k \}_{k\in \mathbb N}\|_{  M_{\vec p} ^{t,r}  (\ell^u) } .
	\end{equation*}
\end{theorem}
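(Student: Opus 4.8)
The plan is to reduce the vector-valued estimate to the scalar argument already carried out for Theorem \ref{HL M}, the single genuinely new ingredient being the Fefferman--Stein vector-valued maximal inequality on the mixed Lebesgue space $L^{\vec p}$. First I would treat the case $r<\infty$, the case $r=\infty$ being entirely analogous (with $\ell^\infty$-norms replacing $\ell^r$-sums). By the pointwise domination (\ref{M le Mdya}) and the norm equivalence of Lemma \ref{D equivalence}, it suffices to show, for each fixed grid $\mathcal D_{\vec a}$ with $\vec a\in\{0,1,2\}^n$, that the associated dyadic maximal operator $\M_{\mathcal D_{\vec a}}$ satisfies $\|\{\M_{\mathcal D_{\vec a}}f_k\}_{k\in\mathbb N}\|_{M_{\vec p}^{t,r}(\mathcal D_{\vec a})(\ell^u)}\lesssim\|\{f_k\}_{k\in\mathbb N}\|_{M_{\vec p}^{t,r}(\mathcal D_{\vec a})(\ell^u)}$; here one uses $\M_{\mathcal D_{\vec a}}g\lesssim\M g$ pointwise, so that the $L^{\vec p}$-valued Fefferman--Stein inequality for $\M$ transfers to $\M_{\mathcal D_{\vec a}}$.

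Next, fix $Q\in\mathcal D_{\vec a}$ and split $f_k=f_k\chi_Q+f_k\chi_{\rn\backslash Q}$, so that on $Q$
\begin{equation*}
\Big(\sum_k(\M_{\mathcal D_{\vec a}}f_k)^u\Big)^{1/u}\le\Big(\sum_k(\M_{\mathcal D_{\vec a}}(f_k\chi_Q))^u\Big)^{1/u}+\Big(\sum_k(\M_{\mathcal D_{\vec a}}(f_k\chi_{\rn\backslash Q}))^u\Big)^{1/u}.
\end{equation*}
For the local term I would invoke the vector-valued Fefferman--Stein inequality for $\|\cdot\|_{L^{\vec p}}$ (extending Bagby \cite{B75}; for $u=\infty$ it is just the scalar bound, since $\sup_k\M g_k\le\M(\sup_k|g_k|)$), which gives, with $F:=(\sum_k|f_k|^u)^{1/u}$,
\begin{equation*}
\Big\|\Big(\sum_k(\M_{\mathcal D_{\vec a}}(f_k\chi_Q))^u\Big)^{1/u}\Big\|_{L^{\vec p}}\lesssim\|F\|_{L^{\vec p}(Q)}.
\end{equation*}
Multiplying by $|Q|^{\frac1t-\frac1n\sum_{i=1}^n\frac1{p_i}}$, raising to the $r$-th power and summing over $Q\in\mathcal D_{\vec a}$ then bounds the local contribution by $\lesssim\|F\|_{M_{\vec p}^{t,r}(\mathcal D_{\vec a})}=\|\{f_k\}\|_{M_{\vec p}^{t,r}(\mathcal D_{\vec a})(\ell^u)}$.

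For the global term, let $Q_j$ denote the $j$-th dyadic parent of $Q$ in $\mathcal D_{\vec a}$. For $x\in Q$ one has $\M_{\mathcal D_{\vec a}}(f_k\chi_{\rn\backslash Q})(x)\le\sum_{j\ge1}\frac1{|Q_j|}\int_{Q_j}|f_k|$, and two uses of Minkowski's inequality --- first the triangle inequality in $\ell^u$, then Minkowski's integral inequality --- yield
\begin{equation*}
\Big(\sum_k\M_{\mathcal D_{\vec a}}(f_k\chi_{\rn\backslash Q})(x)^u\Big)^{1/u}\le\sum_{j\ge1}\frac1{|Q_j|}\int_{Q_j}F.
\end{equation*}
This reduces the global term to the single scalar function $F$, and from here one repeats verbatim the computation in the proof of Theorem \ref{HL M}: H\"older's inequality (Lemma \ref{Holder mixed}) gives $\frac1{|Q_j|}\int_{Q_j}F\le|Q_j|^{-\frac1n\sum_{i=1}^n\frac1{p_i}}\|F\|_{L^{\vec p}(Q_j)}$; the fact that exactly $2^{jn}$ cubes of $\mathcal D_{\vec a}$ share a given $j$-th parent converts the $\ell^r$ sum over $Q$ into a sum over parents; and the geometric series $\sum_{j\ge1}2^{-jn/t}2^{jn/r}$ converges since $t<r$. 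Hence the global contribution is also $\lesssim\|F\|_{M_{\vec p}^{t,r}(\mathcal D_{\vec a})}$. Combining the two bounds via Minkowski's inequality for $\ell^r$ (valid since $r>t>1$) and summing over the finitely many $\vec a\in\{0,1,2\}^n$ completes the proof. The step I expect to be the real obstacle is the vector-valued Fefferman--Stein inequality on $L^{\vec p}$ in the full stated range $1<\vec p\le\infty$, $1<u\le\infty$: one must either cite it in exactly this generality or reprove it, for instance by iterating the one-dimensional $\ell^u$-valued maximal inequality one coordinate at a time (in the spirit of Bagby \cite{B75} and of the proof of Doob's inequality, Lemma \ref{doob}), with the endpoints $p_i=\infty$ and $u=\infty$ requiring separate but routine attention. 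All remaining steps are a mechanical transcription of the scalar argument for Theorem \ref{HL M}.
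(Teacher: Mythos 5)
Your proposal follows essentially the same route as the paper: reduce to the dyadic grid maximal operators via (\ref{M le Mdya}) and Lemma \ref{D equivalence}, split $f_k$ into the piece on $Q$ and the piece off $Q$, handle the local piece by the vector-valued Fefferman--Stein inequality on $L^{\vec p}$, and reduce the global piece via Minkowski to the scalar function $F=(\sum_k|f_k|^u)^{1/u}$ followed by the parent-counting argument of Theorem \ref{HL M}. The one thing you flag as a possible obstacle --- the $L^{\vec p}(\ell^u)$ Fefferman--Stein inequality --- the paper simply cites as \cite[Theorem 1.7]{N19}, and the $r=\infty$ case it likewise dispatches by citing \cite[Theorem 1.8]{N19} rather than rerunning the argument.
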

\begin{proof}
	When $r=\infty$, Theorem \ref{HL seq mix BM} is contained in \cite[Theorem 1.8]{N19}.	
	We can identify $\M $ with the maximal operator $\mathcal M_{\mathcal D _{\vec a}} $  as in Theorem \ref{HL M}.
	
	Case $u=\infty$. Simply using Theorem \ref{HL M} and 
	\begin{equation*}
		\sup_{k \in \mathbb N} \M f_k  \le  \M \sup_{k \in \mathbb N} f_k, 
	\end{equation*}
we get the result.

Case $1<u <\infty $. Let $Q \in \D$. Denote by $Q_m$ the $m^{\operatorname{th}}$ dyadic parent. Observe that
\begin{equation*}
	\left(  \sum_{k=1}^\infty (\M f_k ) ^u  \right)^{1/u} \le \left(  \sum_{k=1}^\infty (\M [ \chi_Q  f_k ] ) ^u  \right)^{1/u} + \sum_{m=1}^\infty \left(  \sum_{k=1}^\infty \left( \frac{1}{|Q_m|} \int_{Q_m} |f_k (y) |\d y   \right) ^u  \right)^{1/u} .
\end{equation*}
Consequently,
\begin{align*}
	 & |Q|^{ \frac{1}{t}  - \frac{1}{n} \sum_{i=1 } ^n \frac{1}{p_i}  }  \left\| \left( \sum_{k=1}^\infty  |\M f_k|^u \right)^{1/u}  \chi_Q \right\|_{L^{\vec p}  } \\
	& \le |Q|^{ \frac{1}{t}  - \frac{1}{n} \sum_{i=1 } ^n \frac{1}{p_i}  }  \left\| \left(  \sum_{k=1}^\infty (\M [ \chi_Q  f_k ] ) ^u  \right)^{1/u}  \chi_Q \right\|_{L^{\vec p}  } \\
	& \quad + |Q|^{1/t}  \sum_{m=1}^\infty \left(  \sum_{k=1}^\infty \left( \frac{1}{|Q_m|} \int_{Q_m} |f_k (y) |\d y   \right) ^u  \right)^{1/u} =: I+ II.
\end{align*}
For the first part, by the Fefferman-Stein vector-valued maximal inequality for mixed spaces (for example, see \cite[Theorem 1.7]{N19}), we obtain
\begin{equation*}
	I \lesssim |Q|^{ \frac{1}{t}  - \frac{1}{n} \sum_{i=1 } ^n \frac{1}{p_i}  }  \left\| \left(  \sum_{k=1}^\infty |  f_k | ^u  \right)^{1/u}  \chi_Q \right\|_{L^{\vec p}  }.
\end{equation*}
By the Minkowski inequality, the H\"older inequality and $|Q| = 2^{-mn} |Q_m| $, we obtain
\begin{align*}
	II & \le  |Q|^{1/t}  \sum_{m=1}^\infty 
	 \frac{1}{|Q_m|} \int_{Q_m} 	\left(  \sum_{k=1}^\infty |f_k (y) | ^u  \right) ^{1/u} \d y  \\
	 & \le  |Q|^{1/t}  \sum_{m=1}^\infty 
	 \frac{1}{|Q_m|}   |Q_m|^{\frac{1}{n}  \sum_{i=1}^n \frac{1}{p_i '} }  \left\|\left(  \sum_{k=1}^\infty |f_k (y) | ^u  \right) ^{1/u} \chi_{Q_m}  \right\|_{L^{\vec p } }    \\
	 & =    \sum_{m=1}^\infty  2^{-mn/t}  |Q_m|^{1/t - \frac{1}{n}  \sum_{i=1}^n \frac{1}{p_i } }   \left\|\left(  \sum_{k=1}^\infty |f_k (y) | ^u  \right) ^{1/u} \chi_{Q_m}  \right\|_{L^{\vec p } }  .
\end{align*}
Writing $Q_0 = Q$, we have
\begin{align*}
	 & |Q|^{ \frac{1}{t}  - \frac{1}{n} \sum_{i=1 } ^n \frac{1}{p_i}  }  \left\| \left( \sum_{k=1}^\infty  |\M f_k|^u \right)^{1/u}  \chi_Q \right\|_{L^{\vec p}  }  \\
	& \lesssim \sum_{m=0 }^\infty  2^{-mn/t}  |Q_m|^{1/t - \frac{1}{n}  \sum_{i=1}^n \frac{1}{p_i } }   \left\|\left(  \sum_{k=1}^\infty |f_k (y) | ^u  \right) ^{1/u} \chi_{Q_m}  \right\|_{L^{\vec p } }  .
\end{align*}
Adding this estimate over $Q\in \D $, then
\begin{align*}
	 & \left(\sum_{Q \in \D} |Q|^{ \frac{r}{t}  - \frac{r}{n} \sum_{i=1 } ^n \frac{1}{p_i}  }  \left\| \left( \sum_{k=1}^\infty  |\M f_k|^u \right)^{1/u}  \chi_Q \right\|_{L^{\vec p}  } ^r  \right)^{1/r}   \\
	 & \lesssim \sum_{m=0 }^\infty  2^{-mn/t}  \left(\sum_{Q \in \D} |Q_m|^{ \frac{r}{t}  - \frac{r}{n} \sum_{i=1 } ^n \frac{1}{p_i}  }  \left\| \left( \sum_{k=1}^\infty  | f_k|^u \right)^{1/u}  \chi_{Q_m} \right\|_{L^{\vec p}  } ^r  \right)^{1/r}.
\end{align*}
Using the fact that there exist $2^{mn}$ children of the cube $Q_m$, namely, there exists $2^{mn}$ dyadic cubes in $\D_{ -\log_2  \ell (Q)  +m } $ that contains $Q$, we see that
\begin{align*}
	 & \left(\sum_{Q \in \D} |Q|^{ \frac{r}{t}  - \frac{r}{n} \sum_{i=1 } ^n \frac{1}{p_i}  }  \left\| \left( \sum_{k=1}^\infty  |\M f_k|^u \right)^{1/u}  \chi_Q \right\|_{L^{\vec p}  } ^r  \right)^{1/r}   \\
	 & \lesssim \sum_{m=0 }^\infty  2^{-mn/t}  \left( 2^{mn} \sum_{Q \in \D} |Q|^{ \frac{r}{t}  - \frac{r}{n} \sum_{i=1 } ^n \frac{1}{p_i}  }  \left\| \left( \sum_{k=1}^\infty  | f_k|^u \right)^{1/u}  \chi_{Q} \right\|_{L^{\vec p}  } ^r  \right)^{1/r} \\
	 & \le \sum_{m=0 }^\infty  2^{-mn/t} 2^{mn /r} 	\|\{f_k \}_{k\in \mathbb N}\|_{  M_{\vec p} ^{t,r}  (\ell^u) }  \\
	 & \lesssim 	\|\{f_k \}_{k\in \mathbb N}\|_{  M_{\vec p} ^{t,r}  (\ell^u) }.
\end{align*}
Thus the proof is finished.
\end{proof}

Moreover, to consider the application of the wavelet characterization we need the following vector-valued inequality.

\begin{theorem} \label{mixed vector HL mixed BM}
		Let $ 1< \vec p \le \infty $ and $1<u_1, u_2 < \infty $. Let    $1 < n / ( \sum_{i=1}^n  1/p_i )   < t <r <\infty $ or $ 1< n / ( \sum_{i=1}^n  1/p_i )  \le t < r=\infty  $.
	Then for all $\{ f_{k_1, k_2}  \}_{k_1, k_2 \in \mathbb N} \subset L^0 $, we have
	\begin{equation*}
	  \left\| \left( \sum_{k_2 =1}^\infty \left( \sum_{k_1 =1}^\infty  |\M f_{k_1, k_2}|^{u_1} \right) ^{u_2 /u_1 } \right)^{1/u_2}  \right\|_{ M_{\vec p}^{t,r} }   \lesssim  \left\| \left( \sum_{k_2 =1}^\infty \left( \sum_{k_1 =1}^\infty  |f_{k_1, k_2}|^{u_1} \right) ^{u_2 /u_1 } \right)^{1/u_2}  \right\|_{ M_{\vec p}^{t,r} }  .
	\end{equation*}
\end{theorem}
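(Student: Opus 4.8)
The plan is to reduce the double vector-valued inequality to the single vector-valued inequality of Theorem~\ref{HL seq mix BM} by an iteration argument, following the same $\mathcal D_{\vec a}$-reduction used there. First I would note, as in the proof of Theorem~\ref{HL M}, that by \eqref{M le Mdya} and Lemma~\ref{D equivalence} it suffices to prove the estimate with $\mathcal M$ replaced by a dyadic maximal operator $\mathcal M_{\mathcal D_{\vec a}}$ and with the $M_{\vec p}^{t,r}$-norm replaced by the equivalent $M_{\vec p}^{t,r}(\mathcal D_{\vec a})$-norm; since $r=\infty$ is covered by \cite[Theorem~1.8]{N19} (the Fefferman--Stein inequality on mixed Morrey spaces, applied with the outer $\ell^{u_2}$ and inner $\ell^{u_1}$), I would assume $r<\infty$ throughout.

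Next, for a fixed dyadic cube $Q\in\mathcal D_{\vec a}$, I would run the same splitting as in Theorem~\ref{HL seq mix BM}: write $f_{k_1,k_2}=\chi_Q f_{k_1,k_2}+\chi_{\rn\setminus Q} f_{k_1,k_2}$, bound the contribution of the local part by the double Fefferman--Stein inequality on the mixed Lebesgue space $L^{\vec p}$ (which is available by iterating \cite[Theorem~1.7]{N19}, or is already stated there for iterated $\ell^u$-sums), and bound the tail by passing through the dyadic parents $Q_m$ of $Q$. Concretely, for $x\in Q$ one has, for each pair $(k_1,k_2)$,
\begin{equation*}
	\mathcal M_{\mathcal D_{\vec a}}\big(\chi_{\rn\setminus Q}f_{k_1,k_2}\big)(x)\lesssim\sum_{m=1}^\infty\frac{1}{|Q_m|}\int_{Q_m}|f_{k_1,k_2}(y)|\,\d y,
\end{equation*}
and then Minkowski's inequality in $\ell^{u_1}$ and $\ell^{u_2}$ (both exponents exceed $1$) lets me pull the double $\ell$-norm inside the averages, so that after applying Hölder's inequality in $L^{\vec p}$ on each $Q_m$ and using $|Q|=2^{-mn}|Q_m|$ I arrive, writing $Q_0:=Q$, at
\begin{equation*}
	|Q|^{\frac1t-\frac1n\sum_i\frac1{p_i}}\Big\|\Big(\sum_{k_2}\big(\sum_{k_1}|\mathcal M f_{k_1,k_2}|^{u_1}\big)^{u_2/u_1}\Big)^{1/u_2}\chi_Q\Big\|_{L^{\vec p}}\lesssim\sum_{m=0}^\infty 2^{-mn/t}|Q_m|^{\frac1t-\frac1n\sum_i\frac1{p_i}}\Big\|\Big(\sum_{k_2}\big(\sum_{k_1}|f_{k_1,k_2}|^{u_1}\big)^{u_2/u_1}\Big)^{1/u_2}\chi_{Q_m}\Big\|_{L^{\vec p}}.
\end{equation*}

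Finally I would take the $\ell^r$-norm over $Q\in\mathcal D_{\vec a}$, apply Minkowski's inequality in $\ell^r$ to move the sum over $m$ outside, and use the combinatorial fact (as in Theorem~\ref{HL seq mix BM} and \eqref{geo 2kn}) that each $R\in\mathcal D_{\vec a}$ arises as $Q_m$ for exactly $2^{mn}$ cubes $Q$; this produces a factor $2^{mn/r}$ and reindexes the sum over $Q_m$ as a sum over all of $\mathcal D_{\vec a}$, so the right-hand side is bounded by $\sum_{m\ge0}2^{-mn/t}2^{mn/r}$ times the $M_{\vec p}^{t,r}(\mathcal D_{\vec a})$-norm of the right-hand function, and this geometric series converges because $r<t$ hence $-n/t+n/r<0$. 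I expect the main obstacle to be the bookkeeping of the two nested $\ell^{u}$-norms through the Minkowski and Hölder steps — in particular verifying that the local term is genuinely controlled by a double Fefferman--Stein inequality on $L^{\vec p}$ rather than a single one; once that ingredient is in hand (either by iterating \cite[Theorem~1.7]{N19} or by citing its iterated form directly), the tail estimate and the summation over dyadic grids are entirely parallel to Theorem~\ref{HL seq mix BM}. So I would state the local ingredient carefully and then treat the remaining computation as a routine repetition of the earlier argument.
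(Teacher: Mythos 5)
Your proposal matches the paper's proof essentially step for step: reduce to the dyadic grid $\mathcal D_{\vec a}$, split $f_{k_1,k_2}=f_{k_1,k_2}\chi_Q+f_{k_1,k_2}\chi_{Q^c}$, bound the local part by the double Fefferman--Stein inequality on $L^{\vec p}$ (the paper cites this as \cite[A.2]{N24}, exactly the ``iterated form'' you fall back on), handle the tail with Minkowski in both $\ell^{u_1}$ and $\ell^{u_2}$ followed by H\"older in $L^{\vec p}$ over the dyadic parents $Q_m$, and close with the $2^{mn}$-children count and the convergent geometric series. The only slip is a typo: the series converges because the standing hypothesis is $t<r$ (so $-n/t+n/r<0$), not ``$r<t$'' as written.
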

\begin{proof}
	The case $1< n / ( \sum_{i=1}^n  1/p_i )  \le t < r=\infty $ is proved in \cite[Proposition 2.15]{N24}.
	
		We can identify $\M $ with the maximal operator $\mathcal M_{\mathcal D _{\vec a}} $  as in Theorem \ref{HL M} where $ \vec a \in \{0,1,2\}^n  $.		
		 Let $Q \in \D_{\vec a}$. Denote by $Q_m$ the $m^{\operatorname{th}}$ dyadic parent of $Q$. By the sublinear of $\M$,
		\begin{align*}
			\left( \sum_{k_2 =1}^\infty \left( \sum_{k_1 =1}^\infty  |\M f_{k_1, k_2}|^{u_1} \right) ^{u_2 /u_1 } \right)^{1/u_2} 
		& \le \left( \sum_{k_2 =1}^\infty \left( \sum_{k_1 =1}^\infty  |\M ( f_{k_1, k_2} \chi_Q )|^{u_1} \right) ^{u_2 /u_1 } \right)^{1/u_2} \\
		& + \left( \sum_{k_2 =1}^\infty \left( \sum_{k_1 =1}^\infty  |\M ( f_{k_1, k_2} \chi_{Q^c} ) |^{u_1} \right) ^{u_2 /u_1 } \right)^{1/u_2} \\
		& =: I_1 +I_2.
		\end{align*}
	For $I_1$, using \cite[A.2]{N24}, we have
	\begin{equation*}
		\| I_1 \|_{L^{\vec p} } \lesssim 	\left\|  \left( \sum_{k_2 =1}^\infty \left( \sum_{k_1 =1}^\infty  | f_{k_1, k_2}  |^{u_1} \right) ^{u_2 /u_1 } \right)^{1/u_2} \chi_Q \right\|_{L^{\vec p} } .
	\end{equation*}
For $I_2$, for $x\in Q$, note that
\begin{align*}
	\M ( f_{k_1, k_2} \chi_{Q^c} ) (x) \le \sum_{m=1}^\infty \frac{1}{|Q_m|} \int_{Q_m}  |f_{k_1, k_2} (y)| \d y.
\end{align*}
Using Minkowski's inequality twice, we get
\begin{align*}
	\| 	\M ( f_{k_1, k_2} \chi_{Q^c} ) (x) \|_{\ell^{(u_1,u_2) } } & \le  \left\| 	\sum_{m=1}^\infty \frac{1}{|Q_m|} \int_{Q_m}  |f_{k_1, k_2} (y)| \d y  \right\|_{\ell^{(u_1,u_2) } } \\
	& \le \sum_{m=1}^\infty \frac{1}{|Q_m|} \int_{Q_m}  \left\| 	f_{k_1, k_2} (y)  \right\|_{\ell^{(u_1,u_2) } } \d y .
\end{align*}
Here and what follows, $ \left\| 	f_{k_1, k_2} (y)  \right\|_{\ell^{(u_1,u_2) } } : =  \left( \sum_{k_2 =1}^\infty \left( \sum_{k_1 =1}^\infty  |f_{k_1, k_2}|^{u_1} \right) ^{u_2 /u_1 } \right)^{1/u_2} $.
Thus by H\"older's inequality (Lemma \ref{Holder mixed}), and $|Q| = 2^{-mn} |Q_m| $, we obtain
\begin{align*}
			& |Q|^{ \frac{1}{t}  - \frac{1}{n} \sum_{i=1 } ^n \frac{1}{p_i}  }  \left\| \left( \sum_{k_2 =1}^\infty \left( \sum_{k_1 =1}^\infty  |\M ( f_{k_1, k_2} \chi_{Q^c} ) |^{u_1} \right) ^{u_2 /u_1 } \right)^{1/u_2} \chi_Q \right\|_{L^{\vec p}  }  \\
			& \le  |Q|^{ \frac{1}{t}} \sum_{m=1}^\infty \frac{1}{|Q_m|} \int_{Q_m}  \left\| 	f_{k_1, k_2} (y)  \right\|_{\ell^{(u_1,u_2) } } \d y \\
			& \le  \sum_{m=1} ^{\infty}{2^{-mn/t} } |Q_m|^{1/t - \frac{1}{n}  \sum_{i=1}^n \frac{1}{p_i } } \|  \left\| 	f_{k_1, k_2} (y)  \right\|_{\ell^{(u_1,u_2) } } \chi_{Q_m}\|_{L^{\vec p}} .
\end{align*}
		Writing $Q_0 = Q$, we have
		\begin{align*}
			& |Q|^{ \frac{1}{t}  - \frac{1}{n} \sum_{i=1 } ^n \frac{1}{p_i}  }  \left\| 	\left( \sum_{k_2 =1}^\infty \left( \sum_{k_1 =1}^\infty  |\M f_{k_1, k_2}|^{u_1} \right) ^{u_2 /u_1 } \right)^{1/u_2}   \chi_Q \right\|_{L^{\vec p}  }  \\
			& \lesssim \sum_{m=0 }^\infty  {2^{-mn/t} } |Q_m|^{1/t - \frac{1}{n}  \sum_{i=1}^n \frac{1}{p_i } } \|  \left\| 	f_{k_1, k_2} (y)  \right\|_{\ell^{(u_1,u_2) } } \chi_{Q_m}\|_{L^{\vec p}} .
		\end{align*}
		Adding this estimate over $Q\in \D_{\vec a} $, then
		\begin{align*}
			& \left(\sum_{Q \in \D_{\vec a}} |Q|^{ \frac{r}{t}  - \frac{r}{n} \sum_{i=1 } ^n \frac{1}{p_i}  }  \left\| 	\left( \sum_{k_2 =1}^\infty \left( \sum_{k_1 =1}^\infty  |\M f_{k_1, k_2}|^{u_1} \right) ^{u_2 /u_1 } \right)^{1/u_2}   \chi_Q \right\|_{L^{\vec p}  } ^r  \right)^{1/r}   \\
			& \lesssim \sum_{m=0 }^\infty  2^{-mn/t}  \left(\sum_{Q \in \D_{\vec a}} |Q_m|^{ \frac{r}{t}  - \frac{r}{n} \sum_{i=1 } ^n \frac{1}{p_i}  }  \left\|	 \left\| 	f_{k_1, k_2}   \right\|_{\ell^{(u_1,u_2) } } \chi_{Q_m} \right\|_{L^{\vec p}  } ^r  \right)^{1/r}.
		\end{align*}
		Using the fact that there exist $2^{mn}$ children of the cube $Q_m$, namely, there exists $2^{mn}$ dyadic cubes in $\D_{ -\log_2  \ell (Q)  +m } $ that contains $Q$, we see that
		\begin{align*}
			& \left(\sum_{Q \in \D_{\vec a}} |Q|^{ \frac{r}{t}  - \frac{r}{n} \sum_{i=1 } ^n \frac{1}{p_i}  }  \left\| 	\left( \sum_{k_2 =1}^\infty \left( \sum_{k_1 =1}^\infty  |\M f_{k_1, k_2}|^{u_1} \right) ^{u_2 /u_1 } \right)^{1/u_2}  \chi_Q \right\|_{L^{\vec p}  } ^r  \right)^{1/r}   \\
			& \lesssim \sum_{m=0 }^\infty  2^{-mn/t}  \left( 2^{mn} \sum_{Q \in \D_{\vec a}} |Q|^{ \frac{r}{t}  - \frac{r}{n} \sum_{i=1 } ^n \frac{1}{p_i}  }  \left\| 	\left\| 	f_{k_1, k_2}   \right\|_{\ell^{(u_1,u_2) } }     \chi_{Q} \right\|_{L^{\vec p}  } ^r  \right)^{1/r} \\
			& \lesssim \left(  \sum_{Q \in \D_{\vec a}} |Q|^{ \frac{r}{t}  - \frac{r}{n} \sum_{i=1 } ^n \frac{1}{p_i}  }  \left\| \left\| 	f_{k_1, k_2}   \right\|_{\ell^{(u_1,u_2) } }   \chi_{Q} \right\|_{L^{\vec p}  } ^r  \right)^{1/r}.
		\end{align*}
			Thus the proof is finished.		
\end{proof}

\begin{lemma}[Proposition 6.4, \cite{N19}]
	\label{Mit vector}
	 Let $ 1 < \vec p <\infty$  and $\omega _k  \in A_{p_k} (\mathbb R)$ for $k =1 , \ldots, n$. Then for all $f\in L^0 $,
	\begin{equation*}
	\left\|  \left(  \sum_{j=1}^\infty [ \Mit f_j ]^u  \right) ^{1/u}  \cdot \bigotimes_{k=1}^n \omega_k ^{1/p_k} \right\|_{L^{\vec p} } \lesssim \left\|  \left(  \sum_{j=1}^\infty |f_j |^u  \right) ^{1/u}  \cdot \bigotimes_{k=1}^n \omega_k ^{1/p_k} \right\|_{L^{\vec p} }
	\end{equation*}
\end{lemma}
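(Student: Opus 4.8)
The plan is to bootstrap from the scalar mixed-norm Fefferman--Stein inequality for $\Mit$ already recorded in Lemma~\ref{Mit mixed weight}, together with a one-dimensional weighted \emph{vector-valued} maximal inequality. First note that the case $u=\infty$ is immediate: each one-dimensional operator $\M_{(k)}$ only sees $|\cdot|$ and obeys $\sup_j\M_{(k)}h_j\le\M_{(k)}(\sup_j|h_j|)$ pointwise, so composing these inequalities gives $\sup_j\Mit f_j\le\Mit(\sup_j|f_j|)$, and Lemma~\ref{Mit mixed weight} applied to $\sup_j|f_j|$ finishes it. Thus only the range $1<u<\infty$ carries content.

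For $1<u<\infty$ I would argue by induction on the dimension $n$, peeling off one coordinate per step. The base case $n=1$ is the one-dimensional weighted vector-valued Hardy--Littlewood maximal inequality (Andersen--John): for $1<q<\infty$ and $\omega\in A_q(\mathbb R)$,
\[
\left\|\left(\sum_j[\M g_j]^u\right)^{1/u}\right\|_{L^q(\mathbb R,\,\omega\,\d t)}\lesssim\left\|\left(\sum_j|g_j|^u\right)^{1/u}\right\|_{L^q(\mathbb R,\,\omega\,\d t)}.
\]
For the inductive step write $\Mit=\M_{(n)}\circ\M_{(n-1)}\circ\cdots\circ\M_{(1)}$ and set $g_j:=\M_{(n-1)}\cdots\M_{(1)}f_j$, which is nonnegative, so that $\Mit f_j=\M_{(n)}g_j$. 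Treating $x_n$ as a parameter, $g_j(\cdot,x_n)$ is the $(n-1)$-dimensional iterated maximal function of $f_j(\cdot,x_n)$, so the induction hypothesis with the tensor weight $\bigotimes_{k=1}^{n-1}\omega_k^{1/p_k}$ gives, uniformly in $x_n$,
\[
\left\|\left(\sum_j|g_j(\cdot,x_n)|^u\right)^{1/u}\bigotimes_{k=1}^{n-1}\omega_k^{1/p_k}\right\|_{L^{(p_1,\dots,p_{n-1})}}\lesssim\left\|\left(\sum_j|f_j(\cdot,x_n)|^u\right)^{1/u}\bigotimes_{k=1}^{n-1}\omega_k^{1/p_k}\right\|_{L^{(p_1,\dots,p_{n-1})}};
\]
raising to the power $p_n$ and integrating against $\omega_n\,\d x_n$ shows that replacing $\{f_j\}$ by $\{g_j\}$ costs only a constant on the right-hand side of the asserted bound. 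It then remains to control the last operator $\M_{(n)}$, i.e.\ to show that $\{g_j\}\mapsto\{\M_{(n)}g_j\}$ is bounded in the full $n$-dimensional weighted mixed $\ell^u$-norm. Here I would view the inner weighted mixed space $L^{(p_1,\dots,p_{n-1})}$ built from $\omega_1,\dots,\omega_{n-1}$, coupled with $\ell^u$, as a Banach function lattice $X$; then $x_n\mapsto\{\M_{(n)}g_j(\cdot,x_n)\}_j$ is precisely the $X$-valued Hardy--Littlewood maximal function in the variable $x_n$ of $x_n\mapsto\{g_j(\cdot,x_n)\}_j$, and one applies the weighted $X$-valued maximal inequality with $\omega_n\in A_{p_n}(\mathbb R)$. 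The lattice $X$ has the Hardy--Littlewood property needed for this (classical for iterated weighted $L^q$-type lattices, and in any case obtainable from the induction), so the step closes. Entirely equivalently, one may re-run the proof of the scalar Lemma~\ref{Mit mixed weight} line by line, carrying the factor $(\sum_j|\cdot|^u)^{1/u}$ through unchanged and replacing every use of the one-dimensional scalar weighted maximal theorem by the displayed Andersen--John inequality.

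The hard part is the point at which the vector-valued character becomes unavoidable. Unlike for a single function, there is \emph{no} pointwise domination $(\sum_j[\M_{(k)}g_j]^u)^{1/u}\le\M_{(k)}[(\sum_j|g_j|^u)^{1/u}]$, since the interval realising the supremum in $\M_{(k)}g_j$ depends on $j$; and for a coordinate $x_k$ that is not the innermost one in the mixed norm one cannot reduce to the scalar case by Minkowski's inequality either, because the supremum and the outer norm do not commute in the favourable direction. Consequently each peeling step genuinely requires the $\ell^u$-valued one-dimensional weighted maximal inequality, and for the interior and outermost coordinates its Banach-lattice-valued strengthening, which in turn needs the inner weighted mixed spaces to lie in the class covered by the standard theory of lattice-valued weighted maximal inequalities — a fact that is itself most naturally obtained from the induction. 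Once this is granted, the remainder is routine: Minkowski's integral inequality (valid since each $p_k\ge1$) to interchange norms, the tensor structure of $\bigotimes_k\omega_k^{1/p_k}$ so that all but one weight factors out of the coordinate being peeled, and Fubini's theorem.
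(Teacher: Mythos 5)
The paper does not prove this lemma; it is quoted verbatim from Nogayama's paper as \cite[Proposition 6.4]{N19}, with no proof reproduced. So there is no ``paper's own proof'' to compare against, and I can only assess your argument on its own merits.

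Your high-level architecture is sound: the $u=\infty$ case by pointwise domination, and for $1<u<\infty$ an induction on the dimension, peeling the iterated operator $\Mit = \M_{(n)}\circ\cdots\circ\M_{(1)}$ and the mixed norm together. Writing $g_j=\M_{(n-1)}\cdots\M_{(1)}f_j$, applying the induction hypothesis fibrewise in $x_n$ to pass from $\{g_j\}$ to $\{f_j\}$, and then handling the remaining $\M_{(n)}$, is a natural organization. You also correctly pinpoint that the heart of the matter is the final lattice-valued step: controlling $\M_{(n)}$ in the $X$-valued weighted $L^{p_n}$ norm, where $X$ is the inner weighted $L^{(p_1,\dots,p_{n-1})}(\ell^u)$ lattice, which cannot be reduced to the scalar case because the optimal interval in $\M_{(n)}$ depends both on $j$ and on the inner variables.

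But there is a genuine gap exactly at that step, and your parenthetical dismissal of it does not hold. You write that the lattice $X$ ``has the Hardy--Littlewood property needed for this (classical for iterated weighted $L^q$-type lattices, and in any case obtainable from the induction).'' The second half of this is incorrect: your induction hypothesis concerns the boundedness of the $(n-1)$-dimensional \emph{iterated} maximal operator on the weighted mixed $\ell^u$-norm; it is a statement about composing $\M_{(1)},\dots,\M_{(n-1)}$ acting on functions defined on the measure space over which $X$ is built. It does \emph{not} yield the lattice Hardy--Littlewood property of $X$, which is a statement about the one-variable lattice-valued maximal operator on $L^{p_n}(\omega_n;X)$ for an external variable $x_n$, and which is governed by abstract considerations (boundedness of $\M$ on $X$ and on its associate space $X'$, in the sense of Bourgain and Rubio de Francia, together with a weighted refinement). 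These are different theorems, and one does not follow from the other by any straightforward implication. Your fallback suggestion --- ``re-run the proof of the scalar Lemma~\ref{Mit mixed weight} line by line, carrying $(\sum_j|\cdot|^u)^{1/u}$ through and replacing the scalar maximal theorem by Andersen--John'' --- inherits the identical difficulty, because already in the scalar case the estimate for $\M_{(k)}$, $k\geq 2$, cannot be obtained by applying the one-dimensional maximal theorem fibrewise and invoking Minkowski: the supremum over intervals and the inner norms fail to commute in the useful direction. So ``carrying $\ell^u$ along'' does not neutralize the obstacle; the scalar proof itself must already contain something beyond fibrewise 1D estimates (e.g.\ an extrapolation step or a lattice-valued maximal theorem), and identifying and correctly extending \emph{that} ingredient is precisely what is missing. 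To close the argument you would need either to cite a concrete weighted lattice-valued maximal theorem covering $X=L^{(p_1,\dots,p_{n-1})}_{\vec\omega'}(\ell^u)$, or to replace the final step by a Rubio de Francia extrapolation from the one-dimensional Andersen--John inequality, which is the more robust route and almost certainly closer to how Nogayama's cited Proposition 6.4 is actually established.
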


\begin{theorem}
		Let $ 1< \vec p < \infty $. Let    $ 1 < n / ( \sum_{i=1}^n  1/p_i )   < t <r <\infty $ or $ 1 < n / ( \sum_{i=1}^n  1/p_i )  \le t < r=\infty  $. 
	Furthermore, let  
	\begin{equation*}
		\left(\frac{1}{n} \sum_{i=1 } ^n \frac{1}{p_i} -\frac{1}{t}  +  \frac{1}{r} \right) <  \frac{1}{\max \vec p}.
	\end{equation*}
Let $1 <u \le \infty $.
	Then for $ \{ f_k\}_{k\in \mathbb N} \in  M_{\vec p}^{t,r} (\ell^u)$,
	\begin{equation*}
		 \left(\sum_{Q \in \D} |Q|^{ \frac{r}{t}  - \frac{r}{n} \sum_{i=1 } ^n \frac{1}{p_i}  }  \left\| \left( \sum_{k=1}^\infty  |\Mit f_k|^u \right)^{1/u}  \chi_Q \right\|_{L^{\vec p}  } ^r  \right)^{1/r}   \lesssim \| \{ f_k\}_{k\in \mathbb N} \|_{ M_{\vec p}^{t,r}  } .
	\end{equation*}
\end{theorem}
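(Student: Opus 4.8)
The plan is to mimic the scalar boundedness of $\Mit$ on $M_{\vec p}^{t,r}$ proved just above, with the single function $f$ replaced by $F:=(\sum_{k=1}^\infty|f_k|^u)^{1/u}$ and with the vector-valued weighted estimate (Lemma \ref{Mit vector}) used in place of the scalar one (Lemma \ref{Mit mixed weight}). The first step is to record a vector-valued version of Proposition \ref{mixed BM equ norm weight}: for $1<u\le\infty$ and any $\eta\in(0,1)$ with $0<\frac1n\sum_{i=1}^n\frac1{p_i}-\frac1t+\frac1r<\eta<1$, and for every sequence $\{g_k\}_{k\in\mathbb N}\subset L^0(\rn)$,
\begin{equation*}
\left\|\left(\sum_{k}|g_k|^u\right)^{1/u}\right\|_{M_{\vec p}^{t,r}}\approx\left(\sum_{Q\in\D}|Q|^{\frac rt-\frac rn\sum_{i=1}^n\frac1{p_i}}\left\|\left(\sum_k|g_k|^u\right)^{1/u}(\Mit\chi_Q)^\eta\right\|_{L^{\vec p}}^r\right)^{1/r}.
\end{equation*}
This is immediate: apply Proposition \ref{mixed BM equ norm weight} to the single nonnegative measurable function $(\sum_k|g_k|^u)^{1/u}\in L^0$, since that proof uses nothing about its argument beyond measurability and the elementary properties of $\|\cdot\|_{L^{\vec p}}$.

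Next I would fix the exponent $\eta$. The extra hypothesis $\frac1n\sum_{i=1}^n\frac1{p_i}-\frac1t+\frac1r<\frac1{\max\vec p}$ lets us choose $\eta$ with
\begin{equation*}
0<\frac1n\sum_{i=1}^n\frac1{p_i}-\frac1t+\frac1r<\eta<\frac1{\max\vec p}<1,
\end{equation*}
which is admissible in the equivalent norm above and, since then $0<\eta p_i<1$ for every $i$, guarantees $(\M_{(i)}\chi_{I_i})^{\eta p_i}\in A_1(\mathbb R)\subset A_{p_i}(\mathbb R)$ for every interval $I_i$ by \cite[Theorem 288]{SDH20}, exactly as in the scalar proof. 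For a dyadic cube $Q=I_1\times\cdots\times I_n$ one has $(\Mit\chi_Q)^\eta=\bigotimes_{i=1}^n(\M_{(i)}\chi_{I_i})^\eta$, so Lemma \ref{Mit vector} applied with the weights $\omega_i=(\M_{(i)}\chi_{I_i})^{\eta p_i}$ yields, with a constant independent of $Q$,
\begin{equation*}
\left\|\left(\sum_k[\Mit f_k]^u\right)^{1/u}(\Mit\chi_Q)^\eta\right\|_{L^{\vec p}}\lesssim\left\|\left(\sum_k|f_k|^u\right)^{1/u}(\Mit\chi_Q)^\eta\right\|_{L^{\vec p}}.
\end{equation*}

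To finish, I would use the pointwise bound $\chi_Q\le(\Mit\chi_Q)^\eta$, multiply by $|Q|^{\frac1t-\frac1n\sum_{i=1}^n\frac1{p_i}}$, raise to the power $r$, sum over $Q\in\D$, take the $r$-th root, apply the $Q$-uniform estimate above termwise, and then invoke the vector-valued equivalent norm applied to $\{f_k\}$:
\begin{equation*}
\left(\sum_{Q\in\D}|Q|^{\frac rt-\frac rn\sum_{i=1}^n\frac1{p_i}}\left\|\left(\sum_k|\Mit f_k|^u\right)^{1/u}\chi_Q\right\|_{L^{\vec p}}^r\right)^{1/r}\lesssim\left(\sum_{Q\in\D}|Q|^{\frac rt-\frac rn\sum_{i=1}^n\frac1{p_i}}\left\|\left(\sum_k|f_k|^u\right)^{1/u}(\Mit\chi_Q)^\eta\right\|_{L^{\vec p}}^r\right)^{1/r}\approx\|\{f_k\}_{k\in\mathbb N}\|_{M_{\vec p}^{t,r}(\ell^u)}.
\end{equation*}
The endpoint $u=\infty$ is covered by the same argument, or more directly by monotonicity of $\Mit$, namely $\sup_k\Mit f_k\le\Mit(\sup_k|f_k|)$, combined with the scalar estimate Lemma \ref{Mit mixed weight} and Proposition \ref{mixed BM equ norm weight} for $\sup_k|f_k|$. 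The only point needing a (mild) check — the main obstacle — is that Proposition \ref{mixed BM equ norm weight} transfers verbatim to the scalar function $(\sum_k|f_k|^u)^{1/u}$ and that the two constraints on $\eta$, the equivalent-norm range $(\frac1n\sum\frac1{p_i}-\frac1t+\frac1r,\,1)$ and the Muckenhoupt range $(0,\,\frac1{\max\vec p})$, overlap; the added hypothesis is precisely what forces this overlap, and everything else is a line-by-line repetition of the scalar argument.
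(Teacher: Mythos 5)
Your proof is correct and is essentially the same as the paper's argument: both reduce via Proposition \ref{mixed BM equ norm weight} (applied to the scalar function $(\sum_k|g_k|^u)^{1/u}$) to the $Q$-uniform weighted $L^{\vec p}$ estimate, choose $\eta$ in the overlap interval guaranteed by the extra hypothesis, factor $(\Mit\chi_Q)^\eta$ as a tensor product of one-dimensional $A_1$ weights via \cite[Theorem 288]{SDH20}, and then invoke Lemma \ref{Mit vector}. Your added observation that the case $u=\infty$ also follows from $\sup_k\Mit f_k\le\Mit(\sup_k|f_k|)$ together with the scalar estimate is a correct and slightly more explicit treatment of that endpoint, but does not change the substance of the argument.
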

\begin{proof}
	Let $\eta \in (0,1)  $  such that
	\begin{equation*}
		0<   \left(\frac{1}{n} \sum_{i=1 } ^n \frac{1}{p_i} -\frac{1}{t}  +  \frac{1}{r} \right)  <\eta  < \frac{1}{\max \vec p}<1 .
	\end{equation*}
		By Proposition \ref{mixed BM equ norm weight}, it suffices to show 
	\begin{equation*}
		\left\| \left( \sum_{k=1}^\infty  |\Mit f_k|^u \right)^{1/u}   (\Mit \chi_Q )^\eta  \right\|_{L^{\vec p}  }  \lesssim 	\left\| \left( \sum_{k=1}^\infty  | f_k|^u \right)^{1/u}   (\Mit \chi_Q )^\eta  \right\|_{L^{\vec p}  } .
	\end{equation*}
Let $Q = I_1 \times \dots \times I_n $. Then 
\begin{equation*}
	( \Mit \chi_Q) ^{\eta }  = \left(  \bigotimes_{i=1}^n \M _{(i)} \chi_{I_i} \right)^{\eta }  =  \bigotimes_{i=1}^n ( \M_{(i)} \chi_{I_i} )^{\eta } .
\end{equation*}
From \cite[Theorem 288]{SDH20}, $ ( \M _{(i)} \chi_{I_i} )^{\eta  p_i} $  belongs to $A_1(\mathbb R)$ if  $\eta > 0$ such that
\begin{equation*}
	0 < \eta  p_i < 1 .
\end{equation*}
Hence $ ( \M _{(i)} \chi_{I_i} )^{\eta  p_i} \in A_1(\mathbb R) \subset A_{p_i}(\mathbb R) $ for all $p_i$. By Lemma \ref{Mit vector}, we obtain
\begin{align*}
	\left\| \left( \sum_{k=1}^\infty  |\Mit f_k|^u \right)^{1/u}   (\Mit \chi_Q )^\eta  \right\|_{L^{\vec p}  }	&  = \left\| \left( \sum_{k=1}^\infty  |\Mit f_k|^u \right)^{1/u}   \bigotimes_{i=1}^n (\M_{(i)} \chi_{I_i } ) ^\eta \right\|_{L^{\vec p}  }	  \\
	& \lesssim \left\| \left( \sum_{k=1}^\infty  | f_k|^u \right)^{1/u}   \bigotimes_{i=1}^n (\M_{(i)} \chi_{I_i } ) ^\eta \right\|_{L^{\vec p}  }	  \\
	&=  \left\| \left( \sum_{k=1}^\infty  | f_k|^u \right)^{1/u}   	( \Mit \chi_Q) ^{\eta }  \right\|_{L^{\vec p}  }	 .
\end{align*}
We finish the proof.
\end{proof}

\subsection{The Hardy-Littlewood maximal operator on  block spaces}
We first give the definition of vector valued block space, which is similar with Definition \ref{def mix block space}.

\begin{definition}
	Let $ 1 \le \vec p <\infty$ and $1 < u' \le \infty $. Let   $n / ( \sum_{i=1}^n  1/p_{i})  \le t \le r \le \infty $. A measurable vector valued function $\vec b =\{b_k \}_{k\in \mathbb N_0}$ is said to be a $\ell^{u' }$ valued
	$(\vec{p}^{\,\prime}, t')$-block if there exists a cube $Q$ that supports $\vec b$ such that 
	\begin{equation*}
		\| \vec b \|_{L^{\vec{p}^{\,\prime} } (\ell^{u'}) } : =  \left\| \left( \sum_{m =0 } ^\infty | b_m| ^{u'} \right)^{1/u'} \right\|_{L^{\vec{p}^{\,\prime} } }  \le |Q| ^{  1/t - \frac{1}{n}  ( \sum_{i=1}^n  1/p_{i})   }  .
	\end{equation*}
	If we need to indicate $Q$, we  say that $ \vec b$ is a  $\ell^{u' }$ valued $(\vec{p}^{\,\prime},t')$-block supported on $Q$.

	The function space $\mathcal{H}_{\vec p \, ^\prime}^{t',r'} (\ell^ {u'}) $
	is the set of  all measurable functions $\vec f = \{f_m \}_{m \in \mathbb N_0}$ such that $\vec f$ is realized
	as the sum
	\begin{equation}\label{eq: mix block vector f}
	\vec	f = \sum_{(j,k)\in\mathbb{Z}^{n+1}}\lambda_{j,k} \vec b_{j,k}
	\end{equation}
	with some $\lambda=\{\lambda_{j,k}\}_{(j,k)\in\mathbb{Z}^{n+1}}\in\ell^{r'}(\mathbb{Z}^{n+1})$
	and $\vec b_{j,k}$ is a $\ell^{u'}$ valued $(\vec{p}^{\,\prime},t')$-block supported on  $Q_{j,k}$ where (\ref{eq: mix block vector f}) converges almost everywhere on $\rn$. The norm of $\mathcal{H}_{\vec p \, ^\prime}^{t',r'}(\ell^ {u'}) $
	is defined by
	\[
	\|f\|_{\mathcal{H}_{\vec p \, ^\prime}^{t',r'} (\ell^ {u'})} :=\inf_{\lambda}\|\lambda\|_{\ell^{r'}},
	\]
	where the infimum is taken over all admissible sequences $\lambda$
	such that (\ref{eq: mix block vector f}) holds.
\end{definition}

\begin{lemma}[Theorem 2.18, \cite{N24}] \label{HL mix block r = infty}
		Let $ 1< \vec p < \infty $ and $1<u' \le \infty $. Let    $ 1< n / ( \sum_{i=1}^n  1/p_i )  \le t < r=\infty  $.
	Then for all $f  \in \mathcal{H}_{\vec p \, ^\prime}^{t',r'}  (\ell^{u'})  $, we have
	\begin{equation*}
		\left\|  \left(   \sum_{j=0}^\infty \M (f_j) ^{u'}\right)^{1/ u'}   \right\|_{\mathcal{H}_{\vec p \, ^\prime}^{t',r'}  } \lesssim 	\left\|  \left(   \sum_{j=0}^\infty |f_j| ^{u'}\right)^{1/ u'}   \right\|_{\mathcal{H}_{\vec p \, ^\prime}^{t',r'}  } .
	\end{equation*}
\end{lemma}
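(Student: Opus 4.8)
The plan is to reduce the vector-valued block-space bound for $r'=1$ (i.e. $r=\infty$, $t=n/(\sum 1/p_i)$ or more generally $t<r=\infty$) to the vector-valued maximal inequality on the mixed Lebesgue space $L^{\vec{p}^{\,\prime}}$ — that is, the Fefferman--Stein type inequality that underlies Lemma \ref{doob} and the results quoted from \cite{N19, N24} — combined with the atomic decomposition structure of $\mathcal{H}_{\vec p \, ^\prime}^{t',r'}(\ell^{u'})$. Write $\vec f = \{f_j\}_{j\ge 0}$ with $\| |\vec f| \|_{\ell^{u'}} = \left(\sum_{j\ge 0} |f_j|^{u'}\right)^{1/u'}$, and let $T\vec f := \left(\sum_{j\ge 0} \M(f_j)^{u'}\right)^{1/u'}$. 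The goal is $\|T\vec f\|_{\mathcal{H}_{\vec p \, ^\prime}^{t',r'}} \lesssim \big\| \| |\vec f| \|_{\ell^{u'}} \big\|_{\mathcal{H}_{\vec p \, ^\prime}^{t',r'}}$.

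First I would fix an admissible decomposition $\vec f = \sum_{(j,k)\in\mathbb Z^{n+1}} \lambda_{j,k} \vec b_{j,k}$ with $\vec b_{j,k}$ an $\ell^{u'}$-valued $(\vec{p}^{\,\prime},t')$-block supported on $Q_{j,k}$ and $\|\lambda\|_{\ell^1} \le (1+\epsilon)\|\vec f\|_{\mathcal H}$. By sublinearity of $\M$ (applied componentwise) and the triangle inequality in $\ell^{u'}$, one has the pointwise bound $T\vec f \le \sum_{(j,k)} |\lambda_{j,k}| \, T\vec b_{j,k}$, so it suffices to show that $T\vec b$ has block-space norm $\lesssim 1$ whenever $\vec b$ is an $\ell^{u'}$-valued block on a cube $Q$. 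Here I would split $T\vec b \le T(\vec b\,\chi_{2Q}) \cdot \chi_{2Q} + (\text{near part outside } 2Q) + (\text{tail})$ in the now-standard way: the local part is handled by the Fefferman--Stein inequality on $L^{\vec{p}^{\,\prime}}$ — giving $\|T(\vec b\,\chi_{2Q})\|_{L^{\vec{p}^{\,\prime}}} \lesssim \| |\vec b|\|_{\ell^{u'}}\|_{L^{\vec{p}^{\,\prime}}} \le |Q|^{1/t - \frac1n\sum 1/p_i}$ — and then Lemma \ref{generate mix block} converts an $L^{\vec{p}^{\,\prime}}$ function supported near $Q$ into a block (up to the constant built into $T(\vec b\,\chi_{2Q})$ being supported in $2Q$, which is covered by finitely many dyadic cubes). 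For the global part, for $x\notin 2Q$ and $y\in Q$ with $|x-y|\approx |x-c_Q|$, one controls $\M(f_j)(x)$ by the average of $|f_j|$ over the ball $B(x, c|x-c_Q|)$, so that $T\vec b(x) \lesssim \sum_{m\ge 1} 2^{-mn}|2^m Q|^{-1}\int_{2^m Q}\| |\vec b|\|_{\ell^{u'}} \cdot \chi_{2^{m+1}Q\setminus 2^m Q}(x) + \dots$; applying Hölder's inequality (Lemma \ref{Holder mixed}) on each annulus and summing the resulting geometric series in $m$ gives that this tail is a finite multiple of a sum of blocks on the dilates $2^m Q$ with $\ell^1$-summable coefficients, again using $t < \infty$ (so $1/t - \frac1n\sum 1/p_i < 0$) for the geometric decay.

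The cleanest way to organize the tail estimate, rather than redoing the annulus computation by hand, is to invoke the scalar block-space maximal bound Lemma \ref{HL mix block r = infty} (the $u'=1$ scalar case, or more precisely Theorem 2.18 of \cite{N24}) applied to the single scalar function $\| |\vec f| \|_{\ell^{u'}}$: indeed the pointwise inequality $T\vec f(x) \lesssim \M\big(\| |\vec f|\|_{\ell^{u'}}\big)(x)$ is \emph{false} in general, so that shortcut is not available; instead one genuinely needs the vector-valued Fefferman--Stein inequality on $L^{\vec{p}^{\,\prime}}$ for the local pieces. The main obstacle I anticipate is precisely the bookkeeping in the global/tail part: one must verify that after the annular decomposition and Hölder's inequality, the normalization constant of each dilate-$2^m Q$ block matches $|2^m Q|^{1/t-\frac1n\sum 1/p_i}$ and that the leftover factors $2^{-mn}$ versus $|2^m Q|$ powers combine to a summable series — this is exactly where the hypotheses $u'>1$ (for Hölder in the $\ell^{u'}$ variable on each annulus), $t<\infty$ and the relation $n/(\sum 1/p_i)\le t$ enter, and it is the same computation as in the scalar case of \cite{N24} but carried along with the $\ell^{u'}$-norm inside, so I would present it by referring to the scalar argument there and indicating the modifications. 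Finally, taking the infimum over admissible decompositions and letting $\epsilon\to 0^+$ yields the claimed inequality.
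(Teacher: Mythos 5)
Note first that the paper does not prove this lemma at all: it is stated as Lemma~\ref{HL mix block r = infty} with the attribution ``[Theorem~2.18, \cite{N24}]'' and no proof is supplied. So you are reconstructing an argument that the paper outsources to Nogayama, and your sketch follows the block-decomposition strategy that Nogayama (and before him Sawano--Tanaka \cite{ST09} and Izumi--Sawano--Tanaka \cite{IST15} in the scalar Morrey setting) use for maximal bounds on block spaces. The reductions you identify are the right ones: (i) the pointwise bound $T\vec f\le \sum_{(j,k)}|\lambda_{j,k}|\,T\vec b_{j,k}$ via componentwise sublinearity of $\M$ and Minkowski's inequality in $\ell^{u'}$, which together with the $\ell^{1}$ nature of the $r'=1$ coefficient norm and the lattice property reduces matters to $\|T\vec b\|_{\mathcal{H}_{\vec p \, ^\prime}^{t',1}}\lesssim 1$ for a single $\ell^{u'}$-valued block on $Q$; (ii) the local piece $(T\vec b)\chi_{2Q}$ controlled by the mixed-norm Fefferman--Stein inequality (\cite[Theorem~1.7]{N19}) followed by Lemma~\ref{generate mix block} applied on the boundedly many dyadic cubes covering $2Q$; (iii) for $x\notin 2Q$, the estimate $T\vec b(x)\lesssim |x-c_Q|^{-n}\int_Q\bigl(\sum_j|b_j|^{u'}\bigr)^{1/u'}\lesssim |x-c_Q|^{-n}|Q|^{1/t}$ by Minkowski's integral inequality and H\"older, then Lemma~\ref{generate mix block} on each dilate $2^{m+1}Q$. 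You are also right that the pointwise comparison $T\vec f\lesssim \M\bigl((\sum_j|f_j|^{u'})^{1/u'}\bigr)$ is false for $1<u'<\infty$, so the genuine vector-valued Fefferman--Stein estimate is unavoidable.

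A few points to tidy in the write-up. Since $\operatorname{supp}\vec b\subset Q$ one has $\vec b\chi_{2Q}=\vec b$, so the split is simply $T\vec b=(T\vec b)\chi_{2Q}+(T\vec b)\chi_{(2Q)^c}$; there is no separate ``near part outside $2Q$.'' The hypothesis $u'>1$ enters only through the vector-valued Fefferman--Stein bound in the local piece, not through any H\"older in the $\ell^{u'}$ variable in the tail: the step $\bigl(\sum_j(\int_Q|b_j|)^{u'}\bigr)^{1/u'}\le\int_Q\bigl(\sum_j|b_j|^{u'}\bigr)^{1/u'}$ is Minkowski's integral inequality and requires only $u'\ge 1$, and the endpoint $u'=\infty$ should be treated separately via $\sup_j\M f_j\le \M(\sup_j|f_j|)$, exactly as the paper does in the proof of its own Theorem~\ref{HL mix block r le infty}. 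Finally, once you normalize the piece of $T\vec b$ living on $2^{m+1}Q$ to a block there, the coefficient you pick up is $\approx 2^{-mn/t}$, so what drives the $\ell^1$-summability in $m$ is precisely $t<\infty$; the quantity $1/t-\tfrac1n\sum_i 1/p_i$ is only guaranteed to be $\le 0$ (it can vanish), and it is not what produces the geometric decay.
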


\begin{theorem} \label{HL mix block r le infty}
		Let $ 1< \vec p <\infty $ and $1<u' \le \infty $. Let    $1 < n / ( \sum_{i=1}^n  1/p_i )   < t <r <\infty $ or $ 1< n / ( \sum_{i=1}^n  1/p_i )  \le t < r=\infty  $.
	Then for all $\{f_j \}_{j\in \mathbb N_0}  \in \mathcal{H}_{\vec p \, ^\prime}^{t',r'}  (\ell^{u'})  $, we have
	\begin{equation*}
		\left\|  \left(   \sum_{j=0}^\infty \M (f_j) ^{u'}\right)^{1/ u'}   \right\|_{\mathcal{H}_{\vec p \, ^\prime}^{t',r'}  } \lesssim 	\left\|  \left(   \sum_{j=0}^\infty |f_j| ^{u'}\right)^{1/ u'}   \right\|_{\mathcal{H}_{\vec p \, ^\prime}^{t',r'}  } .
	\end{equation*}
\end{theorem}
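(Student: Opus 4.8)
The plan is to dispose of the case $r=\infty$ immediately, since $1<n/(\sum_{i=1}^n 1/p_i)\le t<r=\infty$ is precisely Lemma \ref{HL mix block r = infty}, and then treat $1<n/(\sum_{i=1}^n 1/p_i)<t<r<\infty$ by an atomic argument modeled on the proof of Theorem \ref{HL M}. Write $\sigma:=\frac1n\sum_{i=1}^n 1/p_i$, so that $1/t-\sigma<0$ and $1/t>1/r$. Fix $\{f_m\}_{m}\in\mathcal{H}_{\vec p \, ^\prime}^{t',r'}(\ell^{u'})$ and $\epsilon>0$, and choose an admissible decomposition $\{f_m\}=\sum_{(j,k)\in\mathbb Z^{n+1}}\lambda_{j,k}\{b_{j,k,m}\}_m$, where $\{b_{j,k,m}\}_m$ is an $\ell^{u'}$-valued $(\vec p \, ^\prime,t')$-block supported on $Q_{j,k}$ and $\|\lambda\|_{\ell^{r'}}\le\|\{f_m\}\|_{\mathcal{H}_{\vec p \, ^\prime}^{t',r'}(\ell^{u'})}+\epsilon$. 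Since $|f_m(y)|\le\sum_{j,k}|\lambda_{j,k}|\,|b_{j,k,m}(y)|$ for a.e.\ $y$, Tonelli's theorem gives $\M(f_m)(x)\le\sum_{j,k}|\lambda_{j,k}|\,\M(b_{j,k,m})(x)$ a.e., and Minkowski's inequality in $\ell^{u'}$ (valid as $u'\ge1$) yields
\[
\Big(\sum_m\M(f_m)^{u'}\Big)^{1/u'}\le\sum_{(j,k)\in\mathbb Z^{n+1}}|\lambda_{j,k}|\,B_{j,k},\qquad B_{j,k}:=\Big(\sum_m\M(b_{j,k,m})^{u'}\Big)^{1/u'}.
\]
By the lattice property of $\mathcal{H}_{\vec p \, ^\prime}^{t',r'}$ (Lemma \ref{lem lattice block}) and its triangle inequality, it therefore suffices to bound $\big\|\sum_{j,k}|\lambda_{j,k}|B_{j,k}\big\|_{\mathcal{H}_{\vec p \, ^\prime}^{t',r'}}$, and for this I would re-expand each $B_{j,k}$ as a superposition of genuine blocks with geometrically decaying coefficients.

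Concretely, write $B_{j,k}=\sum_{\ell\ge0}B_{j,k}\chi_{A_\ell}$ with $A_0:=2Q_{j,k}$ and $A_\ell:=2^{\ell+1}Q_{j,k}\setminus2^\ell Q_{j,k}$ for $\ell\ge1$. For $\ell=0$ the Fefferman--Stein vector-valued maximal inequality on $L^{\vec p \, ^\prime}$ (available since $1<\vec p \, ^\prime<\infty$; see \cite[Theorem 1.7]{N19}, with the evident modification for $u'=\infty$) gives $\|B_{j,k}\chi_{2Q_{j,k}}\|_{L^{\vec p \, ^\prime}}\lesssim\|(\sum_m|b_{j,k,m}|^{u'})^{1/u'}\|_{L^{\vec p \, ^\prime}}\le|Q_{j,k}|^{1/t-\sigma}$. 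For $\ell\ge1$, since each $b_{j,k,m}$ is supported on $Q_{j,k}$, one has $\M(b_{j,k,m})(x)\lesssim2^{-\ell n}|Q_{j,k}|^{-1}\int_{Q_{j,k}}|b_{j,k,m}|\,\d y$ for $x\in A_\ell$; taking the $\ell^{u'}$-norm, using Minkowski in $\ell^{u'}$ and then H\"older together with Lemma \ref{chi Q mixed Lp} gives $\big(\sum_m\M(b_{j,k,m})(x)^{u'}\big)^{1/u'}\lesssim2^{-\ell n}|Q_{j,k}|^{1/t-1}$ on $A_\ell$, whence $\|B_{j,k}\chi_{2^{\ell+1}Q_{j,k}}\|_{L^{\vec p \, ^\prime}}\lesssim2^{-\ell n/t}|2^{\ell+1}Q_{j,k}|^{1/t-\sigma}$. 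Covering $2^{\ell+1}Q_{j,k}$ by a bounded number of dyadic cubes of comparable sidelength and invoking Lemma \ref{generate mix block} on each, one obtains for every $\ell\ge0$ a representation $B_{j,k}\chi_{A_\ell}\le c\,2^{-\ell n/t}\sum_{S}b^{(\ell,S)}_{j,k}$, where $S$ runs over the $O(1)$ dyadic cubes in the cover (each of sidelength $\sim2^\ell\ell(Q_{j,k})$ and lying in a fixed dilate of the $\ell$-th dyadic ancestor of $Q_{j,k}$) and each $b^{(\ell,S)}_{j,k}$ is a $(\vec p \, ^\prime,t')$-block supported on $S$.

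Finally I would control the $\mathcal{H}_{\vec p \, ^\prime}^{t',r'}$-norm level by level. For a fixed $\ell$ and a fixed dyadic cube $S$, set $\mu_\ell(S):=\sum|\lambda_{j,k}|$, the sum being over those $(j,k)$ whose cover at scale $\ell$ contains $S$; since $\sum|\lambda_{j,k}|\,b^{(\ell,S)}_{j,k}$ over these $(j,k)$ is, up to the factor $\mu_\ell(S)$, again a block supported on $S$, the level-$\ell$ piece equals $c\sum_{S\in\D}\mu_\ell(S)\,\widetilde b^{\,S}_\ell$ with $\widetilde b^{\,S}_\ell$ a $(\vec p \, ^\prime,t')$-block on $S$ (and the series converges a.e.\ and in $L^1_{\operatorname{loc}}$ as in Proposition \ref{mix block sum converge}), so its norm is $\le c\|\mu_\ell\|_{\ell^{r'}}$. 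Because a fixed dyadic cube $S$ arises from at most $O(2^{\ell n})$ of the $Q_{j,k}$, while a fixed $Q_{j,k}$ feeds only $O(1)$ cubes $S$, H\"older's inequality gives $\|\mu_\ell\|_{\ell^{r'}}^{r'}\lesssim2^{\ell n r'/r}\sum_{j,k}|\lambda_{j,k}|^{r'}$, i.e.\ $\|\mu_\ell\|_{\ell^{r'}}\lesssim2^{\ell n/r}\|\lambda\|_{\ell^{r'}}$. Summing over $\ell$,
\[
\Big\|\sum_{j,k}|\lambda_{j,k}|B_{j,k}\Big\|_{\mathcal{H}_{\vec p \, ^\prime}^{t',r'}}\lesssim\sum_{\ell\ge0}2^{-\ell n/t}\,2^{\ell n/r}\,\|\lambda\|_{\ell^{r'}}\lesssim\|\lambda\|_{\ell^{r'}},
\]
the geometric series converging precisely because $t<r$; since $\|\lambda\|_{\ell^{r'}}\le\|\{f_m\}\|_{\mathcal{H}_{\vec p \, ^\prime}^{t',r'}(\ell^{u'})}+\epsilon$ and $\epsilon$ is arbitrary, this gives the claim.

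The step I expect to be the main obstacle is this last one: one must decompose $\M$ applied to a block into blocks supported on dyadic ancestors with geometrically decaying coefficients and then keep precise track of the multiplicities produced when many small cubes $Q_{j,k}$ funnel into one ancestor $S$, so that the gain $2^{-\ell n/t}$ overcomes the loss $2^{\ell n/r}$; this is exactly where $t<r$ enters, and it is the same mechanism that drives the scalar estimate in Theorem \ref{HL M}. As an alternative route, once one verifies that $\mathcal{H}_{\vec p \, ^\prime}^{t',r'}(\ell^{u'})$ is the predual of $M_{\vec p}^{t,r}(\ell^{u})$ with $1/u+1/u'=1$, the inequality follows from Theorem \ref{HL seq mix BM} by a duality argument after linearizing $\M$.
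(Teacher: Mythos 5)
Your argument is correct and is a genuinely different route from the one taken in the paper. The paper introduces the vector-valued slice spaces $(\mathcal E_{\vec p\,'}^{t',r'})_j(\ell^{u'})$, proves that $\M$ is bounded slice-by-slice (Lemma \ref{HL each j}), and then expresses $\mathcal{H}_{\vec p\,'}^{t',r'}(\ell^{u'})$ as an $\ell^{r'}$-assembly of slices (Lemma \ref{char block}), so that the theorem follows by applying $\M$ to each slice. You instead work directly with the atomic decomposition of a block-space element: you push $\M$ past the sum, decompose $\M$ of a single block into annuli, use vector-valued Fefferman--Stein on the central annulus and the standard pointwise decay of $\M$ on the outer ones, and then re-express each annular piece as $2^{-\ell n/t}$ times a block on a dyadic ancestor. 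The bookkeeping at the end is where the work is: a fixed ancestor $S$ absorbs $O(2^{\ell n})$ of the original $Q_{j,k}$, and H\"older in $\ell^{r'}$ turns that multiplicity into the loss $2^{\ell n/r}$, which the gain $2^{-\ell n/t}$ beats precisely when $t<r$. This is the same exponent arithmetic that drives the paper's Lemma \ref{HL each j}, but carried out globally rather than within a fixed scale. A side benefit of your route is that it bypasses the slice-space machinery entirely, at the cost of making the multiplicity counting explicit; a side benefit of the paper's route is that the slice-space characterization (Lemma \ref{char block}) is reusable elsewhere (e.g.\ for Theorem \ref{T H vector block}). Your alternative closing remark via duality with $M_{\vec p}^{t,r}(\ell^u)$ after linearizing $\M$ would also work, but as you note it requires first establishing the vector-valued predual pairing, which the paper does not record; the slice-space route and your direct route both avoid this. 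One small thing worth spelling out in a final write-up: the covering of $2^{\ell+1}Q_{j,k}$ by $O(1)$ dyadic cubes of comparable side is cleanest using the shifted grids $\mathcal D_{\vec a}$ of Lemma \ref{cube be covered}, which also guarantees the bounded-overlap property you implicitly use when re-grouping the sum by $S$.
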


\begin{remark} \label{remark HL block}
	In \cite[Theorem 4.3]{ST09}, Sawano and Tanaka proved the Hardy-Littlewood  maximal operator $\M$ on vector valued on block spaces $ \mathcal{H}_{p'}^{t',1} (\ell^{u'})$; see also \cite[Theorem 2.12]{IST15}. 
	Since Lemma \ref{HL mix block r = infty}, 
	we only need to prove the case  $1 < n / ( \sum_{i=1}^n  1/p_i )   < t <r <\infty $.  We use the idea from \cite[Section 6.2 Besov-Bourgain-Morrey spaces]{ZYY242}. Before proving  Theorem \ref{HL mix block r le infty}, we give some preparation.
\end{remark}

\begin{lemma} \label{norm equ}
		Let $ 0 < \vec p \le \infty $. Let  $   0 < n / ( \sum_{i=1}^n  1/p_{i})     < t <r <\infty $ or $ 0< n / ( \sum_{i=1}^n  1/p_{i})   \le t < r=\infty  $.
	  Then $f \in M_{\vec p}^{t,r} $ if and only if $f \in L_{\mathrm{loc}}^{\vec p} $ and
	\begin{equation*}
		\|f\|'_{ M_{\vec p}^{t,r} } := \left(  \int_0^\infty \int_\rn \left( |B(y,t)| ^{1/t - \frac{\sum_{i=1}^n \frac{1}{ p_i} }{n}  -1/r}  \left\| f \chi_{B(y,t)  } \right\|_{L^{\vec p} }  \right) ^r \d y \frac{\d t}{t}   \right)^{1/r} <\infty,
	\end{equation*}
	with the usual modifications made when $r =\infty$. Moreover,
	\begin{equation*}
		\|f \|_{ M_{\vec  p}^{t,r}}  \approx \|f\|'_{ M_{\vec p}^{t,r} } .
	\end{equation*}
\end{lemma}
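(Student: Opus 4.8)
The plan is to establish the two‑sided estimate $\|f\|_{M_{\vec p}^{t,r}}\approx\|f\|'_{M_{\vec p}^{t,r}}$ by a standard discretization that passes between the family $\D$ of dyadic cubes and the balls $B(y,s)$; I write $s$ for the radius variable (denoted $t$ in the statement, which also carries the Morrey exponent) and set $\sigma:=\frac1n\sum_{i=1}^n\frac1{p_i}$. Two elementary facts do all the work: (a) if a dyadic cube $Q$ and a ball $B$ have comparable diameters then $|Q|\approx|B|$, hence $|Q|^{\beta}\approx|B|^{\beta}$ for every fixed exponent $\beta$; and (b) the mixed Lebesgue norm is monotone, so $E\subset F$ forces $\|f\chi_E\|_{L^{\vec p}}\le\|f\chi_F\|_{L^{\vec p}}$. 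The clause about $L_{\mathrm{loc}}^{\vec p}$ is automatic, since membership in $M_{\vec p}^{t,r}$ already forces $\|f\chi_Q\|_{L^{\vec p}}<\infty$ for every $Q\in\D$, and the same follows once $\|f\|'<\infty$ by the first estimate below; so the real content is the norm comparison. (I take the joint measurability of $(y,s)\mapsto\|f\chi_{B(y,s)}\|_{L^{\vec p}}$ for granted.)

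First I would prove $\|f\|_{M_{\vec p}^{t,r}}\lesssim\|f\|'_{M_{\vec p}^{t,r}}$, say for $r<\infty$. Fix $Q\in\D$. For every $y\in Q$ and every $s\in(\sqrt n\,\ell(Q),\,2\sqrt n\,\ell(Q))$ one has $Q\subset B(y,s)$ and $|B(y,s)|\approx|Q|$, so by (a) and (b)
\[
|Q|^{\frac1t-\sigma-\frac1r}\|f\chi_Q\|_{L^{\vec p}}\lesssim |B(y,s)|^{\frac1t-\sigma-\frac1r}\|f\chi_{B(y,s)}\|_{L^{\vec p}}.
\]
Raising this to the $r$-th power and integrating in $(y,s)$ over $T_Q:=Q\times(\sqrt n\,\ell(Q),\,2\sqrt n\,\ell(Q))$, whose $\d y\,\d s/s$-measure is $|Q|\log2$, gives $|Q|^{\frac rt-r\sigma}\|f\chi_Q\|_{L^{\vec p}}^r\lesssim\int_{T_Q}(|B(y,s)|^{\frac1t-\sigma-\frac1r}\|f\chi_{B(y,s)}\|_{L^{\vec p}})^r\,\d y\,\frac{\d s}{s}$. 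A quick check shows the sets $T_Q$, $Q\in\D$, are pairwise disjoint up to null sets: cubes of equal side have disjoint $y$-projections, and cubes of unequal side have disjoint $s$-intervals. Summing over $\D$ then yields the desired inequality; for $r=\infty$ the same pointwise comparison with the integral replaced by a supremum over $(y,s)$ does the job.

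For the reverse inequality $\|f\|'_{M_{\vec p}^{t,r}}\lesssim\|f\|_{M_{\vec p}^{t,r}}$, given $(y,s)$ pick $j\in\mathbb Z$ with $2^{-j-1}\le s<2^{-j}$; then $B(y,s)$ is covered by at most $3^n$ cubes $Q_1,\dots,Q_N\in\D_j$, each with $|Q_i|\approx|B(y,s)|$, so the (quasi‑)triangle inequality in $L^{\vec p}$ together with (a) gives
\[
|B(y,s)|^{\frac1t-\sigma-\frac1r}\|f\chi_{B(y,s)}\|_{L^{\vec p}}\lesssim\sum_{i=1}^N|Q_i|^{\frac1t-\sigma-\frac1r}\|f\chi_{Q_i}\|_{L^{\vec p}}.
\]
Taking $r$-th powers (using $N\le3^n$), integrating $\d y\,\d s/s$, and interchanging the finite sum with the integral, the problem reduces to the Carleson-type observation that for a fixed $Q\in\D$ the set of $(y,s)$ for which $Q$ can occur among $Q_1(y,s),\dots,Q_N(y,s)$ is contained in $3Q\times[\ell(Q)/2,\ell(Q))$, of $\d y\,\d s/s$-measure $\lesssim|Q|$. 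Hence $(\|f\|')^r\lesssim\sum_{Q\in\D}(|Q|^{\frac1t-\sigma-\frac1r}\|f\chi_Q\|_{L^{\vec p}})^r|Q|=\|f\|_{M_{\vec p}^{t,r}}^r$, and $r=\infty$ is again the supremum version of the same estimate.

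The argument is conceptually routine, so the step I expect to need the most care is purely bookkeeping: making every constant uniform — the dilation factors $\sqrt n$ and $3$, the covering number $N\le3^n$, and above all the two measure bounds $|T_Q|_{\d y\,\d s/s}\approx|Q|$ and $|\{(y,s):Q\text{ occurs among the }Q_i(y,s)\}|_{\d y\,\d s/s}\lesssim|Q|$ — independently of $Q$, of $(y,s)$, and of $f$. When $0<\min\vec p<1$ the triangle inequality above is replaced by the $\ell^{\min\vec p}$-quasi-triangle inequality, which changes only the constants since $N$ is bounded; and the $r=\infty$ (mixed Morrey) endpoint is subsumed throughout, being a direct consequence of the same two-sided comparison of $\|f\chi_Q\|_{L^{\vec p}}$ with $\|f\chi_{B(y,s)}\|_{L^{\vec p}}$ for $Q$ and $B(y,s)$ of comparable size. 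I do not anticipate any genuine conceptual obstacle.
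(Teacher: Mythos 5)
Your argument is correct and is exactly the standard discretization that the paper silently outsources by citing \cite[Theorem 2.9]{ZSTYY23} (``Repeating \dots and replacing $\|\cdot\|_{L^p}$ by $\|\cdot\|_{L^{\vec p}}$''); the two directions — lower bound via the disjoint tiles $T_Q=Q\times(\sqrt n\,\ell(Q),2\sqrt n\,\ell(Q))$, upper bound via the $\le 3^n$ dyadic cover and the Carleson-type bound $\lesssim|Q|$ on where a fixed $Q$ can occur — are precisely what that reference does for the scalar case, and nothing in the discretization sees the mixed structure of $L^{\vec p}$ beyond monotonicity and the finite quasi-triangle inequality, which you invoke correctly. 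No gaps.
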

\begin{proof}
	Repeating  \cite[Theorem 2.9]{ZSTYY23} and replacing  $ \| \cdot \|_{L^p}$ by $\| \cdot \|_{L^{\vec p}}$, we obtain the result. 
\end{proof}

\begin{remark} 
	Let $1<u\le \infty $. 	Let $ 1< \vec p < \infty $ and    $1 < n / ( \sum_{i=1}^n  1/p_i )   < t <r <\infty $.  From Theorem \ref{HL seq mix BM}, we know that  the vector  Hardy-Littlewood maximal  operator $\M$ is bounded on  $ M_{\vec p}^{t,r} (\ell^{u})$. By Lemma \ref{norm equ}, we obtain
	\begin{equation*}
		\|\{ \M  f_k \}_{k\in\mathbb N} \|_{ M_{p}^{t,r} (\ell^u) } ' \lesssim \| \{   f_k \}_{k\in\mathbb N}\|_{ M_{p}^{t,r} (\ell^u) } '.
	\end{equation*}		
\end{remark}
\begin{lemma}\label{HL each j}
	Let $1<u \le \infty $ and $ 1< \vec p < \infty $.   Let    $1 < n / ( \sum_{i=1}^n  1/p_i )   < t <r <\infty $ or $ 1< n / ( \sum_{i=1}^n  1/p_i )  \le t < r=\infty  $.  Then for each $v \in \mathbb Z $,
	\begin{align*}
		& \left(  \sum_{m \in \mathbb Z^n} \left(  |Q_{v,m}|^{1/t-\frac{\sum_{i=1}^n \frac{1}{ p_i} }{n} } \left\| \left( \sum_{\ell =1}^\infty ( \M f_\ell) ^u \right)^{1/u}  \chi_{ Q_{v,m} }\right\|_{L^{\vec p} }    \right) ^r \right) ^{1/r} \\
		& \lesssim  \left(  \sum_{m \in \mathbb Z^n} \left(  |Q_{v,m}|^{1/t-\frac{\sum_{i=1}^n \frac{1}{ p_i} }{n} } \left\| \left( \sum_{\ell =1}^\infty |f_\ell| ^u \right)^{1/u}  \chi_{ Q_{v,m} }\right\|_{L^{\vec p} }    \right) ^r \right) ^{1/r}.
	\end{align*}
\end{lemma}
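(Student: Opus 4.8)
The plan is to run, at the single dyadic scale $v$, the same near/far splitting used in the proofs of Theorems \ref{HL M} and \ref{HL seq mix BM}. First I would remove the Hardy--Littlewood maximal operator in favour of dyadic ones: by (\ref{M le Mdya}) it suffices to bound each $\M_{\mathcal D _{\vec a}}$ with $\vec a\in\{0,1,2\}^n$, and by Lemma \ref{D equivalence} the relevant weighted $\ell^r$-sums over $\D$ may be replaced by those over a grid $\mathcal D _{\vec a}$; so it is enough to prove the asserted inequality with $\M$, $\D$ and $Q_{v,m}$ replaced by $\M_{\mathcal D _{\vec a}}$, $\mathcal D _{\vec a}$ and the level-$v$ cubes of $\mathcal D _{\vec a}$. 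Write $G:=\big(\sum_{\ell\ge 1}|f_\ell|^u\big)^{1/u}$. For a fixed level-$v$ cube $Q$ of $\mathcal D _{\vec a}$, split $f_\ell=f_\ell\chi_Q+f_\ell\chi_{\rn\setminus Q}$, so that on $Q$ one has the pointwise bound $\big(\sum_\ell(\M_{\mathcal D _{\vec a}}f_\ell)^u\big)^{1/u}\le\big(\sum_\ell(\M_{\mathcal D _{\vec a}}(f_\ell\chi_Q))^u\big)^{1/u}+\big(\sum_\ell(\M_{\mathcal D _{\vec a}}(f_\ell\chi_{\rn\setminus Q}))^u\big)^{1/u}$.

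For the local term I would invoke the Fefferman--Stein vector-valued maximal inequality on $L^{\vec p}$ (\cite[Theorem 1.7]{N19}): since $\M_{\mathcal D _{\vec a}}f\lesssim\M f$, $\big\|\big(\sum_\ell(\M_{\mathcal D _{\vec a}}(f_\ell\chi_Q))^u\big)^{1/u}\chi_Q\big\|_{L^{\vec p}}\lesssim\big\|\big(\sum_\ell|f_\ell\chi_Q|^u\big)^{1/u}\big\|_{L^{\vec p}}=\|G\chi_Q\|_{L^{\vec p}}$. Multiplying by $|Q|^{1/t-\frac1n\sum_{i=1}^{n}1/p_i}$ and taking the $\ell^r$-norm over the level-$v$ cubes $Q$ reproduces exactly the right-hand side of the lemma, so the local part is harmless.

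The main work is the far term, and this is where I expect the real obstacle. A dyadic cube $R$ of $\mathcal D _{\vec a}$ meeting both $Q$ and $\rn\setminus Q$ must contain $Q$, hence is the $k$-th dyadic ancestor $Q^{(k)}$ of $Q$ for some $k\ge 1$; thus for $x\in Q$, $\M_{\mathcal D _{\vec a}}(f_\ell\chi_{\rn\setminus Q})(x)\le\sum_{k\ge 1}|Q^{(k)}|^{-1}\int_{Q^{(k)}}|f_\ell|$, and by Minkowski's integral inequality ($u\ge 1$) the $\ell^u$-combination is $\le\sum_{k\ge 1}|Q^{(k)}|^{-1}\int_{Q^{(k)}}G$, a constant on $Q$. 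Using H\"older's inequality (Lemma \ref{Holder mixed}), Lemma \ref{chi Q mixed Lp}, and $|Q^{(k)}|=2^{kn}|Q|$, this gives $\big\|\big(\sum_\ell(\M_{\mathcal D _{\vec a}}(f_\ell\chi_{\rn\setminus Q}))^u\big)^{1/u}\chi_Q\big\|_{L^{\vec p}}\lesssim\sum_{k\ge 1}2^{-k\sum_{i=1}^{n}1/p_i}\|G\chi_{Q^{(k)}}\|_{L^{\vec p}}$. Now I would multiply by $|Q|^{1/t-\frac1n\sum_{i=1}^{n}1/p_i}=2^{-vn(1/t-\frac1n\sum_{i=1}^{n}1/p_i)}$, take the $\ell^r$-norm in the level-$v$ cubes, pull the $k$-sum out by Minkowski in $\ell^r$, and use that each cube of $\mathcal D _{\vec a}$ at level $v-k$ is the $k$-th ancestor of exactly $2^{kn}$ cubes at level $v$; rewriting in terms of the natural weight $|R|^{1/t-\frac1n\sum_{i=1}^{n}1/p_i}$ of level $v-k$ produces, for each $k$, a factor $2^{-kn(1/t-1/r)}$ times a level-$(v-k)$ Bourgain--Morrey quantity of $G$. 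The delicate point, exactly as in the far-part analysis of Theorems \ref{HL M} and \ref{HL seq mix BM}, is this passage across scales: one has to reorganise the double sum (over $k$ and over the dyadic ancestors) so that it is dominated by the level-$v$ right-hand side, the geometric series in $k$ converging precisely because $t<r$ (and $n/(\sum_{i=1}^{n}1/p_i)<t$, which forces $\sum_{i=1}^{n}1/p_i>n/r$ when the local averages are re-summed).

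Finally, the endpoint $r=\infty$ is handled by the same scheme with $\sup$ replacing $\|\cdot\|_{\ell^r}$, the geometric series in $k$ now summable because $t<\infty$; and the case $u=\infty$ follows immediately from the scalar estimate of Theorem \ref{HL seq mix BM} (or Theorem \ref{HL M}) combined with $\sup_\ell\M f_\ell\le\M(\sup_\ell f_\ell)$.
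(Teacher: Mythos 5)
Your local term is fine, and your reduction to $\M_{\mathcal D_{\vec a}}$ via (\ref{M le Mdya}) and Lemma~\ref{D equivalence} is sound, but the far term has a genuine gap precisely at the place you flag as ``delicate.'' After Minkowski in $k$ and the $2^{kn}$-fold multiplicity of the ancestor map, you arrive at $\sum_{k\ge1}2^{-kn(1/t-1/r)}S_{v-k}$, where $S_\mu:=\big(\sum_{R\in\D_\mu}(|R|^{1/t-\frac1n\sum 1/p_i}\|G\chi_R\|_{L^{\vec p}})^r\big)^{1/r}$ is the \emph{level-$\mu$} slice quantity. You then assert that the geometric series closes because $t<r$. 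But $S_{v-k}$ is not controlled by $S_v$ with a favourable constant: in fact in general $S_{v-k}\approx 2^{kn(1/t-1/r)}S_v$, which cancels your decaying factor exactly. Concretely, take $G=\chi_{Q_{v-k_0,0}}$ for large $k_0$. Then for each $1\le k\le k_0$ one computes $2^{-kn(1/t-1/r)}S_{v-k}=2^{k_0 n/r}|Q_{v,0}|^{1/t}=S_v$, so your bound degenerates to $\sum_{k=1}^{k_0}S_v\approx k_0 S_v$, which blows up as $k_0\to\infty$, even though the actual left-hand side of the lemma stays comparable to $S_v$ in this example. The reason the identical sum-over-ancestors bookkeeping \emph{does} work in Theorem~\ref{HL seq mix BM} is that there the re-indexed inner norm is the full scale-invariant $M_{\vec p}^{t,r}$-norm, not a single-level slice, so the $2^{kn(1/t-1/r)}$ loss never appears.

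The missing idea is that, at a single level $v$, one must keep the dyadic maximal operator as a supremum rather than replacing it by the crude sum $\sum_{k\ge1}\frac1{|Q_{v,m}^{(k)}|}\int_{Q_{v,m}^{(k)}}G$. Observing that
\[
\M_{\mathcal D_{\vec a}}\big(G\chi_{Q_{v,m}^{c}}\big)(x)\le\sup_{k\ge1}\frac1{|Q_{v,m}^{(k)}|}\int_{Q_{v,m}^{(k)}}G
=\sup_{k\ge1}\frac1{2^{kn}}\sum_{m':\,Q_{v,m'}\subset Q_{v,m}^{(k)}}\frac{1}{|Q_{v,m'}|}\int_{Q_{v,m'}}G,
\qquad x\in Q_{v,m},
\]
and applying H\"older on each level-$v$ cube, one finds that $|Q_{v,m}|^{1/t-\frac1n\sum 1/p_i}\|\M_{\mathcal D_{\vec a}}(G\chi_{Q_{v,m}^{c}})\chi_{Q_{v,m}}\|_{L^{\vec p}}$ is dominated by the \emph{discrete dyadic maximal function} on $\mathbb Z^n$ of the sequence $b_{m'}:=|Q_{v,m'}|^{1/t-\frac1n\sum 1/p_i}\|G\chi_{Q_{v,m'}}\|_{L^{\vec p}}$, evaluated at $m$. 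Since $r>t>1$, the $\ell^r(\mathbb Z^n)$-boundedness of the discrete dyadic maximal operator closes the far estimate without any cross-scale loss. (The paper's ``repeat the proof of Theorem~\ref{HL seq mix BM}'' hides exactly this substitution of a supremum for the sum, and your proposal inherits the gap by following that hint too literally.) Your treatment of the local part, of the endpoints $r=\infty$ and $u=\infty$, and the overall architecture are otherwise correct.
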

\begin{proof}
	Repeating  the proof of  Theorem \ref{HL seq mix BM}, the result is obtained and we omit the detail here.
\end{proof}
\begin{remark}
	Let $1< u \le \infty $ and $ 1< \vec p < \infty $.  Let    $1 < n / ( \sum_{i=1}^n  1/p_i )   < t <r <\infty $ or $ 1< n / ( \sum_{i=1}^n  1/p_i )  \le t < r=\infty  $.
	By Lemmas \ref{norm equ} and \ref{HL each j}, we have that for each $j\in\mathbb Z$,
	\begin{align*}
		& 	 \int_\rn \left( |B(y,2^{-j})| ^{1/t -\frac{\sum_{i=1}^n \frac{1}{ p_i} }{n}  -1/r}
		\left\| \left(  \sum_{\ell=1}^\infty ( \M f_\ell ) ^u \right)^{1/u} \chi_{B(y, 2^{-j})  } \right\|_{L^{\vec p} }\right)^{r} \d y
		   \\
		& \lesssim 	 \int_\rn \left( |B(y,2^{-j})| ^{1/t -\frac{\sum_{i=1}^n \frac{1}{ p_i} }{n}  -1/r}
		\left\| \left(  \sum_{\ell=1}^\infty | f_\ell | ^u \right)^{1/u} \chi_{B(y, 2^{-j})  } \right\|_{L^{\vec p} }\right)^{r} \d y  .
	\end{align*}
\end{remark}

\begin{definition}\label{def vec skice}
		
	Let $1< u' <\infty$ and $1 <\vec p <\infty$.
	 Let    $1 < n / ( \sum_{i=1}^n  1/p_i )   < t <r <\infty $ and $j \in \mathbb Z$.	 
  The slice space $  (\mathcal E_{\vec p \, ^\prime}^{t',r'}   )_j  (\ell^{u'}) $ is defined to be the set of all  $ \{ f_w \}_{w\in \mathbb N} \in L_{\operatorname{loc} } ^{\vec p \, ^\prime} (\ell^{u'})$  such that 
	\begin{align*}
		& \|\{ f_w \}_{w\in \mathbb N} \|_{  (\mathcal E_{\vec p \, ^\prime}^{t',r'}   )_j  (\ell^{u'} ) } \\
		& := \left( \sum_{k\in \mathbb Z^n } \left( |Q_{j,k}|^{1/t' -\frac{\sum_{i=1}^n \frac{1}{ p_i '} }{n} }   \left\| \left(\sum_{w=1} |f_w|^{u'}  \right)^{1/u'}  \chi_{ Q_{j,k} } \right \|_{ L^{\vec p \, ^\prime}  }   \right) ^{r'}  \right) ^{1/r'} <\infty.
	\end{align*}
\end{definition}

From Lemma \ref{HL each j},  we know that $\M$ is bounded on   $  (\mathcal E_{\vec p \, ^\prime}^{t',r'}   )_j  (\ell^{u'}) $.

In the following lemma, we establish a characterization of  slice space  $  (\mathcal E_{\vec p \, ^\prime}^{t',r'}   )_j  (\ell^{u'}) $.

\begin{lemma} \label{char vec slice}
	Let $1< u' <\infty$ and $1 <\vec p <\infty$.
Let    $1 < n / ( \sum_{i=1}^n  1/p_i )   < t <r <\infty $ and $j \in \mathbb Z$.  Then $\vec f = \{ f_w\}_{w\in \mathbb N} \in (\mathcal E_{\vec p \, ^\prime}^{t',r'}   )_j  (\ell^{u'})  $ if and only if there exist a sequence $  \{ \lambda_{j,k}\}_{k\in \mathbb Z^n}  \subset \mathbb R$ satisfying
	\begin{equation*}
		\left(  \sum_{m\in \mathbb Z^n} |\lambda_{j,k}|^{r'} \right)^{1/r'} <\infty
	\end{equation*}
	and a sequence $\{\vec  b_{j,k} \}_{k\in \mathbb Z^n}$ of vector functions on $\rn$ satisfying, for each $k\in \mathbb Z^n$, supp $\vec b_{j,k}\subset Q_{j,k}$  and 
	\begin{equation} \label{b jk = |Q|}
		\left\|  \| \vec b_{j,k}\|_{\ell^{u'}}  \right\|_{ L^{ \vec p \, ^\prime} (Q_{j,k}) }  = |Q_{j,k}|^{1/t - \frac{\sum_{i=1}^n \frac{1}{ p_i} }{n} }
	\end{equation}
	such that $\vec f = \sum_{k \in \mathbb Z^n} \lambda_{j,k} \vec  b_{j,k} $ almost everywhere on $\rn$; moreover, for such $\vec f$,
	\begin{equation*}
		\| \vec  f\|_{ (\mathcal E_{p'}^{t',r'}   )_j (\ell^{q'}) }  = 	\left(  \sum_{m\in \mathbb Z^n} |\lambda_{j,k}|^{r'} \right)^{1/r'}  .
	\end{equation*}
\end{lemma}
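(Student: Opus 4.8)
The plan is to prove the two directions separately, exploiting that for a fixed scale $j$ the cubes $\{Q_{j,k}\}_{k\in\mathbb Z^n}$ are pairwise disjoint and tile $\rn$, so the slice norm is just a weighted $\ell^{r'}$-sum of local $L^{\vec p\,^\prime}(\ell^{u'})$-norms over a disjoint family.

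\emph{The ``if'' direction.} Suppose $\vec f=\sum_{k\in\mathbb Z^n}\lambda_{j,k}\vec b_{j,k}$ with each $\vec b_{j,k}$ supported in $Q_{j,k}$ and satisfying \eqref{b jk = |Q|}. Because the supports are disjoint and live at a single scale, on each $Q_{j,k_0}$ only the term $k=k_0$ contributes, so $\vec f\chi_{Q_{j,k_0}}=\lambda_{j,k_0}\vec b_{j,k_0}$ pointwise. Hence
\begin{align*}
	|Q_{j,k_0}|^{1/t'-\frac{1}{n}\sum_i 1/p_i'}\,\Big\|\,\|\vec f\|_{\ell^{u'}}\chi_{Q_{j,k_0}}\Big\|_{L^{\vec p\,^\prime}}
	&=|\lambda_{j,k_0}|\,|Q_{j,k_0}|^{1/t'-\frac{1}{n}\sum_i 1/p_i'}\,\Big\|\,\|\vec b_{j,k_0}\|_{\ell^{u'}}\Big\|_{L^{\vec p\,^\prime}}\\
	&=|\lambda_{j,k_0}|,
\end{align*}
using \eqref{b jk = |Q|} together with the duality identity $1/t'-\frac1n\sum_i 1/p_i'=-(1/t-\frac1n\sum_i 1/p_i)$. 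Taking the $\ell^{r'}$-norm over $k_0\in\mathbb Z^n$ gives $\|\vec f\|_{(\mathcal E_{\vec p\,^\prime}^{t',r'})_j(\ell^{u'})}=\big(\sum_k|\lambda_{j,k}|^{r'}\big)^{1/r'}<\infty$, so $\vec f$ lies in the slice space and the norm equality holds.

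\emph{The ``only if'' direction.} Given $\vec f\in(\mathcal E_{\vec p\,^\prime}^{t',r'})_j(\ell^{u'})$, define, for each $k$ with $\big\|\,\|\vec f\|_{\ell^{u'}}\chi_{Q_{j,k}}\big\|_{L^{\vec p\,^\prime}}>0$,
\begin{equation*}
	\lambda_{j,k}:=|Q_{j,k}|^{1/t'-\frac1n\sum_i 1/p_i'}\Big\|\,\|\vec f\|_{\ell^{u'}}\chi_{Q_{j,k}}\Big\|_{L^{\vec p\,^\prime}},\qquad \vec b_{j,k}:=\lambda_{j,k}^{-1}\,\vec f\,\chi_{Q_{j,k}},
\end{equation*}
and set $\lambda_{j,k}=0$, $\vec b_{j,k}=0$ otherwise. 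Then $\vec b_{j,k}$ is supported in $Q_{j,k}$, the normalization \eqref{b jk = |Q|} holds by construction, and $\sum_k\lambda_{j,k}\vec b_{j,k}=\sum_k \vec f\chi_{Q_{j,k}}=\vec f$ pointwise a.e.\ since $\{Q_{j,k}\}_k$ partitions $\rn$. Finally $\big(\sum_k|\lambda_{j,k}|^{r'}\big)^{1/r'}=\|\vec f\|_{(\mathcal E_{\vec p\,^\prime}^{t',r'})_j(\ell^{u'})}<\infty$ by the very definition of the slice norm, which gives both the admissible decomposition and the norm equality.

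\emph{Main obstacle.} There is no serious analytic difficulty here; the content is purely bookkeeping. The one point that needs care is the exponent arithmetic: one must check that the weight $|Q_{j,k}|^{1/t'-\frac1n\sum_i 1/p_i'}$ appearing in the slice norm is exactly the reciprocal of the block-normalization weight $|Q_{j,k}|^{1/t-\frac1n\sum_i 1/p_i}$, so that the product collapses to $|\lambda_{j,k}|$. This follows from $1/t+1/t'=1$ and $1/p_i+1/p_i'=1$ for each $i$, together with the standing assumption $1<n/(\sum_i 1/p_i)<t<r<\infty$ which guarantees all the primed exponents are finite and positive and that the single-scale sums are genuinely $\ell^{r'}$-sums over a disjoint family. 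A secondary bookkeeping point is the degenerate case $\big\|\,\|\vec f\|_{\ell^{u'}}\chi_{Q_{j,k}}\big\|_{L^{\vec p\,^\prime}}=0$, handled by simply omitting those indices from the decomposition.
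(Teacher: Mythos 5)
Your proof takes essentially the same route as the paper's: in the necessity direction you build $\lambda_{j,k}$ and $\vec b_{j,k}$ by normalizing $\vec f\chi_{Q_{j,k}}$, and in the sufficiency direction you use that the cubes $\{Q_{j,k}\}_k$ at a fixed scale $j$ tile $\rn$ disjointly so that the sum localizes cube by cube. Your exponent arithmetic is correct: indeed $1/t'-\tfrac1n\sum_i 1/p_i'=\tfrac1n\sum_i 1/p_i - 1/t$, so the slice-norm weight and the block normalization \eqref{b jk = |Q|} cancel.

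The one genuine gap is your handling of the degenerate case $\big\|\,\|\vec f\|_{\ell^{u'}}\chi_{Q_{j,k}}\big\|_{L^{\vec p\,^\prime}}=0$. You propose setting $\vec b_{j,k}=0$ or ``omitting those indices,'' but the lemma requires the \emph{equality} in \eqref{b jk = |Q|} to hold for every $k\in\mathbb Z^n$, and the zero function does not satisfy it; nor can you drop indices, since the statement asks for a sequence indexed by all of $\mathbb Z^n$. The paper repairs this by taking $\lambda_{j,k}:=0$ together with
\begin{equation*}
	\vec b_{j,k} := |Q_{j,k}|^{-1/t'}\,\chi_{Q_{j,k}}\,\vec a,\qquad \vec a=(1,0,0,\ldots)\in\ell^{u'},\ \|\vec a\|_{\ell^{u'}}=1,
\end{equation*}
which has $\left\|\,\|\vec b_{j,k}\|_{\ell^{u'}}\right\|_{L^{\vec p\,^\prime}(Q_{j,k})}=|Q_{j,k}|^{-1/t'}\,|Q_{j,k}|^{\frac1n\sum_i 1/p_i'}=|Q_{j,k}|^{1/t-\frac1n\sum_i 1/p_i}$ as required. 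Since $\lambda_{j,k}=0$, the added term does not change $\vec f$ or the $\ell^{r'}$-norm, so the rest of your argument goes through verbatim once you substitute this choice.
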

\begin{proof}
	We first show the necessity. Let $\vec f = \{ f_w\}_{w\in \mathbb N}  \in  (\mathcal E_{\vec p \, ^\prime}^{t',r'}   )_j  (\ell^{u'}) $. For each $k \in \mathbb Z^n$, when $  \left\|  \|\vec f\|_{\ell^{u'}} \chi_{Q_{j,k} } \right\|_{ L^{\vec p \, ^\prime}  } >0 $, let $ \lambda_{j,k} := |Q_{j,k}|^{ \frac{\sum_{i=1}^n \frac{1}{ p_i} }{n} -1/t }   \| \chi_{ Q_{j,k} } \vec f\|_{ L^{p'} (\ell^{u'}) }  $,   $\vec  b_{j,k} := \lambda_{j,k} ^{-1} \vec f \chi_{Q_{j,k} }$ and, when $  \left\|  \|\vec f\|_{\ell^{u'}} \chi_{Q_{j,k} } \right\|_{ L^{\vec p \, ^\prime}  } = 0 $, let $ \lambda_{j,k} := 0$ and $\vec  b_{j,k} : = |Q_{j,k}|^{ -1/t'} \chi_{ Q_{j,k} }  \vec a $ where $\vec a = (1, 0, 0, \ldots) \in \ell^{u'} $ with $\|\vec a\|_{ \ell^{u'}} =1$. Then by the Definition \ref{def vec skice}, it is easy to show that supp $\vec b_{j,k}\subset Q_{j,k}$, (\ref{b jk = |Q|}) and 
	\begin{equation*}
		\left(  \sum_{k\in \mathbb Z^n} |\lambda_{j,k}|^{r'} \right)^{1/r'}  = 	\|\vec f\|_{  (\mathcal E_{\vec p \, ^\prime}^{t',r'}   )_j  (\ell^{u'})  }  	  <\infty .
	\end{equation*} 	
	Next we show the sufficiency. Assume that there exist a sequence $  \{ \lambda_{j,k}\}_{k\in \mathbb Z^n}  \in \ell^{r'}$ and a sequence $\{\vec  b_{j,k} \}_{k\in \mathbb Z^n}$ satisfying supp $\vec b_{j,k} \subset Q_{j,k}$ and  (\ref{b jk = |Q|}) such that $\vec f =  \sum_{k \in \mathbb Z^n} \lambda_{j,k} \vec b_{j,k}$  almost everywhere on $\rn$. Note that $ \{ Q_{j,k}\}_{k\in \mathbb Z^n}$  are disjoint. Then 
	\begin{align*}
		\|\vec  f\|_{  (\mathcal E_{\vec p \, ^\prime}^{t',r'}   )_j  (\ell^{u'})  }   & = \left( \sum_{k\in \mathbb Z^n } \left( |Q_{j,k}|^{\frac{\sum_{i=1}^n \frac{1}{ p_i} }{n}  - 1/t }   \| \lambda_{j,k}\vec  b_{j,k}\|_{ L^{\vec p \, ^\prime} (\ell^{u'}) }   \right) ^{r'}  \right) ^{1/r'}  \\
		& = 	\left(  \sum_{k\in \mathbb Z^n} |\lambda_{j,k}|^{r'} \right)^{1/r'}  <\infty.
	\end{align*}
	Hence $\vec f \in (\mathcal E_{\vec p \, ^\prime}^{t',r'}   )_j  (\ell^{u'})$. Thus we finish the proof.
\end{proof}
Using the characterization of vector valued slice space $(\mathcal E_{\vec p \, ^\prime}^{t',r'}   )_j  (\ell^{u'})$, we obtain the following characterization of block spaces.
\begin{lemma}\label{char block}
	Let $1< u' <\infty$ and $1 <\vec p <\infty$.
Let    $1 < n / ( \sum_{i=1}^n  1/p_i )   < t <r <\infty $.  Then $\vec f \in \mathcal{H}_{\vec p \, ^\prime}^{t',r'} (\ell^{u'})$ if and only if 
	\begin{align*}
		\| \vec f\|^{\blacklozenge}_{\mathcal{H}_{\vec p \, ^\prime}^{t',r'} (\ell^{u'}) } & : = \inf\left\{ \left( \sum_{j\in \mathbb Z} \|\vec  f_j\|_{  (\mathcal E_{\vec p \, ^\prime}^{t',r'}   )_j  (\ell^{u'})  } ^{r'} \right) ^{1/r' }  :  \vec f = \sum_{j\in \mathbb Z} \vec f_j, \vec f_j \in  (\mathcal E_{\vec p \, ^\prime}^{t',r'}   )_j  (\ell^{u'})  \right\} \\
		& <\infty ;
	\end{align*}
	moreover, for such $\vec f$, $ \|\vec  f\|_{ \mathcal{H}_{\vec p \, ^\prime}^{t',r'} (\ell^{u'}) } = 	\| \vec f\|^{\blacklozenge}_{\mathcal{H}_{\vec p \, ^\prime}^{t',r'} (\ell^{u'}) }$.
\end{lemma}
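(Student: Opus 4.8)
The plan is to reduce everything to the vector slice space description in Lemma \ref{char vec slice} and to the (vector-valued analogue of the) absolute convergence estimate of Proposition \ref{mix block sum converge}, following the scale-by-scale decomposition strategy used for Besov--Bourgain--Morrey spaces in \cite[Section 6.2]{ZYY242}. Concretely, I would prove the two inequalities $\|\vec f\|_{\mathcal{H}_{\vec p \, ^\prime}^{t',r'}(\ell^{u'})}\le\|\vec f\|^{\blacklozenge}_{\mathcal{H}_{\vec p \, ^\prime}^{t',r'}(\ell^{u'})}$ and $\|\vec f\|^{\blacklozenge}_{\mathcal{H}_{\vec p \, ^\prime}^{t',r'}(\ell^{u'})}\le\|\vec f\|_{\mathcal{H}_{\vec p \, ^\prime}^{t',r'}(\ell^{u'})}$ separately, each of which simultaneously yields the corresponding membership; the only serious bookkeeping is the legitimacy of regrouping a sum over $\mathbb Z^{n+1}$ into an iterated sum over scales and then over cubes at a fixed scale.

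First, assume the right-hand side is finite, and pick a decomposition $\vec f=\sum_{j\in\mathbb Z}\vec f_j$ with $\vec f_j\in(\mathcal E_{\vec p \, ^\prime}^{t',r'})_j(\ell^{u'})$ and $\sum_{j}\|\vec f_j\|_{(\mathcal E_{\vec p \, ^\prime}^{t',r'})_j(\ell^{u'})}^{r'}$ as close as we wish to $(\|\vec f\|^{\blacklozenge})^{r'}$. By Lemma \ref{char vec slice}, each $\vec f_j$ can be written $\vec f_j=\sum_{k\in\mathbb Z^n}\lambda_{j,k}\vec b_{j,k}$ almost everywhere, where $\vec b_{j,k}$ is supported on $Q_{j,k}$ and satisfies the normalization (\ref{b jk = |Q|}), hence is an $\ell^{u'}$-valued $(\vec p\,',t')$-block on $Q_{j,k}$, and $\|\vec f_j\|_{(\mathcal E_{\vec p \, ^\prime}^{t',r'})_j(\ell^{u'})}=(\sum_{k}|\lambda_{j,k}|^{r'})^{1/r'}$. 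Concatenating over $j$ produces a candidate admissible expansion $\vec f=\sum_{(j,k)\in\mathbb Z^{n+1}}\lambda_{j,k}\vec b_{j,k}$; to justify it rigorously I would invoke the vector-valued version of Proposition \ref{mix block sum converge} (its proof is word-for-word the same, with $|b_{j,k}|$ replaced by $\|\vec b_{j,k}\|_{\ell^{u'}}$ and H\"older's inequality applied to $\|\cdot\|_{L^{\vec p\,'}}$, using $n/(\sum_{i=1}^n 1/p_i)<t<r$), which gives that $\sum_{(j,k)}|\lambda_{j,k}|\,\|\vec b_{j,k}(\cdot)\|_{\ell^{u'}}$ is locally integrable, so the double series converges absolutely a.e. and may be regrouped by scale to recover $\vec f$. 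Then $\|\{\lambda_{j,k}\}\|_{\ell^{r'}(\mathbb Z^{n+1})}^{r'}=\sum_{j}\sum_{k}|\lambda_{j,k}|^{r'}=\sum_{j}\|\vec f_j\|_{(\mathcal E_{\vec p \, ^\prime}^{t',r'})_j(\ell^{u'})}^{r'}$, whence $\vec f\in\mathcal{H}_{\vec p \, ^\prime}^{t',r'}(\ell^{u'})$ and, taking the infimum over decompositions, $\|\vec f\|_{\mathcal{H}_{\vec p \, ^\prime}^{t',r'}(\ell^{u'})}\le\|\vec f\|^{\blacklozenge}_{\mathcal{H}_{\vec p \, ^\prime}^{t',r'}(\ell^{u'})}$.

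For the converse, let $\vec f\in\mathcal{H}_{\vec p \, ^\prime}^{t',r'}(\ell^{u'})$ and fix $\epsilon>0$; choose a block expansion $\vec f=\sum_{(j,k)\in\mathbb Z^{n+1}}\lambda_{j,k}\vec b_{j,k}$ with $\|\{\lambda_{j,k}\}\|_{\ell^{r'}}\le\|\vec f\|_{\mathcal{H}_{\vec p \, ^\prime}^{t',r'}(\ell^{u'})}+\epsilon$. For each fixed $j$ set $\vec f_j:=\sum_{k\in\mathbb Z^n}\lambda_{j,k}\vec b_{j,k}$; since $\{Q_{j,k}\}_{k\in\mathbb Z^n}$ partitions $\rn$, this sum is pointwise locally finite and hence well defined, and the same local-integrability estimate as above lets us regroup $\sum_{(j,k)}=\sum_{j}\sum_{k}$, so $\vec f=\sum_{j}\vec f_j$. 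Each $\vec b_{j,k}$ only satisfies the block inequality; writing $\vec b_{j,k}=\mu_{j,k}\tilde{\vec b}_{j,k}$ with $\mu_{j,k}:=|Q_{j,k}|^{-1/t+\frac1n\sum_{i=1}^n 1/p_i}\,\big\|\,\|\vec b_{j,k}\|_{\ell^{u'}}\big\|_{L^{\vec p\,'}}\le1$ and $\tilde{\vec b}_{j,k}$ satisfying (\ref{b jk = |Q|}), and replacing $\lambda_{j,k}$ by $\mu_{j,k}\lambda_{j,k}$, Lemma \ref{char vec slice} applies and gives $\vec f_j\in(\mathcal E_{\vec p \, ^\prime}^{t',r'})_j(\ell^{u'})$ with $\|\vec f_j\|_{(\mathcal E_{\vec p \, ^\prime}^{t',r'})_j(\ell^{u'})}=(\sum_{k}|\mu_{j,k}\lambda_{j,k}|^{r'})^{1/r'}\le(\sum_{k}|\lambda_{j,k}|^{r'})^{1/r'}$. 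Hence $(\sum_{j}\|\vec f_j\|_{(\mathcal E_{\vec p \, ^\prime}^{t',r'})_j(\ell^{u'})}^{r'})^{1/r'}\le\|\{\lambda_{j,k}\}\|_{\ell^{r'}}\le\|\vec f\|_{\mathcal{H}_{\vec p \, ^\prime}^{t',r'}(\ell^{u'})}+\epsilon$, and letting $\epsilon\to0^+$ yields $\|\vec f\|^{\blacklozenge}_{\mathcal{H}_{\vec p \, ^\prime}^{t',r'}(\ell^{u'})}\le\|\vec f\|_{\mathcal{H}_{\vec p \, ^\prime}^{t',r'}(\ell^{u'})}$. Combining the two inequalities gives both the stated equivalence and the identity of the two norms. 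I expect the main obstacle to be exactly the convergence bookkeeping — showing the double series over $\mathbb Z^{n+1}$ may be reorganized as an iterated series over scales and then over cubes — which is precisely where the hypothesis $n/(\sum_{i=1}^n 1/p_i)<t<r<\infty$ is used via the vector form of Proposition \ref{mix block sum converge}; everything else is a matter of matching block normalizations and invoking Lemma \ref{char vec slice} scale by scale.
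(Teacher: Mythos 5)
Your proposal is correct and follows essentially the same route as the paper: split $\vec f$ scale by scale, use Lemma \ref{char vec slice} to pass between the $(\mathcal E_{\vec p \, ^\prime}^{t',r'})_j(\ell^{u'})$ decomposition and the block decomposition, and note that the $\ell^{r'}(\mathbb Z^{n+1})$ norm factors as an iterated sum over $j$ then $k$. The only cosmetic difference is in the direction $\|\vec f\|^{\blacklozenge}\le\|\vec f\|_{\mathcal H_{\vec p\,'}^{t',r'}(\ell^{u'})}$: you explicitly renormalize the blocks (introducing $\mu_{j,k}\le 1$) so as to invoke Lemma \ref{char vec slice}, whereas the paper just bounds $\|\vec f_j\|_{(\mathcal E_{\vec p\,'}^{t',r'})_j(\ell^{u'})}$ directly from the definition using disjointness of $\{Q_{j,k}\}_k$ and the block inequality — both are correct, the paper's is marginally shorter; and you are more explicit than the paper about the absolute-convergence/regrouping step (Proposition \ref{mix block sum converge} in vector form), which the paper leaves implicit.
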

\begin{proof}
	We first show the necessity. Let $\vec f \in \mathcal{H}_{\vec p \, ^\prime}^{t',r'} (\ell^{u'}) $. Then	there exist a  sequence $\lambda=\{\lambda_{j,k}\}_{(j,k)\in\mathbb{Z}^{n+1}}\in\ell^{r'}(\mathbb{Z}^{n+1})$ and a  $\ell^{u'} $-valued sequence $\{\vec  b_{j,k}  \}_{(j,k)\in\mathbb{Z}^{n+1} }  $  of  $(\vec p \, ^\prime,t' )$-block such that 
	\begin{equation*}
		\vec f=\sum_{(j,k)\in\mathbb{Z}^{n+1}}\lambda_{j,k} \vec b_{j,k}  
	\end{equation*}
	almost everywhere on $\rn$ and $ \left( \sum_{ (j,k)\in\mathbb{Z}^{n+1} } |\lambda_{j,k}|^{r'}\right)^{1/r'}  < (1+\epsilon) \|\vec  f\|_{ \mathcal{H}_{\vec p \, ^\prime}^{t',r'} (\ell^{u'})}$.
	For each $j \in \mathbb Z$, let $\vec f_j : = \sum_{k \in \mathbb Z^n}  \lambda_{j,k} \vec  b_{j,k}  $. Then we obtain
	\begin{align*}
		\left(  \sum_{j\in \mathbb Z}	\| \vec f_j \|_{ (\mathcal E_{\vec p \, ^\prime}^{t',r'}   )_j  (\ell^{u'}) }  ^{r'}  \right)^{1/r'} 
		& = \left(  \sum_{j\in \mathbb Z} \left(  \sum_{k\in \mathbb Z^n } \left( |Q_{j,k}|^{\frac{\sum_{i=1}^n \frac{1}{ p_i} }{n}  - 1/t}   \| \lambda_{j,k}\vec  b_{j,k}\|_{ L^{\vec p \, ^\prime} (\ell^{u'}) }   \right) ^{r'}  \right)^{r'/r'}   \right) ^{1/r'} \\
		& \le 	\left(  \sum_{j\in \mathbb Z} \sum_{m\in \mathbb Z^n} |\lambda_{j,k}|^{r'} \right)^{1/r'} < (1+\epsilon) \| \vec  f\|_{ \mathcal{H}_{\vec p \, ^\prime}^{t',r'} (\ell^{u'}) }.
	\end{align*}
	Letting $\epsilon \to 0^+$, we obtain $  	\| \vec f\|^{\blacklozenge}_{\mathcal{H}_{\vec p \, ^\prime}^{t',r'} (\ell^{u'})} \le  \|\vec  f\|_{ \mathcal{H}_{\vec p \, ^\prime}^{t',r'} (\ell^{u'})} $.
	
	Next we show the sufficiency. Let $\vec f $ be a measurable vector function with $ 	\| \vec f\|^{\blacklozenge}_{\mathcal{H}_{\vec p \, ^\prime}^{t',r'} (\ell^{u'})} <\infty$. Then there exists a sequence $\{\vec f_j \}_{j\in \mathbb Z}$ satisfying that $\vec f_j \in  (\mathcal E_{\vec p \, ^\prime}^{t',r'}   )_j (\ell^{u'})  $ for each $j$ such that $\vec f =\sum_{j\in \mathbb Z} \vec f_j $ almost everywhere on $\rn$ and 
	\begin{equation*}
		\left( \sum_{j\in \mathbb Z} \|\vec  f_j\|_{ (\mathcal E_{\vec p \, ^\prime}^{t',r'}   )_j  (\ell^{u'}) } ^{r'} \right) ^{1/r' } < (1+\epsilon) 	\| \vec f\|^{\blacklozenge}_{\mathcal{H}_{\vec p \, ^\prime}^{t',r'} (\ell^{u'})}.
	\end{equation*}
	By  Lemma \ref{char vec slice}, for each $j\in \mathbb Z$, there exist a sequence 
	$\{ \lambda_{j,k} \}_{ (j,k)\in\mathbb{Z}^{n+1} }  \in \ell^{r'}$  and 
	a vector valued sequence $ \{\vec  b_{j,k}  \}_{ (j,k) \in \mathbb{Z}^{n+1} }$   satisfying (\ref{b jk = |Q|}) such that $\vec f_j = \sum_{k \in \mathbb Z^n} \lambda_{j,k}\vec  b_{j,k} $ almost everywhere on $\rn$ and 
	\begin{equation*}
		\|\vec  f_j \|_{  (\mathcal E_{\vec p \, ^\prime}^{t',r'}   )_j  (\ell^{u'})  }  = 	\left(  \sum_{m\in \mathbb Z^n} |\lambda_{j,k}|^{r'} \right)^{1/r'} .
	\end{equation*}
	Hence, $\vec f = \sum_{j\in \mathbb Z}\vec f_j = \sum_{j\in \mathbb Z}\sum_{k \in \mathbb Z^n} \lambda_{j,k}\vec  b_{j,k}  $ almost everywhere on $\rn$ and 
	\begin{equation*}
		\|\vec  f\|_{ \mathcal{H}_{\vec p \, ^\prime}^{t',r'} (\ell^{u'})  }  \le 	\left(  \sum_{j\in \mathbb Z} \sum_{m\in \mathbb Z^n} |\lambda_{j,k}|^{r'} \right)^{1/r'} < (1+\epsilon) 	\| \vec f\|^{\blacklozenge}_{\mathcal{H}_{\vec p \, ^\prime}^{t',r'} (\ell^{u'}) }.
	\end{equation*}
	Letting $\epsilon \to 0^+$, we obtain $	\| \vec f\|_{ \mathcal{H}_{\vec p \, ^\prime}^{t',r'} (\ell^{u'}) } \le   	\| \vec f\|^{\blacklozenge}_{\mathcal{H}_{\vec p \, ^\prime}^{t',r'} (\ell^{u'}) } $. This finish the proof of sufficiency. Thus we complete the proof of Lemma \ref{char block}.
\end{proof}
Now we are ready to show  Theorem \ref{HL mix block r le infty}.
\begin{proof}[Proof of Theorem \ref{HL mix block r le infty}]
		Just as in Remark \ref{remark HL block}, we only need to show the case  $1<u' \le \infty $ and    $1 < n / ( \sum_{i=1}^n  1/p_i )   < t <r <\infty $.
	
Subcase $1<u' <\infty $ and $1 < n / ( \sum_{i=1}^n  1/p_i )   < t <r <\infty $.	

	Let $0<\epsilon <1$.
	Let $\vec f\in \mathcal{H}_{\vec p \, ^\prime}^{t',r'} (\ell^{u'}) $. From Lemma \ref{char block}, there exists a sequence $\{\vec f_j\}_{j\in \mathbb Z} \subset (\mathcal E_{\vec p \, ^\prime}^{t',r'}   )_j (\ell^{u'}) $ such that $\vec f = \sum_{j\in \mathbb Z} \vec f_j$ almost everywhere on $\rn$ and 
	\begin{equation*}
		\| \vec f\|_{\mathcal{H}_{\vec p \, ^\prime}^{t',r'} (\ell^{u'})  } = 	\| \vec f\|^{\blacklozenge}_{\mathcal{H}_{\vec p \, ^\prime}^{t',r'} (\ell^{u'})  } > (1 - \epsilon) \left( \sum_{j\in \mathbb Z} \| \vec f_j\|_{ (\mathcal E_{\vec p \, ^\prime}^{t',r'}   )_j (\ell^{u'})  } ^{r'} \right) ^{1/r' } .
	\end{equation*}
	By the sublinear of Hardy-Littlewood maximal function and Lemma \ref{HL each j}, we obtain
	\begin{align*}
		\| \M \vec f\|_{\mathcal{H}_{\vec p \, ^\prime}^{t',r'} (\ell^{u'}) }  & \le \left\| \sum_{j\in   \mathbb Z}  \M \vec f _j  \right\|_{ \mathcal{H}_{\vec p \, ^\prime}^{t',r'} (\ell^{u'}) }  = \left\| \sum_{j\in   \mathbb Z}  \M  \vec f _j \right\|^{\blacklozenge}_{ \mathcal{H}_{\vec p \, ^\prime}^{t',r'} (\ell^{u'})   }  \\
		& \le \left( \sum_{j\in   \mathbb Z} \| \M  \vec f _j  \|_{ (\mathcal E_{\vec p \, ^\prime}^{t',r'}   )_j (\ell^{u'}) } ^{r'} \right)^{1/r'}  \lesssim  \left( \sum_{j\in   \mathbb Z} \|  \vec f _j  \|_{ (\mathcal E_{\vec p \, ^\prime}^{t',r'}   )_j (\ell^{u'})} ^{r'} \right)^{1/r'}  \\
		& < \frac{1}{1-\epsilon} 	\| \vec f\|_{\mathcal{H}_{\vec p \, ^\prime}^{t',r'} (\ell^{u'}) }
	\end{align*}
	where the implicit positive constants are independent of both $\vec f$ and $\epsilon$. Letting $\epsilon \to 0^+$, we obtain $	\| \M \vec  f\|_{\mathcal{H}_{\vec p \, ^\prime}^{t',r'} (\ell^{u'})} \lesssim  	\| \vec f\|_{\mathcal{H}_{\vec p \, ^\prime}^{t',r'} (\ell^{u'})} $.
	
	Let $ \vec g =\{ g_j \}_{j =0}^\infty  $ be defined by $	g_0 = f, g_j =0$ for $j \in \mathbb N$. Then from the above, we obtain the scalar version:
	\begin{equation} \label{scalar HL mixed block}
	\| \M   f\|_{\mathcal{H}_{\vec p \, ^\prime}^{t',r'} }	=	\| \M \vec  g\|_{\mathcal{H}_{\vec p \, ^\prime}^{t',r'} (\ell^{u'})} \lesssim  	\| \vec g\|_{\mathcal{H}_{\vec p \, ^\prime}^{t',r'} (\ell^{u'})} = \|   f\|_{\mathcal{H}_{\vec p \, ^\prime}^{t',r'} } .
	\end{equation}

Subcase $u' = \infty $ and $1 < n / ( \sum_{i=1}^n  1/p_i )   < t <r <\infty $.	
By 	\begin{equation*}
	\sup_{k \in \mathbb N} \M f_k  \le  \M \sup_{k \in \mathbb N} f_k, 
\end{equation*} and (\ref{scalar HL mixed block}),
	we obtain 
	\begin{equation*}
			\| \M \vec  f\|_{\mathcal{H}_{\vec p \, ^\prime}^{t',r'} (\ell^{\infty})} \lesssim  	\| \vec f\|_{\mathcal{H}_{\vec p \, ^\prime}^{t',r'} (\ell^{\infty})}. 
	\end{equation*}
	Hence, we finish the proof.	
\end{proof}

\subsection{Fractional integral operator}
For $ 0<\alpha <n $, define the fractional integral operator $I_\alpha$ of order $\alpha$ by
\begin{equation*} 
	I_\alpha f (x) := \int_\rn \frac{f (y)}{|x-y|^{n-\alpha} }  \d y
\end{equation*}
for $f \in L^1_{ \operatorname{loc}} $  as long as the right-hand side makes sense.

From the standard argument for fractional integral operators on mixed Morrey spaces (see \cite[proof of Theorm 1.11]{N19}), we get that the pointwise estimate for non-negative functions $f$   is 
\begin{equation} \label{frac estimate}
	I_\alpha f (x)  \lesssim \| f\|_{ M_{\vec p}^{t,\infty} }^{ t\alpha /n  } \M f (x) ^{ 1 - t\alpha /  n }
\end{equation}
where $1<\vec p <\infty $ and  $ 1< n / ( \sum_{i=1}^n  1/p_{i})  \le t < \infty  $.

\begin{theorem}
	Let $ 1< \vec p_1 , \vec p_2< \infty $.  
	For $j=1,2$, let   $1 < n / ( \sum_{i=1}^n  1/p_{j,i})   < t_j <r_j <\infty $ or $ 1< n / ( \sum_{i=1}^n  1/p_{j,i})  \le t_j < r_j =\infty  $ where $p_{j,i}$  is $i$-th item of the vector $\vec p_j$.
		Let $ 0<\alpha <n $ and $1/ t_2 = 1 /t_1 - \alpha /n $. Assume that
	\begin{align*}
	 \frac{t_1}{t_2} \vec p_2 = \vec p_1, 
		&  \quad  \frac{t_1}{t_2} r_2 = r_1. 
	\end{align*}	
	Then \begin{equation*}
		\|  I_\alpha  f \|_{M_{\vec p_2 }^{t_2,r_2	}  } \lesssim   \| f\|_{ M_{\vec p_1}^{t_1,r_1} } .
	\end{equation*}
	
\end{theorem}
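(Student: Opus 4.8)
The plan is to run a Hedberg-type argument built on the pointwise bound (\ref{frac estimate}). First I would reduce to non-negative $f$: because $M_{\vec p_1}^{t_1,r_1}$ is closed under taking absolute values and $|I_\alpha f|\le I_\alpha|f|$, it suffices to prove the estimate for $0\le f\in M_{\vec p_1}^{t_1,r_1}$. Put $\theta:=t_1/t_2$; multiplying $1/t_2=1/t_1-\alpha/n$ by $t_1$ gives $\theta=1-t_1\alpha/n$, and since $\alpha>0$ forces $t_1<t_2$ we have $\theta\in(0,1)$. By Lemma \ref{embed ell r}, $f\in M_{\vec p_1}^{t_1,\infty}$ (the mixed Morrey space) with $\|f\|_{M_{\vec p_1}^{t_1,\infty}}\lesssim\|f\|_{M_{\vec p_1}^{t_1,r_1}}$, and by Theorem \ref{HL M} the function $\M f$ is finite almost everywhere; hence (\ref{frac estimate}) applies with $\vec p=\vec p_1$, $t=t_1$ and yields
\[
I_\alpha f(x)\lesssim\|f\|_{M_{\vec p_1}^{t_1,\infty}}^{\,1-\theta}\,(\M f(x))^{\theta}\qquad\text{for a.e. }x\in\rn.
\]

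Next I would take the $M_{\vec p_2}^{t_2,r_2}$-norm of both sides, pulling out the numerical factor, to get
\[
\|I_\alpha f\|_{M_{\vec p_2}^{t_2,r_2}}\lesssim\|f\|_{M_{\vec p_1}^{t_1,\infty}}^{\,1-\theta}\,\big\|(\M f)^{\theta}\big\|_{M_{\vec p_2}^{t_2,r_2}} .
\]
Then I invoke the homogeneity identity $\||g|^{u}\|_{M_{\vec p}^{t,r}}=\|g\|_{M_{u\vec p}^{ut,ur}}^{u}$ (valid for all $g\in L^0$ and $u\in(0,\infty)$, exactly as used in the proof of nontriviality) with $u=\theta$; the hypotheses $\theta\vec p_2=\vec p_1$, $\theta t_2=t_1$, $\theta r_2=r_1$ then give
\[
\big\|(\M f)^{\theta}\big\|_{M_{\vec p_2}^{t_2,r_2}}=\|\M f\|_{M_{\theta\vec p_2}^{\theta t_2,\theta r_2}}^{\theta}=\|\M f\|_{M_{\vec p_1}^{t_1,r_1}}^{\theta}.
\]
Applying the boundedness of $\M$ on $M_{\vec p_1}^{t_1,r_1}$ (Theorem \ref{HL M}, whose hypotheses are precisely those placed on $\vec p_1,t_1,r_1$, the case $r_1=\infty$ being \cite[Theorem 4.5]{N19}) together with the embedding $M_{\vec p_1}^{t_1,r_1}\hookrightarrow M_{\vec p_1}^{t_1,\infty}$ once more, I obtain $\|\M f\|_{M_{\vec p_1}^{t_1,r_1}}\lesssim\|f\|_{M_{\vec p_1}^{t_1,r_1}}$ and $\|f\|_{M_{\vec p_1}^{t_1,\infty}}\lesssim\|f\|_{M_{\vec p_1}^{t_1,r_1}}$. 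Since the two exponents of $\|f\|_{M_{\vec p_1}^{t_1,r_1}}$ add to $(1-\theta)+\theta=1$, these chain into $\|I_\alpha f\|_{M_{\vec p_2}^{t_2,r_2}}\lesssim\|f\|_{M_{\vec p_1}^{t_1,r_1}}$, which is the claim.

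Since every ingredient — the pointwise estimate, the maximal function bound, the $\ell^{r_1}\hookrightarrow\ell^{\infty}$ embedding, and the scaling identity — is already available, there will be no real analytic difficulty; the one point demanding attention is the index bookkeeping, namely verifying $1-t_1\alpha/n=t_1/t_2$ and that $\theta\vec p_2=\vec p_1$, $\theta t_2=t_1$, $\theta r_2=r_1$ are exactly what is needed for the homogeneity identity to land back in $M_{\vec p_1}^{t_1,r_1}$, together with checking that the endpoint $r_1=r_2=\infty$ (which reduces to the mixed Morrey case of \cite{N19}) is covered and that the hypotheses of all the cited results are simultaneously met.
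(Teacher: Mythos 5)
Your proposal is correct and follows essentially the same Hedberg-type argument as the paper: both apply the pointwise estimate (\ref{frac estimate}) after embedding $M_{\vec p_1}^{t_1,r_1}\hookrightarrow M_{\vec p_1}^{t_1,\infty}$, then use the homogeneity identity $\||g|^u\|_{M_{\vec p}^{t,r}}=\|g\|_{M_{u\vec p}^{ut,ur}}^u$ with $u=1-t_1\alpha/n=t_1/t_2$ to reduce to the boundedness of $\M$ on $M_{\vec p_1}^{t_1,r_1}$. The only cosmetic difference is that you make the reduction to non-negative $f$ and the scaling identity explicit, while the paper invokes them in passing.
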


\begin{proof}
	We have a pointwise estimate (\ref{frac estimate}) for  $|f|$. 	
	Using the embedding  $M_{\vec p}^{t,r}  \hookrightarrow  M_{\vec p}^{t,\infty} $
	 and the condition  $1/ t_2 = 1 /t_1 - \alpha /n  $, 
	 we obtain the  estimate
	 \begin{equation*}
	 	| I_\alpha  f(x) | \le  I_\alpha  [ | f| ](x) \lesssim \| f\|_{ M_{\vec p_1}^{t_1,r_1} }^{ t_1 \alpha /n  } \M f (x) ^{ 1 - t_1 \alpha /  n } .
	 \end{equation*} 
 Hence
 \begin{align*}
 	\|  I_\alpha  f \|_{M_{\vec p_2 }^{t_2,r_2	}  }  & \lesssim \| f\|_{ M_{\vec p_1}^{t_1,r_1} }^{ t_1 \alpha /n  }  	\|  \M f  ^{ 1 - t_1 \alpha /  n }  \|_{M_{\vec p_2 }^{t_2,r_2	}  }  \\
 	& 	= \| f\|_{ M_{\vec p_1}^{t_1,r_1} }^{ t_1 \alpha /n  }  	\|  \M f   \|_{M_{  (1 - t_1 \alpha /  n ) \vec p_2 }^{ (1 - t_1 \alpha /  n) t_2, (1 - t_1 \alpha /  n) r_2	}  } ^{ 1 - t_1 \alpha /  n }  \\
 	 	& 	\lesssim  \| f\|_{ M_{\vec p_1}^{t_1,r_1} }^{ t_1 \alpha /n  }  	\|   f   \|_{M_{  (1 - t_1 \alpha /  n ) \vec p_2 }^{ (1 - t_1 \alpha /  n) t_2, (1 - t_1 \alpha /  n) r_2	}  } ^{ 1 - t_1 \alpha /  n }  \\
 	 	& =  \| f\|_{ M_{\vec p_1}^{t_1,r_1} }^{ t_1 \alpha /n  }  	\|   f   \|_{M_{ \vec p_1  }^{ t_1 , r_1	}  } ^{ 1 - t_1 \alpha /  n }  =  \| f\|_{ M_{\vec p_1}^{t_1,r_1} } .
 \end{align*}
We finish the proof.
\end{proof}

\subsection{Singular integral operators}
Let us consider the boundedness of singular integral operators.
\begin{definition}
	A singular integral operator is an $L^2$-bounded linear operator $T$ with a kernel $K(x,y)$ which satisfies the following conditions:
	\begin{itemize}
		\item[(i)] There exist  $\epsilon, c>0$  such that $ |K(x,y) | \le c / |x-y|^n $ for $x \neq y$ and 
				\begin{equation*}
			| K (x,y) - K (z,y) | + | K (y,x) -K (y,z) |  \le c \frac{|x-z| ^\epsilon}{|x-y| ^{ \epsilon+n  } }
		\end{equation*}
		if $ |x-y| \ge 2 |x-z| >0$.
		\item [(ii)] If $f\in L_c^\infty  $, then
		\begin{equation*}
			T f(x) = \int_\rn K (x,y) f(y) \d y ,  \quad  x \notin  \operatorname{supp} (f).
		\end{equation*}
	\end{itemize}
\end{definition}

\begin{theorem}
	Let $ 1< \vec p <\infty$. Let   $1 < n / ( \sum_{i=1}^n  1/p_{i})   < t <r<\infty $ or $ 1< n / ( \sum_{i=1}^n  1/p_{i})  \le t< r =\infty  $. Let $T$ be a singular integral operator. Then for all $f \in M_{\vec p}^{t,r}$, we have
	\begin{equation*}
		\| Tf\|_{M_{\vec p}^{t,r} }  \lesssim 	\| f\|_{M_{\vec p}^{t,r} } .
	\end{equation*}
\end{theorem}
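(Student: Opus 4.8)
The case $r=\infty$ is that of mixed Morrey spaces, already covered in \cite{N19}, so I would assume throughout that $1<n/(\sum_{i=1}^{n}1/p_i)<t<r<\infty$. For brevity write $\sigma:=\frac1n\sum_{i=1}^n\frac1{p_i}$, so $0<\sigma<1$ and the hypothesis forces $1/t<\sigma$; in particular $1/t-\sigma<0$, and $\|\chi_Q\|_{L^{\vec p}}=|Q|^{\sigma}$ for every cube $Q$ by Lemma \ref{chi Q mixed Lp}. The plan is the standard Morrey-type localization, adapted to the $\ell^r$-summed norm: for a fixed dyadic cube $Q\in\D$, split $f=f\chi_{3Q}+f\chi_{(3Q)^c}$, bound $\|Tf\chi_Q\|_{L^{\vec p}}\le\|T(f\chi_{3Q})\chi_Q\|_{L^{\vec p}}+\|T(f\chi_{(3Q)^c})\chi_Q\|_{L^{\vec p}}$, then take the $|Q|^{1/t-\sigma}$-weighted $\ell^r(\D)$-norm and use Minkowski's inequality in $\ell^r$ to separate a ``near'' and a ``far'' contribution. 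Beforehand one records that $Tf$ is a well-defined measurable function: $T(f\chi_{3Q})$ makes sense because $f\chi_{3Q}\in L^{\vec p}$ is compactly supported and $T$ extends boundedly to $L^{q}(\rn)$ for every $q\in(1,\infty)$, while for $x$ in the interior of $3Q$ the integral $\int_{(3Q)^c}K(x,y)f(y)\,\d y$ converges absolutely, using $f\in M_{\vec p}^{t,r}\hookrightarrow M_{\vec p}^{t,\infty}$ (Lemma \ref{embed ell r}) together with the annular bound below; the two pieces patch consistently as $Q$ varies.

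For the near part I would invoke the $L^{\vec p}$-boundedness of singular integral operators (a consequence of $L^2$-boundedness and the standard kernel estimates via Calder\'on--Zygmund theory; cf.\ \cite{N19}) to get $\|T(f\chi_{3Q})\chi_Q\|_{L^{\vec p}}\lesssim\|f\chi_{3Q}\|_{L^{\vec p}}$. Since $3Q$ is the union of the $3^n$ dyadic cubes in $\D_{j(Q)}$ that meet $Q$, and each member of $\D_{j(Q)}$ meets at most $3^n$ others, a routine rearrangement (using $r\ge1$ and $|Q'|=|Q|$ at a common scale) gives
\begin{equation*}
\sum_{Q\in\D}|Q|^{\frac rt-r\sigma}\big\|T(f\chi_{3Q})\chi_Q\big\|_{L^{\vec p}}^{r}\lesssim\sum_{Q\in\D}|Q|^{\frac rt-r\sigma}\|f\chi_{3Q}\|_{L^{\vec p}}^{r}\lesssim\|f\|_{M_{\vec p}^{t,r}}^{r}.
\end{equation*}

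For the far part, after enlarging the dilation constant $3$ to a fixed power $2^{L_0}$ of $2$ with $L_0=L_0(n)$ (harmless, since $3Q\subset 2^{L_0}Q$), I would write $(2^{L_0}Q)^c=\bigcup_{k\ge1}\big(2^{k+L_0}Q\setminus 2^{k+L_0-1}Q\big)$; for $x\in Q$ and $y$ in the $k$-th annulus one has $|x-y|\gtrsim 2^{k}\ell(Q)$, so the size bound $|K(x,y)|\le c|x-y|^{-n}$, H\"older's inequality in $L^{\vec p}$--$L^{\vec{p}^{\,\prime}}$ and Lemma \ref{chi Q mixed Lp} yield, for all $x\in Q$,
\begin{equation*}
\big|T(f\chi_{(2^{L_0}Q)^c})(x)\big|\lesssim\sum_{k\ge1}2^{-kn\sigma}|Q|^{-\sigma}\big\|f\chi_{2^{k+L_0}Q}\big\|_{L^{\vec p}}.
\end{equation*}
As the right-hand side is constant on $Q$ and $\|\chi_Q\|_{L^{\vec p}}=|Q|^{\sigma}$, multiplying by $|Q|^{1/t-\sigma}$ and using $|Q|=2^{-(k+L_0)n}|2^{k+L_0}Q|$ with $1/t-\sigma\le0$ turns the factor $2^{-kn\sigma}$ into $2^{-kn/t}$:
\begin{equation*}
|Q|^{\frac1t-\sigma}\big\|T(f\chi_{(2^{L_0}Q)^c})\chi_Q\big\|_{L^{\vec p}}\lesssim\sum_{k\ge1}2^{-kn/t}\,\big|2^{k+L_0}Q\big|^{\frac1t-\sigma}\,\big\|f\chi_{2^{k+L_0}Q}\big\|_{L^{\vec p}}.
\end{equation*}
Covering each dilate $2^{k+L_0}Q$ by at most $2^{n}$ dyadic cubes of side $2^{k+L_0}\ell(Q)$, inserting $\|f\chi_{2^{k+L_0}Q}\|_{L^{\vec p}}\le\sum_i\|f\chi_{\widetilde Q_i}\|_{L^{\vec p}}$, taking $\ell^r(\D)$-norms, applying Minkowski's inequality in $\ell^r$, and noting that each dyadic cube of side $2^{k+L_0}\ell(Q)$ can occur in this covering for at most $C_n2^{kn}$ cubes $Q\in\D_{j(Q)}$, one arrives at
\begin{equation*}
\Big(\sum_{Q\in\D}\big(|Q|^{\frac1t-\sigma}\|T(f\chi_{(2^{L_0}Q)^c})\chi_Q\|_{L^{\vec p}}\big)^{r}\Big)^{1/r}\lesssim\|f\|_{M_{\vec p}^{t,r}}\sum_{k\ge1}2^{-kn/t}2^{kn/r}\lesssim\|f\|_{M_{\vec p}^{t,r}},
\end{equation*}
the last series converging exactly because $t<r$. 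Combining the near and far estimates finishes the proof.

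The step I expect to be the main obstacle is the far part: one must organize the annular bookkeeping so that all the stray powers of $2$ --- from $|K|\lesssim|x-y|^{-n}$, from the measures of the dilates, and from re-expanding the non-dyadic dilate $2^{k+L_0}Q$ as a bounded union of genuine dyadic cubes --- recombine into the geometric series $\sum_k 2^{kn(1/r-1/t)}$, which is precisely where the hypothesis $t<r$ is used (and which fails at the mixed Morrey endpoint $r=\infty$, hence the separate appeal to \cite{N19} there). The multiplicity count $\approx 2^{kn}$ for how often each large dyadic cube is hit is the other point requiring care. The near part and the consistency of the definition of $Tf$ are routine.
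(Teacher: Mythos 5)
Your proof is correct and follows essentially the same route as the paper: reduce to $r<\infty$ (citing \cite{N19} at the endpoint), split each dyadic cube's contribution into a near part handled by the $L^{\vec p}$-boundedness of $T$ and a far part handled by the kernel size estimate and H\"older's inequality, and observe that after the multiplicity count the geometric series $\sum_k 2^{-kn/t}2^{kn/r}$ converges precisely because $t<r$. The only structural difference is that the paper splits $f=f\chi_Q+f\chi_{Q^c}$ and sums over the annuli $Q_k\setminus Q_{k-1}$ between consecutive dyadic ancestors of $Q$, whereas you split at $3Q$ (or $2^{L_0}Q$) and use concentric dilates, afterward re-expanding the non-dyadic dilates into $\lesssim2^n$ dyadic cubes. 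Your variant is actually the more careful one: for $x\in Q$ and $y\in Q_k\setminus Q_{k-1}$, the lower bound $|x-y|\gtrsim\ell(Q_k)$ used in the paper can fail if $Q$ sits in a corner of its ancestor $Q_{k-1}$ (the paper's brief aside about ``one of $2^n$ directions'' seems to gesture at this without fully resolving it), while the $3Q$-split guarantees $|x-y|\gtrsim 2^k\ell(Q)$ for $y$ in the $k$-th annulus. The cost is the additional bookkeeping of covering the dilate $2^{k+L_0}Q$ by dyadic cubes and tracking the multiplicity $\approx 2^{kn}$, which you handle correctly. Both arguments arrive at the same estimate; yours trades the elegance of dyadic ancestors (where the multiplicity count $2^{kn}$ is exact) for a cleaner geometric separation.
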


\begin{proof}
	The case $r=\infty$ is included in \cite[Theorem 1.12]{N19}. We may assume that $r<\infty$. Since $M_{\vec p}^{t,r} \hookrightarrow M_{\vec p}^{t,\infty}  $,  the singular integral operator $T$, initially defined on $ M_{\vec p}^{t,\infty} $, can be defined on $M_{\vec p}^{t,r} $ by restriction. Thus, the matters are reduced to the norm estimate $ \| Tf\|_{M_{\vec p}^{t,r} }  \lesssim 	\| f\|_{M_{\vec p}^{t,r} } .$
	
	Fix $Q \in \D$. Denote by $Q_k$ the $k^{\operatorname{th}}$  dyadic parent of $Q$. 
	We may consider one  of $2^n$ directions about a fixed $Q \in \D$ by the definition of singular integral operators. 
	Then we decompose $f = f\chi_Q  + f \chi_{Q^c} =: f_1 +f_2$. 
	By the $L^{ \vec p}$-boundedness of the singular integral operator $T $ (see \cite[Theorem 7.1]{N19}), 
	\begin{equation}\label{T est 1}
		 |Q|^{ \frac{1}{t}  - \frac{1}{n} \sum_{i=1 } ^n \frac{1}{p_i}  }  \| (Tf_1) \chi_Q \|_{L^{\vec p}  }  \lesssim  |Q|^{ \frac{1}{t}  - \frac{1}{n} \sum_{i=1 } ^n \frac{1}{p_i}  }  \| f \chi_Q \|_{L^{\vec p}  } .
	\end{equation}
Meanwhile, by H\"older's inequality,  for all $x\in Q$, we have
\begin{align*}
	| T f_2 (x ) | & \lesssim \int_{Q^c} \frac{ | f(y) |}{|x-y|^n} \d y \\
	&\approx \sum_{k=1}^\infty  \int_{Q_k \backslash Q_{k-1}} \frac{ | f(y) |}{|x-y|^n} \d y \\
	& \lesssim  \sum_{k=1}^\infty  \frac{1}{|Q_k| } \int_{Q_k } | f(y) | \d y \\
	& \lesssim \sum_{k=1} |Q_k|^{ - \frac{1}{n}  \sum_{i=1}^n   \frac{1}{p_i} } \| f \|_{ L^{\vec p} (Q_k)}  .
\end{align*}
Then we conclude 
\begin{equation} \label{T est 2}
	 |Q|^{ \frac{1}{t}  - \frac{1}{n} \sum_{i=1 } ^n \frac{1}{p_i}  }  \| (Tf_2) \chi_Q \|_{L^{\vec p}  }  \lesssim \sum_{k=1}  2^{-kn/t}   |Q_k|^{1/t - \frac{1}{n}  \sum_{i=1}^n   \frac{1}{p_i} } \| f \|_{ L^{\vec p} (Q_k)} .
\end{equation}
Combining   estimates (\ref{T est 1}) and (\ref{T est 2})  and Minkowski's inequality, we obtain
\begin{align*}
	&  \left(\sum_{Q \in \D} |Q|^{ \frac{r}{t}  - \frac{r}{n} \sum_{i=1 } ^n \frac{1}{p_i}  }  \| (Tf)\chi_Q \|_{L^{\vec p}  } ^r  \right)^{1/r}  \\
	& \lesssim  \left(\sum_{Q \in \D} |Q|^{ \frac{r}{t}  - \frac{r}{n} \sum_{i=1 } ^n \frac{1}{p_i}  }  \| f \chi_Q \|_{L^{\vec p}  } ^r  \right)^{1/r} +  \sum_{k=1}  2^{-kn/t}  \left(  \sum_{Q\in \D}  |Q_k|^{r/t - \frac{r}{n}  \sum_{i=1}^n   \frac{1}{p_i} } \| f \|_{ L^{\vec p} (Q_k)} ^r  \right) ^{1/r} \\
	& \lesssim \| f\|_{M_{\vec p}^{t,r}   } + \sum_{k=1}  2^{-kn/t}  2^{kn/r } \| f\|_{M_{\vec p}^{t,r}   } \lesssim \| f\|_{M_{\vec p}^{t,r}   }.
\end{align*}
Thus the proof is complete.
\end{proof}

\begin{theorem}
	Let $1 < u <\infty $
	and $ 1< \vec p <\infty$. Let   $1 < n / ( \sum_{i=1}^n  1/p_{i})   < t <r<\infty $ or $ 1< n / ( \sum_{i=1}^n  1/p_{i})  \le t< r =\infty  $. Let $T$ be a singular integral operator. Then for all $\vec f \in M_{\vec p}^{t,r} (\ell^u)$, we have
	\begin{equation*}
		\| T \vec f\|_{M_{\vec p}^{t,r} (\ell^u) }  \lesssim 	\| \vec f\|_{M_{\vec p}^{t,r} (\ell^u) } .
	\end{equation*}
\end{theorem}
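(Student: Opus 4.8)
The plan is to transcribe the proof of the scalar case just given, replacing every $\|\cdot\|_{L^{\vec p}}$ by its $\ell^u$-valued analogue $\|(\sum_k|\cdot_k|^u)^{1/u}\|_{L^{\vec p}}$. Since $M_{\vec p}^{t,r}(\ell^u)\hookrightarrow M_{\vec p}^{t,\infty}(\ell^u)$ (a consequence of $\ell^r\hookrightarrow\ell^\infty$) and the endpoint case $r=\infty$ is already available (see \cite[Theorem 1.12]{N19} together with the vector-valued machinery there), the operator $T$ extends to $M_{\vec p}^{t,r}(\ell^u)$ and it suffices to prove the norm inequality for $t<r<\infty$.

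First I would invoke the $\ell^u$-valued $L^{\vec p}$-boundedness of $T$: for $1<\vec p<\infty$ and $1<u<\infty$ one has $\|T\vec f\|_{L^{\vec p}(\ell^u)}\lesssim\|\vec f\|_{L^{\vec p}(\ell^u)}$; this is the mixed-norm vector-valued Calder\'on--Zygmund theory, the $\ell^u$-valued form of the input \cite[Theorem 7.1]{N19} used in the scalar proof. Next, fix $Q\in\D$, let $Q_k$ be its $k$-th dyadic parent, and for each $j$ split $f_j=f_j\chi_Q+f_j\chi_{Q^c}=:f_{j,1}+f_{j,2}$. For the local parts the vector-valued $L^{\vec p}$-bound gives $|Q|^{1/t-\frac1n\sum_i 1/p_i}\|(\sum_j|Tf_{j,1}|^u)^{1/u}\chi_Q\|_{L^{\vec p}}\lesssim|Q|^{1/t-\frac1n\sum_i 1/p_i}\|(\sum_j|f_j|^u)^{1/u}\chi_Q\|_{L^{\vec p}}$. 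For the far parts, the kernel size bound yields, for $x\in Q$, $|Tf_{j,2}(x)|\lesssim\sum_{k\ge1}|Q_k|^{-1}\int_{Q_k}|f_j(y)|\,\d y$; taking the $\ell^u$-norm in $j$ and applying Minkowski's inequality twice (first to move $\|\cdot\|_{\ell^u}$ past $\sum_{k\ge1}$, then past $\int_{Q_k}$), followed by H\"older's inequality (Lemma \ref{Holder mixed}) and $|Q|=2^{-kn}|Q_k|$, I obtain $|Q|^{1/t-\frac1n\sum_i 1/p_i}\|(\sum_j|Tf_{j,2}|^u)^{1/u}\chi_Q\|_{L^{\vec p}}\lesssim\sum_{k\ge1}2^{-kn/t}|Q_k|^{1/t-\frac1n\sum_i 1/p_i}\|(\sum_j|f_j|^u)^{1/u}\chi_{Q_k}\|_{L^{\vec p}}$.

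Writing $Q_0=Q$ to absorb the local term into the same form, I then raise to the power $r$, sum over $Q\in\D$, and apply Minkowski's inequality in $\ell^r$ to pull the sum over $k$ outside; using the geometric fact that a fixed dyadic cube arises as $Q_k$ for exactly $2^{kn}$ cubes $Q\in\D$, each inner sum equals $2^{kn}\|\{f_j\}_j\|_{M_{\vec p}^{t,r}(\ell^u)}^r$, so the whole estimate is controlled by $\bigl(\sum_{k\ge0}2^{-kn/t}2^{kn/r}\bigr)\|\{f_j\}_j\|_{M_{\vec p}^{t,r}(\ell^u)}$, and the geometric series converges because $t<r$. This completes the proof. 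The main obstacle is the $\ell^u$-valued $L^{\vec p}$-boundedness of $T$ and the careful handling of the three nested norms in the far-part estimate — in particular, justifying the two applications of Minkowski's inequality needed to commute the $\ell^u$-norm with the average over $Q_k$; everything else is a routine copy of the scalar argument.
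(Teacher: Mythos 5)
Your argument is correct and matches the paper's proof step for step: reduce to $t<r<\infty$ via the embedding into the $r=\infty$ case, split each $f_j$ into $f_j\chi_Q+f_j\chi_{Q^c}$, use the $\ell^u$-valued $L^{\vec p}$-boundedness of $T$ (the paper also cites \cite[Theorem 7.1]{N19} here) for the local part, bound the far part by nested dyadic averages and commute the $\ell^u$-norm through via two applications of Minkowski, then finish by H\"older, the $2^{kn}$ parent-counting, and the convergent geometric series from $t<r$. The paper abbreviates the tail estimate as ``repeating the proof of Theorem~\ref{HL seq mix BM},'' and your written-out version is exactly what that instruction produces.
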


\begin{proof}
	The case $ 1< n / ( \sum_{i=1}^n  1/p_{i})  \le t< r =\infty  $ is proved in \cite[Theorem 2.17]{N24}. Thus we only need to show the case $1 < n / ( \sum_{i=1}^n  1/p_{i})   < t <r<\infty $. Since we have the embedding
$ M_{\vec p}^{t,r} (\ell^u)  \hookrightarrow M_{\vec p}^{t,\infty} (\ell^u)  $, we can consider the singular integral operators on mixed Bourgain-Morrey spaces by restriction.

 Let $Q \in \D$. We may consider one  of $2^n$ directions about a fixed $Q \in \D$ by the definition of singular integral operators.  Denote by $Q_m$ the $m^{\operatorname{th}}$ dyadic parent. Observe that for $x\in Q$,
\begin{align*}
	\left(  \sum_{k=1}^\infty (T f_k ) ^u  (x) \right)^{1/u} 
	& \lesssim \left(  \sum_{k=1}^\infty (T [ \chi_Q  f_k ] ) ^u (x) \right)^{1/u} +  \left(  \sum_{k=1}^\infty \left( \sum_{m=1}^\infty  \int_{Q_m}  \frac{|f_k (y) |}{ |x-y|^n}\d y   \right) ^u  \right)^{1/u} \\
	& \lesssim \left(  \sum_{k=1}^\infty (T [ \chi_Q  f_k ] ) ^u  \right)^{1/u} + \sum_{m=1}^\infty \left(  \sum_{k=1}^\infty \left( \frac{1}{|Q_m|} \int_{Q_m} |f_k (y) |\d y   \right) ^u  \right)^{1/u} .
\end{align*}
Consequently,
\begin{align*}
	& |Q|^{ \frac{1}{t}  - \frac{1}{n} \sum_{i=1 } ^n \frac{1}{p_i}  }  \left\| \left( \sum_{k=1}^\infty  |T f_k|^u \right)^{1/u}  \chi_Q \right\|_{L^{\vec p}  } \\
	& \lesssim |Q|^{ \frac{1}{t}  - \frac{1}{n} \sum_{i=1 } ^n \frac{1}{p_i}  }  \left\| \left(  \sum_{k=1}^\infty (T [ \chi_Q  f_k ] ) ^u  \right)^{1/u}  \chi_Q \right\|_{L^{\vec p}  } \\
	& \quad + |Q|^{1/t}  \sum_{m=1}^\infty \left(  \sum_{k=1}^\infty \left( \frac{1}{|Q_m|} \int_{Q_m} |f_k (y) |\d y   \right) ^u  \right)^{1/u} =: I+ II.
\end{align*}
For the first part, by the boundedness of singular integral operator for mixed spaces (for example, see \cite[Theorem 7.1]{N19}), we obtain
\begin{equation*}
	I \lesssim |Q|^{ \frac{1}{t}  - \frac{1}{n} \sum_{i=1 } ^n \frac{1}{p_i}  }  \left\| \left(  \sum_{k=1}^\infty |  f_k | ^u  \right)^{1/u}  \chi_Q \right\|_{L^{\vec p}  }.
\end{equation*}
Then repeating the proof of Theorem \ref{HL seq mix BM}, we obtain the result.
\end{proof}

\begin{theorem}\label{T H block}
		Let $ 1< \vec p <\infty$. Let   $1 < n / ( \sum_{i=1}^n  1/p_{i})   < t <r<\infty $ or $ 1< n / ( \sum_{i=1}^n  1/p_{i})  \le t< r =\infty  $. Let $T$ be a singular integral operator. Then for all $g \in \mathcal{H}_{\vec p \, ^\prime}^{t',r'}$, we have
	\begin{equation*}
		\| Tg\|_{\mathcal{H}_{\vec p \, ^\prime}^{t',r'} }  \lesssim 	\| g\|_{\mathcal{H}_{\vec p \, ^\prime}^{t',r'} } .
	\end{equation*}
\end{theorem}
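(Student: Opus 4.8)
The plan is to exploit the duality between $M_{\vec p}^{t,r}$ and $\mathcal{H}_{\vec p \, ^\prime}^{t',r'}$ established earlier, reducing the claim to the boundedness of the transpose of $T$ on $M_{\vec p}^{t,r}$. First I would check that the transpose $T^{t}$ of $T$, whose kernel is $K^{t}(x,y):=K(y,x)$, is itself a singular integral operator: it is $L^2$-bounded since $T$ is, the size and regularity conditions for $K^{t}$ follow from those for $K$ by the symmetry of the hypotheses (the bounds on $K$ are stated symmetrically in the two variables), and the off-support representation $T^{t}f(x)=\int_{\rn}K(y,x)f(y)\,\d y$ for $f\in L_c^\infty$, $x\notin\operatorname{supp}(f)$, is the standard adjoint identity. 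Hence the boundedness of singular integral operators on $M_{\vec p}^{t,r}$ proved above applies to $T^{t}$, giving $\|T^{t}f\|_{M_{\vec p}^{t,r}}\lesssim\|f\|_{M_{\vec p}^{t,r}}$.

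Next I would split according to $r$. For $r=\infty$ the statement can be quoted from \cite{N24}; alternatively, since then $r'=1$, one decomposes $Tb=(Tb)\chi_{2Q}+\sum_{j\ge 1}(Tb)\chi_{2^{j+1}Q\setminus 2^{j}Q}$ for a block $b$ supported on a dyadic cube $Q$, controls the local term by the $L^{\vec p \, ^\prime}$-boundedness of $T$ and the tails by the size estimate of $K$, covers each $2^{j+1}Q$ by a bounded number of dyadic cubes, and uses Lemma \ref{generate mix block} to obtain $\|Tb\|_{\mathcal{H}_{\vec p \, ^\prime}^{t',\infty}}\lesssim 1$; combined with the finite decomposition of Theorem \ref{finite decom} (for which $\|g\|_{\mathcal{H}_{\vec p \, ^\prime}^{t',\infty}}\approx\inf\sum_{v}\lambda_{v}$) this already yields the bound. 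For $r<\infty$ I would argue by duality: by Theorem \ref{dense block} it suffices to prove $\|Tg\|_{\mathcal{H}_{\vec p \, ^\prime}^{t',r'}}\lesssim\|g\|_{\mathcal{H}_{\vec p \, ^\prime}^{t',r'}}$ for $g\in L_c^{\vec p \, ^\prime}$ and then extend by density and the completeness of $\mathcal{H}_{\vec p \, ^\prime}^{t',r'}$ (Theorem \ref{Banach}). Such a $g$ lies in $L_c^{p_0}$ with $p_0:=\min_i p_i'\in(1,\infty)$ (because $1<\vec p<\infty$), so $Tg$ is well defined by Calder\'on--Zygmund theory, and for every simple function $f$ — dense in $M_{\vec p}^{t,r}$ by Lemma \ref{dense simple} — the identity $\int_\rn Tg\cdot f=\int_\rn g\cdot T^{t}f$ holds, so that, by the pairing estimate in Theorem \ref{predual mix BM}(i) and the boundedness of $T^{t}$,
\[
\left|\int_\rn Tg(x)f(x)\,\d x\right|=\left|\int_\rn g(x)T^{t}f(x)\,\d x\right|\le\|g\|_{\mathcal{H}_{\vec p \, ^\prime}^{t',r'}}\,\|T^{t}f\|_{M_{\vec p}^{t,r}}\lesssim\|g\|_{\mathcal{H}_{\vec p \, ^\prime}^{t',r'}}\,\|f\|_{M_{\vec p}^{t,r}}.
\]
Thus $f\mapsto\int_\rn Tg\cdot f$ is a bounded functional on $M_{\vec p}^{t,r}$ of norm $\lesssim\|g\|_{\mathcal{H}_{\vec p \, ^\prime}^{t',r'}}$; by Theorem \ref{dual mixed BM} it is represented by a unique $h\in\mathcal{H}_{\vec p \, ^\prime}^{t',r'}$ with $\|h\|_{\mathcal{H}_{\vec p \, ^\prime}^{t',r'}}\lesssim\|g\|_{\mathcal{H}_{\vec p \, ^\prime}^{t',r'}}$, and testing against $\chi_E$ with $|E|<\infty$ forces $h=Tg$ almost everywhere, which finishes the $r<\infty$ case.

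The main difficulty here is not an estimate but the bookkeeping of definitions: one must justify the adjoint identity $\int Tg\cdot f=\int g\cdot T^{t}f$ for the (not necessarily $L^2$) functions involved — routine once $g$ is placed in $L^{p_0}$, $f$ in $L^{p_0'}$, both of finite support, via density of test functions — and verify that the element furnished by Theorem \ref{dual mixed BM} is genuinely $Tg$, so that the continuous extension of $T$ from $L_c^{\vec p \, ^\prime}$ to all of $\mathcal{H}_{\vec p \, ^\prime}^{t',r'}$ is consistent with $T$. I would also stress why a purely primal route is awkward when $r<\infty$: decomposing $Tb$ block by block only gives $\|Tb\|_{\mathcal{H}_{\vec p \, ^\prime}^{t',r'}}\lesssim 1$, hence control by $\|\lambda\|_{\ell^1}$ rather than the sharp $\|\lambda\|_{\ell^{r'}}$; recovering the $\ell^{r'}$ dependence directly would require reorganizing the argument scale by scale through the slice spaces $(\mathcal E_{\vec p \, ^\prime}^{t',r'})_j(\ell^{u'})$ exactly as in the proof of Theorem \ref{HL mix block r le infty}, which is why the duality argument is the cleaner option.
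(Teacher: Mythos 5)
Your proposal follows essentially the same duality route as the paper: both prove the result by transferring the claim to the boundedness of the transposed operator on $M_{\vec p}^{t,r}$. The paper compresses this into a four-line computation using the norm formula (\ref{H = max M le 1}) from Theorem \ref{predual mix BM}, while you reconstruct the same argument via the dual of $M_{\vec p}^{t,r}$ (Theorem \ref{dual mixed BM}) and density, filling in the adjoint-identity bookkeeping the paper leaves implicit; this is a harmless and arguably more careful variant, not a different proof.
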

\begin{proof}
	By Theorem \ref{predual mix BM},
	we obtain 
	\begin{align*}
		\| Tg\|_{\mathcal{H}_{\vec p \, ^\prime}^{t',r'} }  & = \max_{\|f\|_{   M_{\vec p}^{t,r} } \le 1  } \left|\int_\rn T g (x) f (x)  \d x    \right| \\
		& = \max_{\|f\|_{   M_{\vec p}^{t,r} } \le 1  } \left|\int_\rn  g (x) T^* f (x)  \d x    \right| \\
		& \le \max_{\|f\|_{   M_{\vec p}^{t,r} } \le 1  } \| g\|_{\mathcal{H}_{\vec p \, ^\prime}^{t',r'} } \|T^* f \|_{  M_{\vec p}^{t,r} } \\
		& \lesssim \| g\|_{\mathcal{H}_{\vec p \, ^\prime}^{t',r'} }.
	\end{align*}
Thus the proof is complete.
\end{proof}
\begin{theorem}\label{T H vector block}
	Let $1<u ' <\infty$ and $ 1< \vec p <\infty$. Let   $1 < n / ( \sum_{i=1}^n  1/p_{i})   < t <r<\infty $ or $ 1< n / ( \sum_{i=1}^n  1/p_{i})  \le t< r =\infty  $. Let $T$ be a singular integral operator. Then for all $ \vec g=\{ g_j\}_{j =0 }^\infty  \in \mathcal{H}_{\vec p \, ^\prime}^{t',r'} (\ell^ {u'} )$, we have
	\begin{equation*}
		\| T \vec g \|_{\mathcal{H}_{\vec p \, ^\prime}^{t',r'} (\ell^ {u'} ) }  \lesssim 	\|\vec  g\|_{\mathcal{H}_{\vec p \, ^\prime}^{t',r'} (\ell^ {u'} ) } .
	\end{equation*}
\end{theorem}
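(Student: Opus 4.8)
The plan is to mirror the scalar case (Theorem~\ref{T H block}) and obtain the assertion by duality from the boundedness of the adjoint $T^{*}$ on the sequence-valued space $M_{\vec p}^{t,r}(\ell^{u})$, which is precisely the preceding theorem, since $T^{*}$ (whose kernel is $\overline{K(y,x)}$) is again a singular integral operator. The one ingredient not yet in the excerpt that I would first establish, as a separate lemma, is the vector-valued refinement of Theorem~\ref{predual mix BM}: for every $\vec g=\{g_{m}\}_{m\in\mathbb N_{0}}\in\mathcal{H}_{\vec p \, ^\prime}^{t',r'}(\ell^{u'})$,
\[
\|\vec g\|_{\mathcal{H}_{\vec p \, ^\prime}^{t',r'}(\ell^{u'})}=\sup\Big\{\Big|\sum_{m\in\mathbb N_{0}}\int_{\rn}g_{m}f_{m}\,\d x\Big|:\ \|\vec f\|_{M_{\vec p}^{t,r}(\ell^{u})}\le1\Big\},
\]
and, conversely, any $\vec h\in L_{\operatorname{loc}}^{\vec p \, ^\prime}(\ell^{u'})$ for which the right-hand side (with $\vec h$ in place of $\vec g$) is finite already belongs to $\mathcal{H}_{\vec p \, ^\prime}^{t',r'}(\ell^{u'})$, with norm equal to that supremum. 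This is proved by transcribing the proof of Theorem~\ref{predual mix BM}: realize each element of $L^{\vec p \, ^\prime}(Q_{j,k};\ell^{u'})$ as an $\ell^{u'}$-valued $(\vec p \, ^\prime,t')$-block, use the $L^{\vec p}(\ell^{u})$–$L^{\vec p \, ^\prime}(\ell^{u'})$ duality in place of Lemma~\ref{dual mixed Lp}, and invoke the vector-valued counterparts of Theorems~\ref{Fatou general} and~\ref{associate space} for the converse inclusion.

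Granting this lemma, the proof is short. First, $T\vec g$ is well defined componentwise: each $g_{m}$ has finite scalar block norm (since $\|\vec g\|_{\ell^{u'}}\in\mathcal{H}_{\vec p \, ^\prime}^{t',r'}$ by the lattice property, Lemma~\ref{lem lattice block}), so $Tg_{m}$ makes sense via the usual local-plus-tail decomposition, exactly as for the scalar $T$ on $\mathcal{H}_{\vec p \, ^\prime}^{t',r'}$. Next, fix $\vec f=\{f_{m}\}$ with only finitely many nonzero components, each in $L_{c}^{\infty}$, and $\|\vec f\|_{M_{\vec p}^{t,r}(\ell^{u})}\le1$; such $\vec f$ form a norming family for the supremum in the lemma. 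For these, the adjoint identity $\int_{\rn}(Tg_{m})f_{m}\,\d x=\int_{\rn}g_{m}(T^{*}f_{m})\,\d x$ holds for each $m$, so summing over the finitely many $m$ and using linearity,
\[
\Big|\sum_{m}\int_{\rn}(Tg_{m})f_{m}\,\d x\Big|=\Big|\sum_{m}\int_{\rn}g_{m}(T^{*}f_{m})\,\d x\Big|\le\|\vec g\|_{\mathcal{H}_{\vec p \, ^\prime}^{t',r'}(\ell^{u'})}\,\|T^{*}\vec f\|_{M_{\vec p}^{t,r}(\ell^{u})}\lesssim\|\vec g\|_{\mathcal{H}_{\vec p \, ^\prime}^{t',r'}(\ell^{u'})},
\]
where the middle step is the direct half of the vector-valued duality lemma and the last inequality is the preceding theorem applied to the singular integral operator $T^{*}$ (covering $r<\infty$ directly and $r=\infty$ via the $r=\infty$ case quoted there from~\cite{N24}). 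Taking the supremum over such $\vec f$ and applying the converse half of the lemma gives $T\vec g\in\mathcal{H}_{\vec p \, ^\prime}^{t',r'}(\ell^{u'})$ with $\|T\vec g\|_{\mathcal{H}_{\vec p \, ^\prime}^{t',r'}(\ell^{u'})}\lesssim\|\vec g\|_{\mathcal{H}_{\vec p \, ^\prime}^{t',r'}(\ell^{u'})}$, for both parameter ranges in the statement.

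The main obstacle is the vector-valued duality lemma; once it is in place everything else is formal. Its proof is a routine adaptation of Theorem~\ref{predual mix BM}, but care is needed that the block decomposition, the norm-attainer construction, and the Fatou/associate-space arguments all survive the passage from scalar to $\ell^{u'}$-valued blocks — this is exactly where $1<u'<\infty$ enters, since that is what makes $L^{\vec p \, ^\prime}(\ell^{u'})$ the norming dual of $L^{\vec p}(\ell^{u})$. If one prefers to remain strictly within the machinery already developed for Theorem~\ref{HL mix block r le infty}, an alternative is to prove instead a per-scale estimate — that $T$ is bounded, uniformly in $j$, on the slice space $(\mathcal{E}_{\vec p \, ^\prime}^{t',r'})_{j}(\ell^{u'})$, by splitting $\vec f=\vec f\chi_{Q}+\vec f\chi_{Q^{c}}$ for $Q\in\D_{j}$ and estimating the local part by the $L^{\vec p \, ^\prime}(\ell^{u'})$-boundedness of $T$ and the tail by $|K(x,y)|\lesssim|x-y|^{-n}$, exactly as in the proof of the preceding theorem — and then to decompose $\vec g=\sum_{j}\vec g_{j}$ via Lemma~\ref{char block}, apply the per-scale estimate termwise, and resum, the case $r=\infty$ being again quoted from~\cite{N24}.
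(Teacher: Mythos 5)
Your proposal is correct, and you in fact offer two routes; your \emph{alternative} (last paragraph) is precisely what the paper does. The paper's entire proof of this theorem reads ``The proof is similar to that of Theorem~\ref{HL mix block r le infty}, and we omit it here,'' meaning: bound $T$ uniformly in $j$ on the slice spaces $(\mathcal{E}_{\vec p \, ^\prime}^{t',r'})_{j}(\ell^{u'})$ via the local/tail split with the size bound $|K(x,y)|\lesssim|x-y|^{-n}$ (analogue of Lemma~\ref{HL each j}), then decompose $\vec g=\sum_j\vec g_j$ via Lemma~\ref{char block}, apply the per-scale estimate termwise and resum, quoting \cite{N24} for $r=\infty$. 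Your \emph{primary} route, duality against $M_{\vec p}^{t,r}(\ell^u)$ using the adjoint $T^*$ and the preceding sequence-valued theorem, is a genuinely different argument extending the scalar Theorem~\ref{T H block}. Its advantage is that the final estimate is a one-liner once the vector-valued analogue of Theorem~\ref{predual mix BM} is available; its cost is that this predual/associate-space identification of $\mathcal{H}_{\vec p \, ^\prime}^{t',r'}(\ell^{u'})$ with $M_{\vec p}^{t,r}(\ell^u)$ is nowhere established in the paper, and proving it is a substantial if routine transcription (blocks become $\ell^{u'}$-valued, Lemma~\ref{dual mixed Lp} must be replaced by the $L^{\vec p}(\ell^u)$--$L^{\vec p\,^\prime}(\ell^{u'})$ duality, which is where $1<u'<\infty$ enters, and Theorems~\ref{Fatou general} and~\ref{associate space} need vector counterparts). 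Two small points on that route: pairing against arbitrary $\vec f\in M_{\vec p}^{t,r}(\ell^u)$ with norm at most one, as the scalar Theorem~\ref{T H block} does, is cleaner than restricting to finitely supported $L_c^\infty$ components and then having to justify that such $\vec f$ are norming; and the conclusion that $T\vec g$ lies in the block space does require the local $L^{\vec p\,^\prime}(\ell^{u'})$-membership, which you correctly supply from the lattice property (Lemma~\ref{lem lattice block}) and the scalar result. Neither is a gap, but they explain why the paper opts for the slice-space machinery it has already built rather than constructing the vector-valued duality theory.
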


\begin{proof}
	The proof is similar to  that of Theorem \ref{HL mix block r le infty}, and we omit it here.	
\end{proof}

\section{Littlewood-Paley characterization} \label{sec LP}

In this section, we study the 
Littlewood-Paley characterization for mixed Bourgain-Morrey spaces and their preduals. We first recall the partition of unity on $\rn$.
\begin{definition} \label{homo unity}
	Let $ \psi \in \mathscr S $ satisfy $ \chi_{B(0,2)  } \le \psi \le \chi_{B(0,4)}$. Define $\varphi := \psi -\psi (2\cdot)$. For $j \in \mathbb Z$, let $\varphi_j = \varphi (2^{-j} \cdot)$.
	Then supp $\varphi_j \subset\{ \xi: 2^{j-1} \le |\xi| \le 2^{j+1} \}$ for all $j \in \mathbb Z$ and 
	\begin{equation*}
		\sum_{j \in \mathbb Z} \varphi_j (\xi )  = 1 \quad \operatorname{for} \; \xi \neq 0.
	\end{equation*}
\end{definition}

We have proved  the dual of $ \mathcal{H}_{\vec p \, ^\prime}^{t',r'} $ is $M_{\vec p}^{t,r}$ in Theorem \ref{predual mix BM}. By the functional analysis, a sequence $\{g_j\}_{j\in \mathbb N} \subset \mathcal{H}_{\vec p \, ^\prime}^{t',r'}$ weak converges to $g \in \mathcal{H}_{\vec p \, ^\prime}^{t',r'}$ if for all $f \in M_{\vec p}^{t,r}$, 
\begin{equation*}
	\lim_{j \to \infty} \int_\rn f (x) (g_j (x) - g(x))  \d x = 0 .
\end{equation*}
A sequence $ \{ f_k \}_{k\in \mathbb N} \subset M_{\vec p}^{t,r}$ weak-$*$ converges to $f \in M_{\vec p}^{t,r}$ if for all $g \in \mathcal{H}_{\vec p \, ^\prime}^{t',r'}$, we have
\begin{equation*}
	\lim_{k \to \infty} \int_\rn (f_k (x) - f(x)) h (x) \d x = 0 .
\end{equation*}

The main result of this section is the following.
\begin{theorem} \label{LP char mixed BM}
		Let $ 1 < \vec p <\infty$. Let   $1 < n / ( \sum_{i=1}^n  1/p_{i})   < t <r<\infty $ or $ 1< n / ( \sum_{i=1}^n  1/p_{i})  \le t< r =\infty  $. Let $ \varphi $ be the same as in Definition \ref{homo unity}.
		
		{\rm (i)} For all $f \in M_{\vec p}^{t,r}$, we have $f = \sum_{j =-\infty}^\infty  \F^{-1} (\varphi_j \F f)$ in the weak-$*$ topology of $M_{\vec p}^{t,r} $ and 
		\begin{equation*}
			\left\| \left(  \sum_{j\in \mathbb Z} | \F^{-1} (\varphi_j \F f) |^2  \right) ^{1/2}   \right\|_{M_{\vec p}^{t,r}} \approx	\| f  \|_{M_{\vec p}^{t,r}} .
		\end{equation*}
	
	{\rm (ii)} For all $g \in \mathcal{H}_{\vec p \, ^\prime}^{t',r'}$, we have $g = \sum_{j =-\infty}^\infty  \F^{-1} (\varphi_j \F g)$ in the weak topology of $\mathcal{H}_{\vec p \, ^\prime}^{t',r'}$ and 
		\begin{equation*}
		\left\| \left(  \sum_{j\in \mathbb Z} | \F^{-1} (\varphi_j \F g) |^2  \right) ^{1/2}   \right\|_{\mathcal{H}_{\vec p \, ^\prime}^{t',r'}} \approx	\| g  \|_{\mathcal{H}_{\vec p \, ^\prime}^{t',r'}} .
	\end{equation*}

{\rm (iii)} Suppose that $f \in \mathscr S'  $ satisfies 	
\begin{equation*}
	\left\|  \left(  \sum_{j\in \mathbb Z} | \F^{-1} (\varphi_j \F f) |^2  \right) ^{1/2}    \right\|_{ M_{\vec p}^{t,r}} <\infty.
\end{equation*}
Then the limit $F = \lim_{J \to \infty} \sum_{j =-J} ^J  \F^{-1} (\varphi_j \F f)  $ exists in the weak-$*$ topology of $M_{\vec p}^{t,r}$ and we have
\begin{equation*} 
	\|F\|_{ M_{\vec p}^{t,r}} \approx \left\|  \left(  \sum_{j\in \mathbb Z} | \F^{-1} (\varphi_j \F f) |^2  \right) ^{1/2}    \right\|_{ M_{\vec p}^{t,r}} .
\end{equation*}

{\rm (iv)} 	Assume that $g \in \mathscr S ' $ satisfies 
\begin{equation*} 
	\left\|  \left(  \sum_{j\in \mathbb Z} | \F^{-1} (\varphi_j \F g) |^2  \right) ^{1/2}    \right\|_{ \mathcal{H}_{\vec p \, ^\prime}^{t',r'} } <\infty.
\end{equation*}
Then the limit $G = \lim_{J \to \infty} \sum_{j =-J} ^J  \F^{-1} (\varphi_j \F g)  $ exists in the weak topology of $\mathcal{H}_{\vec p \, ^\prime}^{t',r'}$ and we have
\begin{equation*} 
	\|G  \|_{ \mathcal{H}_{\vec p \, ^\prime}^{t',r'}} \approx \left\|  \left(  \sum_{j\in \mathbb Z} | \F^{-1} (\varphi_j \F g) |^2  \right) ^{1/2}    \right\|_{ \mathcal{H}_{\vec p \, ^\prime}^{t',r'} } .
\end{equation*}
\end{theorem}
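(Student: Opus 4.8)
The plan is to establish the two norm equivalences first---the square-function estimates inside parts (i) and (ii)---then upgrade them to the convergence assertions, and finally deduce (iii) and (iv) by a weak-compactness argument. Throughout write $\Delta_j f:=\F^{-1}(\varphi_j\F f)$ and $Sf:=\big(\sum_{j\in\mathbb Z}|\Delta_j f|^2\big)^{1/2}$. When $r=\infty$ the ambient spaces are the mixed Morrey space $M_{\vec p}^{t,\infty}$ and the block space $\mathcal{H}_{\vec p \, ^\prime}^{t',1}$, and all four assertions are already contained in Nogayama's Littlewood--Paley characterization for mixed Morrey spaces and their preduals, so I only treat $r<\infty$. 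For the inequality $\|Sf\|\lesssim\|f\|$ on both $M_{\vec p}^{t,r}$ and $\mathcal{H}_{\vec p \, ^\prime}^{t',r'}$ I would randomize: for each sign sequence $\varepsilon=(\varepsilon_j)_{j\in\mathbb Z}\in\{-1,1\}^{\mathbb Z}$ the symbol $m_\varepsilon:=\sum_j\varepsilon_j\varphi_j$ satisfies $|\partial^\alpha m_\varepsilon(\xi)|\lesssim_\alpha|\xi|^{-|\alpha|}$ uniformly in $\varepsilon$, so by the Mikhlin--H\"ormander multiplier theorem the convolution operator $T_\varepsilon:=\F^{-1}(m_\varepsilon\F\,\cdot)$ is a singular integral operator whose kernel bounds do not depend on $\varepsilon$. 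The boundedness of singular integral operators on $M_{\vec p}^{t,r}$ and on $\mathcal{H}_{\vec p \, ^\prime}^{t',r'}$ from Section \ref{sec operator} then gives $\|T_\varepsilon\cdot\|\lesssim\|\cdot\|$ uniformly in $\varepsilon$; averaging over $\varepsilon$ against the Rademacher measure $\mathbb E_\varepsilon$, using Khintchine's inequality $\mathbb E_\varepsilon\big|\sum_j\varepsilon_j a_j\big|\approx\big(\sum_j|a_j|^2\big)^{1/2}$ pointwise, and then the triangle inequality for the Banach norm yields $\|Sf\|\approx\big\|\mathbb E_\varepsilon|T_\varepsilon f|\big\|\le\mathbb E_\varepsilon\|T_\varepsilon f\|\lesssim\|f\|$.

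For the reverse estimate $\|f\|\lesssim\|Sf\|$, fix a radial $\widetilde\varphi\in\mathscr S$ with $\widetilde\varphi\equiv1$ on $\operatorname{supp}\varphi$ and $\operatorname{supp}\widetilde\varphi\subset\{1/2\le|\xi|\le8\}$, and set $\widetilde\Delta_j:=\F^{-1}(\widetilde\varphi_j\F\,\cdot)$, so that $\widetilde\Delta_j\Delta_j=\Delta_j$ and $f=\sum_j\widetilde\Delta_j\Delta_j f$ in $\mathscr S'/\mathcal P$. For $g\in\mathcal{H}_{\vec p \, ^\prime}^{t',r'}$ and $f\in M_{\vec p}^{t,r}$, transferring the multiplier onto $f$ and applying the Cauchy--Schwarz inequality in $j$ followed by the H\"older-type pairing $|\int g\,\cdot|\le\|g\|_{\mathcal{H}_{\vec p \, ^\prime}^{t',r'}}\|\cdot\|_{M_{\vec p}^{t,r}}$ underlying Theorem \ref{predual mix BM} gives
\[
\Big|\int_\rn g\,f\Big|=\Big|\sum_j\int_\rn\Delta_j g\cdot\widetilde\Delta_j f\Big|\le\big\|Sg\big\|_{\mathcal{H}_{\vec p \, ^\prime}^{t',r'}}\big\|\widetilde Sf\big\|_{M_{\vec p}^{t,r}}\lesssim\big\|Sg\big\|_{\mathcal{H}_{\vec p \, ^\prime}^{t',r'}}\|f\|_{M_{\vec p}^{t,r}},
\]
where $\widetilde Sf:=\big(\sum_j|\widetilde\Delta_j f|^2\big)^{1/2}$ is controlled by $\|f\|_{M_{\vec p}^{t,r}}$ exactly as in the previous paragraph. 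Taking the supremum over $\|f\|_{M_{\vec p}^{t,r}}\le1$ and using the norm representation for $\mathcal{H}_{\vec p \, ^\prime}^{t',r'}$ in Theorem \ref{predual mix BM} yields $\|g\|_{\mathcal{H}_{\vec p \, ^\prime}^{t',r'}}\lesssim\|Sg\|_{\mathcal{H}_{\vec p \, ^\prime}^{t',r'}}$; the estimate $\|f\|_{M_{\vec p}^{t,r}}\lesssim\|Sf\|_{M_{\vec p}^{t,r}}$ follows symmetrically using the duality $(\mathcal{H}_{\vec p \, ^\prime}^{t',r'})^*=M_{\vec p}^{t,r}$. This settles the norm equivalences in (i) and (ii).

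For the convergence in (i) and (ii) set $F_J:=\sum_{|j|\le J}\Delta_j f$. Since $\sum_{|j|\le J}\varphi_j$ telescopes to $\psi(2^{-J}\cdot)-\psi(2^{J+1}\cdot)$, a Mikhlin multiplier uniformly in $J$, the family $\{F_J\}_J$ is bounded in $M_{\vec p}^{t,r}$ (and likewise the corresponding $\{G_J\}$ in $\mathcal{H}_{\vec p \, ^\prime}^{t',r'}$). It then suffices to test against a dense subclass: $L^{\vec p \, ^\prime}_c$, dense in $\mathcal{H}_{\vec p \, ^\prime}^{t',r'}$ by Theorem \ref{dense block}, for the weak-$*$ convergence in $M_{\vec p}^{t,r}$; and $\operatorname{Sim}(\rn)$, dense in $M_{\vec p}^{t,r}$ by Lemma \ref{dense simple}, for the weak convergence in $\mathcal{H}_{\vec p \, ^\prime}^{t',r'}$. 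For $h$ in such a class one transfers $\Delta_j$ onto $h$ and uses that the mixed Littlewood--Paley decomposition converges in $L^{\vec p \, ^\prime}$, respectively $L^{\vec p}$ (a consequence of the boundedness of $\mathcal M$ on $L^{\vec p}(\ell^2)$), together with the rapid spatial decay of the convolution tails to absorb the Morrey-type growth of $f$, giving $\int(F_J-f)h\to0$. Finally, for (iii) and (iv): given $f\in\mathscr S'$ with $Sf\in M_{\vec p}^{t,r}$, the finite-range version of the duality estimate of the second paragraph yields $\|F_J\|_{M_{\vec p}^{t,r}}\lesssim\|Sf\|_{M_{\vec p}^{t,r}}$ uniformly in $J$; since $M_{\vec p}^{t,r}=(\mathcal{H}_{\vec p \, ^\prime}^{t',r'})^*$ with $\mathcal{H}_{\vec p \, ^\prime}^{t',r'}$ separable, Banach--Alaoglu produces a weak-$*$ convergent subsequence $F_{J_k}\to F$, and testing against Schwartz functions (which lie in $\mathcal{H}_{\vec p \, ^\prime}^{t',r'}$) shows $\Delta_j F=\Delta_j f$ for each $j$, whence $SF=Sf$ and part (i) gives $\|F\|_{M_{\vec p}^{t,r}}\approx\|Sf\|_{M_{\vec p}^{t,r}}$; running the argument on every subsequence shows the full sequence converges weak-$*$ to $F$. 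Part (iv) is identical, with Banach--Alaoglu replaced by the Fatou property of $\mathcal{H}_{\vec p \, ^\prime}^{t',r'}$ (Theorem \ref{Fatou general}) and Lemma \ref{weak* compct} used to extract a subsequence converging locally in the weak topology of $L^{\vec p \, ^\prime}$.

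The main obstacle I anticipate is the convergence step: making the weak/weak-$*$ convergence rigorous requires carefully controlling the non-compact support of the partial sums $F_J$ when pairing them against a compactly supported test function, and assembling the $L^{\vec p \, ^\prime}$- (resp. $L^{\vec p}$-) norm convergence of the mixed Littlewood--Paley decomposition. A secondary technical point is justifying the Calder\'on reproducing identity $f=\sum_j\widetilde\Delta_j\Delta_j f$ in $\mathscr S'/\mathcal P$ for $f\in M_{\vec p}^{t,r}$ and the identification $\Delta_j F=\Delta_j f$ of the weak-$*$ limit in the proof of (iii) and (iv).
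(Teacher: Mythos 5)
Your architecture matches the paper's: Rademacher randomization plus uniform singular-integral bounds for the estimate $\|Sf\|\lesssim\|f\|$ (the paper's Lemma \ref{ell 2 le BM and block}), a duality pairing with Cauchy--Schwarz for the reverse inequality (Lemma \ref{lem f le LP}), and a weak-compactness argument for (iii)--(iv) (Lemma \ref{lem LP char S'} and the last lemma of Section \ref{sec LP}). Two minor deviations: (a) in the randomization step you apply the singular-integral bound directly on $M_{\vec p}^{t,r}$ and on $\mathcal{H}_{\vec p\,^\prime}^{t',r'}$, whereas the paper dualizes and applies Theorem \ref{T H block}; both are fine, but you should first truncate to $|j|\le J$ so the Rademacher sum is finite (this is what makes Lemma \ref{T epsilo AS CZ} applicable and what makes the Bochner integral argument go through) and only then let $J\to\infty$. (b) For (iv) you suggest the Fatou property plus Lemma \ref{weak* compct}; the paper instead uses $(M_{\vec p}^{t,r})^*=\mathcal{H}_{\vec p\,^\prime}^{t',r'}$ (Theorem \ref{dual mixed BM}) together with separability of $M_{\vec p}^{t,r}$ (Theorem \ref{separable mixed BM}) and Banach--Alaoglu, exactly parallel to (iii); either route can close, but the paper's is cleaner.

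The genuine gap is the convergence step, which you correctly flag as the main obstacle but do not carry out. The claim that ``the rapid spatial decay of the convolution tails absorbs the Morrey-type growth of $f$'' is precisely the nontrivial content of Lemma \ref{LP block}. For a block $b$ supported on a dyadic cube $Q$ the paper bounds $S_Jb-b$ outside $3Q$ by $\big(2^{-J}(2^{-j}+|x-c_Q|)^{-(n+1)}+2^{-J\epsilon'}(2^{-j}+|x-c_Q|)^{-(n-\epsilon')}\big)\|b\|_{L^1}$ with $0<\epsilon'<n/t$, and then uses Example \ref{exm (1+|x|) M in block} to show this decaying profile belongs to $\mathcal{H}_{\vec p\,^\prime}^{t',r'}$. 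Your dense-class argument against $L^{\vec p\,^\prime}_c$ would require the same tail estimate: $L^{\vec p\,^\prime}$-norm convergence of $\sum_{|j|\le J}\widetilde\Delta_jh$ to $h$ is not sufficient when pairing with $f\in M_{\vec p}^{t,r}$, since $M_{\vec p}^{t,r}$ is not in $L^{\vec p\,^\prime}$-duality and the tail of $\sum_{|j|\le J}\widetilde\Delta_jh-h$ only decays like $|x|^{-(n-\epsilon')}$; one must show this tail has small block-space norm, which is exactly what Lemma \ref{LP block} packages. Once this is supplied, the weak-$*$ convergence in (i) follows from (ii) via $\langle f,g\rangle-\langle F_J,g\rangle=\langle f, g-\sum_{|j|\le J}\widetilde\Delta_jg\rangle$, as the paper does.
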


Before proving Theorem \ref{LP char mixed BM}, we recall some lemmas.

We first recall the Rademacher sequences. For $t\in [0,1]$, the  Rademacher  functions are defined by $\tilde r_k (t) = \sgn ( \sin ( 2^k t \pi ) ) $  for $k \in \mathbb N_0$. Here, we rearrange $ \{ \tilde r_k \} _{ k \in \mathbb N_0}$  into  $ \{  r_k \} _{ k \in \mathbb Z}$. The following property of the  Rademacher sequences is of fundamental importance and with far-reaching consequences in analysis.

\begin{lemma}[C.2, \cite{G14}] \label{lem rade}
	Let $0<p<\infty $. Then for any $\ell^2 $-sequences $\{ \alpha_j \}_{j\in \mathbb Z} \subset \mathbb C$, we have
	\begin{equation*}
		\left(\int_0^1  \left|  \sum_{j\in\mathbb Z} \alpha_j r_j (t) \right|^p \d t  \right) ^{1/p} \approx \left(  \sum_{j\in \mathbb Z}  |\alpha_j|^2  \right)^{1/2}.
	\end{equation*}
\end{lemma}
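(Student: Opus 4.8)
The plan is to reduce the statement to the classical Khintchine inequality for \emph{finite} sums with \emph{real} coefficients, and to prove the latter by the standard subgaussian argument.

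First I would dispose of the complex case. Writing $\alpha_j = a_j + \mathrm{i}b_j$ with $a_j,b_j\in\R$ and $S:=\sum_j \alpha_j r_j = A + \mathrm{i}B$, where $A:=\sum_j a_j r_j$ and $B:=\sum_j b_j r_j$, one has $|S|\le |A|+|B|$ and $|S|\ge\max\{|A|,|B|\}$ pointwise, while $\sum_j|\alpha_j|^2 = \sum_j a_j^2 + \sum_j b_j^2$. Granting the real-coefficient case, the (quasi-)triangle inequality in $L^p(0,1)$ — the ordinary triangle inequality when $p\ge1$, and $\|f+g\|_p^{\min\{p,1\}}\le\|f\|_p^{\min\{p,1\}}+\|g\|_p^{\min\{p,1\}}$ when $0<p<1$ — gives the upper bound, and $\|S\|_p\ge\max\{\|A\|_p,\|B\|_p\}$ gives the lower bound up to the harmless factor $\sqrt2$. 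Next I would truncate: set $S_N:=\sum_{|j|\le N}\alpha_j r_j$. Since $\{\alpha_j\}\in\ell^2$, the partial sums $\{S_N\}$ form a Cauchy sequence in $L^2(0,1)$, hence converge in $L^2(0,1)$, and along a subsequence almost everywhere, to $S$. The finite-sum inequality, with constants independent of $N$, then passes to the limit by Fatou for the upper bound and, for the lower bound, because $\|S_N\|_p\to\|S\|_p$ — immediate from $L^2$-convergence when $p\le2$, and obtained from the uniform $L^{2p}$-bound (interpolation together with uniform integrability) when $p>2$. Thus it suffices to prove: for a finite real family $\{\alpha_j\}$ with $\sum_j\alpha_j^2=1$, one has $c_p\le\bigl\|\sum_j\alpha_j r_j\bigr\|_{L^p(0,1)}\le C_p$ with $c_p,C_p$ depending only on $p$.

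The core of the argument is then standard. I would first record the $L^2$-identity $\int_0^1\bigl|\sum_j\alpha_j r_j\bigr|^2\,\d t = \sum_j\alpha_j^2$, which follows from $\int_0^1 r_j r_k\,\d t = \delta_{jk}$, a consequence of the independence and symmetry of the Rademacher system. Since $(0,1)$ has total mass $1$, monotonicity of the $L^q(0,1)$-norms in $q$ already yields the upper bound for $0<p\le2$ and the lower bound for $2\le p<\infty$, with constant $1$. For the remaining upper bound (needed for $p>2$, though the argument works for every $p>0$) I would use the subgaussian estimate: by independence, $\int_0^1\exp\!\bigl(t\sum_j\alpha_j r_j\bigr)\,\d t = \prod_j\cosh(t\alpha_j)\le\prod_j e^{t^2\alpha_j^2/2} = e^{t^2/2}$ for every $t\in\R$, using the elementary inequality $\cosh x\le e^{x^2/2}$. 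Applying Chebyshev's inequality to $\pm S$ and optimizing in $t$ gives the tail bound $|\{t\in(0,1):|S(t)|>\lambda\}|\le 2e^{-\lambda^2/2}$ for $\lambda>0$, and integrating in $\lambda$ yields $\int_0^1|S|^p\,\d t = p\int_0^\infty\lambda^{p-1}|\{|S|>\lambda\}|\,\d\lambda\le 2p\int_0^\infty\lambda^{p-1}e^{-\lambda^2/2}\,\d\lambda =: C_p^p<\infty$. Finally, for the lower bound when $0<p<2$ I would interpolate: choosing $\theta = p/(4-p)\in(0,1)$, so that $\tfrac12 = \tfrac{\theta}{p}+\tfrac{1-\theta}{4}$, Hölder's inequality gives $1=\|S\|_2\le\|S\|_p^{\theta}\|S\|_4^{1-\theta}\le\|S\|_p^{\theta}C_4^{1-\theta}$, whence $\|S\|_p\ge C_4^{-(1-\theta)/\theta}=:c_p>0$. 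Reassembling the reductions finishes the proof.

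The main obstacle is the subgaussian moment bound, that is, the Chernoff-type estimate $\int_0^1 e^{tS}\,\d t\le e^{t^2/2}$ and the resulting exponential tail inequality; this is the only place where the independence of the Rademacher functions is genuinely used. The passage from finite to infinite sums when $p>2$ is routine but needs the short justification indicated above.
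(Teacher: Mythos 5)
Your proof is correct. Note, however, that the paper does not prove this lemma at all: it is quoted verbatim from Grafakos (Appendix C.2 of \cite{G14}) as a known result, so there is no in-paper argument to compare against. What you have written is the standard textbook proof of Khintchine's inequality --- reduction to real coefficients and finite sums, the $L^2$ orthonormality identity, the subgaussian bound $\int_0^1 e^{tS}\,\d t=\prod_j\cosh(t\alpha_j)\le e^{t^2/2}$ with the resulting Gaussian tail estimate for the upper bound when $p>2$, and Lyapunov (log-convexity of $L^q$ norms) to bootstrap the lower bound for $p<2$ from the $p=2$ identity and the $p=4$ upper bound --- and all the limiting steps (Fatou for the upper bound, $\|S_N\|_p\to\|S\|_p$ via the uniform higher-moment bound for the lower bound) are handled correctly. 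This is essentially the argument given in the cited source. One cosmetic remark: the paper's definition $\tilde r_k(t)=\sgn(\sin(2^k t\pi))$ for $k\in\mathbb N_0$ includes the degenerate function $\tilde r_0\equiv 1$ on $(0,1)$, which is not a symmetric Bernoulli variable; your proof (like the cited one) uses the genuine i.i.d.\ symmetric system, which is clearly what is intended.
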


\begin{lemma} [Lemma 2.8, \cite{IST15}] \label{T epsilo AS CZ}
	Let $\tau \in \mathscr S$ be a function supported away from the origin. Set $\tau_j = \tau (2^{-j} \cdot)$ for $j \in \mathbb Z$. Suppose that $\epsilon = \{\epsilon_j \}_{j \in \mathbb Z}$ is an $\ell^1$ sequence taking its value in $\{ -1,0,1\}$. Define 
	\begin{equation*}
		K_\epsilon := \sum_{j \in \mathbb Z } \epsilon_j \F^{-1} (\tau_j), \; T_\epsilon f := \sum_{j \in \mathbb Z }  \epsilon_j \F^{-1} (\tau_j \F f)=  \sum_{j \in \mathbb Z }  \epsilon_j \F^{-1} (\tau_j) *f
	\end{equation*}
for $f\in L^2. $  Then the following holds:

{\rm (i)} For all $x \in \rn$, 
\begin{equation} \label{size K_epsilon}
	|K_\epsilon (x)| \le \sum_{j \in \mathbb Z } | \F^{-1} (\tau_j) (x)| \le C |x|^{-n}.
\end{equation}

{\rm (ii)} For all $x \in \rn$ and $k = 1,2, \ldots, n$,
\begin{equation}\label{smooth K_epsilon}
	| \partial_k K_\epsilon (x)| \le  \sum_{j \in \mathbb Z } |\partial_k \F^{-1} (\tau_j) (x)| \le C |x|^{-n-1}.
\end{equation} 
Here, in (\ref{size K_epsilon}) and (\ref{smooth K_epsilon}), the constant $C$ does not depend on $\epsilon$.

{\rm (iii)} The limit defining $ T_\epsilon f$ converges in the topology of $L^2$.

{\rm (iv)} Let $f \in L^2 $  have a compact  support. If $x \notin \operatorname{supp} f$, then
\begin{equation*}
	T_\epsilon f (x) = \int_\rn K_\epsilon (x-y) f(y) \d y.
\end{equation*}
\end{lemma}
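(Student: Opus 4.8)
The plan is to reduce everything to the scaling identity $\F^{-1}(\tau_j)(x)=2^{jn}\psi(2^j x)$, where $\psi:=\F^{-1}\tau\in\mathscr S$, so that $K_\epsilon(x)=\sum_{j\in\mathbb Z}\epsilon_j 2^{jn}\psi(2^j x)$, and then to exploit the Schwartz decay of $\psi$ together with the hypothesis that $\tau$ is supported away from the origin. Since $|\epsilon_j|\le1$ for every $j$, every constant produced below will be independent of $\epsilon$, as required.

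For (i) and (ii) I would fix a large exponent $M$ and use $|\partial^\alpha\psi(y)|\le C_M(1+|y|)^{-M}$ to obtain, for each multi-index $\alpha$,
\[
\big|\epsilon_j\,\partial^\alpha[\F^{-1}(\tau_j)](x)\big| \le 2^{j(n+|\alpha|)}\,\big|(\partial^\alpha\psi)(2^j x)\big| \le C_M\,2^{j(n+|\alpha|)}(1+2^j|x|)^{-M}.
\]
Splitting the sum over $j\in\mathbb Z$ according to whether $2^j|x|\le1$ or $2^j|x|>1$ and summing the two resulting geometric series — taking $M>n+|\alpha|$ so that the tail where $2^j|x|>1$ converges — yields $\sum_{j}\big|\partial^\alpha[\F^{-1}(\tau_j)](x)\big|\lesssim|x|^{-n-|\alpha|}$. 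Specializing to $\alpha=0$ gives (i) and to $\alpha=e_k$ gives (ii); the first inequalities in (i) and (ii) are merely the triangle inequality.

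For (iii) I would pass to the Fourier side, where $\F(T_\epsilon f)=\sum_j\epsilon_j\tau_j\,\F f$. Since $\operatorname{supp}\tau\subset\{c_1\le|\xi|\le c_2\}$ for some $0<c_1<c_2<\infty$, the annuli $\operatorname{supp}\tau_j$ have bounded overlap, so $\sum_j|\tau_j(\xi)|\le N\|\tau\|_\infty$ for a fixed $N$ and all $\xi$. Hence the partial sums $\sum_{|j|\le J}\epsilon_j\tau_j\,\F f$ are all dominated by $N\|\tau\|_\infty|\F f|\in L^2$ and converge pointwise as $J\to\infty$; the dominated convergence theorem shows they are Cauchy in $L^2$, and Plancherel transfers the $L^2$ convergence to $T_\epsilon f$ itself.

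Finally, for (iv), let $f\in L^2$ have compact support and fix $x\notin\operatorname{supp} f$. Each $\F^{-1}(\tau_j)\in\mathscr S\subset L^1\cap L^2$ and $f\in L^1\cap L^2$, so $\F^{-1}(\tau_j\,\F f)=\F^{-1}(\tau_j)*f$ and the partial sum is $S_J(x)=\int_{\operatorname{supp} f}\big(\sum_{|j|\le J}\epsilon_j\F^{-1}(\tau_j)\big)(x-y)f(y)\,dy$. On any bounded open set $U$ with $\operatorname{dist}(U,\operatorname{supp} f)>0$, estimate (i) dominates this integrand, uniformly in $J$, by $C\operatorname{dist}(U,\operatorname{supp} f)^{-n}|f|\in L^1$, so dominated convergence gives $S_J(x)\to\int K_\epsilon(x-y)f(y)\,dy$ for every $x\in U$, and the same bound shows $\{S_J\}$ is bounded in $L^\infty(U)$, hence convergent in $L^2(U)$. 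Comparing with (iii), which gives $S_J\to T_\epsilon f$ in $L^2$, we get $T_\epsilon f(x)=\int K_\epsilon(x-y)f(y)\,dy$ for a.e.\ $x\in U$, and since $U$ is arbitrary and the right-hand side is continuous on $\rn\setminus\operatorname{supp} f$ (indeed $C^1$, by (ii)), this is the pointwise identity claimed in (iv) for the continuous representative. The routine parts are the geometric-series split in (i)–(ii) and the bounded-overlap bound in (iii); the one step needing genuine care is the identification in (iv), because $T_\epsilon f$ is a priori only an $L^2$-limit, so one must work on neighborhoods bounded away from $\operatorname{supp} f$ and use continuity of the integral representative to upgrade the a.e.\ identity to a pointwise one.
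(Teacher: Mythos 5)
The paper does not prove this lemma: it is quoted verbatim from \cite{IST15} (Lemma 2.8 there), so there is no in-paper argument to compare against. Your proof is the standard one and is essentially correct. The scaling identity $\F^{-1}(\tau_j)=2^{jn}\psi(2^j\cdot)$ with $\psi=\F^{-1}\tau\in\mathscr S$, combined with Schwartz decay and the split of the $j$-sum at $2^j|x|\approx 1$, gives (i) and (ii) with constants depending only on $\tau$; Plancherel plus dominated convergence gives (iii); and your handling of (iv) --- working on bounded open sets at positive distance from $\operatorname{supp} f$, dominating the partial sums by (i), and reconciling the $L^2$-limit of (iii) with the pointwise limit via uniqueness of $L^2$ limits and continuity of the integral representative --- is exactly the care that step requires.

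One small imprecision in (iii): you assert $\operatorname{supp}\tau\subset\{c_1\le|\xi|\le c_2\}$ and argue by bounded overlap of the dilated annuli. The hypothesis ``supported away from the origin'' only provides the lower bound $|\xi|\ge c_1$ on the support; $\tau$ need not be compactly supported. The uniform bound $\sum_j|\tau_j(\xi)|\le C$ is still true, but for a slightly different reason: the terms with $2^{-j}|\xi|<c_1$ (i.e. $j>\log_2(|\xi|/c_1)$) vanish because $2^{-j}\xi$ lies in the excluded neighborhood of the origin, while the remaining terms with $j\to-\infty$ are controlled by the decay of $\tau$ at infinity, e.g. $|\tau(2^{-j}\xi)|\lesssim(2^{-j}|\xi|)^{-1}=2^{j}/|\xi|$, whose sum over $j\le\log_2(|\xi|/c_1)$ is $O(1/c_1)$ uniformly in $\xi$. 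With that substitution the argument is complete; in the applications in this paper $\tau$ is in fact supported in an annulus, so your version suffices there.
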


\begin{example} \label{exm (1+|x|) M in block}
	Let 
	\begin{equation*}
		f (x) := \frac{1}{ 1+ \max (|x_1| , \ldots , |x_n|)^M}.
	\end{equation*}
If $M >n /t' $, then $f \in \mathcal{H}_{\vec p \, ^\prime}^{t',r'} $. For $j \in \mathbb N$, let $Q_j $ be the cube centered at origin with side length $2^{j+1} $.
Since $f$ is symmetrical, we may consider one quadrant  $\operatorname{Qua} := \{ x\in \rn : x_1 \ge0, x_2 \ge 0, \ldots, x_n \ge 0 \} $. Let 
\begin{equation*}
	b_0 (x)= \chi_{Q_1 \cap \operatorname{Qua} } (x) f(x)
\end{equation*}
and
\begin{equation*}
	b_j (x) = 2^{ j  M } 2^{- jn/t' }  \chi_{ ( Q_j \backslash Q_{j-1})  \cap \operatorname{Qua} } f (x) .
\end{equation*}
Then
\begin{align*}
	\| b_0 \|_{ L^ \infty  }\le 1,
\end{align*}
and
\begin{equation*}
	\| b_j \|_{ L^ {\infty}  }  \le \frac{ 2^{j M } 2^{ -jn/t' }   }{1+2^{ j  M } }  <  2^{-jn/t'}.
\end{equation*}
Hence
\begin{equation*}
	\| b_0 \|_{ L^ {\vec p}  }\le  1 =  | Q_1 \cap \operatorname{Qua}| ^{ 1/t- \frac{1}{n} \sum_{i=1}^{n} 1/p_i },
\end{equation*}
and 
\begin{equation*}
	\| b_j \|_{ L^ {\vec p \, ^\prime}  }\le 2^{-jn/t'  }   | Q_j \cap \operatorname{Qua}  |^{ \frac{1}{n} \sum_{i=1}^{n} 1/p_i' }  =  | Q_j \cap \operatorname{Qua}  |^{ 1/t- \frac{1}{n} \sum_{i=1}^{n} 1/p_i } .
\end{equation*}
This means $b_j$ is a $( \vec p \, ^\prime, t')$-block supported on dyadic cube $Q_j \cap \operatorname{Qua}$.
Since 
\begin{equation*}
	f \chi_{\operatorname{Qua}} = b_0 + \sum_{ j=1 } ^\infty \frac{2^{jn/t' } }{2^{j  M }  } b_j
\end{equation*} 
if we provide $ M > n /t'  $, 
\begin{align*}
	\|f\|_{\mathcal{H}_{\vec p \, ^\prime}^{t',r'}  } \le \left( 1 +   \sum_{j=1}^\infty  \frac{   2^{jn r' /t' }   }{2^{j  M  r' }}    \right)^{1/r'}  <\infty .
\end{align*}

\end{example}

{\bf Proof of Theorem \ref{LP char mixed BM}.}

First, we show that for $f \in M_{\vec p}^{t,r}$ (or $\mathcal{H}_{\vec p \, ^\prime}^{t',r'}$), its square function is bounded on  $M_{\vec p}^{t,r}$ (or $\mathcal{H}_{\vec p \, ^\prime}^{t',r'}$).

\begin{lemma} \label{ell 2 le BM and block}
		Let $ 1 < \vec p <\infty$. Let   $1 < n / ( \sum_{i=1}^n  1/p_{i})   < t <r<\infty $ or $ 1< n / ( \sum_{i=1}^n  1/p_{i})  \le t< r =\infty  $. 
		Let $\tau \in \mathscr S $ be a function supported away from the origin. For $j \in \mathbb Z$, define $\tau_j = \tau (2^{-j } \cdot)$. 
		
		{\rm (i)} For $f \in M_{\vec p}^{t,r}$, we have
		\begin{equation*}
			\left\| \left(  \sum_{j\in \mathbb Z} | \F^{-1} (\tau_j \F f) |^2  \right) ^{1/2}   \right\|_{M_{\vec p}^{t,r}} \lesssim 	\| f  \|_{M_{\vec p}^{t,r}} .
		\end{equation*}
	
		{\rm (ii)} For $g \in\mathcal{H}_{\vec p \, ^\prime}^{t',r'}$, we have
	\begin{equation*}
		\left\| \left(  \sum_{j\in \mathbb Z} | \F^{-1} (\tau_j \F g) |^2  \right) ^{1/2}   \right\|_{\mathcal{H}_{\vec p \, ^\prime}^{t',r'} } \lesssim 	\| g  \|_{\mathcal{H}_{\vec p \, ^\prime}^{t',r'} } .
	\end{equation*}
\end{lemma}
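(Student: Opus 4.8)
The plan is to prove (i) and (ii) simultaneously by a Rademacher randomization argument that reduces the square function to a uniformly bounded family of singular integral operators, combined with a truncation/Fatou limiting procedure. Write $S_j f := \F^{-1}(\tau_j \F f) = \F^{-1}(\tau_j) * f$ for $j \in \mathbb Z$ (this convolution makes sense for $f \in M_{\vec p}^{t,r} \hookrightarrow M_{\vec p}^{t,\infty}$, and for $g \in \H_{\vec p \, ^\prime}^{t',r'}$, exactly as in the treatment of singular integral operators by restriction from $M_{\vec p}^{t,\infty}$). Since the truncated square functions $\big(\sum_{|j|\le N}|S_j f|^2\big)^{1/2}$ increase pointwise a.e.\ to $\big(\sum_{j\in\mathbb Z}|S_j f|^2\big)^{1/2}$ as $N\to\infty$, it suffices, by the Fatou property of $M_{\vec p}^{t,r}$ (stated in the remark after its definition) and of $\H_{\vec p \, ^\prime}^{t',r'}$ (which, by Theorem \ref{associate space}, is a ball Banach function space, hence satisfies Definition \ref{ball Bf norm}(P3)), to bound $\big\|\big(\sum_{|j|\le N}|S_j f|^2\big)^{1/2}\big\|$ uniformly in $N$ and then pass to the limit.

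First I would randomize. Fix $N$ and, for $t\in[0,1]$, set $\epsilon^{(N,t)} := \{r_j(t)\chi_{\{|j|\le N\}}\}_{j\in\mathbb Z}$, a finitely supported sequence with values in $\{-1,0,1\}$, and let $G_t^{(N)} := \sum_{|j|\le N} r_j(t)\, S_j f = T_{\epsilon^{(N,t)}} f$ in the notation of Lemma \ref{T epsilo AS CZ}. Applying Lemma \ref{lem rade} with exponent $1$ to the coefficients $\{S_j f(x)\}_{|j|\le N}$ gives, for a.e.\ $x$,
\begin{equation*}
\Big(\sum_{|j|\le N}|S_j f(x)|^2\Big)^{1/2} \approx \int_0^1 |G_t^{(N)}(x)|\,\d t .
\end{equation*}
Using the lattice property, Minkowski's integral inequality in $L^{\vec p}$ (valid since $\vec p\ge 1$), and the fact that $M_{\vec p}^{t,r}$ and $\H_{\vec p \, ^\prime}^{t',r'}$ are Banach spaces, one obtains
\begin{equation*}
\Big\|\Big(\sum_{|j|\le N}|S_j f|^2\Big)^{1/2}\Big\|_{M_{\vec p}^{t,r}} \lesssim \Big\|\int_0^1 |G_t^{(N)}|\,\d t\Big\|_{M_{\vec p}^{t,r}} \le \int_0^1 \|G_t^{(N)}\|_{M_{\vec p}^{t,r}}\,\d t ,
\end{equation*}
and the analogous inequality with $\H_{\vec p \, ^\prime}^{t',r'}$ in place of $M_{\vec p}^{t,r}$ and $g$ in place of $f$.

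Next I would invoke uniform boundedness of the operators $T_\epsilon$. By Lemma \ref{T epsilo AS CZ}, for every finitely supported sequence $\epsilon$ with values in $\{-1,0,1\}$ the operator $T_\epsilon$ is $L^2$-bounded (with norm controlled by $\|\tau\|_{L^\infty}$ times the bounded overlap of the supports of the $\tau_j$), its convolution kernel $K_\epsilon$ satisfies $|K_\epsilon(x)|\le C|x|^{-n}$ and $|\nabla K_\epsilon(x)|\le C|x|^{-n-1}$ — hence the required Hörmander-type smoothness condition via the mean value theorem, using $K(x,y)=K_\epsilon(x-y)$ — with $C$ independent of $\epsilon$, and it is represented by the integral formula against $K_\epsilon(x-y)$ off the support. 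Thus each $T_\epsilon$ is a singular integral operator in the sense of Section \ref{sec operator} with constants uniform in $\epsilon$, so the boundedness of singular integral operators on $M_{\vec p}^{t,r}$ and Theorem \ref{T H block} yield $\|T_\epsilon h\|_{M_{\vec p}^{t,r}}\lesssim \|h\|_{M_{\vec p}^{t,r}}$ and $\|T_\epsilon h\|_{\H_{\vec p \, ^\prime}^{t',r'}}\lesssim \|h\|_{\H_{\vec p \, ^\prime}^{t',r'}}$ with implicit constants independent of $\epsilon$. Taking $\epsilon=\epsilon^{(N,t)}$ bounds $\|G_t^{(N)}\|$ by $\|f\|$ (resp.\ $\|g\|$) uniformly in $N$ and $t$; inserting this into the displayed estimate, integrating over $[0,1]$, and letting $N\to\infty$ via the Fatou property completes the proof.

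The main obstacle is essentially bookkeeping: ensuring that the $L^2$ operator norm and the kernel constants of $T_{\epsilon^{(N,t)}}$ — and therefore the resulting bounds on $M_{\vec p}^{t,r}$ and $\H_{\vec p \, ^\prime}^{t',r'}$ — are genuinely independent of both $N$ and $t$, so that the integration over $[0,1]$ and the monotone limit $N\to\infty$ are legitimate; this is precisely what Lemma \ref{T epsilo AS CZ} is designed to provide, so the proof is largely a matter of assembling the pieces carefully.
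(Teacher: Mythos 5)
Your proposal is correct and rests on the same two pillars as the paper's proof — the pointwise Khintchine inequality (Lemma \ref{lem rade}) and the uniform Calder\'on--Zygmund bounds for the randomized operators $T_\epsilon$ from Lemma \ref{T epsilo AS CZ} — but it takes a noticeably shorter sub-route after the Minkowski step. Having reduced to $\int_0^1\|T_{\epsilon^{(N,t)}}f\|_{M_{\vec p}^{t,r}}\,\d t$, you simply apply the boundedness of singular integral operators on $M_{\vec p}^{t,r}$ (and on $\mathcal{H}_{\vec p'}^{t',r'}$ for part~(ii), via Theorem \ref{T H block}) with constants uniform in $\epsilon$, integrate, and let $N\to\infty$ by the Fatou/monotone convergence properties of the two norms. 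The paper instead dualizes at this stage: it uses the norm-attainer from Theorem \ref{predual mix BM} to build a measurable family of unit-norm dual functions $g_s^J\in\mathcal{H}_{\vec p'}^{t',r'}$ supported on the dyadic intervals where the Rademacher functions are constant, transfers $T_s$ to the dual side by the adjoint relation $\int (T_s f)g=\int f(T_s^\ast g)$, and invokes the singular integral bound on the \emph{dual} space. Your route avoids the construction of $g_s^J$ and the reliance on the norm attainer, so it is more economical; the paper's detour has the virtue that it only requires pairing $T_s^\ast g_s^J$ against $f$, which quietly sidesteps the question of how $T_\epsilon$ acts on a general element of $M_{\vec p}^{t,r}$ (an issue the $M_{\vec p}^{t,r}$ SIO theorem handles by extension-by-restriction from $M_{\vec p}^{t,\infty}$, so both routes ultimately lean on the same infrastructure). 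One small point worth making explicit in your write-up: the inequality $\bigl\|\int_0^1|G_t^{(N)}|\,\d t\bigr\|\le\int_0^1\|G_t^{(N)}\|\,\d t$ for the block norm does not need a general Minkowski-integral theorem, since $t\mapsto G_t^{(N)}$ is piecewise constant on finitely many dyadic intervals and the bound reduces to the finite triangle inequality, exactly as in the paper's Bochner-integrability argument.
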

\begin{proof}
	(i)
	Let $\{r_j  \}_{j\in \mathbb Z}$ be the Rademacher sequence. Fix $J\in \mathbb N$. 
	Then by Lemma \ref{lem rade}, we have
	\begin{equation*}
		\left\| \left(  \sum_{j =-J} ^J  | \F^{-1} (\tau_j \F f) |^2  \right) ^{1/2}   \right\|_{M_{\vec p}^{t,r}} \lesssim 
			\left\| \int_0^1 \left|  \sum_{j =-J} ^J   \F^{-1} (\tau_j \F f) r_j (s)   \right| \d s \right\|_{M_{\vec p}^{t,r}} .
	\end{equation*}
	Note that $\sum_{j =-J} ^J   \F^{-1} (\tau_j \F f) r_j (s) : (0,1)  \mapsto M_{\vec p}^{t,r} $ is measurable since $r_j $ assumes its value in $\{ -1,0,1 \}$ and $r_j$  is constant almost everywhere on each dyadic interval $I_j = [ 2^ {j-1 - 2^{2J +1}} , 2^ {j - 2^{2J +1}}  ) $ for $j = 1, \ldots, 2^{2J +1}$, and 
	\begin{equation*}
		\left\|    \sum_{j =-J} ^J   \F^{-1} (\tau_j \F f) r_j (s)    \right\|_{M_{\vec p}^{t,r}} 
	\end{equation*}
is integrable on $(0,1)$. Thus, by Bochner's Theorem (for example, see \cite[Theorem 1.1.4]{ABHN11}),
\begin{equation*}
	 \sum_{j =-J} ^J   \F^{-1} (\tau_j \F f) r_j (s) : (0,1) \mapsto M_{\vec p}^{t,r}
\end{equation*}
is Bochner integrable and we have
\begin{equation*}
	\left\| \left(  \sum_{j =-J} ^J  | \F^{-1} (\tau_j \F f) |^2  \right) ^{1/2}   \right\|_{M_{\vec p}^{t,r}} \lesssim   \int_0^1 \left\|  \sum_{j =-J} ^J   \F^{-1} (\tau_j \F f) r_j (s)    \right\|_{M_{\vec p}^{t,r}}  \d s .
\end{equation*}
Then by Theorem \ref{predual mix BM}, we can find the dual function $G_k \in \mathcal{H}_{\vec p \, ^\prime}^{t',r'}$ corresponding to $\sum_{j =-J} ^J   \F^{-1} (\tau_j \F f) r_j (s)$ on $I_k$ for $k =  1, \ldots, 2^{ 2J +1 } $ such that $ \| G_k \|_{ \mathcal{H}_{\vec p \, ^\prime}^{t',r'}  }  \le 1 $. Define 
\begin{equation*}
	g_s^J := \sum_{ k =1 }^ { 2^{ 2J+1  }  } G_k \chi_{ I_k } (s).
\end{equation*}
Then $g_s^J$  is measurable for $s$ and satisfies $ \|g_s^J \|_{ \mathcal{H}_{\vec p \, ^\prime}^{t',r'}  }  \le 1 $.
	Moreover, we have
	\begin{align*}
		\left\| \left(  \sum_{j =-J} ^J  | \F^{-1} (\tau_j \F f) |^2  \right) ^{1/2}   \right\|_{M_{\vec p}^{t,r}} & \lesssim  \int_0^1 \left| \int_\rn   \sum_{j =-J} ^J   \F^{-1} (\tau_j \F f) r_j (s)  g_s^J (x) \d x  \right|   \d s \\
		& = \int_0^1 \left| \int_\rn   f(x) \sum_{j =-J} ^J   \F^{-1} (\tilde \tau_j \F g_s^J) (x) r_j (s)  \d x  \right|   \d s \\
		& \le \int_0^1
		\| f\|_{ M_{\vec p}^{t,r}}  \left\| \sum_{j =-J} ^J   \F^{-1} (\tilde \tau_j \F g_s^J) r_j (s)  \right\|_{ \mathcal{H}_{\vec p \, ^\prime}^{t',r'}   }
		    \d s .
	\end{align*}
Here for $w \in L^0 $, define  $ \tilde{w} = w (-\cdot )$. Define $\alpha_j (s) : = r_j (s) \chi_{ [J,J]} ( j)$ for $j \in \mathbb Z$ and $ s \in (0,1)$ and 
\begin{equation*}
	T_s g : = \sum_{j \in \mathbb Z} \F^{-1} (\tilde \tau_j \F g ) \alpha_j (s).
\end{equation*}

We can regard the operator $T_s $ as a singular integral operator in the view of Lemma \ref{T epsilo AS CZ}. Thus by Theorem \ref{T H block}, we have
\begin{equation*}
		\left\| \left(  \sum_{j =-J} ^J  | \F^{-1} (\tau_j \F f) |^2  \right) ^{1/2}   \right\|_{M_{\vec p}^{t,r}}  \lesssim \int_0^1
		\| f\|_{ M_{\vec p}^{t,r}}  \left\| T_s (g_s^J ) \right\|_{ \mathcal{H}_{\vec p \, ^\prime}^{t',r'}   }
		\d s \lesssim \| f\|_{ M_{\vec p}^{t,r}} 
\end{equation*}
where the implicit constants in the above inequalities are independent of $s$ and $J$. Finally, by using the dominated convergence theorem, we prove (i).

(ii) The same argument works for the predual space $ \mathcal{H}_{\vec p \, ^\prime}^{t',r'}  $. We omit the proof.
\end{proof}

The next result is  the convergence in the predual spaces.

\begin{lemma} \label{LP block}
		Let $ 1 < \vec p <\infty$. Let   $1 < n / ( \sum_{i=1}^n  1/p_{i})   < t <r<\infty $ or $ 1< n / ( \sum_{i=1}^n  1/p_{i})  \le t< r =\infty  $. 
			Let $\varphi$ be the same as in Definition \ref{homo unity}.  If $g \in \mathcal{H}_{\vec p \, ^\prime}^{t',r'} $, then we have
	\begin{equation} \label{topo L-P block}
		\lim_{J \to \infty} \left\| g - \sum_{j =-J} ^J  \F^{-1} (\varphi_j \F g)  \right\|_{\mathcal{H}_{\vec p \, ^\prime}^{t',r'} } =0 .
	\end{equation}
\end{lemma}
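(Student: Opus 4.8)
The plan is to reduce the claim to the density of nice functions together with a uniform boundedness estimate, via a standard $2\epsilon$-argument. First I would fix $g\in\mathcal H_{\vec p\,'}^{t',r'}$ and $\epsilon>0$. By Theorem \ref{dense block}, choose $h\in L^{\vec p\,'}_c$ with $\|g-h\|_{\mathcal H_{\vec p\,'}^{t',r'}}<\epsilon$. Then write
\begin{equation*}
	g-\sum_{j=-J}^J\F^{-1}(\varphi_j\F g)=(g-h)-\sum_{j=-J}^J\F^{-1}\bigl(\varphi_j\F(g-h)\bigr)+\Bigl(h-\sum_{j=-J}^J\F^{-1}(\varphi_j\F h)\Bigr),
\end{equation*}
and estimate the three pieces separately. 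The first is $<\epsilon$ by choice of $h$. For the second, I would use that the partial-sum operator $g\mapsto\sum_{j=-J}^J\F^{-1}(\varphi_j\F g)$ is, after the Rademacher randomization device of Lemma \ref{lem rade} and Lemma \ref{T epsilo AS CZ}, of the form $T_\epsilon$ with $\epsilon=\{\epsilon_j\}$ a $\{0,\pm1\}$-valued $\ell^1$-sequence (here $\epsilon_j=\chi_{[-J,J]}(j)$), hence a singular integral operator with Calder\'on--Zygmund kernel bounds uniform in $J$; then Theorem \ref{T H block} gives $\bigl\|\sum_{j=-J}^J\F^{-1}(\varphi_j\F(g-h))\bigr\|_{\mathcal H_{\vec p\,'}^{t',r'}}\lesssim\|g-h\|_{\mathcal H_{\vec p\,'}^{t',r'}}<\epsilon$ with implicit constant independent of $J$.

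The third and genuinely $J$-dependent piece is where the compact support and boundedness of $h$ are used. Since $h\in L^{\vec p\,'}_c\subset L^2$, the partial sums $\sum_{j=-J}^J\F^{-1}(\varphi_j\F h)$ converge to $h$ in $L^2$ (Littlewood--Paley on $L^2$, or Lemma \ref{T epsilo AS CZ}(iii)), and moreover $\sum_{j=-J}^J\F^{-1}(\varphi_j\F h)=\F^{-1}(\psi(2^{-J}\cdot)\F h)-\F^{-1}(\psi(2^{J+1}\cdot)\F h)$ telescopes, so the remainder $h-\sum_{j=-J}^J\F^{-1}(\varphi_j\F h)$ consists of a low-frequency tail $\F^{-1}((1-\psi(2^{J+1}\cdot))\F h)$ and a high-frequency tail $\F^{-1}(\psi(2^{-J}\cdot)\F h)$. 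I would show each tail tends to $0$ in $\mathcal H_{\vec p\,'}^{t',r'}$. The key tool is the pointwise decay of the convolution kernels: the kernel of $\F^{-1}(\psi(2^{-J}\cdot)\F\,\cdot)$ is $2^{Jn}\Phi(2^J\cdot)$ for a Schwartz $\Phi$, so convolving the compactly supported bounded $h$ produces a function dominated by $C_{M,J}(1+\max(|x_1|,\dots,|x_n|))^{-M}$ times a constant that vanishes as $J\to\infty$ (the high-frequency tail vanishes because $\|2^{Jn}\Phi(2^J\cdot)\|_{L^1}$ is bounded while $\F h$ is smooth and $\psi(2^{-J}\cdot)$ localizes to frequencies $\gtrsim2^J$, killing the mass; the low-frequency tail because $1-\psi(2^{J+1}\cdot)$ is supported at frequencies $\gtrsim2^{J+1}$, again away from where $\F h$ is concentrated after one more regularization). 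Then Example \ref{exm (1+|x|) M in block} (choosing $M>n/t'$) shows such functions lie in $\mathcal H_{\vec p\,'}^{t',r'}$ with norm controlled by the $L^\infty$-size times the decay constant, hence $\to 0$.

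Combining, $\bigl\|g-\sum_{j=-J}^J\F^{-1}(\varphi_j\F g)\bigr\|_{\mathcal H_{\vec p\,'}^{t',r'}}\lesssim\epsilon$ for $J$ large, and letting $\epsilon\to0^+$ yields \eqref{topo L-P block}. The main obstacle I anticipate is making the third step fully rigorous: one must control the Bourgain--Morrey-type norm of the frequency tails of a fixed $L^\infty_c$ function uniformly, which requires a careful kernel estimate of the form $|h-\sum_{j=-J}^J\F^{-1}(\varphi_j\F h)(x)|\lesssim_{N}\delta_J(1+|x|)^{-N}$ with $\delta_J\to0$, exploiting that $h$ is smoothed by the Schwartz kernels and that $\widehat h$ is analytic (so its behaviour near and far from the origin is well understood). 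Once that pointwise bound is in hand, the block-space membership and norm smallness follow mechanically from Example \ref{exm (1+|x|) M in block} and the lattice property (Lemma \ref{lem lattice block}).
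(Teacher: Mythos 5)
Your overall strategy -- reduce to a dense class of compactly supported functions via Theorem~\ref{dense block}, exploit the uniform boundedness of the partial-sum operators $S_J$ as singular integrals (Lemma~\ref{T epsilo AS CZ} + Theorem~\ref{T H block}), and absorb the tails using Example~\ref{exm (1+|x|) M in block} -- is the same skeleton as the paper's. The paper truncates the block decomposition of $g$ to finitely many blocks $b_{j,k}$ rather than passing to a single $h\in L^{\vec p\,'}_c$, but the two reductions are essentially equivalent in force, and the first two pieces of your three-term split are handled correctly.

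The genuine gap is in the third step. The claimed global pointwise bound
\begin{equation*}
\bigl|h(x)-\textstyle\sum_{j=-J}^J\F^{-1}(\varphi_j\F h)(x)\bigr|\lesssim_N \delta_J(1+|x|)^{-N}, \qquad \delta_J\to 0,
\end{equation*}
cannot hold for a general $h\in L^{\vec p\,'}_c$ (and not even for $h\in L^\infty_c$, which is what you appear to assume at the end). Indeed, writing the high-frequency tail as $h-2^{Jn}\Psi(2^J\cdot)*h$ with $\Psi=\F^{-1}\psi$, the mollification $2^{Jn}\Psi(2^J\cdot)*h$ does not converge to $h$ uniformly unless $h$ is continuous -- for $h=\chi_{[0,1]^n}$ the difference stays of order $1$ near $\partial[0,1]^n$ for every $J$. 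The appeal to analyticity of $\widehat h$ does not rescue this: by Paley--Wiener $\widehat h$ is entire, but it need not decay, so there is no reason for $\F^{-1}((1-\psi(2^{-J}\cdot))\F h)$ to be small pointwise on the support of $h$. (There is also a minor bookkeeping slip: the telescoping gives $h-S_Jh=\F^{-1}(\psi(2^{J+1}\cdot)\F h)+\F^{-1}((1-\psi(2^{-J}\cdot))\F h)$, not the expressions you wrote.)

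The repair, which is exactly what the paper does at the level of a single block via the kernel estimate from \cite[proof of Lemma 3.1]{IST15}, is a near/far split. On a fixed dilate $3Q$ of the supporting cube, control $\|(h-S_Jh)\chi_{3Q}\|_{\mathcal H_{\vec p\,'}^{t',r'}}$ by $\|h-S_Jh\|_{L^{\vec p\,'}}$ using Lemma~\ref{generate mix block}, and invoke Littlewood--Paley convergence in $L^{\vec p\,'}$ (which needs no pointwise decay, only the $L^{\vec p\,'}$ norm). Off $3Q$, $h$ vanishes, and there one does get a pointwise bound of the type
\begin{equation*}
|S_Jh(x)-h(x)|\lesssim\Bigl(\tfrac{1}{2^J(2^{-j}+|x-c_Q|)^{n+1}}+\tfrac{1}{2^{J\epsilon'}(2^{-j}+|x-c_Q|)^{n-\epsilon'}}\Bigr)\|h\|_{L^1},
\end{equation*}
whose $\mathcal H_{\vec p\,'}^{t',r'}$-norm tends to $0$ by Example~\ref{exm (1+|x|) M in block} (with exponent $n-\epsilon'>n/t'$). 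Without making this split explicit, the proposed purely pointwise argument does not close.
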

\begin{proof}
	Let $g \in \mathcal{H}_{\vec p \, ^\prime}^{t',r'}$. Then $g = \sum_{(j,k)\in\mathbb{Z}^{n+1}}\lambda_{j,k}b_{j,k}$ where $\lambda=\{\lambda_{j,k}\}_{(j,k)\in\mathbb{Z}^{n+1}}\in\ell^{r'}(\mathbb{Z}^{n+1})$
	and $b_{j,k}$ is a  $(\vec p \, ^\prime,t')$-block supported on $Q_{j,k}$ such that 
	\begin{equation*}
		\left( \sum_{(j,k)\in\mathbb{Z}^{n+1}} |\lambda_{j,k}|^{r'}\right)^{1/r'}  \le 2 \| g\|_{ \mathcal{H}_{\vec p \, ^\prime}^{t',r'} }.
	\end{equation*}
For $K,J \in \mathbb N$, define
\begin{equation*}
	g_K := \sum_{(j,k)\in\mathbb{Z}^{n+1},|(j,k)|_\infty \le K  }\lambda_{j,k}b_{j,k}
\end{equation*}
and 
\begin{equation*}
	S_J g := \sum_{j =-J} ^J  \F^{-1} (\varphi_j \F g) .
\end{equation*}
Since $1 \le r' <\infty $, we choose $K$ sufficiently large such that
\begin{equation*}
\| g - g_K \|_{\mathcal{H}_{\vec p \, ^\prime}^{t',r'} } \le \epsilon.
\end{equation*}
Since $S_J$ is a  singular integral operator by Lemma \ref{T epsilo AS CZ}, there exists a constant $C>0$ such that
\begin{equation*}
	\|  S_J  ( g - g_K) \|_{\mathcal{H}_{\vec p \, ^\prime}^{t',r'} }  \le C \| g - g_K \|_{\mathcal{H}_{\vec p \, ^\prime}^{t',r'} } .
\end{equation*}
Thus we have
\begin{align*}
	\| g - S_J g\|_{\mathcal{H}_{\vec p \, ^\prime}^{t',r'} } & \le \| g - g_K \|_{\mathcal{H}_{\vec p \, ^\prime}^{t',r'} }  +\| g_K - S_J g_K \|_{\mathcal{H}_{\vec p \, ^\prime}^{t',r'} } +\|  S_J g_K - S_J g\|_{\mathcal{H}_{\vec p \, ^\prime}^{t',r'} } \\
	& \le \| g_K - S_J g_K \|_{\mathcal{H}_{\vec p \, ^\prime}^{t',r'} } + (C+1) \epsilon \\
	& \le \sum_{(j,k)\in\mathbb{Z}^{n+1},|(j,k)|_\infty \le K  } | \lambda_{j,k} |  \|S_J b_{j,k} - b_{j,k}  \|_{ \mathcal{H}_{\vec p \, ^\prime}^{t',r'} }  + (C+1) \epsilon .
\end{align*}
Next we estimate $\|S_J b_{j,k} - b_{j,k}  \|_{ \mathcal{H}_{\vec p \, ^\prime}^{t',r'} }$. 
Let $\epsilon' \in (0,1) $ sufficiently close to $0$, say $ 0< \epsilon' < n/t$.
Then from \cite[proof of Lemma 3.1]{IST15}, we have
\begin{align*}
&	| S_J b_{j,k} (x) - b_{j,k} (x) |   \lesssim  | S_J b_{j,k} (x) - b_{j,k} (x) |  \chi_{ 3 Q_{j,k}} (x)\\
	& + \left(  \frac{1}{  2^J( 2^{-j} + |x - c ( Q_{j,k} ) |  ) ^{n+1}}  +  \frac{1}{  2^{ J \epsilon' } ( 2^{-j} + |x - c ( Q_{j,k} ) |  ) ^{n -\epsilon' }} \right) \|  b_{j,k}\|_{L^1} .
\end{align*}
For the first term, we can invoke the Littlewood-Paley theory for $L^ {\vec p} $. Since $ (a+|x|) ^{-n+ \epsilon'  }  \in \mathcal{H}_{\vec p \, ^\prime}^{t',r'} $ for all $a>0$ by Example \ref{exm (1+|x|) M in block}, we have
\begin{equation*}
	\lim_{J\to \infty}  \| S_J b_{j,k} - b_{j,k}\|_{\mathcal{H}_{\vec p \, ^\prime}^{t',r'}  } =0
\end{equation*}
for fixed $(j,k) \in \{  (j,k)\in \mathbb Z^{1+n} : |(j,k)|_\infty \le K \}$. Hence
\begin{equation*}
	\limsup_{J \to \infty} \| g - S_J g\|_{\mathcal{H}_{\vec p \, ^\prime}^{t',r'} } \le (C+2) \epsilon .
\end{equation*}
Since $ \epsilon > 0 $ being arbitrary, we have (\ref{topo L-P block}).
\end{proof}

Using these lemmas, we show Theorem \ref{LP char mixed BM} (i) and (ii).

\begin{lemma} \label{lem f le LP}
		Let $ 1 < \vec p <\infty$. Let   $1 < n / ( \sum_{i=1}^n  1/p_{i})   < t <r<\infty $ or $ 1< n / ( \sum_{i=1}^n  1/p_{i})  \le t< r =\infty  $. 
		Let $\varphi$ be the same as in Definition \ref{homo unity}.

	{\rm (i)}
		Assume that $f\in M_{\vec p}^{t,r} $ satisfies
		\begin{equation} \label{con sum 2}
			\left\|  \left(  \sum_{j\in \mathbb Z} | \F^{-1} (\varphi_j \F f) |^2  \right) ^{1/2}    \right\|_{ M_{\vec p}^{t,r}} <\infty.
		\end{equation}
		Then the limit $f = \lim_{J \to \infty} \sum_{j=-J } ^J \F^{-1} (\varphi_j \F f) $ exists in the weak-$*$ topology in $ M_{\vec p}^{t,r}$ and we have
	the estimate
	\begin{equation} \label{f le LP}
		\|f\|_{ M_{\vec p}^{t,r}} \lesssim \left\|  \left(  \sum_{j\in \mathbb Z} | \F^{-1} (\varphi_j \F f) |^2  \right) ^{1/2}    \right\|_{ M_{\vec p}^{t,r}} .
	\end{equation}

{\rm (ii)} 
	Assume that $g \in \mathcal{H}_{\vec p \, ^\prime}^{t',r'} $ satisfies
\begin{equation*}
	\left\|  \left(  \sum_{j\in \mathbb Z} | \F^{-1} (\varphi_j \F g) |^2  \right) ^{1/2}    \right\|_{ \mathcal{H}_{\vec p \, ^\prime}^{t',r'} } <\infty.
\end{equation*}
Then the limit $g = \lim_{J \to \infty} \sum_{j=-J } ^J \F^{-1} (\varphi_j \F g) $ exists in the weak topology in $ \mathcal{H}_{\vec p \, ^\prime}^{t',r'} $ and we have
\begin{equation*} 
	\|g\|_{ \mathcal{H}_{\vec p \, ^\prime}^{t',r'}} \lesssim \left\|  \left(  \sum_{j\in \mathbb Z} | \F^{-1} (\varphi_j \F g) |^2  \right) ^{1/2}    \right\|_{ \mathcal{H}_{\vec p \, ^\prime}^{t',r'}} .
\end{equation*}
\end{lemma}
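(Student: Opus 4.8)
The plan is to argue by duality with the predual block space $\mathcal{H}_{\vec p \, ^\prime}^{t',r'}$, using Theorem~\ref{predual mix BM}, the square function bounds of Lemma~\ref{ell 2 le BM and block}, and the norm convergence of Lemma~\ref{LP block}. Write $S_J h := \sum_{j=-J}^J \F^{-1}(\varphi_j \F h)$ for the partial Littlewood--Paley sums. Applying Lemma~\ref{T epsilo AS CZ} with $\epsilon_j = 1$ for $|j|\le J$ and $\epsilon_j = 0$ otherwise, $S_J$ is a singular integral operator whose Calder\'on--Zygmund kernel bounds are uniform in $J$; hence, by the boundedness of singular integral operators on $M_{\vec p}^{t,r}$ established above and by Theorem~\ref{T H block}, $S_J$ maps $M_{\vec p}^{t,r}$ and $\mathcal{H}_{\vec p \, ^\prime}^{t',r'}$ into themselves with operator norms bounded independently of $J$ (with $S_J f$ understood in the distributional Fourier-multiplier sense). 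Since $\varphi$ is real and radial, each kernel $\F^{-1}(\varphi_j)$ is real and even, so $S_J$ is self-adjoint: $\int_\rn (S_J h_1)(x)\,h_2(x)\,\d x = \int_\rn h_1(x)\,(S_J h_2)(x)\,\d x$. Finally, fix an auxiliary $\tilde\varphi \in \mathscr S$ supported away from the origin with $\tilde\varphi \equiv 1$ on $\operatorname{supp}\varphi$, so that $\varphi_j = \varphi_j\tilde\varphi_j$ for $\tilde\varphi_j := \tilde\varphi(2^{-j}\cdot)$ and Lemma~\ref{ell 2 le BM and block} applies equally to the family $\{\tilde\varphi_j\}_{j\in\mathbb Z}$.

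For part (i), let $f \in M_{\vec p}^{t,r}$ satisfy (\ref{con sum 2}). For $g \in \mathcal{H}_{\vec p \, ^\prime}^{t',r'}$, termwise Plancherel (moving the factor $\varphi_j$ onto $g$ by writing $\varphi_j = \varphi_j\tilde\varphi_j$ and using the evenness of $\tilde\varphi_j$), followed by Cauchy--Schwarz in $j$ and the duality pairing of Theorem~\ref{predual mix BM}, gives
\begin{align*}
 \left| \int_\rn (S_J f)(x)\, g(x)\,\d x \right|
 &= \left| \sum_{j=-J}^J \int_\rn \F^{-1}(\varphi_j \F f)(x)\, \F^{-1}(\tilde\varphi_j \F g)(x)\,\d x \right| \\
 &\le \int_\rn \Big( \sum_{j\in\mathbb Z} |\F^{-1}(\varphi_j \F f)(x)|^2 \Big)^{1/2} \Big( \sum_{j\in\mathbb Z} |\F^{-1}(\tilde\varphi_j \F g)(x)|^2 \Big)^{1/2} \d x \\
 &\le \Big\| \Big( \sum_{j\in\mathbb Z} |\F^{-1}(\varphi_j \F f)|^2 \Big)^{1/2} \Big\|_{M_{\vec p}^{t,r}} \Big\| \Big( \sum_{j\in\mathbb Z} |\F^{-1}(\tilde\varphi_j \F g)|^2 \Big)^{1/2} \Big\|_{\mathcal{H}_{\vec p \, ^\prime}^{t',r'}},
\end{align*}
and by Lemma~\ref{ell 2 le BM and block}(ii) the last factor is $\lesssim \|g\|_{\mathcal{H}_{\vec p \, ^\prime}^{t',r'}}$; note the right-hand side is independent of $J$. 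On the other hand, self-adjointness and Lemma~\ref{LP block} give, for each fixed $g$,
\begin{equation*}
 \int_\rn (S_J f)(x)\, g(x)\,\d x = \int_\rn f(x)\, (S_J g)(x)\,\d x \longrightarrow \int_\rn f(x)\, g(x)\,\d x \qquad (J\to\infty),
\end{equation*}
since $S_J g \to g$ in $\mathcal{H}_{\vec p \, ^\prime}^{t',r'}$ and $L_f$ is a bounded functional there (Theorem~\ref{predual mix BM}). As $S_J f \in M_{\vec p}^{t,r}$ with norm bounded uniformly in $J$, this shows $S_J f \to f$ in the weak-$*$ topology of $M_{\vec p}^{t,r}$. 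Moreover, combining the two displays and then taking the supremum over $g$ with $\|g\|_{\mathcal{H}_{\vec p \, ^\prime}^{t',r'}} \le 1$, the identity $\|f\|_{M_{\vec p}^{t,r}} = \|L_f\|_{(\mathcal{H}_{\vec p \, ^\prime}^{t',r'})^*}$ from Theorem~\ref{predual mix BM} yields (\ref{f le LP}).

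For part (ii), since $g$ is assumed to already lie in $\mathcal{H}_{\vec p \, ^\prime}^{t',r'}$, Lemma~\ref{LP block} gives $S_J g \to g$ in norm, hence a fortiori in the weak topology, which is the asserted convergence. For the estimate, the symmetric computation yields, for $f \in M_{\vec p}^{t,r}$,
\begin{equation*}
 \left| \int_\rn (S_J g)(x)\, f(x)\,\d x \right| \le \Big\| \Big( \sum_{j\in\mathbb Z} |\F^{-1}(\varphi_j \F g)|^2 \Big)^{1/2} \Big\|_{\mathcal{H}_{\vec p \, ^\prime}^{t',r'}} \Big\| \Big( \sum_{j\in\mathbb Z} |\F^{-1}(\tilde\varphi_j \F f)|^2 \Big)^{1/2} \Big\|_{M_{\vec p}^{t,r}},
\end{equation*}
where now the last factor is $\lesssim \|f\|_{M_{\vec p}^{t,r}}$ by Lemma~\ref{ell 2 le BM and block}(i). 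Letting $J\to\infty$ (using $S_J g \to g$ in norm, so that $\int_\rn (S_J g)\,f \to \int_\rn g\,f$) and taking the supremum over $f$ with $\|f\|_{M_{\vec p}^{t,r}}\le 1$, the norm-attainment formula (\ref{H = max M le 1}) for $\|g\|_{\mathcal{H}_{\vec p \, ^\prime}^{t',r'}}$ gives the claimed bound. The step requiring the most care is the first display: one must check that the termwise Plancherel identity $\int_\rn \F^{-1}(\varphi_j \F f)\,\F^{-1}(\tilde\varphi_j \F g)\,\d x = \int_\rn \F^{-1}(\varphi_j \F f)\,g\,\d x$ is legitimate for $f \in M_{\vec p}^{t,r}$ — regarded as a tempered distribution via $M_{\vec p}^{t,r}\subset M_{\vec p}^{t,\infty}\subset\mathscr S'$ — and for $g$ in the block space, and that the (finite) sum over $j$ may be pulled out of the pairing; this is routine once one observes that each $\F^{-1}(\varphi_j \F f)$ is a smooth function of polynomial growth lying in $L^{\vec p}_{\operatorname{loc}}$ and that $g \in L^{\vec p \, ^\prime}_{\operatorname{loc}}$. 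The remaining ingredients — the duality of Theorem~\ref{predual mix BM}, the square function bounds of Lemma~\ref{ell 2 le BM and block}, and the norm convergence of Lemma~\ref{LP block} — are already in hand, so the argument is essentially an assembly of these.
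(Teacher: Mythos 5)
Your proof is correct and follows essentially the same strategy as the paper: pair $S_J f := \sum_{|j|\le J}\F^{-1}(\varphi_j\F f)$ against $g$ in the block space, exploit finite overlap to reduce to a Cauchy--Schwarz in $j$ and the square-function estimate of Lemma~\ref{ell 2 le BM and block}, and invoke the $\mathcal H$-norm convergence of Lemma~\ref{LP block} together with the duality of Theorem~\ref{predual mix BM} to obtain both the weak-$*$ limit and the norm bound. Your version is a bit more streamlined than the paper's in that you derive both conclusions directly from the pairing bound, whereas the paper first bounds $\|f_J\|_{M_{\vec p}^{t,r}}$ by a separate maximal-function argument and then verifies the limit; the two routes are morally identical.

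One small point to tidy up: you invoke self-adjointness of $S_J$ by appealing to $\varphi$ being radial, but Definition~\ref{homo unity} imposes no such symmetry, so in general $S_J^* \ne S_J$. This is harmless because the exact identity you need is the transposition $\langle\F^{-1}(\varphi_j\F f),g\rangle=\langle f,\F^{-1}(\check\varphi_j\F g)\rangle$ with $\check\varphi_j:=\varphi_j(-\cdot)$, and $\{\check\varphi_j\}$ is again a partition of unity in the sense of Definition~\ref{homo unity}, so both Lemma~\ref{ell 2 le BM and block} and Lemma~\ref{LP block} apply to it unchanged; the paper's proof works with $\check\varphi_j$ throughout for exactly this reason. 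Alternatively, you may simply fix $\psi$ (hence $\varphi$) even at the outset. Either fix closes the gap, and the rest of the argument is sound.
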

\begin{proof}
	(i)	We first prove the existence of the limit. For all $g \in \mathcal{H}_{\vec p \, ^\prime}^{t',r'} $, by the duality
	\begin{align*}
		\left| \langle f,g\rangle - \left\langle   \sum_{j=-J } ^J \F^{-1} (\varphi_j \F f) ,g \right\rangle    \right| & =	\left| \left\langle f, g -\sum_{j=-J } ^J \F^{-1} (\tilde \varphi_j \F g)  \right\rangle     \right| \\
		& \le \| f\|_{  M_{\vec p}^{t,r}} \left\|g -\sum_{j=-J } ^J \F^{-1} (\tilde \varphi_j \F g)    \right\|_{ \mathcal{H}_{\vec p \, ^\prime}^{t',r'} }.
	\end{align*}
By Lemma \ref{LP block}, the limit $f =\sum_{j=-\infty } ^\infty \F^{-1} (\varphi_j \F f) $ exists in the weak-$*$ topology in $ M_{\vec p}^{t,r}$. 

Next we show the estimate (\ref{f le LP}). 
Put $f_J = \sum_{j=-J } ^J \F^{-1} (\varphi_j \F f)$ for $J \in \mathbb N$. Since $f_J$ converges to  $f$ in the weak-$*$ topology of $ M_{\vec p}^{t,r}$, it suffices to show 
\begin{equation} \label{f_J lesssim LP}
	\|f_J \|_{M_{\vec p}^{t,r} } \lesssim  \left\|\left(  \sum_{j\in \mathbb Z} | \F^{-1} (\varphi_j \F f) |^2  \right) ^{1/2}    \right\|_{ M_{\vec p}^{t,r}}.
\end{equation}
For a fixed $J$, we obtain $f_J \in M_{\vec p}^{t,r} $ by the condition (\ref{con sum 2}). Indeed, since $\varphi \in \mathscr S $, by \cite[Theorem 2.1.10]{G14}, we have
\begin{equation*}
	| \F^{-1} (\varphi_j \F f_J) | = \left|\F^{-1} \left( \sum_{k=j-1}^{j+1} \varphi_k  \varphi_j \F f \right)  \right| \lesssim \M (\F^{-1} (\varphi_j \F f) ),
\end{equation*}
where the implicit constant is independent of $J$. Using  Theorem \ref{HL seq mix BM} and condition (\ref{con sum 2}), we have
\begin{equation*}
	\|  \F^{-1} (\varphi_j \F f_J) \|_{  M_{\vec p}^{t,r} } \lesssim 	\|  \M (\F^{-1} (\varphi_j \F f) )  \|_{  M_{\vec p}^{t,r} }  \lesssim 	\| \F^{-1} (\varphi_j \F f)   \|_{  M_{\vec p}^{t,r} }  <\infty .
\end{equation*}
Note that $\F^{-1} (\varphi_j \F f_J) =0 $ if $|j| >J+1$  and $0 \notin \operatorname{supp} ( \F f_J ) $. Hence we get
\begin{equation*}
	f_J = \sum_{j=-\infty } ^\infty \F^{-1} (\varphi_j \F f_J)  = \sum_{j=-J-1 } ^{J+1}\F^{-1} (\varphi_j \F f_J)  \in  M_{\vec p}^{t,r} .
\end{equation*}
Then taking $g \in \mathcal{H}_{\vec p \, ^\prime}^{t',r'} $ with  $\| g\|_{ \mathcal{H}_{\vec p \, ^\prime}^{t',r'} }  \le 1$ arbitrary, we obtain
\begin{align*}
	\int_\rn f_J (x) g(x) \d x & =  \lim_{K \to \infty} \sum_{k=-K}^K \langle f_J , \F^{-1} (\tilde \varphi_k \F g)  \rangle \\
	& = \lim_{K \to \infty} \sum_{k=-K}^K \left\langle \F^{-1} \left(  \varphi_k  \sum_{j=-J } ^J \varphi_j \F f \right) , g  \right\rangle
\end{align*}
by Lemma \ref{LP block}. Note that $ \varphi_k \varphi_j =0$ if $|j-k| >1 $.  Hence
\begin{align*}
	\int_\rn f_J (x) g(x) \d x & = \sum_{k=-J-1}^{J+1} \left\langle \F^{-1} \left(  \varphi_k (\varphi_{k-1} + \varphi_k+\varphi_{k+1} )\F f \right) , g  \right\rangle \\
	& =  \sum_{k=-J-1}^{J+1} \left\langle \F^{-1} \left(  (\varphi_{k-1} + \varphi_k+\varphi_{k+1} )\F f \right) , \F^{-1} (\tilde \varphi_k \F g )  \right\rangle \\
	& = \int_\rn \sum_{k=-J-1}^{J+1}  \F^{-1} \left(  (\varphi_{k-1} + \varphi_k+\varphi_{k+1} )\F f \right) (x) \F^{-1} (\tilde \varphi_k \F g ) (x)  \d x .
\end{align*}
Here, for $h \in L^0$, denote $\tilde h = h (-\cdot)$ again.  Using the Cauchy-Schwartz inequality and the duality (Theorem \ref{predual mix BM}), we have
\begin{align*}
	& \left| \int_\rn f_J (x) g(x) \d x \right| \\
	&\lesssim  \int_\rn \left( \sum_{k=-J-2}^{J+2}  | \F^{-1} \left(   \varphi_k \F f \right) (x)|^2 \right)^{1/2}   \left( \sum_{k=-J-1}^{J+1}  |\F^{-1} (\tilde \varphi_k \F g ) (x) |^2 \right)^{1/2}    \d x \\
	& \le \left\| \left( \sum_{k=-J-2}^{J+2}  | \F^{-1} \left(   \varphi_k \F f \right) |^2 \right)^{1/2}   \right\| _{ M_{\vec p}^{t,r}   }  \left\| \left( \sum_{k=-J-1}^{J+1}  |\F^{-1} (\tilde \varphi_k \F g ) (x) |^2 \right)^{1/2} \right\|_{ \mathcal{H}_{\vec p \, ^\prime}^{t',r'}  }.
\end{align*}
By Lemma \ref{ell 2 le BM and block}  and $\| g\|_{ \mathcal{H}_{\vec p \, ^\prime}^{t',r'} }  \le 1$, we have
\begin{align*}
	\left| \int_\rn f_J (x) g(x) \d x \right| 
 \lesssim \left\| \left( \sum_{k=-\infty}^{ \infty }  | \F^{-1} \left(   \varphi_k \F f \right) |^2 \right)^{1/2}   \right\| _{ M_{\vec p}^{t,r}   }  .
\end{align*}
Then by Theorem \ref{predual mix BM}, we obtain
\begin{equation*}
	\| f_J \|_{ M_{\vec p}^{t,r} }
 \lesssim \left\| \left( \sum_{k=-\infty}^{\infty}  | \F^{-1} \left(   \varphi_k \F f \right) |^2 \right)^{1/2}   \right\| _{ M_{\vec p}^{t,r}   }  .
\end{equation*}
(ii) The similar arguments in (i) work in the predual space $\mathcal{H}_{\vec p \, ^\prime}^{t',r'}$ and we omit it here.
\end{proof}

Next we show the Littlewood-Paley characterization for the elements of $\mathscr S ' $. The following lemma corresponds to Theorem \ref{LP char mixed BM} (iii).
\begin{lemma} \label{lem LP char S'}
		Let $ 1 < \vec p <\infty$. Let   $1 < n / ( \sum_{i=1}^n  1/p_{i})   < t <r<\infty $ or $ 1< n / ( \sum_{i=1}^n  1/p_{i})  \le t< r =\infty  $. 
		Let  $\varphi$  be the same as in Definition \ref{homo unity}.
	Assume that $f \in \mathscr S ' $ satisfies 
		\begin{equation*}
		\left\|  \left(  \sum_{j\in \mathbb Z} | \F^{-1} (\varphi_j \F f) |^2  \right) ^{1/2}    \right\|_{ M_{\vec p}^{t,r}} <\infty.
	\end{equation*}
Then the limit $F = \lim_{J \to \infty} \sum_{j =-J} ^J  \F^{-1} (\varphi_j \F f)  $ exists in the weak-$*$ topology of $M_{\vec p}^{t,r}$ and we have
\begin{equation} \label{eq char BM LP}
	\|F\|_{ M_{\vec p}^{t,r}} \approx \left\|  \left(  \sum_{j\in \mathbb Z} | \F^{-1} (\varphi_j \F f) |^2  \right) ^{1/2}    \right\|_{ M_{\vec p}^{t,r}} .
\end{equation}
\end{lemma}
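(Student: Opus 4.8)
The plan is to follow the proof of Lemma \ref{lem f le LP} almost verbatim, the one genuinely new point being that here $f$ is only a tempered distribution, so the partial sums $S_J f:=\sum_{j=-J}^{J}\F^{-1}(\varphi_j\F f)$ need not converge to $f$ itself; they converge to a function $F$ agreeing with $f$ modulo a polynomial. Throughout write $A:=\big\|\big(\sum_{j\in\mathbb Z}|\F^{-1}(\varphi_j\F f)|^2\big)^{1/2}\big\|_{M_{\vec p}^{t,r}}$, which is finite by hypothesis. Note first that each $\varphi_j\F f$ is a compactly supported distribution, so $\F^{-1}(\varphi_j\F f)$ is a smooth function of polynomial growth; by the hypothesis it moreover lies in $M_{\vec p}^{t,r}$ with $\|\F^{-1}(\varphi_j\F f)\|_{M_{\vec p}^{t,r}}\le A$, and hence $S_Jf\in M_{\vec p}^{t,r}$ for each $J$, with $0\notin\operatorname{supp}\F(S_Jf)$.

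The first step is the uniform bound $\sup_J\|S_Jf\|_{M_{\vec p}^{t,r}}\lesssim A$. This is exactly the computation in the proof of Lemma \ref{lem f le LP}(i) establishing \eqref{f_J lesssim LP}: for $g\in\mathcal{H}_{\vec p \, ^\prime}^{t',r'}$ with $\|g\|_{\mathcal{H}_{\vec p \, ^\prime}^{t',r'}}\le1$ one expands $\int_\rn S_Jf\, g$ using Lemma \ref{LP block} on the $g$-side, uses $\varphi_k\varphi_j=0$ for $|k-j|>1$, the Cauchy--Schwarz inequality in the frequency index, the pairing of Theorem \ref{predual mix BM}, and Lemma \ref{ell 2 le BM and block}(ii). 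Every ingredient here only needs $S_Jf\in M_{\vec p}^{t,r}$ together with its Fourier support, not that $f$ is a function, so the argument applies verbatim and yields $\big|\int_\rn S_Jf\,g\big|\lesssim A$, hence $\|S_Jf\|_{M_{\vec p}^{t,r}}\lesssim A$ by Theorem \ref{predual mix BM}.

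Next I would prove that $\{S_Jf\}_J$ converges in the weak-$*$ topology. For $r=\infty$ this is the mixed Morrey case already contained in \cite{N24}, so suppose $r<\infty$. I claim $\{S_Jf\}$ is weak-$*$ Cauchy. Fix $g\in\mathcal{H}_{\vec p \, ^\prime}^{t',r'}$ and $J'>J$. Since $\sum_k\varphi_k\equiv1$ off the origin and at most the five bumps $\varphi_{j-2},\dots,\varphi_{j+2}$ can overlap $\operatorname{supp}\varphi_j$, Plancherel's identity gives $\int_\rn\F^{-1}(\varphi_j\F f)\,g=\sum_{|k-j|\le2}\int_\rn\F^{-1}(\varphi_j\F f)\,\widetilde{\F^{-1}(\varphi_k\F g)}$, where $\widetilde h:=h(-\cdot)$; summing over $J<|j|\le J'$, applying Cauchy--Schwarz in $k$, Hölder via Theorem \ref{predual mix BM}, and Lemma \ref{ell 2 le BM and block}(ii), one obtains
\[
\Big|\big\langle S_{J'}f-S_Jf,\,g\big\rangle\Big|\lesssim\Big\|\Big(\sum_{|j|>J-2}|\F^{-1}(\varphi_j\F f)|^2\Big)^{1/2}\Big\|_{M_{\vec p}^{t,r}}\,\|g\|_{\mathcal{H}_{\vec p \, ^\prime}^{t',r'}}.
\]
The first factor tends to $0$ as $J\to\infty$: the function $\big(\sum_{|j|>N}|\F^{-1}(\varphi_j\F f)|^2\big)^{1/2}$ decreases to $0$ a.e. and is dominated by $\big(\sum_j|\F^{-1}(\varphi_j\F f)|^2\big)^{1/2}\in M_{\vec p}^{t,r}$, so its norm tends to $0$ by the dominated convergence property of $M_{\vec p}^{t,r}$ recorded in the remark after Lemma \ref{lem fatou}. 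Thus $\langle S_Jf,g\rangle$ converges for every $g$, and since $\mathcal{H}_{\vec p \, ^\prime}^{t',r'}$ is a separable Banach space (remark after Theorem \ref{finite decom}) whose dual is $M_{\vec p}^{t,r}$, the limit functional is represented by a unique $F\in M_{\vec p}^{t,r}$ with $\|F\|_{M_{\vec p}^{t,r}}\le\liminf_J\|S_Jf\|_{M_{\vec p}^{t,r}}\lesssim A$; this is the ``$\lesssim$'' half of \eqref{eq char BM LP}.

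Finally, to identify $F$ and get the reverse inequality: for $J\ge|j|+2$ one has $\varphi_j\sum_{|k|\le J}\varphi_k=\varphi_j$, so $\F^{-1}(\varphi_j\F S_Jf)=\F^{-1}(\varphi_j\F f)$. Testing against $\phi\in C_c^\infty(\rn)$, one has $\int_\rn\F^{-1}(\varphi_j\F S_Jf)\,\phi=\int_\rn S_Jf\,\F^{-1}(\varphi_j\F\phi)$, and $\F^{-1}(\varphi_j\F\phi)\in\mathscr S$ decays rapidly, hence lies in $\mathcal{H}_{\vec p \, ^\prime}^{t',r'}$ by Example \ref{exm (1+|x|) M in block} together with the lattice property (Lemma \ref{lem lattice block}); letting $J\to\infty$ and using weak-$*$ convergence gives $\F^{-1}(\varphi_j\F F)=\F^{-1}(\varphi_j\F f)$ a.e.\ for every $j$. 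Applying Lemma \ref{ell 2 le BM and block}(i) to $F\in M_{\vec p}^{t,r}$ then yields $A=\big\|\big(\sum_j|\F^{-1}(\varphi_j\F F)|^2\big)^{1/2}\big\|_{M_{\vec p}^{t,r}}\lesssim\|F\|_{M_{\vec p}^{t,r}}$, which with the previous bound proves \eqref{eq char BM LP}. The main obstacle is the third paragraph: arranging that the \emph{full} sequence $S_Jf$ (not merely a subsequence extracted by Banach--Alaoglu) converges weak-$*$, using estimates that nowhere presuppose $f$ is a function; this is precisely where the frequency-localization bookkeeping and the dominated convergence property of $M_{\vec p}^{t,r}$ for $r<\infty$ do the work, with the endpoint $r=\infty$ taken from \cite{N24}.
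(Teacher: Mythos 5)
Your proof is correct, and it takes a genuinely different route from the paper's. The paper first extracts a weak-$*$ convergent subsequence $\{f_{J(K)}\}$ via the Banach--Alaoglu theorem (using Theorem \ref{predual mix BM} and separability of $\mathcal{H}_{\vec p\,'}^{t',r'}$), and then upgrades subsequential convergence to full-sequence convergence by a Fourier-multiplier argument showing $\sum_{|j|\le J}\F^{-1}(\varphi_j\F F)=f_J$; the latter step exploits Lemma \ref{lem f le LP} applied to $F\in M_{\vec p}^{t,r}$. You instead prove directly that $\{S_Jf\}$ is weak-$*$ Cauchy, via the frequency-tail estimate
$|\langle S_{J'}f-S_Jf,g\rangle|\lesssim\big\|(\sum_{|j|>J-2}|\F^{-1}(\varphi_j\F f)|^2)^{1/2}\big\|_{M_{\vec p}^{t,r}}\|g\|_{\mathcal{H}_{\vec p\,'}^{t',r'}}$,
and then invoke weak-$*$ sequential completeness of the dual space $M_{\vec p}^{t,r}=(\mathcal{H}_{\vec p\,'}^{t',r'})^*$. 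This avoids the subsequence-extraction step entirely, which is a cleaner route. The trade-off is that making the right-hand side tend to zero requires the $r<\infty$ structure of the norm, so the endpoint $r=\infty$ must be imported from \cite{N24}; the paper's Banach--Alaoglu route handles $r\le\infty$ uniformly.

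One small imprecision worth flagging: you attribute the fact that $\big\|(\sum_{|j|>N}|\F^{-1}(\varphi_j\F f)|^2)^{1/2}\big\|_{M_{\vec p}^{t,r}}\to0$ to ``the dominated convergence property recorded in the remark after Lemma \ref{lem fatou}.'' That remark only records a \emph{monotone-increase} statement ($0\le f_k\uparrow f$ implies $\|f_k\|\uparrow\|f\|$), which does not directly yield the decreasing-to-zero statement you need. The correct justification is either the absolute continuity of the $M_{\vec p}^{t,r}$-norm established in Lemma \ref{lem Absolutely continuous norm} (whose hypotheses match the present lemma when $r<\infty$), or a direct argument unfolding the definition of the norm: for each dyadic $Q$ the summand $|Q|^{r/t-(r/n)\sum_i1/p_i}\|g_N\chi_Q\|_{L^{\vec p}}^r$ tends to zero by dominated convergence in $L^{\vec p}$, and the summands over $Q$ are dominated by a summable series coming from the finite square-function norm, so the sum tends to zero. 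Either way the fact is true and your argument goes through; only the pointer is off.
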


\begin{proof}
	Let $f_J = \sum_{j =-J} ^J  \F^{-1} (\varphi_j \F f)$ for $J \in \mathbb N$. We obtain $f_J \in  M_{\vec p}^{t,r}$ for fixed $J$ in the same way as in Lemma \ref{lem f le LP}. Recall that $  (\mathcal{H}_{\vec p \, ^\prime}^{t',r'} )^* =  M_{\vec p}^{t,r}$ and $  \mathcal{H}_{\vec p \, ^\prime}^{t',r'} $ is separable. Hence, by the Banach-Alaoglu theorem (for example, see \cite[Theorem 89]{SDH20}), we can find a subsequence $ \{f_{J(K)}\}_{K \in \mathbb N} \subset \{f_J\}_{J \in \mathbb N} $  which converges in the weak-$*$ topology of $M_{\vec p}^{t,r}$ to a function $F \in M_{\vec p}^{t,r}$. That is, 
	for all $g \in \mathcal{H}_{\vec p \, ^\prime}^{t',r'}$,
	\begin{equation*}
		\lim_{K \to \infty} \int_\rn f_{J(K)}  (x) g(x) \d x = \int_\rn F (x) g(x) \d x .
	\end{equation*}
Next we show $f_J$ converges to $F$ in the weak-$*$ topology of $M_{\vec p}^{t,r}$. Since $F \in M_{\vec p}^{t,r}$, by Lemma \ref{lem f le LP}, for  $\epsilon >0$, there exists $J \in\mathbb N$ such that
\begin{equation*}
	\left|   \int_\rn \left(F (x) -\sum_{j =-J} ^J  \F^{-1} (\varphi_j \F F)   \right)  g(x) \d x \right| <\epsilon.
\end{equation*}
Fix $J \in \mathbb N$  as above. Then
\begin{equation*}
	\sum_{j =-J} ^J  \F^{-1} (\varphi_j \F F) = \sum_{j =-J} ^J  \F^{-1} (\varphi_j) *  F = \sum_{j =-J} ^J \langle F,  \F^{-1} (\varphi_j) (x- \cdot) \rangle .
\end{equation*}
Since $f_{J(K)} \to F$ in the weak-$*$ topology of $M_{\vec p}^{t,r}$, there exists a number $K=K_J$ depending on $J$ such that 
\begin{equation*}
		\left|   
		\left\langle  F-  f_{J(K)} , \sum_{j =-J} ^J \F^{-1} (\varphi_j)(x- \cdot)   \right\rangle \right|
	 <\epsilon.
\end{equation*} 
So we calculate	
\begin{equation*}
	   	\left\langle   f_{J(K)} , \sum_{j =-J} ^J \F^{-1} (\varphi_j)(x- \cdot)   \right\rangle  .
\end{equation*}
By the definition of $f_{J(K)} $ and the translation property of the Fourier transform, we have
\begin{align*}
		\left\langle  f_{J(K)} , \sum_{j =-J} ^J \F^{-1} (\varphi_j)(x- \cdot)   \right\rangle  & = 	\left\langle  
		\sum_{k = -J(K)} ^{J(K)}  \F^{-1} (\varphi_k \F f),  \sum_{j =-J} ^J  \F ( e^{2\pi x \cdot} \varphi_j )
	   \right\rangle  \\
		& = \left\langle  
		\F f,  \sum_{j =-J} ^J \sum_{k = -J(K)} ^{J(K)}  e^{2\pi x \cdot} \varphi_k \varphi_j 
		\right\rangle .
\end{align*}
If necessary, taking $K =K_J$ sufficiently large, we have
\begin{equation*}
		\left\langle  f_{J(K)} , \sum_{j =-J} ^J \F^{-1} (\varphi_j)(x- \cdot)   \right\rangle = \left\langle  
		\F f,  \sum_{j =-J} ^J   e^{2\pi x \cdot} (\varphi_{j-1} + \varphi_j + \varphi_{j+1} ) \varphi_j 
		\right\rangle .
\end{equation*}
Since $ \varphi_{j-1} + \varphi_j + \varphi_{j+1} =1 $ on the supp $\varphi_j$, we obtain
\begin{equation*}
		\left\langle  f_{J(K)} , \sum_{j =-J} ^J \F^{-1} (\varphi_j)(x- \cdot)   \right\rangle = \left\langle  
	\F f,  \sum_{j =-J} ^J   e^{2\pi x \cdot}  \varphi_j 
	\right\rangle = f_J.
\end{equation*}
Hence
\begin{equation*}
	 \sum_{j =-J} ^J   \F ^{-1}( \varphi_j \F F) =
	\lim_{K \to \infty} \sum_{j =-J} ^J   \F ^{-1} ( \varphi_j \F f_{J(K)})
=	
\sum_{j =-J} ^J  \F^{-1} (\varphi_j \F f)	= f_J .
\end{equation*}
Thus we obtain
\begin{equation*}
	\int_\rn F(x) g(x) \d x = \lim_{J \to \infty }\int_\rn \sum_{j=-J}^J f_J (x) g(x) \d x .
\end{equation*}
Since $g\in \mathcal{H}_{\vec p \, ^\prime}^{t',r'}$ is arbitrary, it follows that $f_J$ converges to $F$ in the weak-$*$ topology of $M_{\vec p}^{t,r} $.

Finally, we prove (\ref{eq char BM LP}). Since $f_J$ converges to $F$ in the weak-$*$ topology of $M_{\vec p}^{t,r} $, by  (i) of Lemma \ref{lem f le LP}, we obtain
\begin{equation*}
	\|F \|_{M_{\vec p}^{t,r}  } \le \liminf_{J\to \infty} 	\|F_J \|_{M_{\vec p}^{t,r}  } \lesssim \left\|  \left(  \sum_{j\in \mathbb Z} | \F^{-1} (\varphi_j \F f) |^2  \right) ^{1/2}    \right\|_{ M_{\vec p}^{t,r}}.
\end{equation*}
The opposite inequality is  obtained by using  (i) of Lemma \ref{ell 2 le BM and block} and the equation 
\begin{equation*}
	\F ^{-1}( \varphi_j \F F) = 	\lim_{K \to \infty}    \F ^{-1} ( \varphi_j \F f_{J(K)})
	=		  \F^{-1} (\varphi_j \F f) ,
\end{equation*}
since $F \in M_{\vec p}^{t,r}$.
\end{proof}

The next lemma concerns the uniqueness of $F$ in Lemma \ref{lem LP char S'} when $f \in M_{\vec p}^{t,r} $.
\begin{lemma}
		Let $ 1 < \vec p <\infty$. Let   $1 < n / ( \sum_{i=1}^n  1/p_{i})   < t <r<\infty $ or $ 1< n / ( \sum_{i=1}^n  1/p_{i})  \le t< r =\infty  $. 
			Let  $\varphi$  be the same as in Definition \ref{homo unity}.
		If $f \in M_{\vec p}^{t,r} $, then 
		\begin{equation} \label{eq unique when f in BM}
			f = \lim_{J \to \infty} \sum_{j =-J} ^J  \F^{-1} (\varphi_j \F f)
		\end{equation}
		in the  weak-$*$ topology of $M_{\vec p}^{t,r} $.
\end{lemma}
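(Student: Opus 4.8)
The plan is to prove directly that the partial sums $f_J := \sum_{j=-J}^J \F^{-1}(\varphi_j \F f)$ converge to $f$ itself in the weak-$*$ topology of $M_{\vec p}^{t,r}$, i.e.\ that $\int_\rn f_J(x)\, g(x)\,\d x \to \int_\rn f(x)\, g(x)\,\d x$ for every $g \in \mathcal{H}_{\vec p \, ^\prime}^{t',r'}$. First I would note that, since $f \in M_{\vec p}^{t,r}$, Lemma \ref{ell 2 le BM and block}(i) gives
\[
\left\| \left( \sum_{j\in\mathbb Z} |\F^{-1}(\varphi_j \F f)|^2 \right)^{1/2} \right\|_{M_{\vec p}^{t,r}} \lesssim \|f\|_{M_{\vec p}^{t,r}} < \infty,
\]
so the hypotheses of Lemma \ref{lem f le LP}(i) hold; consequently each $f_J$ already lies in $M_{\vec p}^{t,r}$ and $\{f_J\}_J$ converges weak-$*$ to \emph{some} limit in $M_{\vec p}^{t,r}$. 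The task is therefore only to identify that limit with $f$.

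The key step is the pairing identity
\[
\int_\rn f_J(x)\, g(x)\,\d x = \int_\rn f(x)\, \widetilde{S}_J g(x)\,\d x, \qquad \widetilde{S}_J g := \sum_{j=-J}^J \F^{-1}(\tilde\varphi_j \F g),
\]
where $\tilde\varphi_j(\xi) := \varphi_j(-\xi)$. This follows from $\F^{-1}(\varphi_j \F f) = \F^{-1}(\varphi_j) * f$ together with Fubini's theorem, because $\F^{-1}(\varphi_j) \in \mathscr S$. I would first check it for $g \in L^{\vec p \, ^\prime}_c$, where the compact support of $g$, the embedding $M_{\vec p}^{t,r} \hookrightarrow M_{\vec p}^{t,\infty}$ applied to $f$, and the rapid decay of $\F^{-1}(\varphi_j)$ make the absolute convergence of the double integral immediate, and then pass to a general $g \in \mathcal{H}_{\vec p \, ^\prime}^{t',r'}$ using the density of $L^{\vec p \, ^\prime}_c$ (Theorem \ref{dense block}) and the continuity in $g$ of both sides for each fixed $J$ — the left side because $f_J \in M_{\vec p}^{t,r}$, the right side because $\widetilde{S}_J$ is a singular integral operator bounded on $\mathcal{H}_{\vec p \, ^\prime}^{t',r'}$ by Lemma \ref{T epsilo AS CZ} and Theorem \ref{T H block}. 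Granting the identity, one obtains
\[
\left| \int_\rn \bigl(f - f_J\bigr)(x)\, g(x)\,\d x \right| = \left| \int_\rn f(x)\,\bigl(g - \widetilde{S}_J g\bigr)(x)\,\d x \right| \le \|f\|_{M_{\vec p}^{t,r}}\,\bigl\| g - \widetilde{S}_J g \bigr\|_{\mathcal{H}_{\vec p \, ^\prime}^{t',r'}}.
\]
Since balls are symmetric, $\tilde\psi := \psi(-\cdot)$ still satisfies $\chi_{B(0,2)} \le \tilde\psi \le \chi_{B(0,4)}$, so $\{\tilde\varphi_j\}_j$ is an admissible system in the sense of Definition \ref{homo unity}, and Lemma \ref{LP block} applied to $\tilde\varphi$ gives $\| g - \widetilde{S}_J g \|_{\mathcal{H}_{\vec p \, ^\prime}^{t',r'}} \to 0$ as $J \to \infty$. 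This yields the desired convergence.

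The step I expect to be the main obstacle is the rigorous justification of the pairing identity — concretely, the Fubini interchange, which amounts to the finiteness of $\iint_{\rn \times \rn} |f(y)|\,|\F^{-1}(\varphi_j)(x - y)|\,|g(x)|\,\d y\,\d x$. The density-and-uniform-bounds route sketched above is the cleanest way around it, and it leans on the $\mathcal{H}_{\vec p \, ^\prime}^{t',r'}$-boundedness of the truncated Littlewood--Paley operators (viewed as singular integrals through Lemma \ref{T epsilo AS CZ}), which is already available. Everything else in the argument is a direct consequence of Lemma \ref{LP block} and the duality $(\mathcal{H}_{\vec p \, ^\prime}^{t',r'})^* = M_{\vec p}^{t,r}$ of Theorem \ref{predual mix BM}.
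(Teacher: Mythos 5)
Your argument is correct, but it takes a genuinely different path from the paper's proof of this lemma. The paper combines Lemma \ref{ell 2 le BM and block} with Lemma \ref{lem LP char S'} to conclude that some weak-$*$ limit $h = \lim_J \sum_{|j|\le J}\F^{-1}(\varphi_j\F f)$ exists in $M_{\vec p}^{t,r}$, then observes that $\F(f-h)$ is supported at the origin, so $f-h$ is a polynomial; since both $f$ and $h$ lie in $M_{\vec p}^{t,r}$, which contains no nonzero polynomial, they must agree. Your route instead identifies the limit directly by pairing against $g\in\mathcal{H}_{\vec p\,^\prime}^{t',r'}$, rewriting $\langle f-f_J,g\rangle = \langle f,\,g-\widetilde S_J g\rangle$, and invoking Lemma \ref{LP block} for the convergence $\widetilde S_J g\to g$ in $\mathcal{H}_{\vec p\,^\prime}^{t',r'}$. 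This is in fact the same computation that appears inside the proof of Lemma \ref{lem f le LP}(i), whose statement — read carefully — already asserts that the weak-$*$ limit equals $f$ (not merely that some limit exists), so one could also finish simply by citing that lemma together with Lemma \ref{ell 2 le BM and block}(i). The trade-off: your duality argument is more self-contained and sidesteps the (true but unproved-in-text) fact that $M_{\vec p}^{t,r}$ contains no nonzero polynomials, while the paper's Fourier-support argument is shorter once Lemma \ref{lem LP char S'} is in place and neatly explains \emph{why} the answer is $f$ rather than some other representative of $f$ modulo polynomials. One small point worth tightening: you should note, as you implicitly do, that $\tilde\psi=\psi(-\cdot)$ still satisfies the hypotheses of Definition \ref{homo unity}, so Lemma \ref{LP block} indeed applies to $\widetilde S_J$; this is fine since balls are symmetric.
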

\begin{proof}
	In in view of Lemmas   \ref{ell 2 le BM and block} and \ref{lem LP char S'}, we have
	\begin{equation*}
		h = \lim_{J \to \infty} \sum_{j =-J} ^J  \F^{-1} (\varphi_j \F f) \in M_{\vec p}^{t,r} ,
	\end{equation*}
where the convergence takes place in the  weak-$*$ topology of $M_{\vec p}^{t,r} $. Since $\F (f-h)$ is supported in the origin, $f-h$ must be a polynomial. Since $f,h$ both belong to $M_{\vec p}^{t,r}$, we must have $ f-h =0 $. Consequently, we have (\ref{eq unique when f in BM}).
\end{proof} 

Finally, we prove Theorem \ref{LP char mixed BM} (iv).
\begin{lemma}
	Let $ 1 < \vec p <\infty$. Let   $1 < n / ( \sum_{i=1}^n  1/p_{i})   < t <r<\infty $ or $ 1< n / ( \sum_{i=1}^n  1/p_{i})  \le t< r =\infty  $. 
	Let $\varphi$  be the same as in Definition \ref{homo unity}.
		Assume that $g \in \mathscr S ' $ satisfies 
	\begin{equation} \label{condi ell 2 H}
		\left\|  \left(  \sum_{j\in \mathbb Z} | \F^{-1} (\varphi_j \F g) |^2  \right) ^{1/2}    \right\|_{ \mathcal{H}_{\vec p \, ^\prime}^{t',r'} } <\infty.
	\end{equation}
	Then the limit $G = \lim_{J \to \infty} \sum_{j =-J} ^J  \F^{-1} (\varphi_j \F g)  $ exists in the weak topology of $\mathcal{H}_{\vec p \, ^\prime}^{t',r'}$ and we have
	\begin{equation} \label{eq char H block LP}
		\|G  \|_{ \mathcal{H}_{\vec p \, ^\prime}^{t',r'}} \approx \left\|  \left(  \sum_{j\in \mathbb Z} | \F^{-1} (\varphi_j \F g) |^2  \right) ^{1/2}    \right\|_{ \mathcal{H}_{\vec p \, ^\prime}^{t',r'} } .
	\end{equation}
\end{lemma}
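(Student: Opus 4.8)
The plan is to follow the proof of Lemma~\ref{lem LP char S'} (part (iii) of Theorem~\ref{LP char mixed BM}), interchanging the roles of $M_{\vec p}^{t,r}$ and $\mathcal{H}_{\vec p \, ^\prime}^{t',r'}$. Put $g_J := \sum_{j=-J}^J \F^{-1}(\varphi_j \F g)$ for $J \in \mathbb N$. When $1 < n/(\sum_{i=1}^n 1/p_i) \le t < r = \infty$ the assertion is contained in Nogayama's work \cite{N24}, so I would assume $t < r < \infty$. In this range Theorem~\ref{dual mixed BM} identifies $\mathcal{H}_{\vec p \, ^\prime}^{t',r'}$ with the dual of $M_{\vec p}^{t,r}$, and $M_{\vec p}^{t,r}$ is separable by Theorem~\ref{separable mixed BM}; hence the Banach--Alaoglu theorem is available, just as in Lemma~\ref{lem LP char S'}. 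Write $A := \big\|(\sum_{j\in\mathbb Z} |\F^{-1}(\varphi_j \F g)|^2)^{1/2}\big\|_{\mathcal{H}_{\vec p \, ^\prime}^{t',r'}}$, which is finite by hypothesis~(\ref{condi ell 2 H}).

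\textbf{Step 1: uniform membership of the partial sums.} I would first show $g_J \in \mathcal{H}_{\vec p \, ^\prime}^{t',r'}$ with $\sup_J \|g_J\|_{\mathcal{H}_{\vec p \, ^\prime}^{t',r'}} \lesssim A$. Since $\F g_J$ has compact support in $\mathbb R^n\setminus\{0\}$, Paley--Wiener--Schwartz shows $g_J$ is a smooth function of polynomial growth, in particular $g_J \in L^{\vec p \, ^\prime}_{\operatorname{loc}}$, so $f \mapsto \int_\rn g_J f$ is well defined on $L^\infty_c$ and extends by density (Corollary~\ref{dense L_c mixed BM}) once it is shown bounded. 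To get the bound I would run the dualization argument of Lemma~\ref{lem f le LP}(ii) verbatim: using $\F^{-1}(\varphi_j \F g_J) = \F^{-1}\big((\varphi_{j-1}+\varphi_j+\varphi_{j+1})\varphi_j \F g\big)$, the Peetre-type bound $|\F^{-1}(\varphi_j \F g_J)| \lesssim \M(\F^{-1}(\varphi_j \F g))$ with constant independent of $J$ (so that $\F^{-1}(\varphi_j \F g_J)=0$ for $|j|>J+1$ and, by Theorem~\ref{HL mix block r le infty} and the lattice property of $\mathcal{H}_{\vec p \, ^\prime}^{t',r'}$, each $\F^{-1}(\varphi_j \F g_J) \in \mathcal{H}_{\vec p \, ^\prime}^{t',r'}$), then for $f \in M_{\vec p}^{t,r}$ with $\|f\|_{M_{\vec p}^{t,r}}\le 1$ splitting $\varphi_k = \varphi_{k-1}+\varphi_k+\varphi_{k+1}$ on $\operatorname{supp}\varphi_k$, applying the Cauchy--Schwarz inequality in $\ell^2$, and controlling the $M_{\vec p}^{t,r}$-side square function of $f$ by Lemma~\ref{ell 2 le BM and block}(i) and the $\mathcal{H}_{\vec p \, ^\prime}^{t',r'}$-side by $\big\|(\sum_{|j|\le J+2}|\F^{-1}(\varphi_j\F g)|^2)^{1/2}\big\|_{\mathcal{H}_{\vec p \, ^\prime}^{t',r'}}\le A$. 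By Theorem~\ref{dual mixed BM} this yields $g_J \in \mathcal{H}_{\vec p \, ^\prime}^{t',r'}$ with $\|g_J\|_{\mathcal{H}_{\vec p \, ^\prime}^{t',r'}} \lesssim A$, uniformly in $J$.

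\textbf{Steps 2--3: extract the limit, identify it, conclude.} Since $\{g_J\}$ is bounded in $\mathcal{H}_{\vec p \, ^\prime}^{t',r'} = (M_{\vec p}^{t,r})^\ast$, Banach--Alaoglu together with separability of $M_{\vec p}^{t,r}$ produces a subsequence $g_{J(K)}$ converging in the weak topology of $\mathcal{H}_{\vec p \, ^\prime}^{t',r'}$ to some $G$, with $\|G\|_{\mathcal{H}_{\vec p \, ^\prime}^{t',r'}} \le \liminf_K \|g_{J(K)}\|_{\mathcal{H}_{\vec p \, ^\prime}^{t',r'}} \lesssim A$. To identify $G$, fix $j$; for $J(K)\ge|j|+1$ one has $\F^{-1}(\varphi_j\F g_{J(K)}) = \F^{-1}(\varphi_j\F g)$ because $\sum_{|k|\le J(K)}\varphi_k\equiv 1$ on $\operatorname{supp}\varphi_j$ and $\varphi_k\varphi_j=0$ for $|k-j|>1$. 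For each $x$ the function $y\mapsto\F^{-1}(\varphi_j)(x-y)$ is Schwartz, hence rapidly decaying, hence lies in $M_{\vec p}^{t,r}$ (compare Example~\ref{exa Q0 iff}); testing the weak convergence against it gives $\F^{-1}(\varphi_j\F g_{J(K)})(x) = \langle g_{J(K)}, \F^{-1}\varphi_j(x-\cdot)\rangle \to \langle G,\F^{-1}\varphi_j(x-\cdot)\rangle = \F^{-1}(\varphi_j\F G)(x)$, so $\F^{-1}(\varphi_j\F G) = \F^{-1}(\varphi_j\F g)$ for every $j$, whence $g_J = \sum_{j=-J}^J\F^{-1}(\varphi_j\F G)$ for all $J$. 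Now $G \in \mathcal{H}_{\vec p \, ^\prime}^{t',r'}$, and $g_J$ is exactly its Littlewood--Paley partial sum, so Lemma~\ref{LP block} gives $\|g_J-G\|_{\mathcal{H}_{\vec p \, ^\prime}^{t',r'}}\to 0$; in particular the \emph{full} sequence $g_J$ converges to $G$ in (and a fortiori in the weak topology of) $\mathcal{H}_{\vec p \, ^\prime}^{t',r'}$, independently of the subsequence. For the equivalence~(\ref{eq char H block LP}), the estimate $\|G\|_{\mathcal{H}_{\vec p \, ^\prime}^{t',r'}}\lesssim A$ is what was proved above, and the reverse $A = \big\|(\sum_j|\F^{-1}(\varphi_j\F G)|^2)^{1/2}\big\|_{\mathcal{H}_{\vec p \, ^\prime}^{t',r'}} \lesssim \|G\|_{\mathcal{H}_{\vec p \, ^\prime}^{t',r'}}$ is Lemma~\ref{ell 2 le BM and block}(ii) applied to $G$, again using $\F^{-1}(\varphi_j\F G)=\F^{-1}(\varphi_j\F g)$.

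\textbf{Main obstacle.} The delicate point is Step~1: producing, for a distribution $g$ known only to satisfy~(\ref{condi ell 2 H}), the membership $g_J \in \mathcal{H}_{\vec p \, ^\prime}^{t',r'}$ with a bound depending only on the $\ell^2$-square function of $g$. This is where band-limitedness is essential --- the defining sum for $g_J$ is finite and $\F g_J$ vanishes near the origin --- and where Theorem~\ref{HL mix block r le infty}, Lemma~\ref{ell 2 le BM and block}, and the duality Theorem~\ref{dual mixed BM} do the work. A secondary subtlety is that $\mathcal{H}_{\vec p \, ^\prime}^{t',r'}$ is a dual Banach space only for $r<\infty$, which is precisely why the Banach--Alaoglu step is legitimate only after reducing to $r<\infty$ and why the remaining case $r=\infty$ is deferred to \cite{N24}.
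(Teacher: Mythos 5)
Your proof follows the paper's own argument essentially verbatim: defer $r=\infty$ to \cite{N24}; use the pointwise bound $|\F^{-1}(\varphi_j\F g_J)|\lesssim\M(\F^{-1}(\varphi_j\F g))$ together with Theorem~\ref{HL mix block r le infty} and the lattice property to get $g_J\in\mathcal{H}_{\vec p\,^\prime}^{t',r'}$ with a uniform bound (the dualization you describe is exactly Lemma~\ref{lem f le LP}(ii)); extract a weak-$*$ limit $G$ via Banach--Alaoglu, separability of $M_{\vec p}^{t,r}$, and the duality Theorem~\ref{dual mixed BM}; identify $\F^{-1}(\varphi_j\F G)=\F^{-1}(\varphi_j\F g)$ by testing against $\F^{-1}(\varphi_j)(x-\cdot)$; upgrade to norm convergence of the full sequence via Lemma~\ref{LP block}; and finish with Lemmas~\ref{lem f le LP}(ii) and \ref{ell 2 le BM and block}(ii) for the two directions of the norm equivalence. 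The structure, the invoked lemmas, and the identified obstacles all match the paper's proof, so this is correct and not a genuinely different route.
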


\begin{proof}
	We only prove the case $1 < n / ( \sum_{i=1}^n  1/p_{i})   < t <r<\infty $ since $ 1< n / ( \sum_{i=1}^n  1/p_{i})  \le t< r =\infty  $ is showed in \cite[Lemma 3.8]{N24}. Define 
	$g_J = \sum_{j=-J}^J \F ^{-1} (\varphi_j \F g )  $. Since $ \varphi \in \mathscr S  $, we have
	$  | \F ^{-1} (\varphi_j \F g_J ) | \lesssim \M ( \F ^{-1} (\varphi_j \F g ) ) $
	where the implicit constant in independent of $J$.  By virtue of Theorem \ref{HL mix block r le infty}  and the condition (\ref{condi ell 2 H}), we obtain 
	\begin{equation*}
		\| \F ^{-1} (\varphi_j \F g_J ) \|_{  \mathcal{H}_{\vec p \, ^\prime}^{t',r'} } \lesssim \|  \M ( \F ^{-1} (\varphi_j \F g ) ) \|_{  \mathcal{H}_{\vec p \, ^\prime}^{t',r'} } \lesssim \|   \F ^{-1} (\varphi_j \F g ) \|_{  \mathcal{H}_{\vec p \, ^\prime}^{t',r'} } <\infty.
	\end{equation*}
Thus, the lattice property of the space $ \mathcal{H}_{\vec p \, ^\prime}^{t',r'}  $ leads to the fact $ \F ^{-1} (\varphi_j \F g_J )  \in  \mathcal{H}_{\vec p \, ^\prime}^{t',r'}$ for all $j$. Note that $ \F ^{-1} (\varphi_j \F g_J )  =0$ if $j >J+1$ or $j<-J -1$,  and $0 \notin \operatorname{supp} \F (g_J)$. Hence
we obtain
\begin{equation*}
	g_J = \sum_{j= -J -1}^{J+1} \F^{-1} ( \varphi_j \F g_J ) \in \mathcal{H}_{\vec p \, ^\prime}^{t',r'} .
\end{equation*}
 Recall that $  (M_{\vec p}^{t,r}  ) ^*  = \mathcal{H}_{\vec p \, ^\prime}^{t',r'} $ (Theorem \ref{dual mixed BM}) and $M_{\vec p}^{t,r}$ is separable (Theorem \ref{separable mixed BM}).  Hence by the Banach-Alaoglu theorem, we can find a subsequence $ \{ g_{ J(K)}\}_{ K=-\infty}^\infty $ which converges in the weak-$*$ topology of $ \mathcal{H}_{\vec p \, ^\prime}^{t',r'}$ to a function $G \in \mathcal{H}_{\vec p \, ^\prime}^{t',r'}$. Namely, we obtain 
\begin{equation} \label{weak* H block}
	\lim_{K \to \infty} \int_\rn g_{J(K)} (x) f(x) \d x= \int_\rn G (x)f (x) \d x
\end{equation}
for all $f \in M_{\vec p}^{t,r} $.

Next we prove that $g_J$ converges  to $G$ in the topology of $ \mathcal{H}_{\vec p \, ^\prime}^{t',r'}$. Thanks to the fact $G \in \mathcal{H}_{\vec p \, ^\prime}^{t',r'}$  and Lemma \ref{LP block}, we have
\begin{equation*}
	\lim_{J \to \infty} \left\| G - \sum_{j=-J}^J \F^{-1} (\varphi_j \F G) \right\|_{ \mathcal{H}_{\vec p \, ^\prime}^{t',r'}} =0.
\end{equation*}
By virtue of (\ref{weak* H block}) and the same argument of Lemma \ref{lem LP char S'}, we obtain
\begin{align} \label{F G = F g}
	\sum_{j=-J}^J \F^{-1} (\varphi_j \F G) = \lim_{K \to \infty} \left\langle g_{J (K)},\sum_{j=-J}^J \F^{-1} (\varphi_j) (x- \cdot)  \right\rangle =\sum_{j=-J}^J  \F^{-1} (\varphi_j \F g)
\end{align}
for sufficiently large $J \in \mathbb N$. Thus we get
\begin{equation*}
	\lim_{J \to \infty} \left\| G - \sum_{j=-J}^J \F^{-1} (\varphi_j \F g) \right\|_{ \mathcal{H}_{\vec p \, ^\prime}^{t',r'}} = \lim_{J \to \infty} \left\| G - \sum_{j=-J}^J \F^{-1} (\varphi_j \F G) \right\|_{ \mathcal{H}_{\vec p \, ^\prime}^{t',r'}}  = 0.
\end{equation*}
Finally, we prove the norm estimate (\ref{eq char H block LP}). Since $g_J$ converges to $G$ in the weak  topology of $\mathcal{H}_{\vec p \, ^\prime}^{t',r'}$, using (ii) of Lemma \ref{lem f le LP}, we have
\begin{equation*}
	\|G\|_{\mathcal{H}_{\vec p \, ^\prime}^{t',r'} } \le \liminf_{J \to \infty} \|g_J\|_{\mathcal{H}_{\vec p \, ^\prime}^{t',r'} } \lesssim  \left\|  \left(  \sum_{j\in \mathbb Z} | \F^{-1} (\varphi_j \F g) |^2  \right) ^{1/2}    \right\|_{ \mathcal{H}_{\vec p \, ^\prime}^{t',r'} }.
\end{equation*}
Using (ii) of Lemma \ref{ell 2 le BM and block} and (\ref{F G = F g}), we obtain
the opposite inequality  since  $G \in \mathcal{H}_{\vec p \, ^\prime}^{t',r'}$.
\end{proof} 

\section{Characterization by the heat semigroup} \label{heat char}

 Let $\Delta$  be the Laplace operator. Then the  heat semigroup $\{e^{\alpha \Delta}  \}_{\alpha >0}$ is defined as 
  \begin{equation*}
 	e^{\alpha \Delta} f(x) := \F^{-1} ( e^{-\alpha |\cdot|^2} \F f ) (x)= \frac{1}{ (4\pi \alpha)^n } \int_\rn \exp \left( - \frac{|x-y|^2}{4\alpha } \right) f(y)  \d y
 \end{equation*}
for suitable functions $f$.
\begin{theorem} \label{heat semigroup char}
		Let $ 1 < \vec p <\infty$. Let   $1 < n / ( \sum_{i=1}^n  1/p_{i})   < t <r<\infty $ or $ 1< n / ( \sum_{i=1}^n  1/p_{i})  \le t< r =\infty$. 
		Let $\varphi$ be the same as in Definition \ref{homo unity}.	
	Then $f = \sum_{j\in \mathbb Z} \F^{-1} (\varphi_j\F f)$ holds in $ M_{\vec p}^{t,r} $	if and only if 
	\begin{equation*}
		\lim_{\alpha \to 0} \left( e^{\alpha \Delta} f - e^{\alpha^{-1} \Delta} f \right) = f
	\end{equation*}
holds in $ M_{\vec p}^{t,r} $.
\end{theorem}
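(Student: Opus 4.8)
The plan is to write $T_\alpha f:=e^{\alpha\Delta}f-e^{\alpha^{-1}\Delta}f=\F^{-1}(m_\alpha\F f)$ with $m_\alpha(\xi):=e^{-\alpha|\xi|^2}-e^{-\alpha^{-1}|\xi|^2}$ and $S_N f:=\sum_{|j|\le N}\F^{-1}(\varphi_j\F f)$, to read both convergences in the statement as norm convergence in $M_{\vec p}^{t,r}$ (the weak-$*$ Littlewood--Paley expansion always holds by Theorem \ref{LP char mixed BM}(i), so only the norm versions are in question), and to prove
\[
S_N f\to f\ \text{in}\ M_{\vec p}^{t,r}\quad\Longleftrightarrow\quad T_\alpha f\to f\ \text{in}\ M_{\vec p}^{t,r}
\]
for fixed $f\in M_{\vec p}^{t,r}$ by a three-term ($\varepsilon/3$) comparison of the two approximations of the identity. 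Since $\F^{-1}\varphi_j$, the Gaussian kernels and $\F^{-1}m_\alpha$ all lie in $L^1$, Corollary \ref{convolution} already puts $S_N f$, $e^{\alpha\Delta}f$, $e^{\alpha^{-1}\Delta}f$ and $T_\alpha f$ in $M_{\vec p}^{t,r}$. Two further ingredients are needed: uniform boundedness of the two families, and convergence of each family against the intermediate approximation supplied by the other.

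For the first ingredient I would show $\sup_N\|S_N\|_{M_{\vec p}^{t,r}\to M_{\vec p}^{t,r}}<\infty$ and $\sup_{0<\alpha<1}\|T_\alpha\|_{M_{\vec p}^{t,r}\to M_{\vec p}^{t,r}}<\infty$. For $S_N$ this is immediate, because $S_N$ is exactly the operator $T_\epsilon$ of Lemma \ref{T epsilo AS CZ} with $\tau=\varphi$ (supported away from the origin, by Definition \ref{homo unity}) and $\epsilon_j=\chi_{\{|j|\le N\}}$, and the Calderón--Zygmund kernel estimates (\ref{size K_epsilon})--(\ref{smooth K_epsilon}) there are independent of the sequence $\epsilon$; hence the boundedness of singular integral operators on $M_{\vec p}^{t,r}$ established in Section \ref{sec operator} applies with constant independent of $N$. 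For $T_\alpha$ I would argue the same way: from $m_\alpha(0)=0$ and $\sum_j\varphi_j\equiv1$ off the origin, $\F^{-1}m_\alpha=\sum_j\F^{-1}(m_\alpha\varphi_j)$, and after rescaling each factor $m_\alpha(2^j\cdot)\varphi$ has $C^{n+1}$-norm bounded by an absolute constant (the dilation factors produced by differentiating $e^{-(2^j\sqrt\alpha\,|\xi|)^2}$ and $e^{-(2^j|\xi|/\sqrt\alpha)^2}$ against the fixed bump $\varphi$ are absorbed using $\sup_{s\ge0}s^ke^{-s^2}<\infty$); re-running the proof of Lemma \ref{T epsilo AS CZ} with these weights gives $|\F^{-1}m_\alpha(x)|\lesssim|x|^{-n}$ and $|\nabla\F^{-1}m_\alpha(x)|\lesssim|x|^{-n-1}$ uniformly in $\alpha$, together with the trivial bound $\|T_\alpha\|_{L^2\to L^2}\le1$, so $T_\alpha$ is again a singular integral operator with constants uniform in $\alpha$.

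For the second ingredient, writing $\Theta_N:=\sum_{|j|\le N}\varphi_j=\psi(2^{-N}\cdot)-\psi(2^{N+1}\cdot)$, one has
\[
T_\alpha S_{N_1}f-S_{N_1}f=\F^{-1}\big((m_\alpha-1)\Theta_{N_1}\big)*f,\qquad S_N T_{\alpha_1}f-T_{\alpha_1}f=\F^{-1}\big((\Theta_N-1)m_{\alpha_1}\big)*f .
\]
The multiplier $(m_\alpha-1)\Theta_{N_1}$ is supported in the fixed annulus $\{2^{-N_1}\le|\xi|\le2^{N_1+2}\}$, on which $|1-e^{-\alpha|\xi|^2}|\lesssim\alpha 2^{2N_1}$ and $e^{-\alpha^{-1}|\xi|^2}\le e^{-c\alpha^{-1}2^{-2N_1}}$ (and likewise for all derivatives), so $\|\F^{-1}((m_\alpha-1)\Theta_{N_1})\|_{L^1}\to0$ as $\alpha\to0$ for fixed $N_1$; dually, $(\Theta_N-1)m_{\alpha_1}$ is supported in $\{|\xi|\lesssim2^{-N}\}\cup\{|\xi|\gtrsim2^{N}\}$, where $m_{\alpha_1}$ vanishes to second order near the origin (size $O(\alpha_1^{-1}2^{-2N})$, which against the small volume of the support still controls the growth of the derivatives of $\psi(2^{N+1}\cdot)$) and is super-exponentially small in $2^N$ at infinity, so $\|\F^{-1}((\Theta_N-1)m_{\alpha_1})\|_{L^1}\to0$ as $N\to\infty$ for fixed $\alpha_1$. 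By Corollary \ref{convolution} this gives $T_\alpha S_{N_1}f\to S_{N_1}f$ (as $\alpha\to0$) and $S_N T_{\alpha_1}f\to T_{\alpha_1}f$ (as $N\to\infty$) in $M_{\vec p}^{t,r}$.

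With these in hand the equivalence follows by the $\varepsilon/3$ scheme: if $S_N f\to f$, then for $\varepsilon>0$ pick $N_1$ with $\|f-S_{N_1}f\|_{M_{\vec p}^{t,r}}<\varepsilon$, split $T_\alpha f-f=T_\alpha(f-S_{N_1}f)+(T_\alpha S_{N_1}f-S_{N_1}f)+(S_{N_1}f-f)$, bound the first term by uniform boundedness, the third by the choice of $N_1$, and the second by letting $\alpha\to0$, to obtain $\limsup_{\alpha\to0}\|T_\alpha f-f\|_{M_{\vec p}^{t,r}}\lesssim\varepsilon$; the reverse implication is symmetric, interchanging the roles of $S_N$ and $T_\alpha$ and using the uniform boundedness of $S_N$ and the convergence $S_N T_{\alpha_1}f\to T_{\alpha_1}f$. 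I expect the hard part to be the uniform-in-$\alpha$ Calderón--Zygmund estimate for the two-scale multiplier $m_\alpha$, i.e.\ checking that the heat difference genuinely fits the framework of Lemma \ref{T epsilo AS CZ}; once that is settled, everything else is the standard comparison of two approximations of the identity combined with the mapping properties already proved.
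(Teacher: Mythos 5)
Your proposal is correct, and it takes a genuinely different route from the paper. You treat $S_N f=\sum_{|j|\le N}\F^{-1}(\varphi_j\F f)$ and $T_\alpha f=e^{\alpha\Delta}f-e^{\alpha^{-1}\Delta}f$ symmetrically as two families of approximate identities and run a single $\varepsilon/3$ comparison in both directions, with the key inputs being uniform boundedness of the two families and the $L^1$-kernel decay of the ``mixed'' operators $T_\alpha S_{N_1}-S_{N_1}$ and $S_N T_{\alpha_1}-T_{\alpha_1}$, fed into Corollary \ref{convolution}. The paper instead proves the forward implication via two lemmas built on the identity $e^{\alpha\Delta}g-g=\int_0^\alpha e^{s\Delta}\Delta g\,\mathrm ds$ (giving $e^{\alpha\Delta}f\to f$ as $\alpha\to0$) and the decay $\|e^{\alpha\Delta}\F^{-1}(\varphi_j\F f)\|\lesssim e^{-4^j\alpha}\|\F^{-1}(\varphi_j\F f)\|$ (giving $e^{\alpha^{-1}\Delta}f\to0$), and proves the converse with a separate $I+II+III$ split in which the middle term is controlled by factoring the symbol as $\bigl(e^{-\alpha|\xi|^2}-e^{-\alpha^{-1}|\xi|^2}\bigr)=m_\alpha(\xi)\cdot\frac{|\xi|^2}{1+|\xi|^4}$ with $m_\alpha$ uniformly Mihlin and invoking Lemma \ref{ell 2 le BM and block} to absorb the tail $\sum_{|j|>J}\frac{2^{2j}}{1+2^{4j}}\lesssim2^{-2J}$. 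Your scheme is more modular and avoids both the ODE representation and the auxiliary multiplier $m_\alpha$, at the cost of having to work out the (slightly delicate) Sobolev estimate on $\F^{-1}\bigl((\Theta_N-1)m_{\alpha_1}\bigr)$ near the origin, where the dilation factors from $\psi(2^{N+1}\cdot)$ must be beaten by the quadratic vanishing of $m_{\alpha_1}$ and the shrinking support volume; this works with a Sobolev exponent in $(n/2,\,n/2+2)$, as you indicate. One simplification: the Calder\'on--Zygmund argument you sketch for $\sup_{0<\alpha<1}\|T_\alpha\|$ is unnecessary, since $\|e^{\beta\Delta}\|_{M_{\vec p}^{t,r}\to M_{\vec p}^{t,r}}\le C_{n,\vec p,r}$ uniformly in $\beta>0$ already follows from Corollary \ref{convolution} and $\|\text{Gaussian}\|_{L^1}=1$, so $\|T_\alpha\|\le2C_{n,\vec p,r}$ for free.
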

To prove Theorem \ref{heat semigroup char}, we divide it into three steps.

\begin{lemma}
		Let $ 1 < \vec p <\infty$. Let   $1 < n / ( \sum_{i=1}^n  1/p_{i})   < t <r<\infty $ or $ 1< n / ( \sum_{i=1}^n  1/p_{i})  \le t< r =\infty$. 
		If $f = \sum_{j\in \mathbb Z} \F^{-1} (\varphi_j\F f) $ in  $ M_{\vec p}^{t,r} $,  then $ f= \lim_{\alpha \to 0}  e^{\alpha \Delta} f$ in  $ M_{\vec p}^{t,r} $.
\end{lemma}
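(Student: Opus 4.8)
The plan is to exploit two ingredients already available: the hypothesis that the Littlewood--Paley series converges to $f$ \emph{in norm} (so that $f$ may be replaced, up to an $\varepsilon$-error, by a finite sum of frequency-localized pieces), and the boundedness of convolution with $L^1$-functions on $M_{\vec p}^{t,r}$ (Corollary \ref{convolution}), which in particular bounds the heat semigroup uniformly. Set $f_j:=\F^{-1}(\varphi_j\F f)$, so that $f=\sum_{j\in\mathbb Z}f_j$ in $M_{\vec p}^{t,r}$; in particular $f\in M_{\vec p}^{t,r}$, and since $\F^{-1}(\varphi_j)\in\mathscr S\subset L^1$ and $f_j=\F^{-1}(\varphi_j)*f$, Corollary \ref{convolution} gives $f_j\in M_{\vec p}^{t,r}$ for each $j$. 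Moreover $e^{\alpha\Delta}f=G_\alpha*f$ with $G_\alpha$ the Gauss--Weierstrass kernel and $\|G_\alpha\|_{L^1}=1$, so Corollary \ref{convolution} yields a constant $C=C_{n,\vec p,r}$, independent of $\alpha$, with $\|e^{\alpha\Delta}h\|_{M_{\vec p}^{t,r}}\le C\|h\|_{M_{\vec p}^{t,r}}$.

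Given $\varepsilon>0$, I would first use the convergence of $\sum_j f_j$ to pick $N$ with $\|f-S_Nf\|_{M_{\vec p}^{t,r}}<\varepsilon$, where $S_Nf:=\sum_{|j|\le N}f_j$, and then split
\begin{equation*}
	e^{\alpha\Delta}f-f=e^{\alpha\Delta}(f-S_Nf)+\bigl(e^{\alpha\Delta}S_Nf-S_Nf\bigr)+(S_Nf-f).
\end{equation*}
The first summand has $M_{\vec p}^{t,r}$-norm $\le C\varepsilon$ by the uniform bound above, and the third is $<\varepsilon$ by the choice of $N$; both are harmless. The content is in the middle term. Using $\widehat{e^{\alpha\Delta}f}=e^{-\alpha|\cdot|^2}\F f$ and introducing the fattened cutoff $\widetilde\varphi_j:=\varphi_{j-1}+\varphi_j+\varphi_{j+1}$, which equals $1$ on $\operatorname{supp}\varphi_j$ so that $\varphi_j\widetilde\varphi_j=\varphi_j$, I would write
\begin{equation*}
	e^{\alpha\Delta}S_Nf-S_Nf=\sum_{|j|\le N}\F^{-1}\bigl((e^{-\alpha|\cdot|^2}-1)\widetilde\varphi_j\bigr)*f_j=:\sum_{|j|\le N}K_{\alpha,j}*f_j,
\end{equation*}
realizing each piece as a genuine convolution with $f_j\in M_{\vec p}^{t,r}$.

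For fixed $j$, the symbol $(e^{-\alpha|\cdot|^2}-1)\widetilde\varphi_j$ is smooth and supported in the fixed annulus $\{2^{j-2}\le|\xi|\le2^{j+2}\}$, and every derivative of it tends to $0$ uniformly as $\alpha\to0^{+}$ (each $\xi$-derivative of $e^{-\alpha|\xi|^2}$ carries at least one factor of $\alpha$); hence $K_{\alpha,j}=\F^{-1}\bigl((e^{-\alpha|\cdot|^2}-1)\widetilde\varphi_j\bigr)\to0$ in $\mathscr S$, and in particular $\|K_{\alpha,j}\|_{L^1}\to0$. Corollary \ref{convolution} then gives $\|K_{\alpha,j}*f_j\|_{M_{\vec p}^{t,r}}\lesssim\|K_{\alpha,j}\|_{L^1}\|f_j\|_{M_{\vec p}^{t,r}}\to0$, and since the sum over $|j|\le N$ is finite, $\|e^{\alpha\Delta}S_Nf-S_Nf\|_{M_{\vec p}^{t,r}}\to0$ as $\alpha\to0^{+}$. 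Combining the three estimates yields $\limsup_{\alpha\to0^{+}}\|e^{\alpha\Delta}f-f\|_{M_{\vec p}^{t,r}}\le(C+1)\varepsilon$, and letting $\varepsilon\to0^{+}$ finishes the proof.

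The only step needing real care is the vanishing of the middle term: one must pass from the Fourier-side statement that the truncated, compactly supported multiplier goes to zero to an $L^1$-convergence of the kernels $K_{\alpha,j}$, and the fattened cutoff $\widetilde\varphi_j$ is precisely what converts $\F^{-1}\bigl((e^{-\alpha|\cdot|^2}-1)\varphi_j\F f\bigr)$ into a convolution of an $L^1$-function with the known $M_{\vec p}^{t,r}$-element $f_j$. Everything else is bookkeeping on top of Corollary \ref{convolution} and the norm-convergence hypothesis.
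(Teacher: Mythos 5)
Your proof is correct, and it takes a genuinely different route from the paper in the only step that requires an argument. Both proofs begin by exploiting the norm-convergence hypothesis to cut off the tail of the Littlewood--Paley series, and both control that tail using the uniform boundedness of the heat semigroup that follows from Corollary~\ref{convolution} (since $\|G_\alpha\|_{L^1}=1$). The divergence is in the finite-frequency block. The paper invokes the semigroup identity
\begin{equation*}
	e^{\alpha\Delta}\F^{-1}(\varphi_j\F f)-\F^{-1}(\varphi_j\F f)=\int_0^\alpha e^{s\Delta}\Delta\bigl(\F^{-1}(\varphi_j\F f)\bigr)\,\d s
\end{equation*}
(referencing \cite[Exercise 108]{SDH20}) and then a spectral estimate $\|\F^{-1}(e^{-s|\cdot|^2}|\cdot|^2\varphi_j\F f)\|_{M_{\vec p}^{t,r}}\approx e^{-4^js}4^j\|\F^{-1}(\varphi_j\F f)\|_{M_{\vec p}^{t,r}}$, which tacitly uses a multiplier bound on the annulus plus Lemma~\ref{ell 2 le BM and block}, to conclude the contribution is $O(\alpha)$. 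You instead insert the fattened cutoff $\widetilde\varphi_j$ so that $e^{\alpha\Delta}f_j-f_j=K_{\alpha,j}*f_j$ with $K_{\alpha,j}=\F^{-1}\bigl((e^{-\alpha|\cdot|^2}-1)\widetilde\varphi_j\bigr)$, observe that the compactly supported symbol tends to $0$ in $C_c^\infty$ (every derivative of $e^{-\alpha|\xi|^2}$ carries a factor $\alpha$), hence $K_{\alpha,j}\to0$ in $\mathscr S$ and thus in $L^1$, and finish with Corollary~\ref{convolution} once more. Your argument is a bit more self-contained: it avoids the external semigroup identity and the implicit multiplier estimate, trading them for a clean $L^1$-convergence of explicit kernels; the paper's version, by contrast, gives a quantitative $O(\alpha)$ rate on each frequency block, which your soft qualitative argument does not (though neither proof needs it). Both are correct.
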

\begin{proof}
	By the assumption, for all $\epsilon>0$, there exists $J \in \mathbb N$  such that
	\begin{equation*}
		\left\| \sum_{j >|J|} \F^{-1} (\varphi_j\F f)   \right\|_{ M_{\vec p}^{t,r} }  \le \epsilon .
	\end{equation*}
Fix $J \in \mathbb N$ as above. Then
\begin{align*}
	\| e^{\alpha \Delta} f -f \|_{  M_{\vec p}^{t,r} } & \le \left\| e^{\alpha \Delta} \left( \sum_{|j| \le J} \F^{-1} (\varphi_j \F f) \right)  - \sum_{j \le |J|} \F^{-1} (\varphi_j\F f)   \right\|_{  M_{\vec p}^{t,r} }  \\
	& + \left\| e^{\alpha \Delta} \left( \sum_{|j| > J} \F^{-1} (\varphi_j \F f) \right)  - \sum_{j > |J|} \F^{-1} (\varphi_j\F f)   \right\|_{  M_{\vec p}^{t,r} } \\
	& : = I +II .
\end{align*}
For $II$, 
note that the heat semigroup is bounded on $M_{\vec p}^{t,r}  $. Indeed, by Corollary \ref{convolution} and the fact that $\int_\rn e^{-|x|^2} \d x = \pi^{n/2}$ (for example, sees \cite[A.1]{G14}),
\begin{align*}
	\left\| e^{\alpha \Delta} f \right\|_{ M_{\vec p}^{t,r} } & = \left\|  \frac{ 1}{ (4\pi \alpha)^{n/2}   }\int_\rn e^{ ( - \frac{|\cdot - y|^2}{4\alpha} ) }  f(y) \d y \right\|_{ M_{\vec p}^{t,r} } \\
	& \le C_{n, \vec p, r}  \frac{ 1}{ (4\pi \alpha)^{n/2}   }\int_\rn e^{ ( - \frac{y^2}{4\alpha} ) } \d y  \left\| f\right\|_{ M_{\vec p}^{t,r} } \\
	& = C_{n, \vec p, r}  \left\| f\right\|_{ M_{\vec p}^{t,r} } .
\end{align*}
Hence we have
\begin{align*}
	II \lesssim \left\|  \sum_{j > |J|} \F^{-1} (\varphi_j\F f)   \right\|_{  M_{\vec p}^{t,r} } \le \epsilon .
\end{align*}

For $I$, let $j \in \mathbb Z \cap [-J,J]$. Note that since $f \in M_{\vec p}^{t,r} \hookrightarrow M_{\vec p}^{t,\infty} \hookrightarrow \mathscr S ' $ (\cite[page 2219]{N24}), $ \F^{-1} ( \varphi_j \F f )  \in C^\infty $. Thus, from \cite[Exercise 108]{SDH20}
\begin{align*}
	e^{\alpha \Delta}  \F^{-1} ( \varphi_j \F f ) - \F^{-1} ( \varphi_j \F f ) = \int_0^\alpha e^{s \Delta } \Delta ( \F ^{-1} (\varphi_j \F  f) ) \d s .
\end{align*}
Then by the Minkowski inequality and the support condition of $\varphi$, we have
\begin{align*}
	\left\|  e^{\alpha \Delta}  \F^{-1} ( \varphi_j \F f ) - \F^{-1} ( \varphi_j \F f )  \right\| _{ M_{\vec p}^{t,r} } & = \left\|  \int_0^\alpha e^{s \Delta } \Delta ( \F ^{-1} (\varphi_j \F  f) ) \d s  \right\| _{ M_{\vec p}^{t,r} } \\
	& \lesssim  \int_0^\alpha  \left\| \F ^{-1} ( e^{ -s |\cdot|^2 } |\cdot|^2  \varphi_j \F  f)    \right\| _{ M_{\vec p}^{t,r} } \d s \\
	& \approx \int_0^\alpha e^{ - 4^j s} 4^j  \left\| \F ^{-1} (  \varphi_j \F  f)    \right\| _{ M_{\vec p}^{t,r} } \d s .
\end{align*}
Thanks to Lemma \ref{ell 2 le BM and block} and $e^{ - 4^j s} \le 1 $, we obtain
\begin{equation*}
	\left\|  e^{\alpha \Delta}  \F^{-1} ( \varphi_j \F f ) - \F^{-1} ( \varphi_j \F f )  \right\| _{ M_{\vec p}^{t,r} } \le C_J \alpha  \| f\|_{ M_{\vec p}^{t,r}   } \to 0
\end{equation*}
as $\alpha \to 0$. Thus we obtain the desired result.
\end{proof}
Next we show that $ e^{\alpha \Delta} f$ vanished when $\alpha $ tends to infinity. 

\begin{lemma}
		Let $ 1 < \vec p <\infty$. Let   $1 < n / ( \sum_{i=1}^n  1/p_{i})   < t <r<\infty $ or $ 1< n / ( \sum_{i=1}^n  1/p_{i})  \le t< r =\infty$. 
	If $f = \sum_{j\in \mathbb Z} \F^{-1} (\varphi_j\F f) $ in  $ M_{\vec p}^{t,r} $,  then $  \lim_{\alpha \to \infty}  e^{\alpha \Delta} f =0 $ in  $ M_{\vec p}^{t,r} $.
\end{lemma}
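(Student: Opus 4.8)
The plan is to reproduce the two-part decomposition used in the previous lemma. Fix $\epsilon>0$. Since by hypothesis $f=\sum_{j\in\mathbb Z}\F^{-1}(\varphi_j\F f)$ converges in $M_{\vec p}^{t,r}$, there is $J\in\mathbb N$ with $\|\sum_{|j|>J}\F^{-1}(\varphi_j\F f)\|_{M_{\vec p}^{t,r}}\le\epsilon$, and I split
\begin{equation*}
e^{\alpha\Delta}f=e^{\alpha\Delta}\Big(\sum_{|j|\le J}\F^{-1}(\varphi_j\F f)\Big)+e^{\alpha\Delta}\Big(\sum_{|j|>J}\F^{-1}(\varphi_j\F f)\Big)=:I_\alpha+II_\alpha.
\end{equation*}
The heat semigroup is bounded on $M_{\vec p}^{t,r}$ with a constant $C_{n,\vec p,r}$ independent of $\alpha>0$ (this is precisely the bound on the term $II$ in the proof of the previous lemma, obtained from Corollary \ref{convolution} together with $\int_\rn e^{-|x|^2}\,\d x=\pi^{n/2}$), so $\|II_\alpha\|_{M_{\vec p}^{t,r}}\le C_{n,\vec p,r}\,\epsilon$ uniformly in $\alpha$.

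The heart of the matter is to show that the finite sum $I_\alpha$ tends to $0$ in $M_{\vec p}^{t,r}$ as $\alpha\to\infty$. For $j\in\mathbb Z$ set $\tilde\varphi_j:=\varphi_{j-1}+\varphi_j+\varphi_{j+1}$, so that $\tilde\varphi_j\equiv1$ on $\operatorname{supp}\varphi_j$. Then $e^{-\alpha|\cdot|^2}\varphi_j\F f=(e^{-\alpha|\cdot|^2}\tilde\varphi_j)(\varphi_j\F f)$ and, since $e^{-\alpha|\cdot|^2}\tilde\varphi_j\in\mathscr S$, the convolution theorem gives
\begin{equation*}
e^{\alpha\Delta}\F^{-1}(\varphi_j\F f)=\F^{-1}\big(e^{-\alpha|\cdot|^2}\tilde\varphi_j\big)*\F^{-1}(\varphi_j\F f).
\end{equation*}
I then apply Corollary \ref{convolution}; the last factor is dominated pointwise by the square function $\big(\sum_k|\F^{-1}(\varphi_k\F f)|^2\big)^{1/2}$, whose $M_{\vec p}^{t,r}$-norm is $\lesssim\|f\|_{M_{\vec p}^{t,r}}$ by Lemma \ref{ell 2 le BM and block} and the lattice property, so
\begin{equation*}
\|e^{\alpha\Delta}\F^{-1}(\varphi_j\F f)\|_{M_{\vec p}^{t,r}}\lesssim\|\F^{-1}(e^{-\alpha|\cdot|^2}\tilde\varphi_j)\|_{L^1}\,\|f\|_{M_{\vec p}^{t,r}}.
\end{equation*}

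It remains to estimate $\|\F^{-1}(e^{-\alpha|\cdot|^2}\tilde\varphi_j)\|_{L^1}$. Writing $\tilde\varphi_j(\xi)=\varrho(2^{-j}\xi)$ for a fixed smooth bump $\varrho$ supported in an annulus $\{c_0\le|\eta|\le c_1\}$ with $c_0>0$, and using that the $L^1$-norm of an inverse Fourier transform is invariant under dilation of the argument, this quantity equals $\|\F^{-1}(e^{-\alpha4^j|\cdot|^2}\varrho)\|_{L^1}$. On the fixed annulus every derivative $\partial^\gamma(e^{-\beta|\cdot|^2})$ is bounded by $C_\gamma(1+\beta)^{|\gamma|}e^{-c_0^2\beta}$, hence $\|e^{-\beta|\cdot|^2}\varrho\|_{C^{n+1}}\lesssim(1+\beta)^{n+1}e^{-c_0^2\beta}$, and the standard bound $\|\F^{-1}h\|_{L^1}\lesssim\sum_{|\gamma|\le n+1}\|\partial^\gamma h\|_{L^\infty}$ for $h$ supported in a fixed compact set yields $\|\F^{-1}(e^{-\alpha|\cdot|^2}\tilde\varphi_j)\|_{L^1}\lesssim(1+\alpha4^j)^{n+1}e^{-c_0^2\alpha4^j}$. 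Summing over $|j|\le J$ gives $\|I_\alpha\|_{M_{\vec p}^{t,r}}\lesssim\big(\sum_{|j|\le J}(1+\alpha4^j)^{n+1}e^{-c_0^2\alpha4^j}\big)\|f\|_{M_{\vec p}^{t,r}}$; since this is a sum of the fixed number $2J+1$ of terms and $4^j\ge4^{-J}>0$, each summand tends to $0$ as $\alpha\to\infty$, so $\|I_\alpha\|_{M_{\vec p}^{t,r}}\le\epsilon$ for $\alpha$ large. Combining the two bounds, $\|e^{\alpha\Delta}f\|_{M_{\vec p}^{t,r}}\le(1+C_{n,\vec p,r})\epsilon$ for all large $\alpha$, and letting $\epsilon\to0^+$ finishes the proof. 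The one genuinely technical point is this final decay estimate: one must control the $L^1$-norm of the inverse Fourier transform of the multiplier (not merely its sup-norm), which grows only polynomially in $\alpha$, so that the uniform exponential gain $e^{-c_0^2\alpha4^j}$ over the finitely many relevant $j$ swamps it; everything else is a direct transcription of the previous lemma's argument.
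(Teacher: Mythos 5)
Your proof is correct and follows the same strategy as the paper: split at a high-frequency cutoff $J$, use the uniform boundedness of the heat semigroup on $M_{\vec p}^{t,r}$ to control the tail, and exploit exponential decay of the multiplier on each fixed frequency annulus for the finitely many low-frequency terms. The paper states the bound $\|e^{\alpha\Delta}\F^{-1}(\varphi_j\F f)\|_{M_{\vec p}^{t,r}}\lesssim e^{-4^j\alpha}\|\F^{-1}(\varphi_j\F f)\|_{M_{\vec p}^{t,r}}$ rather tersely "by the support condition of $\varphi$"; you supply the missing $L^1$-multiplier estimate $\|\F^{-1}(e^{-\alpha|\cdot|^2}\tilde\varphi_j)\|_{L^1}\lesssim(1+\alpha 4^j)^{n+1}e^{-c\alpha 4^j}$ that justifies it, which is a welcome and correct completion of the argument rather than a deviation from it.
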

\begin{proof}
	By the assumption, for all $\epsilon>0$, there exists $J \in \mathbb N$ such that
		\begin{equation*}
		\left\| \sum_{j >|J|} \F^{-1} (\varphi_j\F f)   \right\|_{ M_{\vec p}^{t,r} }  \le \epsilon .
	\end{equation*}
	Fix $J \in \mathbb N$ as above. Then
	\begin{align*}
		\| e^{\alpha \Delta} f  \|_{  M_{\vec p}^{t,r} } & \le \left\| e^{\alpha \Delta} \left( \sum_{|j| \le J} \F^{-1} (\varphi_j \F f) \right)    \right\|_{  M_{\vec p}^{t,r} }  
		 + \left\| e^{\alpha \Delta} \left(  \sum_{j > |J|} \F^{-1} (\varphi_j\F f)  \right) \right\|_{  M_{\vec p}^{t,r} } \\
		& : = I +II .
	\end{align*}
	For $II$, by the boundedness of  the heat semigroup on $M_{\vec p}^{t,r}  $, we have
	\begin{align*}
		II \lesssim \left\|  \sum_{j > |J|} \F^{-1} (\varphi_j\F f)   \right\|_{  M_{\vec p}^{t,r} } \le \epsilon .
	\end{align*}	
	For $I$, let $j \in \mathbb Z \cap [-J,J]$. Then by the support condition of $\varphi$, we have
		\begin{align*}
		\left\|  e^{\alpha \Delta}  \F^{-1} ( \varphi_j \F f )  \right\| _{ M_{\vec p}^{t,r} } & \lesssim 
		e^{ -4^j \alpha } 
		 \left\|   \F ^{-1} (\varphi_j \F  f)  \right\| _{ M_{\vec p}^{t,r} } .
	\end{align*}
		Thanks to Lemma \ref{ell 2 le BM and block}, we obtain
		\begin{equation*}
			\left\|  e^{\alpha \Delta}  \F^{-1} ( \varphi_j \F f )  \right\| _{ M_{\vec p}^{t,r} } \le C_J \sup_{ |j|\le J } 	e^{ -4^j \alpha } 
			\left\|   \F ^{-1} (\varphi_j \F  f)  \right\| _{ M_{\vec p}^{t,r} }  \to 0
		\end{equation*}
	as $\alpha \to  \infty$. Thus we obtain the desired result.
\end{proof}
At last, we prove the necessity of Theorem \ref{heat semigroup char}.
\begin{lemma}
		Let $ 1 < \vec p <\infty$. Let   $1 < n / ( \sum_{i=1}^n  1/p_{i})   < t <r<\infty $ or $ 1< n / ( \sum_{i=1}^n  1/p_{i})  \le t< r =\infty$. 
	Suppose that 
	\begin{equation*}
		\lim_{\alpha \to 0} e^{\alpha \Delta}  f - e^{\alpha^{-1} \Delta}  f = f
	\end{equation*}
in $M_{\vec p}^{t,r} $. Then 
\begin{equation*}
	f = \sum_{j \in \mathbb Z}  \F ^{-1} (\varphi_j \F  f) 
\end{equation*}
in $M_{\vec p}^{t,r} $.
\end{lemma}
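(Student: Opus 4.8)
The plan is to use the hypothesis only through the norm-convergence $g_\alpha:=e^{\alpha\Delta}f-e^{\alpha^{-1}\Delta}f\to f$ in $M_{\vec p}^{t,r}$ as $\alpha\to0$, which in particular places $f$ in $M_{\vec p}^{t,r}$, and then to run a $3\varepsilon$-argument that interpolates between $f$ and the Littlewood--Paley partial sums $S_Jf:=\sum_{|j|\le J}\F^{-1}(\varphi_j\F f)$ through the auxiliary functions $g_\alpha$. First I would record that each $g_\alpha\in M_{\vec p}^{t,r}$: writing $e^{\alpha\Delta}f=G_\alpha*f$ with $G_\alpha\in L^1$ the Gauss kernel, Corollary~\ref{convolution} gives $\|e^{\alpha\Delta}f\|_{M_{\vec p}^{t,r}}\lesssim\|f\|_{M_{\vec p}^{t,r}}$, and likewise for $e^{\alpha^{-1}\Delta}f$. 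Setting $m_\alpha(\xi):=e^{-\alpha|\xi|^2}-e^{-\alpha^{-1}|\xi|^2}$, a Schwartz function, and $\psi_J:=\sum_{|j|\le J}\varphi_j$, one has $S_Jg_\alpha=\F^{-1}(\psi_J m_\alpha\F f)$, hence $g_\alpha-S_Jg_\alpha=\F^{-1}\big((1-\psi_J)m_\alpha\big)*f$, and Corollary~\ref{convolution} again yields $\|g_\alpha-S_Jg_\alpha\|_{M_{\vec p}^{t,r}}\le C_{n,\vec p,r}\,\|\F^{-1}((1-\psi_J)m_\alpha)\|_{L^1}\,\|f\|_{M_{\vec p}^{t,r}}$.

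The core of the argument, and the step I expect to be the main obstacle, is the claim that for each fixed $\alpha\in(0,1)$ one has $\|\F^{-1}((1-\psi_J)m_\alpha)\|_{L^1}\to0$ as $J\to\infty$. Since $\varphi=\psi-\psi(2\cdot)$, the sum $\psi_J$ telescopes to $\psi(2^{-J}\cdot)-\psi(2^{J+1}\cdot)$, so $1-\psi_J=\psi(2^{J+1}\cdot)+\big(1-\psi(2^{-J}\cdot)\big)$ splits into a ``low'' piece supported in $\{|\xi|\lesssim2^{-J}\}$ and a ``high'' piece supported in $\{|\xi|\gtrsim2^{J}\}$. For the low piece I would exploit that $m_\alpha$ vanishes to order two at the origin, indeed $m_\alpha(\xi)=|\xi|^2\int_\alpha^{\alpha^{-1}}e^{-s|\xi|^2}\,\d s$, together with the scaling identity $\|\F^{-1}(h(a\cdot))\|_{L^1}=\|\F^{-1}h\|_{L^1}$: after rescaling $\xi\mapsto2^{-J-1}\xi$ the symbol becomes $2^{-2(J+1)}|\xi|^2\big(\int_\alpha^{\alpha^{-1}}e^{-s4^{-J-1}|\xi|^2}\,\d s\big)\psi(\xi)$, a Schwartz function supported in $\{|\xi|\le4\}$ whose seminorms are bounded uniformly in $J$, so the $L^1$-norm of its inverse transform is $O(2^{-2J})$. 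For the high piece I would rescale $\xi\mapsto2^{J}\xi$ and use the Gaussian decay of $m_\alpha$ at infinity: on $\{|\xi|\ge2\}$ every derivative $\partial^\beta_\xi[m_\alpha(2^{J}\xi)]=2^{J|\beta|}(\partial^\beta m_\alpha)(2^{J}\xi)$ is $O(2^{-NJ})$ for every $N$, so all Schwartz seminorms of $(1-\psi(\xi))m_\alpha(2^J\xi)$ tend to $0$ and, via the elementary bound $\|\F^{-1}h\|_{L^1}\lesssim\sup_{|\beta|\le n+1}\|(1+|\xi|)^{n+1}\partial^\beta h\|_{\infty}$, so does the $L^1$-norm of its inverse transform. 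Both rescalings pull the estimate back to a fixed compact region, which is exactly what makes them uniform in $J$.

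With the claim in hand I would assemble the $3\varepsilon$-argument. Fix $\varepsilon>0$; by hypothesis choose $\alpha$ small so that $\|f-g_\alpha\|_{M_{\vec p}^{t,r}}<\varepsilon$. The operator $S_J$ is a singular integral operator: applying Lemma~\ref{T epsilo AS CZ} with $\tau=\varphi$ and the sign sequence $\epsilon_j=\chi_{\{|j|\le J\}}(j)$ shows its kernel $\sum_{|j|\le J}\F^{-1}(\varphi_j)$ satisfies the size and smoothness bounds with constants independent of $J$, so by the boundedness of singular integral operators on $M_{\vec p}^{t,r}$ one gets $\|S_J(f-g_\alpha)\|_{M_{\vec p}^{t,r}}\le C\,\|f-g_\alpha\|_{M_{\vec p}^{t,r}}<C\varepsilon$ with $C$ independent of $J$. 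For this fixed $\alpha$, the claim together with Corollary~\ref{convolution} produces $J_0$ with $\|g_\alpha-S_Jg_\alpha\|_{M_{\vec p}^{t,r}}<\varepsilon$ for all $J\ge J_0$. Then for $J\ge J_0$,
\[
\|f-S_Jf\|_{M_{\vec p}^{t,r}}\le\|f-g_\alpha\|_{M_{\vec p}^{t,r}}+\|g_\alpha-S_Jg_\alpha\|_{M_{\vec p}^{t,r}}+\|S_J(g_\alpha-f)\|_{M_{\vec p}^{t,r}}\le(2+C)\varepsilon .
\]

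Since $\varepsilon>0$ is arbitrary, $S_Jf\to f$ in $M_{\vec p}^{t,r}$, that is, $f=\sum_{j\in\mathbb Z}\F^{-1}(\varphi_j\F f)$ in $M_{\vec p}^{t,r}$. The same reasoning applies verbatim when $r=\infty$, since Corollary~\ref{convolution}, Lemma~\ref{T epsilo AS CZ} and the singular integral bound all hold in that range as well. I do not expect any difficulty beyond the $L^1$-vanishing claim; the rest is bookkeeping with the Fourier multiplier $m_\alpha$ and the uniform boundedness of the truncated Littlewood--Paley projections.
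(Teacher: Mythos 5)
Your decomposition
\[
f-S_Jf=(f-g_\alpha)+(g_\alpha-S_Jg_\alpha)+S_J(g_\alpha-f)
\]
is exactly the one in the paper, since $T_\alpha:=e^{\alpha\Delta}-e^{\alpha^{-1}\Delta}$ and $S_J$ commute as Fourier multipliers, so the paper's middle term $T_\alpha(f-S_Jf)$ is identical to your $g_\alpha-S_Jg_\alpha$. What differs is how the second and third terms are estimated, and your route is genuinely different and, to my mind, cleaner. For the middle term the paper factors the symbol $e^{-\alpha|\xi|^2}-e^{-\alpha^{-1}|\xi|^2}=m_\alpha(\xi)\cdot\frac{|\xi|^2}{1+|\xi|^4}$ with $m_\alpha$ a Mikhlin multiplier, peels off $m_\alpha$ via the singular-integral boundedness, and then sums the geometric tails $\sum_{|j|>J}\frac{2^{2j}}{1+2^{4j}}\lesssim 2^{-2J}$ using the uniform-in-$j$ Littlewood--Paley bound from Lemma~\ref{ell 2 le BM and block}. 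You instead write $g_\alpha-S_Jg_\alpha=\F^{-1}\bigl((1-\psi_J)(e^{-\alpha|\cdot|^2}-e^{-\alpha^{-1}|\cdot|^2})\bigr)*f$ and show, purely by an $L^1$-kernel estimate (splitting the truncated multiplier into a low piece near $0$, killed by the second-order vanishing of $m_\alpha$, and a high piece near $\infty$, killed by Gaussian decay, each pulled back to a fixed compact set by rescaling), that the kernel's $L^1$ norm tends to $0$; then Corollary~\ref{convolution} finishes. Your approach also sidesteps a small subtlety in the paper's computation: the identity $f-S_Jf=\sum_{|j|>J}\F^{-1}(\varphi_j\F f)$ only holds modulo polynomials, which is harmless there because $\frac{|\xi|^2}{1+|\xi|^4}$ vanishes at the origin, whereas you never need it since your multiplier $e^{-\alpha|\xi|^2}-e^{-\alpha^{-1}|\xi|^2}$ already vanishes at $0$. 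For the third term the paper invokes the square-function estimates (\ref{f_J lesssim LP}) and Lemma~\ref{ell 2 le BM and block}, while you use the uniform-in-$J$ singular-integral boundedness of $S_J$ via Lemma~\ref{T epsilo AS CZ}; both are valid and the constants are uniform in $J$ either way. Your argument is correct, including the $r=\infty$ case; the $L^1$-vanishing claim you isolate as the main obstacle is indeed the crux, and the two-scale rescaling argument you give for it is sound.
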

\begin{proof}
	By the assumption, for all $\epsilon >0$, there exists $\delta >0$ such that for all $\alpha \in (0, \delta) $, 
	\begin{equation} \label{e alpha Delta - f epsilon}
		\left\|  e^{\alpha \Delta}  f - e^{\alpha^{-1} \Delta}  f -f  \right\|_{ M_{\vec p}^{t,r} }  \le \epsilon .
	\end{equation}
Fix $\alpha$ as above and let $J \in \mathbb N$ arbitrary large. Then
\begin{align*}
	\left\| f - \sum_{ |j| \le J}  \F ^{-1} (\varphi_j \F  f)  \right\|_{ M_{\vec p}^{t,r} } & \le 	\left\| f - ( e^{\alpha \Delta}  f - e^{\alpha^{-1} \Delta}  f )  \right\|_{ M_{\vec p}^{t,r} } \\
	& +	\left\| ( e^{\alpha \Delta}   - e^{\alpha^{-1} \Delta}  )  \left( f- \sum_{ |j| \le J}  \F ^{-1} (\varphi_j \F  f ) \right) \right\|_{ M_{\vec p}^{t,r} } \\
	& +	\left\|  \sum_{ |j| \le J}  \F ^{-1}  \left(  \varphi_j \F \left( ( e^{\alpha \Delta}   - e^{\alpha^{-1} \Delta}  ) f -f \right)    \right)  \right\|_{ M_{\vec p}^{t,r} } \\
	& =: I +II + III.
\end{align*}
By (\ref{e alpha Delta - f epsilon}), $ I \le \epsilon $.

For $III$, since $( e^{\alpha \Delta}   - e^{\alpha^{-1} \Delta}  ) f -f \in  M_{\vec p}^{t,r} $, by (\ref{f_J lesssim LP}), Lemma \ref{ell 2 le BM and block}, and (\ref{e alpha Delta - f epsilon}), we have
\begin{align*}
	III & \lesssim 	\left\| \left(  \sum_{ j\in \mathbb Z} \left|  \F ^{-1}  \left(  \varphi_j \F \left( ( e^{\alpha \Delta}   - e^{\alpha^{-1} \Delta}  ) f -f \right)    \right) \right|^2 \right)^{1/2} \right\|_{ M_{\vec p}^{t,r} }  \\
	& \lesssim \left\|  ( e^{\alpha \Delta}   - e^{\alpha^{-1} \Delta}  ) f -f  \right\|_{ M_{\vec p}^{t,r} } \le \epsilon .
\end{align*}

For $II$, consider the kernel
\begin{equation*}
	m_\alpha (\xi ) =\frac{e^{ -\alpha |\xi|^2   } - e^{ - \alpha^{-1}  |\xi|^2 }  }{ |\xi|^2 }  (1+ |\xi|^4) .
\end{equation*}
Since $ e^{ -\alpha |\xi|^2   } - e^{ - \alpha^{-1}  |\xi|^2 }  \in \mathscr S$, $ \partial^\beta m_{\alpha} (\xi) \lesssim |\xi|^{- |\beta|} $  for all multi-indices $\beta \in \mathbb N_0^n$. Hence,
\begin{align*}
	II & =  	\left\|  \F^{-1}  
	\left(  m_\alpha \times \frac{|\cdot|^2 }{ (1+ |\cdot|^4)} \F \left( f- \sum_{ |j| \le J}  \F ^{-1} (\varphi_j \F  f)  \right)  \right)
	 \right\|_{ M_{\vec p}^{t,r} } \\
	 &\lesssim \left\|  \F^{-1}  
	 \left(   \frac{|\cdot|^2 }{ (1+ |\cdot|^4)} \F \left(  \sum_{ |j| > J}  \F ^{-1} (\varphi_j \F  f)  \right)  \right)
	 \right\|_{ M_{\vec p}^{t,r} } \\
	 & \approx \left\|  \F^{-1}  
	 \left(    \sum_{ |j| > J}  \frac{|\cdot|^2 }{ (1+ |\cdot|^4)}   \varphi_j \F  f  \right)
	 \right\|_{ M_{\vec p}^{t,r} } .
\end{align*}
By the support of $\varphi_j$, the triangle inequality and Lemma \ref{ell 2 le BM and block}, we obtain
\begin{align*}
	II \lesssim  \sum_{ |j| > J}  \frac{2^{2j} }{ 1+ 2^{4j} } \left\|    \F ^{-1} (\varphi_j \F  f)  
	\right\|_{ M_{\vec p}^{t,r} }  \lesssim  \sum_{ |j| > J}  \frac{2^{2j} }{ 1+ 2^{4j} }  \|f\|_{M_{\vec p}^{t,r} } \lesssim 2^{-2J}  \|f\|_{M_{\vec p}^{t,r} } 
\end{align*}
Since $J$ is arbitrary, $II \lesssim \epsilon$. Combining the estimates of $I-III$, we obtain the desired result.
\end{proof}

\section{Wavelet characterization} \label{wavelet char}
Denote by $\mathcal P_d := \mathcal P_d (\rn) $  the set of all polynomial functions with degree less than or equal to $d$. Thus, $\mathcal P  = \cup_{d =0}^\infty \mathcal P_d $. The set $ \mathcal P_d  ^{\perp}$ denotes the set of measurable functions $f$ such that $ \langle \cdot \rangle ^d f \in L^1 $ and $\int_\rn x^\alpha f (x) \d x =0 $ for all $\alpha \in \mathbb N_0^n, |\alpha | \le d  $, where $  \langle x \rangle = (1 + |x| ^2) ^{1/2} $. We say that function $f$ satisfy the moment condition of order $d$ if $f$ satisfies the above conditions. In this case, one also writes $f \perp  \mathcal P_d $.

Choose compactly supported $C^m$-functions for large enough $m \in \mathbb N$, $\psi^\ell $, $\ell = 1,2, \ldots, 2^n-1$ such that the following conditions hold:

(i) The system 
\begin{equation*}
	\{ \psi^{\ell}_{j,k} : j \in \mathbb Z , k \in \mathbb Z^n, \ell = 1,2, \ldots, 2^n-1 \}
\end{equation*}
is an orthonormal basis of $L^2 $. Here, given a function $f$ defined on $\rn$, define
\begin{equation*}
	f_{j,k} : = 2^{ jn/2 } f (2^{j}  \cdot - k)
\end{equation*}
for $ j \in \mathbb Z , k \in \mathbb Z^n$.

(ii) Fix a large integer $L \in \mathbb N_0 $. We have
\begin{equation} \label{psi ell wavelet}
	\psi^\ell \in  \mathcal P_L  ^{\perp}, \ell  = 1,2, \ldots, 2^n-1 .
\end{equation}
In addition, they are real-valued and supp $ \psi^\ell = [0,2N-1]^n $ for some $N \in \mathbb N$. See \cite{LPM91}, for example.

We also set $ \chi_{j,k} := 2^{ jn/2} \chi_{ Q_{j,k} }$ for the dyadic cube $Q_{j,k} \in \D$. Then using the $L^2$-inner product $\langle \cdot, \cdot \rangle $, for $f \in L_{\operatorname{loc}}^1$, we define the square function $Sf$ by
\begin{equation*}
	Sf : = S ^{\psi ^{\ell} } f := \left( \sum_{\ell =1}^{2^n -1 }  \sum_{ (j,k) \in \mathbb Z^{n+1}} |\langle f, \psi_{j,k} ^\ell \rangle  \chi_{j,k}  |^2  \right)^{1/2} .
\end{equation*}

\begin{theorem} \label{wavelet char mixed BM}
		Let $ 1 < \vec p <\infty$. Let   $1 < n / ( \sum_{i=1}^n  1/p_{i})   < t <r<\infty $ or $ 1< n / ( \sum_{i=1}^n  1/p_{i})  \le t< r =\infty$. 		
		Assume that $L = n$ and $m = n+1$ as in (\ref{psi ell wavelet}). Then for $f \in M_{\vec p}^{t,r} $, we have
		\begin{equation*}
			\| f\|_{ M_{\vec p}^{t,r}  } \approx \| S f \|_{ M_{\vec p}^{t,r}  }  .
		\end{equation*}
\end{theorem}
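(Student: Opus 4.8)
The plan is to follow the standard Littlewood--Paley strategy already built up in this paper, transferring the wavelet square function estimate to the $\varphi$-based square function characterization of Theorem~\ref{LP char mixed BM}. First I would recall that the wavelet system, being compactly supported $C^{n+1}$-functions with vanishing moments up to order $L=n$, is an orthonormal basis of $L^2$, so that for $f\in L^2$ one has $f=\sum_{\ell,j,k}\langle f,\psi^\ell_{j,k}\rangle\psi^\ell_{j,k}$; and by Theorem~\ref{E_k^q f to f} and Corollary~\ref{dense L_c mixed BM} it suffices (when $r<\infty$) to prove both inequalities first for $f\in L^\infty_c$ (or for simple functions, via Lemma~\ref{dense simple}) and then pass to the limit, while the case $r=\infty$ is the mixed Morrey case already handled in \cite{N24}. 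The two directions $\|Sf\|_{M^{t,r}_{\vec p}}\lesssim\|f\|_{M^{t,r}_{\vec p}}$ and $\|f\|_{M^{t,r}_{\vec p}}\lesssim\|Sf\|_{M^{t,r}_{\vec p}}$ are handled separately but by the same philosophy: compare $Sf$ pointwise with the continuous square function $(\sum_j|\F^{-1}(\varphi_j\F f)|^2)^{1/2}$, then invoke Theorem~\ref{LP char mixed BM}.

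For the estimate $\|Sf\|_{M^{t,r}_{\vec p}}\lesssim\|f\|_{M^{t,r}_{\vec p}}$, I would proceed as in \cite[Section~2]{N24} and \cite{IST15}: decompose $\psi^\ell$ in Littlewood--Paley blocks, $\psi^\ell=\sum_{\nu}\F^{-1}(\varphi_\nu\F\psi^\ell)$, so that $\langle f,\psi^\ell_{j,k}\rangle$ is controlled by a sum over $\nu$ of discretized pieces of $\F^{-1}(\varphi_{j+\nu}\F f)$ evaluated near $2^{-j}k$; using the smoothness ($m=n+1$ derivatives) and vanishing moments ($L=n$) of $\psi^\ell$ one gets rapidly decaying coefficients in $\nu$, and the key point is the pointwise bound $\sum_{\ell,k}|\langle f,\psi^\ell_{j,k}\rangle\chi_{j,k}(x)|^2\lesssim\sum_{\nu}2^{-|\nu|M}\M(|\F^{-1}(\varphi_{j+\nu}\F f)|^2)(x)$ for $M$ as large as we wish. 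Summing in $j$, shifting the index, and applying the mixed vector-valued Fefferman--Stein/Hardy--Littlewood inequality on $M^{t,r}_{\vec p}(\ell^2)$ (Theorem~\ref{HL seq mix BM} with $u=2$) together with Lemma~\ref{D equivalence} reduces everything to $\|(\sum_j|\F^{-1}(\varphi_j\F f)|^2)^{1/2}\|_{M^{t,r}_{\vec p}}$, which by Lemma~\ref{ell 2 le BM and block}(i) and Theorem~\ref{LP char mixed BM}(i) is $\lesssim\|f\|_{M^{t,r}_{\vec p}}$. Conversely, for $\|f\|_{M^{t,r}_{\vec p}}\lesssim\|Sf\|_{M^{t,r}_{\vec p}}$ I would use duality: by Theorem~\ref{predual mix BM}, $\|f\|_{M^{t,r}_{\vec p}}=\sup\{|\int f g|:\|g\|_{\H^{t',r'}_{\vec p\,'}}\le1\}$, expand $f=\sum_{\ell,j,k}\langle f,\psi^\ell_{j,k}\rangle\psi^\ell_{j,k}$ and $g$ against the same basis, apply Cauchy--Schwarz in $(\ell,j,k)$ to get $|\int fg|\le\|Sf\|_{M^{t,r}_{\vec p}}\|Sg\|_{\H^{t',r'}_{\vec p\,'}}$, and then bound $\|Sg\|_{\H^{t',r'}_{\vec p\,'}}\lesssim\|g\|_{\H^{t',r'}_{\vec p\,'}}$ by the block-space analogue of the previous paragraph, using Theorem~\ref{HL mix block r le infty} (vector-valued maximal boundedness on block spaces), Lemma~\ref{ell 2 le BM and block}(ii) and Theorem~\ref{LP char mixed BM}(ii) in place of their $M^{t,r}_{\vec p}$ counterparts.

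The main obstacle I anticipate is the careful bookkeeping in the pointwise comparison between the wavelet coefficients and the continuous Littlewood--Paley pieces: one must exploit the exact interplay of the smoothness order $m=n+1$, the moment order $L=n$, and the ambient dimension $n$ to obtain summable decay in the block index $\nu$ \emph{and} enough decay in the spatial variable to absorb the shift from $Q_{j,k}$ to the support of $\psi^\ell_{j,k}$, which is a cube of side comparable to $2^{-j}$ but not equal to $Q_{j,k}$; this is exactly the place where one needs the almost-orthogonality estimates of \cite[Lemma~2.8]{IST15}-type and where the hypotheses $L=n$, $m=n+1$ are used. A secondary technical point is justifying the various interchanges of summation and the weak-$*$ (resp. weak) convergence of the wavelet expansion in $M^{t,r}_{\vec p}$ (resp. $\H^{t',r'}_{\vec p\,'}$), which I would handle exactly as the $\varphi$-decomposition convergence was handled in Lemma~\ref{LP block} and Lemma~\ref{lem f le LP}, reducing first to $f\in L^\infty_c$ and using density. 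Once these are in place, the dilation/translation invariance and the $\ell^{r_1}\hookrightarrow\ell^{r_2}$ embedding make the remaining steps routine.
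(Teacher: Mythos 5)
Your forward-direction estimate $\|Sf\|_{M^{t,r}_{\vec p}}\lesssim\|f\|_{M^{t,r}_{\vec p}}$ is essentially the paper's Theorem~\ref{wavelet one direction}: the paper expands $f$ via the Calder\'on reproducing formula $f=\sum_m\eta_{2^{-m}}*\phi_{2^{-m}}*f$ and you propose to expand $\psi^\ell$ in Littlewood--Paley blocks, but these are two presentations of the same almost-orthogonality computation, and both culminate in a pointwise bound of the wavelet square function by $\big(\sum_m\M(|\phi_{2^{-m}}*f|)^2\big)^{1/2}$ (or $\M_\zeta$ of it), followed by the $\ell^2$-valued maximal inequality (Theorem~\ref{HL seq mix BM}) and Theorem~\ref{LP char mixed BM}. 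The one small point worth flagging is that your intermediate claim applies $\M$ to $|\F^{-1}(\varphi_{j+\nu}\F f)|^2$ rather than $\M_\zeta$ to $|\F^{-1}(\varphi_{j+\nu}\F f)|$; this is fine if you insert the usual Plancherel--Polya observation that a band-limited function satisfies $|g(y)|\lesssim\M_\zeta g(x)$ for $|x-y|\lesssim2^{-j}$, but that step should be made explicit.

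For the reverse direction your route genuinely differs from the paper's. You propose duality: pair $f$ against $g\in\H^{t',r'}_{\vec p\,'}$, expand both in the wavelet basis, note $\int fg=\int\sum_{\ell,j,k}\big(\langle f,\psi^\ell_{j,k}\rangle\chi_{j,k}\big)\big(\overline{\langle g,\psi^\ell_{j,k}\rangle}\chi_{j,k}\big)\le\int Sf\cdot Sg$, then use H\"older in the $M^{t,r}_{\vec p}$--$\H^{t',r'}_{\vec p\,'}$ pairing and a block-space version of the forward estimate $\|Sg\|_{\H^{t',r'}_{\vec p\,'}}\lesssim\|g\|_{\H^{t',r'}_{\vec p\,'}}$, which in turn relies on Theorem~\ref{HL mix block r le infty} and Lemma~\ref{ell 2 le BM and block}(ii). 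The paper instead proves the reverse direction directly: it writes $\phi_{2^{-m}}*f=\sum_{\ell,j,k}\langle f,\psi^\ell_{j,k}\rangle\,\phi_{2^{-m}}*\psi^\ell_{j,k}$, applies the almost-orthogonality estimate of Lemma~\ref{Smoothness and Vanishing Moments} with $m=n+1$, $L=n$, bounds the resulting expression by a doubly-indexed vector-valued maximal function, and invokes the iterated vector-valued maximal inequality Theorem~\ref{mixed vector HL mixed BM} (which is proved specifically for this purpose). Your duality approach is valid (Parseval for the orthonormal wavelet basis, plus density via Corollary~\ref{dense L_c mixed BM} to justify the exchange of sum and integral, and the already-known $r=\infty$ case), and it avoids invoking Theorem~\ref{mixed vector HL mixed BM}; the cost is that you must establish the block-space forward estimate in full, which requires the slice-space machinery of Lemma~\ref{char block} that the paper's direct argument sidesteps. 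Both are legitimate; the paper's choice is self-contained on the primal side, while yours distributes the work onto the predual side.
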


We make some preparation for Theorem \ref{wavelet char mixed BM}.

\begin{lemma}[p. 595, \cite{G142}]  \label{Smoothness and Vanishing Moments}
	Let $\mu ,\nu \in \mathbb R$, $M,N >0$ and $L \in \mathbb N_0$ satisfy $\nu \ge  \mu$  and $N >M +L +n$. Suppose that $ \phi_{ (\mu) } \in C ^L $ satisfies 
	\begin{equation*}
		| \partial ^\alpha  \phi_{ (\mu) }  (x) | \le A_\alpha  \frac{2^{\mu  (n+L) } }{ (1+ 2^\mu | x - x_\mu |)^M } 
	\end{equation*}
for all $\alpha \in \mathbb N_0^n ,  | \alpha | =L$. Furthermore, suppose that $ \phi _{ (\nu) }$ is a measurable function satisfying 
\begin{equation*}
	\int_\rn  \phi _{ (\nu) } (x)  (x - x_{\nu} ) ^\beta \d x = 0 
\end{equation*}
for all $ | \beta | \le L-1$, and $  |  \phi _{ (\nu) } (x) | \le B 2^{\nu n}  ( 1+ 2^\nu  |x-x_\nu | ) ^{-N} $,
where the former condition is supposed to be vacuous when $L=0$. Then it holds
\begin{equation*}
	\left|  \int_\rn \phi_{ (\mu) }  (x)  \phi _{ (\nu) } (x) \d x \right| \le C_{ A_\alpha , B,L,M,N } 2^{ \mu n - ( \nu - \mu ) L }  ( 1+ 2^\mu |x_\mu - x_\nu | ) ^{-M}
\end{equation*}
where 
\begin{equation*}
	C_{ A_\alpha , B,L,M,N } = B \left(  \sum_{ |\alpha | =L  }  \frac{ A_\alpha }{ \alpha ! }  \right)  \frac{ \omega_n (N-M-L) }{ N-M-L-n},
\end{equation*}
and $\omega_n $ is the volume of the unit ball in $\rn$.
\end{lemma}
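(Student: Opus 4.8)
The plan is to establish this almost-orthogonality estimate by the standard device of exploiting the vanishing moments of $\phi_{(\nu)}$ to compare $\phi_{(\mu)}$ with its Taylor polynomial at $x_\nu$, and then separating the off-diagonal decay factor $(1+2^\mu|x_\mu-x_\nu|)^{-M}$ from a convergent integral by means of a Peetre-type submultiplicativity inequality. Throughout, the roles of the hypotheses will be: the bound on $\partial^\alpha\phi_{(\mu)}$ with $|\alpha|=L$ supplies the gain $2^{-(\nu-\mu)L}$, the moment condition makes the Taylor subtraction legitimate, the decay of $\phi_{(\nu)}$ at the finer scale $2^{-\nu}$ (recall $\nu\ge\mu$) produces the off-diagonal factor, and $N>M+L+n$ guarantees absolute convergence.

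First I would dispose of the trivial case $L=0$, in which the moment hypothesis is vacuous and $\phi_{(\mu)}$ is merely continuous with $|\phi_{(\mu)}(x)|\le A_0 2^{\mu n}(1+2^\mu|x-x_\mu|)^{-M}$. Here one bounds $\bigl|\int_\rn\phi_{(\mu)}\phi_{(\nu)}\bigr|\le\int_\rn|\phi_{(\mu)}(x)||\phi_{(\nu)}(x)|\,\d x$, inserts both pointwise estimates, and uses the inequality $1+2^\mu|x-x_\mu|\le(1+2^\mu|x-x_\nu|)(1+2^\mu|x_\nu-x_\mu|)\le(1+2^\nu|x-x_\nu|)(1+2^\mu|x_\mu-x_\nu|)$, where the last step uses $\nu\ge\mu$. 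This extracts $(1+2^\mu|x_\mu-x_\nu|)^{-M}$ and leaves $\int_\rn(1+2^\nu|x-x_\nu|)^{M-N}\,\d x$, which converges since $N>M+n$; after the substitution $y=2^\nu(x-x_\nu)$ it equals a constant times $2^{-\nu n}$, so the powers of $2$ collapse to $2^{\mu n}$, which is $2^{\mu n-(\nu-\mu)L}$ with $L=0$.

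For $L\ge 1$ I would use $\int_\rn\phi_{(\nu)}(x)(x-x_\nu)^\beta\,\d x=0$ for $|\beta|\le L-1$ to write
\[
\int_\rn\phi_{(\mu)}\phi_{(\nu)}=\int_\rn\Bigl(\phi_{(\mu)}(x)-\sum_{|\beta|\le L-1}\frac{\partial^\beta\phi_{(\mu)}(x_\nu)}{\beta!}(x-x_\nu)^\beta\Bigr)\phi_{(\nu)}(x)\,\d x ,
\]
then represent the bracket by the integral form of the multivariate Taylor remainder, $\sum_{|\alpha|=L}\tfrac{L}{\alpha!}(x-x_\nu)^\alpha\int_0^1(1-s)^{L-1}\partial^\alpha\phi_{(\mu)}(x_\nu+s(x-x_\nu))\,\d s$, and apply $|\partial^\alpha\phi_{(\mu)}(z)|\le A_\alpha 2^{\mu(n+L)}(1+2^\mu|z-x_\mu|)^{-M}$. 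Writing $z_s:=x_\nu+s(x-x_\nu)$, this reduces the whole integral to controlling
\[
2^{\mu(n+L)}\sum_{|\alpha|=L}\frac{A_\alpha}{\alpha!}\int_\rn|x-x_\nu|^L\Bigl(\int_0^1\frac{(1-s)^{L-1}\,\d s}{(1+2^\mu|z_s-x_\mu|)^M}\Bigr)|\phi_{(\nu)}(x)|\,\d x .
\]

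The crux of the argument — and the step I expect to need the most care — is pulling the factor $(1+2^\mu|x_\mu-x_\nu|)^{-M}$ out of the inner integral uniformly in $s$. Here one writes $z_s-x_\mu=(z_s-x_\nu)+(x_\nu-x_\mu)$ and uses $1+2^\mu|x_\mu-x_\nu|\le(1+2^\mu|x_\mu-z_s|)(1+2^\mu|z_s-x_\nu|)$ together with $|z_s-x_\nu|=s|x-x_\nu|\le|x-x_\nu|$ and $2^\mu\le 2^\nu$ to deduce $(1+2^\mu|z_s-x_\mu|)^{-M}\le(1+2^\nu|x-x_\nu|)^M(1+2^\mu|x_\mu-x_\nu|)^{-M}$, valid for every $s\in[0,1]$. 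Feeding this back, the $s$-integral contributes only $\int_0^1(1-s)^{L-1}\,\d s=1/L$, and there remains $2^{\mu(n+L)}(1+2^\mu|x_\mu-x_\nu|)^{-M}\int_\rn|x-x_\nu|^L(1+2^\nu|x-x_\nu|)^M|\phi_{(\nu)}(x)|\,\d x$. Inserting the decay $|\phi_{(\nu)}(x)|\le B2^{\nu n}(1+2^\nu|x-x_\nu|)^{-N}$ and substituting $y=2^\nu(x-x_\nu)$ turns the last integral into $B\,2^{-\nu L}\int_\rn|y|^L(1+|y|)^{M-N}\,\d y$, which converges exactly because $N>M+L+n$; evaluating it in polar coordinates via the Beta function produces the constant $C_{A_\alpha,B,L,M,N}$, while the powers of $2$ combine to $2^{\mu(n+L)}\cdot 2^{-\nu L}=2^{\mu n-(\nu-\mu)L}$, giving the claimed bound. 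The only remaining routine points I would verify carefully are the precise normalization of the Taylor remainder and the exact value of $\int_\rn|y|^L(1+|y|)^{M-N}\,\d y$, so as to recover the displayed form of $C_{A_\alpha,B,L,M,N}$.
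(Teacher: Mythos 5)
The paper does not prove this lemma; it is quoted from Grafakos \cite{G142}, and your argument is precisely the standard proof given there: subtract the degree-$(L-1)$ Taylor polynomial of $\phi_{(\mu)}$ at $x_\nu$ (legitimate by the vanishing moments and the integrability guaranteed by $N>M+L+n$), bound the integral-form remainder by the $|\alpha|=L$ derivative estimates, and peel off $(1+2^\mu|x_\mu-x_\nu|)^{-M}$ uniformly in $s$ via the Peetre submultiplicativity together with $|z_s-x_\nu|\le|x-x_\nu|$ and $2^\mu\le 2^\nu$; the $L=0$ case and the bookkeeping of powers of $2$ are also handled correctly. The one detail to fix is the very last step: the displayed constant $C_{A_\alpha,B,L,M,N}$ does not come from an exact Beta-function evaluation in polar coordinates but from the cruder bounds $|y|^L\le(1+|y|)^L$ and $\int_{\rn}(1+|y|)^{-s}\,\d y\le \omega_n+n\omega_n\int_1^\infty r^{n-1-s}\,\d r=\omega_n s/(s-n)$ with $s=N-M-L$, which reproduce the stated value exactly (and since only an upper bound is claimed, your sharper evaluation would also suffice).
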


Let $ \psi \in \mathscr S $ satisfy $ \chi_{B_(0,2)} \le \psi \le \chi_{B(0,4)}$. Define $\varphi := \psi -\psi (2\cdot)$. For $j \in \mathbb Z$, let $\varphi_j = \varphi (2^{-j} \cdot)$. Let $\psi \in   \mathcal P_0  ^{\perp} $. Then we have $\varphi \in  \mathcal P_0  ^{\perp} $. Moreover, for $ f \in M_{\vec p}^{t,r} $, the coupling $ \langle f ,  \psi_{j,k} ^\ell \rangle $ is well defined.

\begin{theorem} \label{wavelet one direction}
	Let $L=0$. 	Let $ 1 < \vec p <\infty$. Let   $1 < n / ( \sum_{i=1}^n  1/p_{i})   < t <r<\infty $ or $ 1< n / ( \sum_{i=1}^n  1/p_{i})  \le t< r =\infty$. Then 
	\begin{equation*}
		 \| S f \|_{ M_{\vec p}^{t,r}  } \lesssim \| f\|_{ M_{\vec p}^{t,r}  }.
	\end{equation*}
\end{theorem}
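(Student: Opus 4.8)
The plan is to derive a pointwise domination of the wavelet square function $Sf$ by a vector-valued maximal function of the Littlewood-Paley pieces of $f$, and then invoke the Fefferman-Stein inequality on $M_{\vec p}^{t,r}$ together with the Littlewood-Paley characterization of Theorem \ref{LP char mixed BM}. First I would fix one $\ell\in\{1,\dots,2^n-1\}$ (summing over the finitely many $\ell$ at the end is harmless) and let $\varphi,\varphi_j$ be as in Definition \ref{homo unity}. Since $M_{\vec p}^{t,r}\hookrightarrow M_{\vec p}^{t,\infty}\hookrightarrow\mathscr S'$ and each $\psi^\ell_{j,k}$ lies in $L^{\vec p\,'}_c$, hence—by Lemma \ref{generate mix block} and the fact that a compact set meets only finitely many unit dyadic cubes—in $\mathcal H_{\vec p\,^\prime}^{t',r'}$, the decomposition $f=\sum_{i\in\mathbb Z}\F^{-1}(\varphi_i\F f)$ of Theorem \ref{LP char mixed BM}(i) converges in the weak-$*$ topology of $M_{\vec p}^{t,r}$, so that
\begin{equation*}
	\langle f,\psi^\ell_{j,k}\rangle=\sum_{i\in\mathbb Z}\langle\Delta_i f,\psi^\ell_{j,k}\rangle,\qquad \Delta_i f:=\F^{-1}(\varphi_i\F f),
\end{equation*}
the series being absolutely convergent once the estimate below is in hand.

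The heart of the argument is an almost-orthogonality bound: there exist $\delta>0$ and a dimensional constant $C_0\ge1$ such that for all $i,j\in\mathbb Z$ and $k\in\mathbb Z^n$,
\begin{equation*}
	|\langle\Delta_i f,\psi^\ell_{j,k}\rangle|\lesssim 2^{-|i-j|\delta}\,2^{-jn/2}\inf_{y\in Q_{j,k}}\M(\Delta_i f)(y).
\end{equation*}
For $i\le j$ I would use that $\psi^\ell_{j,k}$ has mean zero—this is exactly the hypothesis $L=0$, i.e.\ $\psi^\ell\perp\mathcal P_0$—subtracting the value of $\Delta_i f$ at the center of $Q_{j,k}$ and using the Bernstein-type bound $|\nabla\Delta_i f|\lesssim 2^i\M(\Delta_i f)$, valid for functions with Fourier support in $\{|\xi|\lesssim 2^i\}$, together with $\operatorname{diam}(\operatorname{supp}\psi^\ell_{j,k})\lesssim 2^{-j}$ and $\|\psi^\ell_{j,k}\|_{L^1}\lesssim 2^{-jn/2}$; this yields the gain $2^{i-j}$. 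For $i>j$ I would fatten $\Delta_i f=\F^{-1}(\eta_i)*\Delta_i f$ with $\eta$ radial, $\eta\equiv1$ near $\operatorname{supp}\varphi$ and $\operatorname{supp}\eta$ away from the origin, move the convolution onto $\psi^\ell_{j,k}$, and apply Lemma \ref{Smoothness and Vanishing Moments}: the function $\F^{-1}(\eta_i)*\psi^\ell_{j,k}$ has vanishing moments of every order and the required decay at scale $2^{-j}$ around $Q_{j,k}$, while $\Delta_i f$ is $C^m$-smooth, so one gains $2^{-(i-j)(m-n)}$, which is $2^{-(i-j)\delta}$ with $\delta>0$ since $m$ is taken large. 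In either case the remaining integral of $|\Delta_i f|$ against an $L^1$-normalized bump on $C_0Q_{j,k}$ is at most $\inf_{y\in Q_{j,k}}\M(\Delta_i f)(y)$.

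Granting this bound, since $\chi_{j,k}=2^{jn/2}\chi_{Q_{j,k}}$ and for each fixed $j$ only the cube $Q_{j,k}\ni x$ contributes to $Sf(x)$,
\begin{equation*}
	Sf(x)^2\le(2^n-1)\sum_{j\in\mathbb Z}\Bigl(\sum_{i\in\mathbb Z}2^{-|i-j|\delta}\M(\Delta_i f)(x)\Bigr)^{2}\lesssim\sum_{i\in\mathbb Z}\M(\Delta_i f)(x)^2,
\end{equation*}
the last step being Young's inequality for the $\ell^1$--$\ell^2$ convolution in the $j$-variable. Taking the $M_{\vec p}^{t,r}$ norm, applying the Fefferman-Stein vector-valued inequality on $M_{\vec p}^{t,r}$ (Theorem \ref{HL seq mix BM} with $u=2$), and then the Littlewood-Paley characterization (Theorem \ref{LP char mixed BM}(i)) gives
\begin{equation*}
	\|Sf\|_{M_{\vec p}^{t,r}}\lesssim\Bigl\|\bigl(\textstyle\sum_i|\M(\Delta_i f)|^2\bigr)^{1/2}\Bigr\|_{M_{\vec p}^{t,r}}\lesssim\Bigl\|\bigl(\textstyle\sum_i|\Delta_i f|^2\bigr)^{1/2}\Bigr\|_{M_{\vec p}^{t,r}}\approx\|f\|_{M_{\vec p}^{t,r}},
\end{equation*}
which is the claim. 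I expect the main obstacle to be the almost-orthogonality estimate in the regime $i>j$: extracting an honest decay exponent $\delta>0$ from the finite smoothness of $\psi^\ell$ forces the fattening device and a careful accounting of how the gain interacts with the Schwartz tails of $\F^{-1}(\eta_i)$ (handled by splitting the tail into dyadic annuli and summing), while the case $i\le j$ and the functional-analytic bookkeeping—weak-$*$ convergence and the membership $\psi^\ell_{j,k}\in\mathcal H_{\vec p\,^\prime}^{t',r'}$—are routine.
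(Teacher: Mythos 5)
Your proposal follows the same strategy as the paper's proof: expand $f$ via the Littlewood--Paley decomposition, establish an almost-orthogonality bound giving a pointwise domination $\bigl(\sum_k |\langle f,\psi^\ell_{j,k}\rangle\chi_{j,k}(x)|^2\bigr)^{1/2}\lesssim\sum_m 2^{-|j-m|}\M(\Delta_m f)(x)$, sum the geometric tail by Young/Cauchy--Schwarz, and close with the vector-valued maximal inequality (Theorem \ref{HL seq mix BM}) and the Littlewood--Paley characterization (Theorem \ref{LP char mixed BM}). The only difference is that the paper obtains the pointwise bound by citing the proof of Theorem~4.3 in \cite{N24} after writing $f$ through the Calder\'on reproducing formula with an auxiliary symbol $\eta$, whereas you sketch the almost-orthogonality estimate directly (mean-zero of the wavelet for $i\le j$, the fattening device plus Lemma \ref{Smoothness and Vanishing Moments} for $i>j$); the two are the same computation carried out versus cited.
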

\begin{proof}
	For a measurable function $f$ and $a>0$, define $f_a := a^{-n} f (a^{-1} \cdot)$. Define $\phi := \F^{-1} \varphi \in \mathscr S $. Let $\eta \in \mathscr S $ such that $\F \eta = \varphi (2^{-1} \cdot )  +  \varphi ( \cdot )  + \varphi (2 \cdot ) $. Since $\varphi$ vanishes around the origin, we see that $\eta \in \mathcal P_0  ^{\perp}  $ and 
	\begin{equation*}
		\sum_{ j \in \mathbb Z } \F ( \eta_{2^{-j} } ) (\xi) \F (\phi_{ 2^{-j} } ) (\xi) =1
	\end{equation*}
for all $\xi \neq 0$. Thus, by the Calder\'on reproducing formula (for example, see \cite[Corollary 1.1.7]{G142}), we have
\begin{equation*}
	f = \sum_{ m = -\infty}^{\infty} \eta_{2^{-m} } * \phi_{2^{-m} } * f .
\end{equation*}
Let $j \in \mathbb Z$ be fixed. Then 
\begin{align*}
 |\langle f, \psi_{j,k} ^\ell \rangle  \chi_{j,k}  | & \le \sum_{ m \in \mathbb Z}  | \langle \eta_{2^{-m} } * \phi_{2^{-m} } * f, \psi_{j,k} ^\ell \rangle |  \chi_{j,k} \\
 & = \sum_{ m \in \mathbb Z}  | \langle  \phi_{2^{-m} } * f, \tilde \eta_{2^{-m} } * \psi_{j,k} ^\ell \rangle |  \chi_{j,k} .
\end{align*}
From \cite[proof of Theorem 4.3]{N24}, we have
\begin{equation*}
 \left( \sum_{k \in \mathbb Z^n}  |\langle f, \psi_{j,k} ^\ell \rangle  \chi_{j,k}  |^2\right)	^{1/2} \lesssim \sum_{ m \in \mathbb Z} 2^{ - |j-m|} \mathcal M ( | \phi_{2^{-m} } * f|  ) (x).
\end{equation*}
Taking the $\ell^2$-norm over $j\in \mathbb Z$ and $\ell = 1,2,\ldots, 2^n -1$ and by the Cauchy-Schwartz inequality, we get
\begin{align*}
	\left( \sum_{\ell =1}^{2^n -1 }  \sum_{ (j,k) \in \mathbb Z^{n+1}} |\langle f, \psi_{j,k} ^\ell \rangle  \chi_{j,k}  |^2  \right)^{1/2} 
& \lesssim 	\left( \sum_{ j\in \mathbb Z } \left( \sum_{ m \in \mathbb Z} 2^{ - |j-m|} \mathcal M ( | \phi_{2^{-m} } * f|  ) (x)  \right) ^2    \right)^{1/2} \\
&\lesssim \left( \sum_{ j\in \mathbb Z }  \sum_{ m \in \mathbb Z} 2^{ - 2|j-m|} \mathcal M ( | \phi_{2^{-m} } * f|  ) (x)   ^2    \right)^{1/2} \\
& \lesssim \left( \sum_{ m \in \mathbb Z}  \mathcal M ( | \phi_{2^{-m} } * f|  ) (x)   ^2    \right)^{1/2} .
\end{align*}
Then by the boundedness of Hardy-Littlewood on mixed Bourgain-Morrey spaces and Theorem \ref{LP char mixed BM}, we obtain the desired result.
\end{proof}

Before proving the another direction of Theorem \ref{wavelet char mixed BM}, we recall two function spaces. Let $\varphi \in C_c^\infty $ such that $ \chi_{ B (0,4) \backslash B (0,2) } \le \varphi \le \chi_{ B (0,8) \backslash B (0,3/2) }  $. Let $1\le p,q  \le \infty $ and $s\in \mathbb R$. Then the homogeneous Besov spaces $ \dot B _{p,q}^s:= \dot B _{p,q}^s (\rn) $  is the set of all functions $f \in \mathscr S ' \backslash \mathcal P $ such that 
\begin{equation*}
	\| f\|_{\dot B _{p,q}^s  } : = \left(\sum_{j \in \mathbb Z} 2^{ js  q}  \| \F^{-1} ( \varphi (2^{-j} \cdot ) \F f ) \|_{L^p} ^q \right) ^{1/q} < \infty .
\end{equation*}
Moreover, let $1\le p,q  \le \infty $ and $s\in \mathbb R$. If we replace $ L^p$-norm by the  Morrey $M_{p,t}^\infty  $-norm in the above, we obtain the homogeneous Besov-Morrey spaces $  \mathcal N_{p,t, q }^s (\rn) $. 
That is 
\begin{equation*}
	\| f\|_{\mathcal N_{p,t, q }^s (\rn) } : =  \left(\sum_{j \in \mathbb Z} 2^{ js  q}  \| \F^{-1} ( \varphi (2^{-j} \cdot ) \F f ) \|_{M_{p,t}^\infty} ^q \right) ^{1/q} < \infty .
\end{equation*}
We refer the reader to \cite{Ma03,NS22,R13,S10} for more results about the homogeneous Besov-Morrey spaces. From the \cite[page 2224]{N24} and the basic embeddings of mixed Bourgain-Morrey spaces, we get
\begin{equation*}
	M_{\vec p}^{t,r} \hookrightarrow M_{\vec p}^{t,\infty} \hookrightarrow M_{\min \vec p}^{t,\infty} \hookrightarrow \mathcal N_{\min \vec p,t, \infty }^0 (\rn) \hookrightarrow \dot B _{\infty,\infty}^{-n/t} .
\end{equation*} 
The last embedding comes from the Sobolev embedding for Besov-Morrey spaces; sees \cite[Theorem 2.4]{Ma03}. Hence we can regard the element of $M_{\vec p}^{t,r}  $ as one of $ \dot B _{\infty,\infty}^{-n/t}  $. By \cite[Theorem 7.20]{FJW91}, we have the wavelet decomposition for $\dot B _{\infty,\infty}^{-n/t}  $. Thus we can expand the all element of $M_{\vec p}^{t,r}  $ by using the wavelet. 

Now we move on the another direction of  Theorem \ref{wavelet char mixed BM}. 
\begin{proof} [Proof of Theorem \ref{wavelet char mixed BM}]

By Theorem \ref{LP char mixed BM}, let us prove
\begin{equation*}
	\left\| \left(\sum_{m\in\mathbb Z} | \phi_{2^{-m}  } * f|^2  \right) ^{1/2} \right\|_{M_{\vec p}^{t,r}  }
 \lesssim 	\| S f\|_{ M_{\vec p}^{t,r}  } 
\end{equation*}
for all $ f \in M_{\vec p}^{t,r}$. Here $\phi = \F^{-1} \varphi $.
Let $m \in \mathbb Z$ be fixed. Thanks to the above remark, for all $ f \in M_{\vec p}^{t,r}$, we can use the wavelet decomposition and estimate each term of the decomposition:
\begin{equation*}
	\phi_{2^{-m}  } * f = \sum_{\ell=1}^{2^n -1} \sum_{ j \in \mathbb Z } \sum_{k \in \mathbb Z^n} \langle f, \psi_{j,k} ^\ell \rangle \phi_{2^{-m}  } * \psi_{j,k} ^\ell.
\end{equation*}
Note that $\F \phi = \varphi =0$ at the origin, $\phi \in \mathcal P_n  ^\perp$. Hence, by Lemma \ref{Smoothness and Vanishing Moments}, we have
\begin{align*}
		2^{jn /2 } |\phi_{2^{-m} } * \psi_{j,k} ^\ell (y)|   \lesssim 2 ^{ \min \{j,m \} n - |j-m| (n+1) } ( 1+ 2^{\min \{j,m \}  } |y - 2^{-j} k | )^{-N}
\end{align*}
where $N \in (n,n+1)$. Thus 
\begin{align*}
	&  \sum_{ j \in \mathbb Z } \sum_{k \in \mathbb Z^n} \langle f, \psi_{j,k} ^\ell \rangle \phi_{2^{-m}  } * \psi_{j,k} ^\ell (y) \\
	& \lesssim \sum_{ j \in \mathbb Z } \sum_{k \in \mathbb Z^n} 2^{-jn/2} 2 ^{ \min \{j,m \} n - |j-m| (n+1) } ( 1+ 2^{\min \{j,m \}  } |y - 2^{-j} k | )^{-N} | \langle f, \psi_{j,k} ^\ell \rangle| \\
	&\le  \sum_{ j \in \mathbb Z } \sum_{k \in \mathbb Z^n} 2 ^{ j n /2 - |j-m| (n+1) } ( 1+ 2^{\min \{j,m \}  } |y - 2^{-j} k | )^{-N} | \langle f, \psi_{j,k} ^\ell \rangle| .
\end{align*}
Let $c_{Q_{j,k}}$ be the center of the cube $Q_{j,k} $. 
 A geometric observation shows 
\begin{align*} 
1+ 	2^{\min \{j,m \}  } |x-y |
	& \le 1 + 2^{ \min \{j,m \} } |y - 2^{-j} k| +  2^{ \min \{0, m-j \} } \sqrt{n}\\
	& \lesssim  1+ 2^{\min \{j,m \}  } |y - 2^{-j} k |
\end{align*}
for all $y \in \rn$. 
Then  we  have
\begin{align*}
	2^{ \min\{ m -j, 0 \} } (1+2^j |y-c_{Q_{j,k}} | ) \le 1 + 2^{ \min\{j,m \}} | y-c_{Q_{j,k}} | \lesssim  1 + 2^{ \min\{j,m \}} | y- 2^{-j} k | .
\end{align*}
Hence 
\begin{align*}
	&  \sum_{ j \in \mathbb Z } \sum_{k \in \mathbb Z^n} \langle f, \psi_{j,k} ^\ell \rangle \phi_{2^{-m}  } * \psi_{j,k} ^\ell (y) \\
	& \lesssim  \sum_{ j \in \mathbb Z } \sum_{k \in \mathbb Z^n} 2 ^{ j n /2 - |j-m| (n+1)  -\min\{m-j , 0\}N  } ( 1+ 2^j  |y - c_{Q_{j,k}} | )^{-N} | \langle f, \psi_{j,k} ^\ell \rangle|  \\
	& \approx  \sum_{ j \in \mathbb Z } \sum_{k \in \mathbb Z^n} 2 ^{ j n /2 - |j-m| (n+1)  -\min\{m-j , 0\}N  }  | \langle f, \psi_{j,k} ^\ell \rangle|   \mathcal M (\chi_{Q_{j,k} }   ) (y) ^{1/w},
\end{align*}
where $0 <w<1$.  Since 
\begin{equation*}
	- |j-m| (n+1)  -\min\{m-j , 0\}N \le - |j-m| ( n+1-N), 
\end{equation*} using the Cauchy-Schwartz inequality for $j$,
we have
\begin{align*}
&\sum_{m\in \mathbb Z} \left( 	\sum_{ j \in \mathbb Z } \sum_{k \in \mathbb Z^n}  | \langle f, \psi_{j,k} ^\ell \rangle \phi_{2^{-m}  } * \psi_{j,k} ^\ell (y)  | \right)^2 \\
& \lesssim \sum_{m\in \mathbb Z} \left( 	\sum_{ j \in \mathbb Z } \sum_{k \in \mathbb Z^n} 2 ^{ j n /2 - |j-m| ( n+1-N)  }  | \langle f, \psi_{j,k} ^\ell \rangle|   \mathcal M (\chi_{Q_{j,k} }   ) (y) ^{1/w} \right)^2  \\
& \le  \sum_{m\in \mathbb Z} 
\sum_{ j \in \mathbb Z } 2 ^{ j n  - |j-m| ( n+1-N)  } \left(  \sum_{k \in \mathbb Z^n}   | \langle f, \psi_{j,k} ^\ell \rangle|   \mathcal M (\chi_{Q_{j,k} }   ) (y) ^{1/w}\right)^2  \sum_{ j \in \mathbb Z } 2 ^{  - |j-m| ( n+1-N)  } \\
& \lesssim  \sum_{m\in \mathbb Z} 
\sum_{ j \in \mathbb Z } 2 ^{ j n  - |j-m| ( n+1-N)  } \left(  \sum_{k \in \mathbb Z^n}   | \langle f, \psi_{j,k} ^\ell \rangle|   \mathcal M (\chi_{Q_{j,k} }   ) (y) ^{1/w}\right)^2
\end{align*}
Taking the $ M_{\vec p}^{t,r}$-norm and using Theorem \ref{mixed vector HL mixed BM}, we have
\begin{align*}
	& \left\| \left(  \sum_{m\in \mathbb Z} \left( 	\sum_{ j \in \mathbb Z } \sum_{k \in \mathbb Z^n} | \langle f, \psi_{j,k} ^\ell \rangle \phi_{2^{-m}  } * \psi_{j,k} ^\ell |  \right)^2  \right) ^{1/2}\right\|_{ M_{\vec p}^{t,r} } \\
	& \lesssim \left\| \left(  \sum_{m\in \mathbb Z} 
	\sum_{ j \in \mathbb Z } 2 ^{ j n  - |j-m| ( n+1-N)  } \left(  \sum_{k \in \mathbb Z^n}   | \langle f, \psi_{j,k} ^\ell \rangle|   \mathcal M (\chi_{Q_{j,k} }   )  ^{1/w}\right)^2  \right) ^{1/2}\right\|_{ M_{\vec p}^{t,r} } \\
	 & \lesssim \left\| \left(  \sum_{m\in \mathbb Z} 
	 \sum_{ j \in \mathbb Z } 2 ^{ j n  - |j-m| ( n+1-N)  } \left(  \sum_{k \in \mathbb Z^n}   | \langle f, \psi_{j,k} ^\ell \rangle|    \chi_{Q_{j,k} }   \right)^2 \right) ^{1/2}\right\|_{ M_{\vec p}^{t,r} } \\
	 &= \left\| \left(  \sum_{m\in \mathbb Z} 
	 \sum_{ j \in \mathbb Z } 2 ^{ j n  - |j-m| ( n+1-N)  }  \sum_{k \in \mathbb Z^n}   | \langle f, \psi_{j,k} ^\ell \rangle|^2    \chi_{Q_{j,k} }   \right) ^{1/2}\right\|_{ M_{\vec p}^{t,r} } \\
	 & \lesssim \left\| \left(  
 \sum_{ j \in \mathbb Z } \sum_{k \in \mathbb Z^n} 	 \left( 2 ^{ j n /2  }  | \langle f, \psi_{j,k} ^\ell \rangle|  \chi_{Q_{j,k} }   
	 \right)^2   \right) ^{1/2}\right\|_{ M_{\vec p}^{t,r} } .
\end{align*}
Together Theorem \ref{wavelet one direction}, we prove Theorem \ref{wavelet char mixed BM}.
\end{proof}

\section{Chain rules } \label{sec chain rules}
In \cite{BX25}, the first and the third author of this paper introduce the weighted homogeneous Bourgain-Morrey Besov spaces and Triebel-Lizorkin spaces associated with the operator $L$ where $L$ is a nonnegative self-adjoint operator on $L^2$ satisfying a Gaussian upper bound on its heat kernel. 
If we consider the special case the weight $\omega \equiv 1$ and the operator $L= - \Delta =- \sum_{i=1}^n \partial_i ^2 $, the Laplacian on $\rn$, replacing the $L^p$-norm by the $L^{\vec p}$-norm, then we obtain the following mixed Bourgain-Morrey  Triebel-Lizorkin spaces.

\begin{definition}
	Let $\varphi$  be a partition of unity as in Definition \ref{homo unity}. Let $ 0 < \vec p \le \infty $. 
	Let  $   0 < n / ( \sum_{i=1}^n  1/p_{i})      < t <r <\infty $ or $ 0< n / ( \sum_{i=1}^n  1/p_{i})    \le t < r=\infty  $. Let $s\in \mathbb R$ and $q \in (0,\infty ]$. Then the  homogeneous Bourgain-Morrey-Triebel-Lizorkin space $\dot F_{\vec p , t,r} ^{s,q}$ is set of all $f\in \mathscr S '$  such that 	
	\begin{equation*}
		\| f\|_{\dot F_{\vec p , t,r} ^{s,q} } :=	\left\|  \left(  \sum_{j\in \mathbb Z} 2^{jsq}| \F^{-1} (\varphi_j \F  (f) )|^q  \right) ^{1/q}    \right\|_{ M_{\vec p}^{t,r}} <\infty .
	\end{equation*}
The  homogeneous Bourgain-Morrey-Besov space $\dot B_{\vec p , t,r} ^{s,q}$ is set of all $f\in \mathscr S '$  such that 	
\begin{equation*}
	\| f\|_{\dot B_{\vec p , t,r} ^{s,q} } :=\left(  \sum_{j\in \mathbb Z} 2^{jsq}	\left\|   \F^{-1} (\varphi_j \F  (f) )   \right\|_{ M_{\vec p}^{t,r}}   ^q  \right) ^{1/q}  <\infty  .
\end{equation*}
\end{definition}

We first recall the  following result coming from \cite{Pe75}. For $j\in \mathbb Z$ and $a>0$, define the Peetre maximal operator by 
\begin{equation*}
     (   \F ^{-1}( \varphi_j \F f) ) ^\ast_{a } (x)	: = \sup_{y\in \rn } \frac{ |\F ^{-1}( \varphi_j \F f) (x-y )| }{ (1+ 2^{j}|y| )^a }.
\end{equation*}
Then 
\begin{equation} \label{peetre inequality}
	(   \F ^{-1}( \varphi_j \F f) ) ^\ast_{a } (x) \lesssim \M _{n /a  } ( \F ^{-1}( \varphi_j \F f) ) (x).
\end{equation}

\begin{theorem}\label{peetre char} 
	Let $ 0 < \vec p \le \infty $. 
	Let  $   0 < n / ( \sum_{i=1}^n  1/p_{i})     < t <r <\infty $ or $ 0< n / ( \sum_{i=1}^n  1/p_{i})     \le t < r=\infty  $. Let $s\in \mathbb R$ and $q \in (0,\infty ]$.
	If $a > n / \min\{q,  \vec p \} $, then 
	\begin{align*}
			\| f\|_{\dot F_{\vec p , t,r} ^{s,q} } \approx \left\|  \left(  \sum_{j\in \mathbb Z} 2^{jsq}|  (   \F ^{-1}( \varphi_j \F f) ) ^\ast_{a }  |^q  \right) ^{1/q}    \right\|_{ M_{\vec p}^{t,r}}  .
	\end{align*} 
If $a > n /  \min \vec p$, then 
\begin{align*}
	\| f\|_{\dot B_{\vec p , t,r} ^{s,q} } \approx \left(  \sum_{j\in \mathbb Z} 2^{jsq}	\left\|   (   \F ^{-1}( \varphi_j \F f) ) ^\ast_{a }   \right\|_{ M_{\vec p}^{t,r}}   ^q  \right) ^{1/q}  .
\end{align*} 
\end{theorem}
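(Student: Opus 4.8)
The plan is to deduce both equivalences from the pointwise Peetre estimate (\ref{peetre inequality}), from the homogeneity identity $\| |F|^{u}\|_{M_{\vec p}^{t,r}}=\|F\|_{M_{u\vec p}^{ut,ur}}^{u}$ (already used in the proof of nontriviality), and from the boundedness of the Hardy--Littlewood maximal operator on mixed Bourgain--Morrey spaces and on their sequence-valued analogues (Theorems \ref{HL M} and \ref{HL seq mix BM}). Throughout put $g_{j}:=\F^{-1}(\varphi_{j}\F f)$.

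One direction is immediate. Taking $y=0$ in the definition of the Peetre maximal operator gives $|g_{j}(x)|\le (\F^{-1}(\varphi_{j}\F f))^{\ast}_{a}(x)$ for every $x\in\rn$ and every $j$, so the lattice property of $M_{\vec p}^{t,r}$ together with the monotonicity of the $\ell^{q}$ quasi-norm yields
\begin{equation*}
\| f\|_{\dot F_{\vec p , t,r} ^{s,q} }\le\left\|\left(\sum_{j\in\mathbb Z}2^{jsq}\,|(\F^{-1}(\varphi_{j}\F f))^{\ast}_{a}|^{q}\right)^{1/q}\right\|_{M_{\vec p}^{t,r}},\qquad \| f\|_{\dot B_{\vec p , t,r} ^{s,q} }\le\left(\sum_{j\in\mathbb Z}2^{jsq}\,\|(\F^{-1}(\varphi_{j}\F f))^{\ast}_{a}\|_{M_{\vec p}^{t,r}}^{q}\right)^{1/q},
\end{equation*}
with the usual $\sup$-modification when $q=\infty$.

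For the reverse inequality in the Triebel--Lizorkin case I would set $\eta:=n/a$. The hypothesis $a>n/\min\{q,\vec p\}$ forces $0<\eta<\min\{q,p_{1},\dots,p_{n}\}$; in particular $q/\eta>1$, each $p_{i}/\eta>1$, and $\eta<\min\vec p\le n/(\sum_{i}1/p_{i})$, so the rescaled parameters $\tilde{\vec p}:=\vec p/\eta$, $\tilde t:=t/\eta$, $\tilde r:=r/\eta$, $\tilde u:=q/\eta$ fall inside the admissible range of Theorem \ref{HL seq mix BM} (in both regimes $\tilde r<\infty$ and $\tilde r=\infty$). Now (\ref{peetre inequality}) gives $(\F^{-1}(\varphi_{j}\F f))^{\ast}_{a}\lesssim\M_{n/a}(g_{j})=[\M(|g_{j}|^{\eta})]^{1/\eta}$. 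Writing $h_{j}:=2^{js\eta}|g_{j}|^{\eta}$, so that $h_{j}^{q/\eta}=2^{jsq}|g_{j}|^{q}$, one obtains the pointwise bound $(\sum_{j}2^{jsq}|(\F^{-1}(\varphi_{j}\F f))^{\ast}_{a}|^{q})^{1/q}\lesssim[(\sum_{j}(\M h_{j})^{q/\eta})^{\eta/q}]^{1/\eta}$; taking the $M_{\vec p}^{t,r}$-norm, invoking the homogeneity identity with $u=1/\eta$, and then the vector-valued maximal inequality of Theorem \ref{HL seq mix BM} on $M_{\tilde{\vec p}}^{\tilde t,\tilde r}(\ell^{\tilde u})$ gives
\begin{align*}
\left\|\left(\sum_{j}2^{jsq}|(\F^{-1}(\varphi_{j}\F f))^{\ast}_{a}|^{q}\right)^{1/q}\right\|_{M_{\vec p}^{t,r}}
&\lesssim\left\|\left(\sum_{j}(\M h_{j})^{q/\eta}\right)^{\eta/q}\right\|_{M_{\tilde{\vec p}}^{\tilde t,\tilde r}}^{1/\eta}\\
&\lesssim\left\|\left(\sum_{j}h_{j}^{q/\eta}\right)^{\eta/q}\right\|_{M_{\tilde{\vec p}}^{\tilde t,\tilde r}}^{1/\eta}=\left\|\left(\sum_{j}2^{jsq}|g_{j}|^{q}\right)^{1/q}\right\|_{M_{\vec p}^{t,r}}=\| f\|_{\dot F_{\vec p , t,r} ^{s,q} },
\end{align*}
where the last homogeneity step undoes the first; the case $q=\infty$ runs the same way with $\ell^{\infty}$ replacing $\ell^{q/\eta}$. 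The Besov case is the same but one line shorter: with $\eta=n/a<\min\vec p\le n/(\sum_{i}1/p_{i})$ one applies, for each fixed $j$, the scalar maximal bound (Theorem \ref{HL M}, or the one-term case of Theorem \ref{HL seq mix BM} when some $p_{i}=\infty$) after the same homogeneity rescaling to get $\|(\F^{-1}(\varphi_{j}\F f))^{\ast}_{a}\|_{M_{\vec p}^{t,r}}\lesssim\|\M(|g_{j}|^{\eta})\|_{M_{\tilde{\vec p}}^{\tilde t,\tilde r}}^{1/\eta}\lesssim\||g_{j}|^{\eta}\|_{M_{\tilde{\vec p}}^{\tilde t,\tilde r}}^{1/\eta}=\|g_{j}\|_{M_{\vec p}^{t,r}}$, and then sums in $2^{jsq}(\cdot)^{q}$.

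The main point is bookkeeping rather than a real obstacle: one must verify that the rescaled exponents $\tilde{\vec p},\tilde t,\tilde r,\tilde u$ land exactly in the hypotheses of Theorems \ref{HL M} and \ref{HL seq mix BM} --- which is precisely why $a$ has to be taken larger than $n/\min\{q,\vec p\}$ (respectively $n/\min\vec p$) --- and that the endpoints $r=\infty$ and $q=\infty$ are covered uniformly. Beyond this the only genuine analytic ingredient is the pointwise Peetre inequality (\ref{peetre inequality}), which is quoted from \cite{Pe75}.
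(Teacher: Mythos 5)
Your proof is correct and follows essentially the same strategy as the paper's: the trivial direction from $y=0$ in the Peetre operator, and the nontrivial direction from the pointwise Peetre bound $(\F^{-1}(\varphi_j\F f))^{\ast}_{a}\lesssim\M_{n/a}(\F^{-1}(\varphi_j\F f))$ followed by maximal function boundedness. The paper's proof is terser and cites only Theorem \ref{HL M} (the scalar maximal inequality); you correctly make explicit that the Triebel--Lizorkin estimate actually needs the vector-valued inequality of Theorem \ref{HL seq mix BM} after the power/dilation step $\| |F|^{\eta}\|_{M_{\vec p/\eta}^{t/\eta,r/\eta}}^{1/\eta}=\|F\|_{M_{\vec p}^{t,r}}$, and you verify that the rescaled exponents $\tilde{\vec p}=\vec p/\eta$, $\tilde t=t/\eta$, $\tilde r=r/\eta$, $\tilde u=q/\eta$ land in the admissible range precisely because $a>n/\min\{q,\vec p\}$ (resp.\ $a>n/\min\vec p$ in the Besov case, where the scalar Theorem \ref{HL M} suffices). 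This is a faithful and complete fleshing-out of the paper's argument rather than a different route.
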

\begin{proof}
	We only prove the Triebel-Lizorkin case since Besov case is similar.
	By inequality (\ref{peetre inequality}) and Theorem \ref{HL M}, we obtain
	\begin{align*}
		 &\left\|  \left(  \sum_{j\in \mathbb Z} 2^{jsq}|  (   \F ^{-1}( \varphi_j \F f) ) ^\ast_{a }  |^q  \right) ^{1/q}    \right\|_{ M_{\vec p}^{t,r}} \\
		 &  \lesssim \left\|  \left(  \sum_{j\in \mathbb Z} 2^{jsq}|  \M _{n /a  } ( \F ^{-1}( \varphi_j \F f) )  |^q  \right) ^{1/q}    \right\|_{ M_{\vec p}^{t,r}} \\
		 	 &  \lesssim 	\| f\|_{\dot F_{\vec p , t,r} ^{s,q} } 
	\end{align*}
The another inequality is trivial. Hence we finish the proof.
\end{proof}

	For  $s\in \mathbb R$,  we define
\begin{equation*}
	 H_2^s =\left\{  
	f\in \mathscr S' : \| f\| _{  H_2^s }  :=\left\|  (1+ |\cdot |^2)^{s/2}  \mathcal F f (\cdot)    \right\|     _{L^2}
	<\infty	\right\}.
\end{equation*}

	Using Theorem \ref{peetre char}, we have the following Fourier multiplier Theorem. 

	\begin{theorem}\label{multiplier}
			Let $ 0 < \vec p \le \infty $. 
		Let  $   0 < n / ( \sum_{i=1}^n  1/p_{i})     < t <r <\infty $ or $ 0< n / ( \sum_{i=1}^n  1/p_{i})     \le t < r=\infty  $. Let $s\in \mathbb R$ and $q \in (0,\infty ]$.
		Let $\{ \vec f_k\} _{k=0}^\infty $ be a sequence of function such that for each $k \in \mathbb N_0$, supp $\mathcal F \vec f_k \subset  \{\xi : |\xi |  \le 2^{k+1}\}$.
		\newline
		{\rm (i)} If  $a > n / \min\{q, \min \vec p \} $, then for any $\epsilon>0$
		\begin{equation*}
		\left\|  \left(  \sum_{j\in \mathbb Z} 2^{jsq}| \F^{-1} ( m_j \varphi_j \F  (f) )|^q  \right) ^{1/q}    \right\|_{ M_{\vec p}^{t,r}}  	\lesssim \|f\|_{\dot F_{\vec p , t,r} ^{s,q}  }  \sup_{j \in \mathbb Z}  \| (m_j(2^j \cdot)) \|_{H_2^ {a+n/2+\epsilon} }  .
		\end{equation*}
		\newline
		{\rm (ii)} If  $a > n/ \min \vec p$, then  for any $\epsilon>0$
		\begin{equation*}
			\left(  \sum_{j\in \mathbb Z} 2^{jsq}	\left\|   \F^{-1} (m_j \varphi_j \F  (f) )   \right\|_{ M_{\vec p}^{t,r}}   ^q  \right) ^{1/q}	\lesssim \|f\|_{\dot B_{\vec p , t,r} ^{s,q}  }  \sup_{j \in \mathbb Z}  \| (m_j(2^j \cdot)) \|_{H_2^ {a+n/2+\epsilon} } .
		\end{equation*}
	\end{theorem}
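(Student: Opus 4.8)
The plan is to follow the classical route for Fourier multiplier theorems on the Triebel--Lizorkin and Besov scales: reduce the whole estimate to one uniform pointwise bound and then invoke the Peetre maximal function characterization of Theorem \ref{peetre char}. Write $f_j := \F^{-1}(\varphi_j \F f)$, so that $\F f_j = \varphi_j \F f$ is supported in $\{\xi : 2^{j-1}\le|\xi|\le 2^{j+1}\}\subset B(0,2^{j+1})$; this band-limited structure (which is all that the hypothesis on $\{\vec f_k\}$ supplies) is what makes $m_j$ harmless. Fix once and for all $\Phi\in C_c^\infty(\rn)$ with $\Phi\equiv 1$ on $B(0,2)$ and $\operatorname{supp}\Phi\subset B(0,4)$. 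Since $\varphi_j\F f$ lives on $B(0,2^{j+1})$ we may replace $m_j$ by $m_j\,\Phi(2^{-j}\cdot)$, giving $\F^{-1}(m_j\varphi_j\F f)=\F^{-1}\bigl(m_j\,\Phi(2^{-j}\cdot)\bigr)*f_j$.

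The key lemma I would prove is the following uniform estimate: if $b>a+n/2$, then there is $C=C(a,b,n)>0$ such that for every $j\in\mathbb Z$, every symbol $m$ with $m(2^j\cdot)\in H_2^{b}$, and every $g\in\mathscr S'$ with $\operatorname{supp}\F g\subset B(0,2^{j+1})$ one has
\begin{equation*}
	\bigl|\F^{-1}\bigl(m\,\Phi(2^{-j}\cdot)\bigr)*g\,(x)\bigr|\le C\,\|m(2^j\cdot)\|_{H_2^{b}}\;g^{*}_a(x),\qquad x\in\rn ,
\end{equation*}
where $g^{*}_a(x):=\sup_{y\in\rn}|g(x-y)|/(1+2^j|y|)^a$. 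The proof is the standard Cauchy--Schwarz argument: write the convolution as $\int_\rn \F^{-1}(m\Phi(2^{-j}\cdot))(y)\,g(x-y)\,\d y$, bound $|g(x-y)|\le g^{*}_a(x)(1+2^j|y|)^a$, and estimate $\int_\rn|\F^{-1}(m\Phi(2^{-j}\cdot))(y)|(1+2^j|y|)^a\,\d y$ by Cauchy--Schwarz against $(1+2^j|y|)^{-b}$; the weight integral $\int_\rn(1+2^j|y|)^{2a-2b}\,\d y$ converges exactly because $b>a+n/2$, and after substituting $w=2^jy$ and using Plancherel the remaining factor is $C\,\|m(2^j\cdot)\Phi\|_{H_2^{b}}\lesssim\|m(2^j\cdot)\|_{H_2^{b}}$, the last step because multiplication by the fixed Schwartz function $\Phi$ is bounded on $H_2^{b}$; the two factors $2^{jn}$ produced by the substitutions cancel, so $C$ is independent of $j$.

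Applying the lemma with $g=f_j$ and $b=a+n/2+\epsilon$ and recalling that $(f_j)^{*}_a$ is precisely the Peetre maximal function $(\F^{-1}(\varphi_j\F f))^{*}_a$ of Theorem \ref{peetre char}, I obtain
\begin{equation*}
	2^{js}\bigl|\F^{-1}(m_j\varphi_j\F f)(x)\bigr|\;\lesssim\;\Bigl(\sup_{k\in\mathbb Z}\|m_k(2^k\cdot)\|_{H_2^{a+n/2+\epsilon}}\Bigr)\;2^{js}\bigl(\F^{-1}(\varphi_j\F f)\bigr)^{*}_a(x).
\end{equation*}
For part (i) I then take the $\ell^q$-norm in $j$ followed by the $M_{\vec p}^{t,r}$-norm; the factor $\sup_k\|m_k(2^k\cdot)\|_{H_2^{a+n/2+\epsilon}}$ comes out, and what remains is the Peetre square-function quantity, which by Theorem \ref{peetre char} is $\approx\|f\|_{\dot F_{\vec p,t,r}^{s,q}}$ provided $a>n/\min\{q,\min\vec p\}$. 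For part (ii) I instead take the $M_{\vec p}^{t,r}$-norm of each dyadic block and then the $\ell^q$-norm in $j$, and use the Besov half of Theorem \ref{peetre char} (valid for $a>n/\min\vec p$). The main obstacle is the uniform pointwise lemma: one has to carry the dilation parameter $2^j$ through every substitution so that the constant is genuinely independent of $j$, and to justify that the localized symbol $m_j\Phi(2^{-j}\cdot)$ inherits the Sobolev bound from $m_j(2^j\cdot)$; once this is settled, the rest is a routine transfer of Peetre-type arguments into the mixed Bourgain--Morrey setting, all the needed maximal-function boundedness being already recorded in Theorems \ref{HL M} and \ref{peetre char}.
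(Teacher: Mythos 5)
Your proposal follows essentially the same route as the paper: bound $\F^{-1}(m_j\varphi_j\F f)$ by a convolution, factor out the Peetre maximal function $(\F^{-1}(\varphi_j\F f))^*_a$, estimate $\int_\rn|\F^{-1}(m_j)(y)|(1+2^j|y|)^a\,\d y$ by a change of variables together with Cauchy--Schwarz against $(1+|y|)^{-n/2-\epsilon}$ and Plancherel, and finish by invoking Theorem \ref{peetre char}. The only deviation is your insertion of the cutoff $\Phi(2^{-j}\cdot)$ before taking the inverse Fourier transform of $m_j$, which is a harmless (arguably cleaner) technical refinement that the paper omits; it does not change the substance of the argument.
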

	\begin{proof}
		We just prove (i) since (ii) is similar. Note that
		\begin{align*}
			\left| \F^{-1} (m_j  \varphi_j \F f)  ( x)  \right|  & \le \int_\rn \left|    \mathcal F ^{-1} (  m _j)  (y)  \F^{-1} (  \varphi_j \F f)  (x- y)  \right|  \d y \\
			&\le  \sup_{y\in \rn } \frac{ |\F ^{-1}( \varphi_j \F f) (x-y )| }{ (1+ 2^{j}|y| )^a } \int_\rn \left|     \mathcal F ^{-1} (  m _j)  (y)  (1+ 2^{j}|y| )^a \right|  \d y.
		\end{align*}
		Using $ \mathcal F ^{-1} (  m _j ( 2^j \cdot)  )  (y) =  2^{-jn}  \mathcal F ^{-1}  (m _j) (2^{-j} y)  $, we obtain
		\begin{align*}
			&	\int_\rn \left|     \mathcal F ^{-1} (  m _j)  (y)  (1+ 2^{j}|y| )^a \right|  \d y  \\
			& = \int_\rn \left|       \mathcal F ^{-1} (  m _j  (2^j \cdot) )  ( y )  (1+ | y| ) ^{a+n/2+\epsilon}  (1+  |y|) ^{-n/2 -\epsilon}  \right|  \d y \\
			&\lesssim \left(  \int_\rn \left|       \mathcal F ^{-1} (  m _j (2^j \cdot) )  ( y )  (1+ | y| ) ^{a+n/2+ \epsilon} \right| ^2 \d y \right)^{1/2} \\
			&	\lesssim \| \mathcal F^{-1} \mathcal F^{-1} (m_j (2^j \cdot)) \|_{H_2^ {a+n/2+\epsilon} } \\
			& = \| (m_j(2^j \cdot)) \|_{H_2^ {a+n/2+\epsilon} } ,
		\end{align*}
		where in the last step, we use the fact that 
		\begin{equation*}
			\mathcal F^{-1} \mathcal F^{-1}  f (x)  =  f (-x).
		\end{equation*} 
		Since $a >n / \min\{q, \min \vec p \}$,  
		we obtain 
		\begin{align*}
		&	\left\|  \left(  \sum_{j\in \mathbb Z} 2^{jsq}| \F^{-1} ( m_j \varphi_j \F  (f) )|^q  \right) ^{1/q}    \right\|_{ M_{\vec p}^{t,r}} \\
		& \lesssim \left\|  \left(  \sum_{j\in \mathbb Z} 2^{jsq}|  (   \F ^{-1}( \varphi_j \F f) ) ^\ast_{a }  |^q  \right) ^{1/q}    \right\|_{ M_{\vec p}^{t,r}}  \sup_{j \in \mathbb Z}  \| (m_j(2^j \cdot)) \|_{H_2^ {a+n/2+\epsilon} }  \\
			&	\lesssim 	\left\|  \left(  \sum_{j\in \mathbb Z} 2^{jsq}| \F^{-1} ( \varphi_j \F  (f) )|^q  \right) ^{1/q}    \right\|_{ M_{\vec p}^{t,r}}  \sup_{j \in \mathbb Z}  \| (m_j(2^j \cdot)) \|_{H_2^ {a+n/2+\epsilon} } .
		\end{align*}
	Thus the proof is finished.
	\end{proof}

Next we prove the spaces are independent of $\varphi$.
\begin{theorem} \label{equivalent norms}
		Let $ 0 < \vec p \le \infty $. 
	Let  $   0 < n / ( \sum_{i=1}^n  1/p_{i})     < t <r <\infty $ or $ 0< n / ( \sum_{i=1}^n  1/p_{i})     \le t < r=\infty  $. Let $s\in \mathbb R$ and $q \in (0,\infty ]$.
	Let $\varphi, \psi$ be two partition of unity on $\rn$ as in Definition \ref{homo unity}. Then $\dot B_{\vec p , t,r} ^{s,q} $ and $ \dot F_{\vec p , t,r} ^{s,q} $ are independent of the choice of $\varphi , \psi$ in the sense
	that different choices yield equivalent quasi-norms.
\end{theorem}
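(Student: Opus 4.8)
The plan is to show that the quasi-norms built from two admissible partitions of unity $\varphi$ and $\psi$ are comparable, using the Fourier multiplier theorem (Theorem~\ref{multiplier}) as the main engine, together with the observation that on the support of each $\varphi_j$ only finitely many $\psi_k$'s are relevant. First I would fix, as in Definition~\ref{homo unity}, the basic annular structure: $\operatorname{supp}\varphi_j\subset\{2^{j-1}\le|\xi|\le2^{j+1}\}$ and likewise for $\psi$, and recall the exact partition identity $\sum_{k\in\mathbb Z}\psi_k(\xi)=1$ for $\xi\neq0$. Consequently, for each $j\in\mathbb Z$ one has the pointwise-in-frequency identity
\begin{equation*}
	\varphi_j=\varphi_j\sum_{k\in\mathbb Z}\psi_k=\sum_{k=j-1}^{j+1}\varphi_j\psi_k,
\end{equation*}
since $\varphi_j\psi_k\equiv0$ unless $|j-k|\le1$. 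Hence for $f\in\mathscr S'$ lying in the candidate space,
\begin{equation*}
	\F^{-1}(\varphi_j\F f)=\sum_{k=j-1}^{j+1}\F^{-1}\big(m_{j,k}\,\psi_k\F f\big),\qquad m_{j,k}:=\varphi_j.
\end{equation*}

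Next I would apply Theorem~\ref{multiplier} with the multiplier sequence $\{m_{j,k}\}$, for each of the three shifts $k=j-1,j,j+1$. The point is that the dilated multipliers $m_{j,k}(2^k\cdot)$ are, up to the fixed dilation by $2^{j-k}\in\{1/2,1,2\}$, all equal to the single Schwartz function $\varphi$; therefore $\sup_{j}\|m_{j,k}(2^k\cdot)\|_{H_2^{a+n/2+\epsilon}}\le C$ with a constant depending only on $\varphi$ (and on $a,n,\epsilon$), not on $f$. Choosing $a>n/\min\{q,\min\vec p\}$ in the Triebel–Lizorkin case (respectively $a>n/\min\vec p$ in the Besov case) and using the quasi-triangle inequality in $\ell^q$ and in $M_{\vec p}^{t,r}$ to absorb the three-term sum and the harmless factor $2^{(j-k)s}$, I obtain
\begin{equation*}
	\|f\|_{\dot F^{s,q}_{\vec p,t,r}(\varphi)}\lesssim\|f\|_{\dot F^{s,q}_{\vec p,t,r}(\psi)},
\end{equation*}
and symmetrically with $\varphi$ and $\psi$ interchanged; the Besov estimate is identical with $\ell^q(M_{\vec p}^{t,r})$ replaced by $M_{\vec p}^{t,r}(\ell^q)$ in the order dictated by the definition. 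This gives the asserted equivalence of quasi-norms, hence independence of the space from the chosen partition.

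The one technical point that needs care — and which I expect to be the main obstacle — is the legitimacy of interchanging the sum over $k$ (and, in the hidden reindexing inside Theorem~\ref{multiplier}, over $j$) with the $M_{\vec p}^{t,r}$-norm when $q<1$ or $\min\vec p<1$, so that only quasi-norms are available. I would handle this exactly as in the proof of Theorem~\ref{peetre char}: first establish the estimate for partial sums $\sum_{|j|\le J}$, where everything is a finite sum and Theorem~\ref{multiplier} applies verbatim, obtain a bound uniform in $J$, and then pass to the limit using the Fatou property of $M_{\vec p}^{t,r}$ (the lattice/Fatou property recorded after Definition of the mixed Bourgain–Morrey space, and Lemma~\ref{lem fatou} for $L^{\vec p}$). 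A secondary bookkeeping issue is that $\dot F^{s,q}_{\vec p,t,r}$ and $\dot B^{s,q}_{\vec p,t,r}$ are defined modulo polynomials only implicitly through $f\in\mathscr S'$; since the equivalence of quasi-norms is all that is claimed, and both sides annihilate polynomials by the support condition on $\varphi$ and $\psi$, this causes no difficulty once one notes that the frequency-support identity above is an identity of tempered distributions. With these points dispatched, the proof is a direct invocation of Theorem~\ref{multiplier} and I would keep the written argument short.
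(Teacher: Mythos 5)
Your proposal is correct and takes essentially the same route as the paper: decompose $\varphi_j=\sum_{k=-1}^{1}\varphi_j\psi_{j+k}$, invoke Theorem~\ref{multiplier} with the fixed multiplier $m_j=\varphi_j$ whose dilates $m_j(2^j\cdot)=\varphi$ give a $j$-independent $H_2^{a+n/2+\epsilon}$-bound, and then symmetrize. The caveat you raise about Fatou and passing to the limit is not actually needed here, since the $k$-sum has only three terms and the multiplier theorem is applied directly to $f$ assumed to lie in the $\psi$-space; the finite quasi-triangle inequality in $\ell^q$ and $M_{\vec p}^{t,r}$ suffices, as the paper implicitly uses.
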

\begin{proof}
Let $\varphi, \psi$ be two partition of unity on $\rn$ as in Definition \ref{homo unity}. 
	Then we have
	\begin{equation*}
		\varphi_j = \varphi_j \sum_{k=-1  }^{1} \psi_{j+k}.
	\end{equation*}
Consequently, 
\begin{equation*}
	 \F^{-1} ( \varphi_j \F  (f) ) =   \sum_{k=-1  }^{1}  \F^{-1} ( \varphi_j \psi_{j+k} \F  (f) ) .
\end{equation*}
By Theorem \ref{multiplier},
we have
\begin{align*}
	\|f\|_{\dot B_{\vec p , t,r} ^{s,q}  }^{\varphi} & =	\left(  \sum_{j\in \mathbb Z} 2^{jsq}	\left\|   \F^{-1} (\varphi_j \F  (f) )   \right\|_{ M_{\vec p}^{t,r}}   ^q  \right) ^{1/q} \\
	& = 	\left(  \sum_{j\in \mathbb Z} 2^{jsq}	\left\|   \sum_{k=-1  }^{1}  \F^{-1} ( \varphi_j \psi_{j+k} \F  (f) )  \right\|_{ M_{\vec p}^{t,r}}   ^q  \right) ^{1/q} \\
	& \lesssim \sup_{j\in \mathbb Z} \| \varphi_j(2^j \cdot) \|_{H_2^ {a+n/2+\epsilon} }  	\left(  \sum_{j\in \mathbb Z} 2^{jsq}	\left\|   \sum_{k=-1  }^{1}  \F^{-1} ( \psi_{j+k} \F  (f) )  \right\|_{ M_{\vec p}^{t,r}}   ^q  \right) ^{1/q} \\
		& =  \| \varphi \|_{H_2^ {a+n/2+\epsilon} }  	\left(  \sum_{j\in \mathbb Z} 2^{jsq}	\left\|   \sum_{k=-1  }^{1}  \F^{-1} ( \psi_{j+k} \F  (f) )  \right\|_{ M_{\vec p}^{t,r}}   ^q  \right) ^{1/q} \\
		& \lesssim \| f \|_{\dot B_{\vec p , t,r} ^{s,q}  }^{\psi} .
\end{align*}
By interchanging $\varphi$ and $\psi$, we obtain the  equivalence.
\end{proof}

The homogeneous fractional Laplacian for $f \in \mathcal S' / \mathcal P$  and $s >0$ is denoted by $D^s f \in \mathcal S' / \mathcal P $  and defined by $ \langle D^s f, \varphi \rangle  : = \langle  f, \F^{-1} (|\cdot|^s \F \varphi  ) \rangle  $ for $\varphi \in \mathscr S_0$. 

For $j \in \mathbb Z$, define $\psi_j :=  2^{-j \alpha } |\xi|^\alpha \varphi_j (\xi )  $. Then $\{\psi_j \}_{j\in \mathbb Z}$ is a partition of unity on $\rn $ modulo a positive constant $ (  \sum_{j\in \mathbb Z} \psi_j )^{-1} $.
Then by Theorem \ref{equivalent norms}, we obtain

\begin{align} \label{shift}
	\nonumber
	\| D^\alpha f \|_{ \dot F_{\vec p , t,r} ^{s,q} }& = 	\left\|  \left(  \sum_{j\in \mathbb Z} 2^{jsq}| \F^{-1} (\varphi_j | \cdot|^\alpha \F  (f) ) ) |^q  \right) ^{1/q}    \right\|_{ M_{\vec p}^{t,r}}  \\
	\nonumber
	& = 	\left\|  \left(  \sum_{j\in \mathbb Z} 2^{j ( s +\alpha ) q}| \F^{-1} (\psi_j  \F  (f) ) ) |^q  \right) ^{1/q}    \right\|_{ M_{\vec p}^{t,r}}  \\
	& \approx \|  f \|_{ \dot F_{\vec p , t,r} ^{s+\alpha,q} }.
\end{align}

Christ and Weinstein \cite{CW91} use of a fractional chain rule to study generalized Korteweg-de Vries (gKdV) equations. A fractional chain rule plays an important role in establishing well-posedness of wave equations; see \cite{K95,HJLW20,S97}.

In \cite{D25}, Douglas 	established a fractional chain rule in the  weighted Triebel-Lizorkin spaces.
The following result is the fractional chain rule in the Bourgain-Morrey Triebel-Lizorkin spaces.
 
\begin{theorem} \label{fractional chain rule}
	Let $ 0<s<1$ and $0<q<\infty$.	
	Let $ 1 < \vec p <\infty$. Let   $1 < n / ( \sum_{i=1}^n  1/p_{i})   < t <r<\infty $ or $ 1< n / ( \sum_{i=1}^n  1/p_{i})  \le t< r =\infty$. 		
		Let $1/\vec p = 1/ \vec p_1 + 1 /\vec p_2 $, $1/t = 1/t_1 + 1/t_2$ and $1/r = 1/r_1 + 1/r_2$.		
	For $j =1 ,2$,	assume that  $1 < n / ( \sum_{i=1}^n  1/p_{j,i})   < t_j <r_j<\infty $ or $ 1< n / ( \sum_{i=1}^n  1/p_{j,i})  \le t_j < r_j =\infty$ where $p_{j,i} $ is the $i$-th item of $\vec p_j$. 	
		Let $\varphi$  be a partition of unity as in Definition \ref{homo unity} and 
		\begin{equation*}
			0< n \left( \max\left\{ \frac{1}{\min \vec p_2 } , \frac{1}{q} \right\}  - \frac{1}{ (\min \vec p_1)  ' } \right) <s.
		\end{equation*}
Let $F: \mathbb C \to \mathbb C$ and $G : \mathbb C \to [0,\infty)$ such that $F(0) =0$. Suppose that for $x,y \in \mathbb C $ that 
\begin{equation*}
	| F(x) - F (y) | \le (G(x) +G(y) ) |x-y|.
\end{equation*}
Then there exists a constant $C>0$ such that 
\begin{equation*}
	\| F (u) \|_{ \dot F_{\vec p , t,r} ^{s,q} } \le C \| G (u) \|_{M_{\vec p_1} ^{t_1, r_1} }  \| u \|_{\dot F_{\vec p_2 , t_2,r_2} ^{s,q}  } .
\end{equation*}
\end{theorem}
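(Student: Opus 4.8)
The plan is to follow Douglas \cite{D25}, substituting for the Lebesgue--norm ingredients their mixed Bourgain--Morrey analogues: the Littlewood--Paley characterization (Theorem~\ref{LP char mixed BM}) and its independence of the generating function (Theorem~\ref{equivalent norms}), the Peetre maximal characterization (Theorem~\ref{peetre char}), the scalar and vector--valued boundedness of $\M$ (Theorems~\ref{HL M}, \ref{HL seq mix BM}, \ref{mixed vector HL mixed BM}), and H\"older's inequality for $M_{\vec p}^{t,r}$ with $1/\vec p=1/\vec p_1+1/\vec p_2$, $1/t=1/t_1+1/t_2$, $1/r=1/r_1+1/r_2$ (which follows from Lemma~\ref{Holder mixed} and H\"older's inequality for sums). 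Write $\Delta_j:=\F^{-1}(\varphi_j\F\,\cdot)$ and $\check\varphi_j:=\F^{-1}(\varphi_j)$.

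First I reduce to a square--function estimate. By the definition of $\dot F_{\vec p,t,r}^{s,q}$ and Theorem~\ref{equivalent norms} it suffices to bound $\big\|\big(\sum_{j\in\mathbb Z}2^{jsq}|\Delta_jF(u)|^q\big)^{1/q}\big\|_{M_{\vec p}^{t,r}}$, and I may choose $\varphi$ freely. Since $\varphi(0)=0$ one has $\int_\rn\check\varphi_j=0$, hence for every $j$,
\begin{equation*}
	\Delta_jF(u)(x)=\int_\rn\check\varphi_j(x-y)\,[F(u(y))-F(u(x))]\,\d y,
\end{equation*}
and therefore, by the hypothesis on $F,G$,
\begin{equation*}
	|\Delta_jF(u)(x)|\le\int_\rn|\check\varphi_j(x-y)|\,\big(G(u(y))+G(u(x))\big)\,|u(y)-u(x)|\,\d y .
\end{equation*}
The identity $F(u)=\sum_j\Delta_jF(u)$ and the convergence of the series in $\mathscr S'/\mathcal P$, hence in $\dot F_{\vec p,t,r}^{s,q}$, follow from $F(0)=0$ and the embeddings of $M_{\vec p}^{t,r}$ into Besov spaces recorded earlier.

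Next I obtain a pointwise bound. Expanding $u(y)-u(x)=\sum_{l\in\mathbb Z}(\Delta_lu(y)-\Delta_lu(x))$, I split the $l$--sum at $l=j$. For $l\le j$ a Bernstein--Peetre estimate for $\nabla\Delta_lu$ gives $|\Delta_lu(y)-\Delta_lu(x)|\lesssim2^{l-j}(1+2^j|x-y|)^{a+1}(\Delta_lu)^{*}_{a}(x)$, while for $l>j$ I simply use $|\Delta_lu(y)-\Delta_lu(x)|\le|\Delta_lu(y)|+|\Delta_lu(x)|$. Inserting these, using $|\check\varphi_j(x-y)|\lesssim2^{jn}(1+2^j|x-y|)^{-N}$ with $N$ large to absorb the polynomial factors into Hardy--Littlewood and Peetre maximal averages, pulling the constant $G(u(x))$ out of the $y$--integral where it occurs, and dominating $\M(G(u)\,|\Delta_lu|)$ by H\"older's inequality as $\M_{r}(G(u))\,\M_{r'}(\Delta_lu)$, I arrive at a bound of the schematic form
\begin{equation*}
	2^{js}|\Delta_jF(u)(x)|\lesssim\sum_{l\in\mathbb Z}2^{-\delta|j-l|}\big(\M_{r}(G(u))(x)+G(u(x))\big)\,2^{ls}\,\M_{r'}(\Delta_lu)(x),
\end{equation*}
with $\delta=\min\{s,1-s\}>0$ (this is where $0<s<1$ is used) and an auxiliary exponent $r\in(1,\infty)$ — equivalently, the H\"older split of the maximal function, which in the quasi--Banach range $q\le1$ must instead be realized through the Peetre maximal operators so that a Fefferman--Stein inequality with a gain becomes available — whose admissible range is dictated by the displayed smallness condition on $s$.

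Finally I take norms. Taking the $\ell^q$ norm in $j$ and using $\{2^{-\delta|m|}\}_m\in\ell^{\min\{q,1\}}$, a Young--type convolution inequality on $\mathbb Z$ gives $\big\|\{2^{js}\Delta_jF(u)(x)\}_j\big\|_{\ell^q}\lesssim\big(\M_r(G(u))(x)+G(u(x))\big)\big\|\{2^{ls}\M_{r'}(\Delta_lu)(x)\}_l\big\|_{\ell^q}$. Applying the $M_{\vec p}^{t,r}$--norm and H\"older's inequality, I finish with: (i) $\|\M_r(G(u))+G(u)\|_{M_{\vec p_1}^{t_1,r_1}}\lesssim\|G(u)\|_{M_{\vec p_1}^{t_1,r_1}}$ by Theorem~\ref{HL M} on $M_{\vec p_1/r}^{t_1/r,r_1/r}$ (valid when $r$ is below the maximal threshold of $M_{\vec p_1}^{t_1,r_1}$); and (ii) $\big\|\big(\sum_l2^{lsq}[\M_{r'}(\Delta_lu)]^q\big)^{1/q}\big\|_{M_{\vec p_2}^{t_2,r_2}}\lesssim\|u\|_{\dot F_{\vec p_2,t_2,r_2}^{s,q}}$ by the vector--valued maximal inequality (Theorems~\ref{HL seq mix BM}, \ref{mixed vector HL mixed BM}) on $M_{\vec p_2/r'}^{t_2/r',r_2/r'}(\ell^{q/r'})$, using the homogeneity $\||h|^{r'}\|_{M_{\vec p_2/r'}^{t_2/r',r_2/r'}}=\|h\|_{M_{\vec p_2}^{t_2,r_2}}^{r'}$. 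Combining (i) and (ii) yields the claim. The hard part is exactly the simultaneous bookkeeping behind (i) and (ii): $r$ must be small enough that $\M_r$ is admissible on $M_{\vec p_1}^{t_1,r_1}$ and large enough that the vector--valued $\M_{r'}$ is admissible on $M_{\vec p_2}^{t_2,r_2}(\ell^q)$, and the geometric decay $2^{-\delta|j-l|}$ must be budgeted so that precisely $s/n$ units of room survive; the displayed inequality $0<n(\max\{1/\min\vec p_2,1/q\}-1/(\min\vec p_1)')<s$ together with $0<s<1$ is what guarantees such a choice (or its Peetre--maximal substitute) exists. The remaining points — the Bernstein--Peetre estimate, absorption of the tails of $\check\varphi_j$, and the verification that $F(u)\in\mathscr S'$ with convergent Littlewood--Paley series — are routine.
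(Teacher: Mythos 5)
Your high-level roadmap (pointwise estimate via the Lipschitz-type hypothesis and a Littlewood--Paley decomposition, then H\"older on $M_{\vec p}^{t,r}$ with $1/\vec p=1/\vec p_1+1/\vec p_2$, $1/t=1/t_1+1/t_2$, $1/r=1/r_1+1/r_2$, then the boundedness of $\mathcal M$ and its vector-valued versions) matches the paper, which bases everything on the pointwise lemma imported from Douglas and then estimates the three resulting terms $I_{2,1},I_{2,2},I_{2,3}$ with Lemma~\ref{Hardy seq inequ}, Theorem~\ref{HL M}, and Theorems~\ref{HL seq mix BM},\ref{mixed vector HL mixed BM}. However, the central pointwise bound you propose does not work in the parameter regime of the theorem, and this is a genuine gap, not a presentational detail.

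You dominate $\mathcal M(G(u)\,|\Delta_l u|)$ by H\"older as $\mathcal M_r(G(u))\,\mathcal M_{r'}(\Delta_l u)$ with \emph{conjugate} exponents $r,r'$, and then require simultaneously $r<\min\vec p_1$ (so $\mathcal M_r$ is admissible on $M_{\vec p_1}^{t_1,r_1}$) and $r'<\min\{\min\vec p_2,\,q\}$ (so the vector-valued $\mathcal M_{r'}$ is admissible on $M_{\vec p_2}^{t_2,r_2}(\ell^q)$). These two constraints are equivalent to $(\min\vec p_1)'<\min\{\min\vec p_2,q\}$, i.e.\ $\max\{1/\min\vec p_2,1/q\}<1/(\min\vec p_1)'$. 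But the hypothesis of the theorem says exactly the opposite: $\max\{1/\min\vec p_2,1/q\}>1/(\min\vec p_1)'$. So under the theorem's standing assumption the two admissibility windows for $r$ and $r'=r/(r-1)$ are disjoint, and no conjugate pair exists. The ``Peetre-maximal substitute'' that you mention in passing is therefore not an optional refinement for the quasi-Banach range $q\le1$; it is the essential device in every case covered by the statement, and it is precisely what the lemma quoted from Douglas encodes: one takes $\alpha$ and $\zeta_2$ \emph{non-conjugate} (with $\zeta_2\le\alpha'$), paying for this with the geometric \emph{loss} $2^{(k-j)\,n(1/\zeta_2-1/\alpha')}$ over $k\ge j$, and the standing lower bound $0<n(\max\{1/\min\vec p_2,1/q\}-1/(\min\vec p_1)')<s$ is exactly what guarantees one can choose $\alpha=\min\vec p_1-\epsilon_1$ and $\zeta_2=\min\{q,\min\vec p_2\}-\epsilon_2$ so that $\theta:=n(1/\zeta_2-1/\alpha')\in[0,s)$ and Lemma~\ref{Hardy seq inequ} still applies. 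Relatedly, your claimed decay rate $\delta=\min\{s,1-s\}$ in the rescaled off-diagonal sum is correct only for the ``easy'' terms; for the $I_{2,3}$-type term the rate is $\min\{1-s,\,s-\theta\}$, and its positivity is again the displayed condition rather than merely $0<s<1$. To close the proof you would need to state and use the non-conjugate pointwise decomposition (the three-term $I_{2,1}+I_{2,2}+I_{2,3}$), track the loss exponent $\theta$ explicitly, and only then run the $M_{\vec p}^{t,r}$-norm estimates you outline.
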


\begin{remark}
	To the best knowledge of authors, the result  of Theorem \ref{fractional chain rule} is  new even for $\vec p =p$.
\end{remark}

The following lemma comes from the proof of \cite[Theorem 1.1]{D25}.

\begin{lemma}
		Let $\varphi$  be a partition of unity as in Definition \ref{homo unity}. Let $\Psi = \F^{-1} \varphi$ and $\Psi_j :=2^{jn} \Psi (2^j \cdot) $ for $j \in \mathbb Z$.
	Let $F: \mathbb C \to \mathbb C$ and $G : \mathbb C \to [0,\infty)$ such that $F(0) =0$. Suppose that for $x,y \in \mathbb C $ that 
	\begin{equation*}
		| F(x) - F (y) | \le (G(x) +G(y) ) |x-y|.
	\end{equation*}
	Then
	\begin{align*}
		| \F^{-1} (\varphi_j \F  (F(u) ) ) | 
		& \lesssim G(u) (x) \int_\rn |u(x) - u(y)| |\Psi_j (x-y)| \d y \\
		& \quad +   \int_\rn G(u) (y)|u(x) - u(y)| |\Psi_j (x-y)| \d y \\
		& = : I_1 +I_2.
	\end{align*}
Furthermore,
\begin{align*}
I_1	
		& \lesssim  G (u) (x) \sum_{k<j} 2^{k-j} \M_{\zeta_1} ( \F^{-1} (\varphi_k \F  u ) )  (x) \\
	& +  G (u) (x)\sum_{ k \ge j} \M_{\zeta_1} ( \F^{-1} (\varphi_k \F  u ) )  (x) \\
	& +  G (u) (x) \sum_{ k \ge j}  2^{(k-j) n (1/ \zeta_2 - 1/ \alpha ' ) } \M_{\zeta_2} ( \F^{-1} (\varphi_k \F  u ) )  (x) \\
	& =: I_{1,1} +  I_{1,2} + I_{1,3}, 
\end{align*}
and 
\begin{align*}
I_2
	& \lesssim \M (G (u) ) (x) \sum_{k<j} 2^{k-j} \M_{\zeta_1} ( \F^{-1} (\varphi_k \F  u ) )  (x) \\
	& + \M (G (u) ) (x) \sum_{ k \ge j} \M_{\zeta_1} ( \F^{-1} (\varphi_k \F  u ) )  (x) \\
	& + \M_{\alpha } (G (u) ) (x) \sum_{ k \ge j}  2^{(k-j) n (1/ \zeta_2 - 1/ \alpha ' ) } \M_{\zeta_2} ( \F^{-1} (\varphi_k \F  u ) )  (x)  \\
	& =: I_{2,1} +  I_{2,2} + I_{2,3},
\end{align*}
where $\zeta_1 \in (0, \infty)$ and $\alpha \in [1,\infty]$,  $\zeta_2 \in (0, \alpha '] $.
\end{lemma}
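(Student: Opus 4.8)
The plan is to prove the three displays in order, reducing everything to the band‑limited pointwise estimates recorded in~(\ref{peetre inequality}). Throughout write $u_k:=\F^{-1}(\varphi_k\F u)$, so that $u=\sum_{k\in\mathbb Z}u_k$ in $\mathscr S'$; since $\Psi=\F^{-1}\varphi$ is Schwartz we have, for every $M>0$ (the implicit constant depending on $M$), $|\Psi_j(z)|\lesssim 2^{jn}(1+2^j|z|)^{-M}$, and $M$ will be taken large at the end in terms of $n,\zeta_1,\zeta_2,\alpha'$. For the first display: $\F^{-1}(\varphi_j\F(F(u)))=\Psi_j*F(u)$, and because $\varphi$ is supported away from the origin one has $\int_\rn\Psi_j(z)\,\d z=\varphi(0)=0$, so $\F^{-1}(\varphi_j\F(F(u)))(x)=\int_\rn\Psi_j(x-y)\big(F(u(y))-F(u(x))\big)\,\d y$; applying the structural hypothesis on $F$ and splitting the two summands of $G$ gives $|\F^{-1}(\varphi_j\F(F(u)))(x)|\lesssim I_1+I_2$ immediately. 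The remaining work is to bound the inner $y$‑integrals in $I_1,I_2$ after inserting $u(x)-u(y)=\sum_k(u_k(x)-u_k(y))$.

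For the coarse scales $k<j$ I would use the mean value theorem together with the standard Bernstein/Peetre bound $|\nabla u_k(z)|\lesssim 2^k(1+2^k|x-z|)^{n/\zeta_1}\M_{\zeta_1}(u_k)(x)$, a consequence of~(\ref{peetre inequality}); taking $z$ on the segment $[x,y]$ (so $|x-z|\le|x-y|$) and using $2^k<2^j$ yields $|u_k(x)-u_k(y)|\lesssim 2^k|x-y|(1+2^j|x-y|)^{n/\zeta_1}\M_{\zeta_1}(u_k)(x)$. Inserting this and writing $|x-y|\approx 2^{-j}(1+2^j|x-y|)$, the resulting kernel $2^{jn}(1+2^j|x-y|)^{-M+n/\zeta_1+1}$ integrates to $O(1)$ for $M$ large, so the $u_k$‑contribution to $I_1$ is $\lesssim G(u)(x)\,2^{k-j}\M_{\zeta_1}(u_k)(x)$; for $I_2$ the same kernel is, up to the factor $2^{k-j}$, an $L^1$‑normalized rapidly decreasing approximate identity acting on $G(u)$, hence $\lesssim\M(G(u))(x)\,2^{k-j}\M_{\zeta_1}(u_k)(x)$. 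Summing over $k<j$ produces $I_{1,1}$ and $I_{2,1}$.

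For the fine scales $k\ge j$ I would use $|u_k(x)-u_k(y)|\le|u_k(x)|+|u_k(y)|$. The $|u_k(x)|$ part leaves the integral: since $\|\Psi_j\|_{L^1}\lesssim 1$, $|u_k(x)|\le\M_{\zeta_1}(u_k)(x)$, and (for $I_2$) $\int G(u)(y)|\Psi_j(x-y)|\,\d y\lesssim\M(G(u))(x)$, summation over $k\ge j$ yields $I_{1,2}$ and $I_{2,2}$. For the $|u_k(y)|$ part I would split $|\Psi_j(x-y)|=|\Psi_j(x-y)|^{1/\alpha'}|\Psi_j(x-y)|^{1/\alpha}$ and apply H\"older with exponents $\alpha',\alpha$: the factor carrying $G(u)(y)$ (absent for $I_1$, where $G(u)(x)$ already sits outside) pairs with the $L^1$‑normalized approximate identity $|\Psi_j|$ and contributes $\M_\alpha(G(u))(x)$ (resp.\ $1$ for $I_1$), while it remains to control
\begin{equation*}
	\Big(\int_\rn|u_k(y)|^{\alpha'}|\Psi_j(x-y)|\,\d y\Big)^{1/\alpha'}.
\end{equation*}
I would decompose $\rn$ into the annuli $A_0=\{|x-y|<2^{-j}\}$ and $A_\ell=\{2^{\ell-1-j}\le|x-y|<2^{\ell-j}\}$, on which $|\Psi_j|\lesssim 2^{jn}2^{-\ell M}$; on $A_\ell\subset B(x,2^{\ell-j})$ write $|u_k|^{\alpha'}=|u_k|^{\alpha'-\zeta_2}|u_k|^{\zeta_2}$ and combine the Peetre estimate $\|u_k\|_{L^\infty(B(x,2^{\ell-j}))}\lesssim 2^{(k-j+\ell)n/\zeta_2}\M_{\zeta_2}(u_k)(x)$ with $|B(x,2^{\ell-j})|^{-1}\int_{B(x,2^{\ell-j})}|u_k|^{\zeta_2}\le\M_{\zeta_2}(u_k)(x)^{\zeta_2}$; tracking the powers of $2$ gives a per‑annulus bound $\lesssim 2^{\ell(n\alpha'/\zeta_2-M)}2^{(k-j)n(\alpha'/\zeta_2-1)}\M_{\zeta_2}(u_k)(x)^{\alpha'}$, and summing in $\ell$ (convergent for $M>n\alpha'/\zeta_2$) and taking the $1/\alpha'$ power yields exactly $2^{(k-j)n(1/\zeta_2-1/\alpha')}\M_{\zeta_2}(u_k)(x)$. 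Summation over $k\ge j$ then delivers $I_{1,3}$ and $I_{2,3}$, completing the proof.

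The hard part will be this last step — producing the sharp growth exponent $n(1/\zeta_2-1/\alpha')$ rather than the crude $n/\zeta_2$ that a plain Peetre estimate based at $x$ would force. The saving $2^{-(k-j)n/\alpha'}$ is recovered only by devoting one H\"older exponent, $\alpha'$, to pairing $u_k$ against the rapidly decreasing tail of $\Psi_j$ and the conjugate exponent to the $L^1$‑normalization, and then balancing the oscillation scale $2^{-k}$ of $u_k$ against the averaging scale $2^{-j}$ of $\Psi_j$ through the annular sum; this is precisely where the constraints $\zeta_1\in(0,\infty)$, $\alpha\in[1,\infty]$, $\zeta_2\in(0,\alpha']$ are used — the last guaranteeing both $n(1/\zeta_2-1/\alpha')\ge0$ and the admissibility of the splitting $|u_k|^{\alpha'}=|u_k|^{\alpha'-\zeta_2}|u_k|^{\zeta_2}$. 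The remaining ingredients — the vanishing‑moment reduction, the mean value estimate at coarse scales, and the approximate‑identity domination of $|\Psi_j|$ — are routine once~(\ref{peetre inequality}) is in hand.
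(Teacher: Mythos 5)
Your proof is correct and follows essentially the same route as the source the paper defers to for this lemma (the proof of Theorem 1.1 in \cite{D25}): the vanishing-moment reduction $\int_{\mathbb R^n}\Psi_j=\varphi(0)=0$, the mean-value/Bernstein--Peetre estimate for $k<j$, and for $k\ge j$ the H\"older splitting of $|\Psi_j|$ with exponents $\alpha',\alpha$ combined with the annular decomposition and the factorization $|u_k|^{\alpha'}=|u_k|^{\alpha'-\zeta_2}|u_k|^{\zeta_2}$ (which is where $\zeta_2\le\alpha'$ enters) to produce the exponent $n(1/\zeta_2-1/\alpha')$. The bookkeeping in the annular sum checks out, so no gaps beyond the routine justification of interchanging $\sum_k$ with the integrals.
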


\begin{lemma} [Lemma 3.1, \cite{D25}] \label{Hardy seq inequ}
	Let $\{ a_k \}_{k\in \mathbb Z}$ be a sequence of non-negative real numbers, $0<q<\infty$ and $a \in \{ 0, -\infty\}$. Then if $0 \le s<1 $, 
	\begin{equation*}
		\left( \sum_{j>a} \left( 2^{js}  \sum_{a<k <j} 2^{k-j}  a_k  \right)^{q}   \right) ^{1/q}  \lesssim \left( \sum_{j>a}  ( 2^{js}   a_j )^{q}   \right) ^{1/q};
	\end{equation*}
if $0\le \theta < s$, 
\begin{equation*}
	\left( \sum_{j>a} \left( 2^{js}  \sum_{j \le k } 2^{(k-j) \theta }  a_k  \right)^{q}   \right) ^{1/q}  \lesssim \left( \sum_{j>a}  ( 2^{js}   a_j )^{q}   \right) ^{1/q}.
\end{equation*}
\end{lemma}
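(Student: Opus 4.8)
The plan is to prove the discrete Hardy-type inequalities of Lemma (Lemma 3.1 of \cite{D25} as quoted) by reducing each to a one-dimensional convolution estimate on $\ell^q(\mathbb Z)$, handled separately according to whether $0<q\le 1$ or $1<q<\infty$. Set $b_j:=2^{js}a_j$. For the first inequality, write $2^{js}\sum_{a<k<j}2^{k-j}a_k=\sum_{a<k<j}2^{(k-j)(1-s)}b_k$, so the left-hand side is $\|\,K_1*b\,\|_{\ell^q}$ where $K_1(\ell)=2^{-\ell(1-s)}\chi_{\{\ell>0\}}(\ell)$ and $*$ denotes convolution on $\mathbb Z$ (with the convention $b_k=0$ for $k\le a$). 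Since $s<1$, we have $1-s>0$, hence $\|K_1\|_{\ell^1(\mathbb Z)}=\sum_{\ell\ge 1}2^{-\ell(1-s)}<\infty$. For the second inequality, write $2^{js}\sum_{k\ge j}2^{(k-j)\theta}a_k=\sum_{k\ge j}2^{(k-j)(\theta-s)}b_k=\|K_2*b\|_{\ell^q}$ with $K_2(\ell)=2^{\ell(\theta-s)}\chi_{\{\ell\le 0\}}(\ell)$; since $\theta<s$, $\|K_2\|_{\ell^1(\mathbb Z)}=\sum_{\ell\le 0}2^{\ell(\theta-s)}<\infty$.

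First I would record the elementary convolution bounds. For $1<q<\infty$, Young's inequality for convolutions on $\mathbb Z$ gives $\|K*b\|_{\ell^q}\le\|K\|_{\ell^1}\|b\|_{\ell^q}$, which is exactly the desired estimate with implicit constant $\|K_i\|_{\ell^1}$. For $0<q\le 1$, Young's inequality fails, but the quasi-norm $\|\cdot\|_{\ell^q}$ satisfies $\|\sum_\ell c_\ell\|_{\ell^q}^q\le\sum_\ell\|c_\ell\|_{\ell^q}^q$; applying this to the decomposition $(K*b)(j)=\sum_{\ell}K(\ell)b(j-\ell)$ and using translation invariance of $\|\cdot\|_{\ell^q}$ yields $\|K*b\|_{\ell^q}^q\le\sum_\ell|K(\ell)|^q\|b\|_{\ell^q}^q$, i.e. $\|K*b\|_{\ell^q}\le\|K\|_{\ell^q}\|b\|_{\ell^q}$. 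Since $K_1,K_2$ are geometric sequences with summable (hence $\ell^q$-summable for every $q>0$) tails under the hypotheses $s<1$ and $\theta<s$, both $\|K_1\|_{\ell^q}$ and $\|K_2\|_{\ell^q}$ are finite. Thus in either range of $q$ the stated inequalities follow with a constant depending only on $s,\theta,q$ but not on the sequence $\{a_k\}$.

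The only mild subtlety, and the step I would be most careful about, is the role of the cutoff parameter $a\in\{0,-\infty\}$: the sums on the left are restricted to $j>a$ and $k>a$, so I would extend $b$ by zero on $\{k\le a\}$ before forming the convolution, noting that this does not increase any $\ell^q$-norm and that the truncated left-hand sums are pointwise dominated by the corresponding full convolutions $K_i*b$ evaluated at $j>a$. This makes the reduction to the convolution estimate rigorous uniformly in the two admissible values of $a$. No further obstacle is expected; everything else is a direct application of Young's inequality (for $q>1$) or its quasi-normed surrogate (for $q\le 1$) together with the geometric-series evaluations $\sum_{\ell\ge1}2^{-\ell(1-s)q}$ and $\sum_{\ell\ge0}2^{-\ell(s-\theta)q}$, which converge precisely because $s<1$ and $\theta<s$.
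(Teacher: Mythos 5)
The paper does not prove this lemma; it is quoted verbatim from \cite[Lemma 3.1]{D25}, so there is no in-paper argument to compare against. Your proof is correct and is the standard one: rewrite both sides in terms of $b_j=2^{js}a_j$, recognize the left-hand side as a convolution with a one-sided geometric kernel, and apply Young's inequality $\|K*b\|_{\ell^q}\le\|K\|_{\ell^1}\|b\|_{\ell^q}$ for $q\ge 1$ and the $q$-subadditivity bound $\|K*b\|_{\ell^q}\le\|K\|_{\ell^q}\|b\|_{\ell^q}$ for $0<q\le 1$; the zero-extension below the cutoff $a$ is handled correctly.

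One small slip: with your convention $(K*b)(j)=\sum_\ell K(\ell)b(j-\ell)$ and $\ell=j-k$, the coefficient in the second sum is $2^{(k-j)(\theta-s)}=2^{-\ell(\theta-s)}=2^{\ell(s-\theta)}$ for $\ell\le 0$, not $2^{\ell(\theta-s)}$ as you wrote; as stated, your $K_2$ grows geometrically as $\ell\to-\infty$ and is not summable. This is only a sign typo, since the series you actually evaluate at the end, $\sum_{\ell\ge 0}2^{-\ell(s-\theta)q}$, is the correct one and converges precisely because $\theta<s$. With that correction the argument is complete.
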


\begin{proof}[Proof of Theorem \ref{fractional chain rule}]
	Let $1/\vec p = 1/ \vec p_1 + 1 /\vec p_2 $, $1/t = 1/t_1 + 1/t_2$ and $1/r = 1/r_1 + 1/r_2$. We only estimate $I_2$ since the estimate of $I_1$ is similar.
	
	Let $1<\vec p_1 <\infty $. Let $\zeta_1 \in (0,\min\{ q, \vec p_2 \}  ) $.
	
	Estimate of $I_{2,1}$.
	Applying the Triebel-Lizorkin Bourgain-Morrey norm,  the H\"older inequality and Lemma \ref{Hardy seq inequ}, we obtain
	\begin{align*}
 & 	\left\|  \left(  \sum_{j\in \mathbb Z} 2^{jsq}|\M (G (u) )  \sum_{k<j} 2^{k-j} \M_{\zeta_1} ( \F^{-1} (\varphi_k \F  u ) ) |^q  \right) ^{1/q}    \right\|_{ M_{\vec p}^{t,r}} \\
	& \le \| \M (G (u) ) \|_{M_{\vec p_1}^{t_1,r_1}   } 	\left\|  \left(  \sum_{j\in \mathbb Z} 2^{jsq}|   \sum_{k<j} 2^{k-j} \M_{\zeta_1} ( \F^{-1} (\varphi_k \F  u ) ) |^q  \right) ^{1/q}    \right\|_{ M_{\vec p_2}^{t_2,r_2}} \\
	& \lesssim \| G (u ) \|_{M_{\vec p_1}^{t_1,r_1}   } 	\left\|  \left(  \sum_{j\in \mathbb Z} 2^{jsq}|    \M_{\zeta_1} ( \F^{-1} (\varphi_j \F  u ) ) |^q  \right) ^{1/q}    \right\|_{ M_{\vec p_2}^{t_2,r_2}} \\
		& \lesssim \|G (u)  \|_{M_{\vec p_1}^{t_1,r_1}   } 	\left\|  \left(  \sum_{j\in \mathbb Z} 2^{jsq}|     ( \F^{-1} (\varphi_j \F  u ) ) |^q  \right) ^{1/q}    \right\|_{ M_{\vec p_2}^{t_2,r_2}} .
	\end{align*}
Estimate of $I_{2,2}$. By Lemma \ref{Hardy seq inequ}, we have
	\begin{align*}
	& 	\left\|  \left(  \sum_{j\in \mathbb Z} 2^{jsq}| \M (G (u) ) \sum_{ k \ge j} \M_{\zeta_1} ( \F^{-1} (\varphi_k \F  u ) ) |^q  \right) ^{1/q}    \right\|_{ M_{\vec p}^{t,r}} \\
	& \le \| \M (G (u) ) \|_{M_{\vec p_1}^{t_1,r_1}   } 	\left\|  \left(  \sum_{j\in \mathbb Z} 2^{jsq}|   \sum_{ k \ge j} \M_{\zeta_1} ( \F^{-1} (\varphi_k \F  u ) ) |^q  \right) ^{1/q}    \right\|_{ M_{\vec p_2}^{t_2,r_2}} \\
	& \lesssim \| G(u) \|_{M_{\vec p_1}^{t_1,r_1}   } 	\left\|  \left(  \sum_{j\in \mathbb Z} 2^{jsq}|    \M_{\zeta_1} ( \F^{-1} (\varphi_j \F  u ) ) |^q  \right) ^{1/q}    \right\|_{ M_{\vec p_2}^{t_2,r_2}} \\
	& \lesssim \| G(u) \|_{M_{\vec p_1}^{t_1,r_1}   } 	\left\|  \left(  \sum_{j\in \mathbb Z} 2^{jsq}|     ( \F^{-1} (\varphi_j \F  u ) ) |^q  \right) ^{1/q}    \right\|_{ M_{\vec p_2}^{t_2,r_2}} .
\end{align*}

Estimate of $I_{2,3}$. Note that we suppose $\displaystyle  0< n \left( \max\left\{ \frac{1}{\min \vec p_2 } , \frac{1}{q} \right\}  - \frac{1}{ (\min \vec p_1)  ' } \right) <s $.
Let $\alpha  = \min \vec p_1 -\epsilon_1 >1.$ Let $\zeta_2 = \min\{ q, \min \vec p_2 \} -\epsilon_2>0 $. Note that $ 0< \zeta_2 < \alpha ' $.
Hence $0\le n (1/ \zeta_2 - 1/ \alpha ' )  <s <1$ if choosing suitable numbers $\epsilon_1 , \epsilon_2 >0$ above. Then by Lemma \ref{Hardy seq inequ}, we obtain
	\begin{align*}
	& 	\left\|  \left(  \sum_{j\in \mathbb Z} 2^{jsq}|  \M_\alpha (G (u) )  \sum_{ k \ge j}  2^{(k-j) n (1/ \zeta_2 - 1/ \alpha ' ) } \M_{\zeta_2} ( \F^{-1} (\varphi_k \F  u ) )   |^q  \right) ^{1/q}    \right\|_{ M_{\vec p}^{t,r}} \\
	& \le \|\M_\alpha (G (u) )  \|_{M_{\vec p_1}^{t_1,r_1}   } 	\left\|  \left(  \sum_{j\in \mathbb Z} 2^{jsq}| \sum_{ k \ge j}  2^{(k-j) n (1/ \zeta_2 - 1/ \alpha ' ) } \M_{\zeta_2} ( \F^{-1} (\varphi_k \F  u ) )  |^q  \right) ^{1/q}    \right\|_{ M_{\vec p_2}^{t_2,r_2}} \\
	& \lesssim \| G(u) \|_{M_{\vec p_1}^{t_1,r_1}   } 	\left\|  \left(  \sum_{j\in \mathbb Z} 2^{jsq}|     \M_{\zeta_2} ( \F^{-1} (\varphi_j \F  u ) )  |^q  \right) ^{1/q}    \right\|_{ M_{\vec p_2}^{t_2,r_2}} \\
	& \lesssim \| G(u) \|_{M_{\vec p_1}^{t_1,r_1}   } 	\left\|  \left(  \sum_{j\in \mathbb Z} 2^{jsq}|     ( \F^{-1} (\varphi_j \F  u ) ) |^q  \right) ^{1/q}    \right\|_{ M_{\vec p_2}^{t_2,r_2}} .
\end{align*}
Thus we finish the proof.
\end{proof}

By Theorems \ref{fractional chain rule}, \ref{LP char mixed BM} and equation \ref{shift}, we have the following result.
\begin{corollary}
		Let $ 0<s<1$.	
	Let $ 1 < \vec p <\infty$. Let   $1 < n / ( \sum_{i=1}^n  1/p_{i})   < t <r<\infty $ or $ 1< n / ( \sum_{i=1}^n  1/p_{i})  \le t< r =\infty$. 		
	Let $1/\vec p = 1/ \vec p_1 + 1 /\vec p_2 $, $1/t = 1/t_1 + 1/t_2$ and $1/r = 1/r_1 + 1/r_2$.		
	For $j =1 ,2$,	assume that  $1 < n / ( \sum_{i=1}^n  1/p_{j,i})   < t_j <r_j<\infty $ or $ 1< n / ( \sum_{i=1}^n  1/p_{j,i})  \le t_j < r_j =\infty$ where $p_{j,i} $ is the $i$-th item of $\vec p_j$. 
	Let $\varphi$  be a partition of unity as in Definition \ref{homo unity} and 
\begin{equation*}
	0< n \left( \max\left\{ \frac{1}{\min \vec p_2 } , \frac{1}{2} \right\}  - \frac{1}{ (\min \vec p_1)  ' } \right) <s.
\end{equation*}
	Let $F: \mathbb C \to \mathbb C$ and $G : \mathbb C \to [0,\infty)$ such that $F(0) =0$. Suppose that for $x,y \in \mathbb C $ that 
	\begin{equation*}
		| F(x) - F (y) | \le (G(x) +G(y) ) |x-y|.
	\end{equation*}
	Then there exists a constant $C>0$ such that 
	\begin{equation*}
		\|  D ^s F (u) \|_{ M_{\vec p} ^{t,r} } \le C \| G (u) \|_{M_{\vec p_1} ^{t_1, r_1} }  \| D^s u \|_{M_{\vec p_2} ^{t_2,r_2}  } .
	\end{equation*}
\end{corollary}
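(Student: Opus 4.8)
The plan is to derive the Corollary directly from Theorem~\ref{fractional chain rule} (taken with $q=2$) by using two tools already available: the Littlewood--Paley identification of the mixed Bourgain--Morrey norm with the $\dot F^{0,2}$-quasi-norm (Theorem~\ref{LP char mixed BM}), and the lifting identity~(\ref{shift}). Observe first that the hypothesis $0<n(\max\{1/\min\vec p_2,1/2\}-1/(\min\vec p_1)')<s$ of the Corollary is exactly the hypothesis of Theorem~\ref{fractional chain rule} with $q$ replaced by $2$, while the splittings $1/\vec p=1/\vec p_1+1/\vec p_2$, $1/t=1/t_1+1/t_2$, $1/r=1/r_1+1/r_2$ and the admissibility of each triple $(\vec p_j,t_j,r_j)$ are assumed. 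Hence Theorem~\ref{fractional chain rule} with $q=2$ applies and yields
\begin{equation*}
	\|F(u)\|_{\dot F_{\vec p,t,r}^{s,2}}\le C\,\|G(u)\|_{M_{\vec p_1}^{t_1,r_1}}\,\|u\|_{\dot F_{\vec p_2,t_2,r_2}^{s,2}};
\end{equation*}
in particular, assuming the right-hand side of the Corollary is finite (otherwise there is nothing to prove), the quasi-norm $\|F(u)\|_{\dot F_{\vec p,t,r}^{s,2}}$ is finite.

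Next I would rewrite the two $\dot F^{s,2}$-quasi-norms in terms of the $D^s$-shifted $M$-norms. Applying~(\ref{shift}) with smoothness parameter $0$ and order $\alpha=s$ gives $\|D^s g\|_{\dot F_{\vec p,t,r}^{0,2}}\approx\|g\|_{\dot F_{\vec p,t,r}^{s,2}}$; since $\|D^s g\|_{\dot F_{\vec p,t,r}^{0,2}}=\|(\sum_{j\in\mathbb Z}|\F^{-1}(\varphi_j\F D^s g)|^2)^{1/2}\|_{M_{\vec p}^{t,r}}$, Theorem~\ref{LP char mixed BM}(i), resp.\ (iii), identifies this with $\|D^s g\|_{M_{\vec p}^{t,r}}$ once $D^s g$ is realized as an element of $M_{\vec p}^{t,r}$, and the analogous statements hold for the triple $(\vec p_2,t_2,r_2)$. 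Chaining these with the displayed inequality produces
\begin{equation*}
	\|D^s F(u)\|_{M_{\vec p}^{t,r}}\approx\|F(u)\|_{\dot F_{\vec p,t,r}^{s,2}}\lesssim\|G(u)\|_{M_{\vec p_1}^{t_1,r_1}}\|u\|_{\dot F_{\vec p_2,t_2,r_2}^{s,2}}\approx\|G(u)\|_{M_{\vec p_1}^{t_1,r_1}}\|D^s u\|_{M_{\vec p_2}^{t_2,r_2}},
\end{equation*}
which is the asserted estimate after absorbing the implicit constants into $C$.

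The one genuinely delicate point — and the part I would write out in full — is the well-definedness of $D^sF(u)$ as a bona fide function in $M_{\vec p}^{t,r}$, since a priori it lives only in $\mathscr{S}'/\mathcal{P}$, so that the chain of equivalences above may be composed. This is supplied by Theorem~\ref{LP char mixed BM}(iii): knowing from the first step that $\|(\sum_j|\F^{-1}(\varphi_j\F D^sF(u))|^2)^{1/2}\|_{M_{\vec p}^{t,r}}=\|F(u)\|_{\dot F_{\vec p,t,r}^{s,2}}<\infty$, the partial sums $\sum_{|j|\le J}\F^{-1}(\varphi_j\F D^sF(u))$ converge in the weak-$*$ topology of $M_{\vec p}^{t,r}$ to a function $H$ with $\|H\|_{M_{\vec p}^{t,r}}\approx\|F(u)\|_{\dot F_{\vec p,t,r}^{s,2}}$, and $H-D^sF(u)$ has Fourier transform supported at the origin, hence is a polynomial; thus $H$ is the required representative of $D^sF(u)$. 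The remaining bookkeeping — placing $F(u)$ and $u$ in $\mathscr{S}'/\mathcal{P}$ with the indicated finite quasi-norms, and switching to the $|\cdot|^s$-weighted partition of unity used in deriving~(\ref{shift}) via Theorem~\ref{equivalent norms} — is routine given Sections~\ref{sec LP} and~\ref{heat char} together with the proof of Theorem~\ref{fractional chain rule} itself.
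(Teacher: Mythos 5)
Your proposal is correct and follows precisely the route the paper indicates (the paper gives only a one-line pointer to Theorem~\ref{fractional chain rule} with $q=2$, the Littlewood--Paley characterization Theorem~\ref{LP char mixed BM}, and the lifting identity~(\ref{shift})). Your additional care about realizing $D^sF(u)$, a priori an element of $\mathscr S'/\mathcal P$, as a genuine function in $M_{\vec p}^{t,r}$ via Theorem~\ref{LP char mixed BM}(iii) is a sensible and appropriate elaboration of the argument the paper leaves implicit.
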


\section*{Ethical Approval}

No applicable for both human and/ or animal studies.

\section*{Competing interests}

The authors have no competing interests to declare that are relevant to the content of this article.

\section*{Authors' contributions}

T. Bai and J. Xu wrote the draft. All authors reviewed the manuscript.


\section*{Availability of data and materials} No data and materials was used for the research described in the article.

\newcommand{\etalchar}[1]{$^{#1}$}
\providecommand{\bysame}{\leavevmode\hbox to3em{\hrulefill}\thinspace}
\providecommand{\MR}{\relax\ifhmode\unskip\space\fi MR }
\providecommand{\MRhref}[2]{%
	\href{http://www.ams.org/mathscinet-getitem?mr=#1}{#2}
}
\providecommand{\href}[2]{#2}

\end{document}